\newcommand*{\idx}[2]{\index{{\hspace{-2ex}\color{white}#2}{#1}}{#1}}
\newcommand*{\idxx}[2]{\index{{\hspace{-2ex}\color{white}#2}{#1}}}
\theoremstyle{plain}
\newtheorem{thm}{Theorem}[subsection]
\newtheorem{pro}[thm]{Proposition}
\newtheorem{lem}[thm]{Lemma}
\newtheorem{cor}[thm]{Corollary}
\newtheorem*{thm*}{Theorem}
\newtheorem*{pro*}{Proposition}
\theoremstyle{definition}
\newtheorem{dfn}[thm]{Definition}
\newtheorem*{dfn*}{Definition}
\newtheorem{exa}[thm]{{\it Example}}
\newtheorem*{exa*}{{\it Example}}
\newtheorem{ozn}[thm]{Notation}
\theoremstyle{remark}
\newtheorem{rem}[thm]{{\it Remark}}
\newtheorem{rems}[thm]{{\it Remarks}}
\newtheorem*{rem*}{{\it Remark}}
\newtheorem{opis}[thm]{Procedure}
\DeclareMathOperator{\dzii}{{\mathsf{Chi}}}
\DeclareMathOperator{\dess}{{\mathsf{Des}}}
\DeclareMathOperator{\paa}{{\mathsf{par}}}
\DeclareMathOperator{\koo}{{\mathsf{root}}}
\DeclareMathOperator{\indd}{{\mathrm{ind}}}
\DeclareMathOperator{\Koo}{{\mathsf{Root}}}
\DeclareMathOperator{\sumo}{{\sum\nolimits^{\smalloplus}}}
\DeclareMathOperator{\supp}{\mathrm{supp}}
\newcommand*{\betab}{{\boldsymbol \beta}}
\newcommand*{\borel}[1]{{\mathfrak B}(#1)}
\newcommand*{\cbb}{\mathbb C}
\newcommand*{\card}[1]{\mathrm{card}(#1)}
\newcommand*{\D}{{\mathrm d}}
\newcommand*{\des}[1]{{\dess(#1)}}
\newcommand*{\dest}[2]{{\dess_{#1}(#2)}}
\newcommand*{\dz}[1]{{\mathscr D}(#1)}
\newcommand*{\dzi}[1]{\dzii(#1)}
\newcommand*{\dziplus}[1]{\dzii_\lambdab^+(#1)}
\newcommand*{\dzin}[2]{\dzii^{\langle#1\rangle}(#2)}
\newcommand*{\dzint}[3]{\dzii_{#1}^{\langle#2\rangle}(#3)}
\newcommand*{\dzit}[2]{\dzii_{#1}(#2)}
\newcommand*{\E}{{\mathrm e}}
\newcommand*{\ee}{{\mathcal E}}
\newcommand*{\escr}{{\mathscr{E}_V}}
\newcommand*{\gcal}{{\mathcal G}}
\newcommand*{\Ge}{\geqslant}
\newcommand*{\hh}{{\mathcal H}}
\newcommand*{\I}{{\mathrm i}}
\newcommand*{\ind}[1]{{\indd(#1)}}
\newcommand*{\is}[2]{\langle#1,#2\rangle}
\newcommand*{\isB}[2]{{\Big\langle#1,#2\Big\rangle}}
\newcommand*{\jd}[1]{{\mathscr N}(#1)}
\newcommand*{\kk}{{\mathcal K}}
\newcommand*{\ko}[1]{\koo(#1)}
\newcommand*{\Ko}[1]{\Koo(#1)}
\newcommand*{\lambdab}{{\boldsymbol\lambda}}
\newcommand*{\Le}{\leqslant}
\newcommand*{\nbb}{{\mathbb N}}
\newcommand*{\nul}{\{0\}}
\newcommand*{\ob}[1]{{\mathscr R}(#1)}
\newcommand*{\ogr}[1]{{\boldsymbol B (#1)}}
\newcommand*{\pa}[1]{\paa(#1)}
\newcommand*{\pib}{{\boldsymbol \pi}}
\newcommand*{\rbb}{{\mathbb R}}
\newcommand*{\smalloplus}{\raise0pt\hbox{$\scriptscriptstyle \oplus$}}
\newcommand*{\slam}{S_{\boldsymbol \lambda}}
\newcommand*{\slamh}{S_{\hat {\boldsymbol \lambda}}}
\newcommand*{\slaml}[1]{S_{\lambdab_\leftarrow\!(#1)}}
\newcommand*{\slamr}[1]{S_{\lambdab_\rightarrow\!(#1)}}
\newcommand*{\smlam}{S_{|\boldsymbol \lambda|}}
\newcommand*{\speca}[1]{\sigma_{\mathrm{ap}}(#1)}
\newcommand*{\teb}{{\boldsymbol t}}
\newcommand*{\tbb}{{\mathbb T}}
\newcommand*{\tcal}{{\mathscr T}}
\newcommand*{\thetab}{\boldsymbol\theta}
\newcommand*{\vplus}{V_\lambdab^+}
\newcommand*{\zbb}{{\mathbb Z}}
\begin{document}

   \title[Weighted shifts on directed trees]
{Weighted shifts on directed trees}

   \author[Z.\ J.\ Jab{\l}o\'{n}ski]{Zenon J. Jab{\l}o\'{n}ski}


   \address{Instytut Matematyki, Uniwersytet Jagiello\'{n}ski,
ul. \L ojasiewicza 6, PL-30348 Kra\-k\'ow}

   \email{Zenon.Jablonski@im.uj.edu.pl}

  \author[I.\ B.\ Jung]{Il Bong Jung}
   \address{Department of Mathematics, College of Natural
Sciences, Kyungpook National University, Daegu 702-701
Korea}
   \email{ibjung@knu.ac.kr}

   \author[J. Stochel]{Jan Stochel}

   \address{Instytut Matematyki, Uniwersytet Jagiello\'nski,
ul. \L ojasiewicza 6, PL-30348 Kra\-k\'ow}
\email{Jan.Stochel@im.uj.edu.pl}

\thanks{The first and the third authors were supported by
the MNiSzW grant N201 026 32/1350. The second author
was supported by Basic Science Research Program
through the National Research Foundation of Korea
(NRF) grant funded by the Korea government (MEST)
(2009-0093125).}

\subjclass{Primary 47B37, 47B20; Secondary 47A05, 44A60}

\keywords{Directed tree, weighted shift, adjoint operator,
polar decomposition, circular operator, inclusion of
domains, Fredholm operator, semi-Fredholm operator,
hyponormal operator, cohyponormal operator, subnormal
operator, completely hyperexpansive operator.}

   \dedicatory{Dedicated to Professor Franciszek H.
Szafraniec on the occasion of his 70th birthday}
   \begin{abstract}
A new class of (not necessarily bounded) operators
related to (mainly infinite) directed trees is
introduced and investigated. Operators in question are
to be considered as a generalization of classical
weighted shifts, on the one hand, and of weighted
adjacency operators, on the other; they are called
weighted shifts on directed trees. The basic
properties of such operators, including closedness,
adjoints, polar decomposition and moduli are studied.
Circularity and the Fredholmness of weighted shifts on
directed trees are discussed. The relationships
between domains of a weighted shift on a directed tree
and its adjoint are described. Hyponormality,
cohyponormality, subnormality and complete
hyperexpansivity of such operators are entirely
characterized in terms of their weights. Related
questions that arose during the study of the topic are
solved as well. Particular trees with one branching
vertex are intensively studied mostly in the context
of subnormality and complete hyperexpansivity of
weighted shifts on them. A strict connection of the
latter with $k$-step backward extendibility of
subnormal as well as completely hyperexpansive
unilateral classical weighted shifts is established.
Models of subnormal and completely hyperexpansive
weighted shifts on these particular trees are
constructed. Various illustrative examples of weighted
shifts on directed trees with the prescribed
properties are furnished. Many of them are simpler
than those previously found on occasion of
investigating analogical properties of other classes
of operators.
   \end{abstract}
   \maketitle
   \newpage
\setcounter{tocdepth}{2}
\tableofcontents


   \numberwithin{equation}{section}
   \newpage
   \section{Introduction}
The main goal of this paper is to implement some methods of
graph theory into operator theory. We do it by introducing
a new class of operators, which we propose to call {\em
weighted shifts on directed trees}. This considerably
generalizes the notion of a weighted shift, the classical
object of operator theory (see e.g., \cite{shi} for a
beautiful survey article on bounded weighted shifts, and
\cite{ml} for basic facts on unbounded ones). As opposed to
the standard graph theory which concerns mostly finite
graphs (see e.g., \cite{ore,c-d-s}), we mainly deal with
infinite graphs, in fact infinite directed trees. Much part
of (non-selfadjoint) operator theory trivializes when one
considers weighted shifts on finite directed trees. This is
the reason why we have decided to include assorted facts on
infinite graphs. The specificity of operator theory forces
peculiarity of problems to be solved in graph theory. This
is yet another reason for studying infinite graphs.

Matrix theory is always behind graph theory:\ finite
undirected graphs induce adjacency matrices which are
always symmetric. However, if undirected graphs are
infinite, then we have to replace adjacency matrices
by symmetric operators (cf.\ \cite{mo,mo-wo}). It
turns out that adjacency operators may not be
selfadjoint (cf.\ \cite{mu}, see also \cite{ao}). If
we want to study non-selfadjoint operators, we have to
turn our interest to directed graphs, and replace the
adjacency matrix by an (in general, unbounded)
operator, called the adjacency operator of the graph.
This was done for the first time in \cite{f-s-w}. It
turns out that the adjacency operators (``which form a
small fantastic world'', cf.\ \cite{f-s-w}) can be
expressed as infinite matrices whose entries are $0$
or $1$. If we look at the definition of the adjacency
operator of a directed tree $\tcal$ (with bounded
valency), we find that it coincides with that of the
weighted shift $\slam$ on $\tcal$ with weights
$\lambda_v \equiv 1$ (see Definition \ref{defshift}
and Proposition \ref{desc}). The questions of when the
adjacency operator is positive, selfadjoint, unitary,
normal and (co-) hyponormal have been answered in
\cite{f-s-w} (characterizations of some algebraic
properties of adjacency operators have been given
there as well). Spectral and numerical radii of
adjacency operators have been studied in \cite{b-m-s}
(the case of undirected graphs) and in
\cite{f-s-w2,t-te} (the case of directed graphs).

The notion of adjacency operator has been generalized
in \cite[Section 6]{f-f-s-w} to the case of infinite
directed fuzzy graphs $G$ (i.e., graphs whose arrows
have stochastic values); such operator, denoted by
$A(G)$ in \cite{f-f-s-w} (and sometimes called a
weighted adjacency operator of $G$), is assumed to be
bounded. In view of Proposition \ref{desc}, it is a
simple matter to verify that if $G$ is a directed
tree, then the weighted adjacency operator $A(G)$
coincides with our weighted shift operator on $G$. It
was proved in \cite[Theorem 6.1]{f-f-s-w} that the
spectral radius of the weighted adjacency operator
$A(G)$ of an infinite directed fuzzy graph $G$ belongs
the approximate point spectrum of $A(G)$. Our approach
to this question is quite different. Namely, we first
prove that a weighted shift on a directed tree is
circular (cf.\ Theorem \ref{modul}), and then deduce
the Perron-Frobenius type theorem (cf.\ Corollary
\ref{sppr}). As an immediate consequence of
circularity, we obtain the symmetricity of the
spectrum of a weighted shift on a directed tree with
respect to the real axis.

We now explain why in the case of directed trees
we prefer to call a weighted adjacency operator a
{\em weighted shift on a directed tree}. In the
present paper, by a {\em classical weighted
shift} we mean either a unilateral weighted shift
$S$ in $\ell^2$ or a bilateral weighted shift $S$
in $\ell^2(\zbb)$ ($\zbb$ stands for the set of
all integers). To be more precise, $S$ is
understood as the product $VD$, where, in the
unilateral case, $V$ is the unilateral isometric
shift on $\ell^2$ of multiplicity $1$ and $D$ is
a diagonal operator in $\ell^2$ with diagonal
elements $\{\lambda_n\}_{n=0}^\infty$; in the
bilateral case, $V$ is the bilateral unitary
shift on $\ell^2(\zbb)$ of multiplicity $1$ and
$D$ is a diagonal operator in $\ell^2(\zbb)$ with
diagonal elements
$\{\lambda_n\}_{n=-\infty}^\infty$ (diagonal
operators are assumed to be closed, cf.\ Lemma
\ref{lems}). In fact, $S$ is a unique closed
linear operator in $\ell^2$ (respectively,
$\ell^2(\zbb)$) such that the linear span of the
standard orthonormal basis $\{e_n\}_{n=0}^\infty$
of $\ell^2$ (respectively,
$\{e_n\}_{n=-\infty}^\infty$ of $\ell^2(\zbb)$)
is a core\footnote{\;See Section \ref{s01} for
appropriate definitions.} of $S$ and
   \begin{align}     \label{notold}
S e_n = \lambda_n e_{n+1} \quad \text{for } n\in \zbb_+ \;
(\textrm{respectively, for } n \in \zbb),
   \end{align}
where $\zbb_+$ is the set of all nonnegative integers.
Roughly speaking, the operators $\slam$ which are
subject of our investigations in the present paper can
be described as follows (cf.\ \eqref{eu}):
   \begin{align}   \label{eu-1}
\slam e_u = \sum_{v\in\dzi u} \lambda_v e_v, \quad u \in V,
   \end{align}
where $\{e_v\}_{v\in V}$ is the standard orthonormal
basis of $\ell^2(V)$ indexed by a set $V$ of vertexes
of a directed tree $\tcal$, $\dzi u$ is the set of all
children of $u$ and $\{\lambda_v\}_{v\in V^\circ}$ is
a system of complex numbers called the weights of
$\slam$. If we apply this definition to the directed
trees $\zbb_+$ and $\zbb$ (see Remark \ref{re1-2} for
a detailed explanation), we will see that in this
particular situation the equality \eqref{eu-1} takes
the form
   \begin{align}  \label{notnew}
\slam e_n = \lambda_{n+1}e_{n+1} \quad \text{for } n\in
\zbb_+ \; (\textrm{respectively, } n \in \zbb).
   \end{align}
Comparing \eqref{notold} with \eqref{notnew}, one can
convince himself that the operator $\slam$ can be viewed as
generalization of a classical weighted shift operator. This
is the main reason why operators $\slam$ are called here
weighted shifts on directed trees.

The reader should be aware of the difference between
notation \eqref{notold} and \eqref{notnew}. {\em In the
present paper, we adhere to the new convention}
\eqref{notnew}.

It is well known that the adjoint of an injective
unilateral classical weighted shift is not a classical
weighted shift. It is somewhat surprising that the adjoint
of a unilateral classical weighted shift is a weighted
shift in our more general sense (cf.\ Remark \ref{surp}).

Hereafter, we study weighted shifts on directed
trees imposing no restrictions on their
cardinality. However, if one wants to investigate
densely defined weighted shifts with nonzero
weights, then one ought to consider them on
directed trees which are at most countable (cf.\
Proposition \ref{przeldz}).

Less than half of our paper, namely chapters
\ref{chap3} and \ref{chap4}, deals with unbounded
weighted shifts on directed trees. In Chapter
\ref{chap3}, we investigate the question of when
assorted properties of classical weighted shifts
remain valid for weighted shifts on directed trees.
The first basic property of classical weighted shifts
stating that each of them is unitarily equivalent to
another one with nonnegative weights has a natural
counterpart in the context of directed trees (cf.\
Theorem \ref{uni}). Circularity is another significant
property of classical weighted shifts which turns out
to be valid for their generalizations on directed
trees (cf.\ Theorem \ref{modul}). The adjoint and the
modulus of a weighted shift on a directed tree are
explicitly exhibited in Propositions \ref{sprz} and
\ref{3}, respectively. As a consequence, a clearly
expressed description of the polar decomposition of a
weighted shift on a directed tree is derived in
Proposition \ref{polar}. It enables us to characterize
Fredholm and semi-Fredholm weighted shifts on a
directed tree (cf.\ Propositions \ref{fredholm} and
\ref{semifred}). It turns out that the property of
being Fredholm, when considered in the class of
weighted shifts on a directed tree with nonzero
weights, can be stated entirely in terms of the
underlying tree. Such a tree is called here Fredholm
(cf.\ Definition \ref{treeind}). In general, if a
directed tree admits a Fredholm weight shift (with not
necessarily nonzero weights), then it is automatically
Fredholm, but not conversely (cf.\ Propositions
\ref{fredholm} and \ref{bfred}). Proposition
\ref{fredholm} provides an explicit formula for the
index of a Fredholm weighted shift on a directed tree
(cf.\ the formula \eqref{finf}). Owing to this
formula, the index depends on both the underlying tree
and the weights of the weighted shift in question (in
fact, it depends on the geometry of the set of
vertexes corresponding to vanishing weights). However,
if all the weights are nonzero, then the index depends
only on the underlying tree, and as such is called the
index of the Fredholm tree (cf.\ Definition
\ref{treeind}). The index of a Fredholm weighted shift
on a directed tree can take all integer values from
$-\infty$ to $1$ (cf.\ Lemma \ref{le1} and Theorem
\ref{indle1}).

The question of when the domain of a classical
weighted shift is included in the domain of its
adjoint has a simple answer. A related question
concerning the reverse inclusion has an equally simple
answer. However, the same problems, when formulated
for weighted shifts on a directed tree, become much
more elaborate. This is especially visible in the case
of the reverse inclusion in which we require that a
family of rank one perturbations of positive diagonal
operators be uniformly bounded; these operators are
tided up to the vertexes possessing children (cf.\
Theorem \ref{zen}). Some examples of unbounded
weighted shifts on a directed tree illustrating
possible relationships between the domain of the
operator in question and that of its adjoint are
stated in Example \ref{17rem}.

Starting from Chapter \ref{ch5}, we concentrate mainly
on the study of bounded operators. We begin by
considering the question of hyponormality. We first
show that a hyponormal weighted shift on a directed
tree with nonzero weights is injective, and
consequently that the underlying tree is leafless
(this no longer true if we admit zero weights, cf.\
Remark \ref{lam0}). A complete characterization of the
hyponormality of weighted shifts on directed trees is
given in Theorem \ref{hyp}. It turns out that the
property of being hyponormal is not too restrictive
with respect to the underlying tree (even in the class
of weighted shifts with nonzero weights). The
situation changes drastically when we pass to
cohyponormal weighted shifts on a directed tree. If
the tree has a root, then there is no nonzero
cohyponormal weighted shift on it. On the other hand,
if the tree is rootless and admits a nonzero
cohyponormal weighted shift, then the set of vertexes
corresponding to nonzero weights is a subtree of the
underlying tree which can be geometrically interpreted
as either a broom with infinite handle or a straight
line. This property is an essential constituent of the
characterization of cohyponormality of nonzero
weighted shifts on a directed tree that is given in
Theorem \ref{cohyp-opis}. As a consequence, any
injective cohyponormal weighted shift on a directed
tree is a bilateral classical weighted shift (cf.\
Corollary \ref{injcoh}).

The last section of Chapter \ref{ch5} is devoted to
showing how to separate hyponormality and
paranormality classes with weighted shifts on directed
trees. It is well known that the class of paranormal
operators is essentially larger than that of
hyponormal ones (see Section \ref{hypcohyp} for the
appropriate definition). This was deduced by Furuta
\cite{Fur} from the fact that there are non-hyponormal
squares of hyponormal operators. The first rather
complicated example of a hyponormal operator whose
square is not hyponormal was given by Halmos in
\cite{hal1} (however it is not injective). Probably
the simplest example of such operator which is
additionally injective is to be found in \cite[page
158]{I-W} (see \cite[Problem 209]{hal} for details).
One more example of this kind (with the injectivity
property), but still complicated, can be found in
\cite[Example]{di-ca}. In the present article, we
offer two examples of injective hyponormal weighted
shifts on directed trees whose squares are not
hyponormal. The first one, parameterized by three
independent real parameters, is built on a relatively
simple directed three that has only one branching
vertex (cf.\ Example \ref{exa3}). The other one,
parameterized by two real parameters, is built on a
directed tree which is a ``small perturbation'' of a
directed binary tree (cf.\ Example \ref{exa4}). Let us
point out that there are no tedious computations
behind our examples. According to our knowledge, the
first direct example (making no appeal to
non-hyponormal squares) of an injective paranormal
operator which is not hyponormal appeared in
\cite[Example 3.1]{b-j} (see also \cite{JLP,JLL} for
non-injective examples of this kind). In Example
\ref{pra-nothyp} we construct an injective paranormal
weighted shift on a directed tree which is not
hyponormal; the underlying directed tree is the
simplest possible directed tree admitting such an
operator (because there is no distinction between
hyponormality and paranormality in the class of
classical weighted shifts).

Chapter \ref{ch6} is devoted to the study of (bounded)
subnormal weighted shifts on directed trees. The main
characterization of such operators given in Theorem
\ref{charsub} asserts that a weighted shift $\slam$ on
a directed tree $\tcal = (V,E)$ is subnormal if and
only if each vertex $u \in V$ induces a Stieltjes
moment sequence, i.e., $\{\|\slam^n
e_u\|^2\}_{n=0}^\infty$ is a Stieltjes moment
sequence. Hence, it is natural to examine the set of
all vertexes which induce Stieltjes moment sequences.
Since the operator in question is bounded, the
Stieltjes moment sequence induced by $u \in V$ turns
out to be determinate; its unique representing measure
is denoted by $\mu_u$ (cf.\ Notation \ref{ozn2}). The
first question we analyze is whether the property of
inducing a Stieltjes moment sequence is inherited by
the children of a fixed vertex. In general, the answer
to the question is in the negative. The situation in
which the answer is in the affirmative happens
extremely rarely, actually, only when the vertex has
exactly one child (cf.\ Lemma \ref{charsub-1} and
Example \ref{2nitki}). This fact, when applied to the
leafless directed trees without branching vertexes,
leads to the well known Berger-Gellar-Wallen criterion
for subnormality of injective classical weighted
shifts (cf.\ Corollaries \ref{b-g-w} and
\ref{b-g-w-2}). Though the answer to the reverse
question is in the negative, we can find a necessary
and sufficient condition for a fixed vertex (read:\ a
parent) to induce a Stieltjes moment sequence whenever
its children do so (cf.\ Lemma \ref{charsub2}); this
condition is called the consistency condition. Lemma
\ref{charsub2} also gives a formula linking measures
induced by the parent and its children. The key
ingredient of its proof consists of Lemma \ref{bext}
which answers a variant of the question of backward
extendibility of Stieltjes moment sequences.

The usefulness of the consistency condition (as well
as the strong consistency condition) is undoubted.
This is particularly illustrated in the case of
directed trees $\tcal_{\eta,\kappa}$ that have only
one branching vertex (cf.\ \eqref{varkappa}). Such
trees are one step more complicated than those
involved in the definitions of classical weighted
shifts (see Remark \ref{re1-2}). Parameter $\eta$
counts the number of children of the branching vertex
of $\tcal_{\eta, \kappa}$, while $\kappa$ counts the
number of possible backward steps along the tree when
starting from its branching vertex. Employing Lemma
\ref{charsub2}, we first characterize the subnormality
of weighted shifts on $\tcal_{\eta,\kappa}$ with
nonzero weights (cf.\ Theorem \ref{omega} and
Corollary \ref{omega2}) and then build models for such
operators (cf.\ Section \ref{mod-sub}). According to
Procedure \ref{twolin}, to construct the model
weighted shift on $\tcal_{\eta,\kappa}$, we first take
a sequence $\{\mu_i\}_{i=1}^\eta$ of Borel probability
measures on a finite interval $[0,M]$, each of which
possessing finite negative moments up to order
$\kappa+1$ (cf.\ \eqref{0<infty}). The next step of
the procedure depends on whether $\kappa=0$ or $\kappa
\Ge 1$. In the first case, we choose any sequence
$\{\lambda_{i,1}\}_{i=1}^\eta$ of positive real
numbers satisfying the consistency condition
\eqref{zgod} and define the weights of the model
weighted shift by the formula \eqref{lamij}. In the
other case, we choose any sequence
$\{\lambda_{i,1}\}_{i=1}^\eta$ of positive real
numbers satisfying the strong consistency condition
\eqref{zgod'} and the estimate \eqref{fine}, and
define the weights of the model weighted shift by the
formulas \eqref{lamij}, \eqref{lambda-k} and
\eqref{lambda-k'}. The question of the existence of a
sequence $\{\lambda_{i,1}\}_{i=1}^\eta$ which meets
our requirements is answered in Lemma \ref{discus}.
Note that if $1 \Le \kappa < \infty$, then the weight
$\lambda_{-\kappa + 1}$ corresponding to the child of
the root of $\tcal_{\eta,\kappa}$ is not uniquely
determined by the sequences $\{\mu_i\}_{i=1}^\eta$ and
$\{\lambda_{i,1}\}_{i=1}^\eta$; it is parameterized by
a positive real number $\vartheta$ ranging over an
interval in which one endpoint is $0$ and the other is
uniquely determined by $\{\mu_i\}_{i=1}^\eta$ and
$\{\lambda_{i,1}\}_{i=1}^\eta$. If the parameter
$\vartheta$ coincides with the nonzero endpoint, the
corresponding subnormal weighted shift on
$\tcal_{\eta,\kappa}$ is called extremal. The
extremality can be expressed entirely in terms of the
weighted shift in question (cf.\ Remark
\ref{forwhile}). Procedure \ref{twolin} enables us to
link the issue of subnormality of weighted shifts
(with nonzero weights) on the directed tree
$\tcal_{\eta,\kappa}$ with the problem of $k$-step
backward extendibility of subnormal unilateral
classical weighted shifts which was originated by
Curto in \cite{cur} and continued in \cite{CL} (see
also \cite{HJL} and referenced cited in the paragraph
surrounding \eqref{Lee-Cu}). Roughly speaking, the
subnormality of a weighted shift on
$\tcal_{\eta,\kappa}$ with nonzero weights is
completely determined by the $(\kappa+1)$-step
backward extendibility of unilateral classical
weighted shifts which are tied up to the children of
the branching vertex via the formula \eqref{alla}
(cf.\ Proposition \ref{2curto}).

The class of completely hyperexpansive operators was
introduced by Aleman in \cite{Alem} on occasion of his
study of multiplication operators on Hilbert spaces of
analytic functions, and independently by Athavale in
\cite{ath} on account of his investigation of
operators which are antithetical to contractive
subnormal operators. We also point out the trilogy by
Stankus and Agler \cite{AgSt1,AgSt2,AgSt3} concerning
$m$-isometric transformations of a Hilbert space which
are always completely hyperexpansive whenever $m\Le
2$. Again, as in the case of subnormality, the
complete hyperexpansivity of a weighted shift $\slam$
on a directed tree $\tcal=(V,E)$ with nonzero weights
can be characterized by requiring that each vertex $u
\in V$ induces a completely alternating sequence,
i.e., $\{\|\slam^n e_u\|^2\}_{n=0}^\infty$ is a
completely alternating sequence (cf.\ Theorem
\ref{charch}). The structure of the set of all
vertexes of $\tcal$ inducing completely alternating
sequences is studied in two consecutive lemmas (cf.\
Lemmata \ref{charch-1} and \ref{charch2}). The first
of them deals with the question of whether the
property of inducing a completely alternating sequence
is inherited by the children of a fixed vertex. The
answer is exactly the same as in the case of
subnormality. In the latter lemma we formulate a
necessary and sufficient condition for a fixed vertex
$u \in V$ to induce a completely alternating sequence
whenever its children do so; this condition is again
called the consistency condition, but now it is
written in terms of representing measures of
completely alternating sequences $\{\|\slam^n
e_v\|^2\}_{n=0}^\infty$, where $v$ ranges over the set
$\dzi{u}$ of all children of $u$. The proof of Lemma
\ref{charch2} rely on Lemma \ref{bext-ca} which solves
the question of backward extendibility of completely
alternating sequences and provides the formula for
representing measures of backward extensions of a
given completely alternating sequence. As a
consequence, we obtain a formula binding representing
measures of completely alternating sequences induced
by the parent and its children (cf.\ Lemma
\ref{charch2}).

As in the case of subnormality, the directed tree
$\tcal_{\eta,\kappa}$ serves as a good test for the
applicability of Lemmata \ref{charch-1} and
\ref{charch2}. What we get are the characterizations
of complete hyperexpansivity of weighted shifts on
$\tcal_{\eta,\kappa}$ with nonzero weights (cf.\
Theorem \ref{omega-ch} and Corollary \ref{omega2-ch}).
In opposition to subnormality, the only completely
hyperexpansive weighted shifts on
$\tcal_{\eta,\infty}$ with nonzero weights are
isometries. This is the reason why in further parts of
the paper we consider only the case when $\kappa$ is
finite. Modelling of complete hyperexpansivity of
weighted shifts on $\tcal_{\eta,\kappa}$ with nonzero
weights, though still possible, is much more
elaborate. The procedure leading to a model weighted
shifts on $\tcal_{\eta,\kappa}$ starts with a sequence
$\boldsymbol \tau =\{\tau_i\}_{i=1}^\eta$ of positive
Borel measures on $[0,1]$ whose total masses are
uniformly bounded (these measures eventually
represents completely alternating sequences induced by
the children of the branching vertex). The next step
of the procedure requires much more delicate
reasoning. It depends on the behaviour of weights of a
completely hyperexpansive weighted shift on
$\tcal_{\eta,\kappa}$ corresponding to the children of
the branching vertex. They must satisfy the conditions
\eqref{ka+1}, \eqref{ka+2} and \eqref{ka+3} which are
rather complicated and somewhat difficult to deal with
(cf.\ Lemma \ref{omega3-ch}). This means that if we
want $\{t_{i}\}_{i=1}^\eta$ to be a sequence of
weights of some completely hyperexpansive weighted
shift on $\tcal_{\eta,\kappa}$ that correspond to the
children of the branching vertex, it must verify the
conditions \eqref{ka+1}, \eqref{ka+2} and
\eqref{ka+3}. Theorem \ref{par-ch} asserts that the
above necessary conditions turn out to be sufficient
as well. However, what remains quite unclear is under
what circumstances a sequence $\{t_{i}\}_{i=1}^\eta$
satisfying these three conditions exits. The solution
of this problem is given in Proposition \ref{nascfch}.
It is unexpectedly simple: each measure $\tau_i$ must
have a finite negative moment of order $\kappa+1$ and
at least one of them must generate a unilateral
classical weighted shift possessing a completely
hyperexpansive $(\kappa+1)$-step backward extension
(see \eqref{jab-ju-st} for an explanation). This is
another significant difference between complete
hyperexpansivity and subnormality because in the
latter case each unilateral classical weighted shift
generated by the child of the branching vertex must
possess subnormal $(\kappa+1)$-step backward extension
(compare Propositions \ref{2curto-ch} and
\ref{2curto}). The problem of $k$-step backward
extendibility of completely hyperexpansive unilateral
classical weighted shifts was investigated in
\cite{j-j-s}. The whole process of modelling complete
hyperexpansivity on $\tcal_{\eta,\kappa}$ is
summarized in Procedure \ref{proc-ch}.

Section \ref{s7.4} deals with the question of when for
a given sequence $\{t_i\}_{i=1}^\eta$ of positive real
numbers there exists a completely hyperexpansive
weighted shift on $\tcal_{\eta,\kappa}$ whose weights
corresponding to the children of the branching vertex
form the sequence $\{t_i\}_{i=1}^\eta$. The necessary
and sufficient conditions for that are given in
Propositions \ref{norm-sim} and \ref{kap-ch},
respectively.

Chapter \ref{ch7} ends with Section \ref{s751} which
concerns the issue of extendibility of a system of
weights of a completely hyperexpansive weighted shift
on a subtree $\tcal$ of a directed tree $\hat \tcal$
to a system of weights of some completely
hyperexpansive weighted shift on $\hat \tcal$ (both
weighted shifts are assumed to have nonzero weights).
In many cases such a possibility does not exist.
Similar effect appears in the case of subnormal
weighted shifts, however the assumptions imposed on
the pair $(\tcal,\hat\tcal)$ in the former case are
much more restrictive than those in the latter
(compare Propositions \ref{maxsub} and
\ref{maxsub-ch}). Example \ref{ch-n0restr} illustrates
the validity of the phrase ``much more restrictive''
as well as shows that none of the assumptions (i) and
(ii) of Proposition \ref{maxsub-ch} can be removed.

In the last chapter of the paper (i.e., Chapter
\ref{ch8}) we discuss the question of when a directed
tree admits a weighted shift with a prescribed
property (dense range, hyponormality, subnormality,
normality, etc.) and characterize $p$-hyponormality of
weighted shifts on directed trees (cf.\ Theorem
\ref{p-hyp}). In Example \ref{p-hyp-sep}, we single
out a family of weighted shifts on $\tcal_{2,1}$ (with
nonzero weights) in which $\infty$-hyponormality and
subnormality are proved to be independent (modulo an
operator). The same family is used to show how to
separate $p$-hyponormality classes. Note also that
$p$-hyponormal unilateral or bilateral classical
weighted shifts are always hyponormal (cf.\ Corollary
\ref{p-hyp-clas}).

We now make two concluding remarks. First, we note
that a weighted shift on a rootless directed tree
$\tcal$ is a weighted composition operator on $L^2$
space with respect to the counting measure on the set
of vertexes of $\tcal$ (cf.\ Definition
\ref{defshift}). The next observation is that a
weighted shift on a directed tree can be viewed as a
weighted shift with operator weights on one of the
following simple directed trees
   \begin{align*}
\text{$\zbb_+$, $\zbb$, $\zbb_-$ and
$\{1,\ldots,\kappa\}$ ($\kappa < \infty$).}
   \end{align*}
This can be inferred from a decomposition of a
directed tree described in the conditions (vi) and
(viii) of Proposition \ref{generation}. In general,
the $n$th operator weight is an unbounded operator
acting between different Hilbert spaces whose
dimensions vary in $n$.

\vspace{5ex}

In this paper we use the following notation. The
fields of real and complex numbers are denoted by
\idx{$\rbb$}{01} and \idx{$\cbb$}{02}, respectively.
The symbols \idx{$\zbb$}{03}, \idx{$\zbb_+$}{04} and
\idx{$\nbb$}{05} stand for the sets of integers,
nonnegative integers and positive integers,
respectively. Given a topological space $X$, we write
\idx{$\borel X$}{06} for the $\sigma$-algebra of all
Borel subsets of $X$. If $\zeta \in X$, then
\idx{$\delta_\zeta$}{07} stands for the Borel
probability measure on $X$ concentrated on
$\{\zeta\}$. We denote by \idx{$\chi_Y$}{08} and
\idx{$\card Y$}{09} the characteristic function and
the cardinal number of a set $Y$, respectively (it is
clear from the context on which set the characteristic
function $\chi_Y$ is defined).

   \numberwithin{equation}{subsection}
   \newpage
   \section{Prerequisites}
   \subsection{\label{s0}Directed trees}
   Since the graph theory is mainly devoted to the
study of finite graphs and our paper deals mostly with
infinite graphs, we have decided to include in this
section some basic notions and facts on the subject
which are essential for the rest of the paper. For the
basic concepts of the theory of graphs, we refer the
reader to \cite{ore}. We say that a pair $\gcal=(V,E)$
is a {\em directed graph} if $V$ is a nonempty set and
$E$ is a subset of $V\times V\setminus\{(v,v)\colon
v\in V\}$. Put \idxx{$\tilde E$}{10}
   \begin{align*}
\widetilde E = \{\{u,v\} \subseteq V \colon (u,v) \in E
\text{ or } (v,u) \in E\}.
   \end{align*}
For simplicity, we suppress the explicit dependence of
$V$, $E$ and $\widetilde E$ on $\gcal$ in the
notation. An element of $V$ is called a {\em vertex}
of $\gcal$, a member of $E$ is called an {\em edge} of
$\gcal$, and finally a member of $\widetilde E$ is
called an {\em undirected edge}. If $W$ is a nonempty
subset $V$, then obviously the pair
\idxx{${\gcal}_W$}{10}
   \begin{align} \label{subtree}
\gcal_W := (W, (W \times W) \cap E)
   \end{align}
is a directed graph which will be called a (directed)
{\em subgraph} of $\gcal$. A directed graph $\gcal$ is
said to be {\em connected} if for any two distinct
vertexes $u$ and $v$ of $\gcal$ there exists a finite
sequence $v_1, \ldots,v_n$ of vertexes of $\gcal$ ($n
\Ge 2$) such that $u=v_1$, $\{v_j,v_{j+1}\} \in
\widetilde E$ for all $j = 1, \ldots, n-1$, and
$v_n=v$; such a sequence will be called an {\em
undirected path} joining $u$ and $v$. Set \idxx{$\dzi
u$}{11}
   \begin{align*}
\dzi u = \{v\in V\colon (u,v)\in E\}, \quad u \in V.
   \end{align*}
A member of $\dzi u$ is called a {\em child} of $u$.
If for a given vertex $u \in V$, there exists a unique
vertex $v\in V$ such that $(v,u)\in E$, then we say
that $u$ has a parent $v$ and write \idx{$\pa u$}{12}
for $v$. Since the correspondence $u \mapsto \pa u$ is
a partial function (read:\ a relation) in $V$, we can
compose it with itself $k$-times ($k \Ge 1$); the
result is denoted by \idx{$\paa^k$}{13}. We adhere to
the convention that $\paa^0$ is the identity mapping
on $V$. We will write $\paa^k(u)$ only when $u$ is in
the domain of $\paa^k$. A finite sequence
$\{u_j\}_{j=1}^n$ of distinct vertexes is said to be a
{\em circuit} of $\gcal$ if $n\ge 2$, $(u_j,u_{j+1})
\in E$ for all $j=1, \ldots, n-1$, and $(u_n,u_1) \in
E$. A vertex $v$ of $\gcal$ is called a {\em root} of
$\gcal$, or briefly \idxx{$\Ko \gcal$}{14} $v \in \Ko
\gcal$, if there is no vertex $u$ of $\gcal$ such that
$(u,v)$ is an edge of $\gcal$. Clearly, the
cardinality of the set $\Ko \gcal$ may be arbitrary.
If $\Ko \gcal$ is a one-element set, then its unique
element is denoted by \idx{$\ko \gcal$}{15}, or simply
by \idx{$\koo$}{16} if this causes no ambiguity. We
write \idxx{$V^\circ$}{17} $V^\circ = V \setminus
\Ko{\gcal}$.

The proof of the following fact is left to the
reader\footnote{\;All facts stated in this section without
proofs can be justified by induction or methods employed in
the proof of Proposition \ref{witr}.}.
   \begin{pro}\label{1}
Let $\gcal$ be a directed graph satisfying the following conditions
   \begin{enumerate}
   \item[(i)] $\gcal$ is connected,
   \item[(ii)] each vertex $v \in V^\circ$ has a parent.
   \end{enumerate}
Then the set $\Ko \gcal$ contains at most one element.
   \end{pro}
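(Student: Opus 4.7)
The plan is to argue by contradiction: suppose $r_1, r_2 \in \Ko \gcal$ are two distinct roots. By connectedness (condition (i)), there exists an undirected path $r_1 = v_1, v_2, \ldots, v_n = r_2$ with $\{v_j, v_{j+1}\} \in \widetilde E$ for $j = 1, \ldots, n-1$. Since any undirected path can be shortened by deleting any subpath between two occurrences of the same vertex, I may assume that $v_1, \ldots, v_n$ are pairwise distinct.

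The core of the argument is the following claim, proved by induction on $j$: for every $j \in \{1, \ldots, n-1\}$ one has $(v_j, v_{j+1}) \in E$. For the base case $j=1$, since $r_1 = v_1$ is a root, no edge of the form $(u, r_1)$ lies in $E$; in particular $(v_2, v_1) \notin E$, which together with $\{v_1, v_2\} \in \widetilde E$ forces $(v_1, v_2) \in E$. For the inductive step, assume $(v_{j-1}, v_j) \in E$ for some $j \ge 2$. Then $v_j$ is not a root, so $v_j \in V^\circ$, and by condition (ii) it possesses a (necessarily unique, by the very definition of ``parent'') parent; this parent must be $v_{j-1}$. As $v_{j+1} \neq v_{j-1}$ (vertexes of the path are distinct), uniqueness of the parent gives $(v_{j+1}, v_j) \notin E$, and so $\{v_j, v_{j+1}\} \in \widetilde E$ forces $(v_j, v_{j+1}) \in E$.

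Applying the claim with $j = n-1$ yields $(v_{n-1}, v_n) \in E$, so $v_{n-1}$ is a parent of $v_n = r_2$. This contradicts the assumption $r_2 \in \Ko \gcal$, and the proof is complete.

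I do not expect any real obstacle: the argument just pushes orientation information along a connecting path, using the base case (the root has no incoming edge) together with the uniqueness clause in the definition of a parent to propagate the forward direction. The only small subtleties are making sure the path can be taken with distinct vertexes, and being careful that (ii) provides not merely existence but also uniqueness of the parent — which is, however, automatic from how ``parent'' is defined in the excerpt.
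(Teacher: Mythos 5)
Your proof is correct: the paper leaves this statement to the reader, pointing to ``induction or methods employed in the proof of Proposition \ref{witr}'', and your argument is exactly that method --- propagating the orientation of edges along a connecting undirected path with distinct vertexes, using the root's lack of incoming edges for the base case and the uniqueness built into the definition of a parent for the inductive step. No gaps.
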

We say that a directed graph $\tcal$ is a {\em
directed tree} if it has no circuits and satisfies the
conditions (i) and (ii) of Proposition \ref{1}. Note
that none of these three properties defining the
directed tree follows from the others. A subgraph of a
directed tree $\tcal$ which itself is a directed tree
will be called a {\em subtree} of $\tcal$. A directed
tree may or may not possess a root, however, in the
other case, a root must be unique. Note also that each
finite directed tree always has a root. The reader
should be aware of the fact that we impose no
restriction on the cardinality of the set $V$. A
directed tree $\tcal$ such that $\card {\dzi u} = 2$
for all $u \in V$ will be called a {\em directed
binary tree}.

Given a directed tree $\tcal$, we put
\idxx{$V^\prime$}{18} $V^\prime = \{u \in V \colon
\dzi u \neq \varnothing\}$ and \idxx{$V_{\prec}$}{19}
   \begin{align} \label{prec}
V_{\prec}=\{u \in V\colon \card{\dzi u} \Ge 2\}.
   \end{align}
A member of the set $V\setminus V^\prime$ is called a {\em
leaf} of $\tcal$, while a member of the set $V_{\prec}$ is
called a {\em branching vertex} of $\tcal$. A directed tree
$\tcal$ is said to be {\em leafless} if $V=V^\prime$. Every
leafless directed tree is infinite, and every directed
binary tree is leafless.

The following decomposition of the set $V^\circ$ plays
an important role in our further investigations. Its
proof is left to the reader.
   \begin{pro} \label{46}
If $\tcal$ is a directed tree, then $\dzi u \cap \dzi
v = \varnothing$ for all $u, v\in V$ such that $u \neq
v$, and\,\footnote{\;The notation ``\,$\bigsqcup$\,''
is reserved to denote pairwise disjoint union of
sets.}
   \begin{align} \label{sumchi}
V^\circ= \bigsqcup_{u\in V} \dzi u.
   \end{align}
   \end{pro}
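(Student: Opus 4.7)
The plan is straightforward: both assertions follow directly from the uniqueness of parents in a directed tree, which in turn is forced by condition (ii) in the definition (every vertex in $V^\circ$ has a \emph{parent}, and ``parent'' was defined to require uniqueness).

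For the disjointness claim, I would argue by contradiction. Suppose $w \in \dzi u \cap \dzi v$ for some $u,v \in V$ with $u \neq v$. Then $(u,w), (v,w) \in E$, so by definition of $\Ko\tcal$ we have $w \notin \Ko\tcal$, that is $w \in V^\circ$. By condition (ii) in the definition of a directed tree, $w$ has a parent, i.e., there is a \emph{unique} vertex $x$ with $(x,w) \in E$. Since both $u$ and $v$ satisfy this property, we get $u = v$, contradicting $u \neq v$. Hence $\dzi u \cap \dzi v = \varnothing$.

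For the decomposition \eqref{sumchi}, I would establish both inclusions separately. For $V^\circ \subseteq \bigcup_{u \in V} \dzi u$: if $w \in V^\circ$, then by (ii) its (unique) parent $p := \pa{w}$ satisfies $(p,w) \in E$, so $w \in \dzi p$. For the reverse inclusion $\bigcup_{u \in V} \dzi u \subseteq V^\circ$: if $w \in \dzi u$ for some $u \in V$, then $(u,w) \in E$, which by the very definition of a root means $w \notin \Ko\tcal$, hence $w \in V^\circ$. Combining these inclusions with the disjointness established above yields the pairwise disjoint union \eqref{sumchi}.

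There is no real obstacle here; the only subtle point is being careful about the definitional conventions: a vertex ``has a parent'' only when the incoming neighbour is unique, so condition (ii) of the directed tree definition encodes exactly the uniqueness statement needed for disjointness. Neither the ``no circuits'' hypothesis nor connectedness is used in this proposition — it is purely the consequence of (ii) together with the definition of $\Ko\tcal$ and $\dzi\cdot$.
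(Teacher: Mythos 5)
Your proof is correct and is precisely the direct argument the paper intends (the proof of Proposition \ref{46} is explicitly left to the reader): disjointness comes from the uniqueness built into the definition of a parent via condition (ii), and the two inclusions for \eqref{sumchi} follow from condition (ii) and the definition of a root. Your closing observation that neither connectedness nor the absence of circuits is needed here is also accurate.
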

Let $\tcal$ be a directed tree. Given a set $W
\subseteq V$, we put \idxx{$\dzi W$}{20} $\dzi W =
\bigsqcup_{v \in W} \dzi v$ (in view of Proposition
\ref{46}, $\dzi W$ is well-defined). Define
\idxx{$\dzii^{<n>}(W)$}{21} \idxx{$\des W$}{22}
    \begin{gather} \notag
\dzin{0}{W} = W, \quad \dzin{n+1}{W} = \dzi{\dzin{n}{W}}, \quad
n=0,1,2,\ldots,
    \\
\des W = \bigcup_{n=0}^\infty \dzin n W. \label{ytag}
    \end{gather}
The members of $\des W$ are called {\em descendants}
of $W$. Since $\dzi{\cdot}$ is a monotonically
increasing set-function, so is $\dzin{n}{\cdot}$. As a
consequence, we have
   \begin{align} \label{monot}
W_1\subseteq W_2 \subseteq V \implies \des{W_1}
\subseteq \des{W_2}.
   \end{align}
An induction argument shows that
   \begin{align} \label{n+1}
\dzin{n+1}{W} = \bigcup_{v \in \dzi{W}} \dzin{n}{\{v\}}.
   \end{align}
In general, the sets $\dzin n W$, $n=0,1,2,\ldots$,
are not pairwise disjoint. For $u \in V$, we shall
abbreviate $\dzin n {\{u\}}$ and $\des {\{u\}}$ to
$\dzin n u$ \idxx{$\dzii^{<n>}(u)$}{23} and
\idx{$\des{u}$}{24}, respectively. It is clear that
(use an induction argument)
   \begin{align} \label{minimality}
\textrm{$\des u \subseteq W$ whenever $\dzi W
\subseteq W$ and $u \in W$,}
   \end{align}
which means that $\des u$ is the smallest subset of $V$
which is ``invariant'' for $\dzi {\cdot}$ and which
contains $u$ (cf.\ \eqref{dziinv} below). Since, by
\eqref{ytag},
   \begin{align} \label{dziinv}
\dzi{\des u} = \bigcup_{n=1}^\infty \dzin n u
\subseteq \des u, \quad u \in V,
   \end{align}
we get
    \begin{align}\label{inv}
\des{\des u} = \des u, \quad u \in V.
    \end{align}
It follows from the definition of the partial function
$\paa$ and the fact that $\tcal$ has no circuits that the
sets $\dzin n u$, $n=0,1,2,\ldots$, are pairwise disjoint,
and hence
    \begin{align} \label{decom}
\des u= \bigsqcup_{n=0}^\infty \dzin n u, \quad u\in V.
    \end{align}
It may happen that $\dzin n u=\varnothing$ for some $u
\in V$ and $n \Ge 1$. In what follows, we also use the
modified notation \idx{$\dzit\tcal u$}{25},
$\dzint{\tcal}{n}{u}$
\idxx{$\dzii_{\tcal}^{<n>}(u)$}{25} and
\idx{$\dest\tcal u$}{26} in order to make clear the
dependence of $\dzi u$, $\dzin{n}{u}$ and $\des u$ on
the underlying directed tree $\tcal$.

The following simple observation turns out to be useful
(its proof is left to the reader).
   \begin{lem} \label{parchi}
If $\tcal$ is a directed tree and $X\subseteq V$ is such
that $\pa x \in X$ for all $x \in X$, then
   \begin{align*}
u \in V \setminus X \implies \dzi u \cap \big(X \cup
\dzi{X}\big) = \varnothing.
   \end{align*}
   \end{lem}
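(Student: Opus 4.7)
The plan is to argue by contradiction, exploiting the uniqueness of the parent in a directed tree (Proposition \ref{1}(ii) together with Proposition \ref{46}). Suppose $u \in V \setminus X$ yet there exists some $v \in \dzi u \cap \bigl(X \cup \dzi{X}\bigr)$. Since $v \in \dzi u$, the vertex $v$ has a parent, and that parent equals $u$ (because the parent of any child is uniquely determined: $v \in \dzi u$ forces $(u,v) \in E$, so $\pa{v} = u$).

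Split into the two cases according to which piece of the union $v$ lies in. First, if $v \in X$, then by the hypothesis on $X$ we have $\pa{v} \in X$, that is, $u \in X$, contradicting $u \in V \setminus X$. Second, if $v \in \dzi{X}$, then by the definition $\dzi{X} = \bigsqcup_{w \in X} \dzi{w}$ (which is well defined thanks to Proposition \ref{46}), there exists some $w \in X$ with $v \in \dzi{w}$, and hence $\pa{v} = w$. Combining with $\pa{v} = u$ gives $u = w \in X$, again a contradiction. Therefore no such $v$ exists, and $\dzi u \cap \bigl(X \cup \dzi{X}\bigr) = \varnothing$.

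The only substantive point beneath this argument is the uniqueness of the parent, which is a consequence of the definition of a directed tree (absence of circuits plus the defining conditions of Proposition \ref{1}); everything else is a matter of unpacking the definitions $\dzi{u} = \{v : (u,v) \in E\}$ and $\dzi{X} = \bigsqcup_{w \in X} \dzi{w}$. Since there is no real obstacle, I do not anticipate any difficulty; the whole proof fits in a few lines and will rely solely on the basic bookkeeping already established in this subsection.
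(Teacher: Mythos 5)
Your proof is correct: the paper leaves this lemma to the reader, and your argument — reducing everything to the uniqueness of the parent ($v\in\dzi u$ forces $\pa v=u$, while $v\in X$ or $v\in\dzi{X}$ forces $\pa v\in X$) — is exactly the intended bookkeeping. No gaps; the only cosmetic point is that the uniqueness of the parent comes from the definition of $\pa{\cdot}$ together with condition (ii) in the definition of a directed tree, rather than from Proposition \ref{1} itself.
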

We now prove an important property of directed trees.
    \begin{pro} \label{witr}
If $\tcal$ is a directed tree, then for every finite subset
$W$ of $V$ there exists $u \in V$ such that $W \subseteq
\des u$.
    \end{pro}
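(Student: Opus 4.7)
The plan is to induct on $n = \card{W}$, reducing everything to a two-vertex case. The cases $n=0$ and $n=1$ are immediate (for $n=1$, $v \in \dzin{0}{v} \subseteq \des{v}$ by \eqref{ytag}). For the inductive step, decompose $W = W_0 \sqcup \{w\}$ with $\card{W_0}=n$, apply the hypothesis to obtain $u_0 \in V$ with $W_0 \subseteq \des{u_0}$, and -- granted the two-vertex case -- choose $u \in V$ with $\{u_0, w\} \subseteq \des{u}$. Then $u_0 \in \des{u}$ together with \eqref{monot} and \eqref{inv} give $W_0 \subseteq \des{u_0} \subseteq \des{\des{u}} = \des{u}$, while $w \in \des{u}$ directly, so $W \subseteq \des{u}$.

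The crux is therefore the two-vertex case: for any $v_1, v_2 \in V$ I want to produce $u \in V$ with $v_1, v_2 \in \des{u}$. What I will actually show is that there exist $k_1, k_2 \in \zbb_+$ with $\paa^{k_1}(v_1) = \paa^{k_2}(v_2)$; calling this common vertex $u$ gives $v_i \in \dzin{k_i}{u} \subseteq \des u$ for $i=1,2$. To find such a $u$, I use connectedness of $\tcal$ to pick a \emph{shortest} undirected path $v_1 = w_1, w_2, \ldots, w_\ell = v_2$ joining $v_1$ and $v_2$. Minimality of $\ell$ forces the $w_j$ to be pairwise distinct, since any repetition would let me excise a segment and obtain a strictly shorter path.

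Each undirected edge $\{w_j, w_{j+1}\}$ is either an \emph{ascent} ($w_{j+1} = \pa{w_j}$) or a \emph{descent} ($w_j = \pa{w_{j+1}}$), by condition (ii) of Proposition \ref{1} applied to whichever endpoint lies in $V^\circ$. The structural claim is that ascents cannot follow descents: if step $j$ is a descent and step $j+1$ is an ascent, then uniqueness of the parent of $w_{j+1}$ forces $w_{j+2} = \pa{w_{j+1}} = w_j$, contradicting distinctness. Hence there is some $0 \Le k_1 \Le \ell - 1$ such that the first $k_1$ steps ascend and the remaining $k_2 := \ell - 1 - k_1$ steps descend. Setting $u := w_{k_1+1}$, reading off the ascent portion yields $u = \paa^{k_1}(v_1)$, and reading off the descent portion yields $u = \paa^{k_2}(v_2)$, as required.

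The only genuine obstacle is the ``ascents-before-descents'' structural lemma; everything else is bookkeeping driven by \eqref{monot} and \eqref{inv}. I expect this argument to serve as the template the footnote alludes to, so I would write it out carefully.
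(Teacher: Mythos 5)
Your proposal is correct and follows essentially the same route as the paper: a shortest undirected path with distinct vertices, the observation that uniqueness of parents forbids a descent immediately followed by an ascent (the paper phrases this as choosing the largest initial monotone run and deriving a contradiction at a would-be valley vertex), identification of the peak vertex $u=\paa^{k_1}(v_1)=\paa^{k_2}(v_2)$, and induction on $\card{W}$ via \eqref{monot} and \eqref{inv}. No gaps.
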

    \begin{proof}
If $W = \{w\}$ with some $w \in V$, then setting $u=w$
does the job. If $W=\{a, b\}$ with some distinct $a,b
\in V$, then we proceed as follows. Since $\tcal$ is
connected, there exists a finite sequence $v_1,
\ldots, v_n$ of vertexes of $\gcal$ ($n \Ge 2$) such
that $a=v_1$, $\{v_j,v_{j+1}\} \in \widetilde E$ for
all $j = 1, \ldots, n-1$, and $v_n=b$. Denote by
$\mathcal W(a,b)$ the set of all such sequences.
Without loss of generality we can assume that the
length $n$ of our sequence $v_1, \ldots, v_n$ is the
smallest among lengths of all sequences from $\mathcal
W(a,b)$. We first show that the vertexes $v_1, \ldots,
v_n$ are distinct. Suppose that, contrary to our
claim, there exist $i,j \in \{1, \ldots, n\}$ such
that $i<j$ and $v_i=v_j$. Since evidently the sequence
$v_1, \ldots, v_i, v_{j+1}, \ldots, v_n$ belongs to
$\mathcal W(a,b)$, we are led to a contradiction.

We now consider two disjunctive cases which cover all
possibilities.

{\sc Case 1.} Suppose first that $(v_1,v_2) \in E$. Then
there exists the largest integer $k \in \{2, \ldots, n\}$
such that $(v_{j-1},v_j) \in E$ for all $j \in \{2, \ldots,
k\}$. We claim that $k=n$. Indeed, otherwise by the
maximality of $k$, $(v_{k+1}, v_k) \in E$, which together
with $(v_{k-1}, v_k) \in E$ and $v_{k-1} \neq v_{k+1}$,
contradicts the definition of $\pa {v_k}$. Hence, $v_{j-1} =
\pa {v_j}$ for all $j \in \{2, \ldots, n\}$, which implies
that $b \in \dzin {n-1} a \subseteq \des a$. Then $u=a$
meets our requirements.

{\sc Case 2.} Assume now that Case 1 does not hold. This
implies that $(v_2,v_1) \in E$. Then there exists the
largest integer $p \in \{2, \ldots, n\}$ such that $(v_j,
v_{j-1}) \in E$ for all $j \in \{2, \ldots, p\}$. If $p=n$,
then $v_j = \pa {v_{j-1}}$ for all $j \in \{2, \ldots,
n\}$, which yields $a \in \dzin {n-1} b \subseteq \des b$.
Therefore $u=b$ meets our requirements. In turn, if $p <
n$, then by the maximality of $p$, $(v_p, v_{p+1}) \in E$.
Arguing as in Case 1, we show that $b \in \dzin {n-p} {v_p}
\subseteq \des {v_p}$. Since $v_j = \pa {v_{j-1}}$ for all
$j \in \{2, \ldots, p\}$, we see that $a \in \dzin {p-1}
{v_p} \subseteq \des {v_p}$. Hence $a,b \in \des{v_p}$,
which completes the proof of the case when $W$ is a
two-point set.

Finally, we have to consider $W$ of cardinality $m$,
which is greater than $2$. We use an induction on $m$.
If $\widehat W =W \cup \{w\}$ for some $w \notin W$,
then by the induction hypothesis, there exists $u \in
V$ such that $W \subseteq \des u$. By the first part
of the proof, there exists $u^\prime \in V$ such that
$u,w \in \des {u^\prime}$. Thus, by \eqref{inv}, we
have
    \begin{align*}
\widehat W = W \cup \{w\} \subseteq \des u \cup
\des{u^\prime} \subseteq \des{\des{u^\prime}} \cup
\des{u^\prime} = \des{u^\prime}.
    \end{align*}
This completes the proof.
    \end{proof}
    \begin{cor} \label{przem}
If $\tcal$ is a directed tree with root, then
   \begin{align*}
V = \des{\koo} = \bigsqcup_{n=0}^\infty \dzin n
{\koo}.
   \end{align*}
   \end{cor}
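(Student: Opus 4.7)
The second equality $\des{\koo}=\bigsqcup_{n=0}^\infty \dzin n {\koo}$ is an immediate specialization of \eqref{decom} to the vertex $u=\koo$, so the real content of the corollary is the first equality $V=\des{\koo}$. The inclusion $\des{\koo}\subseteq V$ is trivial from the definition of $\des{\cdot}$, so the plan reduces to showing $V\subseteq \des{\koo}$.

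My approach is to feed Proposition \ref{witr} with a cleverly chosen two-element set. Fix an arbitrary $v\in V$ and apply Proposition \ref{witr} to the finite set $W=\{v,\koo\}$. This yields a vertex $u\in V$ with $v,\koo\in\des u$. By \eqref{decom} applied to $u$, there exists $n\in\zbb_+$ with $\koo\in\dzin n u$.

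The key observation that closes the argument is that $n$ must be $0$. Indeed, if $n\Ge 1$, then by the definition of $\dzin n u$ (iteratively applied through \eqref{n+1}), $\koo$ would be a child of some vertex, i.e., $\koo$ would have a parent; this contradicts the defining property of a root. Hence $n=0$, which forces $u=\koo$, and therefore $v\in\des{\koo}$. Since $v\in V$ was arbitrary, we obtain $V\subseteq\des{\koo}$, as required.

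There is no real obstacle here; the corollary is essentially a clean two-line consequence of the earlier proposition and the disjoint decomposition \eqref{decom}. The only subtlety worth flagging is making sure the ``$n=0$'' step is phrased unambiguously, since the iterated-children notation $\dzin n u$ was defined by recursion rather than through an explicit path, so one should briefly invoke \eqref{n+1} (or the recursive definition) to justify that a member of $\dzin n u$ with $n\Ge 1$ necessarily lies in $V^\circ$.
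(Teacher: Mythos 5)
Your proof is correct and follows essentially the same route as the paper: apply Proposition \ref{witr} to the pair $\{v,\koo\}$, then use \eqref{decom} to force $u=\koo$ (the paper states this step tersely, while you spell out why the root cannot lie in $\dzin{n}{u}$ for $n\Ge 1$). No issues.
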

   \begin{proof}
If $w \in V$, then by Proposition \ref{witr} there
exists $u \in V$ such that $\{\koo, w\} \subseteq \des
u$. By \eqref{decom}, this implies that $u=\koo$, and
consequently $w \in \des \koo$. An application of
\eqref{decom} completes the proof.
    \end{proof}
It turns out that the set $V$ can be described with the
help of the operation $\des{\cdot}$ even in the case of a
rootless directed tree.
   \begin{pro} \label{xdescor}
Let $\tcal$ be a rootless directed tree and $u\in V$.
Then
   \begin{enumerate}
   \item[(i)] $\paa^k(u)$ make sense for all  $k \in \nbb$,
$\paa^k(u) \neq \paa^l(u)$ for all nonnegative
integers $k\neq l$,
   \item[(ii)] $\des{\paa^l(u)} \subseteq \des {\paa^j(u)}$ for all
nonnegative integers $l<j$,
   \item[(iii)] $V=\bigcup_{k\in J} \des{\paa^k(u)}$ for every
infinite subset $J$ of $\nbb$,
   \item[(iv)] if $\card{\dzi{\paa^k(u)}}=1$ for all $k \in
\nbb$, then $V = \{\paa^{k}(u)\}_{k=1}^\infty \sqcup
\des u$.
   \end{enumerate}
   \end{pro}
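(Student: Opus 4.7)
I plan to treat the four parts in order, leaning on Proposition~\ref{witr}, the decomposition~\eqref{decom}, and the absence of circuits and self-loops in $\tcal$. For (i), rootlessness of $\tcal$ means that every vertex has a parent, so $\paa^k(u)$ is defined for every $k\in\nbb$ by induction. To show distinctness, I would assume $\paa^k(u)=\paa^l(u)$ with $k<l$ and pick such a pair with $l-k$ minimal; writing $v_j:=\paa^{k+j}(u)$, minimality forces $v_0,\dots,v_{l-k-1}$ to be pairwise distinct while $v_{l-k}=v_0$, and since $(v_{j+1},v_j)\in E$ for every $j$, reversing the sequence produces either a self-loop (if $l-k=1$) or a circuit of length $l-k\Ge 2$, each excluded by the directed tree axioms.

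For (ii), induction reduces matters to the case $j=l+1$: since $\paa^l(u)$ is a child of $\paa^{l+1}(u)$, we have $\{\paa^l(u)\}\subseteq\des{\paa^{l+1}(u)}$, and the monotonicity~\eqref{monot} combined with~\eqref{inv} then gives $\des{\paa^l(u)}\subseteq\des{\des{\paa^{l+1}(u)}}=\des{\paa^{l+1}(u)}$. For (iii), the inclusion $\supseteq$ is obvious; for the converse, fix $w\in V$ and apply Proposition~\ref{witr} to the finite set $\{u,w\}$ to obtain $v\in V$ with $u,w\in\des v$. From~\eqref{decom} there exists $n\Ge 0$ with $u\in\dzin{n}{v}$, and chasing this path of parents gives $v=\paa^n(u)$, whence $w\in\des{\paa^n(u)}$. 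Since $J$ is infinite, there is $k\in J$ with $k\Ge n$, and an appeal to (ii) finishes the argument.

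For (iv), the one-child hypothesis forces $\dzi{\paa^j(u)}=\{\paa^{j-1}(u)\}$ for every $j\Ge 1$, and an induction on $j$ then delivers $\dzin{j}{\paa^k(u)}=\{\paa^{k-j}(u)\}$ for $1\Le j\Le k$ together with $\dzin{k+i}{\paa^k(u)}=\dzin{i}{u}$ for all $i\Ge 0$. Summing via~\eqref{decom} gives $\des{\paa^k(u)}=\{\paa^j(u):1\Le j\Le k\}\cup\des u$, so part (iii) applied with $J=\nbb$ yields $V=\{\paa^k(u)\}_{k=1}^{\infty}\cup\des u$; the union is disjoint by (i), for if $\paa^k(u)\in\des u$ with $k\Ge 1$ then $u=\paa^{k+m}(u)$ for some $m\Ge 0$, contradicting $\paa^{k+m}(u)\neq u$. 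The main obstacle is the combinatorial bookkeeping in this last induction, where the descendants of $\paa^k(u)$ have to split cleanly into the finite ancestor chain $\paa^1(u),\dots,\paa^k(u)$ and the descendant set $\des u$; once Proposition~\ref{witr} is in hand, everything else is direct.
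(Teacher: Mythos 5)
Your proof is correct and follows essentially the same route as the paper: Proposition~\ref{witr} together with \eqref{decom} drives (iii), (ii) falls out of the resulting inclusion chain, and (iv) reduces to splitting $V$ into the ancestor chain and $\des u$. The only cosmetic differences are that you establish the distinctness in (i) by extracting a circuit from a minimal coincidence $\paa^k(u)=\paa^l(u)$ (the paper just cites the disjointness in \eqref{decom}, which encodes the same fact) and you prove (iv) by computing $\des{\paa^k(u)}=\{\paa^j(u)\colon 1\Le j\Le k\}\cup\des u$ explicitly, where the paper instead invokes the minimality property \eqref{minimality} of the invariant set $W=\{\paa^{n}(u)\}_{n=1}^\infty \sqcup \des u$; both are sound.
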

   \begin{proof}
Condition (i) follows from \eqref{decom} and the fact
that $\tcal$ is rootless.

(iii) We only have to prove the inclusion
``$\subseteq$''. Take $v\in V$. Then, by Proposition
\ref{witr}, there exists $w \in V$ such that $v, u \in
\des w$. Owing to \eqref{decom}, there exists a unique
$l \in \zbb_+$ such that $u \in \dzin{l}{w}$. This
implies that $w=\paa^l(u)$. Since $J$ is infinite,
there exists $j \in J$ such that $l <j$. Hence
$\paa^l(u) \in \dzin{j-l}{\paa^j(u)} \subseteq
\des{\paa^j(u)}$, which implies that
   \begin{align*}
v \in \des{\paa^l(u)} \overset{\eqref{monot}}\subseteq
\des{\des {\paa^j(u)}} \overset{\eqref{inv}}= \des
{\paa^j(u)} \subseteq \bigcup_{k\in J}
\des{\paa^k(u)}.
   \end{align*}
Looking more closely at the last line, we get (ii).

(iv) Put $W=\{\paa^{n}(u)\}_{n=1}^\infty \sqcup \des
u$. It follows from \eqref{dziinv} that $\dzi{W}
\subseteq W$. Hence, by \eqref{minimality},
$\des{\paa^k(u)} \subseteq W$ for all $k \in \nbb$.
This combined with the equality
$V=\bigcup_{k=1}^\infty \des{\paa^k(u)}$ (see (iii))
yields $V = \{\paa^{n}(u)\}_{n=1}^\infty \sqcup \des
u$, which completes the proof.
   \end{proof}
Using Corollary \ref{przem} and arguing as in the proof of
Proposition \ref{xdescor}\,(iv), we obtain a version of the
latter for a directed tree with root.
   \begin{pro}\label{xdescor2}
Let $\tcal$ be a directed tree with root, and let $u \in
V^\circ$. Then there exists a unique $m \in \nbb$ such that
$\paa^m(u)=\koo$; moreover, $\paa^k(u) \neq \paa^l(u)$ for
all $k,l \in \{0, \ldots, m\}$ such that $k\neq l$. If
$\card{\dzi{\paa^j(u)}}=1$ for all $j \in \{1, \ldots,
m\}$, then $V = \{\paa^{j}(u)\colon j=1, \ldots,m\} \sqcup
\des u$.
   \end{pro}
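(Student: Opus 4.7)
The plan is to mirror Proposition \ref{xdescor}\,(iv), using Corollary \ref{przem} (which gives $V = \bigsqcup_{n=0}^\infty \dzin{n}{\koo}$) as the substitute for the rootless-case argument. The proof splits into three stages: (a) establish the existence and uniqueness of $m$ together with distinctness of the iterates $\paa^0(u), \ldots, \paa^m(u)$; (b) under the extra hypothesis, show that $V$ is contained in the candidate set by a minimality argument; (c) verify the disjointness of the two pieces.

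For (a), I would observe that $u \in V = \bigsqcup_{n=0}^\infty \dzin{n}{\koo}$ places $u$ in a unique stratum $\dzin{m}{\koo}$, and an easy induction on the definition of $\dzin{\cdot}{\cdot}$ yields $\paa^k(u) \in \dzin{m-k}{\koo}$ for $0 \Le k \Le m$; in particular $\paa^m(u) = \koo$, and $m \Ge 1$ because $u \in V^\circ$. The pairwise disjointness of the $\dzin{n}{\koo}$'s then forces $\paa^0(u), \ldots, \paa^m(u)$ to be distinct, and simultaneously pins down $m$ as the unique positive integer with $\paa^m(u) = \koo$: any larger power is undefined because $\koo$ has no parent, and any smaller power lies in the wrong stratum.

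For (b), set $W = \{\paa^j(u)\colon j = 1, \ldots, m\} \cup \des u$. The plan is to invoke \eqref{minimality}: it suffices to check that $\koo \in W$ (which is clear, since $\koo = \paa^m(u)$) and that $\dzi W \subseteq W$. For the latter, the hypothesis $\card{\dzi{\paa^j(u)}} = 1$ combined with the evident membership of $\paa^{j-1}(u)$ in $\dzi{\paa^j(u)}$ forces $\dzi{\paa^j(u)} = \{\paa^{j-1}(u)\} \subseteq W$ for $j \Ge 2$ and $\dzi{\paa(u)} = \{u\} \subseteq \des u \subseteq W$ for $j = 1$; meanwhile $\dzi{\des u} \subseteq \des u$ by \eqref{dziinv}. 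Then \eqref{minimality} gives $\des{\koo} \subseteq W$, and Corollary \ref{przem} upgrades this to $V \subseteq W$.

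For (c), the disjointness of $\des u$ from $\{\paa^j(u)\colon j=1,\ldots,m\}$ is what requires the most care: if $\paa^j(u) \in \dzin{n}{u}$ with $j \Ge 1$ and $n \Ge 0$, unwinding the definition yields $\paa^{n+j}(u) = u = \paa^0(u)$, contradicting (a). I expect this last verification, together with the correct handling of the boundary case $j = 1$ in (b), to be the only places where a slip is possible; everything else is a direct transcription of Corollary \ref{przem} and the pattern of Proposition \ref{xdescor}\,(iv).
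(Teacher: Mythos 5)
Your proposal is correct and follows exactly the route the paper intends: the paper disposes of this proposition in one line by saying "use Corollary \ref{przem} and argue as in the proof of Proposition \ref{xdescor}\,(iv)," and your three stages — locating $u$ in a unique stratum of $V=\bigsqcup_{n}\dzin{n}{\koo}$, applying \eqref{minimality} to $W=\{\paa^j(u)\}_{j=1}^m\cup\des u$ after checking $\dzi W\subseteq W$, and deriving disjointness from the distinctness of the iterates — are precisely the details being elided. No gaps.
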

As will be shown below, descenders of a fixed vertex
generate a decomposition of a directed tree. The reader is
referred to \eqref{subtree} for the necessary notation.
    \begin{pro} \label{subdir}
Let $\tcal$ be a directed tree and $u \in V.$ Then
   \begin{enumerate}
   \item[(i)] \idx{$\tcal_{\des u}$}{27} is a directed tree with the root $u$,
   \item[(ii)] \idx{$\tcal_{V \setminus \des u}$}{28} is a
directed tree provided $V \setminus \des u \neq
\varnothing$; moreover, if the directed tree $\tcal$ has a
root, then so does $\tcal_{V \setminus \des u}$ and
   \begin{align*}
\ko\tcal = \ko{\tcal_{V \setminus \des u}}.
   \end{align*}
   \end{enumerate}
In particular, if $\des u =\des v$ for some $v\in V$, then
$u=v$.
    \end{pro}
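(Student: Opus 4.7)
The approach is to verify for each of $\tcal_{\des u}$ and $\tcal_{V \setminus \des u}$ the three defining properties of a directed tree --- absence of circuits (inherited from $\tcal$ in both cases), connectedness, and existence of a parent for every non-root vertex. For (i), the essential input is the decomposition \eqref{decom}, $\des u = \bigsqcup_{n=0}^{\infty} \dzin n u$. If $v \in \des u$ with $v \neq u$, then $v \in \dzin n u$ for some $n \geq 1$, hence $\pa v \in \dzin{n-1} u \subseteq \des u$, so the $\tcal$-parent of $v$ is available inside $\tcal_{\des u}$; iterating yields an undirected path in $\des u$ from $v$ to $u$, which settles connectedness. To see that $u$ is a root of $\tcal_{\des u}$, I would suppose $(w,u) \in E$ for some $w \in \des u$; by disjointness in \eqref{decom}, $w \in \dzin n u$ for a unique $n$, and $n=0$ is excluded by the definition of an edge, while $n \geq 1$ produces a downward chain $u = w_0, w_1, \ldots, w_n = w$ which, closed by the edge $(w,u)$, is a circuit in $\tcal$. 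Proposition \ref{1} then makes $u$ the unique root.

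The core observation for (ii) is that $\{u, \pa u\}$ is the only undirected edge crossing the partition $V = \des u \sqcup (V \setminus \des u)$; correspondingly, the hypothesis $V \setminus \des u \neq \varnothing$ forces $u$ to have a parent, for otherwise $u$ would be the unique root of $\tcal$ and Corollary \ref{przem} would give $\des u = V$. To prove the cut-edge claim I consider $\{x, y\} \in \widetilde E$ with $x \in \des u$ and $y \notin \des u$: if $(x, y) \in E$ then $y \in \dzi{\des u} \subseteq \des u$ by \eqref{dziinv}, a contradiction; if $(y, x) \in E$ then $y = \pa x$, and the same parent containment used in (i) forces $x = u$, whence $y = \pa u$. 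Consequently, any $v \in V \setminus \des u$ possessing a $\tcal$-parent has its parent in $V \setminus \des u$, which settles the parent condition for $\tcal_{V \setminus \des u}$.

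For connectedness of $\tcal_{V \setminus \des u}$, I would join $v, w \in V \setminus \des u$ by an undirected path in $\tcal$ of minimal length; exactly as in the proof of Proposition \ref{witr}, minimality forces the vertices on the path to be distinct. If this path crossed into $\des u$, the cut-edge observation would force it to both enter and exit via $\{\pa u, u\}$, making $u$ and $\pa u$ each appear twice --- a contradiction. Hence the path stays in $V \setminus \des u$. In the rooted case, the chain argument from (i) shows $\koo \notin \des u$ (otherwise $\koo = u$ and $\des u = V$), so $\koo \in V \setminus \des u$ is the unique root of $\tcal_{V \setminus \des u}$, yielding $\ko \tcal = \ko{\tcal_{V \setminus \des u}}$.

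The final assertion is immediate from (i): if $\des u = \des v$, then $\tcal_{\des u}$ and $\tcal_{\des v}$ are the same graph, and uniqueness of the root (Proposition \ref{1}) forces $u = v$. The main obstacle I anticipate is the connectedness step of (ii), where the single-cut-edge observation must be combined with the distinctness of vertices in a minimal undirected path to prevent unwanted excursions into $\des u$.
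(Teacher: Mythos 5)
Your proof is correct, and its skeleton (verify absence of circuits, connectedness, and the parent condition for each subgraph, then invoke Proposition \ref{1} to get uniqueness of the root and hence the ``in particular'' statement) matches the paper's. Part (i) is essentially identical in both: the containment $\pa v \in \dzin{n-1}{u}$ for $v \in \dzin{n}{u}$ gives both connectedness and, via the disjointness in \eqref{decom} (equivalently, via the circuit you exhibit), the fact that $u$ is the root. Where you genuinely diverge is the connectedness of $\tcal_{V \setminus \des u}$. The paper joins $u_1, u_2 \in V\setminus\des u$ by the specific path \eqref{sciecha} through a common ancestor $w$ supplied by Proposition \ref{witr}, and checks directly via \eqref{inv} that no vertex $\paa^j(u_i)$ of that path can lie in $\des u$ (since $\paa^j(u_i)\in\des u$ would force $u_i\in\des{\des u}=\des u$). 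You instead take an arbitrary minimal undirected path, prove that $\{u,\pa u\}$ is the unique undirected edge crossing the partition $V=\des u\sqcup(V\setminus\des u)$, and conclude that an excursion into $\des u$ would make $u$ or $\pa u$ repeat on a path whose vertices are distinct by minimality. Your cut-edge lemma costs a little more setup but is reusable: it simultaneously delivers the parent condition for $\tcal_{V\setminus\des u}$ and is essentially the content of Remark \ref{dzi}, whereas the paper's route gets connectedness almost for free from machinery already built for Proposition \ref{witr}. Both are complete; neither has a gap.
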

    \begin{proof}
   Certainly, the directed graphs $\tcal_{\des u}$ and
$\tcal_{V \setminus \des u}$ satisfy the condition (ii) of
Proposition \ref{1}, and they have no circuits. We show
that both of them are connected which will imply that they
are directed trees. Suppose that $u_1$ and $u_2$ are
distinct elements of $\tcal$. By Proposition \ref{witr},
there exists $w \in V$ such that $\{u_1,u_2\} \subseteq
\des w$. It follows from \eqref{decom} that there exist
integers $m_1,m_2 \Ge 0$ such that $u_1 \in \dzin {m_1}{w}$
and $u_2 \in \dzin {m_2}{w}$. Since $\pa{\dzin {k+1}w}
\subseteq \dzin {k}w$ for all integers $k \Ge 0$, we see
that $\{\paa^i(u_1)\colon i=0, \ldots, m_1\} \subseteq \des
w$, $\{\paa^j(u_2)\colon j=0, \ldots, m_2\} \subseteq \des
w$ and $ \paa^{m_1}(u_1) = w = \paa^{m_2}(u_2)$. This means
that the sequence
   \begin{align} \label{sciecha}
\paa^0(u_1), \paa^1(u_1), \ldots, \paa^{m_1-1}(u_1), w,
\paa^{m_2-1}(u_2), \ldots, \paa^1(u_2), \paa^0(u_2)
   \end{align}
is an undirected path joining $u_1$ and $u_2$. If $u_1,u_2
\in \des u$, then applying the above to $w=u$, we see that
$\tcal_{\des u}$ is connected. If $u_1,u_2 \in V \setminus
\des u$, then no vertex of the undirected path
\eqref{sciecha} belongs to $\des u$ which can be deduced
from \eqref{inv}. Hence the graph $\tcal_{V \setminus \des
u}$ is connected.

Suppose that, contrary to our claim, $u$ is not a root
of $\tcal_{\des u}$. Then there exists $v \in \des u$
such that $u \in \dzi v$. Since, by \eqref{decom},
there exists an integer $n \Ge 0$ such that $v\in
\dzin n u$, we see that $u\in \dzin {n+1} u \cap \dzin
0 u$, which contradicts \eqref{decom}. Thus, by
Proposition \ref{1}, $u = \ko{\tcal_{\des u}}$, which
implies the ``in particular'' part of the conclusion.
The ``moreover'' part of (ii) is easily seen to be
true. This completes the proof.
   \end{proof}
   \begin{rem} \label{dzi}
Regarding Proposition \ref{subdir}, note that for every $v
\in \des u$, the set of all children of $v$ counted in the
graph $\tcal_{\des u}$ is equal to $\dzi v$. In turn, if $v
\in V \setminus \des u$, then the set of all children of
$v$ counted in the graph $\tcal_{V \setminus \des u}$ is
equal to either $\dzi v$ if $v \neq \pa u$, or $\dzi
v\setminus \{u\}$ otherwise.
   \end{rem}
A subtree $\tcal$ of a directed tree $\hat\tcal$
containing all $\hat\tcal$-descendants of each vertex
of $\tcal$ can be characterized as follows.
   \begin{pro}\label{preheredit}
Let $\tcal = (V,E)$ be a subtree of a directed tree
$\hat\tcal=(\hat V,\hat E)$. Then the following conditions
are equivalent\/{\em :}
   \begin{enumerate}
   \item[(i)] $\dzit{\tcal}{u} =  \dzit{\hat\tcal}{u}$ for all
$u \in V$,
   \item[(ii)] $\dzit{\hat\tcal}{u} \subseteq V$ for all
$u \in V$,
   \item[(iii)] $\dest{\tcal}{u} =
\dest{\hat\tcal}{u}$ for all $u \in V$,
   \item[(iv)] $\dest{\hat\tcal}{u} \subset V$
for all $u \in V$,
   \item[(v)] $V =
   \begin{cases}
   \dest{\hat\tcal}{\ko{\tcal}} & \text{if $\tcal$ has
a root,}
   \\
   \hat V & \text{if $\tcal$ is rootless.}
   \end{cases}$
   \end{enumerate}
   \end{pro}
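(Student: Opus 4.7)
The plan is to establish a cycle of implications, using the fact that a subtree inherits its edge set from the ambient tree. The starting observation is that, by definition \eqref{subtree}, for any $u\in V$ one has $\dzit{\tcal}{u}=\dzit{\hat\tcal}{u}\cap V$; this immediately yields (i)$\Leftrightarrow$(ii), and also shows that $\dzit{\tcal}{u}\subseteq\dzit{\hat\tcal}{u}$ unconditionally, whence by iterating \eqref{n+1} we get $\dzint{\tcal}{n}{u}\subseteq\dzint{\hat\tcal}{n}{u}$ and $\dest{\tcal}{u}\subseteq\dest{\hat\tcal}{u}$ for every $u\in V$ and $n\ge0$ without any hypothesis.

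Next, I would handle the equivalence (i)$\Leftrightarrow$(iii) by induction on $n$. Assuming (i), a routine induction using \eqref{n+1} shows that $\dzint{\tcal}{n}{u}=\dzint{\hat\tcal}{n}{u}$ for all $u\in V$ and $n\ge 0$; taking unions yields (iii). Conversely, (iii)$\Rightarrow$(i) is immediate since $\dzit{\cdot}{u}=\dzint{\cdot}{1}{u}\subseteq\dest{\cdot}{u}$ and one can read off the children from the descendants. The implication (iii)$\Rightarrow$(iv) is trivial, and (iv)$\Rightarrow$(ii) holds because $\dzit{\hat\tcal}{u}\subseteq\dest{\hat\tcal}{u}\subseteq V$. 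Together with (i)$\Leftrightarrow$(ii) established above, this closes a cycle among (i)--(iv).

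It remains to insert (v). For (iv)$\Rightarrow$(v), if $\tcal$ has a root then Corollary \ref{przem} applied to $\tcal$ gives $V=\dest{\tcal}{\ko{\tcal}}\subseteq\dest{\hat\tcal}{\ko{\tcal}}\subseteq V$ (the last inclusion by (iv)). If $\tcal$ is rootless, fix any $u\in V$; since in a subtree the parent relation is inherited from the ambient tree (the child--parent edge lies in $V\times V$, and uniqueness in $\hat\tcal$ forces agreement), $\paa_\tcal^k(u)=\paa_{\hat\tcal}^k(u)\in V$ for every $k\in\nbb$. Now pick an arbitrary $w\in\hat V$; applying Proposition \ref{witr} inside $\hat\tcal$ to $\{u,w\}$ yields $w'\in\hat V$ and integers $m,n\ge 0$ with $u\in\dzint{\hat\tcal}{m}{w'}$ and $w\in\dzint{\hat\tcal}{n}{w'}$, forcing $w'=\paa_{\hat\tcal}^m(u)=\paa_\tcal^m(u)\in V$; hence $w\in\dest{\hat\tcal}{w'}\subseteq V$ by (iv), so $V=\hat V$. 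For (v)$\Rightarrow$(iv), in the rooted case any $u\in V=\dest{\hat\tcal}{\ko{\tcal}}$ satisfies $\dest{\hat\tcal}{u}\subseteq\dest{\hat\tcal}{\dest{\hat\tcal}{\ko{\tcal}}}=\dest{\hat\tcal}{\ko{\tcal}}=V$ by \eqref{inv}, while the rootless case is tautological.

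The main obstacle will be the rootless case of (iv)$\Rightarrow$(v): one must combine the "parent compatibility" observation $\paa_\tcal=\paa_{\hat\tcal}|_V$ with Proposition \ref{witr} inside $\hat\tcal$ to manufacture a common ancestor of $u$ and an arbitrary $w\in\hat V$ that actually lies in $V$. Everything else is either a triviality from the subgraph definition or a straightforward induction using \eqref{n+1} and \eqref{inv}.
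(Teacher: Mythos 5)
Your proposal is correct and follows essentially the same route as the paper: the identity $\dzit{\tcal}{u}=\dzit{\hat\tcal}{u}\cap V$, the induction via \eqref{n+1} for (i)$\Rightarrow$(iii), Corollary \ref{przem} together with \eqref{inv} in the rooted case, and the iterated-parent decomposition of a rootless tree in the other case. The only differences are cosmetic rearrangements of the implication cycle (you attach (v) to (iv) rather than to (iii) and (ii), and you unfold Proposition \ref{xdescor}\,(iii) via Proposition \ref{witr} instead of citing it), with the same ingredients throughout.
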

   \begin{proof}
(i)$\Rightarrow$(iii) An induction argument shows that
$\dzint{\tcal}{n}{u}=\dzint{\hat\tcal}{n}{u}$ for all $n
\in \zbb_+$ and $u \in V$. This and \eqref{decom}, applied
to $\tcal$ and $\hat\tcal$, lead to (iii).

(iii)$\Rightarrow$(v) If $\tcal$ has a root, then we apply
(iii) to $u=\ko{\tcal}$, and then Corollary \ref{przem} to
$\tcal$. If $\tcal$ is rootless, then employing Proposition
\ref{xdescor}\,(iii) to $\tcal$ and $\hat\tcal$, we get
   \begin{align*}
V = \bigcup_{k=1}^\infty \dest{\tcal}{\paa^k(u)}
\overset{\mathrm{(iii)}}= \bigcup_{k=1}^\infty
\dest{\hat\tcal}{\paa^k(u)} = \hat V, \quad u \in V.
   \end{align*}

(v)$\Rightarrow$(ii) If $\tcal$ has a root, then by
\eqref{dziinv}, applied to $\hat\tcal$, we have
   \begin{align*}
\dzit{\hat\tcal}{u} \subseteq \dzit{\hat\tcal}{V}
\overset{\mathrm{(v)}}=
\dzit{\hat\tcal}{\dest{\hat\tcal}{\ko{\tcal}}}
\subseteq \dest{\hat\tcal}{\ko{\tcal}}
\overset{\mathrm{(v)}}= V, \quad u \in V.
   \end{align*}
The other case is trivially true.

Since the implications (iii)$\Rightarrow$(iv),
(iv)$\Rightarrow$(ii) and (ii)$\Rightarrow$(i) are
obvious, the proof is complete.
   \end{proof}
We now formulate a useful criterion for a directed tree to
have finite number of leaves. Directed trees taken into
consideration in Proposition \ref{fnl} below are called
Fredholm in Section \ref{fredo} (cf.\ Definition
\ref{treeind}).
   \begin{pro}\label{fnl}
If $\tcal$ is a directed tree such that $\card{\dzi u}
< \infty$ for all $u \in V$ and $\card{V_{\prec}} <
\infty$ {\em (cf.\ \eqref{prec})}, then $\card{V
\setminus V^\prime} < \infty$.
   \end{pro}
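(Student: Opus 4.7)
The key observation is that every vertex $u \in V^\prime \setminus V_{\prec}$ has exactly one child, since $1 \Le \card{\dzi u} < 2$ for such a $u$. The plan is to build an injection (modulo at most one exceptional leaf) from the set $L := V \setminus V^\prime$ of leaves into the set $\dzi{V_{\prec}} = \bigsqcup_{b \in V_{\prec}} \dzi b$, which is finite by hypothesis: $\card{\dzi{V_{\prec}}} = \sum_{b \in V_{\prec}} \card{\dzi b} < \infty$.

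For each $v \in L$, let $k(v) \in \nbb \cup \{\infty\}$ be the smallest $k \Ge 1$ such that $\paa^k(v)$ is defined and belongs to $V_{\prec}$, with $k(v) := \infty$ if no such $k$ exists. When $k(v) < \infty$, set $\psi(v) := \paa^{k(v)-1}(v)$, which is a child of the branching vertex $\paa^{k(v)}(v)$, hence $\psi(v) \in \dzi{V_{\prec}}$. To show $\psi$ is injective on $\{v \in L \colon k(v) < \infty\}$, suppose $\psi(v_1) = \psi(v_2) = w$. By the minimality of $k(v_i)$, every $\paa^j(v_i)$ with $1 \Le j < k(v_i)$ lies outside $V_{\prec}$ and so has exactly one child. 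Starting at $w$ and repeatedly passing to the unique child, one descends along a uniquely determined chain which, for each $i$, terminates at the leaf $v_i$ after $k(v_i)-1$ steps. Since the two descents from $w$ are forced to coincide step by step and neither chain can be continued past a leaf, one concludes $k(v_1) = k(v_2)$ and $v_1 = v_2$.

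It remains to bound $L_\infty := \{v \in L \colon k(v) = \infty\}$ by $1$. For $v \in L_\infty$ no defined ancestor is branching, so each $\paa^j(v)$ with $j \Ge 1$ has exactly one child. Using Proposition \ref{xdescor2} in the rooted case (after disposing of the trivial one-vertex tree, so that $v \in V^\circ$) and Proposition \ref{xdescor}\,(iv) in the rootless case, one obtains $V = \{\paa^{j}(v)\}_j \sqcup \des v = \{\paa^{j}(v)\}_j \sqcup \{v\}$ (indices capped at some $m$ in the rooted case). Any other leaf of $\tcal$ would have to be some $\paa^j(v)$ with $j \Ge 1$, which has a child and is therefore not a leaf; thus $v$ is the unique leaf and $\card{L_\infty} \Le 1$. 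Combining everything, $\card L \Le \card{\dzi{V_{\prec}}} + 1 < \infty$. The main obstacle is the injectivity of $\psi$: its verification rests on the fact that the descent from $w$ is forced, which must be argued carefully from the minimality of $k(v_i)$; the rest is bookkeeping with the two structural propositions already at hand.
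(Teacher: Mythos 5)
Your proof is correct and takes essentially the same route as the paper's: both send each leaf $v$ possessing a branching ancestor to $\paa^{k(v)-1}(v)$, a child of the nearest branching vertex, and establish injectivity by the forced descent through single-child vertices, bounding the image inside the finite set $\bigsqcup_{b \in V_{\prec}} \dzi b$. The only organizational difference is that the paper first reduces to the rooted, infinite case (where every leaf automatically has a branching ancestor), whereas you keep both cases together and bound the exceptional leaves directly by one via Propositions \ref{xdescor} and \ref{xdescor2}; this is a cosmetic rearrangement of the same argument.
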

   \begin{proof}
We first show that there is no loss of generality in
assuming that $\tcal$ has a root. Indeed, otherwise
$\tcal$ is rootless, which together with
$\card{V_{\prec}}<\infty$ and Proposition
\ref{xdescor}\,(i) implies that there exists $u \in V$
such that $\card{\dzi{\paa^{k}(u)}}=1$ for all $k \in
\nbb$. By Proposition \ref{xdescor}\,(iv), $V =
\{\paa^{k}(u)\}_{k=1}^\infty \sqcup \des u$. In view
of \eqref{dziinv}, we have $V^\prime =
\{\paa^{k}(u)\}_{k=1}^\infty \sqcup V_{\des u}^\prime$
and thus $V\setminus V^\prime = V_{\des u} \setminus
V_{\des u}^\prime$. Moreover, $\dzit{\tcal}v =
\dzit{\tcal_{\des u}}v$ for every $v \in \des u$, and
the directed trees $\tcal$ and $\tcal_{\des u}$ have
the same branching vertexes.

Suppose now that $\tcal$ has a root. Certainly, we can
assume that $\tcal$ is infinite and $V\setminus
V^\prime \neq \varnothing$. Take $w \in V\setminus
V^\prime$. Then there exists a positive integer $n$
such that $\paa^{n}(w) \in V_{\prec}$. If not, then by
Corollary \ref{przem} there would exist $n\in \nbb$
such that $\paa^{n}(w) = \koo$ and
$\card{\dzi{\paa^{j}(w)}}=1$ for $j=1,\ldots,n$. Since
$\card{\dzi{w}}=0$, we would deduce that
$V=\{\paa^{j}(w)\}_{j=0}^n$, a contradiction. Let
$k(w)$ be the least positive integer such that
$\paa^{k(w)}(w) \in V_{\prec}$. Set
$\varTheta(w)=\paa^{k(w)}(w)$. Define the equivalence
relation $\mathcal R$ on $V\setminus V^\prime$ by $w_1
\mathcal R \, w_2$ if and only if
$\varTheta(w_1)=\varTheta(w_2)$. Denote by
$[w]_{\mathcal R}$ the equivalence class of $w \in
V\setminus V^\prime$ with respect to $\mathcal R$.
Using the minimality of $k(v)$ and the fact that $v$
is a leaf of $\tcal$, one can show that the mapping
$[w]_{\mathcal R} \ni v \mapsto \paa^{k(v)-1}(v) \in
\dzi{\varTheta(w)}$ is injective. This implies that
$\card{[w]_{\mathcal R}} \Le \card{\dzi{\varTheta(w)}}
<\infty$. Since the mapping $(V\setminus
V^\prime)/\mathcal R \ni [w]_{\mathcal R} \mapsto
\varTheta(w) \in V_{\prec}$ is a well defined
injection, the proof is complete.
   \end{proof}
We conclude this section by introducing an equivalence
relation partitioning the given directed tree into
disjoint classes composed of vertexes of the same
generation.

Suppose that $\tcal$ is a directed tree. We say that
vertexes $u,v \in V$ are of the {\em same generation},
and write \idxx{$\sim_\tcal \; \equiv \; \sim$}{29} $u
\sim_\tcal v$, or shortly $u \sim v$, if there exists
$n\in \zbb_+$ such that $\paa^n(u)=\paa^n(v)$ (and
both sides of the equality make sense). It is easily
seen that $\sim$ is an equivalence relation in $V$.
Denote by \idx{$[u]_{\sim}$}{30} the equivalence class
of $u \in V$ with respect to $\sim$. Note that if $u
\in V^\circ$ and $v \in [u]_\sim$, then $v \in
V^\circ$. Evidently $[\koo]_\sim = \{\koo\}$ if
$\tcal$ has a root. However, if $u \in V^\prime$ and
$v \in [u]_\sim$, then it may happen that $v \notin
V^\prime$.

Given $u\in V$, we define \idxx{$N(u)=N_\tcal(u)$}{31}
$N(u)=N_\tcal(u)=\sup\{n \in \zbb_+ \colon \paa^n(u)
\text{ makes sense}\}$. Clearly, if $n \in \zbb_+$ and
$n \Le N(u)$, then $\paa^n(u)$ makes sense. Let us
collect the basic properties of the relation $\sim$.
   \begin{pro}\label{generation}
If $\tcal$ is a directed tree, then the equivalence
relation $\sim$ has the following
properties\,\footnote{\;$\pa{X}:=\{v \in V\colon
\text{ there exists $x \in X$ such that $\pa{x}$ makes
sense and $v=\pa x$}\}$ for $X \subseteq V$.}\/{\em :}
\idxx{$\pa{X}$}{32}
   \begin{enumerate}
   \item[(i)] $\pa{[u]_{\sim}} \subseteq [\pa{u}]_{\sim}$ for $u \in
V^\circ,$
   \item[(ii)] for all $u,v \in V,$ $u \sim v$ if and only
if $\pa{[u]_\sim} = \pa{[v]_\sim}$; moreover, if $u, v \in
V^\circ,$ then $u \sim v$ if and only if $\pa{[u]_\sim}
\cap \pa{[v]_\sim} \neq \varnothing$,
   \item[(iii)] if $V^\circ \neq \varnothing$, then
$V^\circ/_{\sim} \ni [u]_\sim \mapsto [\pa u]_\sim \in
V/_{\sim}$ is an injection,
   \item[(iv)]  $\dzi{[\pa{u}]_{\sim}} = [u]_{\sim}$
for $u \in V^\circ,$
   \item[(v)] $[u]_{\sim} = \bigcup_{n=0}^{N(u)} \dzin{n}{\paa^n(u)}$
for $u \in V,$
   \item[(vi)] $V = \bigsqcup_{n=0}^{N(u)} [\paa^n(u)]_\sim
\sqcup \bigsqcup_{n=1}^\infty \dzin{n}{[u]_\sim}$ for $u
\in V,$
   \item[(vii)] if $u \in V$, $n\in \nbb$ and $w \in
\dzin{n}{[u]_\sim}$, then $\dzin{n}{[u]_\sim}=[w]_\sim,$
   \item[(viii)] $\dzi{[\paa^{n}(u)]_\sim} =
[\paa^{n-1}(u)]_\sim$ for all integers $n$ such that $1 \Le
n \Le N(u)$, and $\dzi{\dzin{n}{[u]_\sim}} =
\dzin{n+1}{[u]_\sim}$ for all integers $n\Ge 0$.
   \end{enumerate}
   \end{pro}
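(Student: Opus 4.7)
The plan is to prove the eight parts essentially in order. The unifying observation is that $x \sim y$ means $\paa^n(x) = \paa^n(y)$ for some $n \in \zbb_+$ with both sides making sense, and by Propositions \ref{xdescor} and \ref{xdescor2} the ancestor chain $\paa^0(z), \paa^1(z), \ldots$ consists of pairwise distinct vertices within its natural domain $\{0, \ldots, N(z)\}$. This injectivity of ancestor chains will drive every ``uniqueness'' argument, while the global decompositions $V = \dest{\tcal}{\koo}$ (Corollary \ref{przem}) and $V = \bigcup_k \dest{\tcal}{\paa^k(u)}$ (Proposition \ref{xdescor}\,(iii)) provide the ``covering'' side.

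For (i), I unwind the definition: if $x \in [u]_\sim$ with $u \in V^\circ$ and witness $n$ for $x \sim u$, then $\pa x = \pa u$ when $n = 0$, and $\paa^{n-1}(\pa x) = \paa^{n-1}(\pa u)$ when $n \geq 1$. Part (v) is immediate since $\paa^n(v) = \paa^n(u)$ is equivalent to $v \in \dzin{n}{\paa^n(u)}$ (and $n \Le N(u)$). Part (iv) ping-pongs the same equation: $v \sim u$ transfers to $\pa v \sim \pa u$ exactly as in (i), and conversely $v \in \dzi w$ with $w \sim \pa u$ yields $\paa^{n+1}(v) = \paa^{n+1}(u)$. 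Part (ii) combines (iv) with Proposition \ref{1}: if both $\pa{[u]_\sim}$ and $\pa{[v]_\sim}$ are empty, then both classes sit in the at-most-one-element set $\Ko\tcal$, forcing $u = v$; otherwise a common parent $w = \pa x = \pa y$ exhibits $x \sim y$, hence $u \sim v$. Part (iii) is well-defined and injective via the same parent/child manipulation of exponents.

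The heart of the proposition is the partition statement (vi), together with its companions (vii) and (viii). For (vii), I show that $v, w \in \dzin{n}{[u]_\sim}$ are equivalent by lifting a witness for $\paa^n(v) \sim \paa^n(w)$, and conversely that $w \sim v$ with $w \in \dzin{n}{[u]_\sim}$ places $v$ in the same set by a short case split on whether the witness exponent $m$ for $v \sim w$ is $\Le n$ or $> n$. For (vi), I invoke the global decomposition: any $v \in V$ lies in $\dzin{j}{\paa^k(u)}$ for a unique pair $(k, j)$ with $k \Le N(u)$ and $j \Ge 0$. When $j \Le k$, the relation $\paa^j(v) = \paa^k(u) = \paa^j(\paa^{k-j}(u))$ places $v$ in $[\paa^{k-j}(u)]_\sim$; when $j > k$, the element $\paa^{j-k}(v)$ satisfies $\paa^k(\paa^{j-k}(v)) = \paa^j(v) = \paa^k(u)$, so $\paa^{j-k}(v) \in [u]_\sim$ and hence $v \in \dzin{j-k}{[u]_\sim}$. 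Part (viii) is then a two-line consequence: the first equality is (iv) applied to $\paa^{n-1}(u) \in V^\circ$, and the second follows by checking both inclusions using the description of $\dzin{n}{[u]_\sim}$ from (vii) together with (iv).

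The main obstacle is the pairwise disjointness required in (vi). Any hypothetical overlap between $[\paa^k(u)]_\sim$ and $\dzin{n}{[u]_\sim}$ (for $n \Ge 1$), or between two of the classes $\dzin{n}{[u]_\sim}$, must be reduced to a pair of equalities $\paa^a(u) = \paa^b(u)$ with $a \neq b$ after sufficiently many iterated applications of $\paa$, which is forbidden by the injectivity of the ancestor chain. Choosing the exponent large enough to make both sides meaningful yet small enough not to overshoot $N(u)$ is the bookkeeping step that requires the most care, especially when $N(u)$ is finite.
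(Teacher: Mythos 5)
Your proposal is correct and follows essentially the same route as the paper's proof: parts (i)--(v), (vii) and (viii) by direct manipulation of the defining equation $\paa^n(x)=\paa^n(y)$ together with (iv), the covering half of (vi) via a common ancestor (Proposition \ref{witr} and the descendant decompositions), and the disjointness in (vi) by reducing any hypothetical overlap to an identity $\paa^a(u)=\paa^b(u)$ with $a\neq b$, which contradicts \eqref{decom}. The only inaccuracy is your incidental claim that the pair $(k,j)$ with $v\in\dzin{j}{\paa^k(u)}$ is unique (it is not, since e.g.\ $u\in\dzin{k}{\paa^k(u)}$ for every admissible $k$), but your argument uses only existence of such a pair, so nothing is affected.
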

   \begin{proof}
The proof of (i)--(v), being standard, is omitted.

(vi) The inclusion ``$\subseteq$'' (and consequently the
equality) can be justified as follows. If $v \in V$, then
by Proposition \ref{witr} and \eqref{decom} there exist $w
\in V$ and $k,l \in \zbb_+$ such that $\paa^k(u) = w =
\paa^l(v)$. If $k\Ge l$, then $v \in [\paa^{k-l}(u)]_\sim$.
In the opposite case, $x:= \paa^{l-k}(v) \sim u$ and
consequently $v \in \dzin{l-k}x \subseteq
\dzin{l-k}{[u]_\sim}$.

It remains to prove that the terms in (vi) are pairwise
disjoint. Take $n \in \nbb$ and $m \in \zbb_+$ such that $m
\Le N(u)$. We show that $[\paa^m(u)]_\sim \cap
\dzin{n}{[u]_\sim} = \varnothing$. Indeed, otherwise there
exists $w \in [\paa^m(u)]_\sim \cap \dzin{n}{[u]_\sim}$,
which implies that $\paa^s(w)=\paa^{m+s}(u)$ and
$\paa^{n+t}(w)=\paa^{t}(u)$ for some $s,t \in \zbb_+$.
Hence, if $m+s \Le t$, then
   \begin{align*}
\paa^{t-m}(w) = \paa^{s+(t - (m+s))}(w) = \paa^t(u) =
\paa^{n+t}(w).
   \end{align*}
By \eqref{decom}, this implies that $m+n=0$, which is a
contradiction. By the same kind of reasoning we see that
$m+s > t$ leads to a contradiction as well.

Using \eqref{decom}, we verify that the sets
$[\paa^m(u)]_{\sim}$, $0 \Le m \Le N(u)$, are pairwise
disjoint. Likewise, we show that the sets
$\dzin{m}{[u]_\sim}$, $m \in \nbb$, are pairwise disjoint.

(vii) Apply (iv) and induction on $n$.

(viii) is a direct consequence of (iv) and the definition
of $\dzin{n}{[u]_\sim}$. This completes the proof.
   \end{proof}
It follows from Proposition \ref{generation}\,(vii) that
the partition of $V$ appearing in (vi) coincides with the
one generated by the equivalence relation $\sim$.

As shown in Example \ref{rosnie} below, the inclusion in
Proposition \ref{generation}\,(i) may be proper. The
conditions (vi), (vii) and (viii) of Proposition
\ref{generation} may suggest that there exists a sequence
(finite or infinite) $\{u_n\}_n \subseteq V$ such that
$\pa{u_n}=u_{n-1}$ for all admissible $n$'s, and
$V=\bigsqcup_{n} [u_n]_\sim$. However, this is not always
the case.
   \begin{exa} \label{rosnie}
Consider the tree $\tcal=(V,E)$ with root defined by
   \allowdisplaybreaks
   \begin{align*}
V & = \{\koo\} \sqcup \{(i,j)\colon i,j \in \nbb, \, i \Le
j\},
   \\
E & = \big\{\big(\koo, (1,j)\big)\colon j \in \nbb\big\}
\sqcup \bigsqcup_{j=2}^\infty
\big\{\big((i,j),(i+1,j)\big)\colon i=1, \ldots, j-1\big\}.
   \end{align*}
Then $\pa{[u]_\sim} = [\pa{u}]_\sim \setminus \{(j-1,j-1)\}
\subsetneq [\pa u]_\sim$ for $u=(j,j)$ with $j\Ge 2$. One
can verify that a sequence $\{u_n\}_{n=0}^\infty$ with the
properties mentioned above does not exist.
   \end{exa}
   \subsection{\label{s01}Operator theory}
By an operator {\em in} a complex Hilbert space $\hh$
we understand a linear mapping $A\colon \hh \supseteq
\dz A \rightarrow \hh$ defined on a linear subspace
\idx{$\dz A$}{33} of $\hh$, called the {\em domain} of
$A$. The kernel, the range and the adjoint of $A$ are
denoted by \idx{$\jd A$}{34}, \idx{$\ob A$}{35} and
\idx{$A^*$}{36}, respectively. A densely defined
operator $A$ in $\hh$ is called {\em selfadjoint}
(respectively:\ {\em normal}\/) if $A^*=A$
(respectively:\ $A^*A=AA^*$), cf.\ \cite{b-s,weid}. We
denote by \mbox{$\|\cdot\|_A$}
\idxx{$\lVert\cdot\rVert_A$}{37} and
\idxx{$<\hspace{-.5ex}\cdot,\textrm{-}\hspace{-.5ex}>_A$}{38}
$\is{\cdot}{\mbox{-}}_A$ the graph norm and the graph
inner product of $A$, respectively, i.e., $\|f\|_A^2 =
\|f\|^2 + \|Af\|^2$ and $\is{f}{g}_A = \is f g +
\is{Af}{Ag}$ for $f,g \in \dz A$. If $A$ is closable,
then the closure of $A$ will be denoted by \idx{$\bar
A$}{39}. A linear subspace $\ee$ of $\dz A$ is called
a {\em core} of a closed operator $A$ in $\hh$ if
$\overline{A|_\ee} = A$ or equivalently if $\ee$ is
dense in the graph norm \mbox{$\|\cdot\|_A$} in $\dz
A$. If $A$ is a closed densely defined operator in
$\hh$, then $|A|$ stands for the square root of the
positive selfadjoint operator $A^*A$. For real $\alpha
> 0$, the $\alpha$-root
\idxx{$\mid \hspace{-.5ex} A \hspace{-.5ex} \mid^\alpha$}{40} $|A|^\alpha$ of $|A|$ is
defined by the Stone-von Neumann operator calculus,
i.e.,
   \begin{align*}
|A|^\alpha = \int_0^\infty x^\alpha E(\D x),
   \end{align*}
where $E$ is the spectral measure of $|A|$ (from now
on, we abbreviate $\int_{[0,\infty)}$ to
$\int_0^\infty$). The operator $|A|^\alpha$ is
certainly positive and selfadjoint. Given operators
$A$ and $B$ in $\hh$, we write \idx{$A \subseteq
B$}{41} if $\dz A \subseteq \dz B$ and $Ah = Bh$ for
all $h \in \dz A$.

In what follows, \idx{$\ogr \hh$}{42} stands for the
$C^*$-algebra of all bounded operators in $\hh$ with
domain $\hh$. We write \idx{$I=I_\hh$}{43} for the
identity operator on $\hh$. Given $f,g\in \hh$, we
define the operator $f \otimes g \in \ogr \hh$ by
\idxx{$f \otimes g$}{44}
   \begin{align*}
(f \otimes g) (h) = \is h g f, \quad h \in \hh.
   \end{align*}

We say that a closed linear subspace $\mathcal M$ of
$\hh$ {\em reduces} an operator $A$ in $\hh$ if $PA
\subseteq AP$, where $P\in \ogr \hh$ is the orthogonal
projection of $\hh$ onto $\mathcal M$. If $\mathcal M$
reduces $A$, then \idxx{$A \hspace{-.5ex}
\mid_{\mathcal M}$}{45} $A|_{\mathcal M}$ stands for
the restriction of $A$ to $\mathcal M$. To be more
precise, $A|_{\mathcal M}$ is an operator in
${\mathcal M}$ such that $\dz{A|_{\mathcal M}} = \dz A
\cap {\mathcal M}$ and $A|_{\mathcal M}h=Ah$ for $h
\in \dz{A|_{\mathcal M}}$.

For the reader's convenience, we include the proof of the
following result which is surely folklore.
   \begin{lem}\label{lems}
Let $\{e_\iota\}_{\iota \in \varXi}$ be an orthonormal
basis of $\hh$, $A$ be a positive selfadjoint operator
in $\hh$ and $\{t_\iota\}_{\iota \in \varXi}$ be a
family of nonnegative real numbers such that $e_\iota
\in \dz{A}$ and $Ae_\iota = t_\iota e_\iota$ for all
$\iota \in \varXi$. Then for every real $\alpha >0$,
   \begin{enumerate}
   \item[(i)] the linear span $\mathscr E$ of
$\{e_\iota\}_{\iota \in \varXi}$ is contained in
$\dz{A^\alpha}$,
   \item[(ii)] $\mathscr E$ is a core of $A^\alpha$,
i.e., $A^\alpha = \overline{A^\alpha|_{\mathscr E}}$,
   \item[(iii)] $A^\alpha e_\iota = t_\iota^\alpha e_\iota$
for all $\iota \in \varXi$.
   \end{enumerate}
Moreover, $\ob A$ is closed if and only if there
exists a real number $\delta > 0$ such that $t_\iota
\Ge \delta$ for every $\iota \in \varXi$ for which
$t_\iota > 0$.
   \end{lem}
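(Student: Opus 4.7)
The plan is to leverage the Stone--von Neumann functional calculus applied to the spectral measure $E$ of $A$. The starting observation is that $e_\iota \in \dz A$ together with $A e_\iota = t_\iota e_\iota$ forces
\begin{align*}
0 = \|A e_\iota - t_\iota e_\iota\|^2 = \int_0^\infty (x-t_\iota)^2 \, \D \|E(x) e_\iota\|^2,
\end{align*}
so the scalar Borel probability measure $\|E(\cdot) e_\iota\|^2$ is concentrated at the single point $t_\iota$ and therefore equals $\delta_{t_\iota}$. Both (i) and (iii) are read off from this: the integral $\int_0^\infty x^{2\alpha}\, \D \|E(x) e_\iota\|^2$ equals $t_\iota^{2\alpha}<\infty$, giving $e_\iota \in \dz{A^\alpha}$ and $\|A^\alpha e_\iota\|=t_\iota^\alpha$, while $\is{A^\alpha e_\iota}{e_\iota}=\int_0^\infty x^\alpha\,\D\|E(x) e_\iota\|^2= t_\iota^\alpha$. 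Combining these two identities with $\|e_\iota\|=1$ saturates the Cauchy--Schwarz inequality, so $A^\alpha e_\iota$ is a scalar multiple of $e_\iota$, and the scalar must be $t_\iota^\alpha$.

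For (ii) I would exploit the selfadjointness of $A^\alpha$ (which follows from the Stone--von Neumann calculus) and the orthonormal basis expansion. For a fixed $f\in\dz{A^\alpha}$ and $c_\iota:=\is{f}{e_\iota}$, selfadjointness together with (iii) yields $\is{A^\alpha f}{e_\iota} = \is{f}{A^\alpha e_\iota} = t_\iota^\alpha c_\iota$, and Parseval gives
\begin{align*}
A^\alpha f = \sum_{\iota\in\varXi} t_\iota^\alpha c_\iota e_\iota, \qquad \|A^\alpha f\|^2 = \sum_{\iota\in\varXi} t_\iota^{2\alpha}|c_\iota|^2 < \infty.
\end{align*}
Directing the finite subsets $F\subseteq \varXi$ by inclusion and setting $f_F=\sum_{\iota\in F} c_\iota e_\iota\in\mathscr E$, linearity and (iii) give $A^\alpha f_F = \sum_{\iota\in F} t_\iota^\alpha c_\iota e_\iota$; both series above are convergent, so $f_F\to f$ and $A^\alpha f_F\to A^\alpha f$ in $\hh$, i.e.\ $f_F\to f$ in the graph norm $\|\cdot\|_{A^\alpha}$. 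Thus $\mathscr E$ is dense in $\dz{A^\alpha}$ with respect to $\|\cdot\|_{A^\alpha}$, proving (ii).

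For the moreover part, the same expansion (applied with $\alpha=1$) identifies $\jd A$ as $\overline{\mathrm{span}}\{e_\iota\colon t_\iota=0\}$, hence $\jd A^\perp = \overline{\mathrm{span}}\{e_\iota\colon t_\iota>0\}$, and on $\dz A \cap \jd A^\perp$ one has $\|Af\|^2 = \sum_{t_\iota>0} t_\iota^2 |\is{f}{e_\iota}|^2$. If $t_\iota\Ge \delta$ whenever $t_\iota>0$, this gives $\|Af\|\Ge \delta\|f\|$ on $\dz A\cap \jd A^\perp$; a standard Cauchy sequence argument using the closedness of $A$ (approximate $g\in\overline{\ob A}$ by $g_n=Af_n$ with $f_n\in \dz A\cap\jd A^\perp$ and observe that $\{f_n\}$ is Cauchy) then shows $\ob A$ is closed. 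Conversely, if $\ob A$ is closed, the classical reduced-minimum-modulus lemma supplies $c>0$ with $\|Af\|\Ge c\|f\|$ for $f\in \dz A\cap\jd A^\perp$; testing against $f=e_\iota$ with $t_\iota>0$ yields $t_\iota\Ge c$. The main (minor) obstacle is the $(\Rightarrow)$ direction, which rests on the fact that a closed operator with closed range is bounded below on the orthogonal complement of its kernel.
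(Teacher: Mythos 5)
Your proof is correct, and while its skeleton (spectral calculus for (i) and (iii), graph-norm density for (ii), a bounded-below argument for the range) matches the paper's, the local mechanics differ in an interesting way. The paper works in the opposite direction on the spectral measure: it \emph{constructs} a candidate projection-valued measure $E(\sigma)f=\sum_{\iota}\chi_\sigma(t_\iota)\is{f}{e_\iota}e_\iota$, checks $A\subseteq\int_0^\infty x\,E(\D x)$, and invokes maximality of selfadjoint operators to conclude that this $E$ is the spectral measure of $A$; it then applies the measure transport theorem to get the spectral measure of $A^\alpha$ and reads off (i) and (iii) from the eigenspace identity $\jd{t_\iota^\alpha I-A^\alpha}=\ob{E(\{t_\iota\})}=\jd{t_\iota I-A}$. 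You instead take the spectral measure of $A$ as given and localize it, showing $\is{E(\cdot)e_\iota}{e_\iota}=\delta_{t_\iota}$ from $\|(A-t_\iota)e_\iota\|=0$; this spares you the verification that a constructed measure is \emph{the} spectral measure, and your Cauchy--Schwarz saturation trick replaces the transport-theorem step (note only that when $t_\iota=0$ the saturation argument is vacuous and you conclude directly from $\|A^\alpha e_\iota\|=0$). For (ii), your partial-sum approximation in the graph norm and the paper's computation that the graph-orthogonal complement of $\mathscr E$ is trivial are dual formulations of the same density statement. For the ``moreover'' part the two arguments coincide in substance: the paper splits $A=A_0\oplus A_1$ with $A_1$ injective and applies the inverse mapping theorem to $A_1^{-1}$, which is exactly your statement that a closed operator with closed range is bounded below on $\dz{A}\cap\jd{A}^\perp$; your forward direction tacitly uses that $\ob{A}=A(\dz{A}\cap\jd{A}^\perp)$, which is immediate here because $\jd{A}$ is spanned by eigenvectors and hence reduces $A$. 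Net effect: your route is a bit shorter; the paper's explicit construction of $E$ yields as a by-product the integral formula $\int\varphi(x)\is{E(\D x)f}{f}=\sum_\iota\varphi(t_\iota)|\is{f}{e_\iota}|^2$ for arbitrary $f$, which is convenient for the computations it performs along the way.
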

The operator $A$ appearing in Lemma \ref{lems} will be
called a {\em diagonal} operator (subordinated to the
orthonormal basis $\{e_\iota\}_{\iota \in \varXi}$)
with diagonal elements $\{t_\iota\}_{\iota \in
\varXi}$.
   \begin{proof}[Proof of Lemma  \ref{lems}] (i) $\&$ (iii)
   Define the spectral measure $E$ on $[0,\infty)$ by
   \begin{align} \label{Jab1}
E (\sigma) f = \sum_{\iota \in \varXi}
\chi_\sigma(t_\iota) \is f {e_\iota} e_\iota, \quad f
\in \hh, \, \sigma \in \borel{[0, \infty)},
   \end{align}
where the above series is unconditionally convergent
in norm $($equivalently:\ convergent in norm in a
generalized sense, cf.\ \cite{b-p}$)$. Using a
standard measure theoretic argument, we deduce from
\eqref{Jab1} that for every Borel function $\varphi
\colon [0,\infty) \to [0, \infty)$,
   \begin{align}   \label{intsp}
\int_0^\infty \varphi(x) \is{E(\D x)f}f = \sum_{\iota
\in \varXi} \varphi(t_\iota) |\is f {e_\iota}|^2,
\quad f \in \hh.
   \end{align}
It follows from \eqref{intsp}, the selfadjointness of
$A$ and Parseval's identity that
   \begin{align*}
\int_0^\infty x^2 \is{E(\D x)f}f = \sum_{\iota \in
\varXi} t_\iota^2 |\is f {e_\iota}|^2 = \sum_{\iota
\in \varXi} |\is{Af} {e_\iota}|^2 = \|Af\|^2 < \infty,
\quad f \in \dz{A},
   \end{align*}
which means that $\dz A \subseteq \dz{\int_0^\infty x
\, E(\D x)}$. Arguing as above, we see that
   \begin{align*}
\is{Af}f = \Big\langle \sum_{\iota \in \varXi}
\is{Af}{e_\iota}e_\iota, f \Big \rangle = \sum_{\iota
\in \varXi} t_\iota |\is f {e_\iota}|^2
\overset{\eqref{intsp}}{=} \int_0^\infty x \is{E(\D
x)f} f
   \\
= \Big\langle \int_0^\infty x E(\D x) f, f \Big
\rangle, \quad f \in \dz A.
   \end{align*}
Both these facts imply that $A \subseteq \int_0^\infty
x \, E(\D x)$. Since the considered operators are
selfadjoint, we must have $A = \int_0^\infty x \, E(\D
x)$ (use \cite[Theorem 8.14(b)]{weid}), which means
that $E$ is the spectral measure of $A$. It follows
from the measure transport theorem (cf.\ \cite[Theorem
5.4.10]{b-s}) that the spectral measure $E_\alpha$ of
$A^\alpha$ is given by
   \begin{align}          \label{bab}
E_\alpha (\sigma) = E \circ \pi_\alpha^{-1}(\sigma),
\quad \sigma \in \borel{[0,\infty)},
   \end{align}
where $\pi_\alpha \colon [0,\infty) \to [0,\infty)$ is
defined by $\pi_\alpha(x) = x^\alpha$ for $x \in
[0,\infty)$. Hence, by \cite[Theorem 6.1.3]{b-s}, we
have
   \begin{align*}
\jd{t_\iota^\alpha I_\hh - A^\alpha} = \ob{E_\alpha
(\{t_\iota^\alpha\})} \overset{\eqref{bab}}=
\ob{E(\{t_\iota\})} = \jd{t_\iota I_\hh - A}, \quad
\iota \in \varXi.
   \end{align*}
This and our assumptions imposed on the operator $A$
imply (i) and (iii).

   (ii) In view of the above, it is enough to show
that $\mathscr E$ is a core of $A$. For this, take
a vector $f \in \dz A$ which is orthogonal to
$\mathscr E$ with respect to the graph inner
product $\is{\cdot}{\mbox{-}}_A$. Then
   \begin{align*}
0 = \is f {e_\iota} + \is {Af} {Ae_\iota} = \is f
{e_\iota} + t_\iota \is {Af} {e_\iota} = (1+t_\iota^2
) \is {f} {e_\iota}, \quad \iota \in \varXi.
   \end{align*}
Since $\{e_\iota\}_{\iota \in \varXi}$ is an
orthonormal basis of $\hh$, we conclude that $f=0$.

If $T$ is any normal operator in $\hh$, then $T=T_0
\oplus T_1$, where $T_0$ is the zero operator on
$\jd{T}$ and $T_1$ is an injective normal operator in
$\hh \ominus \jd{T}$ with dense range. Hence, $\ob{T}
= \ob{T_1}=\dz{T_1^{-1}}$, and consequently, by the
inverse mapping theorem, $\ob{T}$ is closed if and
only if $T_1^{-1}$ is bounded. Applying the above
characterization to $T=A$, part (ii) to $A_1^{-1}$ and
the fact that $\jd{A}$ equals the closed linear span
of $\{e_\iota\colon t_\iota=0, \, \iota \in \varXi\}$,
we get the ``moreover'' part of the conclusion.
   \end{proof}
   \newpage
   \section{\label{chap3}Fundamental properties}
   \subsection{\label{s1}An invitation to weighted shifts}
From now on, $\tcal=(V,E)$ is assumed to be a directed
tree. Denote by \idx{$\ell^2(V)$}{46} the Hilbert
space of all square summable complex functions on $V$
with the standard inner product
   \begin{align*}
\is fg = \sum_{u \in V} f(u) \overline{g(u)}, \quad f, g
\in \ell^2(V).
   \end{align*}
For $u \in V$, we define \idxx{$e_u$}{47} $e_u \in
\ell^2(V)$ by
   \begin{align*}
e_u(v) =
   \begin{cases}
   1 & \text{if } u=v, \\
   0 & \text{otherwise.}
   \end{cases}
   \end{align*}
The set $\{e_u\}_{u\in V}$ is an orthonormal basis of
$\ell^2(V)$. Denote by \idx{$\escr$}{48} the linear
span of the set $\{e_u\colon u \in V\}$. Let us point
out that $\ell^2(V)$ is a reproducing kernel Hilbert
space which is guaranteed by the reproducing property
   \begin{align} \label{rkhs}
f(u) = \is f {e_u}, \quad f \in \ell^2(V), \, u \in V.
   \end{align}
If $W$ is a nonempty subset of $V,$ then we regard the
Hilbert space $\ell^2(W)$ as a closed linear subspace
of $\ell^2(V)$ by identifying each $f\in \ell^2(W)$
with the function $\widetilde f \in \ell^2(V)$ which
extends $f$ and vanishes on the set $V \setminus W$.
   \begin{dfn}   \label{defshift}
Given $\lambdab = \{\lambda_v\}_{v \in V^\circ}$, a
family of complex numbers, we define the operator
\idx{$\slam$}{49} in $\ell^2(V)$ by
\idxx{$\varLambda_\tcal$}{50}
   \begin{align}   \label{lamtf+}
   \begin{aligned}
\dz {\slam} & = \{f \in \ell^2(V) \colon \varLambda_\tcal f
\in \ell^2(V)\},
   \\
\slam f & = \varLambda_\tcal f, \quad f \in \dz {\slam},
   \end{aligned}
   \end{align}
where $\varLambda_\tcal$ is the mapping defined on
functions $f\colon V \to \cbb$ by
   \begin{align} \label{lamtf}
(\varLambda_\tcal f) (v) =
   \begin{cases}
\lambda_v \cdot f\big(\pa v\big) & \text{if } v\in V^\circ,
   \\
   0 & \text{if } v=\koo.
   \end{cases}
   \end{align}
The operator $\slam$ will be called a {\em weighted
shift} on the directed tree $\tcal$ with weights
$\{\lambda_v\}_{v \in V^\circ}$.
   \end{dfn}
It is worth noting that the extremal situation
$V^\circ = \varnothing$ is not excluded; then, by
\eqref{lamtf}, $\slam$ is the zero operator on a
one-dimensional Hilbert space.

The proof of the following fact is based only on the
reproducing property of $\ell^2(V)$. Proposition
\ref{clos} can also be deduced from parts (i) and (ii)
of Proposition~\ref{desc}.
   \begin{pro} \label{clos}
Any weighted shift $\slam$ on a directed tree $\tcal$ is a closed
operator.
   \end{pro}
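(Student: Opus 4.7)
The plan is to verify closedness directly from the sequence-definition, exploiting the fact that $\ell^2(V)$ is a reproducing kernel Hilbert space via \eqref{rkhs}. The key observation is that norm convergence in $\ell^2(V)$ implies pointwise convergence at every vertex: if $h_n \to h$ in $\ell^2(V)$, then for each $v \in V$, $|h_n(v) - h(v)| = |\is{h_n - h}{e_v}| \le \|h_n - h\|$, so $h_n(v) \to h(v)$.

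Concretely, I would take a sequence $\{f_n\}_{n=1}^\infty \subseteq \dz{\slam}$ such that $f_n \to f$ and $\slam f_n \to g$ in $\ell^2(V)$, and prove $f \in \dz{\slam}$ with $\slam f = g$. First, by the reproducing property, $f_n(u) \to f(u)$ for every $u \in V$, and in particular $f_n(\pa v) \to f(\pa v)$ for every $v \in V^\circ$. Similarly, $(\slam f_n)(v) \to g(v)$ for every $v \in V$. But by \eqref{lamtf}, $(\slam f_n)(v) = \lambda_v f_n(\pa v)$ for $v \in V^\circ$ and $(\slam f_n)(\koo) = 0$ in the case $\tcal$ has a root, so passing to the limit yields
\[
g(v) = \lambda_v f(\pa v) = (\varLambda_\tcal f)(v) \quad \text{for } v \in V^\circ,
\]
and $g(\koo) = 0 = (\varLambda_\tcal f)(\koo)$ if applicable. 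Hence $\varLambda_\tcal f = g \in \ell^2(V)$, which by \eqref{lamtf+} means $f \in \dz{\slam}$ and $\slam f = g$, as desired.

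There is no real obstacle here; the argument is a one-line invocation of pointwise-from-norm convergence in a reproducing kernel Hilbert space, applied separately at each vertex, together with the explicit formula \eqref{lamtf} for $\varLambda_\tcal$. The only mild subtlety is the case distinction at the root (when $\tcal$ has one), which is trivial since both $g(\koo)$ and $(\varLambda_\tcal f)(\koo)$ equal $0$. The extremal case $V^\circ = \varnothing$ is likewise immediate as $\slam$ is then the zero operator on a one-dimensional space.
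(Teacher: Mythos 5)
Your proof is correct and follows essentially the same route as the paper: take $f_n \to f$ with $\slam f_n \to g$, use the reproducing property \eqref{rkhs} to pass from norm convergence to pointwise convergence at each vertex, and identify $g$ with $\varLambda_\tcal f$ via \eqref{lamtf}, handling the root separately. Nothing is missing.
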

   \begin{proof}
Suppose that a sequence $\{f_n\}_{n=1}^\infty \subseteq \dz
\slam$ is convergent to a vector $f \in \ell^2(V)$ and the
sequence $\{\slam f_n\}_{n=1}^\infty$ is convergent to a
vector $g \in \ell^2(V)$. Take $u \in V$. By \eqref{rkhs},
the sequence $\{(\slam f_n)(u)\}_{n=1}^\infty$ is
convergent to $g(u)$. If $u \in V^\circ$, then, again by
\eqref{rkhs}, applied to $(\slam f_n)(u) = \lambda_u
f_n(\pa u)$, we see that the sequence $\{(\slam
f_n)(u)\}_{n=1}^\infty$ is convergent to $\lambda_u f(\pa
u)$. Thus $(\varLambda_\tcal f)(u)=g(u)$. If $u=\koo$, then
evidently $(\varLambda_\tcal f)(\koo)=0=g(\koo)$.
Summarizing, we have shown that $\varLambda_\tcal f=g \in
\ell^2(V)$, which means that $f \in \dz \slam$ and $g=\slam
f$.
   \end{proof}
   Next we describe the domain and the graph norm of the
operator $\slam$. In what follows, we adopt the conventions
that $0 \cdot \infty = 0$ and $\sum_{v\in\varnothing}
x_v=0$.
   \begin{pro} \label{desc}
Let $\slam$ be a weighted shift on a directed tree $\tcal$
with weights $\lambdab = \{\lambda_v\}_{v \in V^\circ}$.
Then the following assertions hold\/{\em :}
   \begin{enumerate}
   \item[(i)] $\dz{\slam} = \big\{f \in \ell^2(V)
\colon \sum_{u \in V} \big(\sum_{v \in\dzi u} |\lambda_v|^2\big)
|f(u)|^2 < \infty\Big\}$,
   \item[(ii)] $\|f\|_{\slam}^2 = \sum_{u \in V} \big
(1+\sum_{v \in\dzi u} |\lambda_v|^2\big) |f(u)|^2$ for all
$f \in \dz{\slam}$,
   \item[(iii)] $e_u$ is in $\dz{\slam}$ if and only if
$\sum_{v\in\dzi u} |\lambda_v|^2 < \infty$; if $e_u
\in \dz{\slam}$, then
   \begin{align} \label{eu}
\slam e_u = \sum_{v\in\dzi u} \lambda_v e_v, \quad
\|\slam e_u\|^2 = \sum_{v\in\dzi u} |\lambda_v|^2,
   \end{align}
   \item[(iv)] if $f \in \dz{\slam}$ and $W$ is a
subset of $V,$ then $f \chi_W \in \dz{\slam}$,
   \item[(v)] $\slam$ is densely defined if and only if
$\{e_u\colon u \in V\} \subseteq \dz{\slam}$,
   \item[(vi)] $\slam = \overline{\slam|_{\escr}}$ provided $\slam$
is densely defined.
  \end{enumerate}
   \end{pro}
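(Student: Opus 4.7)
The plan is to extract all six items from a single bookkeeping computation of $\|\slam f\|^2$. Starting from the definition \eqref{lamtf} and using the disjoint decomposition $V^\circ = \bigsqcup_{u \in V} \dzi u$ from Proposition~\ref{46}, for any function $f \colon V \to \cbb$ one finds
\[
\|\varLambda_\tcal f\|^2 = \sum_{v \in V^\circ} |\lambda_v|^2\, |f(\pa v)|^2 = \sum_{u \in V} |f(u)|^2 \sum_{v \in \dzi u} |\lambda_v|^2.
\]
Plugging this into \eqref{lamtf+} yields (i) at once, and adding $\|f\|^2$ gives (ii). Part (iii) is then the specialization of (i) and (ii) to $f = e_u$: the outer sum collapses to a single term, while \eqref{lamtf} together with the fact that, for $v \in V^\circ$, the equality $\pa v = u$ is equivalent to $v \in \dzi u$, gives the formula $\slam e_u = \sum_{v \in \dzi u} \lambda_v e_v$.

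Part (iv) is immediate from (i): the pointwise bound $|(f\chi_W)(u)| \Le |f(u)|$ shows that replacing $f$ by $f\chi_W$ can only shrink the series characterizing $\dz{\slam}$. For (v), the "if" direction follows because the linear span $\escr$ of $\{e_u\}_{u \in V}$ is dense in $\ell^2(V)$. For the "only if" direction, fix $u \in V$ and, using density, choose $f \in \dz{\slam}$ with $\|f - e_u\| < 1/2$; the reproducing property \eqref{rkhs} gives $|f(u) - 1| = |\is{f - e_u}{e_u}| < 1/2$, so $f(u) \neq 0$. Applying (iv) with $W = \{u\}$ produces $f(u)\, e_u = f\chi_{\{u\}} \in \dz{\slam}$, whence $e_u \in \dz{\slam}$.

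Finally, for (vi), assume $\slam$ is densely defined, so that $\escr \subseteq \dz{\slam}$ by (v). Choose an exhaustion $V_1 \subseteq V_2 \subseteq \cdots$ of $V$ by finite subsets with $\bigcup_n V_n = V$, and, given $f \in \dz{\slam}$, set $f_n := f\chi_{V_n}$. Since $f_n$ has finite support and every $e_u$ belongs to $\dz{\slam}$, we have $f_n \in \escr$. By (ii),
\[
\|f - f_n\|_{\slam}^2 = \sum_{u \in V \setminus V_n} \Big(1 + \sum_{v \in \dzi u}|\lambda_v|^2\Big)|f(u)|^2,
\]
which is the tail of the convergent series $\|f\|_{\slam}^2$ and therefore tends to $0$. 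Hence $\escr$ is dense in $\dz{\slam}$ in the graph norm, i.e., $\slam = \overline{\slam|_{\escr}}$. The only step requiring a genuinely non-routine idea is the "only if" direction of (v), where one must convert an approximation statement into honest pointwise membership $e_u \in \dz{\slam}$; the reproducing property is the device that makes this step work cleanly, and everything else reduces to Parseval-type bookkeeping on the decomposition \eqref{sumchi}.
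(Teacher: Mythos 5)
Your proof is correct and follows essentially the same route as the paper: the single computation of $\sum_{u\in V}|(\varLambda_\tcal f)(u)|^2$ via the decomposition \eqref{sumchi} is exactly the paper's identity \eqref{wwz}, and parts (i)--(iv) fall out of it just as in the paper. Two small remarks. In the ``only if'' direction of (v) you use (iv) with $W=\{u\}$ together with the reproducing property to produce $f(u)e_u\in\dz{\slam}$ with $f(u)\neq 0$; the paper instead observes that if $e_u\notin\dz{\slam}$, then $\sum_{v\in\dzi u}|\lambda_v|^2=\infty$, so every $f\in\dz{\slam}$ vanishes at $u$ and $e_u$ is orthogonal to $\dz{\slam}$, contradicting density --- both arguments are fine and amount to the same observation. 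In (vi) the exhaustion $V_1\subseteq V_2\subseteq\cdots$ with $\bigcup_n V_n=V$ does not exist when $V$ is uncountable, a case the paper explicitly allows (and which is compatible with $\slam$ being densely defined once zero weights are permitted); the repair is immediate: since $\|f\|_{\slam}^2$ is a convergent unordered sum of nonnegative terms, for each $n$ one can choose a finite set $V_n$ whose complementary tail is less than $1/n$ (equivalently, exhaust the at most countable support of $f$). With that adjustment your truncation argument is precisely the paper's assertion that the finitely supported functions are dense in the weighted $\ell^2$ space $(\dz{\slam},\|\cdot\|_{\slam})$.
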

   \begin{proof}
If $f \colon V \to \cbb$ is any function, then
   \begin{align}  \label{wwz}
\sum_{u \in V} |(\varLambda_\tcal f) (u)|^2 &
\overset{\eqref{lamtf}}= \sum_{u \in V^\circ} |\lambda_u|^2
|f(\pa u)|^2
   \\
& \overset{ \eqref{sumchi} } = \sum_{u \in V} \Big(\sum_{v
\in \dzi u} |\lambda_v|^2 \Big) |f(u)|^2, \notag
   \end{align}
which implies (i), (ii) and the first part of (iii). The
proof of \eqref{eu} is left to the reader (use
\eqref{rkhs}).

   (iv) is a direct consequence of (i).

   (v) The ``if'' part is clear because $\{e_u\}_{u\in V}$ is
an orthonormal basis of $\ell^2(V)$. Now we justify the
``only if'' part. Suppose that, contrary to our claim, $e_u
\notin\dz{\slam}$ for some $u \in V$. It follows from (i)
and (iii) that the vector $e_u$ is orthogonal to
$\dz{\slam}$, which is a contradiction.

   (vi) By (i), (ii), (iii) and (v), the Hilbert space $(\dz{\slam},
\|\cdot\|_{\slam})$ is the weighted $\ell^2$ space on $V$ with
weights $\big\{1+\sum_{v \in\dzi u} |\lambda_v|^2\big\}_{u \in V}$
in which the set of all complex functions on $V$ vanishing off
finite sets is dense. This means that $\escr$ is a core of $\slam$,
which completes the proof.
   \end{proof}
It is worth noting that, in general, the linear space
$\escr$ is not invariant for a densely defined
weighted shift $\slam$ on a directed tree $\tcal.$
This happens when the set $\dzi u$ is infinite for at
least one $u \in V$, and all the weights
$\{\lambda_v\}_{v \in \dzi u}$ are nonzero (use
\eqref{eu}). However, if the set $\dzi u$ is finite
for every $u \in V$, then $\escr$ is invariant for
$\slam$.
   \begin{rem} \label{re1-2}
The unilateral and bilateral classical weighted shifts
fit into our definition. Indeed, it is enough to
consider directed trees $(\zbb_+, \{(n,n+1)\colon n
\in \zbb_+\})$ and $(\zbb, \{(n,n+1)\colon n \in
\zbb\})$, respectively (they will be shortly denoted
by $\zbb_+$ and $\zbb$). Then the first equality in
\eqref{eu} reads as follows:
   \begin{align}  \label{notnew+}
\slam e_n = \lambda_{n+1}e_{n+1}.
   \end{align}
The reader should be aware that this is something
different from the conventional notation $\slam
e_n = \lambda_n e_{n+1}$ which abounds in the
literature. In the present paper, we use only the
new convention. Let us mention that according to
Proposition \ref{desc} any weighted shift $\slam$
on the directed tree $\zbb_+$ is densely defined
and the linear span of $\{e_n\colon n \in
\zbb_+\}$ is a core of $\slam$. This fact and
\eqref{notnew+} guarantee that $\slam$ is a
unilateral classical weighted shift (cf.\
\cite[equality (1.7)]{ml}). The same reasoning
applies to the case of a bilateral classical
weighted shift.
   \end{rem}
   \begin{ozn} \label{poddrz}
Given a weighted shift $\slam$ on a directed tree
$\tcal$ with weights $\lambdab = \{\lambda_v\}_{v \in
V^\circ}$ and $u \in V^\circ$, we denote by
\idx{$\slamr u$}{51} and \idx{$\slaml u$}{52} the
weighted shifts on directed trees $\tcal_{\des u}$ and
$\tcal_{V \setminus \des u}$ with weights
\idxx{$\lambdab_\rightarrow\!(u)$}{53}
$\lambdab_\rightarrow\!(u) :=\{\lambda_v\}_{v \in \des
u \setminus \{u\}}$ and \idxx{$\lambdab_\leftarrow\!
(u)$}{54} $\lambdab_\leftarrow\! (u):=\{\lambda_v\}_{v
\in V \setminus (\des u \cup \Ko \tcal)}$,
respectively (cf.\ Proposition \ref{subdir}). If
$\tcal$ has a root and $u=\koo$, then we write $\slamr
u := \slam$.
   \end{ozn}
We show that if at least one weight of the weighted
shift $\slam$ on a directed tree vanishes, then
$\slam$ is an orthogonal sum of two weighted shifts on
directed trees.
   \begin{pro}\label{dirsum}
Let $\slam$ be a weighted shift on a directed tree
$\tcal$ with weights $\lambdab = \{\lambda_v\}_{v \in
V^\circ}$. Assume that $\lambda_u=0$ for some $u \in
V^\circ$. Then
   \begin{align*}
\slam = \slamr u \oplus \slaml u.
   \end{align*}
   \end{pro}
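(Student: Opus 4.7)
The plan is to show that $\ell^2(V) = \ell^2(\des u) \oplus \ell^2(V \setminus \des u)$ is a reducing decomposition of $\slam$ whose two restrictions coincide with $\slamr u$ and $\slaml u$; the vanishing of $\lambda_u$ is precisely what decouples the two pieces. Since $u \in V^\circ$, the vertex $\pa u$ exists and cannot lie in $\des u$ (otherwise $u \in \dzi{\pa u} \subseteq \dzin{k+1}{u}$ for some $k \Ge 0$, contradicting the pairwise disjointness in \eqref{decom}), so $V \setminus \des u \neq \varnothing$; by Proposition \ref{subdir} both $\tcal_{\des u}$ and $\tcal_{V \setminus \des u}$ are directed trees with $V = \des u \sqcup (V \setminus \des u)$, which yields the Hilbert space decomposition above.

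The key preliminary is a parent-invariance of this partition: for every $v \in V^\circ$, if $v \in \des u \setminus \{u\}$ then $\pa v \in \des u$ (by Proposition \ref{subdir} and Remark \ref{dzi}, since $v$ is then a non-root vertex of $\tcal_{\des u}$ whose parent in $\tcal_{\des u}$ agrees with its parent in $\tcal$), while if $v \in V \setminus \des u$ then $\pa v \in V \setminus \des u$ (otherwise $\pa v \in \des u$ would force $v \in \dzi{\des u} \subseteq \des u$ via \eqref{dziinv}). Writing $f = f_1 + f_2$ with $f_1 = f\chi_{\des u}$ and $f_2 = f\chi_{V \setminus \des u}$, I would then evaluate $(\varLambda_\tcal f_i)(v)$ via \eqref{lamtf} in the subcases $v \in \Ko{\tcal}$ (trivial), $v = u$, $v \in \des u \setminus \{u\}$, and $v \in V^\circ \setminus \des u$. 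The decisive case is $v = u$ acting on $f_2$: here $\pa u \in V \setminus \des u$, so $f_2(\pa u) = f(\pa u)$ need not vanish, and only the hypothesis $\lambda_u = 0$ kills $(\varLambda_\tcal f_2)(u) = \lambda_u f(\pa u)$; the other cases follow directly from the invariance. The upshot is that $\varLambda_\tcal f_1$ is supported on $\des u \setminus \{u\}$ and agrees there with $\varLambda_{\tcal_{\des u}} f_1$, while $\varLambda_\tcal f_2$ is supported on $V \setminus \des u$ and agrees there with $\varLambda_{\tcal_{V \setminus \des u}} f_2$.

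To conclude, I invoke Proposition \ref{desc}(i): its defining sum splits along the partition $V = \des u \sqcup (V \setminus \des u)$, and at each vertex $w$ the inner sum over $\dzit{\tcal}{w}$ equals the corresponding inner sum over the children of $w$ in the appropriate subtree, since by Remark \ref{dzi} these two children sets can differ only when $w = \pa u$, where they differ by $\{u\}$ with $|\lambda_u|^2 = 0$. Hence $f \in \dz{\slam}$ if and only if $f_1 \in \dz{\slamr u}$ and $f_2 \in \dz{\slaml u}$, and under these conditions $\slam f = \slamr{u} f_1 \oplus \slaml{u} f_2$, which is the required identity. The only delicate bookkeeping occurs at $u$ and $\pa u$, and in both places $\lambda_u = 0$ is exactly what collapses the one matrix entry of $\slam$ that would otherwise couple the two subspaces.
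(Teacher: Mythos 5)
Your proof is correct and follows essentially the same route as the paper's: both rest on the partition $V = \des u \sqcup (V \setminus \des u)$, the observation that $\chi_{\des u}(v) = \chi_{\des u}(\pa v)$ for $v \in V^\circ \setminus \{u\}$, the vanishing of $\lambda_u$ at the single coupling vertex, and the children-set bookkeeping of Remark \ref{dzi} at $\pa u$ for the domain identification. The only difference is presentational — you decompose $f = f_1 + f_2$ and compute $\varLambda_\tcal f_i$ directly, whereas the paper phrases the same computation as the reduction relation $P_u \slam \subseteq \slam P_u$ for the projection $P_u f = \chi_{\des u} f$.
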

   \begin{proof}
Since $V^\circ \neq \varnothing$, we infer from Proposition
\ref{subdir} that the graphs $\tcal_{\des u}$ and $\tcal_{V
\setminus \des u}$ are directed trees. Denote by $P_u$ the
orthogonal projection of $\ell^2(V)$ onto $\ell^2(\des u)$,
i.e., $P_u f = \chi_{\des u} f$ for $f \in \ell^2(V)$. We
show that $P_u \slam \subseteq \slam P_u$. For this, let $f
\in \dz {\slam}$. By Proposition \ref{desc}\,(iv) $P_u f
\in \dz \slam$. If $v \in V^\circ$, then either $v \in
V^\circ \setminus \{u\}$ and, consequently, by Proposition
\ref{subdir} $\chi_{\des u}(v)= \chi_{\des u}(\pa v)$, or
$v=u$ and hence $\lambda_v=0$. This implies that for all $v
\in V^\circ$,
   \begin{align*}
(P_u \slam f)(v) & = \chi_{\des u}(v) (\slam f)(v) =
\lambda_v \chi_{\des u}(v) f(\pa v)
   \\
& = \lambda_v \chi_{\des u}(\pa v) f(\pa v) =
\lambda_v (P_u f)(\pa v) = (\slam P_u f)(v).
   \end{align*}
In turn, if $v = \ko{\tcal}$, then by \eqref{lamtf} we have
$(P_u \slam f)(v)= 0 = (\slam P_u f)(v)$. This means that
$P_u \slam \subseteq \slam P_u$. Hence $\slam =
\slam|_{\ell^2(\des u)} \oplus \slam|_{\ell^2(V \setminus
\des u)}$. Using Proposition \ref{subdir} as well as Remark
\ref{dzi}, one can show that $\slam|_{\ell^2(\des u)} =
\slamr u$ and $\slam|_{\ell^2(V \setminus \des u)} = \slaml
u$. Looking at the equality $\dz{\slam|_{\ell^2(V \setminus
\des u)}} = \dz{\slaml u}$, the reader should be aware of
the fact that
   \begin{align*}
\sum_{v \in \dzi {\pa u}} |\lambda_v|^2 = \sum_{v \in
\dzii_\leftarrow\!(\pa u)} |\lambda_v|^2 +
|\lambda_u|^2,
   \end{align*}
where $\dzii_\leftarrow\!(w)$ is the set of all
children of $w$ counted in the graph
$\tcal_{V\setminus \des u}$. This completes the proof.
  \end{proof}
The injectivity of a weighted shift on a directed tree is
characterized by a condition which essentially refers to
the graph structure of the tree. In particular, there may
happen that an injective weighted shift on a directed tree
has many zero weights (which never happens for classical
weighted shifts).
   \begin{pro}\label{dzisdesz}
Let $\slam$ be a weighted shift on a directed tree $\tcal$
with weights $\lambdab = \{\lambda_v\}_{v \in V^\circ}$.
Then the following conditions are equivalent{\em :}
   \begin{enumerate}
   \item[(i)] $\slam$ is injective,
   \item[(ii)] $\tcal$ is leafless and
$\sum_{v\in \dzi u} |\lambda_v|^2 > 0$ for all $u \in
V$.
   \end{enumerate}
   \end{pro}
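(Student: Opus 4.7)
The plan is to prove the equivalence by establishing each implication directly from Proposition~\ref{desc}(iii) together with the pointwise rule \eqref{lamtf}. A preliminary simplification is worth noting: under the paper's convention $\sum_{v\in\varnothing}\cdot=0$, the inequality $\sum_{v\in\dzi u}|\lambda_v|^2>0$ already forces $\dzi u\neq\varnothing$, so the two clauses comprising (ii) are together equivalent to the single requirement that each vertex admits a child carrying a nonzero weight. This will let me treat the ``leaf'' case and the ``all weights vanish'' case uniformly.

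For (i)$\Rightarrow$(ii), I would argue contrapositively. Suppose $\sum_{v\in\dzi u}|\lambda_v|^2=0$ for some $u\in V$. This sum is in particular finite, so Proposition~\ref{desc}(iii) gives $e_u\in\dz\slam$, and \eqref{eu} yields
\[
\slam e_u=\sum_{v\in\dzi u}\lambda_v e_v=0,
\]
every summand being zero regardless of whether $\dzi u$ is empty or each $\lambda_v$ simply vanishes. Thus $e_u$ is a nonzero element of $\jd\slam$, and $\slam$ fails to be injective.

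For (ii)$\Rightarrow$(i), take any $f\in\dz\slam$ with $\slam f=0$ and fix an arbitrary $u\in V$. Leaflessness supplies some $v\in\dzi u$; for every such child we have $v\in V^\circ$ with $\pa v=u$, so \eqref{lamtf} yields $0=(\slam f)(v)=\lambda_v f(u)$. The hypothesis $\sum_{v\in\dzi u}|\lambda_v|^2>0$ produces at least one $v\in\dzi u$ with $\lambda_v\neq 0$, whence $f(u)=0$. Since $u$ was arbitrary, $f\equiv 0$ and $\slam$ is injective.

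I do not foresee any serious obstacle here: both implications reduce to reading off the defining formula \eqref{lamtf}, and the only mild subtlety—unifying the leaf case with the zero-weight case in (i)$\Rightarrow$(ii)—is absorbed by invoking Proposition~\ref{desc}(iii) rather than constructing an ad hoc witness. The argument uses neither density of $\dz\slam$ nor boundedness of $\slam$, so it remains valid in full generality.
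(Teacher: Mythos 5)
Your proof is correct and follows essentially the same route as the paper: the implication (i)$\Rightarrow$(ii) is verbatim the paper's argument via Proposition~\ref{desc}\,(iii), and for (ii)$\Rightarrow$(i) you read off the vanishing of $f$ pointwise from \eqref{lamtf}, whereas the paper packages the same computation into the norm identity \eqref{wwz}; the two are interchangeable.
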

   It follows from Proposition \ref{dzisdesz} that a
directed tree which admits an injective weighted shift
must be leafless.
   \begin{proof}[Proof of Proposition \ref{dzisdesz}]
(i)$\Rightarrow$(ii) Suppose that contrary to our
claim \linebreak $\sum_{v\in \dzi u} |\lambda_v|^2
= 0$ for some $u \in V$ (of course, this includes
the case of $\dzi u = \varnothing$). Then, by
Proposition \ref{desc}\,(iii), $e_u \in \dz{\slam}$
and $\slam e_u = 0$, a contradiction.

(ii)$\Rightarrow$(i) Take $f \in \dz \slam$ such that
$\slam f = 0$. Then, by \eqref{lamtf+} and \eqref{wwz}, we
have
   \begin{align*}
0 = \|\slam f\|^2 = \sum_{u \in V} \Big(\sum_{v \in \dzi u}
|\lambda_v|^2 \Big) |f(u)|^2,
   \end{align*}
which, together with (ii), implies that $f(u)=0$ for all $u
\in V$.
   \end{proof}
In view of Propositions \ref{dirsum} and
\ref{dzisdesz}, one can construct a reducible
injective and bounded weighted shift on a
directed tree with root (see \eqref{varkappa} for
examples of directed trees admitting such
weighted shifts). This is again something which
cannot happen for (bounded or unbounded)
injective unilateral classical weighted shifts
(see \cite[Theorem (3.0)]{ml}).

The question of when a weighted shift $\slam$ on a
directed tree is bounded has a simple answer. Let us
point out that implication (i)$\Rightarrow$(ii) of
Proposition \ref{ogrs} below is also an immediate
consequence of the closed graph theorem and
Proposition \ref{clos}.
   \begin{pro}\label{ogrs}
Let $\slam$ be a weighted shift on a directed tree $\tcal$ with
weights $\lambdab = \{\lambda_v\}_{v \in V^\circ}$. Then the
following conditions are equivalent{\em :}
   \begin{enumerate}
   \item[(i)] $\dz \slam = \ell^2(V)$,
   \item[(ii)] $\slam \in \ogr {\ell^2(V)}$,
   \item[(iii)] $\sup_{u\in
V}\sum\nolimits_{v\in\dzi u} |\lambda_v|^2 < \infty$.
   \end{enumerate}
If $\slam \in \ogr {\ell^2(V)}$, then
   \begin{align} \label{pnor}
\|\slam\| = \sup_{u\in V} \|\slam e_u\| = \sup_{u\in
V} \, \sqrt{\sum\limits_{v\in\dzi u} |\lambda_v|^2}.
   \end{align}
   \end{pro}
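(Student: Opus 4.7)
My plan is to prove the cycle (iii)$\Rightarrow$(i)$\Rightarrow$(ii)$\Rightarrow$(iii), with the norm formula dropping out of the same estimates. The key engine is Proposition~\ref{desc}, which gives an explicit formula for both $\dz{\slam}$ and $\|\slam f\|^2$ in terms of the vertexwise sums $\sum_{v\in\dzi u}|\lambda_v|^2$. Set
\[
M := \sup_{u\in V} \sum_{v\in\dzi u} |\lambda_v|^2,
\]
so that (iii) is the statement $M<\infty$.

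For (iii)$\Rightarrow$(i), I would plug $M<\infty$ into the description of the domain from Proposition~\ref{desc}(i)--(ii): for every $f \in \ell^2(V)$,
\[
\sum_{u\in V}\Big(\sum_{v\in\dzi u}|\lambda_v|^2\Big)|f(u)|^2 \;\le\; M\sum_{u\in V}|f(u)|^2 \;=\; M\|f\|^2 \;<\; \infty,
\]
so $f\in\dz{\slam}$. This yields $\dz{\slam}=\ell^2(V)$ and, as a bonus, the uniform estimate $\|\slam f\|^2 \le M\|f\|^2$ from Proposition~\ref{desc}(ii), giving $\|\slam\|\le \sqrt M$.

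The implication (i)$\Rightarrow$(ii) is the closed graph theorem applied to the closed operator $\slam$ of Proposition~\ref{clos} (as remarked in the statement). Alternatively, the direct estimate of the previous paragraph furnishes boundedness without appealing to closedness, but either route is immediate. For (ii)$\Rightarrow$(iii), I would note that (ii) forces every $e_u$ to lie in $\dz{\slam}$, so by Proposition~\ref{desc}(iii),
\[
\sum_{v\in\dzi u} |\lambda_v|^2 \;=\; \|\slam e_u\|^2 \;\le\; \|\slam\|^2\|e_u\|^2 \;=\; \|\slam\|^2
\]
uniformly in $u\in V$; taking the supremum gives $M\le\|\slam\|^2<\infty$, which is (iii) and in fact $\sup_u\|\slam e_u\|\le\|\slam\|$.

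Finally, combining the two sides produces the norm identity \eqref{pnor}: the direct estimate in the first step gives $\|\slam\|\le\sqrt M=\sup_u\|\slam e_u\|$, while the last step gives the reverse inequality $\sup_u\|\slam e_u\|\le\|\slam\|$, and the rewriting $\|\slam e_u\|=\sqrt{\sum_{v\in\dzi u}|\lambda_v|^2}$ is once again Proposition~\ref{desc}(iii). There is no real obstacle here; the only mild subtlety is remembering that under (iii) each $e_u$ automatically lies in $\dz{\slam}$ (so the expressions in \eqref{pnor} are legitimate), which is exactly what Proposition~\ref{desc}(iii) guarantees when $\sum_{v\in\dzi u}|\lambda_v|^2<\infty$.
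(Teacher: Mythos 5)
Your proof is correct and follows essentially the same route as the paper: the same pointwise estimate $\sum_{u}(\sum_{v\in\dzi u}|\lambda_v|^2)|f(u)|^2\le M\|f\|^2$ drives (iii)$\Rightarrow$(i) and the norm bound, while $\|\slam e_u\|^2=\sum_{v\in\dzi u}|\lambda_v|^2\le\|\slam\|^2$ gives (ii)$\Rightarrow$(iii) and the reverse inequality in \eqref{pnor}. The only cosmetic difference is that you close the cycle via the closed graph theorem for (i)$\Rightarrow$(ii), which the paper itself points out as a valid alternative just before stating the proposition.
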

   \begin{proof}
   (i)$\Leftrightarrow$(iii) It follows from Proposition
\ref{desc}\,(i) that $\dz \slam = \ell^2(V)$ if and only if
for every complex function $f$ on $V$,
   \begin{align*}
\sum_{u \in V} |f(u)|^2 < \infty \; \iff \; \sum_{u
\in V} \Big(1 + \sum_{v \in\dzi u} |\lambda_v|^2\Big)
|f(u)|^2 < \infty.
   \end{align*}
This in turn is easily seen to be equivalent to (iii).

   (ii)$\Rightarrow$(iii) If $\slam \in \ogr {\ell^2(V)}$, then by
\eqref{eu} we have
   \begin{align}  \label{szac}
\sum_{v\in\dzi u} |\lambda_v|^2 = \|\slam e_u\|^2 \Le
\|\slam\|^2, \quad u \in V.
   \end{align}

   (iii)$\Rightarrow$(ii) Setting $c=\sup_{u\in
V} \sum\nolimits_{v\in\dzi u} |\lambda_v|^2$, we
get
   \begin{align*}
\sum_{u \in V} \sum_{v \in\dzi u} |\lambda_v|^2
|f(u)|^2 \Le c \sum_{u \in V} |f(u)|^2, \quad f \in
\ell^2(V),
   \end{align*}
which, by Proposition \ref{desc}\,(i) and \eqref{wwz},
implies that $\dz \slam = \ell^2(V)$ and $\|\slam\|^2
\Le c$. This and \eqref{szac} give \eqref{pnor}.
   \end{proof}
According to Propositions \ref{46} and \ref{ogrs},
$\sup_{v \in V^\circ} |\lambda_v| < \infty$ whenever
$\slam \in \ogr{\ell^2(V)}$. However, in general,
$\sup_{v \in V^\circ} |\lambda_v| < \infty$ does not
imply $\slam \in \ogr{\ell^2(V)}$. What is worse, the
above inequality may not imply that the operator
$\slam$ is densely defined (cf.\ Proposition
\ref{desc}).
   \begin{cor} \label{parc}
Let $\slam$ be a weighted shift on a directed tree $\tcal$ with
weights $\lambdab = \{\lambda_v\}_{v \in V^\circ}$, and let $\sup_{u
\in V} \card {\dzi u} < \infty$. Then $\slam \in \ogr {\ell^2(V)}$
if and only if $\sup_{v \in V^\circ} |\lambda_v| < \infty$.
   \end{cor}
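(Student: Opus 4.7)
The plan is to deduce this from Proposition \ref{ogrs}, which characterizes boundedness of $\slam$ by the condition $\sup_{u\in V}\sum_{v\in\dzi u}|\lambda_v|^2 < \infty$. The hypothesis on $\card{\dzi u}$ is only needed for one direction; the other follows without it.

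First I would handle the ``only if'' direction. Suppose $\slam \in \ogr{\ell^2(V)}$. Fix any $v \in V^\circ$. By Proposition \ref{46}, $v$ lies in $\dzi{u}$ for the (unique) vertex $u = \pa{v}$. Then
\begin{align*}
|\lambda_v|^2 \;\Le\; \sum_{w \in \dzi u} |\lambda_w|^2 \;\overset{\eqref{pnor}}{\Le}\; \|\slam\|^2,
\end{align*}
so $\sup_{v \in V^\circ}|\lambda_v| \Le \|\slam\| < \infty$. Note this step does not use the assumption $\sup_{u \in V}\card{\dzi u} < \infty$.

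Next I would handle the ``if'' direction, where the bound on the number of children enters. Set $N = \sup_{u \in V}\card{\dzi u}$ and $M = \sup_{v \in V^\circ}|\lambda_v|$; both are finite by hypothesis. For every $u \in V$,
\begin{align*}
\sum_{v \in \dzi u} |\lambda_v|^2 \;\Le\; \card{\dzi u}\cdot M^2 \;\Le\; N M^2,
\end{align*}
so $\sup_{u\in V}\sum_{v\in\dzi u}|\lambda_v|^2 \Le NM^2 < \infty$, and Proposition \ref{ogrs} yields $\slam \in \ogr{\ell^2(V)}$.

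There is essentially no obstacle here: the corollary is a direct combination of the characterization in Proposition \ref{ogrs} with the elementary estimate $\sum_{v \in \dzi u}|\lambda_v|^2 \Le \card{\dzi u}\cdot (\sup_v |\lambda_v|)^2$. The only thing worth flagging is that the finiteness of $\sup_u \card{\dzi u}$ is indispensable for the ``if'' direction, as the remark immediately preceding the corollary already points out.
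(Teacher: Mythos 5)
Your argument is correct and matches the paper's intended justification: the corollary is left as an immediate consequence of Proposition \ref{ogrs} together with the remark preceding it (which itself rests on Proposition \ref{46}), and your two directions reproduce exactly that combination, including the observation that the child-count hypothesis is needed only for the ``if'' direction.
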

If we want to investigate densely defined
weighted shifts on a directed tree with nonzero
weights, then we have to assume that the tree
under consideration is at most countable, and if
the latter holds, we can always find a bounded
weighted shift on it with nonzero weights.
   \begin{pro} \label{przeldz}
If there exists a densely defined weighted shift
$\slam$ on a directed tree $\tcal$ with nonzero
weights $\lambdab = \{\lambda_v\}_{v \in V^\circ}$,
then $\card{V} \Le \aleph_0$. Conversely, if $\tcal$
is a directed tree such that $\card{V} \Le \aleph_0$,
then there exists a weighted shift $\slam \in
\ogr{\ell^2(V)}$ with nonzero weights.
   \end{pro}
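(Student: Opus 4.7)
The plan is to handle the two implications separately. For the forward implication, suppose $\slam$ is densely defined with all weights nonzero. By Proposition \ref{desc}\,(v), $e_u \in \dz{\slam}$ for every $u \in V$, and then Proposition \ref{desc}\,(iii) yields
\[
\sum_{v \in \dzi u} |\lambda_v|^2 < \infty, \quad u \in V.
\]
Because every $\lambda_v$ is nonzero, a finite sum of squares of the form above forces the index set $\dzi u$ to be at most countable; thus $\card{\dzi u} \le \aleph_0$ for each $u \in V$.

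Next I would promote countable branching at each vertex to countability of $V$. A straightforward induction shows that if $\card{\dzi u} \le \aleph_0$ for every $u$, then $\card{\dzin n u} \le \aleph_0$ for every $u \in V$ and $n \in \zbb_+$ (using \eqref{n+1}). Consequently, by \eqref{decom}, each $\des u$ is at most countable. If $\tcal$ has a root, Corollary \ref{przem} gives $V = \des{\koo}$, hence $\card V \le \aleph_0$. If $\tcal$ is rootless, choose any $u \in V$ and apply Proposition \ref{xdescor}\,(iii) with $J = \nbb$ to obtain $V = \bigcup_{k=1}^\infty \des{\paa^k(u)}$, a countable union of at most countable sets.

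For the converse, assume $\card V \le \aleph_0$. In view of Proposition \ref{46}, the family $\{\dzi u\}_{u \in V}$ partitions $V^\circ$, and each $\dzi u$ is at most countable. For every $u \in V$, fix an enumeration of $\dzi u$ and define the weight of its $k$th element to be $2^{-k}$; this yields a system $\lambdab = \{\lambda_v\}_{v \in V^\circ}$ of nonzero complex numbers with
\[
\sum_{v \in \dzi u} |\lambda_v|^2 \le \sum_{k=1}^\infty 4^{-k} = \tfrac{1}{3}, \quad u \in V.
\]
The uniform bound above, together with Proposition \ref{ogrs}, gives $\slam \in \ogr{\ell^2(V)}$.

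I do not expect any serious obstacle here; the main point is simply to exploit the fact that the constraint ``all weights nonzero and $e_u \in \dz{\slam}$'' forces countable branching at every vertex, and then to globalize this via the decomposition of $V$ by descendants that is already available from Corollary \ref{przem} and Proposition \ref{xdescor}. The converse is essentially a geometric series construction, made consistent by the disjointness in Proposition \ref{46}.
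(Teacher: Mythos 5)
Your proposal is correct and follows essentially the same route as the paper's own proof: countable branching at each vertex (the paper cites a general result on summable families where you argue it directly), then globalization via $\dzin{n}{u}$, $\des{u}$, Corollary \ref{przem} and Proposition \ref{xdescor}\,(iii), and for the converse a square-summable choice of weights on each $\dzi{u}$ combined with Proposition \ref{ogrs}. No issues.
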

   \begin{proof}
Suppose first that there exists a densely defined
weighted shift $\slam$ on $\tcal$ with nonzero
weights. It follows from parts (iii) and (v) of
Proposition \ref{desc} and \cite[Corollary
19.5]{b-p} that $\card{\dzi u} \Le \aleph_0$ for
all $u \in V$. An induction argument combined
with \eqref{n+1} shows that $\card{\dzin n {u}}
\Le \aleph_0$ for all $u \in V$ and $n\in
\zbb_+$. Hence, by \eqref{ytag}, $\card{\des u}
\Le \aleph_0$ for all $u \in V$. If $\tcal$ has a
root, then Corollary \ref{przem} implies that
$\card{V} \Le \aleph_0$. If $\tcal$ is rootless,
then the same inequality holds due to Proposition
\ref{xdescor}\,(iii).

Assume now that $\card{V} \Le \aleph_0$. It is
then clear that for every $u \in V^\prime$, there
exists a system $\{\lambda_{u,v}\}_{v\in
\dzi{u}}$ of positive real numbers such that
$\sum_{v\in \dzi{u}} \lambda_{u,v}^2 \Le 1$.
Hence, by \eqref{sumchi}, the system $\lambdab =
\{\lambda_u\}_{u \in V^\circ}$ given by
$\lambda_v = \lambda_{u,v}$ for $v \in \dzi{u}$
and $u \in V^\prime$ is well defined, and by
Proposition \ref{ogrs} $\slam\in
\ogr{\ell^2(V)}$.
   \end{proof}
We now discuss the question of when the space
$\ell^2(V)$ built on a subtree $\tcal$ of a directed
tree $\hat\tcal$ is invariant for a weighted shift on
$\hat\tcal$ with nonzero weights. Note that if $\tcal
= (V,E)$ is a subtree of a directed tree
$\hat\tcal=(\hat V,\hat E)$, then $V^\circ \subseteq
\hat V^\circ$; moreover, if $\hat \tcal$ has a root,
so does $\tcal$. For equivalent forms of the condition
(ii) of Proposition \ref{heredit} below, we refer the
reader to Proposition \ref{preheredit}.
   \begin{pro}\label{heredit}
Let $\tcal = (V,E)$ be a subtree of a directed tree
$\hat\tcal=(\hat V,\hat E)$, and let $\slamh\in
\ogr{\ell^2(\hat V)}$ be a weighted shift on $\hat\tcal$
with nonzero weights $\hat\lambdab =
\{\hat\lambda_u\}_{u\in \hat V^\circ}$. Then the following
two conditions are equivalent\/{\em :}
   \begin{enumerate}
   \item[(i)] $\ell^2(V)$ is invariant for $\slamh$,
   \item[(ii)] $V =
   \begin{cases}
\dest{\hat\tcal}{\ko{\tcal}} & \text{if $\tcal$ has a
root,}
   \\
\hat V & \text{if $\tcal$ is rootless.}
   \end{cases}$
   \end{enumerate}
Moreover, if {\em (i)} holds, then $\slamh|_{\ell^2(V)} =
\slam$, where $\slam \in \ogr{\ell^2(V)}$ is a weighted
shift on $\tcal$ with weights $\lambdab =
\{\lambda_u\}_{u\in V^\circ}$ given by $\lambda_u =
\hat\lambda_u$ for $u\in V^\circ$.
   \end{pro}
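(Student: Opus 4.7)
The plan is to reduce the equivalence (i)$\Leftrightarrow$(ii) to Proposition \ref{preheredit}, which already gives several reformulations of (ii); then the argument becomes a short computation on orthonormal basis vectors using \eqref{eu} together with the assumption that all weights $\hat\lambda_u$ are nonzero.

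For (ii)$\Rightarrow$(i), I would first invoke Proposition \ref{preheredit} to translate (ii) into the pointwise condition $\dzit{\hat\tcal}{u} \subseteq V$ for every $u \in V$. Then for each $u \in V$, formula \eqref{eu} (applied to $\slamh$) gives
\[
\slamh e_u = \sum_{v \in \dzit{\hat\tcal}{u}} \hat\lambda_v e_v \in \ell^2(V),
\]
since every $v$ appearing in the sum lies in $V$. The linear span of $\{e_u : u \in V\}$ is dense in $\ell^2(V)$, and $\slamh$ is bounded, so $\slamh(\ell^2(V)) \subseteq \ell^2(V)$.

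For the converse (i)$\Rightarrow$(ii), fix any $u \in V$. By (i), $\slamh e_u \in \ell^2(V)$. Applying \eqref{eu} again, $\slamh e_u = \sum_{v \in \dzit{\hat\tcal}{u}} \hat\lambda_v e_v$, and because the $e_v$ are orthonormal and every $\hat\lambda_v$ is nonzero, this vector lies in $\ell^2(V)$ only if $e_v \in \ell^2(V)$ for each $v \in \dzit{\hat\tcal}{u}$, i.e., $\dzit{\hat\tcal}{u} \subseteq V$. This is condition (ii) of Proposition \ref{preheredit}, which in turn yields (ii) here. The mild delicacy here — and essentially the only place where the assumption of nonzero weights is really used — is the step that lets one conclude $\dzit{\hat\tcal}{u}\subseteq V$ from $\slamh e_u\in\ell^2(V)$; without $\hat\lambda_v\neq 0$, some children of $u$ in $\hat\tcal$ could be missed.

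For the ``moreover'' part, assume (i), set $\lambda_u = \hat\lambda_u$ for $u \in V^\circ$, and note that by Proposition \ref{preheredit} we have $\dzit{\tcal}{u} = \dzit{\hat\tcal}{u}$ for all $u \in V$. Using \eqref{eu} for both $\slam$ and $\slamh$,
\[
\slam e_u = \sum_{v \in \dzit{\tcal}{u}} \lambda_v e_v = \sum_{v \in \dzit{\hat\tcal}{u}} \hat\lambda_v e_v = \slamh e_u, \qquad u \in V.
\]
Taking suprema in \eqref{pnor} and using $\dzit{\tcal}{u} = \dzit{\hat\tcal}{u}$ shows $\slam \in \ogr{\ell^2(V)}$ with $\|\slam\| \Le \|\slamh\|$, so both $\slam$ and $\slamh|_{\ell^2(V)}$ are bounded operators agreeing on the orthonormal basis $\{e_u\}_{u \in V}$ of $\ell^2(V)$; hence they coincide. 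I do not anticipate any serious obstacle — the proof is essentially a bookkeeping exercise once Proposition \ref{preheredit} is available.
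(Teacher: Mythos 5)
Your argument is correct and follows essentially the same route as the paper's: both directions reduce to Proposition \ref{preheredit} via the formula \eqref{eu} for $\slamh e_u$, with the nonzero-weight hypothesis used exactly where you flag it, namely to deduce $\dzit{\hat\tcal}{u}\subseteq V$ from $\slamh e_u\in\ell^2(V)$. The ``moreover'' part is likewise handled the same way, so there is nothing to add.
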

Observe that the implication (ii)$\Rightarrow$(i) remains
valid without assuming that $\slamh$ has nonzero weights.
   \begin{proof}[Proof of Proposition \ref{heredit}]
(i)$\Rightarrow$(ii) It follows from \eqref{eu}, applied to
$\slamh$, that $\dzit{\hat\tcal}u \subseteq V$ for every
$u\in V$. Applying Proposition \ref{preheredit}, we get
(ii).

(ii)$\Rightarrow$(i) By Proposition \ref{preheredit},
$\dzit{\tcal}{u} = \dzit{\hat\tcal}{u}$ for all $u \in V$.
This together with \eqref{eu} yields (i).

If the space $\ell^2(V)$ is invariant for $\slamh$, then
the equality $\slamh|_{\ell^2(V)} = \slam$ can be inferred
from Proposition \ref{preheredit}\,(i) and \eqref{eu}.
   \end{proof}
In view of Proposition \ref{przeldz}, the situation
discussed in Proposition \ref{heredit} may happen only if
$\card{\hat V} \Le \aleph_0$.
   \subsection{\label{s1.5}Unitary equivalence}
We begin by showing that, from the Hilbert space point
of view, the study of weighted shifts on directed
trees can be reduced to the case of weighted shifts
with nonnegative weights. Comparing with the
analogical result for classical weighted shifts, the
reader will find that in the present situation the
proof is much more complicated (mostly because it
essentially depends on the complexity of graphs under
consideration). To make the proof as clear and short
as possible, we have decided to use a topological
argument which seems to be of independent interest. We
are aware of the fact that a more elementary proof of
Theorem \ref{uni} is available. However, it is
essentially longer and more technical (compare with
the proof of Theorem \ref{modul}).
    \begin{thm} \label{uni}
A weighted shift $\slam$ on a directed tree
$\tcal$ with weights $\lambdab = \{\lambda_v\}_{v
\in V^\circ}$ is unitarily equivalent to the
weighted shift $\smlam$ on $\tcal$ with weights
$|\lambdab| = \{|\lambda_v|\}_{v \in V^\circ}$.
    \end{thm}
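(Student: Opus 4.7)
The plan is to exhibit a diagonal unitary operator $U \in \ogr{\ell^2(V)}$ of the form $U e_v = \alpha_v e_v$ ($v \in V$), for a suitable family of unimodular complex numbers $\{\alpha_v\}_{v \in V}$, and verify that $U^*\slam U = \smlam$. The intertwining identity $\slam U e_u = U \smlam e_u$ combined with \eqref{eu} forces the constraint
\begin{align*}
\alpha_{\pa v}\, \lambda_v = |\lambda_v|\, \alpha_v, \qquad v \in V^\circ. \tag{$\star$}
\end{align*}
Conversely, granting $(\star)$, Proposition \ref{desc}(i) shows that $\dz{\slam} = \dz{\smlam}$ (the domain depends only on the moduli of the weights) and $U$ preserves this common domain because $|(Uf)(u)| = |f(u)|$; a direct pointwise computation then gives, for $f \in \dz{\slam}$ and $w \in V^\circ$,
\begin{align*}
(U^* \slam U f)(w) = \bar\alpha_w\, \lambda_w\, \alpha_{\pa w}\, f(\pa w) \stackrel{(\star)}{=} |\lambda_w|\, f(\pa w) = (\smlam f)(w),
\end{align*}
with both sides vanishing at $w = \koo$. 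Hence $U^* \slam U = \smlam$ on the full common domain, as required.

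The core task is therefore the construction of $\{\alpha_v\}_{v \in V} \subseteq \tbb := \{z \in \cbb : |z|=1\}$ satisfying $(\star)$. The plan is to do this by a compactness argument. Equip $X := \tbb^V$ with the product topology; $X$ is compact by Tychonoff's theorem. For each $v \in V^\circ$, the set
\begin{align*}
K_v := \{\alpha \in X : \alpha_{\pa v}\, \lambda_v = |\lambda_v|\, \alpha_v\}
\end{align*}
is closed in $X$ (it is the preimage of a closed set under continuous projection to two coordinates), and $K_v = X$ whenever $\lambda_v = 0$. By compactness, to produce a point in $\bigcap_{v \in V^\circ} K_v$ it suffices to verify the finite intersection property.

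Given a finite set $F \subseteq V^\circ$, form $W := F \cup \pa{F}$ (finite), and invoke Proposition \ref{witr} to obtain $u \in V$ with $W \subseteq \des{u}$. A short check shows $u \notin F$: if $u \in F$ then $\pa{u} \in \pa{F} \subseteq \des{u}$, forcing a circuit in $\tcal$. By Proposition \ref{subdir}(i), $\tcal_{\des{u}}$ is a directed tree rooted at $u$, so the prescription
\begin{align*}
\alpha_u := 1, \qquad \alpha_v := \alpha_{\pa v}\cdot \omega_v \;\text{ for } v \in \des{u} \setminus \{u\},
\end{align*}
with $\omega_v := \lambda_v/|\lambda_v|$ if $\lambda_v \neq 0$ and $\omega_v := 1$ otherwise, unambiguously defines $\{\alpha_v\}_{v \in \des{u}} \subseteq \tbb$ by induction on the level $n$ with $v \in \dzin{n}{u}$; extend by $\alpha_v := 1$ for $v \in V \setminus \des{u}$. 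For every $v \in F$ one has $v, \pa{v} \in \des{u}$ and $v \neq u$, so the recursion enforces $(\star)$ at $v$. This produces a point of $\bigcap_{v \in F} K_v$, completing the finite intersection step.

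The main obstacle is the construction of the phases in the rootless case: when $\tcal$ has a root one could simply iterate $(\star)$ downward from $\koo$ using Corollary \ref{przem}, but a rootless tree has no canonical starting vertex, and piecewise recursions along chains $\{\paa^k(u)\}_{k\ge 0}$ require consistency bookkeeping. The compactness argument bypasses this entirely by localising each constraint to a finite rooted subtree $\tcal_{\des{u}}$ via Proposition \ref{witr}, at the cost of a single appeal to Tychonoff's theorem. Once $\{\alpha_v\}$ is secured, the verification that the resulting diagonal unitary $U$ satisfies $U^*\slam U = \smlam$ is the routine calculation sketched in the first paragraph.
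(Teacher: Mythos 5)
Your proposal is correct and follows essentially the same route as the paper: reduce to the phase equation for a diagonal unitary, solve it recursively on each rooted subtree $\tcal_{\des u}$, and glue via Tychonoff's theorem and the finite intersection property supplied by Proposition \ref{witr}. The only cosmetic differences are that you index the closed sets by single constraints $K_v$ rather than by descendant sets $\varOmega_u$, and you run the compactness argument uniformly instead of treating the rooted case separately via Corollary \ref{przem}.
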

    \begin{proof}
Set \idxx{$\tbb$}{55} $\tbb=\{z \in \cbb\colon
|z|=1\}$. For $\betab = \{\beta_u\}_{u \in V}
\subseteq \tbb$, we define the unitary operator
$U_\betab \in \ogr{\ell^2(V)}$ by $(U_{\betab} f)(u) =
\beta_u f(u)$ for $u \in V$ and $f \in \ell^2(V)$.
Since $(U_\betab^* f)(u) = \overline{\beta_u} f(u)$
for $u \in V$ and $f \in \ell^2(V)$, we infer from
Proposition \ref{desc}\,(i) that $\dz{\smlam} =
\dz{\slam} = \dz{U_\betab\slam U_\betab^*}$. Hence,
for every $f \in \dz{\smlam}$,
    \begin{align*}
(U_\betab\slam U_\betab^* f) (v)
\overset{\eqref{lamtf}}=
   \begin{cases}
   \lambda_v \beta_v (U_\betab^* f)(\pa v) =
\lambda_v \beta_v \bar \beta_{\pa v} f(\pa v) &
\text{if } v \in V^\circ,
   \\
   0 & \text{if } v=\koo.
   \end{cases}
   \end{align*}
To complete the proof it is therefore enough to
show that there exists a system
$\betab=\{\beta_v\}_{v \in V} \subseteq \tbb$
such that
    \begin{align} \label{ukr2}
\lambda_v \beta_v \bar \beta_{\pa v} =
|\lambda_v|, \quad v \in V^\circ.
    \end{align}
We do this in two steps.

{\sc Step 1.} For each $(u,\gamma) \in V \times \tbb$,
there exists $\{\beta_{v}\}_{v \in \des u} \subseteq
\tbb$ such that
    \begin{align} \label{indn}
\beta_{u} & = \gamma,
    \\
\lambda_v \beta_{v} & = |\lambda_v| \beta_{\pa v}, \quad v
\in \des u \setminus \{u\}. \label{indn+}
    \end{align}

Indeed, since $\des u \setminus \{u\} =
\bigsqcup_{n=1}^\infty \dzin n u$ (use the decomposition
\eqref{decom}) and $\pa{\dzin{n+1}u} \subseteq \dzin n u$,
we can define the wanted system $\{\beta_{v}\}_{v \in \des
u}$ recursively. We begin with \eqref{indn}, and then
having defined $\beta_w$ for all $w \in \dzin n u$, we
define $\beta_v$ for every $v \in \dzin {n+1} u$ by
$\beta_v = \lambda_v^{-1}|\lambda_v| \beta_{\pa v}$
whenever $\lambda_v\neq 0$ and by $\beta_v=1$ otherwise.
Hence, an induction argument completes the proof of Step 1.

Step 1 and Corollary \ref{przem} enable us to solve
\eqref{ukr2} in the case when $\tcal$ has a root. We
now consider the other case when $\tcal$ has no root.

{\sc Step 2.} There exists $\{\beta_{v}\}_{v \in V}
\subseteq \tbb$ such that
    \begin{align} \label{indn1}
\lambda_v \beta_{v} = |\lambda_v| \beta_{\pa v}, \quad v
\in V.
    \end{align}

To prove this, denote by $\tbb^V$ the set of all functions
from $V$ to $\tbb$, and equip it with the topology of
pointwise convergence on $V$. By Tihonov's theorem, $\tbb^V$
is a compact Hausdorff space. Given $u \in V$, we set
    \begin{align*}
\varOmega_u = \Big\{\{\beta_v\}_{v \in V} \in \tbb^V \colon
\lambda_v \beta_{v} = |\lambda_v| \beta_{\pa v} \text{ for
all } v \in \des u \setminus \{u\}\Big\}.
    \end{align*}
Plainly, each set $\varOmega_u$ is closed in $\tbb^V$. We
claim that the family $\{\varOmega_u\}_{u \in V}$ has the
finite intersection property. Indeed, if $W$ is a finite
nonempty subset of $V$, then by Proposition \ref{witr}
there exists $u \in V$ such that $W \subseteq \des u$.
Hence
    \begin{align} \label{dse2}
\des w \subseteq \des {\des u} \overset{\eqref{inv}} = \des
u, \quad w \in W.
    \end{align}
This implies that $\des w \setminus \{w\}
\subseteq \des u \setminus \{u\}$ for all $w \in
W$ (because the only dubious case $u \in \des w
\setminus \{w\}$, when combined with \eqref{inv}
and \eqref{dse2}, yields $\des u = \des w $,
which contradicts Proposition \ref{subdir}). As a
consequence, $\varOmega_u \subseteq \bigcap_{w
\in W} \varOmega_w$. Since, by Step 1, the set
$\varOmega_u$ is nonempty, we conclude that the
family $\{\varOmega_u\}_{u \in V}$ has the finite
intersection property. Thus, by the compactness
of $\tbb^V$, $\bigcap_{u\in V} \varOmega_u \neq
\varnothing$. If $\betab \in \bigcap_{u\in V}
\varOmega_u$ and $v \in V$, then $\betab \in
\varOmega_{\pa v}$, which implies \eqref{indn1}.
    \end{proof}
    \begin{rem}
We now discuss the question of uniqueness of solutions in
Steps 1 and 2 of the proof of Theorem \ref{uni}. Certainly,
we lose uniqueness if some of the weights $\lambda_v$
vanish. The situation is quite different if the weights of
$\slam$ are nonzero.

In Steps $1^\prime$ and $2^\prime$ below we assuming that
$\lambda_v \neq 0$ for all $v \in V^\circ$.

{\sc Step $1^\prime$.} If $u \in V$ is fixed and
$\{\beta_{v}\}_{v \in \des u}, \{\beta_{v}^\prime\}_{v \in
\des u} \subseteq \tbb$ satisfy \eqref{indn+}, then
$\beta^\prime_v = \gamma \beta_v$ for all $v \in \des u$
with $\gamma = \beta^\prime_u \overline{\beta_u}$.

The proof of Step $1^\prime$ is similar to that
of Step 1.

{\sc Step $2^\prime$.} Suppose that $\tcal$ has
no root. If $u\in V$ is fixed and $\betab,
\betab^\prime \in \tbb^V$ satisfy \eqref{indn1},
then $\beta^\prime_v = \gamma \beta_v$ for all $v
\in V$ with $\gamma = \beta^\prime_u
\overline{\beta_u}$.

Indeed, if $v \in V$, then by Proposition
\ref{witr} there exists $w \in V$ such that
$\{u,v\} \subseteq \des w$. According to Step
$1^\prime$, there exists $\gamma \in \tbb$ such
that $\beta^\prime_x = \gamma \beta_x$ for all $x
\in \des w$. Since $u,v \in \des w$, we get
$\beta^\prime_v = \gamma \beta_v$ and
$\beta^\prime_u = \gamma \beta_u$, which yields
$\gamma = \beta^\prime_u \overline{\beta_u}$.
This means that $\gamma$ does not depend on $w$.
    \end{rem}
   \subsection{Circularity}
We now prove that a densely defined weighted shift on
a directed tree is a circular operator. The definition
of a circular operator was introduced in
\cite{a-h-h-k}. As shown in \cite[Proposition
1.3]{a-h-h-k}, an irreducible bounded operator on a
complex Hilbert space is circular if and only if it
possesses a circulating $C_0$-group of unitary
operators. The latter property was then undertaken by
Mlak and used as the definition of circularity in the
more general context of unbounded operators (cf.\
\cite{ml0,ml1,mslo}).
    \begin{thm} \label{modul}
Let $\slam$ be a weighted shift on a directed
tree $\tcal$. Then for every $c \in \rbb$ there
exists $\thetab=\{\theta_u\}_{u \in V} \subseteq
\rbb$ such that
    \begin{align} \label{cogrr}
\E^{-\I t N} \slam \E^{\I t N} = \E^{\I t c}
\slam, \quad t \in \rbb,
    \end{align}
where $N=N_{\thetab}$ is a unique selfadjoint
operator in $\ell^2(V)$ such that $\{e_u\}_{u \in
V} \subseteq \dz{N}$ and $N e_u = \theta_u e_u$
for all $u \in V$.
    \end{thm}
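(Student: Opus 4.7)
The plan is to reduce the operator identity \eqref{cogrr} to the scalar recursion $\theta_{\pa v} - \theta_v = c$ on the vertex set, solve that recursion using the generation partition of Proposition \ref{generation}(vi), and then assemble $N$ as the corresponding diagonal selfadjoint operator. For any real family $\{\theta_u\}_{u \in V}$, the spectral-calculus argument of Lemma \ref{lems} (with the positivity assumption dropped) produces a unique selfadjoint $N$ in $\ell^2(V)$ with $\{e_u\}_{u \in V} \subseteq \dz{N}$ and $N e_u = \theta_u e_u$, whose unitary group acts by $(\E^{\I t N} f)(u) = \E^{\I t \theta_u} f(u)$. Since $|(\E^{\I t N}f)(u)|=|f(u)|$, Proposition \ref{desc}(i) gives $\E^{\I t N}(\dz{\slam}) = \dz{\slam}$. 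Applying both sides of \eqref{cogrr} to $f \in \dz{\slam}$ and evaluating at $v \in V^\circ$ via \eqref{lamtf} yields
\begin{align*}
(\E^{-\I t N}\slam \E^{\I t N}f)(v) = \lambda_v \E^{\I t(\theta_{\pa v}-\theta_v)} f(\pa v), \qquad (\E^{\I t c}\slam f)(v) = \E^{\I t c}\lambda_v f(\pa v),
\end{align*}
while both sides vanish at $v = \koo$; hence \eqref{cogrr} is equivalent to $\theta_{\pa v} - \theta_v = c$ for every $v \in V^\circ$ with $\lambda_v \neq 0$, and it is harmless to impose this equality for all $v \in V^\circ$.

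To construct such $\theta$ we may assume $V^\circ \neq \varnothing$. Fix any $u_0 \in V$ and invoke the partition
\begin{align*}
V = \bigsqcup_{k=0}^{N_\tcal(u_0)} [\paa^k(u_0)]_\sim \; \sqcup \; \bigsqcup_{n=1}^\infty \dzin{n}{[u_0]_\sim}
\end{align*}
from Proposition \ref{generation}(vi). Define $\theta_v := kc$ for $v \in [\paa^k(u_0)]_\sim$ and $\theta_v := -nc$ for $v \in \dzin{n}{[u_0]_\sim}$; the disjointness of the partition makes this unambiguous. Proposition \ref{generation}(viii), which asserts $\dzi{[\paa^k(u_0)]_\sim} = [\paa^{k-1}(u_0)]_\sim$ for $1 \Le k \Le N_\tcal(u_0)$ and $\dzi{\dzin{n}{[u_0]_\sim}} = \dzin{n+1}{[u_0]_\sim}$ for $n \Ge 0$ (with the convention $\dzin{0}{[u_0]_\sim} = [u_0]_\sim$), guarantees that $\pa v$ always lies exactly one generation before $v$, so $\theta_{\pa v} - \theta_v = c$ in every case.

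The main subtlety is global consistency of $\theta$ in the rootless case, where no distinguished vertex is available to propagate values outward; Proposition \ref{generation}(vi) resolves this cleanly by anchoring a canonical $\sim$-partition at an arbitrary $u_0 \in V$. One could alternatively mimic Step 2 of the proof of Theorem \ref{uni} above, replacing $\tbb^V$ by a suitable compact slice of $\rbb^V$ and invoking Tychonoff's theorem, but the partition approach is more direct. With $\theta$ and $N$ in hand, the pointwise computation in the first paragraph immediately yields \eqref{cogrr} on all of $\dz{\slam}$ for every $t \in \rbb$.
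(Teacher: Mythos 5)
Your proof is correct, and its key construction is genuinely different from the paper's. The reduction of \eqref{cogrr} to the scalar recursion $\theta_{\pa v}-\theta_v=c$ on $V^\circ$ is the same in both arguments (diagonal $N$, unitary group acting as multiplication by $\E^{\I t\theta_u}$, invariance of $\dz{\slam}$ via Proposition \ref{desc}(i)). Where you diverge is in solving the recursion. The paper proceeds locally: for each $u$ it builds the unique solution on $\des u$ with a prescribed value at one point (recursing along $\dzin{n}{u}$), then glues these local solutions over a rootless tree using Proposition \ref{witr} together with a uniqueness argument to show the glued value $\theta_v=\theta_{u,v}$ is independent of the choice of $u$ with $v,u_0\in\des u$. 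You instead anchor the generation partition $V=\bigsqcup_{k=0}^{N(u_0)}[\paa^k(u_0)]_\sim\sqcup\bigsqcup_{n\Ge 1}\dzin{n}{[u_0]_\sim}$ of Proposition \ref{generation}(vi) at an arbitrary $u_0$ and read off an explicit global solution, with Proposition \ref{generation}(viii) and the disjointness of the partition doing the work of consistency: the parent of any $v\in V^\circ$ sits in the class exactly one generation earlier, and the degenerate endpoint $[\paa^{N(u_0)}(u_0)]_\sim=\{\koo\}$ contains no vertex of $V^\circ$, so no check is needed there. Your route is shorter and makes the solution visibly canonical (it is literally ``$c$ times the generation number relative to $u_0$''), at the cost of leaning on Proposition \ref{generation}, whose parts (i)--(v) the paper leaves to the reader; the paper's route is self-contained relative to Proposition \ref{witr} and, as a byproduct, establishes the uniqueness statement recorded immediately after the proof of Theorem \ref{modul} (for fixed $(u_0,\zeta)$ the solution is unique), which your construction exhibits but does not prove.
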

    \begin{proof}
Fix $c \in \rbb$ and take
$\thetab=\{\theta_u\}_{u \in V} \subseteq \rbb$.
Define the operator $N=N_{\thetab}$ in
$\ell^2(V)$ by $\dz{N} = \{f \in \ell^2(V)\colon
\thetab f \in \ell^2(V)\}$ and $Nf = \thetab f$
for $f \in \dz{N}$, where $(\thetab f)(u)=
\theta_u f(u)$ for $u \in V$. Clearly, $N$ is
selfadjoint, $\{e_v\}_{v \in V} \subseteq \dz N$
and $N e_u = \theta_u e_u$ for all $u \in V$. By
Lemma \ref{lems}, such $N$ is unique. Moreover,
$\{\E^{\I t N}\}_{t\in \rbb}$ is a $C_0$-group of
unitary operators. Using an explicit description
of the spectral measure of $N$ (as in
\eqref{Jab1}), we verify that $\E^{\I t N}e_u =
\E^{\I t \theta_u} e_u$ for all $u \in V$ and $t
\in \rbb$. Hence, for all $u \in V$ and $f \in
\ell^2(V)$, we have
   \begin{align*}
(\E^{\I tN}f)(u) \overset{\eqref{rkhs}} =
\is{\E^{\I tN}f}{e_u} = \is{f}{\E^{-\I tN}e_u} =
\E^{\I t \theta_u} \is{f}{e_u} = \E^{\I t
\theta_u} f(u).
   \end{align*}
In view of Proposition \ref{desc}, this implies
that $\dz{\E^{-\I t N} \slam \E^{\I t N}} =
\dz{\slam}$ for all $t \in \rbb$. Moreover, if $f
\in \dz{\slam}$, then
   \begin{align*}
(\E^{-\I t N} \slam \E^{\I t N}f)(v) = \E^{-\I t \theta_v}
\lambda_v (\E^{\I t N}f)(\pa{v}) = \E^{\I t
(\theta_{\pa{v}}-\theta_v)} (\slam f)(v), \quad v \in
V^\circ,
   \end{align*}
and $(\E^{-\I t N} \slam \E^{\I t N}f)(v)= (\slam
f)(v) = 0$ for $v=\koo$. Consequently, it remains
to prove that there exists a solution
$\{\theta_u\}_{u \in V} \subseteq \rbb$ of the
equation
    \begin{align} \label{equiv}
\E^{\I t (\theta_{\pa v} - \theta_v)} = \E^{\I t c}, \quad
v \in V^\circ, \, t \in \rbb.
    \end{align}
Differentiating both sides of the above equality with
respect to $t$ at $t=0$, we see that \eqref{equiv} is
equivalent to
    \begin{align} \label{equiv+}
\theta_{\pa v} - \theta_v = c, \quad v \in V^\circ.
    \end{align}

Take $u \in V$. As in the proof of Step 1 of Theorem
\ref{uni}, we show that for each $\zeta \in \rbb$, there
exists a unique system $\{\theta_{v}\}_{v \in \des
u}\subseteq \rbb$ such that $\theta_u = \zeta$ and
    \begin{align} \label{cosr}
\theta_{\pa v} - \theta_v = c, \quad v \in \des u \setminus
\{u\}.
    \end{align}
Therefore, if $\{\theta_{v}\}_{v \in \des u},
\{\theta_{v}^\prime\}_{v \in \des u} \subseteq \rbb$ are
solutions of \eqref{cosr}, then so is the system
$\{\theta_{v} + (\theta_u^\prime - \theta_u)\}_{v \in \des
u}$ with the same value at $u$ as $\{\theta_{v}^\prime\}_{v
\in \des u}$. Thus, by uniqueness, we have $\theta_v^\prime
= \theta_{v} + (\theta_u^\prime - \theta_u)$ for all $v \in
\des u$.

Suppose now that $u_0 \in \des u$ and $\zeta \in
\rbb$. Take any solution $\{\theta_{v}\}_{v \in \des
u} \subseteq \rbb$ of \eqref{cosr}. Then
$\{\theta_{v}+(\zeta - \theta_{u_0})\}_{v \in \des u}$
is a solution of \eqref{cosr} with value $\zeta$ at
$u_0$. Note that such solution is unique. Indeed, if
$\{\theta_{v}\}_{v \in \des u},
\{\theta_{v}^\prime\}_{v \in \des u} \subseteq \rbb$
are solutions of \eqref{cosr} with the same value
$\zeta$ at $u_0$, then by the previous paragraph there
exists $a \in \rbb$ such that $\theta_v^\prime =
\theta_{v} + a$ for all $v \in \des u$. Substituting
$v=u_0$, we obtain $a=0$, which gives the required
uniqueness.

In view of the above discussion and Corollary
\ref{przem}, the equation \eqref{equiv+} has a
solution in the case when $\tcal$ has a root.

Let us pass to the other case when $\tcal$ has no root. Fix
any $u_0\in V$. If $u \in V$ is such that $u_0 \in \des u$,
then by the penultimate paragraph there exists a unique
system $\{\theta_{u,v}\}_{v \in \des u} \subseteq \rbb$
solving \eqref{cosr} and such that $\theta_{u,u_0} = 0$. We
now define the required solution $\{\theta_v\}_{v \in V}
\subseteq \rbb$ of \eqref{equiv+} as follows. If $v \in V$,
then by Proposition \ref{witr} there exists $u \in V$ such
that $v, u_0 \in \des u$. Define $\theta_v = \theta_{u,v}$
(note that $\{\theta_v\}_{v \in V}$ depends of $u_0$).
First we prove that this definition is correct. So, let
$u^\prime \in V$ be such that $v, u_0 \in \des {u^\prime}$.
We claim that $\theta_{u,v} = \theta_{u^\prime,v}$. Indeed,
by Proposition \ref{witr}, there exists $w \in V$ such that
$u, u^\prime \in \des w$. Then, by \eqref{inv}, we have
$\des u \cup \des{u^\prime} \subseteq \des w$. As in the
proof of Theorem \ref{uni}, we show that the last inclusion
implies $\des u \setminus \{u\} \subseteq \des w \setminus
\{w\}$ and $\des{u^\prime}\setminus \{u^\prime\} \subseteq
\des w \setminus \{w\}$. Hence the system
$\{\theta_{w,x}\}_{x \in \des u}$ is a solution of
\eqref{cosr}, and $\theta_{w,u_0}=0$. By uniqueness
property, we must have $\theta_{w,x} = \theta_{u,x}$ for
all $x \in \des u$. Substituting $x=v$, we get
$\theta_{w,v} = \theta_{u,v}$. Applying similar argument to
the system $\{\theta_{w,x}\}_{x \in \des {u^\prime}}$, we
obtain $\theta_{w,v} = \theta_{u^\prime,v}$, which shows
that our definition of $\{\theta_v\}_{v \in V}$ is correct.
Using Proposition \ref{witr} again, we find $\tilde u \in
V$ such that $\{v, \pa v, u_0\} \subseteq \des{\tilde u}$.
Since $v \in \des {\pa v}\setminus \{\pa v\} \subseteq
\des{\tilde u} \setminus \{\tilde u\}$, we have $\theta_v =
\theta_{\tilde u,v}$, $\theta_{\pa v} = \theta_{\tilde
u,\pa v}$ and consequently $\theta_{\pa v} - \theta_v = c$.
As $v \in V$ is arbitrary, the proof is complete.
    \end{proof}
A careful inspection of the proof of Theorem \ref{modul}
shows that if a tree $\tcal$ has no root, then for every
$c\in \rbb$ and for every $(u_0,\zeta) \in V \times \rbb$
there exists a unique system $\{\theta_v\}_{v \in V}
\subseteq \rbb$ such that $\theta_{u_0} = \zeta$ and
$\theta_{\pa v} - \theta_v = c$ for all $v \in V$.

We conclude this section by mentioning some spectral
properties of weighted shifts on a directed tree. The fact
that the spectral radius of the weighted adjacency operator
$A(G)$ of an infinite directed fuzzy graph $G$ belongs to
the approximate point spectrum of $A(G)$ was proved in
\cite[Theorem 6.1]{f-f-s-w} (see also \cite{mo} for the
case of infinite undirected graphs). The weighted adjacency
operator $A(G)$ is defined in \cite{f-f-s-w} for a directed
fuzzy graph a vertex of which may have more than one server
(read:\ parent). The reader should also convince himself
that in the case of a directed tree our weighted shift
operator coincides with the weighted adjacency operator
$A(G)$ (note that only the bounded weighted adjacency
operators are taken into consideration in \cite{f-f-s-w}).

Given a densely defined closed operator $A$ in a
complex Hilbert space $\hh$, we denote by
\idx{$\sigma(A)$}{56} and \idx{$\speca A$}{57} the
spectrum and the approximate point spectrum of $A$,
respectively. If $A \in \ogr \hh$, then
\idx{$r(A)$}{58} stands for the spectral radius of
$A$. A subset $\sigma$ of $\cbb$ is said to be {\em
circular} if
    \begin{align*}
\text{$\E^{\I t}z \in \sigma$ for all $t \in \rbb$ and
$z \in \sigma$.}
    \end{align*}
    \begin{cor} \label{sppr}
If $\slam$ is a densely defined weighted shift on a directed tree
$\tcal$, then the sets $\sigma(\slam)$, $\sigma(\slam^*)$, $\speca
\slam$ and $\speca {\slam^*}$ are circular. Moreover, if $\slam \in
\ogr{\ell^2(V)}$, then $\{z \in \cbb \colon |z| = r(\slam)\}
\subseteq \speca{\slam} \cap \speca{\slam^*}$.
    \end{cor}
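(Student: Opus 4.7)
The approach is to extract circularity of the spectrum from the group-conjugation identity furnished by Theorem \ref{modul}. First I would specialize that theorem to $c = 1$, obtaining a selfadjoint operator $N$ in $\ell^2(V)$ such that the unitary group $U_t := \E^{\I t N}$ satisfies $U_{-t}\slam U_t = \E^{\I t}\slam$ for every $t \in \rbb$. Unitary equivalence preserves both the spectrum and the approximate point spectrum, while for any closed densely defined operator $T$ and any $\alpha \in \cbb \setminus \{0\}$ one has $\sigma(\alpha T) = \alpha \sigma(T)$ and $\speca{\alpha T} = \alpha \speca{T}$ (these follow from $\alpha \mu I - \alpha T = \alpha(\mu I - T)$ together with $\dz{\alpha T} = \dz{T}$). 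Combining these two ingredients yields $\sigma(\slam) = \E^{\I t}\sigma(\slam)$ and $\speca{\slam} = \E^{\I t}\speca{\slam}$ for all $t \in \rbb$, which is the desired circularity.

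To pass to $\slam^*$ I would take adjoints in the conjugation identity. Since $U_t^* = U_{-t}$, the standard rule for adjoints of products with bounded factors gives $(U_{-t}\slam U_t)^* = U_{-t}\slam^* U_t$, while $(\E^{\I t}\slam)^* = \E^{-\I t}\slam^*$; hence $U_{-t}\slam^* U_t = \E^{-\I t}\slam^*$ and the argument above applied to $\slam^*$ yields circularity of $\sigma(\slam^*)$ and $\speca{\slam^*}$.

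For the moreover part, assume $\slam \in \ogr{\ell^2(V)}$. Then $\sigma(\slam)$ is a nonempty compact subset of $\{z \in \cbb \colon |z| \Le r(\slam)\}$ and must meet the circle $C := \{z \in \cbb \colon |z| = r(\slam)\}$. The circularity already established forces $C \subseteq \sigma(\slam)$, and since no point of $\sigma(\slam)$ lies outside the closed disk of radius $r(\slam)$, every point of $C$ is a topological boundary point of $\sigma(\slam)$. By the classical fact that the boundary of the spectrum of a bounded operator is contained in its approximate point spectrum, we conclude $C \subseteq \speca{\slam}$. The identical reasoning applied to $\slam^*$, together with $r(\slam^*) = r(\slam)$ and the circularity of $\sigma(\slam^*)$, gives $C \subseteq \speca{\slam^*}$.

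Once Theorem \ref{modul} is in hand the argument is essentially routine spectral theory; the only mild obstacle I foresee is the bookkeeping with operator domains in the adjoint identity, where one has to check that $U_{-t}\slam^*U_t$ really is the adjoint of $U_{-t}\slam U_t$ when $\slam$ is unbounded — this is standard because $U_t$ is unitary, hence maps $\dz{\slam^*}$ onto itself in the relevant sense.
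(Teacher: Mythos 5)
Your proposal is correct and follows essentially the same route as the paper: both extract circularity from the conjugation identity of Theorem \ref{modul} with $c=1$ (using that unitary equivalence preserves $\sigma$ and $\sigma_{\mathrm{ap}}$ and that scaling by a unimodular constant rotates them), and both obtain the ``moreover'' part by observing that circularity places the circle $\{|z|=r(\slam)\}$ in the topological boundary of the spectrum, which lies in the approximate point spectrum. The only cosmetic difference is that you pass to $\slam^*$ by taking adjoints in the conjugation identity, whereas the paper deduces circularity of $\sigma(\slam^*)$ and $\speca{\slam^*}$ directly by the same reasoning; both are valid.
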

    \begin{proof}
Let $N$ be as in Theorem \ref{modul} with $c=1$. Since the
operators $\E^{\I t N}$, $t \in \rbb$, are unitary and
$(\E^{\I t N})^* = \E^{-\I t N}$ for all $t \in \rbb$, we
deduce that
    \begin{align*}
\sigma(\slam) = \sigma\big((\E^{\I t N})^* \slam \E^{\I t N}\big)
\overset{\eqref{cogrr}} = \E^{\I t} \sigma(\slam), \quad t \in \rbb,
    \end{align*}
which means that $\sigma(\slam)$ and consequently
$\sigma(\slam^*)$ are circular. The same
reasoning shows that the approximate point
spectra of $\slam$ and $\slam^*$ are circular.

Suppose now that the operator $\slam$ is bounded.
Since $\sigma(\slam)$ is a nonempty compact
subset of $\cbb$, there exists $z_0 \in
\sigma(\slam)$ such that $|z_0| = r(\slam)$. By
the circularity of $\sigma (\slam)$, we see that
the circle $\varGamma := \{z \in \cbb \colon |z|
= r(\slam)\}$ is contained in $\sigma(\slam)$.
This means that $\varGamma$ is a subset of the
boundary of $\sigma(\slam)$. Hence, by
\cite[Corollary XI.1.2]{con}, $\varGamma
\subseteq \speca \slam \cap \speca {\slam^*}$.
This completes the proof.
    \end{proof}
The properties of spectra mentioned in Corollary \ref{sppr} are true
for general circular operators. For the reader's convenience we have
included their proofs. Certainly, other spectra of a circular
operator, like the point spectrum, the continuous spectrum and the
residual spectrum are circular.
    \subsection{\label{s2}Adjoints and moduli}
We begin by giving an explicit description of the
adjoint $\slam^*$ of $\slam$. Recall that $\escr$ is
the linear span of the set $\{e_u\colon u \in V\}$.
   \begin{pro} \label{sprz}
If $\slam$ is a densely defined weighted shift on a directed tree
$\tcal$ with weights $\lambdab = \{\lambda_v\}_{v \in V^\circ}$,
then the following assertions hold\/{\em :}
   \begin{enumerate}
   \item[(i)]
$\sum_{v \in \dzi u} |\lambda_v f(v)| < \infty$ for
all $u \in V$ and $f \in \ell^2(V)$,
   \item[(ii)] $\escr \subseteq \dz{\slam^*}$
and
   \begin{align} \label{sl*}
\slam^*e_u=
   \begin{cases}
   \overline{\lambda_u} e_{\pa u} & \text{if } u \in V^\circ, \\
0 & \text{if } u = \koo,
   \end{cases}
   \end{align}
   \item[(iii)] $(\slam^*f) (u) = \sum_{v \in \dzi u}
\overline{\lambda_v} f(v)$ for all $u \in V$ and $f
\in \dz{\slam^*}$,
   \item[(iv)] $\dz{\slam^*} = \big\{ f \in \ell^2(V)
\colon \sum_{u \in V} \big|\sum_{v \in \dzi u}
\overline{\lambda_v} f(v) \big|^2 < \infty \big\}$,
   \item[(v)] $\|f\|_{\slam^*}^2 = \sum_{u \in V}
\big( |f(u)|^2 + \big|\sum_{v \in \dzi u}
\overline{\lambda_v} f(v) \big|^2\big)$ for all $f \in
\dz{\slam^*}$,
   \item[(vi)] $\ell^2(\dzi u) \subseteq \dz{\slam^*}$
for every $u \in V,$
   \item[(vii)] $\slam^* = \overline{\slam^*|_{\escr}}$.
   \end{enumerate}
   \end{pro}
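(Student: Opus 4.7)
The plan is to prove the seven assertions in essentially the order they are stated, leveraging the reproducing property \eqref{rkhs}, the description of $\dz{\slam}$ from Proposition \ref{desc}, and the standard identity between an adjoint and its domain.

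First, observe that since $\slam$ is densely defined, Proposition \ref{desc}\,(iii) and (v) yield $\sum_{v\in\dzi u}|\lambda_v|^2 < \infty$ for every $u \in V$. Combined with the Cauchy--Schwarz inequality
\[
\sum_{v\in\dzi u}|\lambda_v f(v)|
\Le \Big(\sum_{v\in\dzi u}|\lambda_v|^2\Big)^{1/2}
\Big(\sum_{v\in\dzi u}|f(v)|^2\Big)^{1/2}, \qquad f \in \ell^2(V),
\]
this proves (i). For (ii), I would use the reproducing property: for $g \in \dz{\slam}$ and $u \in V$,
\[
\is{\slam g}{e_u} = (\slam g)(u) =
\begin{cases}
\lambda_u\, g(\pa u) = \is{g}{\overline{\lambda_u} e_{\pa u}} & \text{if } u \in V^\circ,\\
0 & \text{if } u=\koo,
\end{cases}
\]
so $e_u \in \dz{\slam^*}$ with the claimed value of $\slam^* e_u$, proving (ii). Assertion (iii) then follows because, by \eqref{rkhs} and (ii), for $f \in \dz{\slam^*}$ and $u \in V$,
\[
(\slam^* f)(u) = \is{\slam^* f}{e_u} = \is{f}{\slam e_u}
= \Big\langle f, \sum_{v\in\dzi u}\lambda_v e_v\Big\rangle
= \sum_{v\in\dzi u} \overline{\lambda_v} f(v),
\]
the last series being absolutely convergent by (i) and the sum over $\dzi u$ converging in $\ell^2(V)$ by Proposition \ref{desc}\,(iii).

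For (iv), the inclusion ``$\subseteq$'' is immediate from (iii) since $\slam^* f \in \ell^2(V)$. For the reverse inclusion, assume the right-hand series is finite and define $h \in \ell^2(V)$ by $h(u) = \sum_{v\in\dzi u}\overline{\lambda_v} f(v)$; a direct computation using (ii) and the linearity of $\slam^*$ on $\escr$ shows $\is{\slam g}{f} = \is{g}{h}$ for all $g \in \escr$. Since $\escr$ is a core of $\slam$ by Proposition \ref{desc}\,(vi), this identity extends by continuity to all $g\in \dz{\slam}$, giving $f \in \dz{\slam^*}$ with $\slam^* f = h$. Assertion (v) is then just the definition of the graph norm combined with (iv). Part (vi) follows at once from (iv) and Proposition \ref{46}: for $f \in \ell^2(\dzi u)$, the children sets $\dzi w$ are pairwise disjoint, so at most the term $w=u$ in the characterizing sum is nonzero, and finiteness of that single term is guaranteed by (i).

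The main step is (vii), for which I would compute the adjoint of $A_0 := \slam^*|_{\escr}$. The operator $A_0$ is densely defined since $\escr$ is dense in $\ell^2(V)$. For $f \in \dz{\slam}$ and every $g \in \escr$, the identity $\is{A_0 g}{f} = \is{\slam^* g}{f} = \is{g}{\slam f}$ (valid because $g \in \dz{\slam^*}$ by (ii)) shows $\slam \subseteq A_0^*$. Conversely, if $f \in \dz{A_0^*}$ with $A_0^* f = h$, then testing against $g = e_v$ and using (ii) gives $h(v) = \lambda_v f(\pa v)$ for $v \in V^\circ$ and $h(\koo) = 0$, so $\varLambda_\tcal f = h \in \ell^2(V)$, meaning $f \in \dz{\slam}$ and $\slam f = h$; hence $A_0^* \subseteq \slam$. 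Therefore $A_0^* = \slam$, which is closed and densely defined, so $A_0$ is closable with $\overline{A_0} = A_0^{**} = \slam^*$, establishing (vii). The only non-routine point is the justification that $h \in \ell^2(V)$ in the ``$\supseteq$'' direction of (iv); this is exactly the hypothesis that the characterizing sum is finite, combined with the disjointness \eqref{sumchi}.
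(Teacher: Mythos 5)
Your proof is correct, and for parts (i)--(vi) it follows essentially the paper's own route (Cauchy--Schwarz for (i), testing against $e_u$ via the reproducing property for (ii)--(iii), and the disjointness \eqref{sumchi} for (vi)). The two places where you genuinely diverge are (iv) and (vii). In the reverse inclusion of (iv), the paper verifies $\is{\slam h}{f}=\is h g$ directly for \emph{every} $h\in\dz{\slam}$ by a rearrangement of the double sum over the partition \eqref{sumchi}, whereas you verify it only on $\escr$ and then invoke the core property from Proposition \ref{desc}\,(vi) to pass to all of $\dz{\slam}$; both work, yours trades the summation-interchange for an approximation argument (one small wording quibble: your identity on $\escr$ needs only $\slam e_w=\sum_{v\in\dzi w}\lambda_v e_v$ and part (i), not ``(ii) and the linearity of $\slam^*$,'' since $f\in\dz{\slam^*}$ is precisely what is being established). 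For (vii) the difference is more substantial: the paper argues directly that a vector $f\in\dz{\slam^*}$ orthogonal to $\escr$ in the graph inner product $\is{\cdot}{\mbox{-}}_{\slam^*}$ must vanish, by summing the relations $0=f(w)+\lambda_w(\slam^*f)(u)$ over $w\in\dzi u$ to force $f|_{\dzi u}=0$ for each $u$; you instead compute $(\slam^*|_{\escr})^*=\slam$ and conclude $\overline{\slam^*|_{\escr}}=(\slam^*|_{\escr})^{**}=\slam^*$ from the von Neumann closability theorem. Your route is shorter and more conceptual, and yields the extra identity $(\slam^*|_{\escr})^*=\slam$ as a byproduct; the paper's is entirely elementary and self-contained, exhibiting concretely how the graph-orthogonal complement of $\escr$ in $\dz{\slam^*}$ is killed vertex by vertex. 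Both are complete proofs.
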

   \begin{proof}
   (i) By the Cauchy-Schwarz inequality and Proposition
\ref{desc}\,(iii) and (v), we have
   \begin{align*}
\Big(\sum_{v \in \dzi u} |\lambda_v f(v)|\Big)^2 \Le
\sum_{v \in \dzi u} |\lambda_v|^2 \sum_{v \in \dzi
u}|f(v)|^2 <\infty, \quad u\in V, \, f \in \ell^2(V).
   \end{align*}

   (ii) Since
   \begin{align*}
\is{\slam f}{e_{\koo}} \overset{\eqref{rkhs}} =
(\slam f)(\koo) = 0, \quad f \in \dz \slam,
   \end{align*}
we get $e_{\koo} \in \dz{\slam^*}$ and
$\slam^*e_{\koo}=0$. Assume now that $u \in
V^\circ$. Then
   \begin{align*}
\is{\slam f}{e_u} = (\slam f)(u) = \lambda_u \cdot
f\big(\pa u\big) = \is{f}{\overline{\lambda_u} e_{\pa
u}}, \quad f\in \dz\slam,
   \end{align*}
which implies that $e_u \in \dz{\slam^*}$ and $\slam^*
e_u = \overline{\lambda_u} e_{\pa u}$.

   (iii) Applying \eqref{rkhs} and Proposition
\ref{desc}\,(v), we deduce that
   \begin{multline*}
(\slam^*f)(u) = \is{\slam^* f}{e_u} \hspace{1ex}=
\is{f}{\slam e_u} \overset{\eqref{eu}}=
\isB{f}{\sum_{v\in\dzi u} \lambda_v e_v}
   \\
= \sum_{v\in\dzi u} \overline{\lambda_v} \is f {e_v} =
\sum_{v\in\dzi u} \overline{\lambda_v} f(v), \quad u
\in V, f \in \dz{\slam^*}.
   \end{multline*}

   (iv) If $f \in \dz{\slam^*}$, then
   \begin{align} \label{grapl}
\sum_{u \in V} \Big|\sum_{v \in \dzi u}
\overline{\lambda_v} f(v)\Big|^2
\overset{\mathrm{(iii)}}= \sum_{u \in V}
|(\slam^*f)(u)|^2 = \|\slam^*f\|^2 < \infty.
   \end{align}
Conversely, if $f$ belongs to the right-hand side of
(iv), then (i) enables us to define the function $g
\colon V \to \cbb$ by
   \begin{align} \label{defg}
g(u) := \sum_{v \in \dzi u} \overline{\lambda_v} f(v),
\quad u \in V.
   \end{align}
By our assumption, $g \in \ell^2(V)$. Moreover,
   \begin{multline*}
\is{\slam h}f = \sum_{u\in V} (\slam h)(u) \cdot
\overline{f(u)} \overset{\eqref{lamtf}}= \sum_{u\in
V^\circ} h(\pa u) \lambda_u \overline{f(u)}
   \\
\overset{\eqref{sumchi}}= \sum_{u\in V} \sum_{v \in
\dzi u} h(\pa v) \lambda_v \overline{f(v)} =
\sum_{u\in V} h(u) \sum_{v \in \dzi u} \lambda_v
\overline{f(v)}
   \\
\overset{ \eqref{defg} }= \sum_{u\in V} h(u)
\overline{g(u)} = \is h g, \quad h \in \dz{\slam},
   \end{multline*}
which implies that $f \in \dz{\slam^*}$ and
$g=\slam^*f$.

Assertion (v) is a direct consequence of
\eqref{grapl}.

Assertion (vi) follows from (i), (iv) and Proposition
\ref{46}.

   (vii) Take $f \in \dz{\slam^*}$ which is
orthogonal to $\{e_w\colon w \in V\}$ with
respect to the graph inner product
$\is{\cdot}{\mbox{-}}_{\slam^*}$. If $w=\koo$,
then
   \begin{align} \label{rootko}
0 = \is f {e_{\koo}}_{\slam^*} \overset{\eqref{sl*}}=
f(\koo).
   \end{align}
We show that $f$ vanishes on $\dzi u$ for every $u \in V$,
which in view of \eqref{rootko} and \eqref{sumchi} will
complete the proof. Fixing $u \in V$, we get
   \begin{align*}
0 = \is f {e_w}_{\slam^*} \overset{\eqref{sl*}} = f(w) +
\is{\slam^* f} {\overline{\lambda_w} e_{\pa w}} = f(w) +
\lambda_w (\slam^* f) (u), \quad w \in \dzi u.
   \end{align*}
Multiplying the left and the right side of the
above chain of equalities by $\overline{f(w)}$
and then summing over all $w \in \dzi u$, we
deduce from (iii) that
   \begin{align*}
0 = \Big(\sum_{w \in \dzi u} |f(w)|^2\Big) + |(\slam^* f)
(u)|^2,
   \end{align*}
which implies that $f$ vanishes on $\dzi u$. This completes
the proof.
   \end{proof}
It follows from Proposition \ref{sprz}\,(ii) that the
linear space $\escr$ is always invariant for the
adjoint $\slam^*$ of a densely defined weighted shift
$\slam$ on a directed tree $\tcal$. This is opposed to
the fact that $\escr$ may not be invariant for $\slam$
(see the comments after Proposition \ref{desc}).
   \begin{rem} \label{surp}
We now show that the adjoint of a unilateral classical
weighted shift is a weighted shift on a very
particular directed tree. Indeed, let us regard
\idxx{$\zbb_-$}{59} $\zbb_- := \{\ldots, -2, -1,0\}$
as a subtree of the directed tree $\zbb$ (cf.\ Remark
\ref{re1-2}). Certainly, $\zbb_-$ is a rootless
directed tree with only one leaf $0$. Let $\slam$ be a
weighted shift on $\zbb_-$ with weights $\lambdab =
\{\lambda_{-n}\}_{n=0}^\infty$. Then by Proposition
\ref{desc} and the equality \eqref{eu} the operator
$\slam$ is densely defined, $\slam e_{-n} =
\lambda_{-(n-1)} e_{-(n-1)}$ for all $n \in \nbb$, and
$\slam e_{0}=0$. This fact combined with Proposition
\ref{desc}\,(vi) guarantees that $\slam$ can be
thought of as the adjoint of the unilateral classical
weighted shift with weights
$\{\bar\lambda_{-(n-1)}\}_{n=1}^\infty$ (cf.\
\cite[equality (1.11)]{ml}).
   \end{rem}
We now describe powers of the modulus of $\slam$.
   \begin{pro}\label{3}
If $\slam$ is a densely defined weighted shift on
a directed tree $\tcal$ with weights $\lambdab =
\{\lambda_v\}_{v \in V^\circ}$, then for every
real $\alpha > 0$,
   \begin{enumerate}
   \item[(i)] $\escr \subseteq \dz{|\slam|^\alpha}$,
   \item[(ii)] $\escr$ is a core of $|\slam|^\alpha$,
i.e., $|\slam|^\alpha =
\overline{|\slam|^\alpha|_{\escr}}$,
   \item[(iii)] $|\slam|^\alpha e_u = \|\slam e_u\|^\alpha e_u$ for
$u \in V,$
   \item[(iv)] $(|\slam|^\alpha f) (u) = \|\slam e_u\|^\alpha
f(u)$ for $u \in V$ and $f \in \dz{|\slam|^\alpha}$.
   \end{enumerate}
   \end{pro}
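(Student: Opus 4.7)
The plan is to reduce everything to Lemma \ref{lems} applied to the positive selfadjoint operator $A = |\slam|$ with eigenvalues $t_u = \|\slam e_u\|$. The only real work is to verify the eigenvector relation $|\slam| e_u = \|\slam e_u\| e_u$, which I would obtain by first computing $\slam^*\slam e_u$ and then taking the square root.

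First, I would show that $\{e_u\}_{u\in V} \subseteq \dz{\slam^*\slam}$. Since $\slam$ is densely defined, Proposition \ref{desc}\,(v) gives $e_u \in \dz{\slam}$, and by \eqref{eu} the vector $\slam e_u = \sum_{v\in\dzi u}\lambda_v e_v$ lies in $\ell^2(\dzi u)$. Proposition \ref{sprz}\,(vi) then yields $\slam e_u \in \dz{\slam^*}$, so $e_u \in \dz{\slam^*\slam}$. To compute $\slam^*\slam e_u$, I would use Proposition \ref{sprz}\,(iii) with $g := \slam e_u$: by \eqref{lamtf}, $g(v) = \lambda_v$ if $v \in \dzi u$ and $g(v)=0$ otherwise, so for every $w\in V$,
\[
(\slam^* g)(w) = \sum_{v\in\dzi w} \overline{\lambda_v}\, g(v) = \sum_{v\in \dzi w \cap \dzi u} |\lambda_v|^2.
\]
By Proposition \ref{46}, $\dzi w \cap \dzi u$ equals $\dzi u$ when $w=u$ and is empty otherwise. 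Combining this with the identity $\|\slam e_u\|^2 = \sum_{v\in\dzi u}|\lambda_v|^2$ from \eqref{eu} gives $\slam^*\slam e_u = \|\slam e_u\|^2 e_u$.

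Next, since $|\slam|$ is the positive selfadjoint square root of $\slam^*\slam$, the spectral theorem (applied to the function $x\mapsto \sqrt{x}$) promotes the eigenvalue relation $\slam^*\slam e_u = \|\slam e_u\|^2 e_u$ to $|\slam| e_u = \|\slam e_u\| e_u$, in particular $e_u \in \dz{|\slam|}$. Now Lemma \ref{lems} applies directly to $A := |\slam|$ with orthonormal basis $\{e_u\}_{u\in V}$ and nonnegative diagonal elements $t_u := \|\slam e_u\|$: part (i) of that lemma gives (i), part (ii) gives (ii), and part (iii) gives (iii) of the present proposition.

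Finally, for (iv), I would use the selfadjointness of $|\slam|^\alpha$ together with the reproducing property \eqref{rkhs}: for $f \in \dz{|\slam|^\alpha}$ and $u \in V$,
\[
(|\slam|^\alpha f)(u) = \is{|\slam|^\alpha f}{e_u} = \is{f}{|\slam|^\alpha e_u} = \|\slam e_u\|^\alpha \is{f}{e_u} = \|\slam e_u\|^\alpha f(u).
\]
The only step that requires any care is the computation of $\slam^*\slam e_u$, because $\dzi u$ may be infinite; using Proposition \ref{sprz}\,(iii) sidesteps any convergence issue, since that formula is valid on all of $\dz{\slam^*}$ and the sum of absolute values converges by Proposition \ref{sprz}\,(i). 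Everything else is a direct invocation of Lemma \ref{lems}.
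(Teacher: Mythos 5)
Your proposal is correct and follows essentially the same route as the paper: establish $\slam^{*}\slam e_u=\|\slam e_u\|^2 e_u$ via Proposition \ref{sprz} and Proposition \ref{46}, then invoke Lemma \ref{lems} for (i)--(iii) and the reproducing property \eqref{rkhs} for (iv). The only cosmetic differences are that you verify $\slam e_u\in\dz{\slam^*}$ through Proposition \ref{sprz}\,(vi) rather than by the paper's direct computation with Proposition \ref{sprz}\,(iv), and you apply Lemma \ref{lems} to $|\slam|$ after first extracting the square root, whereas the paper applies it to $\slam^{*}\slam$ with exponent $\alpha/2$.
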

   \begin{proof}
   (i)--(iii) Applying Proposition \ref{sprz}\,(iii),
we get
   \begin{multline} \label{kwad}
(\slam^* \slam f) (u) = \sum_{v \in \dzi u}
\overline{\lambda_v} (\slam f)(v)
\overset{\eqref{lamtf}}= \sum_{v \in \dzi u}
|\lambda_v|^2 f(\pa v)
   \\
= \sum_{v\in \dzi{u}} |\lambda_v|^2 f(u)
\overset{\eqref{eu}}= \|\slam e_u\|^2 f(u), \quad u
\in V, \, f \in \dz{\slam^*\slam}.
   \end{multline}
Now, we show that $\escr \subseteq \dz
{\slam^*\slam}$. Indeed, if $u \in V$, then
   \begin{align*}
\sum_{w \in V} \Big| \sum_{v \in \dzi w} \overline
{\lambda_v} (\slam e_u)(v)\Big|^2
\overset{\eqref{lamtf}}= \sum_{w \in V} \Big| \sum_{v
\in \dzi w} |\lambda_v|^2 e_u(w)\Big|^2
\overset{\eqref{eu}}= \|\slam e_u\|^4 < \infty,
   \end{align*}
   which, by Proposition  \ref{sprz}\,(iv), implies that
$\slam e_u \in \dz{\slam^*}$. Hence, \eqref{kwad}
leads to
   \begin{align} \label{slgsl}
\slam^*\slam e_u = \|\slam e_u\|^2 e_u, \quad u \in V.
   \end{align}
In view of Proposition \ref{clos} and
\cite[Theorem 5.39]{weid}, the operator
$\slam^*\slam$ is positive and selfadjoint. Using
\eqref{slgsl} and applying Lemma \ref{lems} with
$\alpha/2$ in place of $\alpha$ to the operator
$\slam^*\slam$, we obtain (i), (ii) and (iii).

   (iv) It follows from \eqref{rkhs} that for all $f
\in \dz{|\slam|^\alpha}$ and $u \in V$,
   \begin{align*}
(|\slam|^\alpha f) (u) = \is{|\slam|^\alpha f}{e_u}
\overset{\mathrm{(i)}}= \is{f}{|\slam|^\alpha e_u}
\overset{\mathrm{(iii)}}= \is{f}{\|\slam e_u\|^\alpha
e_u} = \|\slam e_u\|^\alpha f(u).
   \end{align*}
This completes the proof.
   \end{proof}
   \begin{cor} \label{chariso}
A weighted shift $\slam$ on a directed tree $\tcal$ is
an isometry on $\ell^2(V)$ if and only if
$\sum_{v\in\dzi u} |\lambda_v|^2=1$ for all $u\in V$.
   \end{cor}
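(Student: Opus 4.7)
The plan is to derive Corollary \ref{chariso} directly from Propositions \ref{desc} and \ref{ogrs}, since these already contain the computation $\|\slam e_u\|^2 = \sum_{v\in\dzi u} |\lambda_v|^2$ and the criterion for boundedness. No structural results about the tree beyond these are needed; this is a one-line observation at the basis vectors combined with polarization/density.

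For the ``only if'' direction, assume that $\slam$ is an isometry on $\ell^2(V)$. Then $\dz{\slam} = \ell^2(V)$, so in particular $e_u \in \dz{\slam}$ for every $u \in V$, and the isometry property gives $\|\slam e_u\| = \|e_u\| = 1$. By Proposition~\ref{desc}(iii) this rewrites as $\sum_{v\in\dzi u}|\lambda_v|^2 = 1$ for every $u \in V$.

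For the ``if'' direction, assume that $\sum_{v\in\dzi u}|\lambda_v|^2 = 1$ for all $u \in V$. Then the supremum appearing in Proposition~\ref{ogrs}(iii) equals $1$, so $\slam \in \ogr{\ell^2(V)}$ and in particular $\dz{\slam} = \ell^2(V)$. Now I would use Proposition~\ref{desc}(ii): for every $f \in \ell^2(V)$,
\[
\|f\|^2 + \|\slam f\|^2 \;=\; \|f\|_{\slam}^2 \;=\; \sum_{u \in V}\Bigl(1 + \sum_{v\in\dzi u}|\lambda_v|^2\Bigr)|f(u)|^2 \;=\; 2\sum_{u\in V}|f(u)|^2 \;=\; 2\|f\|^2,
\]
which yields $\|\slam f\| = \|f\|$, i.e., $\slam$ is an isometry. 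Alternatively, one can argue through the modulus: by Proposition~\ref{3}(iii), the hypothesis gives $|\slam|e_u = e_u$ for all $u \in V$, so $|\slam|$ agrees with $I$ on the linear span $\escr$; since $\escr$ is a core for $|\slam|$ by Proposition~\ref{3}(ii) and $I$ is closed, we conclude $|\slam| = I$, whence $\slam^*\slam = I$.

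There is no genuine obstacle here; the only conceptual point worth flagging is that the condition $\sum_{v\in\dzi u}|\lambda_v|^2 = 1$ is imposed at \emph{every} vertex $u \in V$, including leaves (where $\dzi u = \varnothing$ forces the empty sum to equal $1$, which is impossible), so the statement tacitly assumes $\tcal$ is leafless whenever an isometric $\slam$ exists. This is consistent with Proposition~\ref{dzisdesz}, as isometries are injective. I would not belabor this in the formal proof but simply present the two short implications above.
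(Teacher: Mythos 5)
Your proof is correct and follows essentially the same route as the paper, whose entire proof is ``Apply Propositions \ref{desc}, \ref{ogrs} and \ref{3} (with $\alpha=2$)''; your alternative argument via $|\slam|e_u=e_u$ and the core property is exactly that application of Proposition \ref{3}. Your primary argument via the graph-norm identity of Proposition \ref{desc}\,(ii) is a marginally more direct variant that avoids Proposition \ref{3} altogether, but it rests on the same computation $\|\slam e_u\|^2=\sum_{v\in\dzi u}|\lambda_v|^2$ and is not a genuinely different approach.
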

   \begin{proof}
Apply Propositions \ref{desc}, \ref{ogrs} and \ref{3}
(with $\alpha=2$).
   \end{proof}
Since the compactness and the membership in the
Schatten-von Neumann $p$-class depend on
analogous properties of the modulus of the
operator in question (cf.\ \cite{Sch,Rin}), the
following corollary is an immediate consequence
of Proposition~ \ref{3}. Below, the limit
$\lim_{u \in V}$ and the sum $\sum_{u \in V}$ are
understood in a generalized sense.
   \begin{cor}
Let $\slam \in \ogr{\ell^2(V)}$ be a weighted shift on
a directed tree $\tcal$ and let $p\in [1,\infty)$.
Then the following two assertions hold.
   \begin{enumerate}
   \item[(i)] $\slam$ is compact if and only if $\lim_{u \in V}
\|\slam e_u\| = 0$,
   \item[(ii)] $\slam$ is in the Schatten $p$-class
if and only if $\sum_{u \in V} \|\slam e_u\|^p
<\infty$.
   \end{enumerate}
   \end{cor}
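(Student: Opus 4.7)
The plan is to reduce everything to the modulus $|\slam|$ and then use that $|\slam|$ is a diagonal operator in the orthonormal basis $\{e_u\}_{u\in V}$. By Proposition \ref{3}\,(iii) (applied with $\alpha=1$), we have $|\slam| e_u = \|\slam e_u\| e_u$ for every $u\in V$, so $|\slam|$ is precisely the diagonal operator subordinated to the orthonormal basis $\{e_u\}_{u\in V}$ with diagonal elements $\{\|\slam e_u\|\}_{u\in V}$.

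Next I would invoke the two standard facts from the theory of compact and Schatten--von Neumann class operators: for any $T\in\ogr{\ell^2(V)}$, the operator $T$ is compact if and only if $|T|$ is compact, and $T$ belongs to the Schatten $p$-class if and only if $|T|$ does (this is where \cite{Sch,Rin} enter). Consequently, assertions (i) and (ii) become assertions about the diagonal operator $|\slam|$.

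For a diagonal operator $D$ with nonnegative diagonal elements $\{t_\iota\}_{\iota\in\varXi}$ relative to an orthonormal basis $\{e_\iota\}_{\iota\in\varXi}$, it is a classical fact that $D$ is compact if and only if $\lim_{\iota\in\varXi}t_\iota=0$ (in the generalized sense), and $D$ is in the Schatten $p$-class if and only if $\sum_{\iota\in\varXi}t_\iota^p<\infty$; indeed, the nonzero eigenvalues of $D$ (repeated according to multiplicity) are precisely the nonzero $t_\iota$'s, and these are also the singular values of $D$. Substituting $t_u=\|\slam e_u\|$ yields (i) and (ii) at once.

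There is no genuine obstacle here; the proof is essentially a one-line reference. The only point that deserves a moment of care is the use of the generalized limit and sum when $V$ is uncountable, but since $\slam\in\ogr{\ell^2(V)}$ and each of the conditions $\lim_u\|\slam e_u\|=0$ or $\sum_u\|\slam e_u\|^p<\infty$ forces all but countably many $\|\slam e_u\|$ to vanish, the diagonal reduction carries through without complication.
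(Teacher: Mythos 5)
Your proposal is correct and follows exactly the route the paper takes: the corollary is stated there as an immediate consequence of Proposition \ref{3} together with the fact (from \cite{Sch,Rin}) that compactness and Schatten $p$-class membership of $\slam$ are equivalent to the corresponding properties of the diagonal operator $|\slam|$ with diagonal entries $\{\|\slam e_u\|\}_{u\in V}$. Your extra remark about the generalized limit and sum when $V$ is uncountable is a harmless elaboration of the same argument.
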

    \subsection{\label{s2-polar}The polar decomposition}
We now describe the polar decomposition of a densely
defined weighted shift on a directed tree. Recall,
that if $T$ is a closed densely defined operator in a
complex Hilbert space $\hh$, then there exists a
unique partial isometry $U \in \ogr{\hh}$ with initial
space $\overline{\ob{|T|}}$ such that $T=U|T|$. Such
decomposition is called the {\em polar decomposition}
of $T$ (see e.g., \cite[Theorem 8.1.2]{b-s}). If
$T=U|T|$ is the {\em polar decomposition} of $T$, then
the final space of $U$ equals $\overline{\ob T}$.
   \begin{pro}\label{polar}
Let $\slam$ be a densely defined weighted shift on a
directed tree $\tcal$ with weights $\lambdab =
\{\lambda_v\}_{v \in V^\circ}$, and let $\slam =
U|\slam|$ be the polar decomposition of $\slam$. Then
$|\slam|$ is the diagonal operator subordinated to the
orthonormal basis $\{e_u\}_{u \in V}$ with diagonal
elements $\{\|\slam e_u\|\}_{u \in V}$, and $U$ is the
weighted shift $S_\pib$ on $\tcal$ with weights
$\pib=\{\pi_{u}\}_{u \in V^\circ}$ given
by\,\footnote{\;For simplicity, we suppress the
explicit dependence of $\pib$ on $\lambdab$ in the
notation.}
   \begin{align} \label{pidef}
\pi_u =
   \begin{cases}
\cfrac{\lambda_u}{\|\slam e_{\pa u}\|} & \text{if }
\pa u \in \vplus,
   \\[2.5ex]
   0 & \text{if } \pa u \notin \vplus,
   \end{cases}
\quad u \in V^\circ,
   \end{align}
where \idxx{$\vplus$}{60} $\vplus:= \{u \in V\colon
\|\slam e_u\| > 0\}$. Moreover, the following
assertions hold\/{\em :}
   \begin{enumerate}
   \item[(i)] $\jd{U}=\jd{\slam} = \jd{|\slam|}=\ell^2(V \setminus
\vplus)$ and $\overline{\ob{\slam^*}}=\ell^2(\vplus)$,
   \\[-0.5ex]
   \item[(ii)]   $\jd{\slam^*} =
   \begin{cases}
\langle e_{\koo} \rangle \oplus \bigoplus_{u \in
V^\prime} \big(\ell^2(\dzi u) \ominus \langle
\lambdab^u \rangle\big) & \text{if $\tcal$ has a
root,}
   \\[.5ex]
\bigoplus_{u \in V^\prime} \big(\ell^2(\dzi u)
\ominus \langle \lambdab^u \rangle\big) &
\text{otherwise,}
   \end{cases}$
\\[2ex] where \idxx{$\lambdab^u$}{60} $\lambdab^u
\in \ell^2(\dzi u)$ is given by $\lambdab^u\colon \dzi
u \ni v \to \lambda_v \in \cbb$, and
\idxx{$<\hspace{-.5ex}\lambdab^u \hspace{-.6ex}>$}{61}
$\langle \lambdab^u\rangle$ is the linear span of
$\{\lambdab^u\}$,
   \item[(iii)]  the initial space of $U$ equals
$\ell^2(\vplus)$,
   \item[(iv)] $\ob{U} = \overline{\ob{\slam}} =\bigoplus_{u
\in V^\prime} \langle \lambdab^u\rangle$.
   \end{enumerate}
   \end{pro}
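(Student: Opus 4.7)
The plan is to exhibit explicit candidates for $|\slam|$ and for the partial isometry $U$, and then invoke uniqueness of the polar decomposition. Proposition~\ref{3} already shows that $|\slam| e_u = \|\slam e_u\| e_u$ for every $u \in V$ and that $\escr$ is a core of $|\slam|$; since $|\slam|$ is positive and selfadjoint and $\{e_u\}_{u\in V}$ is an orthonormal basis, the uniqueness asserted in Lemma~\ref{lems} identifies $|\slam|$ as the diagonal operator subordinated to $\{e_u\}_{u\in V}$ with diagonal $\{\|\slam e_u\|\}_{u\in V}$. The kernel $\jd{|\slam|} = \ell^2(V\setminus \vplus)$ is then immediate from the description of its spectral measure given in the proof of Lemma~\ref{lems} (it is the closed linear span of the eigenvectors $e_u$ with zero eigenvalue).

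Next I define the candidate partial isometry as the weighted shift $S_\pib$ on $\tcal$ with weights $\pib$ given by \eqref{pidef}. For each $u \in V$, one computes $\sum_{v\in\dzi u}|\pi_v|^2 = 1$ when $u \in \vplus$ (the denominator $\|\slam e_u\|^2$ cancels $\sum_{v\in\dzi u}|\lambda_v|^2$ via \eqref{eu}) and $0$ when $u \notin \vplus$, so Proposition~\ref{ogrs} gives $S_\pib \in \ogr{\ell^2(V)}$ with $\|S_\pib\| \le 1$. Using the disjointness $\dzi {u_1} \cap \dzi{u_2} = \varnothing$ for $u_1 \ne u_2$ from Proposition~\ref{46}, the vectors $S_\pib e_u = \sum_{v\in\dzi u}\pi_v e_v$ with $u\in\vplus$ are mutually orthogonal unit vectors, while $S_\pib e_u = 0$ for $u\notin\vplus$. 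Hence $S_\pib$ is a partial isometry with initial space $\ell^2(\vplus)$, and this coincides with $(\jd{|\slam|})^\perp = \overline{\ob{|\slam|}}$, as required of the partial isometry in the polar decomposition.

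It remains to verify $S_\pib |\slam| = \slam$. On $\escr$ this is a direct computation: for $u \in \vplus$, $S_\pib |\slam| e_u = \|\slam e_u\| S_\pib e_u = \sum_{v\in\dzi u}\lambda_v e_v = \slam e_u$ by \eqref{eu} and \eqref{pidef}, while for $u \notin \vplus$ both sides vanish since $\|\slam e_u\|=0$. To promote this to all of $\dz{|\slam|}$, use that $\escr$ is a core of $|\slam|$ by Proposition~\ref{3}\,(ii), that $\slam$ is closed by Proposition~\ref{clos}, and that $S_\pib$ is bounded: for $f\in\dz{|\slam|}$ choose $f_n\in\escr$ with $f_n\to f$ and $|\slam|f_n\to|\slam|f$, so $S_\pib|\slam|f_n\to S_\pib|\slam|f$; since $S_\pib|\slam|f_n=\slam f_n$, closedness of $\slam$ forces $f\in\dz{\slam}$ and $\slam f = S_\pib|\slam|f$. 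The reverse inclusion $\dz\slam\subseteq\dz{|\slam|}$ is the standard polar-decomposition identity $\dz\slam=\dz{|\slam|}$. Uniqueness of the polar decomposition now yields $U=S_\pib$. I expect this propagation step to be the only genuinely non-routine point in the proof.

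The remaining items are bookkeeping. For (i) and (iii): $\jd U$ is the orthogonal complement of the initial space of $U$, whence $\jd U = \ell^2(V\setminus \vplus) = \jd{|\slam|} = \jd{\slam}$, and $\overline{\ob{\slam^*}} = \jd{\slam}^\perp = \ell^2(\vplus)$. For (iv): since $\escr$ is a core of $\slam$, $\overline{\ob\slam}$ equals the closure of the linear span of $\{\slam e_u : u\in V\} = \{\lambdab^u : u\in V^\prime\}\cup\{0\}$; the mutual orthogonality of the subspaces $\ell^2(\dzi u)$ (Proposition~\ref{46}) yields $\overline{\ob\slam}=\bigoplus_{u\in V^\prime}\langle\lambdab^u\rangle$, which equals $\ob U$ by the general polar-decomposition identity identifying the final space with $\overline{\ob\slam}$. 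Finally, (ii) follows by taking orthogonal complements inside the decomposition $\ell^2(V) = \langle e_{\koo}\rangle \oplus \bigoplus_{u\in V^\prime}\ell^2(\dzi u)$ when $\tcal$ has a root, and $\ell^2(V) = \bigoplus_{u\in V^\prime}\ell^2(\dzi u)$ otherwise (both being instances of Proposition~\ref{46} together with $V = \Ko\tcal \sqcup V^\circ$), so that $\jd{\slam^*} = \overline{\ob\slam}^\perp$ takes precisely the stated form.
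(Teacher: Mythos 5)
Your proof is correct, and it rests on the same supporting results as the paper's (Lemma \ref{lems}, Propositions \ref{3}, \ref{ogrs}, \ref{46} and \ref{sprz}); in particular the identification of $|\slam|$ as the diagonal operator is identical. Where you take a different route is in identifying $U$ with $S_\pib$. You construct $S_\pib$ from scratch, check that it is a partial isometry with initial space $\ell^2(\vplus)=\overline{\ob{|\slam|}}$, verify the factorization $S_\pib|\slam|=\slam$ on the core $\escr$, propagate it to all of $\dz{|\slam|}$ using boundedness of $S_\pib$ and closedness of $\slam$, and then invoke uniqueness of the polar decomposition. The paper argues in the opposite direction: it starts from the partial isometry $U$ that the polar decomposition already supplies and simply evaluates $Ue_u=\|\slam e_u\|^{-1}U|\slam|e_u=\|\slam e_u\|^{-1}\slam e_u$ for $u\in\vplus$ (and $Ue_u=0$ for $u\notin\vplus$, since $e_u\in\jd{U}$), after which $U=S_\pib$ follows at once because two bounded operators agreeing on an orthonormal basis coincide. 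The paper's route thus avoids entirely the core-to-domain propagation step that you rightly flag as the only delicate point of your argument, at the cost of having to know in advance that the candidate weights are those in \eqref{pidef}; your construct-and-verify route is slightly longer but makes the guess of $\pib$ feel less ad hoc. For assertions (ii) and (iv) the two arguments are complementations of each other: the paper computes $\jd{\slam^*}$ first via Proposition \ref{sprz}\,(iii)--(iv) and obtains $\overline{\ob{\slam}}$ by taking orthogonal complements, whereas you compute $\overline{\ob{\slam}}$ as the closed span of $\{\slam e_u\colon u\in V\}$ (using that $\escr$ is a core of $\slam$) and complement to get $\jd{\slam^*}$; the content is the same.
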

   \begin{proof}
It follows from Proposition \ref{clos} that $\slam$ is
closed. By Lemma \ref{lems} and Proposition \ref{3},
$|\slam|$ is the diagonal operator subordinated to the
orthonormal basis $\{e_u\}_{u \in V}$ with diagonal
elements $\{\|\slam e_u\|\}_{u \in V}$. Hence, the
initial space $\overline{\ob{|\slam|}}$ of $U$ equals
$\ell^2(\vplus)$. This means that (iii) holds. As a
consequence of (iii), we obtain (i). If $u \in
\vplus$, then
   \begin{multline} \label{10.IV}
Ue_u = \frac 1 {\|\slam e_u\|} U |\slam| e_u = \frac 1
{\|\slam e_u\|} \slam e_u
   \\
\overset{\eqref{eu}}= \sum_{v \in \dzi u} \frac
{\lambda_v}{\|\slam e_u\|} e_v = \sum_{v \in \dzi u}
\frac {\lambda_v}{\|\slam e_{\pa v}\|} e_v.
   \end{multline}
In turn, if $u \in V\setminus \vplus$, then by (i) $e_u \in
\ell^2(V \setminus \vplus) = \jd{U}$, and so $Ue_u = 0$.
Using \eqref{pidef} and Proposition \ref{ogrs}, we see that
$S_\pib \in \ogr{\ell^2(V)}$. Since, by \eqref{eu},
\eqref{pidef} and \eqref{10.IV}, both operators $U$ and
$S_\pib$ coincide on basic vectors $e_u$, $u \in V$, we
deduce that $U=S_\pib$.

We now describe the final space of $U$. Consider first
the case when $\tcal$ has a root. It follows from
Proposition \ref{sprz}\,(iii) and (iv), and
Proposition \ref{46} that
   \begin{multline}     \label{10.IVb}
\jd{\slam^*} = \{f \in \ell^2(V) \colon \is{f|_{\dzi
u}}{\lambdab^u}=0 \text{ for all } u \in V^\prime\}
   \\
= \langle e_{\koo} \rangle \oplus \bigoplus_{u \in
V^\prime} \Big(\ell^2(\dzi u) \ominus \langle
\lambdab^u \rangle\Big).
   \end{multline}
Since, by Proposition~ \ref{46}, $\ell^2(V) = \langle
e_{\koo} \rangle \oplus \bigoplus_{u \in V^\prime}
\ell^2(\dzi{u})$, we deduce from \eqref{10.IVb} that
$\overline{\ob{\slam}} = \bigoplus_{u \in V^\prime}
\langle \lambdab^u\rangle$. If $\tcal$ is rootless,
then \eqref{10.IVb} holds with $\langle
e_{\koo}\rangle$ removed. As a consequence, we get the
same formula for $\overline{\ob{\slam}}$. This proves
assertions (ii) and (iv), and hence completes the
proof.
   \end{proof}
Using the description of the polar decomposition of
$\slam$ given in Proposition \ref{polar} and the last
paragraph of the proof of Lemma \ref{lems} (with
$A=|\slam|$), we see that $\ob{\slam} = S_\pib
(\ob{|\slam|}) = S_\pib (\dz{A_1^{-1}})$, where
$A_1^{-1}$ is a diagonal operator with diagonal
elements $\big\{\frac 1 {\|\slam e_u\|}\big\}_{u \in
\vplus}$. The details are left to the reader.
   \subsection{\label{fredo}Fredholm directed trees}
We begin by describing weighted shifts on directed
trees with closed ranges. Recall that $\vplus= \{u \in
V\colon \|\slam e_u\|
> 0\} \subseteq V^\prime$.
   \begin{pro} \label{domknietosc}
Let $\slam$ be a densely defined weighted shift on a
directed tree $\tcal$ with weights $\lambdab =
\{\lambda_v\}_{v \in V^\circ}$. Then the following
conditions are equivalent\/{\em :}
   \begin{enumerate}
   \item[(i)] $\ob{\slam}$ is closed,
   \item[(ii)] $\ob{\slam^*}$ is closed,
   \item[(iii)] there exists a real number $\delta >0$
such that $\|\slam e_u\| \Ge \delta$ for every $u \in
\vplus$.
   \end{enumerate}
   \end{pro}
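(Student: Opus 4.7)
The plan is to reduce everything to the polar decomposition $\slam = U|\slam|$ obtained in Proposition \ref{polar} and the ``moreover'' part of Lemma \ref{lems}, together with the classical closed range theorem for closed densely defined operators.

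First I would handle the equivalence (i)$\Leftrightarrow$(ii) by invoking the standard fact that, for a closed densely defined operator $T$ between Hilbert spaces, $\ob{T}$ is closed if and only if $\ob{T^*}$ is closed (the closed range theorem). Since $\slam$ is closed by Proposition \ref{clos}, and densely defined by assumption, this applies verbatim.

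Next I would prove (i)$\Leftrightarrow$(iii). By Proposition \ref{polar}, $\slam = U|\slam|$ where $U$ is a partial isometry with initial space $\overline{\ob{|\slam|}} = \ell^2(\vplus)$, and $|\slam|$ is the diagonal operator with diagonal elements $\{\|\slam e_u\|\}_{u \in V}$. Because $U$ restricted to its initial space is an isometry (hence a homeomorphism onto its final space) and because $\ob{|\slam|} \subseteq \ell^2(\vplus)$, the identity
\[
\ob{\slam} = U\bigl(\ob{|\slam|}\bigr)
\]
yields that $\ob{\slam}$ is closed if and only if $\ob{|\slam|}$ is closed. Now the ``moreover'' part of Lemma \ref{lems}, applied to $A = |\slam|$ with diagonal elements $t_u = \|\slam e_u\|$, tells us that $\ob{|\slam|}$ is closed if and only if there is $\delta > 0$ with $\|\slam e_u\| \Ge \delta$ for all $u \in V$ such that $\|\slam e_u\| > 0$. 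Since $\vplus = \{u \in V \colon \|\slam e_u\| > 0\}$ by definition, this is exactly condition (iii).

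Combining the two equivalences completes the proof. Essentially nothing hard remains: the main content has been packaged into Proposition \ref{polar} and the ``moreover'' part of Lemma \ref{lems}, so the only mildly delicate point is the observation that $U|_{\ell^2(\vplus)}$ is an isometry and therefore transports the closedness of $\ob{|\slam|}$ to that of $\ob{\slam}$ and back. I would spell this out in one or two lines rather than appealing to a general principle, since $U$ is a partial isometry and one wants to be sure the range of the domain piece $\ob{|\slam|}$ sits inside the initial space where $U$ acts isometrically.
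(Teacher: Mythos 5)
Your proposal is correct and follows essentially the same route as the paper: the equivalence (i)$\Leftrightarrow$(ii) via the closed range theorem for closed densely defined operators, and (i)$\Leftrightarrow$(iii) by transporting closedness of the range through the polar decomposition of Proposition \ref{polar} to the diagonal operator $|\slam|$ and then invoking the ``moreover'' part of Lemma \ref{lems}. The only difference is that you spell out explicitly why $U$ preserves closedness of $\ob{|\slam|}$, which the paper leaves implicit.
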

   \begin{proof}
Equivalence (i)$\Leftrightarrow$(ii) holds for general
closed densely defined Banach space operators (cf.\
\cite[Theorem IV.1.2]{Gol}).

(i)$\Leftrightarrow$(iii) It follows from the polar
decomposition of $\slam$ that $\ob{\slam}$ is closed
if and only if $\ob{|\slam|}$ is closed. This fact
combined with Proposition \ref{polar} and Lemma
\ref{lems} completes the proof.
   \end{proof}
   Recall that a closed densely defined operator $T$
in a complex Hilbert space $\hh$ is said to be {\em
Fredholm} if its range is closed and the spaces
$\jd{T}$ and $\jd{T^*}$ are finite dimensional. It is
well known that a closed densely defined operator $T$
in $\hh$ is Fredholm if and only if the spaces
$\jd{T}$ and $\hh / \ob{T}$ are finite dimensional
(cf.\ \cite[Corollary IV.1.13]{Gol}). The {\em index}
\idx{$\ind T$}{62} of a Fredholm operator $T$ is given
by $\ind T=\dim \jd{T}-\dim\jd{T^*}$ (see \cite[\S
4.2]{Gol} and \cite{Goh-Gol-Kaa} for the case of
unbounded operators and \cite{con} for bounded ones).

Fredholm weighted shifts on directed trees and their
indexes can be characterized as follows (below we adopt the
convention that $\inf \varnothing = \infty$).
   \begin{pro}\label{fredholm}
Let $\slam$ be a densely defined weighted shift on a
directed tree $\tcal$ with weights $\lambdab =
\{\lambda_v\}_{v \in V^\circ}$. Then the following
conditions are equivalent\/{\em :}
   \begin{enumerate}
   \item[(i)] $\slam$ is a Fredholm operator,
   \item[(ii)] $\mathfrak c(\slam) > 0$ and
$\mathfrak b(\slam) < \infty$, where \idxx{$\mathfrak
b(\slam)$}{63}
   \begin{align*}
\mathfrak b(\slam) & :=\sum_{u \in \vplus}
(\card{\dzi u}-1) + \sum_{u \in V^\prime
\setminus \vplus} \card{\dzi u},
      \\
\mathfrak c(\slam) & := \inf\{|\lambda_u|\colon u
\in V^\circ, \lambda_u \neq 0,
\card{\dzi{\pa{u}}} = 1\},
   \end{align*}
   \item[(iii)] $\mathfrak c(\slam) > 0$, $\card{\dzi u} < \infty$ for all $u \in V$,
$\card{V_{\prec}} < \infty$ $($see \eqref{prec}
for the definition of $V_\prec$$)$ and
$\card{V^\prime \setminus \vplus} < \infty$.
   \end{enumerate}
If $\slam$ is Fredholm, then \idxx{$\mathfrak
a(\slam)$}{64} $\mathfrak a(\slam) := \card{V
\setminus \vplus} < \infty$ and
   \begin{align} \label{finf}
\ind \slam =
   \begin{cases}
   \mathfrak a(\slam)-\mathfrak b(\slam) - 1 &
\text{if $\tcal$ has a root,}
   \\
   \mathfrak a(\slam)-\mathfrak b(\slam) &
\text{otherwise.}
   \end{cases}
   \end{align}
   \end{pro}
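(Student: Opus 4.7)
My plan is to read everything off from Proposition \ref{polar} (which exhibits $\jd{\slam}$ and $\jd{\slam^*}$ explicitly) and Proposition \ref{domknietosc} (which characterizes closed range by a uniform lower bound on $\|\slam e_u\|$ over $\vplus$). First I would dispatch the equivalence (ii)$\Leftrightarrow$(iii) as a purely combinatorial matter. Using $\vplus\subseteq V^\prime$ and the partition $V^\prime = (V^\prime\setminus\vplus)\sqcup(\vplus\cap V_{\prec})\sqcup(\vplus\setminus V_{\prec})$, each summand in $\mathfrak b(\slam)$ is nonnegative and those terms $\card{\dzi u}-1$ with $u\in\vplus\setminus V_{\prec}$ vanish; hence $\mathfrak b(\slam)<\infty$ forces $\card{V^\prime\setminus\vplus}<\infty$, $\card{V_{\prec}}<\infty$ and $\card{\dzi u}<\infty$ on $V_{\prec}$ as well as on $V^\prime\setminus\vplus$ --- which together amount to $\card{\dzi u}<\infty$ for every $u\in V$. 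The converse is immediate.

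Next I would establish (i)$\Leftrightarrow$(iii). From Proposition \ref{polar}(i), $\dim\jd{\slam}=\card{V\setminus\vplus}=\mathfrak a(\slam)$. From Proposition \ref{polar}(ii), noting that $\lambdab^u=0$ exactly when $u\in V^\prime\setminus\vplus$, so that $\dim\bigl(\ell^2(\dzi u)\ominus\langle\lambdab^u\rangle\bigr)$ equals $\card{\dzi u}-1$ on $\vplus$ and $\card{\dzi u}$ on $V^\prime\setminus\vplus$, one gets
\begin{align*}
\dim\jd{\slam^*} \;=\; \epsilon_\tcal + \mathfrak b(\slam),
\end{align*}
where $\epsilon_\tcal=1$ if $\tcal$ has a root and $0$ otherwise. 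Hence $\dim\jd{\slam^*}<\infty$ iff $\mathfrak b(\slam)<\infty$; and assuming this, Proposition \ref{fnl} applies to yield $\card{V\setminus V^\prime}<\infty$, so $\mathfrak a(\slam)=\card{V\setminus V^\prime}+\card{V^\prime\setminus\vplus}<\infty$ as well. In particular $\dim\jd{\slam}<\infty$ is automatic once $\mathfrak b(\slam)<\infty$.

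It remains to identify the range-closedness condition. By Proposition \ref{domknietosc}, $\ob{\slam}$ is closed iff $\inf_{u\in\vplus}\|\slam e_u\|>0$. Assuming (iii), the set $\vplus\cap V_{\prec}$ is finite, so the infimum over it is attained and (being over a finite set of positive numbers) positive; this reduces the question to the infimum over $\vplus\cap(V^\prime\setminus V_{\prec})$, where every vertex $u$ has a unique child $v$ satisfying $\|\slam e_u\|=|\lambda_v|$ with $\lambda_v\neq 0$. Reindexing by $v$ (so $u=\pa v$) turns this infimum into the defining infimum of $\mathfrak c(\slam)$; hence under (iii), $\ob{\slam}$ closed $\Leftrightarrow$ $\mathfrak c(\slam)>0$. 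Putting everything together, $\slam$ is Fredholm iff $\dim\jd{\slam^*}<\infty$ and $\ob{\slam}$ is closed, i.e.\ $\mathfrak b(\slam)<\infty$ and $\mathfrak c(\slam)>0$, which is (ii), and these are equivalent to (iii).

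Finally, when $\slam$ is Fredholm we have $\mathfrak a(\slam)<\infty$ by the second paragraph, and the index identity
\begin{align*}
\ind{\slam} \;=\; \dim\jd{\slam}-\dim\jd{\slam^*} \;=\; \mathfrak a(\slam)-\bigl(\epsilon_\tcal+\mathfrak b(\slam)\bigr)
\end{align*}
reproduces \eqref{finf}. The only step requiring a nontrivial observation is the last reduction from $\inf_{u\in\vplus}\|\slam e_u\|>0$ to $\mathfrak c(\slam)>0$: one must use $\card{V_{\prec}}<\infty$ (itself a consequence of $\mathfrak b(\slam)<\infty$) to discard the branching part of $\vplus$ and recognize the remaining infimum over non-branching vertices as precisely $\mathfrak c(\slam)$.
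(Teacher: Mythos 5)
Your proof is correct and follows essentially the same route as the paper: it reads $\dim\jd{\slam}=\mathfrak a(\slam)$ and $\dim\jd{\slam^*}=\epsilon_\tcal+\mathfrak b(\slam)$ off Proposition \ref{polar}, handles closed range via Proposition \ref{domknietosc}, and uses Proposition \ref{fnl} for the finiteness bookkeeping. Your only addition is to spell out the reduction of $\inf_{u\in\vplus}\|\slam e_u\|>0$ to $\mathfrak c(\slam)>0$ (using the finiteness of $V_{\prec}$), a step the paper leaves implicit.
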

   \begin{proof}
First we prove that if $\mathfrak b(\slam) < \infty$,
then $\mathfrak a(\slam) < \infty$, $\card{\dzi u} <
\infty$ for all $u \in V$ and $\card{V_{\prec}} <
\infty$. We begin by recalling that $\vplus \subseteq
V^\prime \subseteq V$. Then an easy computation shows
that
   \begin{align*}
   \begin{aligned} &\card{V_{\prec}} \Le \mathfrak b(\slam) <
\infty,
   \\
&\card{\dzi{u}} \Le \mathfrak b(\slam) + 1 < \infty,
\quad u \in V.
   \end{aligned}
   \end{align*}
Hence, by Proposition \ref{fnl}, $\card{V \setminus
V^\prime} < \infty$. Since $\card{V^\prime \setminus
\vplus} \Le \mathfrak b(\slam) < \infty$, we get
$\mathfrak a(\slam)=\card{V \setminus V^\prime} +
\card{V^\prime \setminus \vplus} < \infty$. This
proves our claim.

Reversing the above reasoning we deduce that if $\card{\dzi
u} < \infty$ for all $u \in V$, $\card{V_{\prec}} < \infty$
and $\card{V^\prime \setminus \vplus} < \infty$, then
$\mathfrak b(\slam) < \infty$ and consequently $\mathfrak
a(\slam) < \infty$.

(i)$\Rightarrow$(ii) Employing Proposition
\ref{polar}, we see that $\mathfrak a(\slam) <
\infty$, $\mathfrak b(\slam)<\infty$ and \eqref{finf}
holds. By Proposition \ref{domknietosc}, we conclude
that $\mathfrak c(\slam) > 0$.

(ii)$\Rightarrow$(i) In view of the first
paragraph of this proof, the implication
(ii)$\Rightarrow$(i) can be deduced from
Propositions \ref{polar} and \ref{domknietosc}.

(ii)$\Leftrightarrow$(iii) Apply two first paragraphs
of this proof.
   \end{proof}
Owing to Proposition \ref{fredholm}, a densely defined
weighted shift $\slam$ on a directed tree $\tcal$ with
nonzero weights $\lambdab = \{\lambda_v\}_{v \in V^\circ}$
and with closed range is Fredholm if and only if
$\card{\dzi u} < \infty$ for all $u \in V$ and
$\card{V_{\prec}} < \infty$. Moreover,
   \begin{align} \label{indsl}
\ind\slam =
   \begin{cases}
\card{V \setminus V^\prime} - 1 - \sum\limits_{u \in
V^\prime} \big(\card{\dzi u}-1\big) & \text{if $\tcal$
has a root},
   \\[2.5ex]
\card{V \setminus V^\prime} - \sum\limits_{u \in
V^\prime} \big(\card{\dzi u}-1\big) &
\text{otherwise.}
   \end{cases}
   \end{align}
A trivial verification shows that
   \begin{align} \label{indsl1}
\ind\slam =
   \begin{cases}
\card{(V \setminus V^\prime)\sqcup V_{\prec}} - 1 -
\sum\limits_{u \in V_{\prec}} \card{\dzi u} & \text{if
$\tcal$ has a root},
   \\[2.5ex]
\card{(V \setminus V^\prime) \sqcup V_{\prec}} -
\sum\limits_{u \in V_{\prec}} \card{\dzi u} &
\text{otherwise.}
   \end{cases}
   \end{align}
Noting that the right-hand side of \eqref{indsl} does
not depend on $\slam$, we propose the following
definition.
   \begin{dfn} \label{treeind}
A directed tree $\tcal$ such that $\card{\dzi u} <
\infty$ for all $u \in V$ and $\card{V_{\prec}} <
\infty$ is called {\em Fredholm}. The right-hand side
of \eqref{indsl} (which is equal to the right-hand
side of \eqref{indsl1}) is denoted by $\ind\tcal$ and
called the {\em index} of a Fredholm directed tree
$\tcal$.
   \end{dfn}
   The definition of $\ind{\slam}$ is correct due
to Proposition \ref{fnl}. We can rephrase
Definition \ref{treeind} as follows:\ a directed
tree is Fredholm if and only if it has finitely
many branching vertexes and each branching vertex
has finitely many children. Moreover, each
Fredholm directed tree has finitely many leaves.
As a consequence, we see that there exists
countably many non-isomorphic Fredholm directed
trees.

The following is a beneficial excerpt from the
proof of Proposition \ref{fredholm}.
   \begin{pro}\label{bfred}
If $\slam$ is a densely defined weighted shift on a
directed tree $\tcal$ with weights $\lambdab =
\{\lambda_v\}_{v \in V^\circ}$ and $\mathfrak
b(\slam)<\infty$, then $\tcal$ is Fredholm and
$\mathfrak a(\slam)<\infty$.
   \end{pro}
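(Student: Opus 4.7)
The plan is to extract the argument almost verbatim from the first paragraph of the proof of Proposition \ref{fredholm}, which already does exactly this work. I would first fix the convention that $\vplus \subseteq V^\prime \subseteq V$ (from Proposition \ref{polar} and the definitions) and then walk through three bounding estimates, each derived directly from the definition $\mathfrak{b}(\slam) = \sum_{u \in \vplus}(\card{\dzi u}-1) + \sum_{u \in V^\prime \setminus \vplus}\card{\dzi u}$.

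First I would argue that $\card{V_{\prec}} \Le \mathfrak{b}(\slam)$: every branching vertex $u \in V_{\prec}$ has $\card{\dzi u} \Ge 2$, hence contributes at least $1$ to one of the two summands in $\mathfrak{b}(\slam)$ (depending on whether $u$ lies in $\vplus$ or not). Next, I would check that $\card{\dzi u} \Le \mathfrak{b}(\slam) + 1$ for every $u \in V$: if $u \in V^\prime$ then $\card{\dzi u}$ (or $\card{\dzi u} - 1$) appears as one of the summands; if $u \notin V^\prime$ the bound is trivial. These two inequalities already show that $\tcal$ is Fredholm in the sense of Definition \ref{treeind}.

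Then I would apply Proposition \ref{fnl} to conclude that $\card{V \setminus V^\prime} < \infty$, i.e., the set of leaves is finite. Combined with the obvious estimate $\card{V^\prime \setminus \vplus} \Le \mathfrak{b}(\slam) < \infty$ (each such $u$ contributes $\card{\dzi u} \Ge 1$ to the second summand of $\mathfrak{b}(\slam)$), the decomposition
\[
V \setminus \vplus = (V \setminus V^\prime) \sqcup (V^\prime \setminus \vplus)
\]
yields $\mathfrak{a}(\slam) = \card{V \setminus \vplus} < \infty$, as required.

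There is no real obstacle here — the proposition is essentially the content of the opening paragraph of the proof of Proposition \ref{fredholm}, isolated as a separate statement because it will be reused elsewhere. The only point requiring a moment's care is to make sure that one correctly splits $V^\circ$ (and hence $V$) according to whether a vertex lies in $\vplus$, in $V^\prime \setminus \vplus$, or in $V \setminus V^\prime$, so that the definition of $\mathfrak{b}(\slam)$ is applied to the right summand in each case.
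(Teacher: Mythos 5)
Your proof is correct and follows exactly the paper's own argument, which (as the paper itself notes) is just the first paragraph of the proof of Proposition \ref{fredholm}: the bounds $\card{V_{\prec}} \Le \mathfrak b(\slam)$, $\card{\dzi u} \Le \mathfrak b(\slam)+1$, and $\card{V^\prime \setminus \vplus} \Le \mathfrak b(\slam)$, followed by Proposition \ref{fnl} and the decomposition $V \setminus \vplus = (V \setminus V^\prime) \sqcup (V^\prime \setminus \vplus)$. Nothing is missing.
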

It is worth mentioning that for every Fredholm directed
tree $\tcal$ we may construct a bounded Fredholm weighted
shift $\slam$ on $\tcal$ with nonzero weights. Indeed, in
view of Propositions \ref{ogrs} and \ref{fredholm}, the
weighted shift $\slam$ with weights $\lambda_u \equiv 1$
meets our requirements.

The assertion (i) of Lemma \ref{przycinanie} below
shows that after cutting off a leaf of a Fredholm
directed tree $\tcal$ the index of the trimmed tree
remains the same as that of $\tcal$. The assertion
(ii) states that after cutting off a straight infinite
branch $\des u$ from a simply branched leafless
subtree $\des w$ of $\tcal$, where $u$ is a child of
$w$, the index of the trimmed tree enlarges by $1$.
Finally, the assertion (iii) says that after cutting
off a trunk of a rootless $\tcal$ the index of the
trimmed tree decreases by $1$. For the definition of
the subgraph $\gcal_W$, we refer the reader to
\eqref{subtree}.
   \begin{lem} \label{przycinanie}
If $\tcal$ is a Fredholm directed tree, then the
following assertions hold.
   \begin{enumerate}
   \item[(i)] If $w \in V \setminus V^\prime$
and $V_w := V \setminus \{w\} \neq \varnothing$, then
$\ind\tcal=\ind{\tcal_{V_w}}$.
   \item[(ii)] If $w \in V$ is such that $\card {\dzi
{w}} \Ge 2$ and $\card{\dzi{v}} =1$ for all $v \in
\des {w} \setminus \{w\}$, then $\ind{\tcal_{V_{[u]}}}
=\ind\tcal+1$ for every $u \in \dzi{w}$, where
$V_{[u]} := V \setminus \des{u}$.
   \item[(iii)] If $\tcal$ is rootless and $w \in V$
is such that $V = \{\paa^{n}(w)\}_{n=1}^\infty \sqcup
\des w$ $($such $w$ always exists$)$, then
$\ind{\tcal_{\des w}} = \ind\tcal-1$.
   \end{enumerate}
   \end{lem}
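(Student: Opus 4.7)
The plan is to apply the explicit formula \eqref{indsl} directly and in each case track how three quantities move under the pruning: the cardinality $\card{V\setminus V^\prime}$ of the leaf set, the set $V_{\prec}$ of branching vertexes, and the sum $\sum_{v \in V^\prime}(\card{\dzi{v}}-1)$. In every case I would first invoke Proposition \ref{subdir} to confirm that the pruned graph is a directed tree (noting that $\des{w}=\{w\}$ in (i) because $w$ is a leaf), and inherit Fredholmness from $\tcal$, since the branching vertexes of the subtree form a subset of $V_{\prec}$ and each child-set in the subtree is contained in the corresponding child-set in $\tcal$.

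For (i) the only vertex whose child-set is altered is $\pa{w}$, and I would split into three sub-cases according to $\card{\dzit{\tcal}{\pa{w}}}$. If it equals $1$, then $\pa{w}$ becomes a new leaf, so $\card{V \setminus V^\prime}$ is unchanged and the sum does not move. If it equals $2$, then $\pa{w}$ leaves $V_{\prec}$: the leaf count drops by $1$ (only $w$ disappears, since $\pa{w}$ now has one child, not zero) and the sum loses the term $(\card{\dzi{\pa{w}}}-1)=1$. If it is at least $3$, then $\pa{w}$ stays in $V_{\prec}$ with one fewer child, so again both the leaf count and the sum drop by $1$. Formula \eqref{indsl} is unchanged in all three sub-cases, and the root/rootless status is preserved because $w \neq \ko{\tcal}$ (if $w=\ko{\tcal}$ were a leaf, then $V=\{w\}$, contradicting $V_w \neq \varnothing$).

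For (ii) the straightness hypothesis yields $\des{u} \subseteq V^\prime$ and $\des{u} \cap V_{\prec} = \varnothing$, so removing $\des{u}$ eliminates no leaves and no branching vertexes from the relevant counts; the only child-set that shrinks is $\dzit{\tcal}{w}$, which loses the element $u$. Both when $\card{\dzi{w}} = 2$ (so $w$ drops out of $V_{\prec}$) and when $\card{\dzi{w}} \Ge 3$ ($w$ stays but its child count drops by $1$), the sum $\sum(\card{\dzi{v}}-1)$ decreases by exactly $1$, while the leaf count and the root/rootless status are unchanged (the root, if any, lies outside $\des{u}$ by Corollary \ref{przem}). Formula \eqref{indsl} then gives $\ind{\tcal_{V_{[u]}}} = \ind{\tcal}+1$.

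For (iii) I would first establish existence of $w$: Fredholmness forces $\card{V_{\prec}}<\infty$, so along the parent-chain of any fixed $u \in V$ (all terms distinct by Proposition \ref{xdescor}\,(i)) only finitely many entries lie in $V_{\prec}$; from some index $K$ on $\card{\dzit{\tcal}{\paa^k(u)}}=1$, and $w:=\paa^K(u)$ satisfies the hypothesis by Proposition \ref{xdescor}\,(iv). The trunk $\{\paa^n(w)\}_{n=1}^\infty$ contains no leaf (each entry has the child $\paa^{n-1}(w)$) and no branching vertex (each has exactly one child), and the children counts of vertexes in $\des{w}$ coincide in $\tcal$ and $\tcal_{\des{w}}$ by Remark \ref{dzi}. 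Hence the leaf set, $V_{\prec}$, and all children counts are preserved, and the only change in \eqref{indsl} comes from passing from the rootless to the rooted formula, contributing the extra $-1$, so $\ind{\tcal_{\des{w}}} = \ind{\tcal}-1$. The main obstacle is the careful case analysis in (i); once the right formula is in hand, (ii) and (iii) reduce to one-line bookkeeping.
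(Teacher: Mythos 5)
Your proposal is correct and follows essentially the same route as the paper: direct bookkeeping with the formula \eqref{indsl}, a case split on $\card{\dzit{\tcal}{\pa w}}$ in (i) (your three sub-cases collapse to the paper's two, since for \eqref{indsl} only the dichotomy $\card{\dzit{\tcal}{\pa w}}=1$ versus $\Ge 2$ matters), the observation in (ii) that only the child-set of $w$ shrinks while $\des u$ contributes nothing, and in (iii) the existence of $w$ via $\card{V_\prec}<\infty$ together with the rooted-versus-rootless discrepancy in \eqref{indsl} producing the $-1$. The only step you state without justification is that each $\paa^{n}(w)$ in (iii) has exactly one child, but this follows quickly from $V=\{\paa^{n}(w)\}_{n=1}^\infty\sqcup\des w$ and \eqref{decom}, and the paper glosses over it as well.
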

Note that by Proposition \ref{subdir}, $\tcal_{V_w}$,
$\tcal_{V_{[u]}}$ and $\tcal_{\des w}$ are directed
trees.
   \begin{proof}[Proof of Lemma  \ref{przycinanie}]
   (i) It is clear that
   \begin{align*}
   E_{\tcal} = E_{\tcal_{V_w}} \sqcup \{(\pa w, w)\}
   \end{align*}
($\pa w$ makes sense because $V_w \neq\varnothing$). This
implies that $\dzit{\tcal}u = \dzit{\tcal_{V_w}}u$ for $u
\in V_w \setminus \{\pa w\}$ and $\dzit{\tcal}{\pa w} =
\dzit{\tcal_{V_w}}{\pa w} \sqcup \{w\}$. Consider first the
case when $\card{\dzit{\tcal}{\pa w}} \Ge 2$. Then
$V^\prime = V_w^\prime$ and consequently $\card{V \setminus
V^\prime} = \card{V_w\setminus V_w^\prime} + 1$. This
altogether implies that $\ind\tcal=\ind{\tcal_{V_w}}$. In
turn, if $\card{\dzit{\tcal}{\pa w}} = 1$, then arguing as
above, we see that $V^\prime = V_w^\prime \sqcup \{\pa w\}$
and $\pa w \in V_w \setminus V_w^\prime$, hence that $V
\setminus V^\prime = \{w\} \sqcup \big((V_w \setminus
V_w^\prime) \setminus \{\pa w\}\big)$ and $\card{V
\setminus V^\prime} = \card{V_w\setminus V_w^\prime}$, and
finally that $\ind\tcal=\ind{\tcal_{V_w}}$. In particular,
we have the following equalities
   \begin{align} \label{vvprim}
   \card{V_w\setminus V_w^\prime} =
   \begin{cases}
\card{V \setminus V^\prime} - 1 & \text{if }
\card{\dzit{\tcal}{\pa w}} \Ge 2,
   \\
\card{V \setminus V^\prime} & \text{if }
\card{\dzit{\tcal}{\pa w}} = 1.
   \end{cases}
   \end{align}

   (ii) It is plain that $V^\prime = V_{[u]}^\prime
\sqcup \des{u}$ and consequently that $V \setminus
V^\prime = V_{[u]} \setminus V_{[u]}^\prime$. If $v
\in V_{[u]}^\prime \setminus \{w\}$, then
$\dzit{\tcal}v = \dzit{\tcal_{V_{[u]}}}v$. In turn, if
$v=w$, then $\dzit{\tcal}v = \dzit{\tcal_{V_{[u]}}}v
\sqcup \{u\}$. Finally, if $v \in \des u$, then
$\card{\dzit{\tcal}v}=1$. All this implies that
$\ind{\tcal_{V_{[u]}}} = \ind\tcal+1$.

   (iii) Arguing as in the first paragraph of the
proof of Proposition \ref{fnl} and using
\eqref{indsl1}, we show that $\ind{\tcal_{\des w}} =
\ind\tcal-1$ (remember that the directed tree
$\tcal_{\des w}$ has a root, while $\tcal$ not). This
completes the proof.
   \end{proof}
We now show that the index of a Fredholm directed tree
does not exceed $1$.
   \begin{lem}\label{le1}
If $\tcal$ is a Fredholm directed tree, then
   \begin{enumerate}
   \item[(i)] $\ind \tcal \Le 1$ provided $\tcal$ is rootless,
   \item[(ii)] $\ind \tcal =  0$ provided $\tcal$  is
finite,
   \item[(iii)] $\ind \tcal \Le -1$ provided $\tcal$ has a root
and is infinite.
   \end{enumerate}
   \end{lem}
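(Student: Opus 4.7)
The plan is to establish (ii), then (iii), and finally to deduce (i), invoking the three trimming operations of Lemma \ref{przycinanie} at each stage. For (ii), I would induct on $\card{V}$: the base case $\card{V}=1$ gives $V=\{\koo\}$, $V\setminus V^\prime=\{\koo\}$, $V_\prec=\varnothing$, so formula \eqref{indsl1} evaluates to $1-1-0=0$; for the inductive step, every finite rooted tree with at least two vertices has a leaf $w$ with $V\setminus\{w\}\ne\varnothing$, and Lemma \ref{przycinanie}\,(i) gives $\ind\tcal = \ind\tcal_{V\setminus\{w\}}$, which equals $0$ by the inductive hypothesis.

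For (iii), I would induct on $\card{V_\prec}$. If $\card{V_\prec}=0$, every vertex has at most one child, and starting from $\koo$ generates a chain which, since $\tcal$ is infinite, must be infinite and leafless, giving $\ind\tcal = 0-1-0 = -1$. For the inductive step with $\card{V_\prec}=n\ge 1$, pick a \emph{deepest} branching vertex $v$ (one with $V_\prec\cap(\des v\setminus\{v\})=\varnothing$; this exists because $V_\prec$ is finite), so each of the $k\ge 2$ children of $v$ initiates a maximal chain---finite or infinite---in $\des v$. If $\des v$ is finite, then $v\ne\koo$ (else $V$ would be finite), and $\des v$ can be removed one leaf at a time by iterating Lemma \ref{przycinanie}\,(i) (the ambient tree stays nonempty by infiniteness of $\tcal$), preserving the index; the resulting tree $\tcal_{V\setminus\des v}$ is rooted Fredholm infinite with at most $n-1$ branching vertices, and the inductive hypothesis yields $\ind\tcal\le -1$. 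If $\des v$ is infinite, partition the children of $v$ into $F$ (those $c_i$ with $\des{c_i}$ finite) and $I$ (those with $\des{c_i}$ infinite), noting $|I|\ge 1$. I would first remove $\bigsqcup_{i\in F}\des{c_i}$ by iterated leaf-trimming (preserving the index) to obtain $\tcal_1$, and then apply Lemma \ref{przycinanie}\,(ii) at $w=v$ a total of $|I|-1$ times (vacuous if $|I|=1$) to strip off all but one of the remaining infinite chains below $v$. The hypothesis of Lemma \ref{przycinanie}\,(ii) persists throughout because every vertex of $\des v\setminus\{v\}$ then lies on an infinite chain and so has exactly one child. The final tree $\tcal_2$ is rooted Fredholm infinite with $n-1$ branching vertices, and $\ind\tcal_2 = \ind\tcal + (|I|-1)$; the inductive hypothesis gives $\ind\tcal_2\le -1$, whence $\ind\tcal\le -|I|\le -1$.

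For (i), I would apply Lemma \ref{przycinanie}\,(iii). Since $\tcal$ is rootless Fredholm, $V_\prec$ is finite, and by Proposition \ref{xdescor}\,(i) the iterates $\{\paa^k(u)\}_{k\in\nbb}$ for any fixed $u\in V$ are pairwise distinct; hence only finitely many lie in $V_\prec$, so for some $k_0$ the vertex $w:=\paa^{k_0}(u)$ satisfies $\paa^k(w)\notin V_\prec$ for every $k\ge 1$, which combined with the fact that $\paa^{k-1}(w)$ is always a child of $\paa^k(w)$ gives $\card{\dzi{\paa^k(w)}}=1$ for all $k\ge 1$. Proposition \ref{xdescor}\,(iv) then yields $V=\{\paa^n(w)\}_{n=1}^\infty\sqcup\des w$, so Lemma \ref{przycinanie}\,(iii) gives $\ind\tcal=\ind\tcal_{\des w}+1$. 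Now $\tcal_{\des w}$ is rooted Fredholm: if finite, (ii) gives $\ind\tcal_{\des w}=0$ and hence $\ind\tcal=1$; if infinite, (iii) gives $\ind\tcal_{\des w}\le -1$ and hence $\ind\tcal\le 0$; in either case $\ind\tcal\le 1$. The principal difficulty lies in the combinatorial bookkeeping in (iii)---verifying that each trimming step preserves rootedness, the Fredholm property, and infiniteness, that the hypothesis of Lemma \ref{przycinanie}\,(ii) persists through successive removals of infinite chains (crucially using the choice of $v$ as a deepest branching vertex), and that $\card{V_\prec}$ strictly decreases so the inductive hypothesis applies.
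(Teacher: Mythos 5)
Your proof is correct, but part (iii) follows a genuinely different route from the paper's. The paper reduces (i) to (iii) exactly as you do, via Lemma \ref{przycinanie}\,(iii), and treats (ii) as immediate; for (iii) it first disposes of the leafless case in one line by reading off $\ind\tcal=-1-\sum_{u\in V_\prec}(\card{\dzi u}-1)\Le -1$ directly from \eqref{indsl}, and then handles a tree with leaves by induction on the number of leaves $\card{V\setminus V^\prime}$: a leaf $w$ is pruned together with the chain $w,\pa w,\ldots,\paa^{k-1}(w)$ up to the nearest branching vertex, using only Lemma \ref{przycinanie}\,(i) and the bookkeeping \eqref{vvprim}, so the leaf count drops by one while the index is unchanged, until a leafless tree is reached. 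You instead induct on $\card{V_\prec}$, locate a deepest branching vertex, and strip its descendant set using both the leaf-trimming operation (i) and the infinite-branch-trimming operation (ii) of Lemma \ref{przycinanie} --- the latter is never invoked in the paper's proof of this lemma. The paper's route buys brevity, since the terminal case of its induction is settled by the closed formula rather than by your case analysis on finite versus infinite descendant sets of the children; your route yields the slightly sharper estimate $\ind\tcal\Le -|I|$ in the branching step and supplies an explicit inductive proof of (ii), which the paper merely asserts. Your verifications that each trimming preserves rootedness, infiniteness and the Fredholm property, and that the hypothesis of Lemma \ref{przycinanie}\,(ii) persists because the deepest branching vertex leaves only single-child chains below it, are all sound.
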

   \begin{proof}
Without loss of generality, we can assume that $\tcal$
is infinite (otherwise $\ind \tcal = 0$). If $\tcal$
is rootless, then by using assertion (iii) of Lemma
\ref{przycinanie} we are reduced to showing that
$\ind\tcal \Le -1$ whenever $\tcal$ has a root and is
infinite (note that the trimmed tree may be finite,
however this case has been covered).

If $\tcal$ is leafless, then by \eqref{indsl} we have
   \begin{align} \label{le-1}
\ind\tcal = -1 - \sum_{u \in V_{\prec}}
\big(\card{\dzi u}-1\big) \Le -1.
   \end{align}

Suppose now that $\tcal$ is not leafless (however
still infinite and with root). Then
$\varkappa(\tcal) := \card{V\setminus V^\prime}
\Ge 1$. We claim that there exists an infinite
subtree $\check\tcal$ of $\tcal$ such that
$\varkappa(\check \tcal) = \varkappa(\tcal) - 1$
and $\ind{\check \tcal} = \ind\tcal$. To prove
our claim, take any $w \in V \setminus V^\prime$.
Let $k$ be the least positive integer such that
$\paa^{k}(w) \in V_{\prec}$ (the existence of
such $k$ is justified in the second paragraph of
the proof of Proposition \ref{fnl}). Applying
\eqref{vvprim} and assertion (i) of Lemma
\ref{przycinanie} $k$ times, we get the required
subtree $\check \tcal$, which proves our claim.
Finally, employing the reduction procedure $\tcal
\rightsquigarrow \check \tcal$ $\varkappa(\tcal)$
times, we find an infinite subtree $\check \tcal$
of $\tcal$ such that $\ind\tcal = \ind{\check
\tcal}$ and $\varkappa(\check \tcal)=0$. Since
the latter means that $\check \tcal$ is leafless,
we deduce from \eqref{le-1} that $\ind\tcal =
\ind{\check \tcal} \Le -1$. This completes the
proof.
   \end{proof}
Applying \eqref{indsl1} and Lemma \ref{le1} we get the
following estimates for Fredholm directed trees.
   \begin{cor}
If $\tcal$ is an infinite Fredholm directed tree,
then
   \begin{align*}
\card{V \setminus V^\prime} + \card{V_{\prec}} & \Le
\sum\limits_{u \in V_{\prec}} \card{\dzi u} \quad
\text{if $\tcal$ has a root,}
   \\
\card{V \setminus V^\prime} + \card{V_{\prec}} & \Le 1
+ \sum\limits_{u \in V_{\prec}} \card{\dzi u} \quad
\text{otherwise.}
   \end{align*}
   \end{cor}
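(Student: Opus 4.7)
The proof is essentially a direct combination of the two cited results, so the plan is short. The key preliminary observation is that $V_\prec \subseteq V^\prime$, so $V \setminus V^\prime$ and $V_\prec$ are disjoint; hence $\card{(V \setminus V^\prime) \sqcup V_\prec} = \card{V \setminus V^\prime} + \card{V_\prec}$, which lets us rewrite the right-hand side of \eqref{indsl1} in the form that appears on the left-hand side of the desired inequalities.

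My plan is to split into the two cases from Lemma \ref{le1}. In the rooted case, I would apply the first line of \eqref{indsl1}, obtaining
\[
\ind\tcal = \card{V \setminus V^\prime} + \card{V_\prec} - 1 - \sum_{u \in V_\prec} \card{\dzi u},
\]
and then invoke Lemma \ref{le1}(iii), which gives $\ind\tcal \Le -1$ for infinite rooted $\tcal$; rearranging yields the first inequality. In the rootless case, the second line of \eqref{indsl1} gives
\[
\ind\tcal = \card{V \setminus V^\prime} + \card{V_\prec} - \sum_{u \in V_\prec} \card{\dzi u},
\]
and Lemma \ref{le1}(i) gives $\ind\tcal \Le 1$; rearranging yields the second inequality.

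There is essentially no obstacle here beyond bookkeeping, since the corollary is really just the statement that $\ind\tcal \Le -1$ (resp.\ $\Le 1$) rewritten through the explicit formula for $\ind\tcal$ from Definition \ref{treeind}. The only small point to verify explicitly is the disjointness $V_\prec \cap (V \setminus V^\prime) = \varnothing$, which is immediate from $V_\prec = \{u \in V : \card{\dzi u} \Ge 2\} \subseteq V^\prime$. The hypothesis that $\tcal$ is Fredholm ensures that all three cardinals appearing in the inequalities ($\card{V \setminus V^\prime}$, $\card{V_\prec}$, and each $\card{\dzi u}$ for $u \in V_\prec$) are finite, so the arithmetic manipulations are legitimate.
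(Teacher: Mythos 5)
Your proof is correct and follows exactly the paper's route: the corollary is stated there with the one-line justification ``Applying \eqref{indsl1} and Lemma \ref{le1}'', which is precisely the rearrangement you carry out, including the observation that $V_\prec \subseteq V^\prime$ makes the union in \eqref{indsl1} disjoint. No issues.
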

Note that for every integer $k\Le 1$ there exists
a Fredholm directed tree $\tcal$ such that
$\ind{\tcal}=k$. Indeed, it is clear that
$\zbb_-$, $\zbb$ and $\zbb_+$ are Fredholm
directed trees (see Remarks \ref{re1-2} and
\ref{surp} for appropriate definitions), and
$\ind{\zbb_-}=1$, $\ind{\zbb}=0$ and
$\ind{\zbb_+}=-1$. The directed trees $\zbb$ and
$\zbb_+$ are leafless and they have no branching
vertexes. If we consider any rootless and
leafless directed tree $\tcal$ with $|k|$
branching vertexes ($k \Le 0$) each of which
having exactly two children, then $\ind \tcal=k$.

It turns out that the index of a Fredholm weighted
shift on a directed tree does not exceed $1$ even if
some weights of $\slam$ are zero.
   \begin{thm} \label{indle1}
If $\slam$ is a Fredholm weighted shift on a directed
tree $\tcal$ with weights $\lambdab = \{\lambda_v\}_{v
\in V^\circ}$, then $\tcal$ is Fredholm and
$\ind{\slam}=\ind{\tcal} \Le 1$.
   \end{thm}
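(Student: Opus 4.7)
The plan is to extract $\tcal$-Fredholmness directly from Proposition~\ref{fredholm}\,(iii), to identify the two index formulas \eqref{finf} and \eqref{indsl} by a short bookkeeping argument, and to conclude via Lemma~\ref{le1}. First, since $\slam$ is Fredholm, condition~(iii) of Proposition~\ref{fredholm} immediately supplies $\card{\dzi u}<\infty$ for every $u\in V$ and $\card{V_{\prec}}<\infty$, which are exactly the defining conditions for $\tcal$ to be Fredholm (Definition~\ref{treeind}); this step is also recorded as Proposition~\ref{bfred}. The same condition~(iii) further yields $\card{V^\prime\setminus\vplus}<\infty$, and Proposition~\ref{fnl} then supplies $\card{V\setminus V^\prime}<\infty$, so every cardinal that I will manipulate below is finite.

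For the equality $\ind\slam=\ind\tcal$, I would compare \eqref{finf} with \eqref{indsl} and exploit the disjoint decomposition $V^\prime=\vplus\sqcup(V^\prime\setminus\vplus)$, which is legitimate because $\vplus\subseteq V^\prime$. Splitting the sum $\sum_{u\in V^\prime}(\card{\dzi u}-1)$ along this partition and separating the two summands in the definition of $\mathfrak b(\slam)$ accordingly, the discrepancy $\mathfrak b(\slam)-\sum_{u\in V^\prime}(\card{\dzi u}-1)$ collapses to $\card{V^\prime\setminus\vplus}$. The same quantity reappears in $\mathfrak a(\slam)-\card{V\setminus V^\prime}$ via $V\setminus\vplus=(V\setminus V^\prime)\sqcup(V^\prime\setminus\vplus)$, so the two contributions cancel and we obtain the desired equality (the rootless case is handled identically, with the constant $-1$ absent on both sides of \eqref{finf} and \eqref{indsl}).

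Once $\ind\slam=\ind\tcal$ is established, the bound $\ind\tcal\Le 1$ is exactly Lemma~\ref{le1}, whose three cases together cover every possibility for a Fredholm tree. The only real obstacle is making sure all the sums and cardinals are finite before performing the rearrangements in the second step; this is guaranteed by the finiteness conclusions of Proposition~\ref{fredholm}\,(iii) combined with Proposition~\ref{fnl}.
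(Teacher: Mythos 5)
Your proposal is correct and follows essentially the same route as the paper: Fredholmness of $\tcal$ is read off from Proposition~\ref{fredholm}, the identity $\ind\slam=\ind\tcal$ is obtained by splitting $V^\prime=\vplus\sqcup(V^\prime\setminus\vplus)$ and $V\setminus\vplus=(V\setminus V^\prime)\sqcup(V^\prime\setminus\vplus)$ so that the two occurrences of $\card{V^\prime\setminus\vplus}$ cancel, and the bound $\ind\tcal\Le 1$ is then Lemma~\ref{le1}. The paper performs exactly this cancellation when computing $\mathfrak a(\slam)-\mathfrak b(\slam)$, using $\card{V^\prime\setminus\vplus}\Le\mathfrak a(\slam)<\infty$ for the finiteness you secure via Proposition~\ref{fredholm}\,(iii) and Proposition~\ref{fnl}.
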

   \begin{proof}
It follows from Proposition \ref{fredholm} that
$\tcal$ is Fredholm~ and
   \begin{align} \label{temp1}
\card{V^\prime \setminus \vplus} \Le \mathfrak
a(\slam)<\infty.
   \end{align}
Hence, the following equalities hold
   \allowdisplaybreaks
   \begin{align*}
   \mathfrak a(\slam) - \mathfrak b(\slam) &
\hspace{1ex} \hspace{.8ex}=\card{V \setminus \vplus}
-\sum_{u \in \vplus} (\card{\dzi u}-1) - \sum_{u \in
V^\prime \setminus \vplus} \card{\dzi u}
   \\
& \overset{ \eqref{temp1}} = \card{V \setminus
V^\prime} + \card{V^\prime \setminus \vplus}
   \\
& \hspace{10ex}- \sum_{u \in V^\prime} (\card{\dzi
u}-1) + \sum_{u \in V^\prime \setminus \vplus}
(\card{\dzi u}-1)
   \\
& \hspace{40ex} - \sum_{u \in V^\prime \setminus
\vplus} \card{\dzi u}
   \\
&\hspace{1.8ex}= \card{V \setminus V^\prime} - \sum_{u
\in V^\prime} (\card{\dzi u}-1).
   \end{align*}
By \eqref{finf} and \eqref{indsl}, this implies
that $\ind{\slam}=\ind{\tcal}$. Employing Lemma
\ref{le1} completes the proof.
   \end{proof}
It may happen that a directed tree $\tcal$ is
Fredholm but not every densely defined weighted
shift $\slam$ on $\tcal$ with closed range is
Fredholm (e.g., the directed tree $\tcal =
\tcal_{2,0}$ defined in \eqref{varkappa} has the
required properties; it is enough to consider a
weighted shift $\slam$ on $\tcal$ with infinite
number of zero weights, whose nonzero weights are
uniformly separated from zero).

The theory of semi-Fredholm operators can be
implemented into the context of weighted shift
operators on directed trees as well. Recall that
a closed densely defined operator $T$ in a
complex Hilbert space $\hh$ is said to be {\em
left semi-Fredholm} if its range is closed and
$\dim \jd T < \infty$. If $T^*$ is left
semi-Fredholm, then $T$ is called {\em right
semi-Fredholm}. It is well known that a closed
densely defined operator $T$ in $\hh$ is right
semi-Fredholm if and only if $T$ has closed range
and $\dim \jd{T^*} < \infty$. Hence, in view of
\cite[Corollary IV.1.13]{Gol}, a closed densely
defined operator $T$ in $\hh$ is right
semi-Fredholm if and only if the quotient space
$\hh / \ob{T}$ is finite dimensional. If $T$ is
either left semi-Fredholm or right semi-Fredholm,
then its {\em index} $\ind{T}$ is defined by
$\ind{T}= \dim \jd{T}-\dim\jd{T^*}$. In general,
$\ind{T} \in \zbb \sqcup \{\pm \infty\}$ (cf.\
\cite{Gol,con}).
   \begin{pro} \label{semifred}
Let $\slam$ be a densely defined weighted shift on a
directed tree $\tcal$ with weights $\lambdab =
\{\lambda_v\}_{v \in V^\circ}$. Then the following
assertions hold.
   \begin{enumerate}
   \item[(i)] $\slam$ is left
semi-Fredholm if and only if $\slam$ has closed range
$($cf.\ {\em Proposition \ref{domknietosc}}$)$ and
$\card{V \setminus \vplus}<\infty$. Moreover, if
$\slam$ is left semi-Fredholm, then $\ind{\slam} \in
\{n \in \zbb\colon n \Le 1\} \sqcup \{-\infty\}$.
   \item[(ii)] $\slam$ is right semi-Fredholm if and only if it
is Fredholm.
   \end{enumerate}
   \end{pro}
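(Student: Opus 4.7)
The strategy is to translate each one-sided Fredholm condition into information about $\jd{\slam}$ and $\jd{\slam^*}$ extracted from the polar decomposition (Proposition \ref{polar}), and then to invoke the already established bookkeeping (Propositions \ref{fredholm}, \ref{bfred} and Theorem \ref{indle1}) to recognize the Fredholm situation.

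For part (i), I would begin with the identity $\jd{\slam} = \ell^2(V\setminus\vplus)$ from Proposition \ref{polar}\,(i), which gives $\dim\jd{\slam}=\card{V\setminus\vplus}$. Since ``left semi-Fredholm'' means closed range together with $\dim\jd{\slam}<\infty$, the stated equivalence is immediate. For the index clause, I would then split into two cases. If $\dim\jd{\slam^*}<\infty$, the operator $\slam$ is genuinely Fredholm, and Theorem \ref{indle1} yields $\ind{\slam}=\ind{\tcal}\Le 1$. If instead $\dim\jd{\slam^*}=\infty$, the general definition of the index gives $\ind{\slam}=\dim\jd{\slam}-\dim\jd{\slam^*}=-\infty$. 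Combining the two cases places $\ind{\slam}$ in $\{n\in\zbb\colon n\Le 1\}\sqcup\{-\infty\}$.

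For part (ii), the nontrivial direction is that closed range plus $\dim\jd{\slam^*}<\infty$ forces $\slam$ to be Fredholm. I would compute $\dim\jd{\slam^*}$ from the formula in Proposition \ref{polar}\,(ii): for $u\in V^\prime$ the summand $\ell^2(\dzi u)\ominus\langle\lambdab^u\rangle$ has dimension $\card{\dzi u}-1$ when $u\in\vplus$ (so $\lambdab^u\neq 0$) and dimension $\card{\dzi u}$ when $u\in V^\prime\setminus\vplus$ (so $\lambdab^u=0$). Summing, and adding $1$ in the rooted case for the summand $\langle e_{\koo}\rangle$, gives
   \begin{align*}
\dim\jd{\slam^*} \;=\; \epsilon_\tcal \;+\; \mathfrak b(\slam),
   \end{align*}
where $\epsilon_\tcal\in\{0,1\}$ records whether $\tcal$ has a root. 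Hence $\dim\jd{\slam^*}<\infty$ is equivalent to $\mathfrak b(\slam)<\infty$. Invoking Proposition \ref{bfred}, the latter forces $\tcal$ to be Fredholm and $\mathfrak a(\slam)=\card{V\setminus\vplus}<\infty$, which via Proposition \ref{polar}\,(i) yields $\dim\jd{\slam}<\infty$. Together with the assumed closed range, this delivers the Fredholm property of $\slam$. The converse implication is trivial from the definitions.

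I do not anticipate a real obstacle; the only point that needs care is the case distinction $u\in\vplus$ versus $u\in V^\prime\setminus\vplus$ in computing $\dim\jd{\slam^*}$, because the subspace $\langle\lambdab^u\rangle$ collapses to $\{0\}$ precisely on $V^\prime\setminus\vplus$, and it is this dichotomy that produces the two sums defining $\mathfrak b(\slam)$ and thereby makes the bridge to Propositions \ref{fredholm} and \ref{bfred} possible.
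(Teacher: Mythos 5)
Your proposal is correct and follows essentially the same route as the paper: part (i) via the identification $\jd{\slam}=\ell^2(V\setminus\vplus)$ from Proposition \ref{polar}\,(i) together with Theorem \ref{indle1} for the index bound, and part (ii) by reading $\mathfrak b(\slam)$ off the kernel formula in Proposition \ref{polar}\,(ii) and then invoking Proposition \ref{bfred}. The only difference is that you make explicit the dimension count $\dim\jd{\slam^*}=\epsilon_\tcal+\mathfrak b(\slam)$, including the correct dichotomy between $u\in\vplus$ and $u\in V^\prime\setminus\vplus$, which the paper leaves implicit.
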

   \begin{proof}
Assertion (i) is a direct consequence Proposition
\ref{polar}\,(i) and Theorem \ref{indle1}.

(ii) If $\slam$ right semi-Fredholm, then by
Proposition \ref{polar}\,(ii) $\mathfrak
b(\slam)<\infty$. This, in view of Proposition
\ref{bfred}, implies that $\mathfrak
a(\slam)<\infty$. Thus $\slam$ is Fredholm.
   \end{proof}
Suppose that $\slam$ is any densely defined weighted shift
on a directed tree {\em with nonzero weights}. It follows
from Proposition \ref{semifred} that $\slam$ is left
semi-Fredholm if and only if it has closed range and the
directed tree $\tcal$ has finitely many leaves. As a
consequence, we see that if $\slam$ has closed range and
$\tcal$ is leafless, then $\slam$ is always left
semi-Fredholm. Certainly, it many happen that $\slam$ is
left semi-Fredholm but not Fredholm (in fact, this happens
more frequently). For example, the isometric weighted shift
$\slam$ on the leafless directed tree $\tcal_{\infty,0}$
defined in Example \ref{sub2} (with one branching vertex
$\omega$ such that $\card{\dzi{\omega}} = \aleph_0$) is
left semi-Fredholm and $\ind{\slam}=-\infty$.
   \newpage
   \section{\label{chap4}Inclusions of domains}
   \subsection{\label{s3}When $\text{$\dz{\slam} \subseteq
\dz{\slam^*}$}$?}
   Our next aim is to characterize the circumstances
under which the inclusion $\dz{\slam} \subseteq
\dz{\slam^*}$ holds.
   \begin{thm}\label{slssl*}
If $\slam$ is a densely defined weighted shift on a
directed tree $\tcal$ with weights $\lambdab =
\{\lambda_v\}_{v \in V^\circ}$, then the following
conditions are equivalent{\em :}
   \begin{enumerate}
   \item[(i)] $\dz{\slam} \subseteq \dz{\slam^*}$,
   \item[(ii)] there exists $c>0$ such that
   \begin{align} \label{incofds}
\sum_{v \in \dzi u} \frac {|\lambda_v|^2} {1+ \|\slam
e_v\|^2}\Le c,\quad u\in V.
   \end{align}
   \end{enumerate}
   \end{thm}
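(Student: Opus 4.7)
The plan is to translate both domain conditions into series inequalities using the explicit descriptions from Propositions \ref{desc} and \ref{sprz}, then obtain the characterization by a Cauchy--Schwarz argument in one direction and a closed graph / test vector argument in the other.

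For the implication (ii)$\Rightarrow$(i), I would take $f \in \dz{\slam}$ and estimate the inner sum appearing in Proposition \ref{sprz}(iv) by Cauchy--Schwarz with the splitting
\[
\overline{\lambda_v} f(v) = \frac{\overline{\lambda_v}}{\sqrt{1+\|\slam e_v\|^2}}\cdot \sqrt{1+\|\slam e_v\|^2}\,f(v).
\]
This yields $\big|\sum_{v\in\dzi u}\overline{\lambda_v}f(v)\big|^2 \le c\sum_{v\in\dzi u}(1+\|\slam e_v\|^2)|f(v)|^2$. Summing over $u\in V$ and using the disjoint-union decomposition \eqref{sumchi} collapses the double sum to $c\sum_{v\in V^\circ}(1+\|\slam e_v\|^2)|f(v)|^2$; recalling that $\|\slam e_v\|^2 = \sum_{w\in\dzi v}|\lambda_w|^2$ by \eqref{eu}, this is at most $c\|f\|_{\slam}^2$ by Proposition \ref{desc}(ii). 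Hence $f\in\dz{\slam^*}$ with $\|\slam^* f\|^2\le c\|f\|_{\slam}^2$.

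For (i)$\Rightarrow$(ii), I would first observe that $\dz{\slam}$ equipped with the graph norm $\|\cdot\|_{\slam}$ is a Hilbert space (since $\slam$ is closed by Proposition \ref{clos}) and that the inclusion $\slam^*\colon(\dz{\slam},\|\cdot\|_{\slam})\to\ell^2(V)$ is closed, hence bounded by the closed graph theorem: there is $c>0$ with $\|\slam^* f\|^2\le c\|f\|_{\slam}^2$ for every $f\in\dz{\slam}$. To extract the pointwise bound \eqref{incofds}, I fix $u\in V$ and a finite subset $F\subseteq\dzi u$, and plug in the test vector $f=\sum_{v\in F}a_v e_v\in\escr$ (so automatically $f\in\dz{\slam}$) with the particular choice
\[
a_v=\frac{\lambda_v}{1+\|\slam e_v\|^2},\qquad v\in F.
\]
The key point is that the vectors $\{\slam e_v\}_{v\in V}$ have pairwise disjoint supports: by \eqref{eu} each $\slam e_v$ is supported on $\dzi v$, and $\dzi v\cap \dzi{v'}=\varnothing$ for $v\ne v'$ by Proposition \ref{46}. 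This gives $\|\slam f\|^2=\sum_{v\in F}|a_v|^2\|\slam e_v\|^2$, and similarly Proposition \ref{sprz}(iii) together with Proposition \ref{46} yields $\|\slam^* f\|^2 = \big|\sum_{v\in F}\overline{\lambda_v}a_v\big|^2$ (only $w=u$ contributes). Substituting the chosen $a_v$ reduces the closed graph inequality to
\[
\Bigl(\sum_{v\in F}\tfrac{|\lambda_v|^2}{1+\|\slam e_v\|^2}\Bigr)^{\!2}\le c\sum_{v\in F}\tfrac{|\lambda_v|^2}{1+\|\slam e_v\|^2},
\]
so dividing (when the sum is positive, else trivial) gives the bound $\le c$ for every finite $F$, and letting $F\nearrow\dzi u$ yields \eqref{incofds}.

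The only delicate point is the verification in the second direction that the test vectors legitimately produce the tight bound, which hinges on exploiting the orthogonality coming from Proposition \ref{46}; the reciprocal-weight choice of $a_v$ is the standard trick to saturate the Cauchy--Schwarz inequality used in the first direction, making the two implications genuinely dual.
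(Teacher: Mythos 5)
Your proof is correct and follows essentially the same route as the paper's: the closed graph theorem on the graph-norm Hilbert space, the reciprocal-weight test vectors supported on finite subsets of $\dzi u$ for (i)$\Rightarrow$(ii), and the dual Cauchy--Schwarz estimate combined with the decomposition \eqref{sumchi} for (ii)$\Rightarrow$(i). The only cosmetic differences are that you plug the test vectors in directly rather than first localizing the closed-graph inequality to $f\chi_{\dzi u}$ as in \eqref{in2}, and that you spell out the Cauchy--Schwarz direction which the paper dispatches with a one-line remark.
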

   \begin{proof}
   (i)$\Rightarrow$(ii) Recall that by
Proposition \ref{desc}\,(v), $\escr \subseteq \dz
{\slam}$. Suppose that $\dz{\slam} \subseteq
\dz{\slam^*}$. By Proposition \ref{clos}, the
normed space $(\dz{\slam},
\text{\mbox{$\|\cdot\|_{\slam}$}})$ is complete.
The normed space $(\dz{\slam^*},
\text{\mbox{$\|\cdot\|_{\slam^*}$}})$ is complete
as well (cf.\ \cite[Theorems 3.2.1 and
3.3.2]{b-s}). Applying the closed graph theorem
to the identity embedding mapping $(\dz{\slam},
\text{\mbox{$\|\cdot\|_{\slam}$}})\to
(\dz{\slam^*},
\text{\mbox{$\|\cdot\|_{\slam^*}$}})$ we see that
there exists a positive real number $c$ such that
   \begin{align*}
\|f\|_{\slam^*}^2 \Le c \, \|f\|_{\slam}^2, \quad f
\in \dz{\slam}.
   \end{align*}
By Propositions \ref{sprz}\,(v) and \ref{desc}\,(ii),
the above inequality implies that
   \begin{multline}  \label{malysz}
\sum_{u \in V} \Big| \sum_{v \in \dzi u} \overline
{\lambda_v} f(v)\Big|^2 \Le \|f\|_{\slam^*}^2 \Le c
\|f\|_{\slam}^2
   \\
\overset{\eqref{eu}}= c \sum_{u \in V} \big (1+\|\slam
e_u \|^2\big) |f(u)|^2, \quad f \in \dz{\slam}.
   \end{multline}
First, we consider the case when the tree $\tcal$ has a root.
Employing \eqref{sumchi}, we get
   \begin{multline*}
\sum_{u \in V} \Big| \sum_{v \in \dzi u} \overline
{\lambda_v} f(v)\Big|^2 \Le c (1+\|\slam
e_{\koo}\|^2\big) |f(\koo)|^2
   \\
+ c \sum_{u \in V} \sum_{v \in \dzi u} \big(1+\|\slam
e_v\|^2\big) |f(v)|^2, \quad f \in \dz{\slam}.
   \end{multline*}
Since, by Proposition \ref{desc}\,(iv), the function
$f \cdot \chi_{V^\circ}$ is in $\dz{\slam}$ for every
$f \in \dz{\slam}$, we see that the above inequality
is equivalent to the following one
   \begin{multline}     \label{in1}
\sum_{u \in V} \Big| \sum_{v \in \dzi u} \overline
{\lambda_v} f(v)\Big|^2 \Le c \sum_{u \in V} \sum_{v
\in \dzi u} \big(1+\|\slam e_v\|^2\big) |f(v)|^2, \;\;
f \in \dz{\slam}.
   \end{multline}
If $\tcal$ is rootless, then similar reasoning leads to the
inequality \eqref{in1}. Since, by Proposition
\ref{desc}\,(iv), $f \cdot \chi_{\dzi u} \in \dz {\slam}$
for all $f \in \dz{\slam}$ and $u \in V$, we deduce from
\eqref{in1} and Proposition \ref{46} that
   \begin{align} \label{in2}
\Big| \sum_{v \in \dzi u} \overline {\lambda_v}
f(v)\Big|^2 \Le c \sum_{v \in \dzi u} \big(1+\|\slam
e_v\|^2\big) |f(v)|^2, \quad f \in \dz{\slam}, u \in
V.
   \end{align}
Fix $u \in V^\prime$ and take a finite nonempty
subset $W$ of $\dzi u$. It follows from
Proposition \ref{desc}\,(v) that $\ell^2(W)
\subseteq \dz \slam$. This allows us to
substitute the function
    \begin{align*}
f(v) =
    \begin{cases}
\frac {\lambda_v}{1+\|\slam e_v\|^2} & \text{if } v
\in W,
    \\
0 & \text{otherwise,}
    \end{cases}
    \end{align*}
into \eqref{in2}. What we obtain is
    \begin{align*}
\Big(\sum_{v \in W} \frac {|\lambda_v|^2}{1+\|\slam e_v\|^2}\Big)^2
\Le c \sum_{v \in W} \frac {|\lambda_v|^2}{1+\|\slam e_v\|^2}.
    \end{align*}
This implies that
    \begin{align*}
\sum_{v \in W} \frac {|\lambda_v|^2}{1+\|\slam e_v\|^2}
\Le c.
    \end{align*}
Passing with $W$ to ``infinity'' if necessary, we get (ii).

   (ii)$\Rightarrow$(i) A careful inspection of
the proof of (i)$\Rightarrow$(ii), supported by
the Cauchy-Schwarz inequality, shows that the
reverse implication (ii)$\Rightarrow$(i) is true
as well.
   \end{proof}
   \subsection{\label{s4}When $\text{$\dz{\slam^*}\subseteq
\dz{\slam}$}$?}
   The circumstances under which the inclusion
$\dz{\slam^*} \subseteq \dz{\slam}$ holds are more
elaborate and require much more effort to be
accomplished. For this reason, we attach to a densely
defined weighted shift $\slam$ on a directed tree
$\tcal$ the diagonal operators $M_u$ in $\ell^2(\dzi
u)$, $u \in V^\prime$, given by \idxx{$M_u$}{65}
   \begin{align}     \label{tuformm}
   \begin{aligned}
\dz{M_u} & = \{g \in \ell^2(\dzi u) \colon \sum_{v \in
\dzi u} \|\slam e_v\|^2 |g(v)|^2 < \infty\},
   \\
   (M_ug)(v) & = \|\slam e_v\| g(v), \quad v \in \dzi
u, \, g\in \dz{M_u}.
   \end{aligned}
   \end{align}
If $u \in V^\prime$ is such that the function
$\lambdab^u\colon \dzi u \ni v \to \lambda_v \in \cbb$
belongs to $\dz{M_u}$, then we define the operator
$T_u$ in $\ell^2(\dzi u)$ by \idxx{$T_u$}{66}
   \begin{align}    \label{tuform}
T_u &=M_u^2 - \frac 1 {1+ \|\slam e_u\|^2} \, M_u
(\lambdab^u) \otimes M_u (\lambdab^u), \quad u \in
V^\prime.
   \end{align}
For simplicity, we suppress the explicit dependence of
$M_u$ and $T_u$ on $\lambdab$ in the notation. We
gather below indispensable properties of the operators
$M_u$ and $T_u$.
   \begin{pro}\label{ogrtu}
Let $\slam$ be a densely defined weighted shift
on a directed tree $\tcal$ with weights $\lambdab
= \{\lambda_v\}_{v \in V^\circ}$. If $u \in
V^\prime$ and $\lambdab^u \in \dz{M_u}$, then
   \begin{enumerate}
   \item[(i)]  $\{e_v \colon v \in \dzi u\} \subseteq
\dz{T_u} = \dz{M_u^2} $,
   \item[(ii)] $M_u$ and $T_u$ are positive
selfadjoint operators in $\ell^2(\dzi u)$,
   \item[(iii)] $M_u$ is bounded if and only if $T_u$ is bounded,
   \item[(iv)] $\|\slam e_v\| \Le \sqrt{2\|T_u\|}$ for all but
at most one vertex $v \in \dzi u$ whenever the operator
$T_u$ is bounded.
   \end{enumerate}
   \end{pro}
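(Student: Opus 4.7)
The strategy is to exploit the fact that $T_u$ differs from $M_u^2$ by a bounded rank-one operator, combined with the identity $\|\lambdab^u\|^2 = \|\slam e_u\|^2$, which follows from Proposition \ref{desc}(iii). The workhorse is Lemma \ref{lems} applied to the diagonal operator $M_u$.

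For (i), since $\lambdab^u \in \dz{M_u}$ by hypothesis, $M_u\lambdab^u \in \ell^2(\dzi u)$, so the rank-one summand appearing in \eqref{tuform} is a bounded selfadjoint operator on $\ell^2(\dzi u)$; in particular $\dz{T_u} = \dz{M_u^2}$. Because $M_u$ is a diagonal operator subordinated to the orthonormal basis $\{e_v\}_{v \in \dzi u}$ with diagonal entries $\{\|\slam e_v\|\}_{v \in \dzi u}$, applying Lemma \ref{lems}(i) with $\alpha = 2$ yields $\{e_v : v \in \dzi u\} \subseteq \dz{M_u^2}$.

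For (ii), Lemma \ref{lems} tells us that $M_u$ is positive selfadjoint, whence so is $M_u^2$; subtracting a bounded selfadjoint operator preserves selfadjointness, so $T_u$ is selfadjoint on $\dz{M_u^2}$. For positivity, I would compute, for $g \in \dz{T_u} \subseteq \dz{M_u}$,
\[\is{T_u g}{g} = \|M_u g\|^2 - \frac{|\is{M_u g}{\lambdab^u}|^2}{1+\|\slam e_u\|^2},\]
using selfadjointness of $M_u$ to shift it onto $g$ in the inner product involving $\lambdab^u$. Cauchy--Schwarz combined with $\|\lambdab^u\|^2 = \|\slam e_u\|^2$ then gives
\[\is{T_u g}{g} \geq \|M_u g\|^2 \Big(1 - \frac{\|\slam e_u\|^2}{1+\|\slam e_u\|^2}\Big) = \frac{\|M_u g\|^2}{1+\|\slam e_u\|^2} \geq 0.\]

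Assertion (iii) is immediate, since $M_u^2 - T_u$ is bounded and $M_u \geq 0$ is bounded iff $M_u^2$ is. For (iv), using Lemma \ref{lems}(iii) to get $M_u^2 e_v = \|\slam e_v\|^2 e_v$, I would evaluate
\[\is{T_u e_v}{e_v} = \|\slam e_v\|^2 \Big(1 - \frac{|\lambda_v|^2}{1+\|\slam e_u\|^2}\Big) \leq \|T_u\|, \quad v \in \dzi u.\]
The crux is then a pigeonhole observation: the ``bad set'' $B := \{v \in \dzi u : |\lambda_v|^2 > \frac{1}{2}(1+\|\slam e_u\|^2)\}$ cannot contain two distinct vertices, since otherwise $\sum_{v \in B}|\lambda_v|^2 > 1 + \|\slam e_u\|^2$, contradicting $\sum_{v \in \dzi u}|\lambda_v|^2 = \|\slam e_u\|^2$. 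For every $v \in \dzi u \setminus B$ the parenthesised factor is at least $\frac{1}{2}$, so $\|\slam e_v\| \leq \sqrt{2\|T_u\|}$. No step is particularly delicate; the only things to keep an eye on are the domain inclusion $\dz{M_u^2} \subseteq \dz{M_u}$ (needed for the Cauchy--Schwarz step in (ii)) and the pigeonhole argument in (iv).
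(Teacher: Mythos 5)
Your proof is correct and follows essentially the same route as the paper's: the same Cauchy--Schwarz estimate via $\|\lambdab^u\|^2 = \|\slam e_u\|^2$ for positivity in (ii), and the same diagonal-entry bound $\is{T_u e_v}{e_v} = \|\slam e_v\|^2\bigl(1 - |\lambda_v|^2/(1+\|\slam e_u\|^2)\bigr) \Le \|T_u\|$ as the engine of (iv). The only cosmetic difference is in (iv), where the paper adds the diagonal entries at two distinct vertices while you run a pigeonhole on the set of $v$ with $|\lambda_v|^2 > \tfrac12(1+\|\slam e_u\|^2)$; both reduce to the fact that $\sum_{v\in\dzi u}|\lambda_v|^2 = \|\slam e_u\|^2 < 1+\|\slam e_u\|^2$.
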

   \begin{proof}
   (i) Apply parts (iii) and (v) of Proposition \ref{desc}.

   (ii) It is easily seen that $M_u$ and $T_u$ are
selfadjoint operators, and $M_u$ is positive (see
e.g., \cite[Theorem 6.20]{weid}). Moreover, we have
   \begin{multline*}
\big\langle \big(M_u (\lambdab^u) \otimes M_u
(\lambdab^u)\big) g, g \big\rangle = |\is g{M_u
(\lambdab^u)}|^2 = |\is {M_u (g)}{\lambdab^u}|^2
   \\
\Le \|\lambdab^u\|^2 \|M_u(g)\|^2 \overset{\eqref{eu}}\Le
(1+\|\slam e_u\|^2) \is{M_u^2(g)} g, \quad g \in \dz {T_u},
   \end{multline*}
which implies that $T_u$ is positive.

   (iii) Evident.

   (iv) Take vertexes $v_1,v_2 \in \dzi u$ such that
$v_1 \neq v_2$. Without loss of generality, we may
assume that $\|\slam e_{v_1}\| \Le \|\slam e_{v_2}\|$.
It follows that
   \begin{align}    \label{dzsum}
\frac {\|\slam e_v\|^2\big(1 + \sum_{w \in \dzi u \setminus
\{v\}} |\lambda_w|^2\big)}{1+\|\slam e_u\|^2}
\overset{\eqref{tuform}}= \is{T_u e_v}{e_v} \Le \|T_u\|,
\quad v \in \dzi u.
   \end{align}
Hence, we have
   \allowdisplaybreaks
   \begin{align*}
\|\slam e_{v_1}\|^2 & \Le \frac {\|\slam
e_{v_1}\|^2\big(2 + |\lambda_{v_1}|^2 +
|\lambda_{v_2}|^2 + 2 \sum_{w \in \dzi u \setminus
\{v_1,v_2\}} |\lambda_w|^2\big)}{1+\|\slam e_u\|^2}
   \\
& \Le \frac {\|\slam e_{v_1}\|^2\big(1 + \sum_{w \in
\dzi u \setminus \{v_1\}}
|\lambda_w|^2\big)}{1+\|\slam e_u\|^2}
   \\
&\phantom{xxxxx} + \frac{\|\slam e_{v_2}\|^2 \big(1 +
\sum_{w \in \dzi u \setminus \{v_2\}}
|\lambda_w|^2\big)}{1+\|\slam e_u\|^2}
   \\
&\hspace{-1.45ex}\overset{\eqref{dzsum}} \Le 2
\|T_u\|.
   \end{align*}
This enables us to deduce that for every two distinct
vertexes $v_1,v_2 \in \dzi u$ either $\|\slam
e_{v_1}\| \Le \sqrt{2\|T_u\|}$ or $\|\slam e_{v_2}\|
\Le \sqrt{2\|T_u\|}$. As is easily seen, this implies
(iv).
   \end{proof}
Now we characterize all weighted shifts $\slam$ on directed trees
which have the property that $\dz{\slam^*}\subseteq \dz{\slam}$. We
do this with the help of the operators $T_u$ defined in
\eqref{tuform}. The proof of this characterization relies heavily on
the fact that for every $u \in V^\prime$, the operator $T_u$
factorizes as $T_u=R_uR_u^*$ whenever $T_u$ is bounded (see
\eqref{factort}).
   \begin{thm} \label{zen}
If $\slam$ is a densely defined weighted shift on a directed tree
$\tcal$ with weights $\lambdab = \{\lambda_v\}_{v \in V^\circ}$,
then the following two conditions are equivalent{\em :}
   \begin{enumerate}
   \item[(i)] $\dz{\slam^*}\subseteq \dz{\slam}$,
   \item[(ii)]
$T_u \in \ogr{\ell^2(\dzi u)}$ for all $u \in
V^\prime$, and
   \begin{align}  \label{supnor}
\sup_{u\in V^\prime} \|T_u\|<\infty.
   \end{align}
   \end{enumerate}
   \end{thm}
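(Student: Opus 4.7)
The plan is to reduce the inclusion $\dz{\slam^*} \subseteq \dz{\slam}$ via the closed graph theorem to a family of per-vertex quadratic inequalities, one for each $u \in V^\prime$, and then to identify these with uniform boundedness of $\{\|T_u\|\}_{u \in V^\prime}$ through the factorisation $T_u = R_u R_u^*$ with $R_u := M_u B_u^{-1/2}$ and $B_u := I + \lambdab^u \otimes \lambdab^u$.

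For (i)$\Rightarrow$(ii): by Proposition \ref{clos} both $\slam$ and $\slam^*$ are closed, so $(\dz{\slam},\|\cdot\|_{\slam})$ and $(\dz{\slam^*},\|\cdot\|_{\slam^*})$ are Hilbert, and the closed graph theorem applied to the identity embedding $\dz{\slam^*} \hookrightarrow \dz{\slam}$ furnishes $C>0$ with $\|\slam f\|^2 \Le C(\|f\|^2 + \|\slam^* f\|^2)$ for every $f \in \dz{\slam^*}$. Given $u \in V^\prime$ and $g \in \ell^2(\dzi u)$, I would test this on $f := g\chi_{\dzi u}$: by Proposition \ref{46} the sets $\{\dzi w\}_{w \in V}$ are pairwise disjoint, so Proposition \ref{sprz}(iii) shows $\slam^* f$ is supported only at $u$ with value $\is{g}{\lambdab^u}$, while \eqref{eu} gives $\|\slam f\|^2 = \|M_u g\|^2$. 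The CGT inequality collapses to
\begin{align*}
\|M_u g\|^2 \Le C\,\is{B_u g}{g} = C\big(\|g\|^2 + |\is{g}{\lambdab^u}|^2\big), \quad g \in \ell^2(\dzi u), \; u \in V^\prime.
\end{align*}
Cauchy--Schwarz then forces $\|M_u\|^2 \Le C(1+\|\slam e_u\|^2)$, so each $M_u$ and hence each $T_u$ is a bounded operator.

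The core step is to convert this per-$u$ inequality into $\|T_u\| \Le C$. The Sherman--Morrison formula yields $B_u^{-1} = I - (1+\|\slam e_u\|^2)^{-1}\lambdab^u \otimes \lambdab^u$, and a direct algebraic check confirms $T_u = M_u B_u^{-1} M_u$; with $R_u := M_u B_u^{-1/2}$ this is exactly the factorisation $T_u = R_u R_u^*$ flagged in the text. Combining the universal identity $\|R_u R_u^*\| = \|R_u^* R_u\|$ with the substitution $h = B_u^{1/2} g$ produces the key identity
\begin{align*}
\|T_u\| = \|B_u^{-1/2} M_u^2 B_u^{-1/2}\| = \sup_{g \neq 0} \frac{\|M_u g\|^2}{\is{B_u g}{g}},
\end{align*}
so $\|T_u\|$ is precisely the smallest constant $C$ for which $M_u^2 \Le CB_u$. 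Together with the preceding paragraph this gives $\sup_u \|T_u\| \Le C < \infty$.

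For (ii)$\Rightarrow$(i) I would reverse the chain: with $C := \sup_u \|T_u\|$, the identity above delivers $\|M_u g\|^2 \Le C(\|g\|^2 + |\is{g}{\lambdab^u}|^2)$ uniformly in $u$. For $f \in \dz{\slam^*}$, setting $g_u := f|_{\dzi u}$ and summing, Propositions \ref{46} and \ref{sprz}(iii) give $\sum_u \|g_u\|^2 \Le \|f\|^2$ and $\sum_u |\is{g_u}{\lambdab^u}|^2 = \|\slam^* f\|^2$; adding the trivially finite root contribution $\|\slam e_\koo\|^2 |f(\koo)|^2$ in the rooted case, Proposition \ref{desc}(i) yields $f \in \dz{\slam}$. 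The main obstacle is the norm identity in the central paragraph: one must invert the rank-one perturbation $B_u$ explicitly and then invoke $\|S^* S\| = \|S S^*\|$ for $S = R_u$ to transform the abstract definition of $\|T_u\|$ into the explicit variational quotient that matches the CGT output.
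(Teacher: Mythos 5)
Your proof is correct and follows essentially the same route as the paper's: the closed graph theorem reduces (i) to the per-vertex inequalities $\|M_ug\|^2\Le C\big(\|g\|^2+|\is{g}{\lambdab^u}|^2\big)$ (the paper's \eqref{bin}), and the factorization $T_u=R_uR_u^*$ identifies $\|T_u\|$ with the optimal constant in that inequality, exactly as in \eqref{factort}. The only real difference is in how the factorization is realized: where the paper takes $R_u$ to act from the renormed space $\kk_u$ and computes its adjoint by hand in \eqref{szte}--\eqref{rus}, you stay on $\ell^2(\dzi u)$, invert $B_u=I+\lambdab^u\otimes\lambdab^u$ by Sherman--Morrison, and invoke $\|R_uR_u^*\|=\|R_u^*R_u\|$ to reach the same variational identity more directly.
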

   \begin{proof}
   (i)$\Rightarrow$(ii)
Suppose that $\dz{\slam^*}\subseteq \dz{\slam}$. Since
both operators are closed, we can argue as in the
proof of Theorem \ref{slssl*}. Thus, there exists a
constant $c > 0$ such that
   \begin{align*}
\|\slam f\|^2 \Le c \, (\|f\|^2 + \|\slam^*f\|^2), \quad f
\in \dz{\slam^*}.
   \end{align*}
Similarly to \eqref{malysz}, we see that
   \begin{align}    \label{both}
\sum_{u \in V} \|\slam e_u\|^2 |f(u)|^2 \Le c \sum_{u
\in V} |f(u)|^2 + c \sum_{u \in V} \Big| \sum_{v \in
\dzi u} \overline {\lambda_v} f(v)\Big|^2
\hspace{-1ex},\quad f \in \dz{\slam^*}.
   \end{align}
   If the tree $\tcal$ has a root, then applying \eqref{sumchi} to
the left-hand side of \eqref{both} and to the first term of the
right-hand side of \eqref{both}, we obtain
   \begin{multline*}
\sum_{u \in V} \sum_{v \in\dzi u} \|\slam e_v\|^2
|f(v)|^2
   \\
\Le c |f(\koo)|^2 + c \sum_{u \in V} \Big( \sum_{v \in
\dzi u} |f(v)|^2 + \Big| \sum_{v \in \dzi u} \overline
{\lambda_v} f(v)\Big|^2 \Big), \quad f \in
\dz{\slam^*}.
   \end{multline*}
Since, by Proposition \ref{sprz}\,(iv), there is no
restriction on the value of $f$ at $\koo$, we see that
the above inequality is equivalent to the following
one
   \begin{multline}    \label{malyszi}
\sum_{u \in V} \sum_{v \in\dzi u} \|\slam e_v\|^2
|f(v)|^2
   \\
\Le c \sum_{u \in V} \Big( \sum_{v \in \dzi u}
|f(v)|^2 + \Big| \sum_{v \in \dzi u} \overline
{\lambda_v} f(v)\Big|^2 \Big), \quad f \in
\dz{\slam^*}.
   \end{multline}
Clearly, inequalities \eqref{both} and
\eqref{malyszi} coincide if $\tcal$ is rootless.

Fix $u \in V^\prime$. Recall that by Proposition
\ref{desc}, $\lambdab^u \in \ell^2(\dzi u)$. In
view of Proposition \ref{46} and Proposition
\ref{sprz}\,(vi), the inequality \eqref{malyszi}
yields
   \begin{align}   \label{bin}
\sum_{v \in\dzi u} \|\slam e_v\|^2 |f(v)|^2 \Le c (
\|f\|^2 + |\is{f}{\lambdab^u}|^2), \quad f \in
\ell^2(\dzi u).
   \end{align}
If $\lambdab^u = 0$, then by \eqref{bin}, $M_u \in \ogr
{\ell^2(\dzi u)}$ and $\|M_u\| \le \sqrt c$, and
consequently by \eqref{tuform}, $\|T_u\| \le c$. Assume now
that $\lambdab^u \neq 0$. Define the new inner product
$\is{\cdot}{\textrm{-}}_u$ on $\ell^2(\dzi u)$ by
   \begin{align} \label{nsp}
\is{f}{g}_u = \is fg + \is f {\lambdab^u} \cdot \is
{\lambdab^u} g, \quad f,g \in \ell^2(\dzi u),
   \end{align}
and denote by $\kk_u$ the Hilbert space $(\ell^2(\dzi u),
\is{\cdot}{\textrm{-}}_u)$. It follows from \eqref{bin} that the
operator $R_u \colon \kk_u \to \ell^2(\dzi u)$ defined by
   \begin{align}      \label{gejsza}
(R_u f)(v) = \|\slam e_v\| f(v), \quad v \in \dzi u,
\, f \in \kk_u,
   \end{align}
is bounded and
   \begin{align}      \label{gejsza2}
\|R_u\| \Le \sqrt c.
   \end{align}
Using the Cauchy-Schwarz inequality, we deduce from
\eqref{nsp}, \eqref{gejsza} and \eqref{gejsza2} that
$M_u \in \ogr {\ell^2(\dzi u)}$ and $\|M_u\|^2 \Le c
(1+ \|\lambdab^u\|^2)$. Hence, $T_u \in \ogr
{\ell^2(\dzi u)}$.

   We now compute the adjoint $R_u^* \colon \ell^2(\dzi
u) \to \kk_u$ of $R_u$. Take $g \in \ell^2(\dzi u)$.
Then, we have
   \allowdisplaybreaks
   \begin{align}   \label{szte}
\sum_{v \in \dzi u} \|&\slam e_v\| g(v) \overline {h(v)} =
\is{g}{R_u h} = \is{R_u^*g}{h}_u
   \\
& = \is{R_u^*g}{h} + \is {R_u^*g}{\lambdab^u} \cdot \is
{\lambdab^u} h \notag
   \\
& = \sum_{v \in \dzi u} (R_u^* g)(v) \overline {h(v)} + \is
{R_u^*g}{\lambdab^u} \cdot \is {\lambdab^u} h, \quad h \in
\ell^2(\dzi u). \notag
   \end{align}
Set $\zeta_{u,g}(v) = \|\slam e_v\| g(v) - (R_u^* g)(v)$
for $v \in \dzi u$. By \eqref{bin}, $\zeta_{u,g} \in
\ell^2(\dzi u)$. It follows from \eqref{szte} that
   \begin{align}   \label{zetah}
\is {\zeta_{u,g}} h = \sum_{v \in \dzi u} \zeta_{u,g}(v)
\overline {h(v)} = \is {R_u^*g}{\lambdab^u} \cdot \is
{\lambdab^u}h, \quad h \in \ell^2(\dzi u).
   \end{align}
This implies that $\{\lambdab^u\}^\perp \subseteq
\{\zeta_{u,g}\}^\perp$, where the orthogonality refers
to the original inner product of $\ell^2(\dzi u)$. As
a consequence, $\{\zeta_{u,g}\}^{\perp\perp} \subseteq
\{\lambdab^u\}^{\perp\perp}$, and so there exists
$\alpha_{u,g}\in \cbb$ such that $\zeta_{u,g} =
-\alpha_{u,g} \lambdab^u$. Hence, by the definition of
$\zeta_{u,g}$, we have
   \begin{align}    \label{zetah2}
(R_u^* g)(v) = \|\slam e_v\| g(v) + \alpha_{u,g}
\lambda_v, \quad v \in \dzi u.
   \end{align}
Since $M_u \in \ogr {\ell^2(\dzi u)}$, we see that
   \allowdisplaybreaks
   \begin{align*}
-\alpha_{u,g} \is {\lambdab^u} h & \overset{\eqref{zetah}}
= \is{R_u^* g} {\lambdab^u} \is {\lambdab^u}h
   \\
& \overset{\eqref{zetah2}} = \is{M_u g}{\lambdab^u}
\is {\lambdab^u}h + \alpha_{u,g} \|\lambdab^u\|^2 \,
\is {\lambdab^u}h
   \\
&\hspace{2.1ex} = \is{g}{M_u (\lambdab^u)} \is
{\lambdab^u}h + \alpha_{u,g} \|\lambdab^u\|^2 \, \is
{\lambdab^u}h, \quad h \in \ell^2(\dzi u).
   \end{align*}
This and \eqref{eu} imply that
   \begin{align}    \label{dalf}
-\alpha_{u,g} (1 + \|\slam e_u\|^2) \is {\lambdab^u}h
= \is{g}{M_u (\lambdab^u)} \is {\lambdab^u}h, \quad h
\in \ell^2(\dzi u).
   \end{align}
Substituting $h = \lambdab^u$ into \eqref{dalf} and
dividing both sides of \eqref{dalf} by
$\|\lambdab^u\|^2$ (which, according to our
assumption, is nonzero), we get
   \begin{align*}
\alpha_{u,g} = -\frac{\is{g}{M_u (\lambdab^u)}}{1 + \|\slam
e_u\|^2}, \quad h \in \ell^2(\dzi u).
   \end{align*}
Hence
   \begin{align} \label{rus}
R_u^* g \overset{\eqref{zetah2}}= M_u g - \frac{\is{g}{M_u
(\lambdab^u)} }{1 + \|\slam e_u\|^2} \, \lambdab^u, \quad g
\in \ell^2(\dzi u).
   \end{align}
As a consequence, we have
   \begin{align}   \label{factort}
R_uR_u^* g \overset{\eqref{gejsza}}= M_u^2 g -
\frac{\is{g}{M_u (\lambdab^u)}}{1 + \|\slam e_u\|^2} \, M_u
(\lambdab^u) \overset{\eqref{tuform}}= T_u g, \quad g \in
\ell^2(\dzi u).
   \end{align}
This, together with \eqref{gejsza2}, implies that
$\|T_u\| = \|R_u\|^2 \Le c$ for all $u \in V^\prime$.

   (ii)$\Rightarrow$(i) Since the operator $M_u$ is
bounded, we easily check that the operator $R_u \colon
\kk_u \to \ell^2(\dzi u)$ defined by equalities
\eqref{nsp} and \eqref{gejsza} is bounded as well. As
in the proof of implication (i)$\Rightarrow$(ii), we
verify that the formulas \eqref{rus} and
\eqref{factort} are valid. Hence, by \eqref{factort},
$\|R_u\| \Le \sqrt c$ for all $u \in V^\prime$, where
$c := \sup_{v\in V^\prime} \|T_v\|$. Now, by reviewing
in reverse order this part of the proof of
(i)$\Rightarrow$(ii) which begins with \eqref{gejsza2}
and ends with \eqref{malyszi}, we see that (i) holds.
   \end{proof}
   \begin{rems}
1) It follows from Proposition \ref{ogrtu}\,(iv) and
Theorem \ref{zen} that if $\dz{\slam^*} \subseteq
\dz{\slam}$, then for every $u \in V^\prime$ and for all
but at most one vertex $v \in \dzi u$, $\|\slam e_v\|^2 \Le
C$ with $C = 2 \sup_{v\in V^\prime} \|T_v\| < \infty$. This
fact, when combined with Proposition \ref{ogrs}, may
suggest that in the case of directed binary (or more
complicated) trees the inclusion $\dz{\slam^*}\subseteq
\dz{\slam}$ forces the operator $\slam$ to be bounded.
However, this is not the case, which is illustrated in
Example \ref{17rem} below. It is well known that in the
case of classical weighted shifts none of the inclusions
$\dz{\slam}\subseteq \dz{\slam^*}$ and
$\dz{\slam^*}\subseteq \dz{\slam}$ implies the boundedness
of the operator $\slam$.

2) Looking at the formula \eqref{tuform} and
Theorem \ref{zen}, it is tempting to expect that
the uniform boundedness of the operators
$\{T_u\}_{u \in V^\prime}$, which completely
characterizes the inclusion
$\dz{\slam^*}\subseteq \dz{\slam}$, is equivalent
to the uniform boundedness of the operators
$\{M_u\}_{u \in V^\prime}$. However, this is not
the case. Indeed, assuming that the operator
$\slam$ is densely defined and all the operators
$\{M_u\}_{u \in V^\prime}$ are bounded, one can
infer from \eqref{tuform} and Proposition
\ref{ogrtu}\,(ii) that $0 \Le T_u \Le M_u^2$ and
consequently $\|T_u\| \Le \|M_u\|^2$. This means
that $\sup_{u\in V^\prime} \|M_u\|<\infty$
implies $\sup_{u\in V^\prime} \|T_u\|<\infty$. In
view of \eqref{tuformm}, the inequality
$\sup_{u\in V^\prime} \|M_u\|<\infty$ is
equivalent to $\sup_{u \in V^\circ} \|\slam e_u\|
< \infty$. Since $e_{\koo} \in \dz \slam$
whenever $\tcal$ has a root, the last inequality
is equivalent to $\sup_{u \in V} \|\slam e_u\| <
\infty$, which, by Proposition \ref{ogrs}, is
equivalent to $\slam \in \ogr {\ell^2(V)}$.
Summarizing, we have shown that if the operator
$\slam$ is densely defined, then the inequality
$\sup_{u\in V^\prime} \|M_u\|<\infty$ is
equivalent to the boundedness of $\slam$.

    3) According to \eqref{tuform} and Theorem \ref{zen},
the question of when $\dz{\slam^*}\subseteq \dz{\slam}$
reduces to estimating the norms of rank one perturbations
of positive diagonal operators. The class of such operators
is still not very well understood, despite the structural
simplicity of diagonal operators (cf.\
\cite{Io,f-j-k-p,f-j-k-p2} and references therein; see also
\cite{si} for a more general situation\footnote{\;In
\cite{si} Barry Simon wrote: {\em Finally to rank one
perturbations---maybe something so easy that I can say
something useful! Alas, we'll see even this is hard and
exceedingly rich}.}). As will be seen in Example
\ref{17rem}, the norms of unperturbed operators $M_u^2$ may
tend to $\infty$, while the norms of perturbed operators
$T_u$ may be uniformly bounded.
   \end{rems}
To make the condition \eqref{supnor} of Theorem
\ref{zen} more explicit and calculable, we
consider, instead of the usual operator norm, the
Hilbert-Schmidt norm and the trace norm,
respectively.
    \begin{pro}   \label{ilB}
Let $\slam$ be a densely defined weighted shift on a directed tree
$\tcal$ with weights $\lambdab = \{\lambda_v\}_{v \in V^\circ}$. If
there exists a constant $C>0$ such that
   \begin{align}  \label{wkwdom-}
   \begin{aligned}
\sum_{v \in \dzi u} \left[\|\slam e_v\|^2 \Big(1 -
\frac {|\lambda_v|^2}{1+\| \slam
e_u\|^2}\Big)\right]^2 &\Le C, \quad u\in V^\prime,
    \\
\sum_{\substack{v,w \in \dzi u\\v \neq w}}
\left[\frac{\|\slam e_v\| |\lambda_v| \|\slam e_w\|
|\lambda_w|}{1+ \|\slam e_u\|^2}\right]^2 & \Le C,
\quad u\in V^\prime,
    \end{aligned}
   \end{align}
then $\dz{\slam^*}\subseteq \dz{\slam}$. In
particular, this is the case if for some constant
$C>0$,
    \begin{align}    \label{wkwdom-+}
\sum_{v \in \dzi u} \|\slam e_v\|^2 \Big(1 - \frac
{|\lambda_v|^2}{1+\| \slam e_u\|^2}\Big) &\Le C, \quad
u\in V^\prime.
    \end{align}
    \end{pro}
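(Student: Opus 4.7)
The plan is to use Theorem \ref{zen} and bound $\|T_u\|$ in terms of the Hilbert--Schmidt or trace norm of $T_u$, both of which are directly computable in the basis $\{e_v\}_{v \in \dzi u}$ of $\ell^2(\dzi u)$.

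First, I would compute the matrix entries of $T_u$. Since $M_u^2 e_v = \|\slam e_v\|^2 e_v$ and, for $h \in \ell^2(\dzi u)$, $(M_u(\lambdab^u) \otimes M_u(\lambdab^u))(h) = \langle h, M_u(\lambdab^u)\rangle\, M_u(\lambdab^u)$ with $(M_u \lambdab^u)(v) = \|\slam e_v\| \lambda_v$, a direct calculation from \eqref{tuform} gives
\begin{align*}
\is{T_u e_v}{e_v} &= \|\slam e_v\|^2\Big(1 - \frac{|\lambda_v|^2}{1+\|\slam e_u\|^2}\Big), \quad v \in \dzi u, \\
\is{T_u e_v}{e_w} &= -\frac{\|\slam e_v\|\,\overline{\lambda_v}\,\|\slam e_w\|\,\lambda_w}{1+\|\slam e_u\|^2}, \quad v,w \in \dzi u,\ v\neq w.
\end{align*}
Observe that each diagonal entry is nonnegative because $|\lambda_v|^2 \Le \sum_{w\in \dzi u} |\lambda_w|^2 = \|\slam e_u\|^2 \Le 1+\|\slam e_u\|^2$, cf.\ \eqref{eu}.

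Next, assuming \eqref{wkwdom-}, I would form the Hilbert--Schmidt norm squared of $T_u$ by summing $|\is{T_u e_v}{e_w}|^2$ over $v,w \in \dzi u$. Splitting into diagonal and off-diagonal parts and comparing with the two inequalities in \eqref{wkwdom-}, we get $\|T_u\|_{HS}^2 \Le 2C$ for every $u \in V^\prime$. In particular $T_u \in \ogr{\ell^2(\dzi u)}$ (by Proposition \ref{ogrtu}\,(iii), or simply from Hilbert--Schmidt $\subseteq$ bounded), and $\|T_u\| \Le \|T_u\|_{HS} \Le \sqrt{2C}$. Hence \eqref{supnor} holds, and Theorem \ref{zen} yields $\dz{\slam^*} \subseteq \dz{\slam}$.

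For the ``in particular'' part I would use that $T_u$ is positive (Proposition \ref{ogrtu}\,(ii)), so $\|T_u\| \Le \operatorname{tr}(T_u) = \sum_{v \in \dzi u} \is{T_u e_v}{e_v}$, which by the formula above is exactly the left-hand side of \eqref{wkwdom-+}. Thus \eqref{wkwdom-+} forces $\|T_u\| \Le C$ uniformly in $u$, and another application of Theorem \ref{zen} finishes the argument. The only real point to watch is recognizing the two expressions in \eqref{wkwdom-} as the diagonal and off-diagonal contributions to $\|T_u\|_{HS}^2$; once the matrix entries of $T_u$ are written down, everything is routine, so there is no serious obstacle here.
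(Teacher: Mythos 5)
Your argument is correct, and for the main implication it is the same proof as the paper's: the paper likewise computes the matrix entries of $T_u$ (its formula \eqref{7.2}), recognizes the two sums in \eqref{wkwdom-} as the diagonal and off-diagonal contributions to $\sum_{v,w}|\is{T_ue_v}{e_w}|^2$, concludes $\|T_u\|\Le\|T_u\|_2\Le\sqrt{2C}$ uniformly in $u$, and invokes Theorem \ref{zen}. Two points you should make explicit. First, before speaking of $T_u$ at all you must check $\lambdab^u\in\dz{M_u}$; the paper extracts this from the second inequality in \eqref{wkwdom-}, and under \eqref{wkwdom-+} it follows because each summand there dominates $\|\slam e_v\|^2/(1+\|\slam e_u\|^2)$, so $\sum_{v\in\dzi u}\|\slam e_v\|^2<\infty$. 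Second, passing from square-summable matrix entries to $T_u\in\ogr{\ell^2(\dzi u)}$ uses that $T_u$ is closed (it is selfadjoint by Proposition \ref{ogrtu}\,(ii)), so that boundedness of $T_u$ on the dense span of the vectors $e_v$, $v\in\dzi u$, forces $T_u$ to be everywhere defined and bounded; citing Proposition \ref{ogrtu}\,(iii) alone does not do this. Where you genuinely diverge is the ``in particular'' part: the paper proves the entrywise Cauchy--Schwarz inequality $|\varDelta_{v,w}|^2\Le\varDelta_{v,v}\varDelta_{w,w}$ for the matrix entries and thereby shows that \eqref{wkwdom-+} implies \eqref{wkwdom-} with constant $C^2$, then reuses the Hilbert--Schmidt bound, whereas you use positivity of $T_u$ to get $\|T_u\|\Le\mathrm{tr}(T_u)\Le C$ directly. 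Your route is shorter and is essentially the inequality $\|T\|\Le\|T\|_1$ that the paper records only in the remark after its proof; its one hidden step is that a positive selfadjoint operator with $\sum_{v}\is{T_ue_v}{e_v}<\infty$ is automatically bounded, which you can secure by applying the Hilbert--Schmidt-plus-closedness argument to $T_u^{1/2}$ (note $\sum_v\|T_u^{1/2}e_v\|^2=\mathrm{tr}(T_u)\Le C$ gives $\|T_u\|=\|T_u^{1/2}\|^2\Le C$). The paper's longer route has the side benefit of establishing the implication \eqref{wkwdom-+}$\Rightarrow$\eqref{wkwdom-} itself, which it uses in the discussion following the proposition to compare the conditions $\sup_u\|T_u\|_1<\infty$ and $\sup_u\|T_u\|_2<\infty$.
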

    \begin{proof}
First we consider the case when \eqref{wkwdom-}
holds. It follows from Proposition \ref{desc}
that $\{e_u\colon u \in V\} \subseteq
\dz{\slam}$. We show that $\lambdab^u \in
\dz{M_u}$ for every $u \in V^\prime$. Indeed,
there are two possibilities, either $\|\slam
e_v\| |\lambda_v|=0$ for all $v \in \dzi u$ and
hence $\lambdab^u \in \dz{M_u}$, or $\|\slam
e_w\| |\lambda_w|>0$ for some $w \in \dzi u$
which together with the second inequality in
\eqref{wkwdom-} implies that $\lambdab^u \in
\dz{M_u}$. This means that we can consider the
operators $T_u$, $u \in V^\prime$. Owing to
Proposition \ref{ogrtu}, for every $u \in
V^\prime$, the operator $T_u$ is closed and its
domain contains the basis vectors $e_v$, $v \in
\dzi u$. It follows from \eqref{tuform} that
    \begin{align} \label{7.2}
\is{T_u e_w} {e_v} = \|\slam e_v\|^2 \delta_{v,w} - \frac{\|\slam
e_v\| \lambda_v \|\slam e_w\| \overline{\lambda_w}}{1+ \|\slam
e_u\|^2}, \quad v,w \in \dzi u, \, u \in V^\prime,
    \end{align}
where $\delta_{v,w}$ is the usual Kronecker delta. Thus, by
\eqref{7.2} and \eqref{wkwdom-}, we have
    \begin{multline} \label{sum}
\sum_{v,w \in \dzi u} |\is{T_u e_v} {e_w}|^2 = \sum_{v \in \dzi u}
\left[\|\slam e_v\|^2 \Big(1 - \frac {|\lambda_v|^2}{1+\| \slam
e_u\|^2}\Big)\right]^2
    \\
+ \sum_{\substack{v,w \in \dzi u\\v \neq w}} \left[\frac{\|\slam
e_v\| |\lambda_v| \|\slam e_w\| |\lambda_w|}{1+ \|\slam
e_u\|^2}\right]^2 \Le 2C, \quad u \in V^\prime.
    \end{multline}
We deduce from \eqref{sum} that the operator $T_u|_\escr$ is bounded
and $\|T_u|_\escr\| \Le \sqrt{2C}$. Since $T_u$ is closed, we see
that $T_u \in \ogr {\ell^2(\dzi u)}$ and $\|T_u\| \Le \sqrt{2C}$ for
all $u \in V^\prime$. By Theorem \ref{zen}, we get $\dz{\slam^*}
\subseteq \dz{\slam}$. Note that $T_u$ is a Hilbert-Schmidt operator
with Hilbert-Schmidt norm $\|T_u\|_2$ (cf.\ \cite[page 66]{stone})
not exceeding $\sqrt{2C}$.

    We now claim that \eqref{wkwdom-+} implies
\eqref{wkwdom-} with the new constant $C^2$. Fix $u
\in V^\prime$. For $v,w \in \dzi u$, we denote by
$\varDelta_{v,w}$ the right-hand side of the equality
\eqref{7.2}. First, we show that
   \begin{align} \label{deltawv}
|\varDelta_{v,w}|^2 \Le \varDelta_{v,v} \varDelta_{w,w},
\quad v, w \in \dzi u.
   \end{align}
Since, $\varDelta_{v,v} \Ge 0$ for all $v\in \dzi u$, it is
enough to consider the case when $v \neq w$. Under this
assumption, we have
   \begin{align} \label{tedwaimpl1}
|\lambda_v|^2 \overset{\eqref{eu}} \Le \|\slam e_u\|^2 -
|\lambda_w|^2 < 1 + \|\slam e_u\|^2 - |\lambda_w|^2,
   \end{align}
and, by symmetry,
   \begin{align} \label{tedwaimpl2}
|\lambda_w|^2 < 1 + \|\slam e_u\|^2 - |\lambda_v|^2.
   \end{align}
It is now easily seen that \eqref{tedwaimpl1} and
\eqref{tedwaimpl2} imply \eqref{deltawv}. Combining
\eqref{deltawv} with \eqref{wkwdom-+}, we get
   \begin{align}     \label{delta2le}
\sum_{v,w \in \dzi u} |\varDelta_{v,w}|^2 \Le \Big(\sum_{v
\in \dzi u} \varDelta_{v,v}\Big)^2 \Le C^2.
   \end{align}
Since the left-hand side of \eqref{delta2le} is equal
to the right-hand side of the equality in \eqref{sum},
our claim is established. In view of \eqref{delta2le}
and the discussion in the previous paragraph, we see
that $T_u \in \ogr {\ell^2(\dzi u)}$ and $\|T_u\| \Le
C$ for all $u \in V^\prime$, which completes the
proof. Looking at \eqref{wkwdom-+} and \eqref{7.2}, we
deduce that $\sum_{v\in \dzi u} \is{T_u e_v} {e_v} \Le
C$, which means that the operator $T_u$ is of trace
class and its trace norm $\|T_u\|_1$ is less than or
equal to $C$.
    \end{proof}
     An inspection of the proof of Proposition \ref{ilB}
shows that \eqref{wkwdom-} is equivalent to $\sup_{u \in
V^\prime}\|T_u\|_2 < \infty$, while \eqref{wkwdom-+} is
equivalent to $\sup_{u \in V^\prime}\|T_u\|_1 < \infty$.
Since $\|T\| \Le \|T\|_2$ and $\|T\|_2 \Le \|T\|_1$ for
Hilbert-Schmidt and trace class operators, respectively, we
see why implications
\eqref{wkwdom-+}$\Rightarrow$\eqref{wkwdom-} (possibly with
different constants $C$) and
\eqref{wkwdom-}$\Rightarrow$\eqref{supnor} are true.
   \begin{cor} \label{zenw}
If $\slam$ is a weighted shift on a directed tree
$\tcal$ with weights $\lambdab = \{\lambda_v\}_{v \in
V^\circ}$, and $\sup_{u \in V} \card {\dzi u} <
\infty$, then the following two assertions hold\/{\em
:}
   \begin{enumerate}
   \item[(i)] $\slam$ is densely defined.
   \item[(ii)] $\dz{\slam^*}\subseteq \dz{\slam}$ if and only if
there exists a positive constant $C$ such that
   \begin{align}    \label{wkwdom}
\|\slam e_v\|^2 \Big(1 - \frac {|\lambda_v|^2}{1+\|
\slam e_u\|^2}\Big) & \Le C, \quad v \in \dzi u, \,
u\in V^\prime.
   \end{align}
   \end{enumerate}
   \end{cor}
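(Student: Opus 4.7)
The plan is to deduce both assertions directly from the preceding machinery, using the finite-valency hypothesis only to pass between pointwise and summed inequalities. For part (i), since $\card{\dzi u} \Le N := \sup_{u \in V} \card{\dzi u} < \infty$ for every $u \in V$, the sum $\sum_{v \in \dzi u} |\lambda_v|^2$ is a finite sum of finite terms and hence finite. By Proposition \ref{desc}(iii), $e_u \in \dz{\slam}$ for every $u \in V$, and Proposition \ref{desc}(v) then yields that $\slam$ is densely defined.

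For the forward implication of (ii), I would invoke Theorem \ref{zen} to obtain $C := \sup_{u \in V^\prime} \|T_u\| < \infty$. Using the defining formula \eqref{tuform} together with the identity $\|\slam e_u\|^2 = \sum_{w \in \dzi u} |\lambda_w|^2$ from \eqref{eu}, a short computation gives
\begin{align*}
\is{T_u e_v}{e_v} = \|\slam e_v\|^2 \Bigl(1 - \frac{|\lambda_v|^2}{1 + \|\slam e_u\|^2}\Bigr), \quad v \in \dzi u, \; u \in V^\prime.
\end{align*}
Since $T_u$ is positive (Proposition \ref{ogrtu}(ii)) and $e_v$ is a unit vector, the left-hand side is majorized by $\|T_u\| \Le C$, which is precisely \eqref{wkwdom}. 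I note that this implication does not actually require the cardinality hypothesis.

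For the reverse implication, assuming \eqref{wkwdom}, I would sum over the finite set $\dzi u$ (of cardinality at most $N$) to obtain
\begin{align*}
\sum_{v \in \dzi u} \|\slam e_v\|^2 \Bigl(1 - \frac{|\lambda_v|^2}{1 + \|\slam e_u\|^2}\Bigr) \Le N C, \quad u \in V^\prime,
\end{align*}
which is exactly condition \eqref{wkwdom-+} of Proposition \ref{ilB} (with the larger constant $NC$). Proposition \ref{ilB} then delivers $\dz{\slam^*} \subseteq \dz{\slam}$, completing the proof. There is no genuine obstacle here; the finite-valency hypothesis plays essentially one role, namely to let the pointwise bound \eqref{wkwdom} be integrated into the summed bound that Proposition \ref{ilB} requires, while in the reverse direction it also rescues dense definedness (which otherwise could fail under the unrestricted hypotheses of Theorem \ref{zen}).
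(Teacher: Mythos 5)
Your proof is correct and follows essentially the same route as the paper: part (i) via Proposition \ref{desc}, the forward direction of (ii) via Theorem \ref{zen} and the identity $\is{T_u e_v}{e_v} = \|\slam e_v\|^2\bigl(1 - |\lambda_v|^2/(1+\|\slam e_u\|^2)\bigr)$ from \eqref{7.2}, and the reverse direction by summing the pointwise bound over the (uniformly finite) sets $\dzi u$ to reach condition \eqref{wkwdom-+} of Proposition \ref{ilB}. The only difference is that you spell out the details the paper leaves implicit.
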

   \begin{proof}
That $\slam$ is densely defined follows immediately from
Proposition \ref{desc}.

The ``if'' part of the assertion (ii) is a direct
consequence of Proposition \ref{ilB}. In turn, the ``only
if'' part follows from Theorem \ref{zen} because, by
\eqref{7.2}, the left-hand side of inequality
\eqref{wkwdom} is equal to $\is{T_u e_v}{e_v}$.
   \end{proof}
   The classical weighted shifts fall within the
scope of Theorem \ref{slssl*} and Corollary
\ref{zenw}. We leave it to the reader to write
the explicit inequalities characterizing the
inclusions $\dz\slam \subseteq \dz{\slam^*}$ and
$\dz{\slam^*} \subseteq \dz\slam $ for classical
weighted shifts $\slam$.
   \subsection{An example}
In this section we give an example highlighting the
possible relationships between domains of unbounded
weighted shifts on directed trees and their adjoints.
Since classical weighted shifts are well developed, we
concentrate on weighted shifts on directed binary
trees, the graphs which are essentially more
complicated than ``line trees'' involved in the
definition of classical weighted shifts. The other
reason for this is that Propositions \ref{ogrs} and
\ref{ogrtu}, and Theorem \ref{zen} may suggest at
first glance that weighted shifts $\slam$ on directed
binary trees (or more complicated directed trees)
satisfying the inclusion $\dz{\slam^*}\subseteq
\dz{\slam}$ are almost always bounded. Fortunately, as
shown below, this is not the case.
   \begin{exa} \label{17rem}
Let $\tcal$ be the directed binary tree as in Figure 1 with
$V^\circ$ given by
   \begin{align*}
   V^\circ = \{(i,j) \colon i = 1,2, \ldots, \, j=1, \ldots,2^i\}.
   \end{align*}
Define the sets $V^\circ_{\mathrm g} = \{(i,j) \in
V^\circ \colon j=1\}$ (gray filled ellipses in Figure
1 without root) and $V^\circ_{\mathrm w} = V^\circ
\setminus V^\circ_{\mathrm g}$ (white filled ellipses
in Figure 1).
   \begin{center}
   \includegraphics[width=8cm]
   {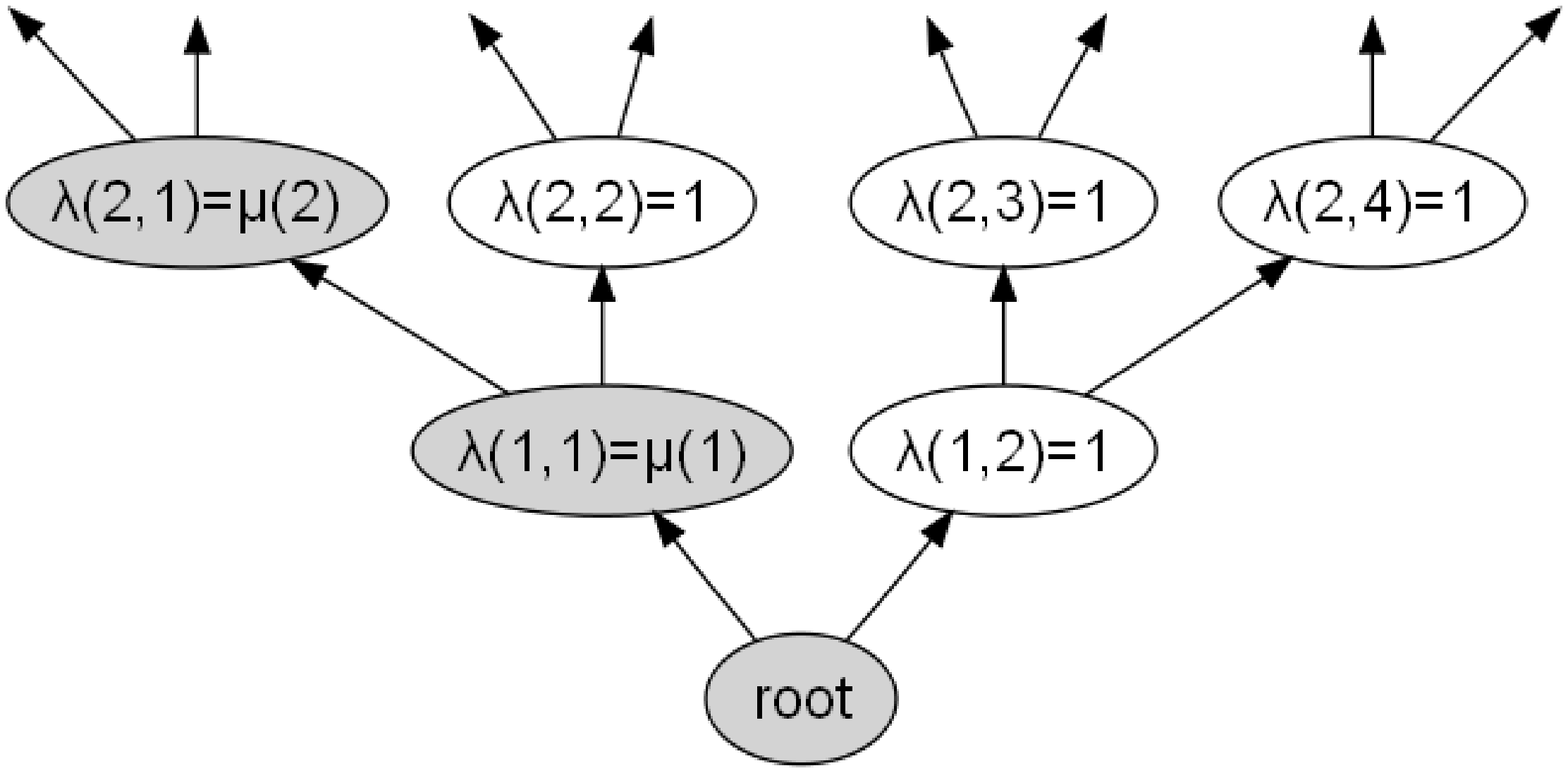}
   \\[1.5ex]
   {\small {\sf Figure 1}}
   \end{center}
   \vspace{1ex}
   Let $\{\mu(i)\}_{i=1}^\infty$ be a sequence of
complex numbers and let $\slam$ be the weighted shift
on $\tcal$ with weights $\boldsymbol
\lambda=\{\lambda(i,j)\}_{(i,j) \in V^\circ}$ given by
$\lambda(i,j)=1$ for $(i,j) \in V^\circ_{\mathrm w}$,
and $\lambda(i,j)=\mu(i)$ for $(i,j) \in
V^\circ_{\mathrm g}$. In view of Corollary \ref{zenw},
$\slam$ is densely defined, and $\dz{\slam^*}\subseteq
\dz{\slam}$ if and only if there exists a positive
constant $C$ such that for all $u \in V = V^\prime$,
   \begin{align} \label{bint}
\frac {\|\slam
e_{v_k}\|^2(1+|\lambda_{v_l}|^2)}{1+|\lambda_{v_1}|^2
+ |\lambda_{v_2}|^2} \Le C, \quad k,l=1,2, \, k \neq
l, \dzi u = \{v_1,v_2\}.
   \end{align}
If $u \in V^\circ_{\mathrm w}$, then \eqref{bint} holds with
$C=4/3$. It is also easily seen that \eqref{bint} is valid for all
$u \in V^\circ_{\mathrm g}$ with some positive constant $C$ if and
only if
   \begin{align}\label{wkwg}
\sup_{i \Ge 1} \frac{1+|\mu(i+1)|^2} {1 + |\mu(i)|^2}
< \infty \quad \Big(\hspace{-1ex}\iff \sup_{i \Ge 1}
\frac{|\mu(i+1)|^2} {1 + |\mu(i)|^2} < \infty \Big).
   \end{align}
Hence, $\dz{\slam^*}\subseteq \dz{\slam}$ if and only if
\eqref{wkwg} holds. Similar analysis based upon Theorem
\ref{slssl*} shows that $\dz{\slam}\subseteq \dz{\slam^*}$
if and only if
   \begin{align}  \label{wkwg+}
\sup_{i \Ge 1} \frac{1 + |\mu(i)|^2} {1 +
|\mu(i+1)|^2} < \infty \quad \Big(\hspace{-1ex}\iff
\sup_{i \Ge 1} \frac{|\mu(i)|^2} {1 + |\mu(i+1)|^2} <
\infty \Big).
   \end{align}
According to Corollary \ref{parc}, $\slam$ is
bounded if and only if $\sup_{i\Ge 1} |\mu(i)| <
\infty$. Clearly, if $\inf_{i\Ge 1} |\mu(i)| > 0$,
then \eqref{wkwg} is equivalent to $\sup_{i\Ge 1}
|\mu(i+1)/ \mu(i)| < \infty$, while \eqref{wkwg+}
is equivalent to $\sup_{i\Ge 1} |\mu(i)/ \mu(i+1)|
< \infty$. This simple observation enables us to
find unbounded sequences $\{\mu(i)\}_{i=1}^\infty
\subseteq \cbb$ (read:\ unbounded $\slam$'s) for
which any of the following four mutually exclusive
conditions may hold
   \begin{enumerate}
   \item[$1^\circ$] $\dz{\slam} = \dz{\slam^*}$
(e.g., $\mu(i)=i^s$ for some $s \in (0,\infty)$, or
$\mu(i)=q^i$ for some $q \in (1,\infty)$),
   \item[$2^\circ$] $\dz{\slam} \varsubsetneq \dz{\slam^*}$
(e.g., $\mu(i)=i!$),
   \item[$3^\circ$] $\dz{\slam^*} \varsubsetneq \dz{\slam}$
(e.g., $\mu(i)=i+1 - k_n$ if $k_n \Le i < k_{n+1}$,
where $\{k_n\}_{n=1}^\infty$ is a strictly increasing
sequence of integers such that $k_1 = 1$ and
$\lim_{n\to \infty} (k_{n+1} - k_n) = \infty$),
   \item[$4^\circ$] $\dz{\slam} \nsubseteq \dz{\slam^*}$ and
$\dz{\slam^*}\nsubseteq \dz{\slam}$ (e.g.,
$\mu(i)=(i+1 - k_n)!$ if $k_n \Le i < k_{n+1}$,
where $\{k_n\}_{n=1}^\infty$ is as in $3^\circ$).
   \end{enumerate}
The examples mentioned in $3^\circ$ and $4^\circ$
fit into a more general scheme. Given a strictly
increasing function $\phi\colon \{1,2, \ldots\}
\to (0,\infty)$ such that $\lim_{i\to \infty}
\phi(i) = \infty$, we set $\mu(i) = \phi(i+1-k_n)
+ l_n$ if $k_n \Le i < k_{n+1}$, where
$\{k_n\}_{n=1}^\infty$ is as in $3^\circ$ and
$\{l_n\}_{n=1}^\infty \subseteq [0,\infty)$.
Choosing appropriate $\phi$,
$\{k_n\}_{n=1}^\infty$ and
$\{l_n\}_{n=1}^\infty$, we can find examples of
unbounded $\slam$'s satisfying $3^\circ$ (e.g.,
$\phi(i)=i$, $k_n = n^3$ and $l_n=n$) and
$4^\circ$ ($\phi(i)=i!$, $k_n = n^2$ and $l_n=n$)
with the additional property that $\lim_{i\to
\infty} |\mu(i)| = \infty$.

One can construct unbounded weighted shifts on
directed binary trees with the required properties
mentioned in conditions $1^\circ$-$4^\circ$ whose
weights are more complicated than those explicated in
Figure 1. The simplest way of doing this is to draw a
directed binary tree with gray and white vertexes
(read:\ filled ellipses) following the rule that only
one child of the gray vertex is gray, the other being
white, and that both children of the white vertex are
white.
   \end{exa}
   \newpage
   \section{\label{ch5}Hyponormality and cohyponormality}
   \subsection{\label{hypcohyp}Hyponormality}
Starting from this section, we shall concentrate
mostly on investigating bounded weight\-ed shifts on
directed trees. We begin with recalling definitions of
some important classes of operators (see also
Corollary \ref{chariso}, Proposition \ref{adcoiso},
Lemma \ref{normal} and Proposition \ref{qn-ch} for
characterizations of isometric, coisometric, normal
and quasinormal weighted shifts, respectively). Let
$\hh$ be a complex Hilbert space. An operator $A \in
\ogr \hh$ is said to be {\em subnormal} if there
exists a complex Hilbert space $\mathcal K$ and a
normal operator $N\in \ogr{\kk}$ such that $\hh
\subseteq \kk$ (isometric embedding) and $Ah=Nh$ for
all $h \in \hh$ (cf.\ \cite{hal1,con2}). An operator
$A \in \ogr \hh$ is called {\em hyponormal} if
$\|A^*f\| \Le \|Af\|$ for all $f \in \hh$. We say that
$A\in \ogr \hh$ is {\em paranormal} if $\|Af\|^2 \Le
\|A^2 f\| \|f\|$ for all $f \in \hh$. It is a well
known fact that subnormal operators are always
hyponormal, but not conversely. The latter can be
easily seen by considering classical weighted shifts.
Also, it is well known that hyponormal operators are
paranormal (cf.\ \cite{Istr,Fur}), but not conversely
(cf.\ \cite[Theorem 2]{Fur}). As noticed by Furuta,
the latter reduces to finding an example of a
hyponormal operator whose square is not hyponormal;
the square is just the wanted paranormal operator.
Since paranormal unilateral classical weighted shifts
are automatically hyponormal, they are not proper
candidates for operators with the aforesaid property.
Example \ref{exa3} shows that weighted shifts on
directed trees are prospective candidates for this
purpose.

As pointed out below, hyponormal weighted shifts on a
directed tree with nonzero weights must be injective.
   \begin{pro} \label{hypcor}
Let $\tcal$ be a directed tree with $V^\circ \neq
\varnothing$. If $\slam \in \ogr{\ell^2(V)}$ is a
hyponormal weighted shift on $\tcal$ whose all weights
are nonzero, then $\tcal$ is leafless. In particular,
$\slam$ is injective and $\card{V} = \aleph_0$.
   \end{pro}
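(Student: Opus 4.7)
The plan is to test the hyponormality inequality $\|\slam^* f\| \le \|\slam f\|$ on each basis vector $e_u$, using the explicit formulas already established. From \eqref{eu} we have $\|\slam e_u\|^2 = \sum_{v \in \dzi u} |\lambda_v|^2$, and from \eqref{sl*} we have $\|\slam^* e_u\|^2 = |\lambda_u|^2$ whenever $u \in V^\circ$ (and $0$ if $u = \koo$). Hyponormality therefore yields
\begin{align*}
|\lambda_u|^2 \Le \sum_{v \in \dzi u} |\lambda_v|^2, \quad u \in V^\circ.
\end{align*}

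Next, suppose for contradiction that $\tcal$ has a leaf $u_0 \in V \setminus V^\prime$. I first observe that $u_0$ cannot be a root: if $u_0 = \koo$, then Corollary \ref{przem} forces $V = \des{\koo} = \{\koo\}$, contradicting $V^\circ \neq \varnothing$; and if $\tcal$ is rootless, no vertex is a root anyway. Hence $u_0 \in V^\circ$, so $\lambda_{u_0}$ is one of the (nonzero) weights. But $\dzi{u_0} = \varnothing$, so by \eqref{eu} $\slam e_{u_0} = 0$, while by \eqref{sl*} $\|\slam^* e_{u_0}\| = |\lambda_{u_0}| > 0$, directly contradicting the displayed inequality above. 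Thus $\tcal$ is leafless.

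For the "in particular" part, since $\tcal$ is leafless we have $\dzi u \neq \varnothing$ for every $u \in V$, and since every weight is nonzero, $\sum_{v \in \dzi u}|\lambda_v|^2 > 0$ for every $u \in V$; Proposition \ref{dzisdesz} then gives injectivity of $\slam$. Finally, being leafless, $\tcal$ is infinite (as noted immediately after the definition of "leafless"), so $\card{V} \Ge \aleph_0$. On the other hand, $\slam$ is bounded, hence densely defined, with nonzero weights, so Proposition \ref{przeldz} yields $\card{V} \Le \aleph_0$, and therefore $\card{V} = \aleph_0$.

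The only potential subtlety, and really the only thing one must be careful about, is ruling out the degenerate case where the sole leaf is a root; this is handled by invoking Corollary \ref{przem} to force $V = \{\koo\}$ in that situation and then using the standing hypothesis $V^\circ \neq \varnothing$ to exclude it. The rest is a direct application of the already established formulas \eqref{eu} and \eqref{sl*} together with Propositions \ref{dzisdesz} and \ref{przeldz}.
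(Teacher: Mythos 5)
Your proof is correct and follows essentially the same route as the paper: assume a leaf exists, use Corollary \ref{przem} together with $V^\circ \neq \varnothing$ to place it in $V^\circ$, derive the contradiction $0 < |\lambda_{u_0}|^2 = \|\slam^* e_{u_0}\|^2 \Le \|\slam e_{u_0}\|^2 = 0$ from \eqref{sl*} and \eqref{eu}, and then invoke Propositions \ref{dzisdesz} and \ref{przeldz} for the "in particular" part. Your explicit handling of the root case is exactly the paper's (more tersely stated) observation.
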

   \begin{proof}
Suppose that, contrary to our claim, $\dzi u =
\varnothing$ for some $u \in V$. It follows from
$V^\circ \neq \varnothing$ and Corollary \ref{przem}
that $u \in V^\circ$. Then we have (see also
\eqref{wkwhyp0})
   \begin{align*}
|\lambda_u|^2 \overset{ \eqref{sl*}}= \|\slam^*e_u\|^2
\Le \|\slam e_u\|^2 \overset{\eqref{eu}}= \sum_{v \in
\dzi u} |\lambda_v|^2 = 0,
   \end{align*}
which is a contradiction. Since each leafless directed
tree is infinite, Propositions \ref{dzisdesz} and
\ref{przeldz} complete the proof.
   \end{proof}
   We now characterize the hyponormality of weighted
shifts on directed trees in terms of their weights
(see Theorem \ref{p-hyp} for the case of
$p$-hyponormality). Given a directed tree $\tcal$ and
$\lambdab = \{\lambda_v\}_{v \in V^\circ} \subseteq
\cbb$, we define \idx{$\dziplus u$}{67}
    \begin{align*}
\dziplus u = \{v \in \dzi u\colon \sum_{w\in\dzi v}
|\lambda_w|^2 > 0\}, \quad u \in V.
    \end{align*}
If $\{e_v\}_{v \in V} \subseteq \dz \slam$ and $u
\in V$, then by \eqref{eu},
   \begin{align*}
\dziplus u = \{v \in \dzi u\colon \|\slam e_v\|
> 0\}=\dzi u \cap \vplus,
   \end{align*}
and for every $v \in \dzi u$, $\|\slam e_v\| = 0$ if and
only if $\lambda_w = 0$ for all $w \in \dzi v$ (of course,
the case of $\dzi v= \varnothing$ is not excluded).
    \begin{thm} \label{hyp}
Let $\slam \in \ogr{\ell^2(V)}$ be a weighted shift on
a directed tree $\tcal$ with weights $\lambdab =
\{\lambda_v\}_{v \in V^\circ}$. Then the following
assertions are equivalent{\em :}
    \begin{enumerate}
    \item[(i)] $\slam$ is hyponormal,
    \item[(ii)] for every $u \in V$, the following two conditions
hold{\em :}
    \begin{gather}
\text{if } v \in \dzi u \text{ and } \|\slam e_v\|=0,
\text{ then } \lambda_v = 0, \label{wkwhyp0}
    \\
\sum_{v \in \dziplus u} \frac {|\lambda_v|^2}{\|\slam
e_v\|^2} \Le 1. \label{wkwhyp}
    \end{gather}
    \end{enumerate}
    \end{thm}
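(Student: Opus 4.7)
The plan is to translate hyponormality, $\|\slam^* f\|^2 \le \|\slam f\|^2$ for all $f \in \ell^2(V)$, into pointwise/vertexwise conditions using the explicit formulas already at our disposal. Namely, by Proposition~\ref{desc}\,(ii) we have $\|\slam f\|^2 = \sum_{u \in V} \|\slam e_u\|^2 |f(u)|^2$, and by Proposition~\ref{sprz}\,(iii)--(iv) we have $\|\slam^* f\|^2 = \sum_{u \in V} \bigl|\sum_{v \in \dzi u} \overline{\lambda_v} f(v)\bigr|^2$. So hyponormality is equivalent to
\begin{align*}
\sum_{u \in V} \Bigl|\sum_{v \in \dzi u} \overline{\lambda_v} f(v)\Bigr|^2 \;\le\; \sum_{u \in V} \|\slam e_u\|^2 |f(u)|^2, \quad f \in \ell^2(V).
\end{align*}
The pairwise disjointness in Proposition~\ref{46} means the inner sums on the left are ``decoupled'' across $u$, which is precisely what makes the local analysis at a single vertex feasible.

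For (i)$\Rightarrow$(ii), I would test on single basis vectors and on rank-one type profiles. To obtain \eqref{wkwhyp0}, plug $f = e_{v_0}$ with $v_0 \in \dzi{u_0}$ and $\|\slam e_{v_0}\|=0$: the right-hand side vanishes while the left equals $|\lambda_{v_0}|^2$ (only the term $u = u_0$ survives in view of Proposition~\ref{46}), forcing $\lambda_{v_0} = 0$. For \eqref{wkwhyp}, fix $u \in V$, fix a finite $W \subseteq \dziplus u$, and plug $f = \sum_{v \in W} \frac{\lambda_v}{\|\slam e_v\|^2}\,e_v$; again by Proposition~\ref{46} the inner sum on the left is supported only at the vertex $u$, and a direct computation gives $\bigl(\sum_{v \in W} |\lambda_v|^2/\|\slam e_v\|^2\bigr)^2 \le \sum_{v \in W} |\lambda_v|^2/\|\slam e_v\|^2$; letting $W$ exhaust $\dziplus u$ yields \eqref{wkwhyp}.

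For (ii)$\Rightarrow$(i), I would fix $u \in V$ and estimate $\bigl|\sum_{v \in \dzi u} \overline{\lambda_v} f(v)\bigr|^2$. Condition \eqref{wkwhyp0} lets me replace the sum over $\dzi u$ by the sum over $\dziplus u$ without loss (the omitted terms have $\lambda_v = 0$). On $\dziplus u$ the denominator $\|\slam e_v\|$ is strictly positive, so Cauchy--Schwarz applied to $\overline{\lambda_v} f(v) = \frac{\overline{\lambda_v}}{\|\slam e_v\|} \cdot \|\slam e_v\| f(v)$ gives
\begin{align*}
\Bigl|\sum_{v \in \dzi u} \overline{\lambda_v} f(v)\Bigr|^2 \;\le\; \Bigl(\sum_{v \in \dziplus u} \frac{|\lambda_v|^2}{\|\slam e_v\|^2}\Bigr)\cdot \sum_{v \in \dzi u} \|\slam e_v\|^2 |f(v)|^2 \;\le\; \sum_{v \in \dzi u} \|\slam e_v\|^2 |f(v)|^2,
\end{align*}
where the second inequality uses \eqref{wkwhyp}. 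Summing over $u \in V$ and invoking Proposition~\ref{46} (so that each $v \in V^\circ$ appears in exactly one $\dzi u$) bounds the right-hand side by $\sum_{v \in V} \|\slam e_v\|^2 |f(v)|^2 = \|\slam f\|^2$, which is hyponormality.

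There is no real obstacle here: everything is enabled by the two reservoirs Proposition~\ref{desc} and Proposition~\ref{sprz} together with the disjointness in Proposition~\ref{46}. The only subtle point to state carefully is \eqref{wkwhyp0}, which is a degeneracy condition ensuring that the Cauchy--Schwarz estimate above is not spoiled by summands for which both numerator and denominator vanish, and which equivalently amounts to the containment $\{v \in V^\circ : \lambda_v \ne 0\} \subseteq \vplus$ — the natural compatibility between the weights and the set of vertexes ``visible'' to $|\slam|$.
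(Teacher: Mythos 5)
Your proposal is correct and follows essentially the same route as the paper: the paper likewise localizes the inequality $\|\slam^*f\|^2\Le\|\slam f\|^2$ to each set $\dzi u$ (by invoking the computations from the proof of Theorem \ref{slssl*} with $\|\slam e_v\|^2$ in place of $1+\|\slam e_v\|^2$), extracts \eqref{wkwhyp0} by testing on $e_v$, obtains \eqref{wkwhyp} from the same finitely supported test function $f(v)=\lambda_v/\|\slam e_v\|^2$, and reverses the argument via Cauchy--Schwarz. Your write-up merely makes explicit the decoupling over $u$ (Proposition \ref{46}) and the reversal step that the paper leaves as ``a matter of routine,'' so there is nothing to correct.
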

    \begin{proof}
Suppose that $\slam$ is hyponormal. Arguing as in the proof
of Theorem \ref{slssl*} up to the inequality \eqref{in2}
(with $c=1$ and $\|\slam e_v\|^2$ in place of $1+\|\slam
e_v\|^2$), we deduce that
   \begin{align} \label{in2+}
\Big| \sum_{v \in \dzi u} \overline {\lambda_v}
f(v)\Big|^2 \Le \sum_{v \in \dzi u} \|\slam e_v\|^2
|f(v)|^2, \quad f \in \ell^2(\dzi u),\, u \in V.
   \end{align}
If $v \in \dzi u$ is such that $\|\slam e_v\|=0$, then
by substituting $f = e_v$ into \eqref{in2+}, we obtain
$\lambda_v=0$, which proves \eqref{wkwhyp0}. In view
of \eqref{wkwhyp0} and \eqref{in2+}, we have
   \begin{align} \label{in2++}
\Big| \sum_{v \in \dziplus u} \overline {\lambda_v}
f(v)\Big|^2 \Le \sum_{v \in \dziplus u} \|\slam
e_v\|^2 |f(v)|^2, \quad f \in \ell^2(\dziplus u),\, u
\in V.
   \end{align}
This in turn implies \eqref{wkwhyp} (consult the part of
the proof of Theorem \ref{slssl*} which comes after the
inequality \eqref{in2}). It is a matter of routine to
verify that the above reasoning can be reversed.
    \end{proof}
   \begin{rem} \label{lam1}
In view of Proposition \ref{hypcor}, it is
tempting to expect that weights of an injective
hyponormal weighted shift on a directed tree are
all nonzero. However, this is not the case.
Indeed, consider two injective hyponormal
weighted shifts $S_{\boldsymbol \lambda_1}$ and
$S_{\boldsymbol \lambda_2}$ on directed trees
$\tcal_1 = (V_1,E_1)$ and $\tcal_2=(V_2,E_2)$
with nonzero weights $\lambdab_1 =
\{\lambda_{1,v}\}_{v\in V_1^\circ}$ and
$\lambdab_2=\{\lambda_{2,v}\}_{v\in V_2^\circ}$,
respectively, and assume that the directed tree
$\tcal_2$ has a root denoted by $\koo_2$ (cf.\
Examples \ref{exa3} and \ref{exa4}). Set $V = V_1
\sqcup V_2$ and $E = E_1 \sqcup E_2 \sqcup
\{(\omega_1,\koo_2)\}$, where $\omega_1$ is an
arbitrarily fixed vertex of $\tcal_1$. Then
clearly $\tcal=(V,E)$ is a directed tree and
$V^\circ = V_1^\circ \sqcup V_2$. Define a family
$\lambdab = \{\lambda_v\}_{v\in V^\circ}
\subseteq \cbb$ by setting $\lambda_v =
\lambda_{1,v}$ for $v\in V_1^\circ$, $\lambda_v =
\lambda_{2,v}$ for $v\in V_2^\circ$ and
$\lambda_{\koo_2} = 0$. Applying Proposition
\ref{dirsum} to $u=\koo_2$, we deduce that $\slam
= S_{\boldsymbol \lambda_1} \oplus S_{\boldsymbol
\lambda_2}$, which implies that $\slam$ is an
injective hyponormal weighted shifts with exactly
one vanishing weight.
   \end{rem}
   \begin{rem} \label{lam0}
It is worth mentioning that the conclusion of
Proposition \ref{hypcor} is no longer true if some
weights of $\slam$ vanish. Indeed, it is enough to
modify the example in Remark \ref{lam1} simply by
considering an injective hyponormal weighted shift
$S_{\boldsymbol \lambda_1}$ with nonzero weights and
$V_2=\{\koo_2\}$ (see the comment following Definition
\ref{defshift}). Then the weighted shift $\slam =
S_{\boldsymbol \lambda_1} \oplus S_{\boldsymbol
\lambda_2} = S_{\boldsymbol \lambda_1} \oplus 0$ is
hyponormal, it has exactly one vanishing weight and
$\dzi{\koo_2} = \varnothing$.
   \end{rem}
   \begin{rem} \label{unhyp}
The notion of hyponormality can be extended to
the case of unbounded operators. A densely
defined operator $A$ in a complex Hilbert space
$\hh$ is said to be {\em hyponormal} if $\dz{A}
\subseteq \dz{A^*}$ and $\|A^*f\| \Le \|Af\|$ for
all $f \in \dz A$. It is known that hyponormal
operators are closable and their closures are
hyponormal as well (see
\cite{ot-sch,jj1,jj2,jj3,sto} for elements of the
theory of unbounded hyponormal operators). A
close inspection of the proof reveals that {\em
Theorem {\em \ref{hyp}} remains true for densely
defined weighted shifts on directed trees} (in
the present more general context the inequalities
\eqref{in2+} and \eqref{in2++}, which play a
pivotal role in the proof, have to be considered
for $f \in \dz{\slam}$). Note also that if
$\slam$ is a densely defined weighted shift on a
directed tree $\tcal$ with weights $\lambdab =
\{\lambda_v\}_{v \in V^\circ}$, then the
conditions \eqref{wkwhyp0} and \eqref{wkwhyp}
imply \eqref{incofds} with $c=1$. Indeed, for all
$u \in V$,
   \begin{align*}
\sum_{v \in \dzi u} \frac {|\lambda_v|^2} {1+
\|\slam e_v\|^2} \overset{\eqref{wkwhyp0}}=
\sum_{v \in \dziplus u} \frac {|\lambda_v|^2} {1+
\|\slam e_v\|^2} \Le \sum_{v \in \dziplus u}
\frac {|\lambda_v|^2} {\|\slam e_v\|^2}
\overset{\eqref{wkwhyp}} \Le 1.
   \end{align*}
As a consequence of Theorem \ref{slssl*}, we get
$\dz{\slam} \subseteq \dz{\slam^*}$.
   \end{rem}
   \subsection{\label{cohypcohyp}Cohyponormality}
Recall that an operator $A \in \ogr \hh$ is said to be
{\em cohyponormal} if its adjoint $A^*$ is hyponormal.
The question of cohyponormality of weighted shifts on
directed trees is more delicate than hyponormality. It
requires considering two distinct cases.
   \begin{lem} \label{cohyp}
Let $\slam \in \ogr{\ell^2(V)}$ be a weighted shift on
a directed tree $\tcal$ with weights $\lambdab =
\{\lambda_v\}_{v \in V^\circ}$. Then the following
assertions hold{\em :}
    \begin{enumerate}
    \item[(i)] if $\tcal$ has a root, then $\slam$ is cohyponormal
if and only if $\slam = 0$,
    \item[(ii)] if $\tcal$ is rootless, then $\slam$ is cohyponormal
if and only if for every $u \in V$ the following
two conditions are satisfied{\em :}
   \begin{enumerate}
   \item[(a)] $\card {\dziplus u} \Le  1$,
   \item[(b)] if $\card {\dziplus u} = 1$, then
$0 < \|\slam e_v\| \Le |\lambda_v|$ for $v \in
\dziplus u$ and $\lambda_w = 0$ for $w \in \dzi u
\setminus \dziplus u$.
   \end{enumerate}
    \end{enumerate}
    \end{lem}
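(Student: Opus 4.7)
My plan is to translate cohyponormality into the scalar inequality $\|\slam f\|^2 \le \|\slam^* f\|^2$, compute both sides explicitly via \eqref{lamtf+} and Proposition \ref{sprz}\,(iii), and probe it with vectors $f = \alpha e_u + \sum_{v \in \dzi u} \beta_v e_v$ supported on a single vertex together with its children. Because $\slam f$ lives in the one-forward-neighbourhood and $\slam^* f$ in the one-backward-neighbourhood of $\supp f$, and since distinct children of $u$ have disjoint child-sets in a tree, everything outside the relevant ``star at $u$'' cancels and the inequality collapses to
\begin{align*}
\bigl(\|\slam e_u\|^2 - |\lambda_u|^2\bigr)|\alpha|^2 + \sum_{v \in \dziplus u} \|\slam e_v\|^2 \, |\beta_v|^2 \;\le\; \Big|\sum_{v \in \dzi u} \overline{\lambda_v}\beta_v\Big|^2, \qquad (\ast)
\end{align*}
with the convention $|\lambda_{\koo}|^2 := 0$ when $u = \koo$. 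This $(\ast)$ (for every $u$ and every $\alpha, \beta$) is a necessary condition for cohyponormality; in the rooted case it is already enough, while in the rootless case I will prove sufficiency by a separate, structural, computation.

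For part (i), apply $(\ast)$ at $u=\koo$ with $\alpha=1$, $\beta=0$ to get $\|\slam e_{\koo}\|^2 = 0$, hence $\lambda_v = 0$ for every $v \in \dzi\koo$. An induction on $n$ driven by the general bound $\|\slam e_u\|^2 \le |\lambda_u|^2$ (the same substitution at any $u \in V^\circ$), together with the decomposition $V = \bigsqcup_{n \ge 0} \dzin{n}{\koo}$ from Corollary \ref{przem}, then forces all weights to vanish, i.e.\ $\slam = 0$; the converse is trivial. For necessity in (ii), set $\alpha = 0$ in $(\ast)$. First, $v \in \dziplus u$ forces $\lambda_v \ne 0$, since $(\ast)$ at $\pa v$ gives $\|\slam e_v\|^2 \le |\lambda_v|^2$ and $v \in \dziplus u$ means $\|\slam e_v\|^2 > 0$. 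Now if $\dziplus u$ contained two distinct vertices $v_1, v_2$, the choice $\beta_{v_i} = \pm 1/\overline{\lambda_{v_i}}$ (zero elsewhere) makes the right-hand side of $(\ast)$ vanish but the left-hand side strictly positive, a contradiction; and the same trick with $v_0 \in \dziplus u$ and $w_0 \in \dzi u \setminus \dziplus u$ having $\lambda_{w_0} \ne 0$ rules out the remaining part of (b). The bound $\|\slam e_v\| \le |\lambda_v|$ for $v \in \dziplus u$ is the basic inequality already available from $(\ast)$.

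For sufficiency of (ii), set $W := \vplus$. Using (a) and (b) I will first show that $W$ is backward-closed (if $u \in W$, then $u \in \dziplus{\pa u}$, so by (b) $\lambda_u \ne 0$, whence $\|\slam e_{\pa u}\|^2 \ge |\lambda_u|^2 > 0$) and that each vertex of $W$ has at most one child in $W$; combined with Proposition \ref{witr}, these facts force $W$, when nonempty, to be a totally ordered chain $W = \{u_n\}_{n \le K}$ with $\pa u_{n+1} = u_n$, extending backward infinitely (since $\tcal$ is rootless) and either extending forward infinitely ($K = \infty$) or terminating at a \emph{tip} $u_\infty := u_K$ with $\dziplus{u_K} = \varnothing$. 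Outside $W$ we have $(\slam f)(u) = 0 = (\slam^* f)(u)$, so $\|\slam f\|^2 - \|\slam^* f\|^2$ picks up contributions only from $u \in W$; writing $b_n = |\lambda_{u_{n+1}}|^2$ and $c_n = |f(u_n)|^2$, condition (b) identifies $\|\slam e_{u_n}\|^2 = b_n$ for $n < K$ and yields the monotonicity $b_{n+1} \le b_n$, while $(\slam^* f)(u_n) = \overline{\lambda_{u_{n+1}}} f(u_{n+1})$ for $n < K$. An Abel summation (absolutely convergent because $|\lambda_u|^2 = \|\slam^* e_u\|^2 \le \|\slam\|^2$) rewrites the resulting sum as $\sum_{n \le K-1}(b_n - b_{n-1}) c_n$, plus, when $K < \infty$, a tip correction of the form $(a_{u_K} - b_{K-1}) c_K - R$ with $R = \bigl|\sum_{v \in \dzi u_K} \overline{\lambda_v} f(v)\bigr|^2 \ge 0$; every summand is non-positive (using monotonicity of $b_n$ and $a_{u_K} \le |\lambda_{u_K}|^2 = b_{K-1}$ from (b) applied to $u_{K-1}$), so $\|\slam f\|^2 \le \|\slam^* f\|^2$. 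The main obstacle is pinning down the chain structure of $W$ and controlling the tip vertex $u_\infty$, which, though outside the chain in terms of forward continuation, may carry arbitrary non-$W$ children with nonzero weights; the delicate comparison $a_{u_K} \le b_{K-1}$ forced by (b) is precisely what makes the extra term $-R$ absorbable and closes the argument.
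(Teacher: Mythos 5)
Your proposal is correct, and its necessity half (together with part (i)) is essentially the paper's argument in concrete clothing: the paper also reduces cohyponormality to the per-vertex inequality $\sum_{v \in \dzi u} \|\slam e_v\|^2 |f(v)|^2 \Le |\is{f}{\lambdab^u}|^2$ on $\ell^2(\dzi u)$, but instead of your explicit phase-cancelling test vectors it observes abstractly that any $f \perp \lambdab^u$ must vanish on $\dziplus u$, hence $\ell^2(\dziplus u) \subseteq \langle \lambdab^u\rangle$, which is at most one-dimensional; the two arguments are the same idea. Where you genuinely diverge is sufficiency. The paper simply reverses the local inequality: under (a) and (b) the right-hand side at each $u$ is $|\lambda_v|^2|f(v)|^2$ for the single $v \in \dziplus u$ (or the left-hand side is $0$), so the per-vertex inequality is immediate and summing over $u$ via $V = \bigsqcup_u \dzi u$ finishes the proof in two lines. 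You instead extract the full chain structure of $\vplus$ and run an Abel summation; this works (your monotonicity $b_{n+1}\Le b_n$ and the tip estimate $\|\slam e_{u_K}\|^2 \Le |\lambda_{u_K}|^2$ are exactly what is needed), but it amounts to proving most of the paper's subsequent Theorem \ref{cohyp-opis} inside the lemma, which is far more than the lemma requires. What your route buys is that the structural description of cohyponormal shifts comes out for free; what it costs is length and one bookkeeping hazard: your claim that $(\slam f)(u)=0$ for $u \notin \vplus$ is literally false at the non-$\vplus$ children of the tip $u_K$ (where (b) is vacuous and $\lambda_u$ may be nonzero). You must group $\|\slam f\|^2$ by source vertex as $\sum_u \|\slam e_u\|^2 |f(u)|^2$, which is evidently what your tip correction $a_{u_K}c_K$ does, so the slip is one of phrasing rather than substance.
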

    \begin{proof}
(i) Assume that $\tcal$ has a root and $\slam$ is
cohyponormal. Note that if $\slam^* e_u = 0$ for some
$u \in V$, then by the cohyponormality of $\slam$ we
have $\slam e_u=0$, which together with \eqref{eu}
implies that $\lambda_v = 0$ for all $v \in \dzi{u}$,
or equivalently that $\slam^* e_v=0$ for all $v \in
\dzi{u}$ (use \eqref{sl*}). Since $\slam^*
e_{\koo}=0$, an induction argument shows that
$\lambda_v = 0$ for all $v \in \dzin n u$ and for all
$n=1,2,\ldots$ Applying Corollary \ref{przem}, wee see
that $\lambda_v = 0$ for all $v \in V^\circ$, which
means that $\slam =0$.

(ii) Assume that $\tcal$ is rootless. Suppose first that
$\slam$ is cohyponormal. Arguing as in the proof of Theorem
\ref{zen} up to the inequality \eqref{both}, we deduce that
    \begin{align}   \label{niercoh1}
\sum_{u \in V} \|\slam e_u\|^2 |f(u)|^2 \Le \sum_{u
\in V} \Big| \sum_{v \in \dzi u} \overline {\lambda_v}
f(v)\Big|^2 \hspace{-1ex},\quad f \in \ell^2(V),
    \end{align}
which by Proposition \ref{46} takes the form
    \begin{align}   \label{niercoh2}
\sum_{u \in V} \sum_{v \in \dzi u} \|\slam
e_v\|^2 |f(v)|^2 \Le \sum_{u \in V} \Big| \sum_{v
\in \dzi u} \overline {\lambda_v} f(v) \Big|^2
\hspace{-1ex},\quad f \in \ell^2(V).
    \end{align}
This in turn is equivalent to
    \begin{align} \label{ineq2}
\sum_{v \in \dzi u} \|\slam e_v\|^2 |f(v)|^2 \Le
|\is{f}{\lambdab^u}|^2,\quad f \in \ell^2(\dzi u), \,
u \in V^\prime,
    \end{align}
where $\lambdab^u = \{\lambda_v\}_{v \in \dzi u}$. It
follows from \eqref{ineq2} that
    \begin{align*}
\ell^2(\dzi u) \ominus \langle \lambdab^u\rangle
\subseteq \ell^2(\dzi u) \ominus \ell^2(\dziplus
u), \quad u \in V^\prime,
    \end{align*}
where $\langle \lambdab^u\rangle$ stands for the
linear span of $\{\lambdab^u\}$. Taking orthogonal
complements in $\ell^2(\dzi u)$, we obtain
$\ell^2(\dziplus u) \subseteq \langle
\lambdab^u\rangle$. This implies that $\card {\dziplus
u} \Le 1$. If $\card {\dziplus u} = 1$, say $\dziplus
u = \{v\}$, then $\ell^2(\dziplus u) = \langle
\lambdab^u\rangle$, which guarantees that $\lambda_v
\neq 0$ and $\lambda_w = 0$ for all $w \in \dzi u
\setminus \{v\}$. In turn, by the cohyponormality of
$\slam$ we have
   \begin{align*}
\|\slam e_v\| \Le \|\slam^* e_v\|
\overset{\eqref{sl*}}= |\lambda_v|.
   \end{align*}

Reversing the above reasoning completes the
proof.
    \end{proof}
Lemma \ref{cohyp} enables us to give a complete description
of cohyponormal weight\-ed shifts on a directed tree. In
particular, the structure of a subtree corresponding to
nonzero weights of a cohyponormal weighted shift is
established.
   \begin{thm} \label{cohyp-opis}
Let $\slam \in \ogr{\ell^2(V)}$ be a nonzero weighted
shift on a directed tree $\tcal$ with weights
$\lambdab = \{\lambda_v\}_{v \in V^\circ}$. Then
$\slam$ is cohyponormal if and only if the tree
$\tcal$ is rootless and one of the following two
disjunctive conditions holds\/{\em :}
   \begin{enumerate}
   \item[(i)] there exists a sequence
$\{u_{n}\}_{n=-\infty}^{\infty} \subseteq V$ such that
   \begin{align} \label{defun}
\text{$0 < |\lambda_{u_{n}}| \Le
|\lambda_{u_{n-1}}|$ and $u_{n-1} = \pa {u_n}$}
   \end{align}
for all $n \in \zbb$, and $\lambda_v = 0$ for all
$v \in V \setminus \{u_{n}\colon n\in\zbb\}$,
   \item[(ii)] there exist a sequence
$\{u_{n}\}_{n=-\infty}^{0} \subseteq V$ such that
   \begin{align} \label{defun2}
\text{$0 < \sum_{v \in \dzi{u_0}}|\lambda_v|^2
\Le |\lambda_{u_{0}}|^2$, $0 < |\lambda_{u_{n}}|
\Le |\lambda_{u_{n-1}}|$ and $u_{n-1} = \pa
{u_n}$}
   \end{align}
for all integers $n \Le 0$, and $\lambda_v = 0$
for all $v \in V \setminus (\{u_{n}\colon n\Le
0\} \cup \dzi{u_0})$.
   \end{enumerate}
   \end{thm}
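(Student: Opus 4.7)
The plan is to reduce everything to Lemma \ref{cohyp}. Since $\slam \neq 0$, Lemma \ref{cohyp}(i) immediately forces $\tcal$ to be rootless, so every vertex has all iterated parents defined. The task then becomes to show that the local conditions (a) and (b) of Lemma \ref{cohyp}(ii) translate into exactly the global geometric alternative (i)/(ii). The central object of the proof is the set $\vplus = \{v \in V \colon \|\slam e_v\| > 0\}$, which I expect to coincide with the chain $\{u_n\}$ (plus possibly the children of $u_0$ in case (ii)).

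First I would extract two structural features of $\vplus$ from the local conditions. Property (P1): $\vplus$ is upward-closed under the parent operation. Indeed, if $v \in \vplus$, then $v \in \dziplus{\pa v}$, so (a) forces $\dziplus{\pa v} = \{v\}$, and (b) then gives $|\lambda_v| \Ge \|\slam e_v\| > 0$, whence $\|\slam e_{\pa v}\|^2 \Ge |\lambda_v|^2 > 0$. Property (P2): each vertex has at most one child in $\vplus$, which is just (a). Together these say $\vplus$ is a linearly ordered chain of ancestors without branching, and they also imply $\vplus \subseteq W := \{v \in V \colon \lambda_v \neq 0\}$.

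Second, I would pick any $v_0 \in \vplus$ (nonempty because $\slam \neq 0$) and build a sequence by setting $v_{-n} = \paa^n(v_0)$ for $n \Ge 0$, and recursively defining $v_{n+1}$ for $n \Ge 0$ as the unique element of $\dziplus{v_n}$ whenever this set is nonempty, stopping if ever it is empty. Two mutually exclusive cases arise: either the forward recursion never halts (bi-infinite chain), or it halts at some $v_{n_0}$ with $\dziplus{v_{n_0}} = \varnothing$. The key step is showing $\vplus = \{v_n\}$: any $u \in \vplus$ lies in $\des{v_{-k}}$ for some $k$ by Proposition \ref{xdescor}(iii), and the unique downward path from $v_{-k}$ to $u$ stays inside $\vplus$ by (P1), so (P2) forces it to coincide step by step with the forward recursion; in the halting case the terminal condition at $v_{n_0}$ prevents overshooting.

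Third, I would identify $W$ and verify the inequalities. Any $v \in W \setminus \vplus$ must satisfy $\pa v \in \vplus$, so $\pa v = v_n$ for some $n$, and condition (b) applied at $v_n$ rules out $v$ being a non-distinguished child unless $\dziplus{v_n} = \varnothing$---which happens only in the halting case, at $v_{n_0}$. Hence $W = \vplus$ in the bi-infinite case and $W \subseteq \vplus \cup \dzi{v_{n_0}}$ otherwise; re-indexing $u_n := v_{n+n_0}$ in the halting case produces the one-sided sequence needed for (ii). The norm inequalities fall out from $\|\slam e_{v_n}\|^2 = \sum_{w \in \dzi{v_n}} |\lambda_w|^2$ combined with (b): inside the chain only $|\lambda_{v_{n+1}}|^2$ survives, while at the endpoint $u_0$ the full sum $\sum_{v \in \dzi{u_0}} |\lambda_v|^2$ appears, bounded by $|\lambda_{u_0}|^2$ via (b) applied at $u_{-1}$. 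The converse direction is a routine verification that an $\slam$ of the form (i) or (ii) satisfies conditions (a) and (b) of Lemma \ref{cohyp}(ii). The main obstacle is the identification $\vplus = \{v_n\}$, which requires simultaneously exploiting upward-closure, no-branching, and the termination condition at $v_{n_0}$.
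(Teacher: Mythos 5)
Your proposal is correct and takes essentially the same route as the paper's proof: both reduce to Lemma \ref{cohyp}, establish the upward propagation of the condition $\card{\dziplus{u}}=1$ (your (P1) is the paper's key observation \eqref{step1}), build the backward chain by induction, split into the non-terminating and terminating cases of the forward recursion, and rule out nonzero weights off the chain by locating an offending vertex as a descendant of some $\paa^k(v_0)$. The only (cosmetic) difference is that you organize the argument around identifying $\vplus$ with the chain, whereas the paper argues directly that the remaining weights vanish via the inequality \eqref{indnz}.
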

   \begin{proof}
Suppose that $\slam$ is cohyponormal. Since $\slam
\neq 0$, it follows from Lemma \ref{cohyp}\,(i) that
$\tcal$ is rootless. Hence, we have
   \begin{align} \label{indnz}
|\lambda_u| \overset{ \eqref{eu}} \Le \|\slam e_{\pa
u}\| \Le \|\slam^* e_{\pa u}\| \overset{ \eqref{sl*}}
= |\lambda_{\pa u}|, \quad u \in V.
   \end{align}
Observe that
   \begin{align} \label{step1}
   \begin{minipage}{30em}
if $u \in V$ is such that $\card {\dziplus u} = 1$,
then $\card {\dziplus{\pa{u}}} = 1$ and $u \in
\dziplus{\pa{u}}$.
   \end{minipage}
   \end{align}
   Indeed, it follows from Lemma \ref{cohyp}\,(b) that
$0 < \|\slam e_v\| \Le |\lambda_v|$, where $v \in
\dziplus{u}$. As a consequence, we have $\lambda_v
\neq 0$ and $\|\slam e_u\| > 0$. Hence, $u \in
\dziplus {\pa u}$ and so, by Lemma
\ref{cohyp}\,(a), $\card {\dziplus{\pa{u}}} = 1$.

Since $\tcal$ is rootless and $\slam \neq 0$,
there exists $u \in V$ such that $\card {\dziplus
u} = 1$. Applying an induction procedure, we
infer from \eqref{step1} that there exists a
backward sequence $\{u_n\}_{n=-\infty}^{-1}$ such
that $u_{n-1} = \pa {u_n}$ and $\card
{\dziplus{u_n}} = 1$ for all integers $n \Le -1$
with $u_{-1} = u$ (by Proposition \ref{xdescor},
$u_m \neq u_n$ whenever $m \neq n$). Take $v \in
\dziplus{u}$. If $\card {\dziplus v} = 1$, then
the new sequence $\{\ldots, u_{-2}, u_{-1},
u_{0}\}$ with $u_{0}=v$ shares the same
properties as $\{u_n\}_{n=-\infty}^{-1}$. There
are now two possibilities. The first is that this
procedure never terminates. Let us denote by
$\{u_n\}_{n=-\infty}^\infty$ the resulting
sequence. We show that
$\{u_n\}_{n=-\infty}^\infty$ fulfills (i). For
this, take $v \in V \setminus \{u_{n} \colon n\in
\zbb\}$. Suppose that, contrary to our claim,
$\lambda_v \neq 0$. By Proposition \ref{witr},
there exists an integer $k \Ge 1$ such that
$\paa^k(v) \in \{u_{n}\colon n\in\zbb\}$, say
$\paa^k(v) = u_l$ with $l\in \zbb$, and
$\paa^j(v) \notin \{u_{n}\colon n\in\zbb\}$ for
$j=0, \ldots, k-1$. It follows from \eqref{indnz}
that $\lambda_{\paa^{k-1}(v)} \neq 0$. Since
$\dziplus{u_{l}} = \{u_{l+1}\}$, $\paa^{k-1}(v)
\in \dzi{u_l}$ and $\paa^{k-1}(v) \neq u_{l+1}$,
we get a contradiction with Lemma
\ref{cohyp}\,(b). The inequalities in
\eqref{defun} follow from Lemma \ref{cohyp}\,(b).

The other possibility is that our procedure
terminates, which means that we obtain a sequence
$\{u_n\}_{n=-\infty}^{-1}$ such that $u_{n-1} = \pa
{u_n}$ and $\card {\dziplus{u_n}} = 1$ for all
integers $n \Le -1$, and
   \begin{align} \label{zen0}
\card {\dziplus{u_0}} = 0 \text{ for a unique } u_0
\in \dziplus{u_{-1}}.
   \end{align}
We show that $\{u_n\}_{n=-\infty}^{0}$ fulfills
(ii). Take $v \in V \setminus (\{u_{n}\colon n\Le
0\} \cup \dzi{u_0})$. Suppose that, contrary to
our claim, $\lambda_v \neq 0$. As in the previous
case, we find $k \Ge 1$ such that $\paa^k(v) \in
\{u_{n}\colon n\Le 0\} \cup \dzi{u_0}$ and
$\paa^j(v) \notin \{u_{n} \colon n \Le 0\} \cup
\dzi{u_0}$ for $j=0, \ldots, k-1$. By
\eqref{indnz}, we get $\lambda_{\paa^{k-1}(v)}
\neq 0$. Now we have two possibilities, either
$\paa^{k}(v) \in \dzi{u_0}$ which implies that
$\paa^{k}(v) \in \dziplus{u_{0}}$, in
contradiction with \eqref{zen0}, or $\paa^{k}(v)
\in \{u_{n}\colon n\Le -1\}$, say $\paa^k(v) =
u_l$ with $l\Le -1$ (note that the case of $l=0$
is impossible), which implies that
$\dziplus{u_{l}} = \{u_{l+1}\}$ (use \eqref{zen0}
if $l=-1$, and \eqref{step1} if $l \Le -2$),
$\paa^{k-1}(v) \in \dzi{u_l}$ and $\paa^{k-1}(v)
\neq u_{l+1}$, in contradiction with Lemma
\ref{cohyp}\,(b). Since the inequalities in
\eqref{defun2} hold due to Lemma
\ref{cohyp}\,(b), the ``only if'' part of the
conclusion is justified.

Finally, applying Lemma \ref{parchi} (with $X =
\{u_n\colon n \in \zbb\}$ or $X = \dzi{u_0} \cup
\{u_n\colon n \Le 0\}$) and Lemma
\ref{cohyp}\,(ii) completes the proof of the
``if'' part.
   \end{proof}
The following corollary is a direct consequence of
Proposition \ref{dzisdesz} and Theorem
\ref{cohyp-opis}. It asserts that injective
cohyponormal weighted shifts on directed trees are
bilateral classical weighted shifts.
   \begin{cor}  \label{injcoh}
A nonzero weighted shift $\slam \in
\ogr{\ell^2(V)}$ on a directed tree $\tcal$ with
weights $\lambdab = \{\lambda_v\}_{v \in
V^\circ}$ is injective and cohyponormal if and
only if there exists a sequence
$\{u_{k}\}_{k=-\infty}^{\infty}$ such that $V =
\{u_k\colon k \in \zbb\}$, $u_{n-1} = \pa {u_n}$
and $0 < |\lambda_{u_{n}}| \Le
|\lambda_{u_{n-1}}|$ for all $n \in \zbb$.
   \end{cor}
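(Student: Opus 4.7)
\smallskip

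\noindent\textbf{Proof proposal.} The plan is to derive both implications by combining Theorem \ref{cohyp-opis} with the injectivity criterion of Proposition \ref{dzisdesz}.

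For the forward direction, assume $\slam$ is nonzero, injective and cohyponormal. Theorem \ref{cohyp-opis} then forces $\tcal$ to be rootless and either condition (i) or condition (ii) of that theorem to hold. The aim is to show that only (i) occurs and that, moreover, the sequence $\{u_n\}_{n\in\zbb}$ furnished by (i) exhausts $V$. By Proposition \ref{dzisdesz}, injectivity of $\slam$ gives $\sum_{v\in\dzi u}|\lambda_v|^2 > 0$ for every $u\in V$ (and incidentally that $\tcal$ is leafless). The key observation in both cases is the following uniqueness-of-parent argument: if $w\in\dzi u$ and $w=u_n$ for some $n$, then $u=\pa{w}=\pa{u_n}=u_{n-1}$; consequently, if $u$ does not belong to the prescribed sequence, neither does any child of $u$.

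Applying this in Case (ii), pick any $w\in\dzi{u_0}$ (nonempty since $\tcal$ is leafless). Then $w \notin\{u_n\colon n\Le 0\}\cup\dzi{u_0}$, and by the observation no child of $w$ lies in $\{u_n\colon n\Le 0\}\cup\dzi{u_0}$ either; hence all weights $\lambda_v$ with $v\in\dzi w$ vanish, contradicting injectivity. So (ii) is ruled out. In Case (i), if some $u\in V$ were not of the form $u_n$, the same observation would give $\lambda_v=0$ for every $v\in\dzi u$, again contradicting Proposition \ref{dzisdesz}. Therefore $V=\{u_n\colon n\in\zbb\}$, and the inequalities $0<|\lambda_{u_n}|\Le|\lambda_{u_{n-1}}|$ are already part of condition (i).

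For the converse, suppose the sequence $\{u_k\}_{k\in\zbb}$ with the stated properties exists. Then $\tcal$ is rootless, $\dzit{\tcal}{u_n}=\{u_{n+1}\}$ for every $n\in\zbb$ (since $V=\{u_k\}$ and $u_{n-1}=\pa{u_n}$), and all weights are nonzero. Consequently condition (i) of Theorem \ref{cohyp-opis} is satisfied, so $\slam$ is cohyponormal; and since $\tcal$ is leafless with $\sum_{v\in\dzit{\tcal}{u_n}}|\lambda_v|^2=|\lambda_{u_{n+1}}|^2>0$ for all $n$, Proposition \ref{dzisdesz} yields injectivity. I expect no serious obstacle beyond keeping the parent/child bookkeeping straight when ruling out Case (ii) and when showing $V=\{u_n\}$ in Case (i); this is where the elementary but crucial uniqueness-of-parent observation does all the work.
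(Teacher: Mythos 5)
Your proof is correct and takes essentially the same route as the paper, which presents the corollary as a direct consequence of Proposition \ref{dzisdesz} and Theorem \ref{cohyp-opis}; your contribution is to fill in the elimination of case (ii) and the exhaustion of $V$ in case (i), both of which rest on the parent-uniqueness observation and do work. One slip in the case (ii) argument: having chosen $w \in \dzi{u_0}$, the assertion ``$w \notin \{u_n\colon n \Le 0\} \cup \dzi{u_0}$'' is literally false, since $w$ lies in $\dzi{u_0}$ by choice. What you actually need --- and what your observation does deliver --- is that $w \notin \{u_n\colon n \Le 0\}$ (otherwise $\pa{w}=u_0$ would force $w=u_1$, outside the index range) and $w \neq u_0$; from these two facts it follows that no child of $w$ lies in $\{u_n\colon n \Le 0\} \cup \dzi{u_0}$, so all weights on $\dzi{w}$ vanish and Proposition \ref{dzisdesz} is contradicted. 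With that one-line correction the argument, including the converse, is complete.
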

A typical directed tree admitting a nonzero
cohyponormal weighted shift $\slam$ that satisfies the
condition (ii) of Theorem \ref{cohyp-opis} is
illustrated in Figure 2 below (such directed tree is
infinite). Only edges joining vertexes corresponding
to nonzero weights of $\slam$ are drawn. Of course,
the tree may have more edges however their arrowheads
must correspond to zero weights of $\slam$; they are
not drawn.
   \vspace{2ex}
   \begin{center}
   \includegraphics[width=6cm]
    {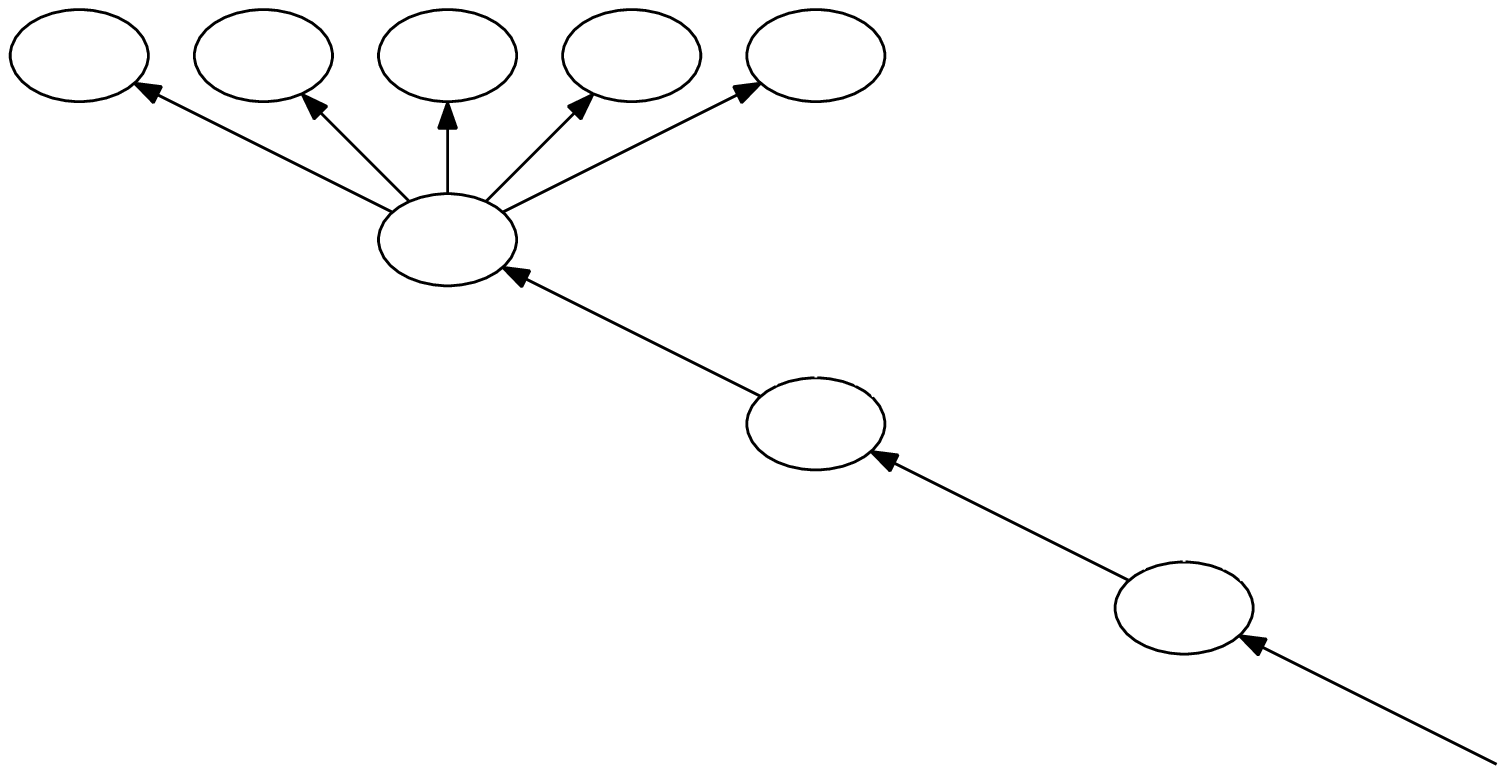}
   \\[1.5ex]
   {\small {\sf Figure 2}}
   \end{center}
   \begin{rem}
A closed densely defined operator $A$ in a
complex Hilbert space $\hh$ is said to be {\em
cohyponormal} if $A^*$ is hyponormal (cf.\ Remark
\ref{unhyp}), i.e., $\dz{A^*} \subseteq \dz{A}$
and $\|Af\| \Le \|A^*f\|$ for all $f \in
\dz{A^*}$. A thorough inspection of proofs shows
that {\em Lemma {\em \ref{cohyp}}, Theorem {\em
\ref{cohyp-opis}} and Corollary {\em
\ref{injcoh}} remain true for densely defined
weighted shifts on directed trees} (now we have
to employ Propositions \ref{desc}\,(v) and
\ref{sprz}\,(ii); moreover, the inequalities
\eqref{niercoh1} and \eqref{niercoh2} have to be
considered for $f \in \dz{\slam^*}$).
   \end{rem}
   \subsection{Examples}
We begin by giving an example of a bounded injective
weight\-ed shift on a directed tree which is
paranormal but not hyponormal (see Section
\ref{hypcohyp} for definitions). In view of Remark
\ref{re1-2}, the directed tree considered in Example
\ref{pra-nothyp} below is one step more complicated
than that appearing in the case of classical weighted
shifts.
   \begin{exa} \label{pra-nothyp}
Let $\tcal$ be the directed tree as in Figure 3 with
$V^\circ$ given by $V^\circ = \{(i,j) \colon i = 1,2,
\, j=1, 2, \ldots\}$ (note that $\tcal=\tcal_{2,0}$,
cf.\ \eqref{varkappa}).
   \vspace{1.5ex}
   \begin{center}
   \includegraphics[width=10cm]
   {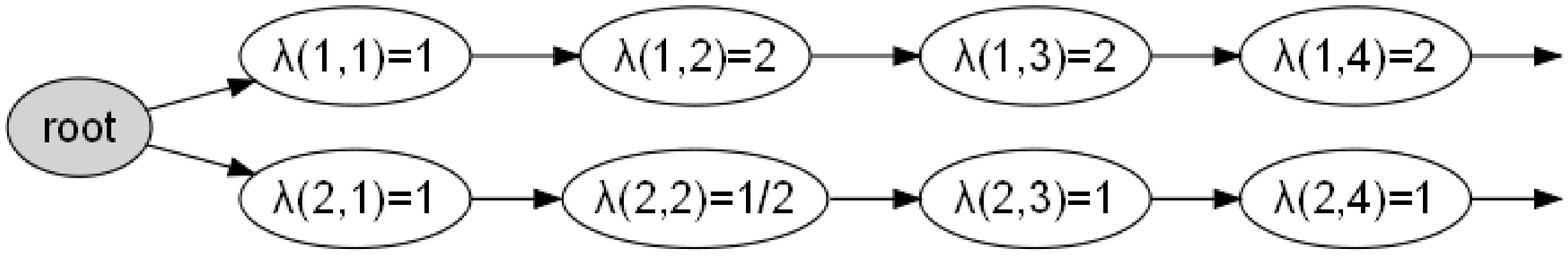}
   \\[1.5ex]
   {\small {\sf Figure 3}}
   \end{center}
   \vspace{1ex}
   Let $\slam$ be the weighted shift on
$\tcal$ with weights $\boldsymbol \lambda =
\{\lambda(i,j)\}_{(i,j) \in V^\circ}$ given by
$\lambda(1,1)=1$, $\lambda(1,j)=2$ for all $j \Ge 2$,
$\lambda(2,2)=1/2$ and $\lambda(2,j)=1$ for all $j
\neq 2$. By Corollary \ref{parc} and Proposition
\ref{dzisdesz}, $\slam \in \ogr {\ell^2(V)}$ and
$\jd{\slam}=\{0\}$. To make the notation more
readable, we write $e_{i,j}$ instead of $e_u$ for $u =
(i,j) \in V^\circ$. It follows from \eqref{eu} that
    \begin{gather} \label{dwa19X}
    \begin{gathered}
    \begin{cases}
\slam (e_{\koo}) = e_{1,1} + e_{2,1},\quad \slam
(e_{1,j}) = 2e_{1,j+1} \text{ for } j\Ge1,
   \\
\slam (e_{2,1}) = \frac12 e_{2,2},\quad \slam
(e_{2,j}) = e_{2,j+1} \text{ for }j\Ge2.
    \end{cases}
    \end{gathered}
    \end{gather}
Take $f= \alpha e_{\koo} + \sum_{j=1}^\infty \alpha_{1,j}
e_{1,j} + \sum_{j=1}^\infty \alpha_{2,j} e_{2,j} \in
\ell^2(V)$. Then, by  \eqref{dwa19X}, we have
    \begin{align*}
\slam f&= \alpha e_{1,1} + \alpha e_{2,1} + 2
\sum_{j=1}^\infty \alpha_{1,j} e_{1,j+1} +
\frac12\alpha_{2,1} e_{2,2} + \sum_{j=2}^\infty
\alpha_{2,j} e_{2,j+1},
    \\
\slam^2 f&= 2\alpha e_{1,2} + \frac12\alpha e_{2,2} +
4\sum_{j=1}^\infty \alpha_{1,j} e_{1,j+2} +
\frac12\alpha_{2,1} e_{2,3} + \sum_{j=2}^\infty
\alpha_{2,j} e_{2,j+2}.
    \end{align*}
Putting all this together implies that
    \begin{align*}
\|f\|^2&= |\alpha|^2 + \sum_{j=1}^\infty |\alpha_{1,j}|^2 +
|\alpha_{2,1}|^2 + \sum_{j=2}^\infty |\alpha_{2,j}|^2,
    \\
\|\slam f\|^2&= 2|\alpha|^2 +4\sum_{j=1}^\infty
|\alpha_{1,j}|^2 + \frac14|\alpha_{2,1}|^2+
\sum_{j=2}^\infty |\alpha_{2,j}|^2, \\
\|\slam^2 f\|^2&= \frac{17}4|\alpha|^2 +
16\sum_{j=1}^\infty |\alpha_{1,j}|^2 +
\frac14|\alpha_{2,1}|^2+ \sum_{j=2}^\infty
|\alpha_{2,j}|^2.
    \end{align*}
Substituting $x_1=|\alpha|^2$, $x_2=\sum_{j=1}^\infty
|\alpha_{1,j}|^2$, $x_3=|\alpha_{2,1}|^2$ and $x_4=
\sum_{j=2}^\infty |\alpha_{2,j}|^2$ into the following
inequality\footnote{\;which can be obtained from the
Cauchy-Schwarz inequality, or by direct computation}
   \begin{align*}
\Big(2x_1+4x_2+\frac14x_3+x_4\Big)^2 \Le \Big(x_1+x_2+x_3+x_4\Big)
\cdot\Big(\frac{17}4x_1+16x_2+\frac14x_3+x_4\Big),
    \end{align*}
which is true for all nonnegative reals
$x_1,x_2,x_3,x_4$, we get $\|\slam f\|^4 \Le \|f\|^2
\|\slam^2 f\|^2$. This means that $\slam$ is
paranormal. Since, by \eqref{dwa19X} and \eqref{sl*},
$\|\slam e_{2,1}\| = \frac 12$ and $\|\slam^*
e_{2,1}\| = 1$, we conclude that $\slam$ is not
hyponormal.
    \end{exa}
Recall that there are bounded hyponormal operators
whose squares are not hyponormal (see
\cite{hal1,I-W,hal,di-ca}). This cannot happen for
classical weighted shifts. However, this really
happens for weighted shifts on directed trees. Our
next example is build on a very simple directed tree
(though a little bit more complicated than that in
Example \ref{pra-nothyp}). It is parameterized by four
real parameters $q$, $r$, $s$ and $t$ (in fact, by
three independent real parameters, cf.\
\eqref{sfera}). In Example \ref{exa4} below, we
demonstrate yet another sample of a hyponormal
weighted shift with non-hyponormal square which is
indexed only by two real parameters, however it is
build on a more complicated directed tree. It seems to
be impossible to reduce the number of parameters.
    \begin{exa}     \label{exa3}
Let $\tcal$ be the directed tree as in Figure 4 with
$V^\circ$ given by $V^\circ = \{(0,0)\} \cup \{(i,j)
\colon i=1,2; \, j=1,2, \ldots\}$ (note that
$\tcal=\tcal_{2,1}$, cf.\ \eqref{varkappa}).
   \vspace{1.5ex}
   \begin{center}
   \includegraphics[width=10cm]
   {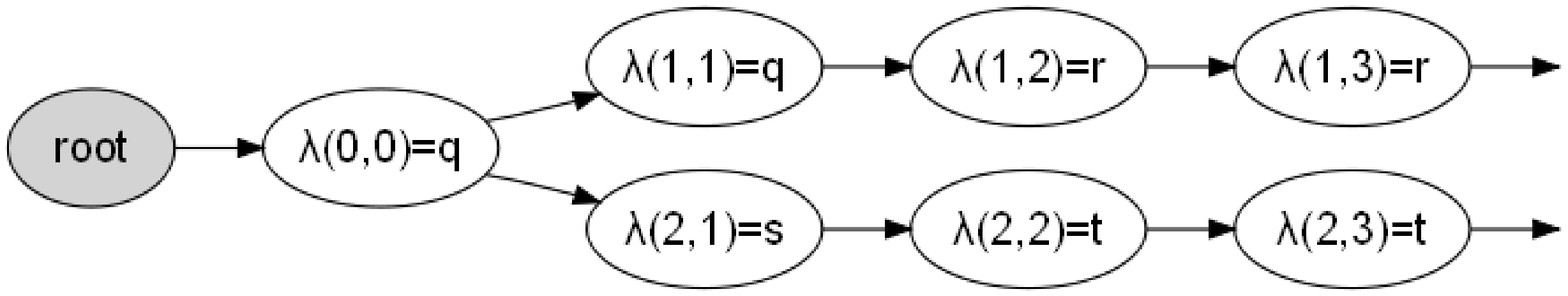}
   \\[1.5ex]
   {\small {\sf Figure 4}}
   \end{center}
   \vspace{1ex}
   Fix positive real numbers $q,r,s,t$ such that
    \begin{align} \label{sfera}
\Big(\frac s t\Big)^2 \Big(\frac rt\Big)^2 +
\Big(\frac qt\Big)^2 \Le \Big(\frac rt\Big)^2 \quad
\text{and } \quad \Big(\frac s t\Big) \Big(\frac
qt\Big) > 1.
    \end{align}
(e.g., $q=4$, $r=8$, $s=1$ and $t \in [2/\sqrt 3,
2)$ are sample numbers satisfying \eqref{sfera}).
Let $\slam$ be the weighted shift on $\tcal$ with
weights $\boldsymbol \lambda =
\{\lambda(i,j)\}_{(i,j) \in V^\circ}$ given by
$\lambda(0,0)=\lambda(1,1)=q$, $\lambda(2,1)=s$,
$\lambda(1,j) = r$ and $\lambda(2,j)=t$ for
$j=2,3, \ldots$ By Corollary \ref{parc} and
Proposition \ref{dzisdesz}, $\slam \in \ogr
{\ell^2(V)}$ and $\jd{\slam}=\{0\}$. It is a
routine matter to verify that $\slam$ satisfies
the inequality \eqref{wkwhyp}. Hence, by Theorem
\ref{hyp}, $\slam$ is hyponormal. It follows from
\eqref{eu} and \eqref{sl*} that $\slam^2 e_{2,1}
= t^2 e_{2,3}$ and $\slam^{*2} e_{2,1} = sq
e_{\koo}$, which, by the right-hand inequality in
\eqref{sfera}, implies that $\|\slam^2 e_{2,1}\|
< \|\slam^{*2} e_{2,1}\|$. This means that
$\slam^2$ is not hyponormal.
    \end{exa}
   \begin{exa} \label{exa4} Let $\tcal$ be the
directed binary tree as in Figure 5 with $V^\circ$ given by
   \begin{align*}
   V^\circ = \{(i,j) \colon i = 1,2, \ldots, \, j=1,
\ldots,2^{i-1}\}.
   \end{align*}
Fix positive real numbers $\alpha, \beta$. Let $\slam$
be the weighted shift on $\tcal$ with weights
$\boldsymbol \lambda = \{\lambda(i,j)\}_{(i,j) \in
V^\circ}$ given by
   \begin{align*}
\lambda(i,j)=
  \begin{cases}
  \alpha & \text{if } i=j=1,
  \\
   \alpha & \text{if } i \Ge 2 \text{ and } j = 1,
\ldots, 2^{i-2},
   \\
   \beta & \text{if } i \Ge 2 \text{ and } j =
2^{i-2}+1, \ldots, 2^{i-1}.
   \end{cases}
   \end{align*}
   By Corollary \ref{parc} and Proposition
\ref{dzisdesz}, $\slam \in \ogr {\ell^2(V)}$ and
$\jd{\slam}=\{0\}$. In virtue of Theorem \ref{hyp},
$\slam$ is hyponormal. Assume now that $\alpha > 2
\beta$. Then $\slam^2$ is not hyponormal because, by
\eqref{eu} and \eqref{sl*}, we have (see the
notational convention used in Example
\ref{pra-nothyp})
   \begin{align*}
\|\slam^{*2} e_{2,2}\| = \alpha \beta > 2 \beta^2 =
\|\slam^2 e_{2,2}\|.
   \end{align*}
   \begin{center}
   \includegraphics[width=8cm]
   {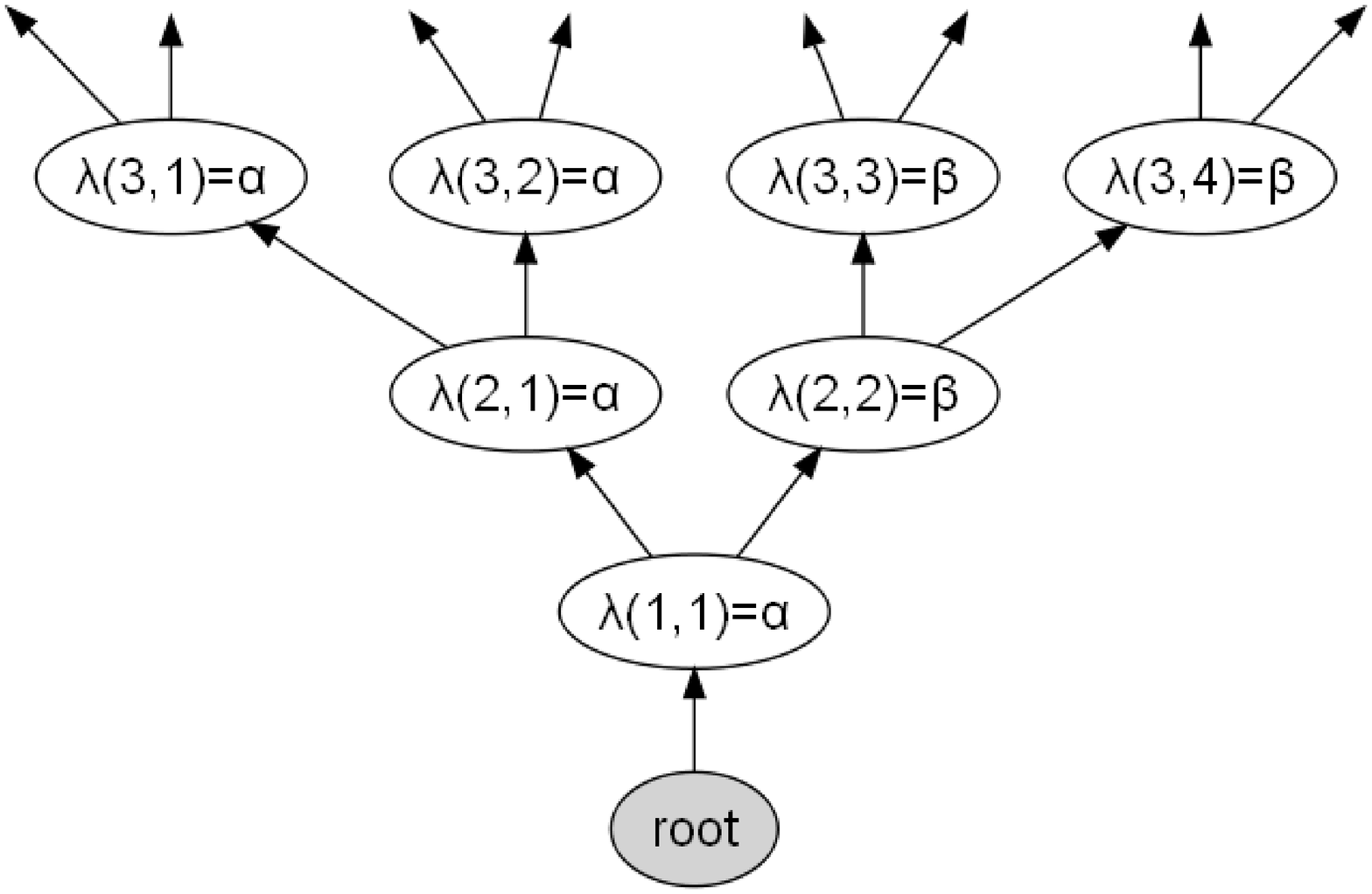} \\[1.5ex]{\small {\sf Figure 5}}
   \end{center}
    \end{exa}
   \newpage
   \section{\label{ch6}Subnormality}
   \subsection{A general approach}
Our goal in this section is to find a characterization
of subnormality of weighted shifts on directed trees
(see Section \ref{hypcohyp} for the definition of a
subnormal operator). We begin by attaching to a family
$\lambdab = \{\lambda_v\}_{v \in V^\circ}$ of weights
of a weighted shift $\slam$ on a directed tree $\tcal$
a new family $\{\lambda_{u\mid v}\}_{u \in V, v \in
\des{u}}$ defined~ by \idxx{$\lambda_{u \mid v}$}{68}
   \begin{align*}
\lambda_{u\mid v} =
   \begin{cases}
   1 & \text{ if } v=u,
   \\
   \prod_{j=0}^{n-1} \lambda_{\paa^{j}(v)} & \text{ if
} v \in \dzin{n}{u}, \, n \Ge 1.
   \end{cases}
   \end{align*}
Owing to \eqref{decom}, the above definition is
correct. It is easily seen that the following
recurrence formula holds:
   \begin{align} \label{recfor}
\lambda_{u\mid v} & = \lambda_{u\mid \pa v} \lambda_v,
\quad u \in V, \, v \in \des u, \, v \neq u,
   \\
   \lambda_{\pa v\mid w} & = \lambda_v \lambda_{v\mid
w}, \quad v \in V^\circ, \, w\in \des v.
\label{recfor2}
   \end{align}
   \begin{lem} \label{pot}
If $\slam \in \ogr {\ell^2(V)}$ is a weighted shift on
a directed tree $\tcal$ with weights $\lambdab =
\{\lambda_v\}_{v \in V^\circ}$, then the following two
conditions hold\/{\em :}
   \begin{enumerate}
   \item[(i)] $\slam^n e_u = \sum_{v \in \dzin{n}{u}}
\lambda_{u\mid v} \, e_v$ for all $u \in V$ and $n\in
\zbb_+$,
   \item[(ii)] $\|\slam^n e_u\|^2 =
\sum_{v \in \dzin{n}{u}} |\lambda_{u\mid v}|^2$ for
all $u \in V$ and $n\in \zbb_+$.
   \end{enumerate}
   \end{lem}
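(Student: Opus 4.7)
The plan is to prove both assertions simultaneously by induction on $n$, with part (ii) following from part (i) via orthonormality of $\{e_v\}_{v \in V}$.

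For the base case $n=0$, we have $\dzin{0}{u} = \{u\}$ and $\lambda_{u\mid u} = 1$, so both sides of (i) reduce to $e_u$. For the inductive step, I assume (i) holds at level $n$ and compute
\[
\slam^{n+1} e_u = \slam\Big(\sum_{v\in \dzin{n}{u}} \lambda_{u\mid v}\, e_v\Big).
\]
Since $\slam \in \ogr{\ell^2(V)}$, it is bounded and hence continuous, so it may be pulled inside the (possibly infinite) sum, which converges in $\ell^2(V)$ because its coefficients are the components of a vector already known to lie in $\ell^2(V)$ (the vector $\slam^n e_u$). Applying \eqref{eu} gives
\[
\slam^{n+1} e_u = \sum_{v\in \dzin{n}{u}}\, \sum_{w \in \dzi{v}} \lambda_{u\mid v}\, \lambda_w\, e_w.
\]

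Next, I use Proposition \ref{46} together with $\dzin{n+1}{u} = \dzi{\dzin{n}{u}}$ (by the definition right before \eqref{ytag}) to obtain the \emph{disjoint} decomposition $\dzin{n+1}{u} = \bigsqcup_{v \in \dzin{n}{u}} \dzi{v}$. This disjointness means that each $w \in \dzin{n+1}{u}$ appears under a unique $v \in \dzin{n}{u}$, namely $v = \pa{w}$; therefore the double sum collapses to a single sum indexed by $w \in \dzin{n+1}{u}$, with coefficient $\lambda_{u\mid \pa w}\, \lambda_w$. By the recurrence formula \eqref{recfor}, this coefficient equals $\lambda_{u\mid w}$, which completes the inductive step for~(i).

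Finally, assertion (ii) follows immediately from (i): since $\{e_v\}_{v \in V}$ is an orthonormal basis of $\ell^2(V)$ and the vectors $\{e_v\}_{v \in \dzin{n}{u}}$ are therefore pairwise orthogonal, Parseval's identity (applied to the norm of the expansion in (i)) yields $\|\slam^n e_u\|^2 = \sum_{v \in \dzin{n}{u}} |\lambda_{u\mid v}|^2$. The only mild subtlety in the whole argument is the interchange of $\slam$ with an infinite sum, but this is harmless because $\slam$ is bounded and the sum converges in the Hilbert-space norm; everything else reduces to disjointness of $\dzi{v}$'s (Proposition \ref{46}) and the recurrence \eqref{recfor}.
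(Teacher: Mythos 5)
Your proof is correct and follows essentially the same route as the paper's: induction on $n$, continuity of the bounded operator $\slam$ to pass it through the orthogonal series, the disjoint decomposition $\dzin{n+1}{u}=\bigsqcup_{v\in\dzin{n}{u}}\dzi v$ from Proposition \ref{46}, the recurrence \eqref{recfor} to collapse the coefficient, and orthogonality to deduce (ii) from (i). No gaps.
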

   \begin{proof}
All we need to prove is the equality in the condition
(i). We proceed by induction on $n$. The case $n=0$ is
obvious. Suppose that (i) holds for a given integer
$n\Ge 0$. It follows from the definition of
$\dzin{n+1}{u}$ and Proposition \ref{46} that
   \begin{align} \label{dzinn1}
\dzin{n+1}{u} = \bigsqcup_{v \in \dzin{n}{u}} \dzi{v}.
   \end{align}
Then by the induction hypothesis and the continuity of
$\slam$ we have
   \begin{align*}
\slam^{n+1} e_u = \slam (\slam^n e_u) & \hspace{2.7ex}
= \sum_{v \in \dzin{n}{u}} \lambda_{u\mid v} \, \slam
e_v
   \\
& \hspace{0.9ex}\overset{\eqref{eu}} = \underset{v \in
\dzin{n}{u}\hspace{1ex}}{\sumo} \sum_{w\in\dzi v}
\lambda_{u\mid v} \lambda_w e_w
\\
& \hspace{.9ex}\overset{\eqref{dzinn1}} = \sum_{w \in
\dzin{n+1}{u}} \lambda_{u\mid \pa{w}} \lambda_w e_w
\overset{\eqref{recfor}} = \sum_{w \in \dzin{n+1}{u}}
\lambda_{u\mid w} e_w,
   \end{align*}
where the symbol \idx{$\sumo$}{69} is reserved for
denoting the orthogonal series. This completes the
proof.
   \end{proof}
We say that a sequence $\{t_n\}_{n=0}^\infty$ of real
numbers is a {\em Hamburger moment sequence} if there
exists a positive Borel measure $\mu$ on $\rbb$ such
that
   \begin{align*} 
   t_{n}=\int_{\rbb} s^n \D\mu(s),\quad n\in \zbb_+;
   \end{align*}
$\mu$ is called a {\em representing measure} of
$\{t_n\}_{n=0}^\infty$. In view of Hamburger's
theorem (cf.\ \cite[Theorem 1.2]{sh-tam}), a
sequence $\{t_n\}_{n=0}^\infty$ of real numbers
is a Hamburger moment sequence if and only if it
is {\em positive definite}, i.e.,
   \begin{align*}
\sum_{k,l=0}^n t_{k+l} \alpha_k \overline{\alpha_l}
\Ge 0, \quad \alpha_0,\ldots, \alpha_n \in \cbb,\,
\quad n \in \zbb_+.
   \end{align*}
A Hamburger moment sequence is said to be {\em
determinate} if it has only one representing measure.
Let us recall a useful criterion for determinacy.
   \begin{align} \label{determ}
   \begin{minipage}{28em}
If $\{t_n\}_{n=0}^\infty$ is a Hamburger moment
sequence such that $a:=\limsup_{n\to \infty}
t_{2n}^{\nicefrac 1 {2 n}} < \infty$, then it is determinate and
its unique representing measure is concentrated on
$[-a,a]$.
   \end{minipage}
   \end{align}
Indeed, if $\mu$ is a representing measure of
$\{t_n\}_{n=0}^\infty$, then the closed support of
$\mu$ is contained in $[-a,a]$ (cf.\ \cite[page
71]{Rud}), and so $\mu$ is a unique representing
measure of $\{t_n\}_{n=0}^\infty$ (see \cite{fug}).
Yet another approach to \eqref{determ} is in
\cite[Theorem~ 2]{sza}.

A Hamburger moment sequence having a representing
measure concentrated on $[0,\infty)$ is called a {\em
Stieltjes moment sequence}. Clearly
   \begin{align}  \label{st+1}
\text{if $\{t_n\}_{n=0}^\infty$ is a Stieltjes moment
sequence, then so is $\{t_{n+1}\}_{n=0}^\infty$.}
   \end{align}
The above property is no longer valid for Hamburger
moment sequences. By Stieltjes theorem (cf.\
\cite[Theorem~ 1.3]{sh-tam}), a sequence
$\{t_n\}_{n=0}^\infty \subseteq \rbb$ is a Stieltjes
moment sequence if and only if the sequences
$\{t_n\}_{n=0}^\infty$ and $\{t_{n+1}\}_{n=0}^\infty$
are positive definite.

The question of backward extendibility of Hamburger or
Stieltjes moment sequences has known solutions (see
e.g., \cite[Proposition 8]{cur} and \cite{sz}). What
we further need is a variant of this question.
   \begin{lem} \label{bext}
Let $\{t_n\}_{n=0}^\infty$ be a Stieltjes moment
sequence. Set $t_{-1}=1$. Then the following three
conditions are equivalent{\em :}
   \begin{enumerate}
   \item[(i)] $\{t_{n-1}\}_{n=0}^\infty$ is a
Stieltjes moment sequence,
   \item[(ii)]  $\{t_{n-1}\}_{n=0}^\infty$ is positive
definite,
   \item[(iii)] there exists a representing measure $\mu$
of $\{t_n\}_{n=0}^\infty$ concentrated on $[0,\infty)$ such
that\/\footnote{\;\label{foot}We adhere to the convention
that $\frac 1 0 := \infty$. Hence, $\int_0^\infty \frac 1 s
\D \mu(s) < \infty$ implies $\mu(\{0\})=0$.} $\int_0^\infty
\frac 1 s \D \mu(s) \Le 1$.
   \end{enumerate}
If $\mu$ is as in {\em (iii)}, then the positive Borel
measure $\nu$ on $\rbb$ defined by
   \begin{align}   \label{nu}
\nu(\sigma) = \int_\sigma \frac 1 s \D \mu(s) +
\Big(1-\int_0^\infty \frac 1 s \D \mu(s)\Big)
\delta_0(\sigma), \quad \sigma \in \borel{\rbb},
   \end{align}
is a representing measure of $\{t_{n-1}\}_{n=0}^\infty$
concentrated on $[0,\infty)$; moreover, $\nu(\{0\})=0$ if
and only if $\int_0^\infty \frac 1 s \D \mu(s)=1$.
   \end{lem}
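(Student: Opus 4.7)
The plan is to split the equivalence into three pieces: $(i)\Leftrightarrow(ii)$ by Stieltjes' theorem; $(i)\Rightarrow(iii)$ by a direct ``divide by $s$'' transport of measures; and $(iii)\Rightarrow(i)$ by computing moments of the explicit measure $\nu$ given in \eqref{nu}, which simultaneously establishes the ``moreover'' clause. The whole proof is essentially bookkeeping around the atom at $0$; there is no single hard step, but the convention $1/0:=\infty$ from footnote \ref{foot} must be used consistently to keep the transport between $\mu$ and $\nu$ well defined.

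For $(i)\Leftrightarrow(ii)$: the implication $(i)\Rightarrow(ii)$ is immediate, since any Stieltjes (indeed any Hamburger) moment sequence is positive definite. For the converse, recall the Stieltjes theorem cited just before the lemma: a real sequence $\{s_n\}_{n=0}^\infty$ is a Stieltjes moment sequence if and only if both $\{s_n\}_{n=0}^\infty$ and $\{s_{n+1}\}_{n=0}^\infty$ are positive definite. Apply this with $s_n=t_{n-1}$: then $\{s_{n+1}\}_{n=0}^\infty=\{t_n\}_{n=0}^\infty$ is positive definite by the standing hypothesis that $\{t_n\}_{n=0}^\infty$ is Stieltjes, while $\{s_n\}_{n=0}^\infty=\{t_{n-1}\}_{n=0}^\infty$ is positive definite by (ii). Hence (i) holds.

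For $(i)\Rightarrow(iii)$: given a representing measure $\rho$ of $\{t_{n-1}\}_{n=0}^\infty$ concentrated on $[0,\infty)$, define a positive Borel measure $\mu$ on $[0,\infty)$ by $\mu(\sigma)=\int_\sigma s\,\D\rho(s)$. Then $\mu(\{0\})=0$ and, for every $n\in\zbb_+$,
\[
\int_0^\infty s^n\,\D\mu(s)=\int_0^\infty s^{n+1}\,\D\rho(s)=t_n,
\]
so $\mu$ represents $\{t_n\}_{n=0}^\infty$. Moreover, since $\mu(\{0\})=0$, the convention $1/0:=\infty$ yields
\[
\int_0^\infty \tfrac{1}{s}\,\D\mu(s)=\int_{(0,\infty)}\D\rho(s)\Le \rho([0,\infty))=t_{-1}=1,
\]
which is (iii).

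For $(iii)\Rightarrow(i)$ together with the ``moreover'' part: let $\mu$ be as in (iii) and let $\nu$ be defined by \eqref{nu}. Since $\int_0^\infty\frac{1}{s}\,\D\mu(s)\Le 1$, the coefficient of $\delta_0$ is nonnegative and $\nu$ is a genuine positive Borel measure on $[0,\infty)$ (the finiteness of the integral forces $\mu(\{0\})=0$, so the first term of \eqref{nu} is well defined and vanishes on $\{0\}$). A direct computation gives, for $n\Ge 1$,
\[
\int_0^\infty s^n\,\D\nu(s)=\int_0^\infty s^n\cdot\tfrac{1}{s}\,\D\mu(s)+0=\int_0^\infty s^{n-1}\,\D\mu(s)=t_{n-1},
\]
and for $n=0$,
\[
\int_0^\infty \D\nu(s)=\int_0^\infty\tfrac{1}{s}\,\D\mu(s)+\Big(1-\int_0^\infty\tfrac{1}{s}\,\D\mu(s)\Big)=1=t_{-1}.
\]
Hence $\nu$ is a representing measure of $\{t_{n-1}\}_{n=0}^\infty$ on $[0,\infty)$, which proves (i) and furnishes the displayed formula. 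Finally, $\nu(\{0\})=(1-\int_0^\infty\frac{1}{s}\D\mu(s))\,\delta_0(\{0\})=1-\int_0^\infty\frac{1}{s}\D\mu(s)$ (the first term of \eqref{nu} contributes $0$ at $\{0\}$ because $\mu(\{0\})=0$), so $\nu(\{0\})=0$ if and only if $\int_0^\infty\frac{1}{s}\D\mu(s)=1$, completing the proof.
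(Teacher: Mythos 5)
Your proof is correct and follows essentially the same route as the paper's: the equivalence (i)$\Leftrightarrow$(ii) via the Stieltjes theorem, the transport $\D\mu(s)=s\,\D\rho(s)$ for (i)$\Rightarrow$(iii), and the direct moment computation for $\nu$ in (iii)$\Rightarrow$(i). You merely spell out the verifications that the paper leaves as "easily verifiable," and they check out.
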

   \begin{proof}
(i)$\Leftrightarrow$(ii) Apply the Stieltjes theorem.

(i)$\Rightarrow$(iii) Let $\rho$ be a representing
measure of $\{t_{n-1}\}_{n=0}^\infty$ concentrated on
$[0,\infty)$. Define the positive Borel measure $\mu$
on $[0,\infty)$ by $\D \mu (s) = s \D \rho (s)$. Then
   \begin{align*}
t_n = t_{(n+1)-1} = \int_0^\infty s^n s\D \rho(s) =
\int_0^\infty s^n \D \mu(s), \quad n \in \zbb_+.
   \end{align*}
Clearly, $\mu(\{0\})=0$ and consequently
   \begin{align*}
\int_0^\infty \frac 1 s \D \mu(s) = \int_{(0,\infty)} \D
\rho(s) = \rho((0,\infty)) = \int_{[0,\infty)} s^0 \D
\rho(s) - \rho(\{0\}) = 1 - \rho(\{0\}).
   \end{align*}
This implies that $\int_0^\infty \frac 1 s \D \mu(s) \Le
1$.

(iii)$\Rightarrow$(i) It is easily verifiable that the
measure $\nu$ defined by \eqref{nu} is concentrated on
$[0,\infty)$ and $t_{n-1}= \int_0^\infty s^n \D \nu(s)$ for
all $n \in \zbb_+$. Hence $\{t_{n-1}\}_{n=0}^\infty$ is a
Stieltjes moment sequence. The ``moreover'' part of the
conclusion is obvious.
   \end{proof}
   Let us recall Lambert's characterization of
subnormality (cf.\ \cite{Lam}; see also
\cite[Theorem 7]{StSz} for the general, not
necessarily injective, case):\ an operator $T \in
\ogr \hh$ is subnormal if and only if
$\{\|T^{n}f\|^2\}_{n=0}^\infty$ is a Stieltjes
moment sequence for all $f\in \hh$. Since
$T^{n+1}f=T^{n}(Tf)$, we infer from the Stieltjes
theorem that
   \begin{align}  \label{lamsub}
   \begin{minipage}{28em}
an operator $T\in \ogr \hh$ is subnormal if and only
if $\{\|T^{n}f\|^2\}_{n=0}^\infty$ is a Hamburger
moment sequence for all $f\in \hh$.
   \end{minipage}
   \end{align}
Lambert's theorem enables us to write a complete
characterization of subnormality of weighted shifts on
directed trees.
   \begin{thm} \label{charsub}
If $\slam \in \ogr {\ell^2(V)}$ is a weighted shift on
a directed tree $\tcal$ with weights $\lambdab =
\{\lambda_v\}_{v \in V^\circ}$, then the following
conditions are equivalent{\em :}
   \begin{enumerate}
   \item[(i)] $\slam$ is subnormal,
   \item[(ii)] $\big\{\sum\limits_{v \in \dzin{n}{u}}
|\lambda_{u\mid v}|^2\big\}_{n=0}^\infty$ is a
Stieltjes moment sequence for every $u \in V,$
   \item[(iii)] $\{\|\slam^n e_u\|^2\}_{n=0}^\infty$ is a
Stieltjes moment sequence for every $u \in V,$
   \item[(iv)] $\{\|\slam^n e_u\|^2\}_{n=0}^\infty$ is a Hamburger
moment sequence for every $u \in V,$
   \item[(v)]
$\sum_{k,l=0}^n \|\slam^{k+l} e_u\|^2 \alpha_k
\overline{\alpha_l} \Ge 0$ for all $\alpha_0,\ldots,
\alpha_n \in \cbb$, $n \in \zbb_+$ and $u \in V.$
   \end{enumerate}
   \end{thm}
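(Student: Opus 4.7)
My plan is to prove the five-fold equivalence by establishing the cycle $(iii)\Leftrightarrow(ii)$, $(iii)\Rightarrow(iv)\Leftrightarrow(v)$, $(iv)\Rightarrow(iii)$, $(i)\Rightarrow(iv)$ and $(iii)\Rightarrow(i)$, building the whole argument on Lemma \ref{pot}, Hamburger's theorem, Stieltjes' theorem, Lambert's characterization \eqref{lamsub}, and the determinacy criterion \eqref{determ}. The equivalence $(ii)\Leftrightarrow(iii)$ is immediate from Lemma \ref{pot}(ii); the implication $(iii)\Rightarrow(iv)$ is trivial; $(iv)\Leftrightarrow(v)$ is Hamburger's theorem applied termwise; and $(i)\Rightarrow(iv)$ follows by taking $f=e_u$ in \eqref{lamsub}. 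So the substance is in the two remaining implications.

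For $(iv)\Rightarrow(iii)$, I plan to exploit the tree structure to obtain the recurrence $\|\slam^{n+1}e_u\|^2=\sum_{v\in\dzi u}|\lambda_v|^2\|\slam^n e_v\|^2$. This comes from writing $\slam^{n+1}e_u=\slam^n(\slam e_u)=\sum_{v\in\dzi u}\lambda_v\slam^n e_v$ and invoking the key orthogonality fact: if $v,v'\in\dzi u$ are distinct, then by Lemma \ref{pot}(i), $\slam^n e_v$ and $\slam^n e_{v'}$ are supported in $\dzin n v$ and $\dzin n{v'}$, and these sets are disjoint because a common element $w$ would satisfy $\paa^n(w)=v=v'$. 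Given (iv) for every vertex, each $\{\|\slam^n e_v\|^2\}_{n\Ge0}$ has a representing measure $\mu_v$, which by boundedness of $\slam$ and \eqref{determ} is supported in $[-\|\slam\|^2,\|\slam\|^2]$ and of total mass $1$. Consequently $\sum_{v\in\dzi u}|\lambda_v|^2\mu_v$ is a finite positive measure on $\rbb$ (its total mass is $\|\slam e_u\|^2\Le\|\slam\|^2<\infty$ by \eqref{eu} and Proposition \ref{ogrs}), and it represents $\{\|\slam^{n+1}e_u\|^2\}_{n\Ge0}$. Thus both $\{\|\slam^n e_u\|^2\}$ and its shift are positive definite, so by Stieltjes' theorem $\{\|\slam^n e_u\|^2\}$ is a Stieltjes moment sequence.

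For the crucial $(iii)\Rightarrow(i)$, I would invoke \eqref{lamsub} (in its Stieltjes form from Lambert) and prove that $\{\|\slam^n f\|^2\}_{n\Ge0}$ is a Stieltjes moment sequence for every $f\in\ell^2(V)$. First handle $f\in\escr$: writing $f=\sum_u\alpha_u e_u$ with finite support and using the same disjointness of $\dzin n u,\dzin n{u'}$ for $u\neq u'$ as above, cross terms vanish and
\begin{align*}
\|\slam^n f\|^2=\sum_u|\alpha_u|^2\|\slam^n e_u\|^2=\int_0^\infty s^n\,\D\Big(\sum_u|\alpha_u|^2\mu_u\Big)(s),
\end{align*}
so the sequence has the positive finite representing measure $\sum_u|\alpha_u|^2\mu_u$ (total mass $\|f\|^2$). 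For general $f\in\ell^2(V)$, approximate by $f_k\in\escr$ with $f_k\to f$; then $\slam^n f_k\to\slam^n f$ by continuity of $\slam^n$, hence $\|\slam^n f_k\|^2\to\|\slam^n f\|^2$ for every $n$, and positive definiteness of both $\{t_n\}$ and $\{t_{n+1}\}$ passes to the pointwise limit, so by Stieltjes' theorem $\{\|\slam^n f\|^2\}$ is a Stieltjes moment sequence. Lambert's theorem then gives subnormality of $\slam$.

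The main technical point — which recurs in both non-trivial implications — is the orthogonality $\is{\slam^n e_u}{\slam^n e_{u'}}=0$ for $u\neq u'$; without the tree structure (via Lemma \ref{pot}(i) and the injectivity of $\paa^n$ on $\dzin n{\cdot}$) this would fail, and the whole reduction of subnormality to a vertex-by-vertex moment condition would collapse. A secondary subtlety is ensuring finiteness of the sum-of-measures in $(iv)\Rightarrow(iii)$ when $\dzi u$ is infinite, which is taken care of by the boundedness of $\slam$ together with \eqref{determ} pinning all representing measures in a common compact interval.
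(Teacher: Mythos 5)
Your proof is correct, and it rests on exactly the same pillars as the paper's: Lemma \ref{pot}, the pairwise disjointness of the sets $\dzin{n}{u}$ for distinct $u$ (hence the orthogonality $\is{\slam^n e_u}{\slam^n e_{u'}}=0$), Lambert's criterion, and the Hamburger and Stieltjes theorems. The difference is in how the loop is closed. The paper proves the single implication (iv)$\Rightarrow$(i) directly: the orthogonal decomposition $\|\slam^n f\|^2=\sum_{u\in V}|f(u)|^2\|\slam^n e_u\|^2$ of \eqref{sumo} is established for \emph{every} $f\in\ell^2(V)$ at once, positive definiteness then propagates from the basis vectors to all $f$, and the Hamburger form \eqref{lamsub} of Lambert's theorem -- into which the Hamburger-to-Stieltjes upgrade has already been absorbed globally via $T^{n+1}f=T^n(Tf)$ -- finishes the job; the implications (i)$\Rightarrow$(ii)$\Rightarrow$(iii)$\Rightarrow$(iv) are then one-liners and (iv)$\Rightarrow$(iii) is never argued separately. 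You instead perform the Hamburger-to-Stieltjes upgrade locally at each vertex, through the recurrence $\|\slam^{n+1}e_u\|^2=\sum_{v\in\dzi u}|\lambda_v|^2\|\slam^n e_v\|^2$ and the measure $\sum_{v\in\dzi u}|\lambda_v|^2\mu_v$; this is sound (your use of \eqref{determ} and the boundedness of $\slam$ correctly handles infinite $\dzi u$ and the sum--integral interchange) and is essentially the computation the paper defers to Lemma \ref{charsub2}, where it is needed anyway to relate $\mu_u$ to the measures of the children. Your separate implication (iii)$\Rightarrow$(i) then requires the Stieltjes form of Lambert plus an approximation $\escr\ni f_k\to f$; the limiting argument is valid, since positive definiteness of a sequence and of its shift survives pointwise convergence, but it is avoidable because \eqref{sumo} holds for arbitrary $f\in\ell^2(V)$ without restricting to $\escr$ first. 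Net effect: both arguments are correct; yours is slightly longer, while the paper's reliance on the Hamburger version of Lambert's criterion dispenses with both the vertex-by-vertex backward step and the approximation.
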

   \begin{proof}
(i)$\Rightarrow$(ii) Employ the Lambert theorem
(or rather its easier part) and Proposition
\ref{pot}\,(ii).

(ii)$\Rightarrow$(iii) Apply Lemma \ref{pot}\,(ii).

(iii)$\Rightarrow$(iv) Evident.

(iv)$\Rightarrow$(i) Take $f \in \ell^2(V)$. An
induction argument shows that for a fixed integer $n
\Ge 0$, the sets $\dzin{n} u$, $u \in V$, are pairwise
disjoint. By Lemma \ref{pot}\,(i), this implies that
   \begin{align}  \label{sumo}
\|\slam^n f\|^2 = \Big\|\underset{u\in
V\hspace{1ex}}{\sumo} f(u) \slam^n e_u\Big\|^2 =
\sum_{u \in V} |f(u) |^2\|\slam^n e_u \|^2, \quad n
\in \zbb_+.
   \end{align}
Since the sequence $\{\|\slam^n
e_u\|^2\}_{n=0}^\infty$ is positive definite, we can
easily infer from \eqref{sumo} that the sequence
$\{\|\slam^n f\|^2\}_{n=0}^\infty$ is positive
definite as well. Applying the Hamburger theorem and
\eqref{lamsub}, we get the subnormality of $\slam$.

The equivalence (iv)$\Leftrightarrow$(v) is a direct
consequence of the Hamburger theorem. This completes
the proof.
   \end{proof}
One of the consequences of Theorem \ref{charsub} is
that the study of subnormality of weighted shifts on
directed trees reduces to the case of trees with root.
   \begin{cor} \label{subcyc}
Let $\slam \in \ogr {\ell^2(V)}$ be a weighted shift
on a directed tree $\tcal$ with weights $\lambdab =
\{\lambda_v\}_{v \in V^\circ}$. Suppose that $X$ is a
subset of $V$ such that $V=\bigcup_{x\in X} \des x$.
Then $\slam$ is subnormal if and only if the operator
$\slamr x$ is subnormal for every $x \in X$ $($cf.\
Notation {\em \ref{poddrz}}$)$.
   \end{cor}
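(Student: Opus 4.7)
The plan is to reduce the claim to Theorem \ref{charsub} via the identity $\|\slam^n e_u\|^2 = \|\slamr{x}^n e_u\|^2$ valid for every $u \in \des x$, which follows from the fact that descendants of $u$ are entirely contained in $\des x$ (and thus coincide whether computed in $\tcal$ or in $\tcal_{\des x}$).

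First, I would note that $\slamr x \in \ogr{\ell^2(\des x)}$ for every $x \in X$. Indeed, the weights of $\slamr x$ form a subfamily of $\lambdab$, and the supremum in Proposition \ref{ogrs}\,(iii) taken over $\des x$ is dominated by the corresponding supremum over $V$, which is finite since $\slam \in \ogr{\ell^2(V)}$. Next, I would establish the key identity:
\begin{align*}
\|\slam^n e_u\|^2 = \|\slamr{x}^n e_u\|^2, \quad u \in \des x, \, n \in \zbb_+.
\end{align*}
This rests on Lemma \ref{pot}\,(ii), which expresses both sides as sums of $|\lambda_{u \mid v}|^2$ over $\dzin{n}{u}$ computed in the respective tree. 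Since $u \in \des x$ implies $\des u \subseteq \des{\des x} = \des x$ by \eqref{monot} and \eqref{inv}, the set $\dzint{\tcal}{n}{u}$ agrees with $\dzint{\tcal_{\des x}}{n}{u}$ for every $n \in \zbb_+$, and the weights $\lambda_{u\mid v}$ involve only edges inside $\des u$, hence are the same in both trees.

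With the identity in hand, the equivalence is immediate from Theorem \ref{charsub}. For the forward direction, if $\slam$ is subnormal then $\{\|\slam^n e_u\|^2\}_{n=0}^\infty$ is a Stieltjes moment sequence for every $u \in V$; restricting to $u \in \des x$ and applying the identity, the same holds for $\slamr x$, so $\slamr x$ is subnormal. Conversely, suppose each $\slamr x$ is subnormal. Given any $u \in V$, the hypothesis $V = \bigcup_{x \in X} \des x$ furnishes some $x \in X$ with $u \in \des x$; then $\{\|\slam^n e_u\|^2\}_{n=0}^\infty = \{\|\slamr{x}^n e_u\|^2\}_{n=0}^\infty$ is Stieltjes, and Theorem \ref{charsub} yields the subnormality of $\slam$.

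There is no serious obstacle here. The only minor subtlety is ensuring that $\dzin{n}{u}$ and the coefficients $\lambda_{u\mid v}$ are intrinsic to the subtree $\tcal_{\des x}$ whenever $u \in \des x$; this is a direct consequence of Proposition \ref{preheredit} (or equivalently \eqref{inv}), together with the explicit recursive definition of $\lambda_{u\mid v}$ through the parent operation $\paa$, which agrees on $\des x$ for $\tcal$ and $\tcal_{\des x}$.
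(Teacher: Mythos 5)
Your proof is correct and follows essentially the same route as the paper: both reduce the statement to Theorem \ref{charsub} via the observation that $\|\slam^n e_u\| = \|\slamr{x}^n e_u\|$ for $u \in \des x$. The paper obtains this identity slightly more directly by noting $\slamr x = \slam|_{\ell^2(\des x)}$ (invariance of $\ell^2(\des x)$ via \eqref{dziinv}), whereas you verify it combinatorially through Lemma \ref{pot}\,(ii); the two are equivalent.
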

   \begin{proof}
Note that by \eqref{dziinv} the space $\ell^2(\des u)$ is
invariant for $\slam$ and
   \begin{align*}
   \slamr u = \slam|_{\ell^2(\des u)}.
   \end{align*}
Hence, by Theorem \ref{charsub}, $\slam$ is subnormal if
and only if $\slamr u$ is subnormal for every $u \in V$.
Since $\slamr u \subseteq \slamr x$ whenever $u \in \des
x$, the conclusion follows from the above characterization
of subnormality and the equality $V=\bigcup_{x\in X} \des
x$.
   \end{proof}
It turns out that in some instances the condition (iii) of
Theorem \ref{charsub} can be essentially weakened without
spoiling the equivalence (i)$\Leftrightarrow$(iii). This
effect is similar to that appearing in the case of
classical weighted shifts. The result which follows will be
referred to as the {\em small} lemma (see also Lemma
\ref{charsub2}).
   \begin{lem} \label{charsub-1}
Let $\slam \in \ogr {\ell^2(V)}$ be a weighted shift
on a directed tree $\tcal$ with weights $\lambdab =
\{\lambda_v\}_{v \in V^\circ}$ and let $u_0, u_1 \in
V$ be such that $\dzi{u_0} = \{u_1\}$. If $\{\|\slam^n
e_{u_0}\|^2\}_{n=0}^\infty$ is a Stieltjes moment
sequence and $\lambda_{u_1}\neq 0$, then $\{\|\slam^n
e_{u_1}\|^2\}_{n=0}^\infty$ is a Stieltjes moment
sequence.
   \end{lem}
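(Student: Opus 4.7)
The plan is to reduce the statement to the shift-invariance property \eqref{st+1} of Stieltjes moment sequences by establishing an explicit identity between $\|\slam^n e_{u_0}\|^2$ and $\|\slam^{n-1} e_{u_1}\|^2$.

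First, since $\dzi{u_0} = \{u_1\}$, I would show that $\dzin{n}{u_0} = \dzin{n-1}{u_1}$ for every integer $n \Ge 1$; this is immediate from the inductive definition of $\dzin{n}{\cdot}$ together with \eqref{dzinn1}. Next, using the recurrence \eqref{recfor2} applied with $v = u_1$, I would observe that
\begin{align*}
\lambda_{u_0 \mid v} = \lambda_{u_1} \lambda_{u_1 \mid v}, \quad v \in \dest{\tcal}{u_1}.
\end{align*}
Combining this with Lemma \ref{pot}\,(ii) yields the key identity
\begin{align*}
\|\slam^n e_{u_0}\|^2 = \sum_{v \in \dzin{n}{u_0}} |\lambda_{u_0 \mid v}|^2 = |\lambda_{u_1}|^2 \sum_{v \in \dzin{n-1}{u_1}} |\lambda_{u_1 \mid v}|^2 = |\lambda_{u_1}|^2 \, \|\slam^{n-1} e_{u_1}\|^2
\end{align*}
for all $n \Ge 1$.

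With this identity in hand, the rest is routine. Set $t_n = \|\slam^n e_{u_0}\|^2$; by hypothesis $\{t_n\}_{n=0}^\infty$ is a Stieltjes moment sequence. By \eqref{st+1}, the shifted sequence $\{t_{n+1}\}_{n=0}^\infty$ is also Stieltjes (if $\mu$ represents $\{t_n\}$ on $[0,\infty)$, then $s\,\D\mu(s)$ represents $\{t_{n+1}\}$). The identity above gives
\begin{align*}
\|\slam^n e_{u_1}\|^2 = \frac{t_{n+1}}{|\lambda_{u_1}|^2}, \quad n \in \zbb_+,
\end{align*}
and since $|\lambda_{u_1}|^2 > 0$, the sequence $\{\|\slam^n e_{u_1}\|^2\}_{n=0}^\infty$ is a positive scalar multiple of a Stieltjes moment sequence, hence itself a Stieltjes moment sequence (represented by $|\lambda_{u_1}|^{-2} s\,\D\mu(s)$).

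There is no real obstacle here; the only subtlety is the bookkeeping for the $n=0$ case (where the normalization $\lambda_{u_0 \mid u_0} = 1$ and the convention for empty products come into play) and verifying that the identity $\lambda_{u_0 \mid v} = \lambda_{u_1}\lambda_{u_1 \mid v}$ holds for all $v \in \dest{\tcal}{u_1}$, including $v = u_1$ itself (where both sides equal $\lambda_{u_1}$). The assumption $\lambda_{u_1} \neq 0$ is used solely to divide by $|\lambda_{u_1}|^2$ in passing from the identity to the conclusion.
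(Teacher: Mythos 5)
Your proof is correct and follows essentially the same route as the paper: both rest on the identity $\|\slam^{n+1}e_{u_0}\|^2=|\lambda_{u_1}|^2\,\|\slam^{n}e_{u_1}\|^2$ followed by an appeal to \eqref{st+1} and division by $|\lambda_{u_1}|^2>0$. The only difference is that the paper obtains this identity in one line from \eqref{eu} (since $\dzi{u_0}=\{u_1\}$ gives $\slam e_{u_0}=\lambda_{u_1}e_{u_1}$, hence $\slam^{n+1}e_{u_0}=\lambda_{u_1}\slam^{n}e_{u_1}$), whereas you re-derive it combinatorially via Lemma \ref{pot}\,(ii) and \eqref{recfor2}; both derivations are valid.
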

   \begin{proof} Observing that
   \begin{align*}
\|\slam^n e_{u_1}\|^2 \overset{\eqref{eu}}= \frac 1
{|\lambda_{u_1}|^2}\|\slam^{n+1} e_{u_0}\|^2, \quad n
\in \zbb_+,
   \end{align*}
and applying \eqref{st+1}, we complete the proof.
   \end{proof}
Note that Lemma \ref{charsub-1} is no longer true if
$\card{\dzi{u_0}} \Ge 2$.
   \begin{exa} \label{2nitki}
Let $\slam$ be a weighted shift on the directed tree
$\tcal_{2,0}$ (cf.\ \eqref{varkappa}) with weights
$\{\lambda_{v}\}_{v\in V_{2,0}^\circ}$ given by
$\{\lambda_{1,j}\}_{j=1}^\infty=\{a,\frac ba, \frac
ab, 1,1, \ldots\} $ and
$\{\lambda_{2,j}\}_{j=1}^\infty=\{b,\frac ab, \frac
ba, 1,1, \ldots\}$, where $a, b \in (0,1)$ are such
that $a < b$ and $a^2 + b^2=1$. Then $\slam$ is
bounded, $\{\|\slam^ne_0\|^2\}_{n=0}^\infty = \{1,1,
\ldots\}$ is a Stieltjes moment sequence and neither
of the sequences
$\{\|\slam^ne_{1,1}\|^2\}_{n=0}^\infty = \{1, (\frac
ba)^2, 1,1, \ldots\}$ and
$\{\|\slam^ne_{2,1}\|^2\}_{n=0}^\infty = \{1, (\frac
ab)^2, 1,1, \ldots\}$ is a Stieltjes moment sequence
(consult also Proposition \ref{izometria}).
   \end{exa}
   As an immediate consequence of Theorem
\ref{charsub} and Lemma \ref{charsub-1} (see also
Remark \ref{re1-2}), we obtain the
Berger-Gellar-Wallen criterion for subnormality of
injective unilateral classical weight\-ed shifts (cf.\
\cite{g-w,h}).
   \begin{cor} \label{b-g-w}
A bounded injective unilateral classical weighted
shift with weights $\{\lambda_n\}_{n=1}^\infty$
$($with notation as in \eqref{notnew}$)$ is subnormal
if and only if the sequence \linebreak $\{1,
|\lambda_1|^2, |\lambda_1 \lambda_2|^2, |\lambda_1
\lambda_2 \lambda_3|^2, \ldots\}$ is a Stieltjes
moment sequence.
   \end{cor}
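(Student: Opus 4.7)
The plan is to identify the unilateral classical weighted shift with a weighted shift on the directed tree $\zbb_+$ as described in Remark \ref{re1-2}, and then invoke Theorem \ref{charsub} for one direction and Lemma \ref{charsub-1} repeatedly for the other. First, I would observe that for the directed tree $\zbb_+$ every vertex $k$ satisfies $\dzit{\zbb_+}{k} = \{k+1\}$, so an easy induction on $n$ using \eqref{n+1} gives $\dzin{n}{k} = \{k+n\}$. Consequently, Lemma~\ref{pot}\,(ii) yields
\begin{align*}
\|\slam^n e_k\|^2 = |\lambda_{k\mid k+n}|^2 = \Big|\prod_{j=1}^{n}\lambda_{k+j}\Big|^2, \qquad k,n\in \zbb_+,
\end{align*}
with the convention that the empty product equals $1$. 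In particular, $\{\|\slam^n e_0\|^2\}_{n=0}^\infty$ is precisely the sequence $\{1,|\lambda_1|^2,|\lambda_1\lambda_2|^2,\ldots\}$ appearing in the statement.

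For the ``only if'' direction, suppose $\slam$ is subnormal. By the implication (i)$\Rightarrow$(iii) of Theorem \ref{charsub}, the sequence $\{\|\slam^n e_0\|^2\}_{n=0}^\infty$ is a Stieltjes moment sequence, and by the computation above this is exactly the claimed conclusion. For the ``if'' direction, assume $\{1,|\lambda_1|^2,|\lambda_1\lambda_2|^2,\ldots\}$ is a Stieltjes moment sequence. Since $\slam$ is injective, Proposition \ref{dzisdesz} applied to the tree $\zbb_+$ (where $\dzi{k}=\{k+1\}$) gives $|\lambda_{k+1}|>0$ for every $k\in\zbb_+$; thus all weights are nonzero. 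The small lemma (Lemma \ref{charsub-1}) applied with $u_0=0$, $u_1=1$ then shows that $\{\|\slam^n e_1\|^2\}_{n=0}^\infty$ is a Stieltjes moment sequence. A straightforward induction on $k$, applying Lemma \ref{charsub-1} with $u_0=k$, $u_1=k+1$ at each step, yields that $\{\|\slam^n e_k\|^2\}_{n=0}^\infty$ is a Stieltjes moment sequence for every $k \in \zbb_+$. Finally, the implication (iii)$\Rightarrow$(i) of Theorem \ref{charsub} gives the subnormality of $\slam$.

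The main step is essentially bookkeeping: the genuine content (Lambert's characterization and the propagation of the Stieltjes moment property along a simple chain via \eqref{st+1}) is already packaged in Theorem \ref{charsub} and Lemma \ref{charsub-1}. I do not anticipate any obstacle beyond verifying the translation between the classical notation \eqref{notold} and the tree-theoretic notation \eqref{notnew+}, and keeping track of the empty-product convention that fixes the zeroth term of the moment sequence as $1$.
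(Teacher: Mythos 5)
Your proposal is correct and follows exactly the route the paper intends: the corollary is stated there as an immediate consequence of Theorem \ref{charsub} and Lemma \ref{charsub-1} (via the identification in Remark \ref{re1-2}), which is precisely the combination you spell out. The bookkeeping with $\dzin{n}{k}=\{k+n\}$ and the inductive propagation of the Stieltjes property along the chain are the same argument, just made explicit.
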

Before formulating the next corollary, we recall that
a two-sided sequence $\{t_n\}_{n=-\infty}^\infty$ of
real numbers is said to be a {\em two-sided Stieltjes
moment sequence} if there exists a positive Borel
measure $\mu$ on $(0,\infty)$ such that
   \begin{align*}
   t_{n}=\int_{(0,\infty)} s^n \D\mu(s),\quad n \in
\zbb;
   \end{align*}
$\mu$ is called a representing measure of
$\{t_n\}_{n=-\infty}^\infty$. It is easily seen that
   \begin{align}   \label{char2sid2}
   \begin{minipage}{29em}
$\{t_n\}_{n=-\infty}^\infty \subseteq \rbb$ is a
two-sided Stieltjes moment sequence if and only if
$\{t_{n+k}\}_{n=-\infty}^\infty$ is a two-sided
Stieltjes moment sequence for some (equivalently:\ for
all\/) $k\in \zbb$.
   \end{minipage}
   \end{align}
It is known that (cf.\ \cite[Theorem 6.3]{j-t-w} and
\cite[page 202]{ber})
   \begin{align} \label{char2sid}
   \begin{minipage}{29em}
$\{t_n\}_{n=-\infty}^\infty \subseteq \rbb$ is a
two-sided Stieltjes moment sequence if and only if the
sequences $\{t_{n-k}\}_{n=0}^\infty$, $k =
0,1,2,\ldots$, are positive definite.
   \end{minipage}
   \end{align}

We are now in a position to deduce an analogue of the
Berger-Gellar-Wallen criterion for subnormality of
injective bilateral classical weighted shifts (cf.\
\cite[Theorem II.6.12]{con2} and \cite[Theorem
5]{StSz}). Another proof of this fact will be given
just after Remark \ref{unbsu}.
   \begin{cor} \label{b-g-w-2}
A bounded injective bilateral classical weighted
shift $S$ with weights $\{\lambda_n\}_{n \in
\zbb}$ $($with notation as in \eqref{notnew}$)$
is subnormal if and only if the two-sided
sequence $\{t_n\}_{n=-\infty}^\infty$ defined by
   \begin{align} \label{twowe}
t_n =
   \begin{cases}
   |\lambda_1 \cdots \lambda_{n}|^2 & \text{ for } n
\Ge 1,
   \\
   1 & \text{ for } n=0,
   \\
   |\lambda_{n+1} \cdots \lambda_{0}|^{-2} & \text{
for } n \Le -1,
   \end{cases}
   \end{align}
is a two-sided Stieltjes moment sequence.
   \end{cor}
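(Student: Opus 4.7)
The plan is to read off Corollary~\ref{b-g-w-2} from Theorem~\ref{charsub} together with the characterization \eqref{char2sid} of two-sided Stieltjes moment sequences. Regarding $S$ as a weighted shift on the directed tree $\zbb$ (cf.\ Remark~\ref{re1-2}), I first observe that $\dzin{n}{k}=\{k+n\}$ for every $k\in\zbb$ and $n\in\zbb_+$, so Lemma~\ref{pot}\,(ii) (together with the definition of $\lambda_{k\mid k+n}$) gives
\[
\|S^n e_k\|^2 \;=\; |\lambda_{k+1}\lambda_{k+2}\cdots\lambda_{k+n}|^2, \qquad k\in\zbb,\; n\in\nbb,
\]
and $\|S^0 e_k\|^2=1$. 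A routine case check against the definition \eqref{twowe} (splitting into $k\Ge 0$, $k+n\Le 0$, and the mixed case, using that injectivity of $S$ forces every $\lambda_j$, and hence every $t_j$, to be nonzero) yields the key identity
\begin{equation}\label{bgw2-key}
\|S^n e_k\|^2 \;=\; \frac{t_{k+n}}{t_k}, \qquad k\in\zbb,\; n\in\zbb_+.
\end{equation}

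Next I feed \eqref{bgw2-key} into Theorem~\ref{charsub}. Since a positive scalar multiple of a Stieltjes moment sequence is again a Stieltjes moment sequence, the equivalence (i)$\Leftrightarrow$(iii) of Theorem~\ref{charsub} says that $S$ is subnormal if and only if for every $k\in\zbb$ the one-sided sequence $\{t_{k+n}\}_{n=0}^\infty$ is a Stieltjes moment sequence. Invoking \eqref{char2sid2}, this is exactly the condition that every forward/backward shift of $\{t_n\}_{n=-\infty}^\infty$ belongs to the Stieltjes class.

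Finally, I translate this into the ``positive definiteness of all backward shifts'' condition of \eqref{char2sid}. By the Stieltjes theorem, $\{t_{k+n}\}_{n=0}^\infty$ is a Stieltjes moment sequence if and only if both $\{t_{k+n}\}_{n=0}^\infty$ and $\{t_{k+1+n}\}_{n=0}^\infty$ are positive definite. Running $k$ through all of $\zbb$ therefore amounts to requiring that $\{t_{n-k}\}_{n=0}^\infty$ be positive definite for every $k\in\zbb_+$, which by \eqref{char2sid} is precisely the statement that $\{t_n\}_{n=-\infty}^\infty$ is a two-sided Stieltjes moment sequence. This closes the equivalence.

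There is no genuinely hard step: the substantive input (Theorem~\ref{charsub} and the two-sided criterion \eqref{char2sid}) is already in place, so the only item that requires care is the bookkeeping identity \eqref{bgw2-key}, where one must handle the sign of $k$ and $k+n$ separately and use that $t_0=1$ together with injectivity of $S$ to ensure $t_k\ne 0$.
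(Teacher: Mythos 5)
Your proof is correct and follows essentially the same route as the paper's (first) proof: reduce via Theorem~\ref{charsub} to the Stieltjes property of the sequences $\{\|S^n e_k\|^2\}_{n=0}^\infty=\{t_{k+n}/t_k\}_{n=0}^\infty$ and then invoke \eqref{char2sid}. The only cosmetic difference is that the paper first uses Lemma~\ref{charsub-1} to restrict attention to $k\Le 0$, whereas you run over all $k\in\zbb$ and absorb the positive-index conditions at the positive-definiteness stage.
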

   \begin{proof}
By Lemma \ref{charsub-1} and Theorem
\ref{charsub}\,(iii), $S$ is subnormal if and only if
$\{\|S^n (S^{-k}e_0)\|^2\}_{n=0}^\infty$ is a
Stieltjes moment sequence for every $k\in \zbb_+$.
This, when combined with \eqref{char2sid}, completes
the proof.
   \end{proof}
Taking into account \eqref{char2sid}, we can rephrase
Corollary \ref{b-g-w-2} as follows:\ $S$ is subnormal
if and only if $\{\ldots, |\lambda_{-1}
\lambda_{0}|^{-2}, |\lambda_{0}|^{-2},1,
|\lambda_1|^2, |\lambda_1 \lambda_2|^2, \ldots\}$ is a
two-sided Stieltjes moment sequence no matter which
position is chosen as the zero one.

Another question worth exploring is to find relationships
between representing measures of Stieltjes moment sequences
$\{\|\slam^n e_u\|^2\}_{n=0}^\infty$, $u \in V$. We begin
by fixing notation.
   \begin{ozn} \label{ozn2}
Let $\slam \in \ogr {\ell^2(V)}$ be a weighted shift
on a directed tree $\tcal$. If for some $u \in V$,
$\{\|\slam^n e_u\|^2\}_{n=0}^\infty$ is a Stieltjes
moment sequence, then, in view of \eqref{determ}, it
is determinate and its unique representing measure is
concentrated on $[0,\|\slam\|^2]$. Denote this measure
by \idxx{$\mu_u$, $\mu_u^{\tcal}$}{70} $\mu_u$ (or by
$\mu_u^{\tcal}$ if we wish to make clear the
dependence of $\mu_u$ on $\tcal$).
   \end{ozn}
The result which follows will be referred to as the {\em
big} lemma (as opposed to Lemma \ref{charsub-1} which is
called the small lemma).
   \begin{lem} \label{charsub2}
Let $\slam \in \ogr {\ell^2(V)}$ be a weighted shift on a
directed tree $\tcal$ with weights $\lambdab =
\{\lambda_v\}_{v \in V^\circ}$, and let $u \in V^\prime$ be
such that $\{\|\slam^n e_v\|^2\}_{n=0}^\infty$ is a
Stieltjes moment sequence for every $v \in \dzi u$. Then
the following conditions are equivalent\/\footnote{\;In
\eqref{consist}, we adhere to the standard convention that
$0 \cdot \infty = 0$; see also footnote \ref{foot}.}{\em :}
   \begin{enumerate}
   \item[(i)] $\{\|\slam^n e_u\|^2\}_{n=0}^\infty$
is a Stieltjes moment sequence,
   \item[(ii)]  $\slam$ satisfies
the consistency condition at $u$, i.e.,
   \begin{align} \label{consist}
\sum_{v \in \dzi{u}} |\lambda_v|^2 \int_0^\infty \frac 1
s\, \D \mu_v(s) \Le 1.
   \end{align}
   \end{enumerate}
If {\em (i)} holds, then $\mu_v(\{0\})=0$ for every $v
\in \dzi u$ such that $\lambda_v \neq 0$, and the
representing measure $\mu_u$ of $\big\{\|\slam^n
e_u\|^2\big\}_{n=0}^\infty$ is given by
   \begin{align}    \label{muu}
\mu_u(\sigma) = \sum_{v \in \dzi u} |\lambda_v|^2
\int_\sigma \frac 1 s \D \mu_v(s) + \Big(1 - \sum_{v \in
\dzi u} |\lambda_v|^2 \int_0^\infty \frac 1 s \D
\mu_v(s)\Big) \delta_0(\sigma)
   \end{align}
for $\sigma \in \borel{\rbb}$; moreover, $\mu_u(\{0\})=0$
if and only if $\slam$ satisfies the strong consistency
condition at $u$, i.e.,
   \begin{align} \label{consist'}
\sum_{v \in \dzi{u}} |\lambda_v|^2 \int_0^\infty \frac 1
s\, \D \mu_v(s) = 1.
   \end{align}
   \end{lem}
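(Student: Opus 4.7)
The plan is to reduce the statement directly to Lemma \ref{bext}, applied to the shifted sequence $\{\|\slam^{n+1}e_u\|^2\}_{n=0}^\infty$, by exhibiting its representing measure explicitly in terms of the $\mu_v$.

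The first step is the recurrence
\begin{align*}
\|\slam^{n+1} e_u\|^2 = \sum_{v \in \dzi u} |\lambda_v|^2 \|\slam^n e_v\|^2, \quad n \in \zbb_+. \qquad (\ast)
\end{align*}
I would obtain this either from Lemma \ref{pot}\,(ii) combined with the decomposition $\dzin{n+1}{u} = \bigsqcup_{v \in \dzi u} \dzin{n}{v}$ (consequence of \eqref{n+1} and Proposition \ref{46}) and the factorization $\lambda_{u\mid w}=\lambda_v\lambda_{v\mid w}$ for $w\in\dzin{n}{v}$ coming from \eqref{recfor2}; or, more directly, from $\slam^{n+1}e_u=\sum_{v\in\dzi u}\lambda_v\slam^n e_v$ together with the observation that the vectors $\slam^n e_v$ have pairwise disjoint supports, since $\supp(\slam^n e_v)\subseteq\dzin{n}{v}\subseteq\des v$ and the sets $\{\des v\}_{v\in\dzi u}$ are pairwise disjoint by Proposition \ref{subdir}.

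Define the finite positive Borel measure $\nu:=\sum_{v\in\dzi u}|\lambda_v|^2\mu_v$, concentrated on $[0,\|\slam\|^2]$. Then $(\ast)$ rewrites as $\|\slam^{n+1}e_u\|^2=\int_0^\infty s^n\D\nu(s)$ for every $n\in\zbb_+$, so $\{\|\slam^{n+1}e_u\|^2\}_{n=0}^\infty$ is a Stieltjes moment sequence with representing measure $\nu$; by \eqref{determ}, applied using $\|\slam^{n+1}e_u\|^2\Le\|\slam\|^{2(n+1)}$, this sequence is determinate, and thus $\nu$ is its \emph{unique} representing measure. Since $\|\slam^0 e_u\|^2=1$, Lemma \ref{bext} applied to $t_n:=\|\slam^{n+1}e_u\|^2$ yields at once the equivalence (i)$\Leftrightarrow$(ii): the condition $\int_0^\infty s^{-1}\D\nu(s)\Le 1$ unfolds by monotone convergence (and the convention $0\cdot\infty=0$) into \eqref{consist}, and the backward-extension measure produced by \eqref{nu} expands by linearity into the formula \eqref{muu}.

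The remaining ``moreover'' assertions will follow as quick corollaries. If (i) holds and $\lambda_v\neq 0$ for some $v\in\dzi u$, then \eqref{consist} forces $\int_0^\infty s^{-1}\D\mu_v(s)<\infty$, which under the convention $1/0=\infty$ yields $\mu_v(\{0\})=0$. Evaluating \eqref{muu} at $\{0\}$ and using $\mu_v(\{0\})=0$ for each $v$ with $\lambda_v\neq 0$ (the terms with $\lambda_v=0$ contribute nothing by $0\cdot\infty=0$) then gives $\mu_u(\{0\})=1-\sum_{v\in\dzi u}|\lambda_v|^2\int_0^\infty s^{-1}\D\mu_v(s)$, which vanishes precisely when the strong consistency condition \eqref{consist'} holds. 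The only point requiring mild care is the interchange of the (possibly infinite) sum over $\dzi u$ with integration, which is harmless throughout because all summands are nonnegative and $\nu$ is finite; the main conceptual ingredient is simply identifying $\nu$ and invoking the ready-made backward-extension lemma.
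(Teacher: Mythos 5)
Your proposal is correct and follows essentially the same route as the paper's proof: the recurrence $\|\slam^{n+1}e_u\|^2=\sum_{v\in\dzi u}|\lambda_v|^2\|\slam^n e_v\|^2$, the identification of $\sum_{v\in\dzi u}|\lambda_v|^2\mu_v$ as the (by determinacy, unique) representing measure of the shifted sequence, and the reduction to Lemma \ref{bext}. The handling of the ``moreover'' assertions likewise matches the paper's argument.
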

   \begin{proof}
Define the set function $\mu$ on Borel subsets of
$\rbb$ by
   \begin{align*}
\mu(\sigma) = \sum_{v \in \dzi u} |\lambda_v|^2
\mu_v(\sigma), \quad \sigma \in \borel{\rbb}.
   \end{align*}
Then $\mu$ is a positive Borel measure concentrated on
$[0,\infty)$, and\/\footnote{\;Apply the Lebesgue
monotone convergence theorem to measures $\mu$,
$\mu_v$ and to the counting measure on $\dzi u$; note
also that the cardinality of $\dzi u$ may be larger
than $\aleph_0$.}
   \begin{align}  \label{leb2}
\int_0^\infty f \D \mu = \sum_{v \in \dzi u}
|\lambda_v|^2 \int_0^\infty f \D \mu_v
   \end{align}
for every Borel function $f\colon {[0,\infty)} \to
[0,\infty]$. In particular, we have
   \begin{align} \label{1/t}
\int_0^\infty \frac 1 s\, \D \mu (s) = \sum_{v \in
\dzi u} |\lambda_v|^2 \int_0^\infty \frac 1 s\, \D
\mu_v(s).
   \end{align}
   Combining \eqref{n+1} with the fact that the sets
$\dzin{n} u$, $u \in V$, are pairwise disjoint for
every fixed integer $n \Ge 0$, we deduce that
   \begin{align} \label{dzinn2}
\dzin{n+1}{u} = \bigsqcup_{v \in \dzi{u}} \dzin{n}{v}.
   \end{align}
   Employing twice Lemma \ref{pot}\,(ii), we get
   \allowdisplaybreaks
   \begin{align}     \label{sln+1}
\|\slam^{n+1} e_u\|^2 & \hspace{2.2ex}= \sum_{w \in
\dzin{n+1}{u}} |\lambda_{u\mid w}|^2
   \\
& \overset{\eqref{dzinn2}}= \sum_{v \in \dzi u}
\sum_{w \in \dzin{n} v} |\lambda_{u\mid w}|^2 \notag
   \\
& \hspace{.4ex} \overset{\eqref{recfor2}}= \sum_{v \in
\dzi u} |\lambda_v|^2 \sum_{w \in \dzin{n} v}
|\lambda_{v\mid w}|^2 \notag
   \\
& \hspace{2.2ex} =\sum_{v \in \dzi u} |\lambda_v|^2
\|\slam^n e_v\|^2, \quad n \in \zbb_+. \notag
   \end{align}
This implies that
   \begin{align*}
\|\slam^{n+1} e_u\|^2 = \sum_{v \in \dzi u}
|\lambda_v|^2 \int_0^\infty s^n \, \D \mu_v(s)
\overset{\eqref{leb2}}= \int_0^\infty s^n \D \mu(s),
\quad n \in \zbb_+,
   \end{align*}
   which means that $\{\|\slam^{n+1}
e_u\|^2\}_{n=0}^\infty$ is a Stieltjes moment sequence with
a representing measure $\mu$. Since $\limsup_{n\to \infty}
(\|\slam^{n+1} e_u\|^2)^{1/n} \Le \|\slam\|^2$, we deduce
from \eqref{determ} that $\{\|\slam^{n+1}
e_u\|^2\}_{n=0}^\infty$ is a determinate Hamburger moment
sequence, and consequently, $\mu$ is its unique
representing measure. Employing now the equality
\eqref{1/t} and Lemma \ref{bext} with $t_n = \|\slam^{n+1}
e_u\|^2$, we see that the conditions (i) and (ii) are
equivalent. The formula \eqref{muu} can easily be inferred
from \eqref{nu} by applying \eqref{leb2} (consult Notation
\ref{ozn2}). The remaining part of conclusion is now
obvious.
   \end{proof}
   \begin{rem} \label{unbsu}
A thorough inspection of the proof of Lemma
\ref{charsub2} reveals that the implication
(ii)$\Rightarrow$(i) can be justified without recourse
to the determinacy of Stieltjes moment sequences. In
particular, the formula \eqref{muu} gives a
representing measure of $\big\{\|\slam^n
e_u\|^2\big\}_{n=0}^\infty$ provided $\mu_v$ is a
representing measure of $\big\{\|\slam^n
e_v\|^2\big\}_{n=0}^\infty$ concentrated on
$[0,\infty)$ for every $v \in \dzi u$. This
observation seems to be of potential relevance because
it can be used to produce examples of unbounded
weighted shifts on directed trees. However, the proof
of the implication (i)$\Rightarrow$(ii) requires using
the determinacy of the sequence $\{\|\slam^{n+1}
e_u\|^2\}_{n=0}^\infty$.
   \end{rem}
Lemma \ref{charsub2} turns out to be a useful
tool for verifying subnormality of weighted
shifts on directed trees. First, we apply it to
give another proof (without recourse to
\eqref{char2sid}) of the Berger-Gellar-Wallen
criterion for subnormality of injective bilateral
classical weight\-ed shifts.
   \begin{proof}[Second proof of Corollary \ref{b-g-w-2}]
Suppose first that $S$ is subnormal. Applying Lemma
\ref{charsub2} to $u=e_{k-1}$, $k \in \zbb$, we deduce
that $\mu_k({\{0\}})=0$ for all $k \in \zbb$. As a
consequence, we see that the inequality
\eqref{consist} turns into the equality
\eqref{consist'}, and the second term of the
right-hand side of the equality \eqref{muu} vanishes.
This, when applied to $u=e_{-1}$, leads to $\frac 1
{|\lambda_{0}|^2} = \int_0^\infty \frac 1 s \, \D
\mu_0(s)$ and $\D \mu_{-1}(s) = \frac{|\lambda_0|^2}s
\D \mu_0(s)$ (be aware of \eqref{notnew}). Employing
an induction argument, we show that for every $k \in
\zbb_+$,
   \begin{align} \label{mu-k}
\frac 1 {|\lambda_{-k} \cdots \lambda_{0}|^2}
= \int_0^\infty \frac 1 {s^{k+1}} \, \D \mu_0
(s) \text{ and } \D \mu_{-k-1}(s) = \frac
{|\lambda_{-k} \cdots \lambda_{0}|^2}
{s^{k+1}} \D \mu_0(s),
   \end{align}
which completes the proof of the ``only if''
part of the conclusion.

Reversely, if $\{t_n\}_{n=-\infty}^\infty$ in \eqref{twowe}
is a two-sided Stieltjes moment sequence with a
representing measure $\mu_0$, then the equality $\|S^n
e_{-k}\|^2=|\lambda_{-k+1} \cdots \lambda_0|^2 t_{n-k}$
which holds for all $k \in \nbb$ and $n \in \zbb_+$ implies
that for every $k \in \nbb$, the sequence $\{\|S^n
e_{-k}\|^2\}_{n=0}^\infty$ is a Stieltjes moment sequence
with a representing measure $\mu_{-k}$ defined in
\eqref{mu-k}. By Lemma \ref{charsub-1}, $\{\|S^n
e_{k}\|^2\}_{n=0}^\infty$ is a Stieltjes moment sequence
for every $k\in \zbb_+$. This, together with Theorem
\ref{charsub}, completes the proof of the ``if'' part.
   \end{proof}
The ensuing proposition which concerns subnormal
extendibility of weights will be illustrated in
Example \ref{extend} in the context of directed
trees $\tcal_{\eta,\kappa}$.
   \begin{pro} \label{maxsub}
Let $\tcal = (V,E)$ be a subtree of a directed tree
$\hat\tcal=(\hat V,\hat E)$ such that $\dzit{\tcal}{w}
\neq \dzit{\hat\tcal}{w}$ for some $w \in V \setminus
\Ko{\tcal},$ and $\dest{\tcal} v = \dest{\hat\tcal} v$
for all $v \in \dzit{\tcal}w$. Suppose that $\slam \in
\ogr{\ell^2(V)}$ is a subnormal weighted shift on the
directed tree $\tcal$ with nonzero weights $\lambdab =
\{\lambda_u\}_{u \in V^\circ}$. Then the directed tree
$\tcal$ is leafless and there exists no subnormal
weighted shift $\slamh \in \ogr {\ell^2(\hat V)}$ on
$\hat \tcal$ with nonzero weights $\hat \lambdab =
\{\hat \lambda_u\}_{u \in \hat V^\circ}$ such that
\idx{$\lambdab \subseteq \hat \lambdab$}{71}, i.e.,
$\lambda_u = \hat \lambda_u$ for all $u \in V^\circ$.
   \end{pro}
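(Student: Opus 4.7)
The plan is to treat the two conclusions separately. That $\tcal$ is leafless is immediate: $\slam$ is subnormal, hence hyponormal, with nonzero weights, and $V^\circ\neq\varnothing$ because $w\in V\setminus\Ko{\tcal}=V^\circ$; Proposition \ref{hypcor} then gives leaflessness.

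For the non-existence of $\slamh$, I argue by contradiction: suppose such a subnormal $\slamh\in\ogr{\ell^2(\hat V)}$ with $\hat\lambdab\supseteq\lambdab$ exists. By Lambert's criterion (Theorem \ref{charsub}), for every $u$ the sequences $\{\|\slam^n e_u\|^2\}_{n=0}^\infty$ and $\{\|\slamh^n e_u\|^2\}_{n=0}^\infty$ are Stieltjes moment sequences with unique representing measures $\mu_u^{\tcal}$ and $\hat\mu_u$ (Notation \ref{ozn2}). Since $w\in V^\circ$, it has a parent $w'\in V$ which is also its parent in $\hat\tcal$. Applying the ``moreover'' part of Lemma \ref{charsub2} to $\slam$ (resp.\ $\slamh$) at $w'$, using $\lambda_w=\hat\lambda_w\neq 0$, yields $\mu_w^{\tcal}(\{0\})=\hat\mu_w(\{0\})=0$. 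Hence, again by Lemma \ref{charsub2}, $\slam$ satisfies the strong consistency condition at $w$ in $\tcal$ and $\slamh$ satisfies it at $w$ in $\hat\tcal$:
\begin{align*}
\sum_{v\in\dzit{\tcal}{w}}|\lambda_v|^2\int_0^\infty\frac{1}{s}\,\D\mu_v^{\tcal}(s)=1=\sum_{v\in\dzit{\hat\tcal}{w}}|\hat\lambda_v|^2\int_0^\infty\frac{1}{s}\,\D\hat\mu_v(s).
\end{align*}

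The key intermediate step is to show that $\mu_v^{\tcal}=\hat\mu_v$ for every $v\in\dzit{\tcal}{w}$. Fix such a $v$. The hypothesis $\dest{\tcal}{v}=\dest{\hat\tcal}{v}$, together with \eqref{minimality} and \eqref{inv}, implies $\dest{\hat\tcal}{u}\subseteq\dest{\hat\tcal}{v}=\dest{\tcal}{v}$ for every $u\in\dest{\tcal}{v}$. Viewing $\tcal_{\dest{\tcal}{v}}$ as a subtree of $\hat\tcal$, Proposition \ref{preheredit}\,(iv)$\Rightarrow$(i), combined with Remark \ref{dzi}, then gives $\dzit{\tcal}{u}=\dzit{\hat\tcal}{u}$ for all $u\in\dest{\tcal}{v}$. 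Iterating yields $\dzint{\tcal}{n}{v}=\dzint{\hat\tcal}{n}{v}$ for all $n\in\zbb_+$; since the weights agree on $V^\circ$, the products $\lambda_{v\mid x}$ coincide in both trees, and Lemma \ref{pot}\,(ii) gives $\|\slam^n e_v\|^2=\|\slamh^n e_v\|^2$. Uniqueness of the representing measure (see Notation \ref{ozn2}) then forces $\mu_v^{\tcal}=\hat\mu_v$.

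Subtracting the two strong consistency identities, and using $\lambda_v=\hat\lambda_v$ and $\mu_v^{\tcal}=\hat\mu_v$ on $\dzit{\tcal}{w}$, leaves
\begin{align*}
\sum_{v\in\dzit{\hat\tcal}{w}\setminus\dzit{\tcal}{w}}|\hat\lambda_v|^2\int_0^\infty\frac{1}{s}\,\D\hat\mu_v(s)=0.
\end{align*}
Since $\dzit{\tcal}{w}\subsetneq\dzit{\hat\tcal}{w}$, this sum is non-empty; pick any $v^*$ in that difference. The ``moreover'' clause of Lemma \ref{charsub2}, applied to $\slamh$ at $w$, gives $\hat\mu_{v^*}(\{0\})=0$, so $\hat\mu_{v^*}$ is supported in $(0,\infty)$ with total mass $1$; hence $\int_0^\infty s^{-1}\,\D\hat\mu_{v^*}(s)>0$ while $\hat\lambda_{v^*}\neq 0$, a contradiction. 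The main obstacle is the identification $\mu_v^{\tcal}=\hat\mu_v$ for $v\in\dzit{\tcal}{w}$; once the branches of $\tcal$ and $\hat\tcal$ below each such $v$ are recognized as identical trees via Proposition \ref{preheredit}, the rest is a comparison of two strong-consistency identities that differ only in the ``extra'' children contributed by $\hat\tcal$.
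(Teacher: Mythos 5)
Your proof is correct and follows essentially the same route as the paper's: establish the strong consistency condition at $w$ for both $\slam$ and $\slamh$ via Lemma \ref{charsub2}, identify $\mu_v^{\tcal}=\mu_v^{\hat\tcal}$ for $v\in\dzit{\tcal}{w}$ from the coincidence of the descendant structures, and subtract the two identities to force the extra weights of $\slamh$ to vanish. Your detailed justification of the measure identification via Proposition \ref{preheredit} and Lemma \ref{pot} is a correctly worked-out version of the step the paper merely asserts (the equality of the restrictions of the two operators to $\ell^2(\dest{\tcal}{v})$).
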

Note that the directed tree $\hat\tcal$ in Proposition
\ref{maxsub} may not be leafless.
   \begin{proof}[Proof of Proposition  \ref{maxsub}]
Suppose that, contrary to our claim, such an $\slamh$
exists. It follows from Proposition \ref{hypcor} that
$\tcal$ and $\hat\tcal$ are leafless. Hence
$\dzit{\tcal}{w} \neq \varnothing$. Applying Lemma
\ref{charsub2} to $\slam$ and $u=\pa{w}$, we see that
$\mu_w^\tcal(\{0\})=0$. Next, applying Lemma \ref{charsub2}
to $\slam$ and $u=w$, we get
   \begin{align} \label{jedyneczka}
1=\sum_{v \in \dzit{\tcal}{w}} |\lambda_v|^2 \int_0^\infty
\frac 1 s\, \D \mu_v^{\tcal}(s) \quad \text{(see Notation
\ref{ozn2}).}
   \end{align}
The same is true for $\slamh$. Hence, we have
   \allowdisplaybreaks
   \begin{multline*}
1 = \sum_{v \in \dzit{\hat\tcal}{w}} |\hat\lambda_v|^2
\int_0^\infty \frac 1 s\, \D \mu_v^{\hat\tcal}(s)
   \\
\overset{(\star)}= \sum_{v \in \dzit{\tcal}{w}}
|\lambda_v|^2 \int_0^\infty \frac 1 s\, \D
\mu_v^{\tcal}(s) + \sum_{v \in \dzit{\hat\tcal}{w}
\setminus \dzit{\tcal}{w}} |\hat\lambda_v|^2
\int_0^\infty \frac 1 s\, \D \mu_v^{\hat\tcal}(s)
   \\
\overset{\eqref{jedyneczka}}= 1 + \sum_{v \in
\dzit{\hat\tcal}{w} \setminus
\dzit{\tcal}{w}}|\hat\lambda_v|^2 \int_0^\infty \frac 1 s\,
\D \mu_v^{\hat\tcal}(s),
   \end{multline*}
which implies that $\hat\lambda_v=0$ for all $v \in
\dzit{\hat\tcal}{w} \setminus \dzit{\tcal}{w} \neq
\varnothing$, a contradiction. The equality ($\star$)
follows from $\lambdab \subseteq \hat \lambdab$ and
the fact that $\mu_v^{\tcal} = \mu_v^{\hat\tcal}$ for
all $v \in \dzit{\tcal}w$, the latter being a direct
consequence of the equality
$\slam|_{\ell^2(\dest{\tcal}v)} =
\slamh|_{\ell^2(\dest{\hat\tcal}v)}$ which holds for
all $v \in \dzit{\tcal}w$. This completes the proof.
   \end{proof}
Note that Proposition \ref{maxsub} is no longer true when
$w=\ko{\tcal}$ (cf.\ Example \ref{extend}).
   \subsection{\label{subnkappa}Subnormality on assorted
directed trees}
   Classical weighted shifts are built on very special
directed trees which are characterized by the property
that each vertex has exactly one child (cf.\ Remark
\ref{re1-2}). In this section, we consider one step
more complicated directed trees, namely those with the
property that each vertex except one has exactly one
child; the exceptional vertex is assumed to be a
branching vertex (cf. \eqref{prec}).

Below we adhere to Notation \ref{poddrz}. Set
\idxx{$J_\iota$}{72} $J_\iota = \{k \in \nbb\colon
k\Le \iota\}$ for $\iota \in \zbb_+ \sqcup
\{\infty\}$. Note that $J_0=\varnothing$ and $J_\infty
= \nbb$.
   \begin{thm}\label{omega}
Suppose that $\tcal$ is a directed tree for which
there exists $\omega\in V$ such that
$\card{\dzi{\omega}}\Ge 2$ and $\card{\dzi{v}}=1$ for
every $v \in V \setminus \{\omega\}$. Let $\slam
\in\ogr{\ell^2(V)}$ be a weighted shift on the
directed tree $\tcal$ with nonzero weights $\lambdab =
\{\lambda_v\}_{v \in V^\circ}$. Then the following
assertions are valid.
   \begin{enumerate}
   \item[(i)] If $\omega\in \Ko{\tcal}$, then
$\slam$ is subnormal if and only if \eqref{consist} holds
for $u=\omega$ and $\{\|\slam^n e_v\|^2\}_{n=0}^\infty$ is
a Stieltjes moment sequence for every $v \in \dzi
{\omega}$.
   \item[(ii)] If $\tcal$ has a root and $\omega \neq \koo$,
then $\slam$ is subnormal if and only if any one of the
following two equivalent conditions holds\/{\em :}
   \begin{enumerate}
   \item[(\mbox{ii-a})] $\slamr
\omega$ is subnormal, \eqref{consist'} is valid for
$u=\omega$,
   \begin{align}  \label{znumerkiem}
\frac 1 {|\prod_{j=0}^{k-1}
\lambda_{\paa^j(\omega)}|^2} = \sum_{v \in \dzi
\omega} |\lambda_v|^2 \int_0^\infty \frac 1 {s^{k+1}}
\D \mu_v(s)
   \end{align}
for all $k\in J_{\kappa-1}$, and
   \begin{align}  \label{znumerkiem'}
\frac 1 {|\prod_{j=0}^{\kappa-1}
\lambda_{\paa^j(\omega)}|^2} \Ge \sum_{v \in \dzi \omega}
|\lambda_v|^2 \int_0^\infty \frac 1 {s^{\kappa+1}} \D
\mu_v(s),
   \end{align}
where $\kappa$ is a unique positive integer such that
$\paa^\kappa(\omega) = \koo${\em ;}
   \item[(\mbox{ii-b})] $\{\|\slam^n
e_{\koo}\|^2\}_{n=0}^\infty$ and $\{\|\slam^n
e_v\|^2\}_{n=0}^\infty$ are Stieltjes moment sequences
for all $v \in \dzi {\omega}$.
   \end{enumerate}
   \item[(iii)] If $\tcal$ is rootless,
then $\slam$ is subnormal if and only if any one of the
following two equivalent conditions holds\/{\em :}
   \begin{enumerate}
   \item[(\mbox{iii-a})] $\slamr
\omega$ is subnormal, \eqref{consist'} is valid for
$u=\omega$, and \eqref{znumerkiem} is valid for all $k\in
\nbb$,
   \item[(\mbox{iii-b})] $\{\|\slam^n e_{\paa^k(\omega)}
\|^2\}_{n=0}^\infty$ and $\{\|\slam^n
e_v\|^2\}_{n=0}^\infty$ are Stieltjes moment
sequenc\-es for infinitely many integers $k \Ge 1$ and
for all $v \in \dzi {\omega}$.
   \end{enumerate}
   \end{enumerate}
   \end{thm}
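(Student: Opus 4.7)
My plan is to reduce everything to Theorem \ref{charsub}, which characterizes the subnormality of $\slam$ by the condition that $\{\|\slam^n e_u\|^2\}_{n=0}^\infty$ be a Stieltjes moment sequence at \emph{every} $u\in V$, and then to propagate this property through $V$ using the small Lemma \ref{charsub-1} at one-child vertexes and the big Lemma \ref{charsub2} at $\omega$ and recursively up the tail $v_j:=\paa^j(\omega)$. The structural hypothesis forces $\dzi{v_j}=\{v_{j-1}\}$ for every admissible $j\Ge 1$ and $\card{\dzi{\omega}}\Ge 2$, so every non-branching vertex has exactly one child; moreover $V$ decomposes, according to which of (i)--(iii) holds, as $\des\omega$, as $\{v_1,\ldots,v_\kappa\}\sqcup\des\omega$, or as $\{v_j\}_{j=1}^\infty\sqcup\des\omega$ (cf.\ Corollary \ref{przem} and Proposition \ref{xdescor}\,(iv)).

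Case (i) is the cleanest. Iterating Lemma \ref{charsub-1} along each branch $\des v$ issued from $v\in\dzi\omega$ shows that the Stieltjes property at $v$ propagates automatically to every descendant; hence $\slam$ is subnormal iff $\{\|\slam^n e_v\|^2\}_{n=0}^\infty$ is Stieltjes for each $v\in\dzi\omega$ and for $v=\omega$. Lemma \ref{charsub2} applied at $u=\omega$ (whose children fulfill the hypothesis) then rewrites ``Stieltjes at $\omega$'' as the consistency condition \eqref{consist}.

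Case (ii) is proved by iterating the big lemma along the finite tail $v_1,\ldots,v_\kappa=\koo$. For ``subnormal $\Leftrightarrow$ (ii-b)'' I apply Lemma \ref{charsub-1} $\kappa$ times downward to transport the Stieltjes property from $\koo$ to $\omega$, and from each $v\in\dzi\omega$ across $\des v$ as in case (i). For the equivalence with (ii-a), case (i) identifies the subnormality of $\slamr\omega$ with ``Stieltjes at each $v\in\dzi\omega$ together with \eqref{consist} at $\omega$''; I then apply Lemma \ref{charsub2} in succession at $u=v_1,v_2,\ldots,v_\kappa$, observing that at each intermediate step $j\in\{1,\ldots,\kappa-1\}$ the Stieltjes property at $v_{j+1}$ forces not merely consistency but \emph{strong} consistency at $v_j$, since any $\delta_0$-mass in $\mu_{v_j}$ would make $\int_0^\infty s^{-1}\D\mu_{v_j}(s)=\infty$ and so violate the next consistency inequality; only at the final step $j=\kappa$ does the weak version suffice. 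Combining \eqref{muu} with these strong consistencies yields inductively
\begin{align*}
\D\mu_{v_j}(s)=\Big|\prod_{i=0}^{j-1}\lambda_{\paa^i(\omega)}\Big|^{2} s^{-j}\,\D\mu_\omega(s), \qquad 1\Le j\Le \kappa-1,
\end{align*}
and the same formula at $u=\omega$ (atom-free by strong consistency at $\omega$) gives $\D\mu_\omega(s)=\sum_{v\in\dzi\omega}|\lambda_v|^{2}s^{-1}\,\D\mu_v(s)$; substituting these into the strong consistency at $v_j$ (resp.\ the weak consistency at $v_\kappa$) reproduces precisely \eqref{znumerkiem} for $k=j$ (resp.\ \eqref{znumerkiem'}).

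Case (iii) is the same with the tail infinite. In (iii-b), Stieltjes at $v_k$ for infinitely many $k$ suffices because, given any $m\in\nbb$, one may pick some $k\Ge m$ and descend from $v_k$ to $v_m$ via Lemma \ref{charsub-1}, recovering Stieltjes at every $v_j$; the reverse implication is immediate. Dually, in (iii-a) no terminal vertex caps the inductive chain, so strong consistency becomes mandatory at every $v_j$, $j\Ge 0$, producing \eqref{znumerkiem} for every $k\in\nbb$ with no residual inequality. The one genuine obstacle throughout the program is the bookkeeping described in case (ii-a)/(iii-a): namely, verifying that strong (not merely weak) consistency is forced at each interior tail vertex, since this is precisely what keeps $\mu_{v_j}$ atom-free at $0$ and renders the recursive identities above well defined, so that the tower of strong consistencies can be read off as \eqref{znumerkiem}--\eqref{znumerkiem'}.
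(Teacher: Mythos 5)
Your proposal is correct and follows essentially the same route as the paper: reduce to Theorem \ref{charsub}, propagate the Stieltjes property along one-child branches via Lemma \ref{charsub-1} and through $\omega$ and the tail $\paa^j(\omega)$ via Lemma \ref{charsub2}, with strong consistency forced at each interior tail vertex precisely because a point mass of $\mu_{\paa^j(\omega)}$ at $0$ would blow up the next consistency integral (this is the paper's observation that $\mu_{\paa^k(\omega)}(\{0\})=0$ for $k\Le\kappa-1$, and your recursive formula for $\mu_{\paa^j(\omega)}$ is the paper's \eqref{literki}). The only cosmetic difference is in case (iii), where the paper reduces to case (ii) by restricting to $\des{\paa^k(\omega)}$ for $k$ in an infinite set (Corollary \ref{subcyc} with Proposition \ref{xdescor}), while you run the same induction directly along the infinite tail; both work.
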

   \begin{proof}
(i) To prove the ``if'' part of the conclusion of (i),
denote by $\mathscr S$ the set of all $v \in V$ such
that $\big\{\|\slam^n e_v\|^2\big\}_{n=0}^\infty$ is a
Stieltjes moment sequence. Then, by Lemma
\ref{charsub2}, $\{\omega\} \cup \dzi{\omega}
\subseteq \mathscr S$. In turn, by Corollary
\ref{przem} and Lemma \ref{charsub-1}, we see that $V
\setminus (\{\omega\} \cup \dzi{\omega}) \subseteq
\mathscr S$, which together with Theorem \ref{charsub}
completes the proof of the ``if'' part. The ``only
if'' part is a direct consequence of Theorem
\ref{charsub} and Lemma \ref{charsub2}.

(ii) It follows from our assumptions on $\tcal$ and
Proposition \ref{xdescor2} that there exists a unique
$\kappa \in \nbb$ satisfying the equality
$\paa^\kappa(u)=\koo$, and that
   \begin{align} \label{vplus}
V = \{\paa^{j}(\omega) \colon j=1, \ldots, \kappa\} \sqcup
\des \omega.
   \end{align}
We first prove the ``only if'' part of the conclusion of
(ii). For this, suppose that $\slam$ is subnormal. Then
$\slamr \omega$ is subnormal as a restriction of $\slam$ to
its invariant subspace $\ell^2(\des \omega)$. Applying
Lemma \ref{charsub2} to $u=\paa^k(\omega)$, $k\in
J_\kappa$, we deduce that $\mu_{\paa^k(\omega)}(\{0\})=0$
for all $k = 0, \ldots, \kappa-1$. This, when combined with
Lemma \ref{charsub2}, applied to $u=\paa^k(\omega)$ with
$k=0, \ldots, \kappa$, leads to
   \begin{align} \label{ABC-A}
& \sum_{v \in \dzi{\omega}} |\lambda_v|^2 \int_0^\infty
\frac 1 s\, \D \mu_v(s) = 1,
   \\ \label{ABC-B}
& |\lambda_{\paa^k(\omega)}|^2 \int_0^\infty \frac 1 s\, \D
\mu_{\paa^k(\omega)}(s) = 1, \quad k \in \zbb,\, 0\Le k \Le
\kappa -2,
   \\ \label{ABC-C}
& |\lambda_{\paa^{\kappa-1}(\omega)}|^2 \int_0^\infty \frac
1 s\, \D \mu_{\paa^{\kappa-1}(\omega)}(s) \Le 1,
   \\ \label{ABC-A'}
& \mu_\omega(\sigma) = \sum_{v \in \dzi \omega}
|\lambda_v|^2 \int_\sigma \frac 1 s \D \mu_v(s), \quad
\sigma \in \borel \rbb,
   \\ \label{ABC-B'}
& \mu_{\paa^k(\omega)}(\sigma) =
|\lambda_{\paa^{k-1}(\omega)}|^2 \int_\sigma \frac 1 s
\D \mu_{\paa^{k-1}(\omega)}(s), \quad \sigma \in
\borel \rbb, \, k \in J_{\kappa-1}.
   \end{align}
Using an induction argument, we deduce from
\eqref{ABC-B}, \eqref{ABC-A'} and \eqref{ABC-B'} that
\eqref{znumerkiem} holds for every $k \in
J_{\kappa-1}$, and that the measures
$\mu_{\paa^k(\omega)}$, $k \in J_{\kappa-1}$, are
given by
   \begin{align}\label{literki}
\frac {\mu_{\paa^k(\omega)} (\sigma)} {|\prod_{j=0}^{k-1}
\lambda_{\paa^j(\omega)}|^2} = \sum_{v \in \dzi \omega}
|\lambda_v|^2 \int_{\sigma} \frac 1 {s^{k+1}}\, \D
\mu_v(s), \quad \sigma \in \borel{\rbb}.
   \end{align}
(To show that \eqref{znumerkiem} holds for $k+1$ in
place of $k$, we have to employ the formula
\eqref{literki}.) Next, we infer \eqref{znumerkiem'}
from \eqref{ABC-C}, \eqref{ABC-A'} (if $\kappa=1$) and
\eqref{literki} (if $\kappa \Ge 2$, with
$k=\kappa-1$). This means that (\mbox{ii-a}) holds.
Clearly, the condition (\mbox{ii-b}) is a direct
consequence of Theorem \ref{charsub}.

Let us turn to the proof of the ``if'' part of the
conclusion of (ii). Assume first that (\mbox{ii-a})
holds. By subnormality of $\slamr \omega =
\slam|_{\ell^2(\des \omega)}$, we have $\des \omega
\subseteq \mathscr S$ (cf.\ Theorem \ref{charsub}).
This and \eqref{ABC-A}, when combined with Lemma
\ref{charsub2}, yields \eqref{ABC-A'}. If $\kappa =1$,
then the formula \eqref{ABC-A'} for $\mu_\omega$
enables us to rewrite the inequality
\eqref{znumerkiem'} as $|\lambda_\omega|^2
\int_0^\infty \frac 1 s\, \D \mu_\omega(s) \Le 1$. By
Lemma \ref{charsub2}\,(ii), $\koo = \pa \omega \in
\mathscr S$, which together with \eqref{vplus} and
Theorem \ref{charsub} implies subnormality of $\slam$.
If $\kappa \Ge 2$, then the equality
\eqref{znumerkiem} with $k=1$ takes the form
$|\lambda_\omega|^2 \int_0^\infty \frac 1 s\, \D
\mu_\omega(s) = 1$. Applying Lemma \ref{charsub2}
again, we see that $\pa \omega \in \mathscr S$ and the
measure $\mu_{\pa \omega}$ is given by \eqref{literki}
with $k=1$. An induction argument shows that for every
$k \in J_{\kappa-1}$, $\paa^k(\omega) \in \mathscr S$
and the measure $\mu_{\paa^k(\omega)}$ is given by
\eqref{literki}. Finally, the formula \eqref{literki}
with $k=\kappa-1$ for $\mu_{\paa^{\kappa-1}(\omega)}$
enables us to rewrite the inequality
\eqref{znumerkiem'} as
$|\lambda_{\paa^{\kappa-1}(\omega)}|^2 \int_0^\infty
\frac 1 s\, \D \mu_{\paa^{\kappa-1}(\omega)}(s) \Le
1$. By Lemma \ref{charsub2}\,(ii), $\koo =
\paa^\kappa(\omega) \in \mathscr S$. This combined
with \eqref{vplus} and Theorem \ref{charsub} shows
that $\slam$ is subnormal.

Suppose now that \mbox{(ii-b)} holds. Employing Lemma
\ref{charsub-1} repeatedly first to
$u_0=\koo=\paa^{\kappa}(\omega)$, then to $u_0=\paa^{\kappa
- 1}(\omega)$ and so on up to $u_0=\paa^1(\omega)$, we see
that $\{\paa^{j}(\omega) \colon j=0, \ldots, \kappa\}
\subseteq \mathscr S$. The same procedure applied to
members of $\dzi \omega$ shows that $\des \omega \setminus
\{\omega\} \subseteq \mathscr S$, which by \eqref{vplus}
and Theorem \ref{charsub} completes the proof of (ii).

(iii) Let $J$ be an infinite subset of $\nbb$. In view
of Proposition \ref{xdescor}\,(iii), we can apply
Corollary \ref{subcyc} to $X= \{\paa^k(\omega)\colon k
\in J\}$. What we get is that $\slam$ is subnormal if
and only if $\slamr x$ is subnormal for every $x\in
X$. Since for every $x \in X$, $\slamr
x=\slam|_{\ell^2(\des{x})}$ and the directed tree
$\tcal_{\des{x}}$ has the property required in (ii),
we see that (iii) can be deduced from (ii) by applying
the aforesaid characterization of subnormality of
$\slam$. This completes the proof.
   \end{proof}
Our next aim is to rewrite Theorem \ref{omega} in
terms of weights of weighted shifts being studied. In
view of Proposition \ref{hypcor}, there is no loss of
generality in assuming that $\card{\dzi \omega} \Le
\aleph_0$. A careful look at the proof of Theorem
\ref{omega} reveals that the directed trees considered
therein can be modelled as follows (see Figure 6).
Given $\eta,\kappa \in \zbb_+ \sqcup \{\infty\}$ with
$\eta \Ge 2$, we define the directed tree
\idxx{$\tcal_{\eta,\kappa}$}{73} $\tcal_{\eta,\kappa}
= (V_{\eta,\kappa}, E_{\eta,\kappa})$ by (recall that
$J_\iota = \{k \in \nbb\colon k\Le \iota\}$ for $\iota
\in \zbb_+ \sqcup \{\infty\}$)
\idxx{$V_{\eta,\kappa}$}{74}
\idxx{$E_{\eta,\kappa}$}{75} \idxx{$E_\kappa$}{76}
   \allowdisplaybreaks
   \begin{align}  \label{varkappa}
   \begin{aligned}
V_{\eta,\kappa} & = \big\{-k\colon k\in J_\kappa\big\}
\sqcup \{0\} \sqcup \big\{(i,j)\colon i\in J_\eta,\,
j\in \nbb\big\},
   \\
E_{\eta,\kappa} & = E_\kappa \sqcup
\big\{(0,(i,1))\colon i \in J_\eta\big\} \sqcup
\big\{((i,j),(i,j+1))\colon i\in J_\eta,\, j\in
\nbb\big\},
   \\
E_\kappa & = \big\{(-k,-k+1) \colon k\in J_\kappa\big\}.
   \end{aligned}
   \end{align}
   \vspace{1.5ex}
   \begin{center}
   \includegraphics[width=7cm]
   {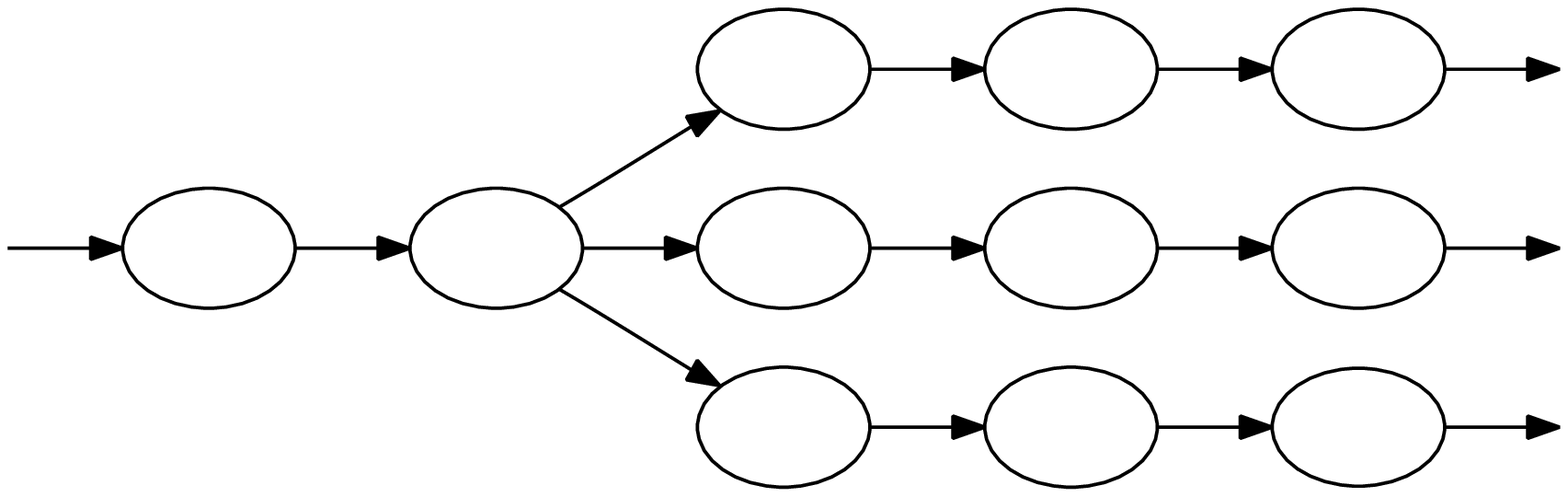}
   \\[1.5ex]
   {\small {\sf Figure 6}}
   \end{center}
   \vspace{1ex}
   If $\kappa < \infty$, then the directed tree
$\tcal_{\eta,\kappa}$ has a root and
$\ko{\tcal_{\eta,\kappa}}=-\kappa$. In turn, if
$\kappa=\infty$, then the directed tree
$\tcal_{\eta,\infty}$ is rootless. In all cases, the
branching vertex $\omega$ is equal to $0$. Note that
the simplest\footnote{\;This means that there is no
proper leafless subtree of the underlying directed
tree which is not isomorphic to $\zbb_+$.} leafless
directed tree which is not isomorphic to $\zbb_+$ and
$\zbb$ (cf.\ Remark \ref{re1-2}) coincides with
$\tcal_{2,0}$.

We are now ready to reformulate Theorem \ref{omega} in
terms of weights. Writing the counterpart of
(\mbox{iii-b}), being a little bit too long, is left to the
reader. Below we adhere to notation $\lambda_{i,j}$ instead
of a more formal expression $\lambda_{(i,j)}$.
   \begin{cor}\label{omega2}
Let $\slam \in\ogr{\ell^2(V_{\eta,\kappa})}$ be a
weighted shift on the directed tree
$\tcal_{\eta,\kappa}$ with nonzero weights $\lambdab =
\{\lambda_v\}_{v \in V_{\eta,\kappa}^\circ}$. Then the
following assertions hold.
   \begin{enumerate}
   \item[(i)] If $\kappa=0$, then
$\slam$ is subnormal if and only if there exist Borel
probability measures $\{\mu_i\}_{i=1}^\eta$ on $[0,\infty)$
such that
   \allowdisplaybreaks
   \begin{gather}     \label{zgod0}
\int_0^\infty s^n \D \mu_i(s) =
\Big|\prod_{j=2}^{n+1}\lambda_{i,j}\Big|^2, \quad n
\in \nbb, \; i \in J_\eta,
   \\
\sum_{i=1}^\eta |\lambda_{i,1}|^2 \int_0^\infty \frac 1 s\,
\D \mu_i(s) \Le 1. \label{zgod}
   \end{gather}
   \item[(ii)] If $0 < \kappa < \infty$,
then $\slam$ is subnormal if and only if one of the
following two equivalent conditions holds\/{\em :}
   \begin{enumerate}
   \item[(\mbox{ii-a})]  there exist Borel probability
measures $\{\mu_i\}_{i=1}^\eta$ on $[0,\infty)$ which
satisfy \eqref{zgod0} and the following requirements{\em :}
   \begin{align} \label{zgod'}
&\sum_{i=1}^\eta |\lambda_{i,1}|^2 \int_0^\infty \frac 1
s\, \D \mu_i(s) = 1,
   \\
& \frac 1 {|\prod_{j=0}^{k-1} \lambda_{-j}|^2} =
\sum_{i=1}^\eta|\lambda_{i,1}|^2 \int_0^\infty \frac 1
{s^{k+1}} \D \mu_i(s), \quad k \in J_{\kappa-1},
\label{widly1}
   \\
& \frac 1 {|\prod_{j=0}^{\kappa-1} \lambda_{-j}|^2} \Ge
\sum_{i=1}^\eta|\lambda_{i,1}|^2 \int_0^\infty \frac 1
{s^{\kappa+1}} \D \mu_i(s); \label{widly1'}
   \end{align}
   \item[(\mbox{ii-b})] there exist
Borel probability measures $\{\mu_i\}_{i=1}^\eta$ and
$\nu$ on $[0,\infty)$ which satisfy \eqref{zgod0} and
the equations below
   \begin{align*}
\int_0^\infty s^n \D \nu(s) =
   \begin{cases}
|\prod_{j=\kappa-n}^{\kappa-1}\lambda_{-j}|^2 &
\text{if } n \in J_\kappa,
   \\[1ex]
|\prod_{j=0}^{\kappa-1}\lambda_{-j}|^2
\big(\sum_{i=1}^\eta |\prod_{j=1}^{n-\kappa}
\lambda_{i,j}|^2\big) & \text{if } n \in \nbb
\setminus J_\kappa.
   \end{cases}
   \end{align*}
   \end{enumerate}
   \end{enumerate}
   \begin{enumerate}
   \item[(iii)] If $\kappa=\infty$,
then $\slam$ is subnormal if and only if there exist Borel
probability measures $\{\mu_i\}_{i=1}^\eta$ on $[0,\infty)$
satisfying \eqref{zgod0}, \eqref{zgod'} and \eqref{widly1}.
   \end{enumerate}
Moreover, if $\slam$ is subnormal and
$\{\mu_i\}_{i=1}^\eta$ are Borel probability
measures on $[0,\infty)$ satisfying
\eqref{zgod0}, then $\mu_i = \mu_{i,1}$ for all
$i \in J_\eta$.
   \end{cor}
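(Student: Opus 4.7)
The plan is to derive Corollary \ref{omega2} by unwinding each case of Theorem \ref{omega} into conditions on the weights, exploiting the very special structure of $\tcal_{\eta,\kappa}$: each subtree $\tcal_{\des{(i,1)}}$ is a chain isomorphic to $\zbb_+$, the trunk from $-\kappa$ (if present) to $0$ has $\card{\dzi{v}}=1$ everywhere along it except at $\omega=0$, and $\paa^{j}(\omega)=-j$ for all admissible $j$. The decisive bridge is the explicit computation from Lemma \ref{pot}\,(ii) showing that, for every $i\in J_\eta$,
\begin{align*}
\|\slam^n e_{(i,1)}\|^2 = \Big|\prod_{j=2}^{n+1}\lambda_{i,j}\Big|^2, \quad n\in\zbb_+.
\end{align*}
Hence the existence of a Borel probability measure $\mu_i$ on $[0,\infty)$ satisfying \eqref{zgod0} is equivalent to $\{\|\slam^n e_{(i,1)}\|^2\}_{n=0}^\infty$ being a Stieltjes moment sequence; when this holds, boundedness of $\slam$ combined with \eqref{determ} forces determinacy, so $\mu_i=\mu_{(i,1)}$ in the sense of Notation \ref{ozn2}, which already yields the final uniqueness assertion of the corollary. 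Applying Lemma \ref{charsub-1} along each branch then propagates the Stieltjes property from $(i,1)$ to all of $\des{(i,1)}$, so by Theorem \ref{charsub} applied to $\slamr{0}$, subnormality of $\slamr{0}$ is equivalent to \eqref{zgod0} alone.

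For case (i) with $\kappa=0$, the vertex $\omega=0$ is the root, so Theorem \ref{omega}\,(i) applies verbatim and the consistency condition \eqref{consist} at $\omega$ translates, under the identifications $\mu_i=\mu_{(i,1)}$ and $\dzi\omega=\{(i,1):i\in J_\eta\}$, to \eqref{zgod}. For case (iii) with $\kappa=\infty$, the tree is rootless; Theorem \ref{omega}\,(iii-a) becomes, under the substitutions $\paa^j(\omega)=-j$ and $\lambda_{\paa^j(\omega)}=\lambda_{-j}$, the conjunction of \eqref{zgod0}, \eqref{zgod'} and \eqref{widly1} for every $k\in\nbb$; no terminal inequality is needed in the absence of a root.

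Case (ii) with $0<\kappa<\infty$ requires more work. Condition (ii-a) of the corollary is the literal transcription of condition (ii-a) of Theorem \ref{omega}: equations \eqref{zgod'}, \eqref{widly1} and \eqref{widly1'} are exactly \eqref{consist'}, \eqref{znumerkiem} and \eqref{znumerkiem'} after substitution. For the equivalence (ii-a)$\Leftrightarrow$(ii-b), the implication (ii-a)$\Rightarrow$(ii-b) proceeds by constructing $\nu=\mu_{-\kappa}=\mu_{\koo}$ through a recursive application of Lemma \ref{charsub2} up the trunk $0,-1,\ldots,-\kappa$, and then computing moments of $\nu$ directly from Lemma \ref{pot}\,(ii). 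For $n\in J_\kappa$ the set $\dzin{n}{-\kappa}$ is the single vertex $-\kappa+n$, giving $\|\slam^n e_{-\kappa}\|^2=|\prod_{j=\kappa-n}^{\kappa-1}\lambda_{-j}|^2$; for $n>\kappa$ one factors $\slam^n e_{-\kappa}=\big(\prod_{j=0}^{\kappa-1}\lambda_{-j}\big)\slam^{n-\kappa}e_0$ and applies Lemma \ref{pot}\,(ii) to $\slam^{n-\kappa}e_0$ using $\dzin{n-\kappa}{0}=\bigsqcup_{i=1}^\eta\{(i,n-\kappa)\}$. Conversely, (ii-b) provides a Stieltjes representing measure $\nu$ for $\{\|\slam^n e_{\koo}\|^2\}_{n=0}^\infty$, which Lemma \ref{charsub-1} propagates along the trunk to every vertex $-j$, thereby giving subnormality of $\slam$ via Theorem \ref{charsub}; reading off Lemma \ref{charsub2} then delivers \eqref{zgod'}, \eqref{widly1} and \eqref{widly1'}.

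The main obstacle is the bookkeeping in the moment computation for $\nu$: the regimes $n\le\kappa$ and $n>\kappa$ must be joined so that the boundary value at $n=\kappa$ consistently produces the factor $|\prod_{j=0}^{\kappa-1}\lambda_{-j}|^2$ coming from $\slam^\kappa e_{-\kappa}$, and the inductive passage up the trunk must correctly track the pushforward formulas of Lemma \ref{charsub2} so that $\mu_{-j}(\{0\})=0$ for $j<\kappa$ and \eqref{widly1} holds at every intermediate level. Once this is in place, every other step in the proof is an essentially mechanical translation of Theorem \ref{omega}, made possible by the one-branching-vertex structure of $\tcal_{\eta,\kappa}$.
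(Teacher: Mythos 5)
Your proposal takes exactly the route the paper intends: Corollary \ref{omega2} is obtained by specializing Theorem \ref{omega} to the tree $\tcal_{\eta,\kappa}$, using Lemma \ref{pot}\,(ii) to identify $\|\slam^n e_{(i,1)}\|^2$ with $\big|\prod_{j=2}^{n+1}\lambda_{i,j}\big|^2$ and \eqref{determ} to force $\mu_i=\mu_{i,1}$. The paper's own proof is literally the citation ``Apply Theorem \ref{omega} (consult also \eqref{determ})'', so your write-up merely supplies the bookkeeping the authors leave implicit; in particular your moment computation for $\nu$ in case (ii-b), splitting at $n=\kappa$ and factoring $\slam^{\kappa}e_{-\kappa}=\big(\prod_{j=0}^{\kappa-1}\lambda_{-j}\big)e_0$, is correct.

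One intermediate assertion is false as stated: subnormality of $\slamr 0$ is \emph{not} equivalent to \eqref{zgod0} alone. Condition \eqref{zgod0} only says that each child $(i,1)$ of the branching vertex induces a Stieltjes moment sequence; for $\slamr 0$ to be subnormal the vertex $0$ itself must also induce one, and by Lemma \ref{charsub2} this additionally requires the consistency condition \eqref{zgod} at $u=0$ (choosing measures $\mu_i$ with $\sum_{i}|\lambda_{i,1}|^2\int_0^\infty s^{-1}\D\mu_i(s)>1$ shows the implication genuinely fails). The slip does not damage your final equivalences, because in every case the needed consistency condition sits among the remaining hypotheses --- \eqref{zgod} in case (i), and \eqref{zgod'} (which implies \eqref{zgod}) in cases (ii) and (iii) --- so that \eqref{zgod0} together with it does yield subnormality of $\slamr 0$ via Lemma \ref{charsub2} and Theorem \ref{charsub}. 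You should rephrase that bridge accordingly; everything else stands.
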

   \begin{proof}  Apply Theorem \ref{omega}
(consult also \eqref{determ}).
   \end{proof}
Corollary \ref{omega2} suggests the possibility of singling
out a class of subnormal weighted shifts on
$\tcal_{\eta,\kappa}$ (with $\kappa \in \nbb$) whose
behaviour on $e_{\koo}$ is, in a sense, extreme.
   \begin{rem} \label{forwhile}
Suppose that $\kappa \in \nbb$. We say that a subnormal
weighted shift $\slam \in \ogr{\ell^2(V_{\eta,\kappa})}$ on
$\tcal_{\eta,\kappa}$ with nonzero weights $\lambdab =
\{\lambda_v\}_{v \in V_{\eta,\kappa}^\circ}$ is {\em
extremal} if
   \begin{align*}
\|\slam e_{\koo}\| = \max \|S_{\tilde \lambdab} e_{\koo}\|,
   \end{align*}
where the maximum is taken over all subnormal weighted
shifts $S_{\tilde \lambdab} \in
\ogr{\ell^2(V_{\eta,\kappa})}$ on $\tcal_{\eta,\kappa}$
with nonzero weights $\tilde \lambdab = \{\tilde
\lambda_v\}_{v \in V_{\eta,\kappa}^\circ}$ such that
$\slamr{-\kappa+1} =
S_{\tilde\lambdab_\rightarrow\!(-\kappa+1)}$, or
equivalently that $\lambda_v = \tilde \lambda_v$ for all $v
\neq -\kappa + 1$. It follows from Corollary \ref{omega2}
that a subnormal weighted shift $\slam \in
\ogr{\ell^2(V_{\eta,\kappa})}$ on $\tcal_{\eta,\kappa}$
with nonzero weights $\lambdab = \{\lambda_v\}_{v \in
V_{\eta,\kappa}^\circ}$ is extremal if and only if $\slam$
satisfies the condition (\mbox{ii-a}) with the inequality
in \eqref{widly1'} replaced by equality; in other words,
$\slam$ is extremal if and only if $\slam$ satisfies the
strong consistency condition at each vertex $u \in
V_{\eta,\kappa}$ (cf.\ \eqref{consist'}). Hence, if $\slam
\in \ogr{\ell^2(V_{\eta,\kappa})}$ is a subnormal weighted
shift on $\tcal_{\eta,\kappa}$ with nonzero weights
$\lambdab = \{\lambda_v\}_{v \in V_{\eta,\kappa}^\circ}$,
then any weighted shift $S_{\tilde \lambdab}$ on
$\tcal_{\eta,\kappa}$ with nonzero weights $\tilde \lambdab
= \{\tilde \lambda_v\}_{v \in V_{\eta,\kappa}^\circ}$
satisfying the following equalities
   \begin{align*}
|\tilde \lambda_v| =
   \begin{cases}
|\lambda_v| & \text{for } v \neq -\kappa+1,
   \\
\Big(\sum_{i=1}^\eta|\lambda_{i,1}|^2 \int_0^\infty \frac 1
{s^{\kappa+1}} \D \mu_i(s)\Big)^{-1/2} & \text{for } v =
-\kappa+1 \text{ if } \kappa =1,
   \\
\prod_{j=0}^{\kappa-2} |\lambda_{-j}|
\Big(\sum_{i=1}^\eta|\lambda_{i,1}|^2 \int_0^\infty \frac 1
{s^{\kappa+1}} \D \mu_i(s)\Big)^{-1/2} & \text{for } v =
-\kappa+1 \text{ if } \kappa > 1,
   \end{cases}
   \end{align*}
is bounded (cf.\ Proposition \ref{ogrs}), subnormal and
extremal.
   \end{rem}
If a weighted shift $\slam$ on the directed tree
$\tcal_{\eta,0}$ is an isometry, then $\slam$ is subnormal
and $\|\slam^n e_0\|^2=1$ for all $n \in \zbb_+$. It turns
out that the reverse implication holds as well even if
subnormality is relaxed into hyponormality. According to
Example \ref{2nitki} and Proposition \ref{izometria} below,
the assumption $\|\slam^n e_0\|^2=1$, $n\in \zbb_+$, by
itself does not imply the isometricity of $\slam$. This
phenomenon is quite different comparing with the case of
unilateral classical weighted shifts in which the aforesaid
assumption always implies isometricity.
   \begin{pro}\label{izometria}
If $\slam \in \ogr{\ell^2(V_{\eta,0})}$ is a weighted
shift on $\tcal_{\eta,0}$ with positive weights
$\{\lambda_v\}_{v \in V_{\eta,0}^\circ}$, then the
following conditions are equivalent\/{\em :}
   \begin{enumerate}
   \item[(i)] $\slam$ is an isometry,
   \item[(ii)]  $\slam$ is hyponormal and
$\|\slam^n e_0\|^2=1$ for all $n \in \zbb_+$,
   \item[(iii)]
$\sum_{i=1}^\eta \lambda_{i,1}^2=1$ and
$\lambda_{i,j}=1$ for all $i \in J_\eta$ and $j=2, 3,
\ldots$
   \end{enumerate}
   \end{pro}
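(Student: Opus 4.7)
I will establish the cyclic implications (iii)$\Rightarrow$(i)$\Rightarrow$(ii)$\Rightarrow$(iii). The first two are short applications of earlier results; the substantive work sits in (ii)$\Rightarrow$(iii), where the identity $\|\slam^n e_0\|^2 = 1$ and the hyponormality inequalities from Theorem \ref{hyp} must be combined to force every $\lambda_{i,j}$ with $j \Ge 2$ to equal $1$.

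For (iii)$\Rightarrow$(i), I will invoke Corollary \ref{chariso}: the structure of $\tcal_{\eta,0}$ gives $\dzi 0 = \{(i,1) \colon i \in J_\eta\}$ and $\dzi{(i,j)} = \{(i,j+1)\}$, so (iii) is precisely the condition $\sum_{v \in \dzi u} \lambda_v^2 = 1$ at every vertex $u$. The implication (i)$\Rightarrow$(ii) is immediate: every isometry is hyponormal ($S^*S = I$ forces $SS^* \Le I = S^*S$) and preserves the norm of $e_0$, so $\|\slam^n e_0\| = 1$ for all $n \in \zbb_+$.

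For (ii)$\Rightarrow$(iii), I first translate the orbit hypothesis via Lemma \ref{pot}\,(ii): since $\dzin{n}{0} = \{(i,n)\colon i \in J_\eta\}$ and $\lambda_{0\mid(i,n)} = \lambda_{i,1}\lambda_{i,2}\cdots\lambda_{i,n}$, the assumption $\|\slam^n e_0\|^2 = 1$ reads
\begin{equation*}
\sum_{i=1}^\eta \lambda_{i,1}^2 \lambda_{i,2}^2 \cdots \lambda_{i,n}^2 = 1, \quad n \in \nbb. \qquad (\ast)
\end{equation*}
Because all weights are positive and every vertex of $\tcal_{\eta,0}$ has children, the condition \eqref{wkwhyp0} of Theorem \ref{hyp} holds vacuously, and \eqref{wkwhyp} becomes $\lambda_{i,j+1} \Le \lambda_{i,j+2}$ at $u = (i,j)$ (monotonicity of $(\lambda_{i,j})_{j\Ge 2}$ for each fixed $i$), and $\sum_{i=1}^\eta \lambda_{i,1}^2/\lambda_{i,2}^2 \Le 1$ at $u = 0$.

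Combining $(\ast)$ at $n=1,2$ with the hyponormality bound at $0$, I will sandwich via Cauchy--Schwarz
\begin{equation*}
1 = \sum_{i=1}^\eta \lambda_{i,1}^2 = \sum_{i=1}^\eta \frac{\lambda_{i,1}}{\lambda_{i,2}} \cdot \lambda_{i,1}\lambda_{i,2} \Le \Bigl(\sum_{i=1}^\eta \frac{\lambda_{i,1}^2}{\lambda_{i,2}^2}\Bigr)^{\!1/2} \Bigl(\sum_{i=1}^\eta \lambda_{i,1}^2 \lambda_{i,2}^2\Bigr)^{\!1/2} \Le 1,
\end{equation*}
forcing equality throughout. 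The equality case of Cauchy--Schwarz yields $\lambda_{i,1}/\lambda_{i,2} = c\,\lambda_{i,1}\lambda_{i,2}$ for a constant $c > 0$, hence $\lambda_{i,2}^2 = 1/c$ is independent of $i$; plugging back into the outer equalities pins $c = 1$, so $\lambda_{i,2} = 1$ for every $i \in J_\eta$. Monotonicity then gives $\lambda_{i,j} \Ge 1$ for all $j \Ge 2$, so each summand in $(\ast)$ satisfies $\lambda_{i,1}^2 \lambda_{i,2}^2 \cdots \lambda_{i,n}^2 \Ge \lambda_{i,1}^2$; comparing with $\sum_i \lambda_{i,1}^2 = 1$ leaves no slack, so $\lambda_{i,j} = 1$ for all $i \in J_\eta$ and $j = 2,\ldots,n$, and letting $n \to \infty$ establishes (iii). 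The main obstacle I anticipate is the base step $\lambda_{i,2} = 1$, where three pieces of data---two moment identities from $(\ast)$ and the hyponormality bound at the root---must be braided together via the equality case of Cauchy--Schwarz; once secured, the rest is a monotonicity-plus-equality extraction.
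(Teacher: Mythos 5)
Your proposal is correct, and the (iii)$\Rightarrow$(i) and (i)$\Rightarrow$(ii) steps coincide with the paper's (Corollary \ref{chariso}, respectively triviality). The substantive implication (ii)$\Rightarrow$(iii) is where you genuinely diverge. The paper uses hyponormality only through the monotonicity $\lambda_{i,2} \Le \lambda_{i,3} \Le \cdots$ along each branch, and then argues in two stages: if some $\lambda_{i,j} > 1$ with $j \Ge 2$, monotonicity forces $\prod_{k=1}^{j+n}\lambda_{i,k}^2 \Ge (\lambda_{i,j}^2)^{n+1}\prod_{k=1}^{j-1}\lambda_{i,k}^2 \to \infty$, contradicting $\|\slam^{j+n}e_0\|^2 = 1$; hence all $\lambda_{i,j} \Le 1$, and then the equality $\|\slam^{j-1}e_0\|^2 = \|\slam^{j}e_0\|^2$ leaves no room for any $\lambda_{i,j} < 1$. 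You instead also exploit the hyponormality inequality at the branching vertex, $\sum_i \lambda_{i,1}^2/\lambda_{i,2}^2 \Le 1$, and braid it with the $n=1,2$ moment identities via the equality case of Cauchy--Schwarz to pin down $\lambda_{i,2}=1$ first, after which monotonicity and the remaining moment identities finish the job exactly as you describe. Both arguments are sound and comparably elementary; the paper's has the mild advantage of never needing the condition \eqref{wkwhyp} at the branching vertex (only along the branches), while yours isolates more explicitly where the branching structure enters and avoids the growth-to-infinity contradiction. One cosmetic point: when you invoke equality in Cauchy--Schwarz for $\eta = \infty$ you are working in $\ell^2$, where the equality case still requires proportionality of the two positive sequences, so your deduction $\lambda_{i,2}^2 = 1/c$ independent of $i$ is legitimate there as well.
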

   \begin{proof}
(i)$\Rightarrow$(ii) Obvious.

(ii)$\Rightarrow$(iii) Fix an integer $j\Ge 2$.
Consider first the case when $\lambda_{i,j} > 1$
for some $i\in J_\eta$. Since $\slam$ is
hyponormal, we see that $\lambda_{i,2} \Le
\lambda_{i,3} \Le \lambda_{i,4} \Le \ldots$ (cf.\
Theorem \ref{hyp}). Hence, we have
   \begin{align*}
(\lambda_{i,j}^2)^{n+1} \Big(\prod_{k=1}^{j-1}
\lambda_{i,k}^2\Big) \Le \prod_{k=1}^{j+n} \lambda_{i,k}^2
\Le \sum_{l=1}^\eta \prod_{k=1}^{j+n} \lambda_{l,k}^2 =
\|\slam^{j+n} e_0\|^2=1, \quad n \Ge 0,
   \end{align*}
which contradicts $\lambda_{i,j} > 1$. Thus, we must
have $\lambda_{i,j} \Le 1$ for all $i\in J_\eta$. This
in turn implies that $\lambda_{i,j} = 1$ for $i \in
J_\eta$, because if $\lambda_{i,j} < 1$ for some $i
\in J_\eta$, then
   \begin{align*}
\|\slam^{j-1}e_0\|^2 = \sum_{l=1}^\eta \prod_{k=1}^{j-1}
\lambda_{l,k}^2 < \sum_{l=1}^\eta \prod_{k=1}^{j}
\lambda_{l,k}^2 = \|\slam^{j}e_0\|^2,
   \end{align*}
which is a contradiction.

(iii)$\Rightarrow$(i) Apply Corollary \ref{chariso}.
   \end{proof}
   \begin{pro} \label{isokappa}
If $\slam \in \ogr{\ell^2(V_{\eta,\kappa})}$ is a
weighted shift on $\tcal_{\eta,\kappa}$ with positive
weights $\{\lambda_v\}_{v \in V_{\eta,\kappa}^\circ}$
and $\kappa \Ge 1$, then the following conditions are
equivalent\/{\em :}
   \begin{enumerate}
   \item[(i)] $\slam$ is an isometry,
   \item[(ii)] $\|\slam^n e_0\|^2=1$ for all $n \in
\zbb_+$, and either $\slam$ is subnormal and extremal if
$\kappa < \infty$, or $\slam$ is subnormal if $\kappa =
\infty$,
   \item[(iii)]
$\sum_{i=1}^\eta \lambda_{i,1}^2=1$, $\lambda_{i,j}=1$ for
all $i \in J_\eta$ and   $j=2, 3, \ldots$, and $\lambda_{-k}
= 1$ for all integers $k$ such that $0 \Le k < \kappa$.
   \end{enumerate}
   \end{pro}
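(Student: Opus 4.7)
The plan is to prove (iii) $\Leftrightarrow$ (i) by direct inspection via Corollary \ref{chariso}, then (iii) $\Rightarrow$ (ii), and finally the main step (ii) $\Rightarrow$ (iii). For (iii) $\Leftrightarrow$ (i), Corollary \ref{chariso} characterises isometric weighted shifts by the equality $\sum_{v \in \dzi u} \lambda_v^2 = 1$ holding for every $u \in V_{\eta,\kappa}$. Running over the three types of vertexes of $\tcal_{\eta,\kappa}$ (namely $u = (i,j)$ with $j \Ge 1$, $u = -k$ with $1 \Le k \Le \kappa$, and $u = 0$) and using the descriptions of $\dzi u$ read off from \eqref{varkappa}, this single condition translates into precisely the three equalities that constitute (iii).

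For (iii) $\Rightarrow$ (ii), isometries are subnormal and preserve the norm, so $\|\slam^n e_0\|^2 = 1$ for every $n \in \zbb_+$. Under (iii) one verifies by direct computation that $\{\|\slam^n e_v\|^2\}_{n=0}^\infty$ is the constant sequence $\{1,1,\ldots\}$ for every vertex $v$; hence by \eqref{determ} the representing measure $\mu_v$ coincides with $\delta_1$, and in particular $\mu_v(\{0\})=0$. By the ``moreover'' clause of Lemma \ref{charsub2}, the strong consistency condition then holds at each vertex with children, which by Remark \ref{forwhile} is precisely the extremality asserted in (ii) when $\kappa < \infty$.

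The principal step is (ii) $\Rightarrow$ (iii). Since the Stieltjes moment sequence $\{\|\slam^n e_0\|^2\}_{n=0}^\infty = \{1,1,\ldots\}$ is determinate by \eqref{determ}, one has $\mu_0 = \delta_1$; combined with $\mu_0(\{0\}) = 0$, the ``moreover'' clause of Lemma \ref{charsub2} gives strong consistency at $u=0$, so
\begin{align*}
\delta_1(\sigma) \;=\; \mu_0(\sigma) \;=\; \sum_{i=1}^\eta \lambda_{i,1}^2 \int_\sigma \frac{1}{s}\,\D\mu_{(i,1)}(s), \qquad \sigma \in \borel{\rbb}.
\end{align*}
Testing on $\sigma = \{0\}$, and using the positivity of the weights $\lambda_{i,1}$ together with the convention $\frac{1}{0} = \infty$, forces $\mu_{(i,1)}(\{0\}) = 0$ for every $i$; testing on $\sigma \subseteq (0,\infty) \setminus \{1\}$ forces $\mu_{(i,1)}$ to vanish on such subsets. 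Consequently $\mu_{(i,1)}$ is a probability measure supported at $\{1\}$, so $\mu_{(i,1)} = \delta_1$ for every $i \in J_\eta$. Testing the displayed identity on $\sigma = \{1\}$ then yields $\sum_{i=1}^\eta \lambda_{i,1}^2 = 1$, and the moment relation $\prod_{j=2}^{n+1} \lambda_{i,j}^2 = \int s^n \D\mu_{(i,1)}(s) = 1$ forces $\lambda_{i,j} = 1$ for all $i \in J_\eta$ and $j \Ge 2$. Substituting $\mu_i = \delta_1$ into \eqref{widly1} (which is available for $k \in J_{\kappa-1}$ from extremality when $\kappa<\infty$, and for every $k \in \nbb$ via Corollary \ref{omega2}(iii) when $\kappa = \infty$), and, when $\kappa < \infty$, into \eqref{widly1'} with the equality granted by extremality, yields $\prod_{j=0}^{k-1} \lambda_{-j} = 1$ for every $k$ with $1 \Le k \Le \kappa$; an induction on $k$ then delivers $\lambda_{-k} = 1$ for every integer $k$ with $0 \Le k < \kappa$.

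The main obstacle is to handle uniformly the finite and infinite values of $\kappa$: when $\kappa = \infty$ the backward consistency identities \eqref{widly1} are furnished by subnormality alone via Corollary \ref{omega2}(iii), whereas when $\kappa < \infty$ the extremality hypothesis is essential in order to convert the inequality \eqref{widly1'} into an equality and thereby obtain $\lambda_{-\kappa+1} = 1$.
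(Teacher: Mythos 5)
Your proposal is correct, and its overall architecture matches the paper's: both reduce the work to (ii)$\Rightarrow$(iii) and handle the backward weights $\lambda_{-k}$ by iterating the strong consistency condition (with extremality supplying the equality at the root when $\kappa<\infty$, and Corollary \ref{omega2}\,(iii) supplying all of \eqref{widly1} when $\kappa=\infty$). The one place you diverge is the forward part, i.e.\ the derivation of $\sum_{i=1}^\eta\lambda_{i,1}^2=1$ and $\lambda_{i,j}=1$ for $j\Ge 2$: the paper simply applies Proposition \ref{izometria} to the subnormal (hence hyponormal) restriction $\slamr{0}$, which rests on the monotonicity of the weights along each branch coming from hyponormality, whereas you extract $\mu_{(i,1)}=\delta_1$ directly from the identity \eqref{muu} at $u=0$ (testing $\delta_1(\sigma)=\sum_i\lambda_{i,1}^2\int_\sigma s^{-1}\,\D\mu_{(i,1)}(s)$ on $\{0\}$, on Borel subsets of $(0,\infty)\setminus\{1\}$, and on $\{1\}$) and then read off the weights from the moments. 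Your route is slightly longer but stays entirely inside the Stieltjes-measure machinery of Lemma \ref{charsub2} and Corollary \ref{omega2}, and in particular never invokes the hyponormality argument underlying Proposition \ref{izometria}; the paper's route is shorter because that proposition was established immediately beforehand. One cosmetic point: the identities \eqref{widly1} for $k\in J_{\kappa-1}$ are already guaranteed by subnormality via Corollary \ref{omega2}\,(\mbox{ii-a}); extremality is needed only to upgrade \eqref{widly1'} to an equality, which is how you in fact use it.
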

   \begin{proof}
As in the previous proof, we verify that the only
implication which requires explanation is
(ii)$\Rightarrow$(iii). Applying Proposition
\ref{izometria} to the subnormal operator $\slamr{0}$, we
see that condition (iii) of this proposition holds. Since
evidently $\mu_0 = \delta_{1}$, we deduce from the strong
consistency condition at $u=-1$ (which is guaranteed either
by Lemma \ref{charsub2}, or by the extremality of $\slam$,
cf.\ Remark \ref{forwhile}) that $\lambda_0=1$. This in
turn implies that $\mu_{-1}=\delta_1$. Using this procedure
repeatedly completes the proof.
   \end{proof}
   \begin{exa}
In general, if $\kappa \in \nbb$, it is impossible to relax
extremal subnormality into subnormality in the condition
(ii) of Proposition \ref{isokappa} without affecting the
implication (ii)$\Rightarrow$(i). Indeed, using Proposition
\ref{isokappa}, we construct an isometric weighted shift
$\slam \in \ogr{\ell^2(V_{\eta,\kappa})}$ on
$\tcal_{\eta,\kappa}$ with positive weights
$\lambdab=\{\lambda_v\}_{v \in V_{\eta,\kappa}^\circ}$. In
particular, $\lambda_{-k} = 1$ for all integers $k$ such
that $0 \Le k < \kappa$. For a fixed $t \in (0,1)$, we
define the weighted shift $S_{\tilde\lambdab}\in
\ogr{\ell^2(V_{\eta,\kappa})}$ on $\tcal_{\eta,\kappa}$
with positive weights
$\tilde\lambdab=\{\tilde\lambda_v\}_{v \in
V_{\eta,\kappa}^\circ}$ by $\tilde \lambda_v = \lambda_v$
for every $v \neq -\kappa +1$, and $\tilde
\lambda_{-\kappa+1} =t$. Since $\delta_1$ is the
representing measure of the Stieltjes moment sequence
$\{\|\slam^n e_v\|^2\}_{n=0}^\infty$ for all $v \in
V_{\eta,\kappa}$, we infer from Corollary
\ref{omega2}\,(\mbox{ii-a}) that $S_{\tilde\lambdab}$ is
subnormal but not isometric.

It turns out that if $\kappa=\infty$, then the implication
(ii)$\Rightarrow$(i) of Proposition \ref{isokappa} is no
longer true if subnormality is replaced by hyponormality
(compare with Proposition \ref{izometria}). Indeed, using
Theorem \ref{hyp}, one can construct a non-isometric
hyponormal $\slam$ with isometric $\slamr{0}$.
   \end{exa}
   \subsection{\label{mod-sub}Modelling subnormality on
$\tcal_{\eta,\kappa}$}
   Corollary \ref{omega2} enables us to give a method
of constructing all possible bounded subnormal
weighted shifts on the directed tree
$\tcal_{\eta,\kappa}$ with nonzero weights (see
\eqref{varkappa} for the definition of
$\tcal_{\eta,\kappa}$). This is done in Procedure
\ref{twolin} below. By virtue of Theorem \ref{uni},
there is no loss of generality in assuming that
weighted shifts under consideration have positive
weights. Note also that, in view of Notation
\ref{ozn2}, the Borel probability measures
$\{\mu_i\}_{i=1}^\eta$ appearing in Corollary
\ref{omega2} are unique, concentrated on a common
finite subinterval of $[0,\infty)$ and
   \begin{align} \label{0<infty}
\int_0^\infty \frac 1 {s^k} \D \mu_i(s) < \infty, \quad
k\in J_{\kappa+1},\, i\in J_\eta.
   \end{align}
   \begin{opis} \label{twolin}
Let $\{\mu_i\}_{i=1}^\eta$ be a sequence of Borel
probability measures on $[0,\infty)$ satisfying
\eqref{0<infty} and concentrated on a common finite
subinterval of $[0,\infty)$. It is easily seen that
then
   \begin{align*} 
0 < \int_0^\infty s^n \D \mu_i(s) < \infty, \quad n \in
\zbb, \, n \Ge -(\kappa+1),\, i\in J_\eta.
   \end{align*}
Set $M= \sup_{i\in J_\eta} \sup \, \supp(\mu_i)$, where
$\supp(\mu_i)$ stands for the closed support of the measure
$\mu_i$. Define
   \begin{align}    \label{lamij}
\lambda_{i,j} = \sqrt{\frac{\int_0^\infty s^{j-1} \D
\mu_i(s)}{\int_0^\infty s^{j-2} \D \mu_i(s)}}, \quad j
= 2,3,4, \ldots,\; i \in J_\eta.
   \end{align}
If $\kappa=0$, we take a sequence $\{\lambda_{i,1}\}_{i\in
J_\eta}$ of positive real numbers satisfying \eqref{zgod}.
Then clearly the weights $\lambdab = \{\lambda_v\}_{v \in
V_{\eta,0}^\circ}$ just defined satisfy \eqref{zgod0} and
\eqref{zgod}. If $\kappa \Ge 1$, we take a sequence
$\{\lambda_{i,1}\}_{i\in J_\eta}$ of positive real numbers
solving \eqref{zgod'} and satisfying the following
condition
   \begin{align}   \label{fine}
\sum_{i=1}^\eta \lambda_{i,1}^2 \int_0^\infty \frac 1
{s^{k}} \D \mu_i(s) < \infty, \quad k \in J_{\kappa+1}.
   \end{align}
(The question of the existence of such a sequence is
discussed in Lemma \ref{discus} below.) The remaining
weights $\{\lambda_{-k}\}_{k=0}^{\kappa-1}$ are defined by
   \begin{align}  \label{lambda-k}
\lambda_{-k} & = \sqrt{\frac{\sum_{i=1}^{\eta}
\lambda_{i,1}^2 \int_0^\infty \frac 1 {s^{k+1}} \D
\mu_i(s)}{\sum_{i=1}^\eta \lambda_{i,1}^2 \int_0^\infty
\frac 1 {s^{k+2}} \D \mu_i(s)}}, \quad k \in \zbb_+,\, 0
\Le k < \kappa -1,
   \\
\lambda_{-\kappa + 1} & = \vartheta, \text{ where } 0 <
\vartheta \Le \sqrt{\frac{\sum_{i=1}^{\eta} \lambda_{i,1}^2
\int_0^\infty \frac 1 {s^{\kappa}} \D
\mu_i(s)}{\sum_{i=1}^\eta \lambda_{i,1}^2 \int_0^\infty
\frac 1 {s^{\kappa+1}} \D \mu_i(s)}} \label{lambda-k'}.
   \end{align}
One can easily verify that the full system of weights
$\lambdab = \{\lambda_v\}_{v \in
V_{\eta,\kappa}^\circ}$ satisfies \eqref{zgod0},
\eqref{zgod'}, \eqref{widly1} and \eqref{widly1'} (of
course, if $\kappa=1$, then \eqref{widly1} and
\eqref{lambda-k} do not appear; similarly, if
$\kappa=\infty$, then \eqref{widly1'} and
\eqref{lambda-k'} do not appear). Thus we are left
with proving the boundedness of the weighted shift
$\slam$. We will also give an explicit formula for the
norm of $\slam$.

Fix $i\in J_\eta$ and set $\alpha_{i,n} :=
\int_0^\infty s^{n} \D \mu_i(s)$ for $n\in \zbb_+$.
Applying the Cauchy-Schwarz inequality in
$L^2(\mu_i)$, we get
   \begin{align*}
\alpha_{i,n+1}^2 \Le \alpha_{i,n} \cdot
\alpha_{i,n+2}, \quad n \in \zbb_+.
   \end{align*}
Hence, the sequence
$\{\frac{\alpha_{i,n+1}}{\alpha_{i,n}}\}_{n=1}^{\infty}$
is monotonically increasing. This combined with
\cite[Exercise 23, page 74]{Rud} gives
   \begin{align} \label{eq1}
\sup_{j\Ge 1} \|\slam e_{i,j}\|^2=\sup_{j\Ge 2}
\lambda_{i,j}^2 \overset{\eqref{lamij}}= \sup_{n\Ge 0}
\frac{\alpha_{i,n+1}}{\alpha_{i,n}} =
\lim_{n\to\infty} \frac{\alpha_{i,n+1}}{\alpha_{i,n}}
= \sup \, \supp(\mu_i).
   \end{align}
Since $\mu_i$, $i \in J_\eta$, are probability measures, we
deduce from \eqref{zgod} that
   \begin{align} \label{eq2}
\|\slam e_0\|^2 = \sum_{i\in J_\eta} \lambda_{i,1}^2 \Le M
\sum_{i\in J_\eta} \lambda_{i,1}^2 \int_0^M \frac 1 s \D
\mu_i(s) \Le M.
   \end{align}
Together with Proposition \ref{ogrs}, this implies
that if $\kappa = 0$, then $\slam$ is bounded and
$\|\slam\|^2 = M$. The case of $\kappa \Ge 1$ needs a
little more effort. By the Cauchy-Schwarz inequality
for integrals and series, we get
   \allowdisplaybreaks
   \begin{multline*}
\left(\sum_{i=1}^\eta \lambda_{i,1}^2 \int_0^\infty
\frac 1 {s^{k+2}} \D \mu_i(s)\right)^2
   \\
\Le \left(\sum_{i=1}^\eta \left(\lambda_{i,1}
\sqrt{\int_0^\infty \frac 1 {s^{k+1}} \D
\mu_i(s)}\,\right) \left(\lambda_{i,1}
\sqrt{\int_0^\infty \frac 1 {s^{k+3}} \D
\mu_i(s)}\,\right)\right)^2
   \\
\Le \left(\sum_{i=1}^\eta \lambda_{i,1}^2
\int_0^\infty \frac 1 {s^{k+1}} \D \mu_i(s)\right)
\left(\sum_{i=1}^\eta \lambda_{i,1}^2 \int_0^\infty
\frac 1 {s^{k+3}} \D \mu_i(s)\right), \quad k \in
\zbb_+,
   \end{multline*}
which, in view of \eqref{lambda-k} and \eqref{lambda-k'},
means that
   \begin{align} \label{eq3}
   \lambda_{-(k+1)} \Le \lambda_{-k} = \|\slam
e_{-(k+1)}\|, \quad k \in \zbb_+, \, 0 \Le k < \kappa -1.
   \end{align}
Combining \eqref{eq1}, \eqref{eq2} and \eqref{eq3}
with Proposition \ref{ogrs} and Corollary
\ref{omega2}, we see that $\slam$ is a bounded
subnormal operator and $\|\slam\|^2 = \max\{M,
\lambda_0^2\}$. Applying Theorem \ref{hyp} to $u=-1$
and then \eqref{eq2}, we deduce that $\lambda_0^2 \Le
\|\slam e_0\|^2 \Le M$. Summarizing, we have proved
that ($\kappa$ is arbitrary)
   \begin{align}   \label{normkappa}
\|\slam\|^2 = \sup_{i\in J_\eta} \sup \, \supp(\mu_i).
   \end{align}
   \end{opis}
We now discuss the question of the existence of a sequence
$\{\lambda_{i,1}\}_{i\in J_\eta}$ of positive real numbers
satisfying \eqref{zgod'} and \eqref{fine}.
  \begin{lem} \label{discus}
Let $\{\mu_i\}_{i=1}^\eta$ be a sequence of Borel
probability measures concentrated on a common finite
subinterval $[0,M]$ of $\rbb$. Then a sequence
$\{\lambda_{i,1}\}_{i\in J_\eta} \subseteq (0,\infty)$
satisfying \eqref{zgod'} and \eqref{fine} exists if and
only if one of the following two disjunctive conditions
holds\/{\em :}
   \begin{enumerate}
   \item[(i)] $\int_0^\infty \frac 1 {s^{\kappa+1}} \D \mu_i(s)
< \infty$ for all $i\in J_\eta$, provided $\kappa <
\infty$,
   \item[(ii)] $\int_0^\infty \frac 1 {s^{k}} \D \mu_i(s)
< \infty$ for all $k\in \nbb$ and $i\in J_\eta$,
provided $\kappa = \infty$.
   \end{enumerate}
   \end{lem}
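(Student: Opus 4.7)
The necessity of (i) or (ii) is essentially immediate: if a positive sequence $\{\lambda_{i,1}\}_{i\in J_\eta}$ satisfies \eqref{zgod'} and \eqref{fine}, then each individual summand $\lambda_{i,1}^2 \int_0^\infty \frac{1}{s^k}\,\D\mu_i(s)$ appearing in \eqref{fine} must be finite. Since $\lambda_{i,1}>0$, this forces $\int_0^\infty \frac{1}{s^k}\,\D\mu_i(s)<\infty$ for every $i \in J_\eta$ and every $k\in J_{\kappa+1}$. Taking $k=\kappa+1$ yields (i) when $\kappa<\infty$, while ranging $k$ over $\nbb = J_{\kappa+1}$ yields (ii) when $\kappa=\infty$.

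For the sufficiency, the first observation is that because each $\mu_i$ is supported in $[0,M]$, for $s\in(0,M]$ and $k\Le \kappa+1$ we have $\frac{1}{s^k}\Le \frac{M^{\kappa+1-k}}{s^{\kappa+1}}$, so assumption (i) (or (ii)) in fact guarantees $\int_0^\infty \frac{1}{s^k}\,\D\mu_i(s)<\infty$ for all $i\in J_\eta$ and all $k\in J_{\kappa+1}$. In particular $\mu_i(\{0\})=0$ (by the convention $\frac{1}{0}=\infty$) and $\int_0^\infty \frac{1}{s}\,\D\mu_i(s)\Ge \frac{1}{M}>0$, so the quantities
\[
a_{i,k} \;:=\; \frac{\int_0^\infty s^{-k}\,\D\mu_i(s)}{\int_0^\infty s^{-1}\,\D\mu_i(s)}, \qquad i\in J_\eta,\; k\in J_{\kappa+1},
\]
are finite and positive. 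The idea is then to prescribe $\lambda_{i,1}^2 = c_i/\!\int_0^\infty \frac{1}{s}\,\D\mu_i(s)$ with $c_i>0$, $\sum_{i\in J_\eta} c_i = 1$, so that \eqref{zgod'} is automatic, and \eqref{fine} translates into the requirement $\sum_{i\in J_\eta} c_i\, a_{i,k} < \infty$ for each $k \in J_{\kappa+1}$.

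If $\eta<\infty$, every finite sum of finite quantities is finite, so the uniform choice $c_i=1/\eta$ works immediately. If $\eta=\infty$, one needs to choose the $c_i$ judiciously. When $\kappa<\infty$ the index set $J_{\kappa+1}$ is finite, so $A_i:=\max_{k\in J_{\kappa+1}} a_{i,k}$ is finite, and picking $c_i$ proportional to $2^{-i}/A_i$ (and then normalizing to $\sum c_i=1$) gives $\sum c_i a_{i,k}\Le \sum c_i A_i <\infty$. The principal technical step is the case $\eta=\infty=\kappa$, where $J_{\kappa+1}=\nbb$; here a diagonal choice is needed, e.g.\ choose $b_i>0$ with $b_i\max_{k\Le i} a_{i,k}\Le 2^{-i}$, set $c_i = b_i/\sum_j b_j$, and check that for each fixed $k\in\nbb$ the tail $\sum_{i\Ge k} c_i a_{i,k}$ is dominated by a geometric series while the head $\sum_{i<k} c_i a_{i,k}$ is a finite sum of finite numbers. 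The main (though mild) obstacle is this double-infinite case: one must carefully arrange one sequence $\{c_i\}$ that works simultaneously for all $k\in\nbb$, and the diagonal estimate above is precisely what makes that possible.
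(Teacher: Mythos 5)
Your proposal is correct and follows essentially the same route as the paper: necessity is immediate, the support bound $\supp(\mu_i)\subseteq[0,M]$ upgrades finiteness at order $\kappa+1$ to all orders in $J_{\kappa+1}$, and a $2^{-i}$-weighted diagonal choice handles the doubly infinite case $\eta=\kappa=\infty$, followed by a normalization to force equality in \eqref{zgod'}. The only cosmetic difference is that you normalize up front (via $\sum_i c_i=1$) whereas the paper first secures finiteness of all the sums $t_k$ and then rescales by $1/\sqrt{t_1}$.
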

   \begin{proof} Since the necessity is obvious,
we only need to consider the sufficiency.

Assume that $\kappa < \infty$. Since $\supp(\mu_i)
\subseteq [0,M]$, we see that $0 < \int_0^\infty \frac 1
{s^k} \D \mu_i(s) < \infty$ for all $k\in J_{\kappa +1}$
($i \in J_\eta$). This enables us to find a sequence
$\{\lambda_{i,1}\}_{i=1}^\eta$ of positive real numbers
such that $t_k:=\sum_{i=1}^\eta \lambda_{i,1}^2
\int_0^\infty \frac 1 {s^k} \D \mu_i(s)<\infty$ for all
$k\in J_{\kappa + 1}$. Replacing $\lambda_{i,1}$ by
$\lambda_{i,1}/\sqrt{t_1}$, we get the required sequence.

The same reasoning applies in the case of $\kappa =
\infty$; now the sequence $\{\lambda_{i,1}\}_{i=1}^\eta$
may be chosen as follows: $\lambda_{i,1}:=2^{-i}
(\max_{k\in J_{i}} \int_0^\infty \frac 1 {s^k} \D
\mu_i(s))^{-1/2}$ for all $i \in J_\eta$.
   \end{proof}
   If $\slam$ is a bounded subnormal weighted shift on
$\tcal_{\eta,\kappa}$ with positive weights, then by
Proposition \ref{desc} weights corresponding to
$\dzi{0}$ are always square summable. We now show that
for any square summable sequence $\{x_{i}\}_{i\in
J_\eta}$ of positive real numbers, there exists a
bounded subnormal weighted shift $\slam$ on
$\tcal_{\eta,\kappa}$ with positive weights $\lambdab
= \{\lambda_v\}_{v \in V_{\eta,\kappa}^\circ}$ such
that $\lambda_{i,1}=x_i$ for all $i\in J_\eta$. In
fact, $\slam$ can always be chosen to be a scalar
multiple of an isometry.
   \begin{exa} \label{sub2}
Let $\{x_i\}_{i \in J_\eta}$ be a square summable
sequence of positive real numbers. Take any sequence
$\{t_i\}_{i\in J_\eta}$ of positive real numbers such
that $0 < \inf_{i\in J_\eta} t_i$, $\sup_{i\in J_\eta}
t_i < \infty$ and $\sum_{i\in J_\eta} \frac {x_i^2}
{t_i} = 1$ (the simplest possible solution is the
constant one $t_i = \sum_{j\in J_\eta} x_j^2$).
Consider any family $\{\mu_i\}_{i\in J_\eta}$ of Borel
probability measures on $[0,\infty)$ such that $\frac
1 {t_i} = \int_0^\infty \frac 1 s \D \mu_i (s)$ for
all $i \in J_\eta$, $0 < \inf_{i \in J_\eta} \inf
\supp(\mu_i)$ and $M := \sup_{i\in J_\eta} \sup \,
\supp(\mu_i) < \infty$ (again this is always possible,
e.g., $\mu_i = \delta_{t_i}$ does the job). Applying
Procedure \ref{twolin} to $\lambda_{i,1}=x_i$, $i\in
J_\eta$, we obtain the required weighted shift $\slam$
on $\tcal_{\eta,\kappa}$; without loss of generality
we can also assume that $\slam$ is extremal in the
case when $\kappa \in \nbb$. Moreover, by
\eqref{normkappa}, we have $\|\slam\|^2= M$. In
particular, if $\mu_i = \delta_{t_i}$ for all $i\in
J_\eta$, then $\|\slam\|^2 = \sup_{i\in J_\eta} t_i$.
If additionally $t_i = \sum_{j\in J_\eta} x_j^2$ for
all $i\in J_\eta$, then $\|\slam\|^{-1}\slam$ is an
isometry (use Propositions \ref{izometria} and
\ref{isokappa}).
   \end{exa}
Let $S$ be a bounded subnormal unilateral classical
weighted shift with positive weights
$\{\alpha_n\}_{n=1}^\infty$. Then $\{\|S^n
e_0\|^2\}_{n=0}^\infty$ (cf.\ Remark \ref{re1-2}) is a
determinate Stieltjes moment sequence, whose unique
representing measure is called a {\em Berger measure} of
$S$. Given an integer $k\Ge 1$, we say that $S$ has a
\textit{subnormal $k$-step backward extension} if for some
positive scalars $x_{1}, \ldots, x_{k}$ the unilateral
classical weighted shift with weights $\{x_{k}, \ldots,
x_{1},\alpha_1, \alpha_2, \dots\}$ is subnormal. If $S$ has
a subnormal $k$-step backward extension for all $k\in
\nbb$, then $S$ is said to have a {\em subnormal
$\infty$-step backward extension}. The following elegant
characterization of subnormal $k$-step backward
extendibility is to be found in \cite[Corollary 6.2]{CL}
(see also \cite[Proposition 8]{cur} for the case $k=1$).
   \begin{align} \label{Lee-Cu}
   \begin{minipage}{30em}
If $k \in \nbb \cup \{\infty\}$, then $S$ has a subnormal
$k$-step backward extension if and only if $\int_0^\infty
\frac{1}{s^n} \D \mu(s) < \infty$ for all $n\in J_k$, where
$\mu$ is the Berger measure of $S$.
   \end{minipage}
   \end{align}
Note that according to Notation \ref{ozn2}, we have
$\mu=\mu_0$. Let us mention that there are subnormal
unilateral classical weighted shifts which have no
subnormal backward extensions. The famous Bergman
shift with weights
$\big\{\sqrt{\frac{n}{n+1}}\big\}_{n=1}^\infty$ is
among them. On the other hand, the Hardy shift with
weights $\{1,1,1, \ldots\}$ does have a subnormal
$\infty$-step backward extension (see
\cite{cur,HJL,CL} for more examples; see also
\cite{JLS} for subnormal backward extensions of
general subnormal operators and \cite{C-L-Y,C-Y,C-Y0}
for subnormal backward extensions of two-variable
weighted shifts). We now relate the subnormality of
weighted shifts on the directed tree
$\tcal_{\eta,\kappa}$ to that of unilateral classical
weighted shifts which have subnormal $(\kappa+1)$-step
backward extensions.
   \begin{pro}\label{2curto}
Let $\kappa, \eta \in \zbb_+ \sqcup \{\infty\}$ and let
$\eta \Ge 2$. If for every $i\in J_\eta$, $S_i$ is a
bounded unilateral classical weighted shift with positive
weights $\{\alpha_{i,n}\}_{n=1}^\infty$, then the following
two conditions are equivalent\/{\em :}
   \begin{enumerate}
   \item[(i)] there exists a system $\lambdab =
\{\lambda_{v}\}_{v \in V_{\eta,\kappa}^\circ}$ of
positive scalars such that the weighted shift $\slam$
on the directed tree $\tcal_{\eta,\kappa}$ is bounded
and subnormal, and
   \begin{align} \label{alla}
\alpha_{i,n}=\lambda_{i,n+1}, \quad n\in \nbb, \, i\in
J_\eta,
   \end{align}
   \item[(ii)]
$S_i$ has a subnormal $(\kappa+1)$-step backward
extension for every $i\in J_\eta$, and
$\sup_{i\in J_\eta} \|S_i\| < \infty$.
   \end{enumerate}
Moreover, if $\slam$ is as in {\em (i)}, then
$\|\slam\|=\sup_{i\in J_\eta} \|S_i\|$.
   \end{pro}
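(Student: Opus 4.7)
The plan is to translate both conditions into statements about Berger measures of the unilateral classical weighted shifts $S_i$ and to invoke the Curto--Lee--Yoon criterion \eqref{Lee-Cu}. The bridge is furnished by Corollary \ref{omega2} together with Procedure \ref{twolin}: the measures $\{\mu_i\}_{i=1}^\eta$ appearing in \eqref{zgod0} are necessarily the Berger measures of the $S_i$, since under \eqref{alla} one has
\begin{align*}
\int_0^\infty s^n \D\mu_i(s) = \Big|\prod_{j=2}^{n+1}\lambda_{i,j}\Big|^2 = \Big|\prod_{j=1}^{n}\alpha_{i,j}\Big|^2 = \|S_i^n e_0\|^2, \quad n \in \nbb.
\end{align*}

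For the implication (i)$\Rightarrow$(ii), I would apply the appropriate clause of Corollary \ref{omega2} depending on the value of $\kappa$. Because every $\lambda_{i,1}$ is strictly positive, the finite sums in \eqref{zgod}, \eqref{zgod'}, \eqref{widly1} and \eqref{widly1'} force each individual integral $\int_0^\infty s^{-k}\D\mu_i(s)$ to be finite: for $\kappa=0$ this yields finiteness only at $k=1$; for $1\Le\kappa<\infty$ one collects \eqref{zgod'}, \eqref{widly1} (over $k\in J_{\kappa-1}$) and \eqref{widly1'} to cover $k\in J_{\kappa+1}$; and for $\kappa=\infty$ one uses \eqref{zgod'} together with \eqref{widly1} for all $k\in\nbb$. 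By \eqref{Lee-Cu}, each $S_i$ then admits a subnormal $(\kappa+1)$-step backward extension. The boundedness $\sup_i\|S_i\|<\infty$ and in fact the norm equality $\sup_i\|S_i\|=\|\slam\|$ follow at once from $\|S_i\|^2=\sup\supp(\mu_i)$ and \eqref{normkappa}.

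For the implication (ii)$\Rightarrow$(i), let $\mu_i$ be the Berger measure of $S_i$; by \eqref{Lee-Cu}, the condition (i) or (ii) of Lemma \ref{discus} holds (depending on whether $\kappa$ is finite or infinite), and $\supp(\mu_i)\subseteq[0,\|S_i\|^2]\subseteq[0,M]$ with $M:=\sup_{i\in J_\eta}\|S_i\|^2<\infty$ by hypothesis. Thus the family $\{\mu_i\}$ satisfies the input requirements of Procedure \ref{twolin}. Lemma \ref{discus} supplies a sequence $\{\lambda_{i,1}\}_{i\in J_\eta}\subseteq(0,\infty)$ obeying \eqref{zgod'} (or \eqref{zgod} when $\kappa=0$) and \eqref{fine}; filling in the remaining weights via \eqref{lamij}, \eqref{lambda-k} and \eqref{lambda-k'} produces a bounded subnormal $\slam$ on $\tcal_{\eta,\kappa}$ of norm $\sqrt{M}=\sup_i\|S_i\|$ by \eqref{normkappa}. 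The requirement \eqref{alla} is automatic from \eqref{lamij}, because
\begin{align*}
\lambda_{i,n+1}^2 = \frac{\int_0^\infty s^n \D\mu_i(s)}{\int_0^\infty s^{n-1}\D\mu_i(s)} = \frac{\|S_i^n e_0\|^2}{\|S_i^{n-1} e_0\|^2} = \alpha_{i,n}^2, \quad n\in\nbb.
\end{align*}

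No single step is genuinely hard; the main subtlety is administrative, namely keeping the three regimes $\kappa=0$, $1\Le\kappa<\infty$ and $\kappa=\infty$ in lock-step so that the exponents in \eqref{Lee-Cu} match the set $J_{\kappa+1}$ of indices arising in the consistency inequalities. The second delicate point is the index shift $\alpha_{i,n}=\lambda_{i,n+1}$, which is what makes the Berger measure of $S_i$ coincide with the measure $\mu_i=\mu_{i,1}$ associated by Notation \ref{ozn2} to the vertex $(i,1)$ rather than to $(i,0)$ (and ensures that ``$(\kappa+1)$-step'' rather than ``$\kappa$-step'' is the correct count).
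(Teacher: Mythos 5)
Your proposal is correct and follows essentially the same route as the paper's proof: identifying $\mu_{i,1}$ with the Berger measure of $S_i$ via \eqref{alla}, extracting the finiteness of the negative moments of order $k\in J_{\kappa+1}$ from Corollary \ref{omega2} and feeding it into \eqref{Lee-Cu} for one direction, and using Lemma \ref{discus} together with Procedure \ref{twolin} (with the norm formula \eqref{normkappa}) for the converse. The only difference is cosmetic: you spell out which of \eqref{zgod}, \eqref{zgod'}, \eqref{widly1}, \eqref{widly1'} supplies which exponent, whereas the paper cites \eqref{0<infty} wholesale.
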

   \begin{proof}
(i)$\Rightarrow$(ii) It follows from \eqref{alla} that
$\slamr{i,1} = \slam|_{\ell^2(\des{i,1})}$ is
unitarily equivalent to $S_i$ for all $i\in J_\eta$.
Hence $\sup_{i\in J_\eta} \|S_i\| \Le \|\slam\|$ and
$\|\slam^n e_{i,1}\|^2 = \|S_i^n e_0\|^2$ for all $n
\in \zbb_+$, which implies that $\mu_{i,1}$ is the
Berger measure of $S_i$. Owing to Corollary
\ref{omega2}, $\int_0^\infty \frac 1 {s^k} \D
\mu_{i,1}(s) < \infty$ for all $k \in J_{\kappa+1}$.
By \eqref{Lee-Cu}, (ii) holds.

(ii)$\Rightarrow$(i) Let $\mu_i$ be the Berger measure
of $S_i$. Since $\sup \supp(\mu_i) = \|S_i\|^2$, we
deduce that the Borel probability measures $\mu_i$,
$i\in J_\eta$, are concentrated on the common finite
interval $[0,M]$ with $M:=\sup_{j\in J_\eta}
\|S_j\|^2$. As $S_i$ has a subnormal $(\kappa+1)$-step
backward extension, we infer from \eqref{Lee-Cu} that
$\int_0^\infty \frac 1 {s^k} \D \mu_{i}(s) < \infty$
for all $k \in J_{\kappa+1}$. Applying Lemma
\ref{discus} and Procedure \ref{twolin}, we find a
bounded subnormal weighted shift $\slam$ on
$\tcal_{\eta,\kappa}$ with positive weights $\lambdab
= \{\lambda_{v}\}_{v \in V_{\eta,\kappa}^\circ}$ which
satisfy \eqref{lamij}. Hence
$\alpha_{i,n}=\lambda_{i,n+1}$ for all $n\in \nbb$ and
$i\in J_\eta$. Employing \eqref{normkappa} completes
the proof.
   \end{proof}
We now illustrate the question of extendibility
discussed in Proposition \ref{maxsub}.
   \begin{exa} \label{extend}
Fix parameters $\eta,\eta^\prime,\iota,\kappa,\kappa^\prime
\in \zbb_+ \sqcup \{\infty\}$ such that $2 \Le \eta <
\eta^\prime$ and $1 \Le \iota \Le \kappa \Le
\kappa^\prime$. It is evident that $\tcal_{\eta,\kappa}$
can be regarded as a proper subtree of the directed tree
$\tcal_{\eta^\prime,\kappa^\prime}$. Denote by
$\tcal_{1,\iota}$ the proper subtree of the directed tree
$\tcal_{\eta,\kappa}$ with $V_{1,\iota}:= \big\{-k\colon
k\in J_\iota\big\} \sqcup \{0\} \sqcup \big\{(1,j)\colon
j\in \nbb\big\}$. Clearly, the directed tree
$\tcal_{1,\iota}$ can be identified with $\zbb_+$ when
$\iota < \infty$, or with $\zbb$ if $\iota = \infty$. It is
easily seen that each pair $(\tcal,\hat \tcal) \in
\{(\tcal_{1,\iota}, \tcal_{\eta,\kappa}),
(\tcal_{\eta,\kappa}, \tcal_{\eta^\prime,\kappa^\prime})\}$
satisfies the assumptions of Proposition \ref{maxsub} with
$w=0$. As a consequence, there are no bounded subnormal
weighted shifts $\slam$ and $S_{\hat \lambdab}$ on $\tcal$
and $\hat\tcal$ with nonzero weights
$\lambdab=\{\lambda_u\}_{u \in V^\circ}$ and $\hat
\lambdab=\{\hat \lambda_u\}_{u \in \hat V^\circ}$,
respectively, such that $\lambdab \subseteq \hat \lambdab$.
Of course, in the case of the pairs $(\tcal,\hat \tcal) \in
\{(\tcal_{1,\kappa},\tcal_{1,\kappa^\prime}),
(\tcal_{\eta,\kappa},\tcal_{\eta,\kappa^\prime})\}$ there
always exist bounded subnormal weighted shifts $\slam$ and
$\slamh$ with nonzero weights such that $\lambdab \subseteq
\hat \lambdab$. Isometric weighted shifts are the simplest
examples of such operators (cf.\ Corollary \ref{chariso}).

Consider now the pair $(\tcal,\hat \tcal) =
(\tcal_{\eta,0},\tcal_{\eta^\prime,0})$. We construct
subnormal weight\-ed shifts $\slam \in
\ogr{\ell^2(V)}$ and $S_{\hat\lambdab} \in
\ogr{\ell^2(\hat V)}$ on $\tcal$ and $\hat \tcal$ with
positive weights $\lambdab=\{\lambda_u\}_{u \in
V^\circ}$ and $\hat \lambdab=\{\hat \lambda_u\}_{u \in
\hat V^\circ}$, respectively, such that $\lambdab
\subseteq \hat \lambdab$. Set $\lambda_{i,j}=\hat
\lambda_{l,j}=1$ for $i \in J_\eta$, $l \in
J_{\eta^\prime}$ and $j=2,3,\ldots$ Let $\{\hat
\lambda_{i,1}\}_{i \in J_{\eta^\prime}}$ be a system
of positive real numbers such that $\sum_{i\in
J_{\eta^\prime}} \hat\lambda_{i,1}^2 \Le 1$. Set
$\lambda_{i,1} = \hat \lambda_{i,1}$ for $i\in
J_\eta$. It follows from Proposition \ref{ogrs} that
$\slam \in \ogr{\ell^2(V)}$ and $S_{\hat\lambdab} \in
\ogr{\ell^2(\hat V)}$. Since $\mu_{i,1}^{\tcal} =
\mu_{l,1}^{\hat\tcal} = \delta_1$ for all $i\in
J_\eta$ and $l\in J_{\eta^\prime}$, we see that
$\slam$ and $S_{\hat\lambdab}$ satisfy the consistency
condition \eqref{zgod} at $u=0$. As a consequence of
Corollary \ref{omega2}, $\slam$ and $S_{\hat\lambdab}$
are bounded subnormal weighted shifts with positive
weights such that $\lambdab \subseteq \hat \lambdab$.
This shows that Proposition \ref{maxsub} is no longer
true when $w=\ko{\tcal}$.
   \end{exa}
   \newpage
   \section{\label{ch7}Complete hyperexpansivity}
   \subsection{\label{s18}A general approach}
A sequence $\{a_n\}_{n=0}^\infty \subseteq \rbb$ is
said to be {\em completely alternating} if
   \begin{align*}
\sum_{j=0}^n (-1)^j \binom n j a_{m+j} \Le 0, \quad m \in
\zbb_+, \, n \in \nbb.
   \end{align*}
As an immediate consequence of the definition, we have
   \begin{align}  \label{ca+1}
\text{if a sequence $\{a_n\}_{n=0}^\infty$ is completely
alternating, then so is $\{a_{n+1}\}_{n=0}^\infty$.}
   \end{align}
By \cite[Proposition 4.6.12]{ber} (see also \cite[Remark
1]{ath}), a sequence $\{a_n\}_{n=0}^\infty \subseteq \rbb$
is completely alternating if and only if there exists a
positive Borel measure $\tau$ on the closed interval
$[0,1]$ such that
   \begin{align} \label{repch}
a_n = a_0 + \int_0^1 (1+ \ldots + s^{n-1}) \D \tau(s),
\quad n = 1,2, \ldots
   \end{align}
(from now on, we abbreviate $\int_{[0,1]}$ to $\int_0^1$).
The measure $\tau$ is unique (cf.\ \cite[Lemma 4.1]{jab})
and finite. Call it the {\em representing measure} of
$\{a_n\}_{n=0}^\infty$.
   \begin{lem} \label{monot-ca}
If a sequence $\{a_n\}_{n=0}^\infty \subseteq \rbb$ is
completely alternating and $a_0=1$, then $a_n \Ge 1$ for
all $n\in\zbb_+$ and the corresponding sequence of
quotients $\big\{\frac{a_{n+1}}{a_{n}}\big\}_{n=0}^\infty$
is monotonically decreasing.
   \end{lem}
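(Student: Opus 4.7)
The plan is to read everything off the integral representation \eqref{repch}. With $a_0 = 1$, the formula becomes
\[
a_n = 1 + \int_0^1 \frac{1-s^n}{1-s}\, \D \tau(s), \quad n \Ge 1,
\]
where $\tau$ is the representing measure on $[0,1]$ and the integrand is understood to equal $n$ at $s=1$. Since the integrand is nonnegative on $[0,1]$ and $\tau$ is a positive measure, we obtain $a_n \Ge 1$ for every $n\in\zbb_+$ immediately.

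Next I would introduce the first differences $c_n := a_{n+1}-a_n$. A direct subtraction using the representation above gives
\[
c_n = \int_0^1 s^n\, \D\tau(s), \quad n\in\zbb_+
\]
(this is the step that makes everything transparent). Two elementary facts about these numbers will do the job: first, $c_n \Ge 0$, so $\{a_n\}$ is monotonically increasing; second, because $s^n \Ge s^{n+1}$ on $[0,1]$, we have $c_n \Ge c_{n+1}$, which is a concavity statement for $\{a_n\}$.

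To conclude that $\{a_{n+1}/a_n\}$ is decreasing I will rewrite the concavity inequality $c_n \Ge c_{n+1}$ as
\[
2 a_{n+1} = a_n + c_n + a_{n+1} \Ge a_n + c_{n+1} + a_{n+1} = a_n + a_{n+2}.
\]
Since $a_n, a_{n+2} \Ge 1 > 0$, the AM--GM inequality yields
\[
a_{n+1} \Ge \frac{a_n+a_{n+2}}{2} \Ge \sqrt{a_n a_{n+2}},
\]
whence $a_{n+1}^2 \Ge a_n a_{n+2}$, which is precisely $\frac{a_{n+1}}{a_n} \Ge \frac{a_{n+2}}{a_{n+1}}$ for every $n\in\zbb_+$.

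There is really no hard step here: the whole proof is a two-line computation once the representation \eqref{repch} is differenced. The only point requiring a hint of care is to make sure $a_n$ is strictly positive (so that the ratios make sense and AM--GM applies with the correct direction), which is why I verify $a_n \Ge 1$ before touching the monotonicity claim.
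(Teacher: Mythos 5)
Your proof is correct. The paper does not spell out an argument at all (it merely says ``argue as in the proof of Athavale's Proposition 4''), and your chain --- read off the representation \eqref{repch}, observe that the first differences $a_{n+1}-a_n=\int_0^1 s^n\,\D\tau(s)$ are decreasing moments, hence the sequence is concave, and then pass from concavity to log-concavity $a_{n+1}^2\Ge a_na_{n+2}$ via AM--GM using $a_n\Ge 1>0$ --- is precisely the standard argument that the citation delegates to, so nothing is missing.
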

   \begin{proof}
Argue as in the proof of \cite[Proposition 4]{ath}.
   \end{proof}
The question of backward extendibility of completely
alternating sequences has the following solution
(compare with Lemma \ref{bext}).
   \begin{lem} \label{bext-ca}
Let $\{a_n\}_{n=0}^\infty$ be a completely alternating
sequence with the representing measure $\tau$. Set
$a_{-1}=1$. Then the following conditions are
equivalent{\em :}
   \begin{enumerate}
   \item[(i)] the sequence $\{a_{n-1}\}_{n=0}^\infty$
is completely alternating,
   \item[(ii)] $1 + \int_0^1 \frac 1 s \D \tau(s) \Le a_0$.
   \end{enumerate}
If {\em (i)} holds, then $\tau(\nul)=0$, and the positive
Borel measure $\varrho$ on $[0,1]$ defined by
   \begin{align}   \label{nu-ch}
\varrho(\sigma) = \int_\sigma \frac 1 s \D \tau(s) +
\Big(a_0 - 1 - \int_0^1 \frac 1 s \D \tau(s)\Big)
\delta_0(\sigma), \quad \sigma \in \borel{[0,1]},
   \end{align}
is the representing measure of $\{a_{n-1}\}_{n=0}^\infty$.
Moreover, $\rho(\nul)=0$ if and only if $1 + \int_0^1 \frac
1 s \D \tau(s) = a_0$.
   \end{lem}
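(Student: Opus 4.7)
The plan is to exploit the integral representation \eqref{repch} together with the uniqueness of the representing measure of a completely alternating sequence. Everything reduces to the relation that should hold between $\tau$ and the candidate representing measure $\varrho$ of $\{a_{n-1}\}_{n=0}^\infty$, namely $\D\tau(s) = s\,\D\varrho(s)$ on $[0,1]$. This is the exact analogue of the relation $\D\mu(s)=s\,\D\nu(s)$ used in Lemma~\ref{bext}, so I expect a parallel proof structure.

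For (ii)$\Rightarrow$(i), I would take $\varrho$ as defined in \eqref{nu-ch} and verify three things: (a) it is a positive Borel measure on $[0,1]$, (b) it has total mass $a_0-1$, and (c) plugging it into \eqref{repch} reproduces $\{a_{n-1}\}_{n=0}^\infty$. Item (a) follows because (ii) forces $\tau(\nul)=0$ (otherwise $\int_0^1 \frac{1}{s}\D\tau(s) = \infty$ by the footnote convention) and guarantees the $\delta_0$-coefficient $a_0 - 1 - \int_0^1\tfrac{1}{s}\D\tau(s)$ is nonnegative. For (b), a direct computation gives $\varrho([0,1]) = a_0-1$. For (c), I would split the integral $\int_0^1(1+s+\cdots+s^{n-1})\D\varrho(s)$ into the absolutely continuous and atomic parts of $\varrho$; the first part becomes $\int_0^1\tfrac{1}{s}\D\tau(s) + \int_0^1 (1+s+\cdots+s^{n-2})\D\tau(s)$ (reading the empty sum as $0$ when $n=1$), while the atomic part contributes the constant $a_0 - 1 - \int_0^1\tfrac{1}{s}\D\tau(s)$. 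Using \eqref{repch} for $a_{n-1}$, the sums telescope and give $a_{n-1}-1$, which after adding the leading $1$ yields $a_{n-1}$.

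For (i)$\Rightarrow$(ii), let $\varrho$ be a representing measure of $\{a_{n-1}\}_{n=0}^\infty$. Evaluating \eqref{repch} at $n=1$ forces $\varrho([0,1]) = a_0 - 1$. Subtracting the two representations $a_{n-1} = a_0 + \int_0^1(1+\cdots+s^{n-2})\D\tau(s)$ (for $n\ge 2$) and $a_{n-1} = 1 + \int_0^1 (1+\cdots+s^{n-1})\D\varrho(s)$, and absorbing the constants via $a_0-1=\varrho([0,1])$, yields
\begin{align*}
\int_0^1 (1+s+\cdots+s^{n-1})\D\tau(s) = \int_0^1 s(1+s+\cdots+s^{n-1})\D\varrho(s), \quad n\in\nbb.
\end{align*}
Taking successive differences gives $\int_0^1 s^k \D\tau(s) = \int_0^1 s^{k+1}\D\varrho(s)$ for all $k \in \zbb_+$. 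By Stone--Weierstrass and the Riesz representation theorem (both measures being finite on the compact $[0,1]$), this forces the identity of measures $\D\tau(s) = s\,\D\varrho(s)$; in particular $\tau(\nul) = 0$ and
\begin{align*}
\int_0^1 \frac{1}{s}\D\tau(s) = \varrho((0,1]) \Le \varrho([0,1]) = a_0-1,
\end{align*}
which is (ii). Moreover, $\varrho|_{(0,1]}(\sigma) = \int_\sigma \tfrac{1}{s}\D\tau(s)$ and $\varrho(\nul) = a_0-1-\int_0^1\tfrac{1}{s}\D\tau(s)$, so $\varrho$ coincides with the measure in \eqref{nu-ch} and $\varrho(\nul)=0$ if and only if equality holds in (ii).

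The only non-routine step is the passage from equality of polynomial moments to $\D\tau(s) = s\,\D\varrho(s)$, but this is standard for finite measures on a compact interval, so no serious obstacle is anticipated; the remainder is careful bookkeeping with telescoping sums and the handling of the atomic part at $0$.
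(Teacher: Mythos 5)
Your proposal is correct and follows essentially the same route as the paper: both directions hinge on the relation $\D\tau(s)=s\,\D\varrho(s)$, with (ii)$\Rightarrow$(i) verified by the same splitting of $\varrho$ into its $(0,1]$-part and the atom at $0$. The only cosmetic difference is in (i)$\Rightarrow$(ii): the paper defines $\tilde\tau$ by $\D\tilde\tau(s)=s\,\D\varrho(s)$, checks directly that it represents $\{a_n\}_{n=0}^\infty$, and invokes the cited uniqueness of representing measures of completely alternating sequences, whereas you re-derive that uniqueness by hand via moment identities and Stone--Weierstrass; both are sound.
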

   \begin{proof}
(i)$\Rightarrow$(ii) Let $\varrho$ be a representing
measure of $\{a_{n-1}\}_{n=0}^\infty$. Then
   \begin{align} \label{a0}
a_0 = a_{1-1} = a_{-1} + \int_0^1 1 \D \varrho(s) = 1 +
\varrho([0,1]).
   \end{align}
Define the positive Borel measure $\tilde\tau$ on $[0,1]$
by $\D \tilde\tau(s) = s \D \varrho (s)$. Then, we have
   \begin{align*}
a_n = a_{(n+1)-1} & \hspace{1.5ex} = a_{-1} + \int_0^1 (1+
\ldots + s^{n}) \D \varrho(s)
   \\
& \hspace{1.5ex} = 1 + \varrho([0,1]) + \int_0^1 (1+ \ldots
+ s^{n-1}) s \D \varrho(s)
   \\
& \overset{\eqref{a0}} = a_0 + \int_0^1 (1+ \ldots +
s^{n-1}) \D \tilde\tau(s), \quad n = 1, 2, \ldots
   \end{align*}
By the uniqueness of the representing measure, we have
$\tilde \tau=\tau$. This implies that $\tau(\{0\})=0$,
and consequently
   \begin{align*}
a_0 \overset{\eqref{a0}} = 1 + \rho(\nul) + \rho((0,1]) = 1
+ \rho(\nul) + \int_0^1 \frac 1 s \D \tau(s).
   \end{align*}
Hence (ii) holds.

(ii)$\Rightarrow$(i) Define the positive Borel measure
$\varrho$ on $[0,1]$ by \eqref{nu-ch}. Then
   \begin{align*}
a_{-1} & + \int_0^1 (1+ \ldots + s^{n-1}) \D \varrho(s)
   \\
& = 1 + \int_0^1 \frac {1+ \ldots + s^{n-1}} s \D \tau(s) +
\Big(a_0 - 1 - \int_0^1 \frac 1 s \D \tau(s)\Big)
   \\
& = 1 + \int_0^1 \frac 1 s \D \tau(s) + \int_0^1 (1+ \ldots
+ s^{n-2}) \D \tau(s) + \Big(a_0 - 1 - \int_0^1 \frac 1 s
\D \tau(s)\Big)
   \\
&= a_0 + \int_0^1 (1+ \ldots + s^{n-2}) \D \tau(s) =
a_{n-1}, \quad n=2,3, \ldots
   \end{align*}
Since
   \begin{align*}
a_{-1} + \int_0^1 1 \D\rho(s) \overset{\eqref{nu-ch}} = 1 +
\int_0^1 \frac 1 s \D \tau(s) + \Big(a_0 - 1 - \int_0^1
\frac 1 s \D \tau(s)\Big) = a_0 = a_{1-1},
   \end{align*}
we deduce that the sequence
$\{a_{n-1}\}_{n=0}^\infty$ is completely
alternating with the representing measure
$\varrho$. This completes the proof.
   \end{proof}
We are now ready to recall the definition of our present
object of study from \cite{ath}. Let $\hh$ be a complex
Hilbert space. An operator $A\in \ogr{\hh}$ is said to be
{\em completely hyperexpansive} if the sequence $\{\|A^n
h\|^2\}_{n=0}^\infty$ is completely alternating for every
$h \in \hh$. In view of the above discussion, $A$ is
completely hyperexpansive if and only if (substitute $A^m
h$ in place of $h$)
   \begin{align}  \label{ch1}
\sum_{j=0}^n (-1)^j \binom n j\|A^jh\|^2 \Le 0, \quad n \in
\nbb, \, h\in\hh,
   \end{align}
Completely hyperexpansive operators are antithetical
to contractive subnormal operators in the sense that
their defining properties and behavior are related to
the theory of completely alternating functions on
abelian semigroups (subnormality is connected with
positive definiteness). This is their great advantage
and one of the reasons why they attract attention of
researchers (see e.g.,
\cite{Alem,AgSt1,AgSt2,AgSt3,ath,a-s,s-a,a-r1,a-r2,jab,ath2,jab2,e-j-l}).

Note that if $A$ is a completely hyperexpansive
operator, then $\|Ah\|\Ge \|h\|$ for all $h \in \hh$
(apply \eqref{ch1} to $n=1$), which means that $A$ is
injective. In view of this, we can deduce from
Proposition \ref{dzisdesz} that a directed tree which
admits a completely hyperexpansive weighted shift must
be leafless.
   \begin{pro} \label{chinj}
If $\slam \in \ogr {\ell^2(V)}$ is a completely
hyperexpansive weighted shift on a directed tree $\tcal$
with weights $\lambdab = \{\lambda_v\}_{v \in V^\circ}$,
then $\tcal$ is leafless and $\sum_{v\in \dzi u}
|\lambda_v|^2 > 0$ for all $u \in V$.
   \end{pro}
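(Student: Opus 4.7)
The plan is to derive the statement directly from the defining inequality \eqref{ch1} for complete hyperexpansivity together with Proposition \ref{dzisdesz} which characterizes injectivity of a weighted shift on a directed tree.

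First, I would specialize \eqref{ch1} to $n=1$. This yields
\begin{align*}
\|h\|^2 - \|\slam h\|^2 \Le 0, \quad h \in \ell^2(V),
\end{align*}
so $\|\slam h\| \Ge \|h\|$ for every $h \in \ell^2(V)$. In particular, $\slam$ is expansive, hence injective: if $\slam h = 0$ then $\|h\| \Le \|\slam h\| = 0$.

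Second, I would invoke Proposition \ref{dzisdesz} in the direction (i)$\Rightarrow$(ii). Injectivity of $\slam$ forces $\tcal$ to be leafless and $\sum_{v \in \dzi u} |\lambda_v|^2 > 0$ for every $u \in V$, which is precisely the statement to be proved.

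There is essentially no obstacle here; the entire content has been set up in advance. The only minor point to be careful about is that \eqref{ch1} is stated for vectors in an arbitrary Hilbert space $\hh$, so I just need to apply it in $\hh = \ell^2(V)$ to get the expansiveness, after which Proposition \ref{dzisdesz} does the rest.
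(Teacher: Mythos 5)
Your proof is correct and coincides with the paper's own argument: the authors also apply \eqref{ch1} with $n=1$ to conclude $\|\slam h\| \Ge \|h\|$, hence injectivity, and then invoke Proposition \ref{dzisdesz} to obtain leaflessness and the positivity of $\sum_{v\in \dzi u} |\lambda_v|^2$. Nothing further is needed.
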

Complete hyperexpansivity of weighted shifts on directed
trees can be characterized as follows.
   \begin{thm} \label{charch}
If $\slam \in \ogr {\ell^2(V)}$ is a weighted shift on a
directed tree $\tcal$ with weights $\lambdab =
\{\lambda_v\}_{v \in V^\circ}$, then the following
conditions are equivalent{\em :}
   \begin{enumerate}
   \item[(i)] $\slam$ is completely hyperexpansive,
   \item[(ii)] $\big\{\sum\limits_{v \in \dzin{n}{u}}
|\lambda_{u\mid v}|^2\big\}_{n=0}^\infty$ is a
completely alternating sequence for all $u \in V,$
   \item[(iii)] $\{\|\slam^n e_u\|^2\}_{n=0}^\infty$
is a completely alternating sequence for all $u \in V,$
   \item[(iv)]
$\sum_{j=0}^n (-1)^j \binom n j\|\slam^{j} e_u\|^2 \Le 0$
for all $n \in \nbb$ and $u \in V.$
   \end{enumerate}
   \end{thm}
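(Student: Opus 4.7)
My plan is to mirror closely the proof of Theorem~\ref{charsub}, substituting ``completely alternating sequence'' for ``Stieltjes moment sequence'' and using the machinery of Lemma~\ref{pot} together with the shift-invariance property \eqref{ca+1} in place of \eqref{st+1}. The implications to prove are naturally arranged in the cycle (i)$\Rightarrow$(iii)$\Rightarrow$(iv)$\Rightarrow$(iii)$\Rightarrow$(i), together with the equivalence (ii)$\Leftrightarrow$(iii) and the trivial (iii)$\Rightarrow$(iv).

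First, the equivalence (ii)$\Leftrightarrow$(iii) is immediate from Lemma~\ref{pot}\,(ii), which identifies the two sequences in question. The implication (i)$\Rightarrow$(iii) is a direct specialization of the definition to the vectors $e_u \in \ell^2(V)$, and (iii)$\Rightarrow$(iv) is the $m=0$ instance of the complete-alternation inequality.

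The nontrivial content lies in (iv)$\Rightarrow$(iii) and (iii)$\Rightarrow$(i), and both rely on the same orthogonal decomposition. Concretely, by Lemma~\ref{pot}\,(i) the vectors $\{\slam^n e_u\}_{u\in V}$ have pairwise disjoint supports for each fixed $n$ (since the sets $\dzin{n}{u}$ are pairwise disjoint), hence they are orthogonal. Combining this with the identity $\slam^{m+j} e_u = \slam^j(\slam^m e_u) = \sum_{v \in \dzin{m}{u}} \lambda_{u\mid v}\, \slam^j e_v$ yields
\begin{align*}
\|\slam^{m+j} e_u\|^2 = \sum_{v \in \dzin{m}{u}} |\lambda_{u\mid v}|^2 \, \|\slam^{j} e_v\|^2, \quad u\in V,\, m, j \in \zbb_+.
\end{align*}
For (iv)$\Rightarrow$(iii), fix $u \in V$ and $m \in \zbb_+$; multiplying by $(-1)^j\binom{n}{j}$, summing over $j$, interchanging the finite sums, and invoking (iv) at each $v \in \dzin{m}{u}$ gives $\sum_{j=0}^n (-1)^j \binom{n}{j} \|\slam^{m+j} e_u\|^2 \le 0$, which is exactly the completely alternating inequality for $\{\|\slam^n e_u\|^2\}_{n=0}^\infty$.

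For (iii)$\Rightarrow$(i), the same orthogonality produces \eqref{sumo}, namely $\|\slam^n f\|^2 = \sum_{u\in V} |f(u)|^2 \|\slam^n e_u\|^2$ for every $f \in \ell^2(V)$ and every $n \in \zbb_+$. Consequently, for $m \in \zbb_+$ and $n \in \nbb$,
\begin{align*}
\sum_{j=0}^n (-1)^j \binom{n}{j} \|\slam^{m+j} f\|^2 = \sum_{u \in V} |f(u)|^2 \sum_{j=0}^n (-1)^j \binom{n}{j} \|\slam^{m+j} e_u\|^2 \le 0,
\end{align*}
where nonpositivity of the inner sum follows from (iii) applied to each $u \in V$. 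Hence $\{\|\slam^n f\|^2\}_{n=0}^\infty$ is completely alternating for every $f$, which is (i). The only potentially delicate point is the interchange of summations and the handling of possibly uncountable $V$; this is legitimate because the outer sum is finite (over $j \in \{0,\dots,n\}$) and the inner expressions are absolutely summable since $\slam$ is bounded, so no convergence issue arises. No step appears to require a genuinely new idea beyond what the subnormality proof already supplies.
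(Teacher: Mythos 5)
Your proof is correct and follows essentially the same route as the paper: the equivalence (ii)$\Leftrightarrow$(iii) via Lemma \ref{pot} and the orthogonal decomposition \eqref{sumo} to pass from the basis vectors $e_u$ to an arbitrary $f$. The only difference is organizational: the paper goes directly from (iv) to (i) by invoking the reformulation \eqref{ch1} of complete hyperexpansivity (which already encodes the reduction to $m=0$ via the substitution $h \mapsto A^m h$), whereas you first establish (iv)$\Rightarrow$(iii) through a separate orthogonality argument over $\dzin{m}{u}$ and then verify the full family of alternating inequalities for $\|\slam^{m+j}f\|^2$; both versions are valid.
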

   \begin{proof}
The implications (i)$\Rightarrow$(iii) and
(iii)$\Rightarrow$(iv) are evident. By Lemma \ref{pot}, the
conditions (ii) and (iii) are equivalent.

(iv)$\Rightarrow$(i) Take $f \in \ell^2(V)$. It follows
from \eqref{sumo} that
   \begin{multline*}
\sum_{j=0}^n (-1)^j \binom n j\|\slam^{j} f\|^2 =
\sum_{j=0}^n (-1)^j \binom n j \sum_{u \in V} |f(u)
|^2\|\slam^{j} e_u \|^2
   \\
= \sum_{u \in V} |f(u)|^2 \Big(\sum_{j=0}^n (-1)^j
\binom n j \|\slam^j e_u\|^2 \Big)
\overset{\mathrm{(iv)}} \Le 0
   \end{multline*}
for all $n \in \nbb$. This, together with \eqref{ch1},
completes the proof.
   \end{proof}
As shown in Corollary \ref{chcyc} below, the study of
complete hyperexpansivity of weighted shifts on
directed trees reduces to the case of trees with root.
This is very similar to what happens in the case of
subnormality. The proof of Corollary \ref{chcyc} is
essentially the same as that of Corollary \ref{subcyc}
(use Theorem \ref{charch} instead of Theorem
\ref{charsub}).
   \begin{cor} \label{chcyc}
Let $\slam \in \ogr {\ell^2(V)}$ be a weighted shift on a
directed tree $\tcal$ with weights $\lambdab =
\{\lambda_v\}_{v \in V^\circ}$. Suppose that $X$ is a
subset of $V$ such that $V=\bigcup_{x\in X} \des x$. Then
$\slam$ is completely hyperexpansive if and only if the
operator $\slamr x$ is completely hyperexpansive for every
$x \in X$ $($cf.\ Notation {\em \ref{poddrz}}$)$.
   \end{cor}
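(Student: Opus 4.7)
The plan is to mirror the proof of Corollary \ref{subcyc} verbatim, replacing Theorem \ref{charsub} with its complete-hyperexpansivity analogue Theorem \ref{charch}. The only structural input that is really used about the class of operators in question is that its defining property, when expressed as in Theorem \ref{charch}\,(iii), is an $e_u$-by-$e_u$ condition; everything else is tree-combinatorial and has already been set up.

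First, I would observe that $\ell^2(\des u)$ is an invariant subspace of $\slam$ for every $u \in V$. This is immediate from \eqref{dziinv}: if $f \in \ell^2(\des u)$ then $(\slam f)(v) = \lambda_v f(\pa v)$ vanishes off $\dzi{\des u} \subseteq \des u$. By the definition of $\slamr u$ in Notation \ref{poddrz}, one has $\slamr u = \slam|_{\ell^2(\des u)}$, so in particular $(\slamr u)^n e_v = \slam^n e_v$ for every $v \in \des u$ and every $n \in \zbb_+$. Consequently the scalar sequence $\{\|(\slamr u)^n e_v\|^2\}_{n=0}^\infty$ coincides with $\{\|\slam^n e_v\|^2\}_{n=0}^\infty$.

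Now apply Theorem \ref{charch} in the direction (i)$\Leftrightarrow$(iii). Since $\des u \subseteq V$, it is clear that if $\slam$ is completely hyperexpansive, then for each $u \in V$ the sequence $\{\|\slam^n e_v\|^2\}_{n=0}^\infty$ is completely alternating for every $v \in \des u$, hence $\slamr u$ is completely hyperexpansive. Conversely, if each $\slamr x$ ($x \in X$) is completely hyperexpansive, then for every $v \in V$ we use the hypothesis $V = \bigcup_{x\in X} \des x$ to pick some $x \in X$ with $v \in \des x$; the sequence $\{\|\slam^n e_v\|^2\}_{n=0}^\infty = \{\|(\slamr x)^n e_v\|^2\}_{n=0}^\infty$ is then completely alternating, so Theorem \ref{charch}\,(iii)$\Rightarrow$(i) gives the complete hyperexpansivity of $\slam$.

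There is really no obstacle here; the only point to double-check is that the inclusion $\slamr u \subseteq \slamr x$ (for $u \in \des x$) transfers the relevant scalar sequences correctly, but this is guaranteed by $\slamr x = \slam|_{\ell^2(\des x)}$ together with $\des u \subseteq \des x$ (which follows from \eqref{monot} and \eqref{inv}). Thus the entire argument runs in one short paragraph, exactly parallel to Corollary \ref{subcyc}, and needs no new ingredients beyond Theorem \ref{charch} and the invariance encoded in \eqref{dziinv}.
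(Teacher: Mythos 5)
Your proof is correct and follows essentially the same route as the paper, which simply notes that the argument of Corollary \ref{subcyc} (invariance of $\ell^2(\des u)$ via \eqref{dziinv}, the identification $\slamr u = \slam|_{\ell^2(\des u)}$, and the covering $V=\bigcup_{x\in X}\des x$) goes through verbatim with Theorem \ref{charch} in place of Theorem \ref{charsub}. No gaps.
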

We now formulate the counterpart of the small lemma (cf.\
Lemma \ref{charsub-1}) for completely hyperexpansive
weighted shifts on directed trees.
   \begin{lem} \label{charch-1}
Let $\slam \in \ogr {\ell^2(V)}$ be a weighted shift on a
directed tree $\tcal$ with weights $\lambdab =
\{\lambda_v\}_{v \in V^\circ}$ and let $u_0, u_1 \in V$ be
such that $\dzi{u_0} = \{u_1\}$. If the sequence
$\{\|\slam^n e_{u_0}\|^2\}_{n=0}^\infty$ is completely
alternating and $\lambda_{u_1}\neq 0$, then the sequence
$\{\|\slam^n e_{u_1}\|^2\}_{n=0}^\infty$ is completely
alternating.
   \end{lem}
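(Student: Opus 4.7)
The plan is to imitate exactly the strategy used for the subnormality analogue in Lemma \ref{charsub-1}, with the role of \eqref{st+1} played by the shift-invariance property \eqref{ca+1} of completely alternating sequences. The structural input is the same: since $\dzi{u_0}=\{u_1\}$, the equality \eqref{eu} forces $\slam e_{u_0}=\lambda_{u_1} e_{u_1}$, so by induction $\slam^{n+1}e_{u_0}=\lambda_{u_1}\slam^{n}e_{u_1}$ for every $n\in\zbb_+$, hence
\[
\|\slam^{n}e_{u_1}\|^2 \;=\; \frac{1}{|\lambda_{u_1}|^2}\,\|\slam^{n+1}e_{u_0}\|^2, \qquad n\in\zbb_+.
\]
The nonvanishing hypothesis $\lambda_{u_1}\neq 0$ is used precisely to make this rescaling legitimate.

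Next I would invoke \eqref{ca+1} applied to the hypothesized completely alternating sequence $a_n:=\|\slam^{n}e_{u_0}\|^2$: the shifted sequence $\{a_{n+1}\}_{n=0}^{\infty}$ is completely alternating as well. Finally, multiplication by the positive scalar $1/|\lambda_{u_1}|^2$ preserves complete alternation, because the defining inequalities
\[
\sum_{j=0}^{n}(-1)^{j}\binom{n}{j}\,a_{m+j}\;\Le\;0, \qquad m\in\zbb_+,\,n\in\nbb,
\]
are homogeneous in the sequence and the scalar is positive. Combining these two observations yields that $\{\|\slam^{n}e_{u_1}\|^2\}_{n=0}^{\infty}$ is completely alternating.

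There is no genuine obstacle here; the only point worth verifying carefully is the base value $\|\slam^{0}e_{u_1}\|^2=1$, which is consistent with $\tfrac{1}{|\lambda_{u_1}|^2}\|\slam e_{u_0}\|^2 = \tfrac{|\lambda_{u_1}|^2}{|\lambda_{u_1}|^2}=1$ thanks to $\dzi{u_0}=\{u_1\}$ and \eqref{eu}, so the rescaled shifted sequence actually equals $\{\|\slam^{n}e_{u_1}\|^2\}_{n=0}^{\infty}$ term-by-term rather than merely being proportional to it from some index onwards. Once this is noted, the proof is a two-line calculation followed by two appeals to the prerequisite material (\eqref{eu} and \eqref{ca+1}), mirroring the proof of Lemma \ref{charsub-1}.
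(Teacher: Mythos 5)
Your proof is correct and is essentially the paper's own argument: the identity $\|\slam^{n}e_{u_1}\|^2=|\lambda_{u_1}|^{-2}\|\slam^{n+1}e_{u_0}\|^2$ obtained from \eqref{eu}, followed by the shift-invariance \eqref{ca+1} and the (obvious) invariance of complete alternation under multiplication by a positive scalar. The extra check of the base term $n=0$ is a harmless elaboration; nothing differs in substance from the proof in the text.
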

   \begin{proof} Noticing that
   \begin{align*}
\|\slam^n e_{u_1}\|^2 \overset{\eqref{eu}}= \frac 1
{|\lambda_{u_1}|^2}\|\slam^{n+1} e_{u_0}\|^2, \quad n \in
\zbb_+,
   \end{align*}
and employing \eqref{ca+1}, we complete the proof.
   \end{proof}
It turns out that Lemma \ref{charch-1} is no longer true if
$\card{\dzi{u_0}} \Ge 2$. Indeed, the weighted shift
$\slam$ on $\tcal_{2,0}$ defined in Example \ref{2nitki}
has the property that the sequence
$\{\|\slam^ne_0\|^2\}_{n=0}^\infty = \{1,1, \ldots\}$ is
completely alternating (with the representing measure
$\tau=0$, cf.\ \eqref{repch}), and neither of the sequences
$\{\|\slam^ne_{1,1}\|^2\}_{n=0}^\infty = \{1, (\frac ba)^2,
1,1, \ldots\}$ and $\{\|\slam^ne_{2,1}\|^2\}_{n=0}^\infty =
\{1, (\frac ab)^2, 1,1, \ldots\}$ is completely alternating
(as neither of them is monotonically increasing).

Our next goal is to find relationships between
representing measures of completely alternating
sequences $\{\|\slam^n e_u\|^2\}_{n=0}^\infty$, $u \in
V$. Let us first fix the notation that is used
throughout (compare with Notation \ref{ozn2}).
   \begin{ozn} \label{ozn2-ca}
Let $\slam \in \ogr {\ell^2(V)}$ be a weighted shift
on a directed tree $\tcal$. If for some $u \in V$, the
sequence $\{\|\slam^n e_u\|^2\}_{n=0}^\infty$ is
completely alternating, then its unique representing
measure which is concentrated on $[0,1]$ will be
denoted by \idxx{$\tau_u$, $\tau_u^{\tcal}$}{77}
$\tau_u$ (or by $\tau_u^{\tcal}$ if we wish to make
clear the dependence of $\tau_u$ on $\tcal$).
   \end{ozn}
The result which follows is a counterpart of the big lemma
(cf.\ Lemma \ref{charsub2}) for completely hyperexpansive
weighted shifts on directed trees.
   \begin{lem} \label{charch2}
Let $\slam \in \ogr {\ell^2(V)}$ be a weighted shift on a
directed tree $\tcal$ with weights $\lambdab =
\{\lambda_v\}_{v \in V^\circ}$, and let $u \in V^\prime$ be
such that the sequence $\{\|\slam^n e_v\|^2\}_{n=0}^\infty$
is completely alternating for every $v \in \dzi u$. Then
the following conditions are equivalent\/{\em :}
   \begin{enumerate}
   \item[(i)] the sequence $\{\|\slam^n e_u\|^2\}_{n=0}^\infty$
is completely alternating,
   \item[(ii)]  $\slam$ satisfies
the consistency condition at $u$, i.e.,
   \begin{align} \label{consist-ch}
\sum_{v \in \dzi{u}} |\lambda_v|^2 \Ge 1 + \sum_{v \in
\dzi{u}} |\lambda_v|^2 \int_0^1 \frac 1 s\, \D \tau_v(s).
   \end{align}
   \end{enumerate}
   If {\em (i)} holds, then $\tau_v(\nul)=0$ for every $v
\in \dzi u$ such that $\lambda_v \neq 0$, and
   \begin{align} \label{muu-ch}
   \begin{aligned} \tau_u(\sigma)  & =
\sum_{v \in \dzi u} |\lambda_v|^2
\int_\sigma \frac 1 s \D \tau_v(s)
   \\
& \hspace{8ex}+ \Big(\sum_{v \in \dzi{u}} |\lambda_v|^2 - 1
- \sum_{v \in \dzi{u}} |\lambda_v|^2 \int_0^1 \frac 1 s\,
\D \tau_v(s)\Big) \delta_0(\sigma)
   \end{aligned}
   \end{align}
for every $\sigma \in \borel{[0,1]}$. Moreover,
$\tau_u(\nul)=0$ if and only if $\slam$ satisfies the
strong consistency condition at $u$, i.e.,
   \begin{align} \label{consist-ch'}
\sum_{v \in \dzi{u}} |\lambda_v|^2 = 1 + \sum_{v \in
\dzi{u}} |\lambda_v|^2 \int_0^1 \frac 1 s\, \D \tau_v(s).
   \end{align}
   \end{lem}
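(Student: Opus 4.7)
The plan is to imitate the proof of Lemma \ref{charsub2}, replacing Stieltjes moment sequences and Lemma \ref{bext} by completely alternating sequences and Lemma \ref{bext-ca}. The identity \eqref{sln+1}, whose derivation in the proof of Lemma \ref{charsub2} is purely algebraic and uses only \eqref{recfor2}, \eqref{dzinn2} and Lemma \ref{pot}\,(ii), gives us
\begin{align*}
\|\slam^{n+1} e_u\|^2 = \sum_{v \in \dzi u} |\lambda_v|^2 \, \|\slam^n e_v\|^2, \quad n \in \zbb_+,
\end{align*}
which is the only structural fact we need. I would set $a_n := \|\slam^{n+1} e_u\|^2$ for $n \in \zbb_+$ and $a_{-1}:=\|e_u\|^2 = 1$, so that the problem is recast as asking whether the sequence $\{a_{n-1}\}_{n=0}^\infty$ is completely alternating—exactly the setting of Lemma \ref{bext-ca}.

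First I would check that $\{a_n\}_{n=0}^\infty$ itself is completely alternating and identify its representing measure. Define the positive Borel measure $\tau$ on $[0,1]$ by $\tau(\sigma) = \sum_{v \in \dzi u} |\lambda_v|^2 \tau_v(\sigma)$. This is well-defined and finite because $\sum_{v \in \dzi u} |\lambda_v|^2 = \|\slam e_u\|^2 < \infty$ (Proposition \ref{desc}\,(iii)) and each $\tau_v$ is a finite measure on $[0,1]$, with $\tau_v([0,1]) = \|\slam e_v\|^2 - 1 \Le \|\slam\|^2 - 1$; a Lebesgue monotone convergence argument (as in \eqref{leb2}) yields
\begin{align*}
\int_0^1 \varphi \D \tau = \sum_{v \in \dzi u} |\lambda_v|^2 \int_0^1 \varphi \D \tau_v
\end{align*}
for every nonnegative Borel function $\varphi$ on $[0,1]$. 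Substituting the representation \eqref{repch} for $\|\slam^n e_v\|^2$ into the formula for $a_n$ and exchanging sum and integral yields
\begin{align*}
a_n = a_0 + \int_0^1(1 + s + \ldots + s^{n-1}) \D \tau(s), \quad n \Ge 1,
\end{align*}
so $\{a_n\}_{n=0}^\infty$ is completely alternating with representing measure $\tau$.

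Now I would apply Lemma \ref{bext-ca} directly to $\{a_n\}_{n=0}^\infty$ with the convention $a_{-1}=1$: the backward extension $\{a_{n-1}\}_{n=0}^\infty = \{\|\slam^n e_u\|^2\}_{n=0}^\infty$ is completely alternating if and only if
\begin{align*}
1 + \int_0^1 \frac{1}{s} \D \tau(s) \Le a_0 = \sum_{v \in \dzi u} |\lambda_v|^2,
\end{align*}
which, after rewriting $\int_0^1 \frac{1}{s} \D \tau(s) = \sum_{v \in \dzi u} |\lambda_v|^2 \int_0^1 \frac 1 s \D \tau_v(s)$ via monotone convergence, is precisely \eqref{consist-ch}. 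This establishes (i)$\Leftrightarrow$(ii).

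Finally, assuming (i), the formula \eqref{nu-ch} of Lemma \ref{bext-ca} gives the representing measure $\tau_u$ of $\{a_{n-1}\}_{n=0}^\infty$; substituting the definition of $\tau$ and once more exchanging sum and integral produces \eqref{muu-ch}. Lemma \ref{bext-ca} also yields $\tau(\nul)=0$, and since $\tau(\nul) = \sum_{v \in \dzi u} |\lambda_v|^2 \tau_v(\nul)$ is a sum of nonnegative terms, each must vanish; hence $\tau_v(\nul) = 0$ whenever $\lambda_v \neq 0$. The final ``moreover'' equivalence ($\tau_u(\nul) = 0 \Leftrightarrow$ \eqref{consist-ch'}) is then read off the point-mass coefficient at $0$ in \eqref{muu-ch}. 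The only delicate point worth double-checking is the Fubini/monotone-convergence manipulation when $\card{\dzi u}$ is infinite, but since all integrands are nonnegative this is routine and does not rely on any additional hypothesis beyond $\slam \in \ogr{\ell^2(V)}$.
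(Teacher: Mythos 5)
Your proposal is correct and follows essentially the same route as the paper's own proof: define $\tau=\sum_{v\in\dzi u}|\lambda_v|^2\tau_v$, use the identity $\|\slam^{n+1}e_u\|^2=\sum_{v\in\dzi u}|\lambda_v|^2\|\slam^n e_v\|^2$ from the proof of Lemma \ref{charsub2} to recognize $\{\|\slam^{n+1}e_u\|^2\}_{n=0}^\infty$ as completely alternating with representing measure $\tau$, and then invoke Lemma \ref{bext-ca}. The monotone-convergence justification for interchanging the sum over $\dzi u$ with the integrals is exactly the "version of \eqref{leb2}" the paper appeals to, so no gap remains.
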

   \begin{proof}
Define the positive Borel measure $\tau$ on $[0,1]$ by
   \begin{align*}
\tau(\sigma) = \sum_{v \in \dzi u} |\lambda_v|^2
\tau_v(\sigma), \quad \sigma \in \borel{[0,1]}.
   \end{align*}
Applying a version of \eqref{leb2} (with $\mu=\tau$ and
$\mu_v=\tau_v$), we see that
   \allowdisplaybreaks
   \begin{align*}
\|\slam^{n+1} e_u\|^2 & \overset{\eqref{sln+1}}= \sum_{v
\in \dzi u} |\lambda_v|^2 \|\slam^n e_v\|^2
   \\
& \hspace{2.2ex}= \sum_{v \in \dzi u} |\lambda_v|^2
(1+\int_0^1 (1+ \ldots + s^{n-1}) \D \tau_v(s))
   \\
& \hspace{.55ex} \overset{\eqref{eu}} = \|\slam
e_u\|^2 + \int_0^1 (1+ \ldots + s^{n-1}) \D \tau (s),
\quad n =1,2, \ldots,
   \end{align*}
which means that the sequence $\{\|\slam^{n+1}
e_u\|^2\}_{n=0}^\infty$ is completely alternating with the
representing measure $\tau$. Employing now a version of
\eqref{1/t} and Lemma \ref{bext-ca} with $a_n =
\|\slam^{n+1} e_u\|^2$, we see that the conditions (i) and
(ii) are equivalent. The formula \eqref{muu-ch} can be
inferred from \eqref{nu-ch} by applying a version of
\eqref{leb2}. The rest of the conclusion is clearly true.
   \end{proof}
Because of uniqueness of representing measures $\tau_v$,
there is strong hope that Lemma \ref{charch2} holds for
unbounded completely hyperexpansive weighted shifts on
directed trees (see \cite{j-s,jab,jab2} for an invitation
to unbounded completely hyperexpansive operators).

It is interesting to note that the direct counterpart of
Proposition \ref{maxsub} for completely hyperexpansive
weighted shifts on directed trees is no longer true (cf.\
Example \ref{ch-n0restr}). However, its weaker version
remains valid (cf.\ Proposition \ref{maxsub-ch}).
   \subsection{\label{k-step}Complete hyperexpansivity on
$\tcal_{\eta,\kappa}$} Now we confine our attention to
discussing the question of complete hyperexpansivity
of weighted shifts on the directed trees
$\tcal_{\eta,\kappa}$ (cf.\ \eqref{varkappa}). Before
formulating the counterpart of Theorem \ref{omega} for
completely hyperexpansive weighted shifts, we first
recall the definition of completely hyperexpansive
$k$-step backward extendibility of unilateral
classical weighted shifts (cf.\ \cite[Definition
4.1]{j-j-s}). Given an integer $k\Ge 1$, we say that a
unilateral classical weighted shift with positive
weights $\{\lambda_n\}_{n=1}^\infty$ (cf.\ Remark
\ref{re1-2}) has a {\it completely hyperexpansive
$k$-step backward extension} if for some positive
scalars $\lambda_{-k+1}, \ldots, \lambda_{0}$, the
unilateral classical weighted shift with weights
$\{\lambda_{-k+n}\}_{n=1}^\infty$ is completely
hyperexpansive (note that completely hyperexpansive
unilateral classical weighted shifts are automatically
bounded, cf.\ \cite[Proposition 6.2\,(i)]{j-s}). Each
unilateral classical weighted shift which has a
completely hyperexpansive $k$-step backward extension
is automatically completely hyperexpansive (cf.\
\cite[Section 4]{j-j-s})).

Recall that a unilateral classical weighted shift $S$
with positive weights $\{\lambda_n\}_{n=1}^\infty$ is
completely hyperexpansive if and only if the sequence
$\{\|S^ne_0\|^2\}_{n=0}^\infty$ is completely
alternating (cf.\ \cite[Proposition 3]{ath}); the
representing measure of
$\{\|S^ne_0\|^2\}_{n=0}^\infty$ will be called the
{\em representing measure} of $S$. The following
characterization of completely hyperexpansive $k$-step
backward extendibility was given in \cite[Theorem
4.2]{j-j-s}.
   \begin{align} \label{jab-ju-st}
   \begin{minipage}{28em}
If $k \in \nbb$, then $S$ has a completely hyperexpansive
$k$-step backward extension if and only if $S$ is
completely hyperexpansive and $\int_0^1 \sum_{l=1}^k
\frac{1}{s^l} \D \tau(s) < 1$, where $\tau$ is the
representing measure of $S$.
   \end{minipage}
   \end{align}
Below we adhere to Notation \ref{ozn2-ca}.
   \begin{thm}\label{omega-ch}
Suppose that $\tcal$ is a directed tree for which
there exists $\omega\in V$ such that
$\card{\dzi{\omega}}\Ge 2$ and $\card{\dzi{v}}=1$ for
every $v \in V \setminus \{\omega\}$. Let $\slam
\in\ogr{\ell^2(V)}$ be a weighted shift on the
directed tree $\tcal$ with nonzero weights $\lambdab =
\{\lambda_v\}_{v \in V^\circ}$. Then the following
assertions hold.
   \begin{enumerate}
   \item[(i)] If $\omega = \koo$, then
$\slam$ is completely hyperexpansive if and only if the
sequence $\{\|\slam^n e_v\|^2\}_{n=0}^\infty$ is completely
alternating for every $v \in \dzi {\omega}$, and $\slam$
satisfies the consistency condition at $\omega$, i.e.,
\eqref{consist-ch} is valid for $u=\omega$.
   \item[(ii)] If $\tcal$ has a root and $\omega \neq \koo$,
then $\slam$ is completely hyperexpansive if and only if
one of the following two equivalent conditions holds\/{\em
:}
   \begin{enumerate}
   \item[(\mbox{ii-a})] $\slamr
\omega$ is completely hyperexpansive,
\eqref{consist-ch'} is valid for $u=\omega$,
   \begin{align*}  
|\lambda_{\paa^{k-1}(\omega)}|^2 = 1 +
\Big|\prod_{j=0}^{k-1} \lambda_{\paa^j(\omega)}\Big|^2
\sum_{v \in \dzi \omega} |\lambda_v|^2 \int_0^1 \frac 1
{s^{k+1}} \D \tau_v(s)
   \end{align*}
for all $k\in J_{\kappa-1}$, and
   \begin{align*}  
|\lambda_{\paa^{\kappa-1}(\omega)}|^2 \Ge 1 +
\Big|\prod_{j=0}^{\kappa-1} \lambda_{\paa^j(\omega)}\Big|^2
\sum_{v \in \dzi \omega} |\lambda_v|^2 \int_0^1 \frac 1
{s^{\kappa+1}} \D \tau_v(s),
   \end{align*}
where $\kappa$ is a unique positive integer such that
$\paa^\kappa(\omega)=\koo$,
   \item[(\mbox{ii-b})] the sequences $\{\|\slam^n
e_{\koo}\|^2\}_{n=0}^\infty$ and $\{\|\slam^n
e_v\|^2\}_{n=0}^\infty$ are completely alternating for all
$v \in \dzi {\omega}$.
   \end{enumerate}
   \item[(iii)] If $\tcal$ is rootless,
then $\slam$ is completely hyperexpansive if and only if
$\slam$ is an isometry.
   \end{enumerate}
   \end{thm}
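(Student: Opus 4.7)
The proof will parallel the structure of Theorem \ref{omega} for subnormality, replacing Stieltjes moment sequences by completely alternating sequences and invoking Lemmas \ref{charch-1} (the small lemma) and \ref{charch2} (the big lemma) in place of Lemmas \ref{charsub-1} and \ref{charsub2}. Throughout, Theorem \ref{charch} serves as the central reduction device: $\slam$ is completely hyperexpansive if and only if $\{\|\slam^n e_u\|^2\}_{n=0}^\infty$ is completely alternating for every $u \in V$.

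For part (i), since $\omega = \koo$, Corollary \ref{przem} gives $V = \{\omega\} \sqcup (\des\omega \setminus \{\omega\})$ and every vertex of $\des\omega \setminus \{\omega\}$ has a unique child. In the forward direction, Theorem \ref{charch} forces complete alternation at every vertex, and Lemma \ref{charch2} at $u = \omega$ yields the consistency condition. In the reverse direction, the small lemma propagates complete alternation from $\dzi\omega$ throughout $\des\omega \setminus \{\omega\}$, while the big lemma at $\omega$ combined with the assumed consistency condition delivers complete alternation at $\omega$; Theorem \ref{charch} then closes the case.

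For part (ii), I would first establish (ii-b) $\Leftrightarrow$ CHE: the forward implication is immediate from Theorem \ref{charch}, and the reverse follows by using the small lemma to propagate complete alternation from $\dzi\omega$ through $\des\omega \setminus \{\omega\}$ and from $\koo$ downward along the chain $\koo, \paa^{\kappa-1}(\omega), \ldots, \paa(\omega), \omega$ (each vertex in this chain having exactly one child). For (ii-a) $\Leftrightarrow$ CHE I would mirror the subnormality proof of Theorem \ref{omega}(ii). In the forward direction, CHE furnishes complete alternation at every vertex; applying Lemma \ref{charch2} at $\paa^{k+1}(\omega)$ for $k = 0, \ldots, \kappa - 1$ (whose unique child is $\paa^k(\omega)$) and invoking its ``moreover'' clause forces $\tau_{\paa^k(\omega)}(\{0\}) = 0$, equivalently strong consistency at $\paa^k(\omega)$, whereas at the root $\koo = \paa^\kappa(\omega)$ only the weak consistency inequality is available. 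An induction based on formula \eqref{muu-ch} then computes $\tau_{\paa^{k-1}(\omega)}(\sigma) = \big|\prod_{j=0}^{k-2} \lambda_{\paa^j(\omega)}\big|^2 \sum_{v \in \dzi\omega} |\lambda_v|^2 \int_\sigma s^{-k}\,\D\tau_v(s)$, translating the strong consistency equations into the displayed equations of (ii-a). The reverse direction runs this iteration in the opposite order: starting from (ii-a), successive applications of Lemma \ref{charch2} at $\omega, \paa(\omega), \ldots, \koo$ establish complete alternation along the chain, and the small lemma fills in $\des\omega$.

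Part (iii) is where I expect the main obstacle. Since $\tcal$ is rootless, Corollary \ref{chcyc} applied to $X = \{\paa^k(\omega) : k \in \nbb\}$ (which satisfies $V = \bigcup_k \des{\paa^k(\omega)}$ by Proposition \ref{xdescor}(iii)) shows that CHE of $\slam$ is equivalent to CHE of $\slamr{\paa^k(\omega)}$ for every $k \in \nbb$. Each such restriction is a weighted shift on a directed tree of the type treated in part (ii) with $\kappa = k$; applying (ii) for every $k$ yields strong consistency at every $\paa^j(\omega)$, $j \geq 0$, together with the explicit formula for $\tau_{\paa^k(\omega)}$ in terms of $\{\tau_v\}_{v \in \dzi\omega}$. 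Setting $P_k = \big|\prod_{j=0}^{k-1} \lambda_{\paa^j(\omega)}\big|^2$, writing out strong consistency at $\paa^k(\omega)$ via this formula gives the telescoping identity $1/P_{k-1} - 1/P_k = \sum_v |\lambda_v|^2 \int_0^1 s^{-(k+1)}\,\D\tau_v(s)$; combining with strong consistency at $\omega$ and summing over $k = 1, \ldots, K$ yields $\sum_{v \in \dzi\omega} |\lambda_v|^2 \int_0^1 \sum_{l=1}^{K+1} s^{-l}\,\D\tau_v(s) = \sum_{v \in \dzi\omega} |\lambda_v|^2 - 1/P_K < \sum_{v \in \dzi\omega} |\lambda_v|^2$ for every $K \in \nbb$. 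As $K \to \infty$, any mass of some $\tau_v$ in $[0, 1)$ would make the integrand blow up while mass at $\{1\}$ would make the sum $\sum_l s^{-l}$ diverge; either scenario contradicts the uniform bound, so $\tau_v = 0$ for every $v \in \dzi\omega$. Feeding $\tau_v = 0$ back into the strong consistency chain yields $\tau_{\paa^k(\omega)} = 0$ and $|\lambda_{\paa^{k-1}(\omega)}| = 1$ for every $k \in \nbb$, together with $\sum_{v \in \dzi\omega} |\lambda_v|^2 = 1$; since the unilateral shift $\slamr v$ then has constant sequence $\{\|\slam^n e_v\|^2\} \equiv 1$, all its weights equal $1$, so Corollary \ref{chariso} concludes that $\slam$ is an isometry. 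The converse is trivial. The delicate step is the telescoping calculation and the $K \to \infty$ limit, which requires careful bookkeeping to extinguish both interior and boundary mass of the $\tau_v$.
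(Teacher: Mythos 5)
Your treatment of parts (i) and (ii) matches the paper's, which simply transplants the proof of Theorem \ref{omega}, using Lemmas \ref{charch-1} and \ref{charch2} and Theorem \ref{charch} in place of their subnormal counterparts together with the explicit formula $\tau_{\paa^{k}(\omega)}(\sigma)=\big|\prod_{j=0}^{k-1}\lambda_{\paa^j(\omega)}\big|^2\sum_{v\in\dzi\omega}|\lambda_v|^2\int_\sigma s^{-(k+1)}\,\D\tau_v(s)$; your induction is exactly this. For part (iii), however, you take a genuinely different and, as far as I can check, correct route. The paper does not pass through part (ii) at all: for each $k$ it forms the unilateral classical weighted shift $W_k$ with weights $\|\slam^{n}e_{\paa^k(\omega)}\|\cdot\|\slam^{n-1}e_{\paa^k(\omega)}\|^{-1}$, observes that $W_k$ has a completely hyperexpansive $l$-step backward extension for every $l$ (because $W_{k+l}$ is completely hyperexpansive and shares the same tail of quotients), invokes the rigidity result \cite[Corollary 4.6\,(i)]{j-j-s} to conclude that all weights of $W_k$ equal $1$, and then uses the consistency condition at $\omega$ to force $\tau_v=0$ before finishing with Proposition \ref{xdescor}\,(iv) and Corollary \ref{chariso}. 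You instead apply Corollary \ref{chcyc} and part (ii) to every restriction $\slamr{\paa^k(\omega)}$, upgrade each terminal inequality to an equality by passing from $\kappa=k$ to $\kappa=k+1$, and extract the telescoping identity $1/P_{k-1}-1/P_k=\sum_{v\in\dzi\omega}|\lambda_v|^2\int_0^1 s^{-(k+1)}\,\D\tau_v(s)$ with $P_k:=\big|\prod_{j=0}^{k-1}\lambda_{\paa^j(\omega)}\big|^2$; the resulting partial sums $\sum_v|\lambda_v|^2\int_0^1\sum_{l=1}^{K+1}s^{-l}\,\D\tau_v(s)$ stay below $\sum_v|\lambda_v|^2$, which is compatible with monotone convergence only if every $\tau_v=0$, since nonzero mass anywhere in $[0,1]$ makes the integrals diverge. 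Your argument is self-contained within the paper's consistency-condition machinery and avoids the external citation to \cite{j-j-s}, at the cost of routing through part (ii) and a slightly longer computation; the paper's argument is shorter given that citation and makes explicit the link to backward extendibility that it exploits again in Proposition \ref{2curto-ch}.
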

   \begin{proof}
The proofs of (i) and (ii) are essentially the same as the
corresponding parts of the proof of Theorem \ref{omega}. We
only have to use Lemmas \ref{charch-1} and \ref{charch2}
and Theorem \ref{charch} in place of Lemmas \ref{charsub-1}
and \ref{charsub2} and Theorem \ref{charsub}, respectively.
Moreover, in proving (ii) we need to exploit the explicit
formulas for representing measures $\tau_{\paa^k(\omega)}$,
$k\in J_{\kappa-1}$, which are given by
   \begin{align*}
\frac {\tau_{\paa^k(\omega)} (\sigma)} {|\prod_{j=0}^{k-1}
\lambda_{\paa^j(\omega)}|^2} = \sum_{v \in \dzi \omega}
|\lambda_v|^2 \int_{\sigma} \frac 1 {s^{k+1}}\, \D
\tau_v(s), \quad \sigma \in \borel{\rbb}
   \end{align*}
(the measure $\tau_\omega$ is given by $\tau_\omega(\sigma)
= \sum_{v \in \dzi \omega} |\lambda_v|^2 \int_\sigma \frac
1 s \D \tau_v(s)$ for $\sigma \in \borel{[0,1]}$).

(iii) Suppose that $\tcal$ is rootless and $\slam$ is
completely hyperexpansive. Then the sequences $\{\|\slam^n
e_{\paa^k(\omega)} \|^2\}_{n=0}^\infty$ and $\{\|\slam^n
e_v\|^2\}_{n=0}^\infty$ are completely alternating for all
$k \in \zbb_+$ and $v \in \dzi {\omega}$. Fix $k \in
\zbb_+$. Consider the unilateral classical weighted shift
$W_k$ with weights $\big\{\|\slam^{n}e_{\paa^k(\omega)}\|
\cdot
\|\slam^{n-1}e_{\paa^k(\omega)}\|^{-1}\big\}_{n=1}^\infty$.
By \cite[Proposition 3]{ath}, $W_k$ is completely
hyperexpansive. Since, by \eqref{eu},
   \begin{align*}
\slam^l e_{\paa^{k+l}(\omega)} =
\lambda_{\paa^{k+l-1}(\omega)} \cdots
\lambda_{\paa^{k}(\omega)} e_{\paa^{k}(\omega)}, \quad
l\in \nbb,
   \end{align*}
we deduce that
   \begin{align*}
\frac{\|\slam^{n+l}e_{\paa^{k+l}(\omega)}\|}
{\|\slam^{n-1+l}e_{\paa^{k+l}(\omega)}\|} =
\frac{\|\slam^{n}e_{\paa^k(\omega)}\|}
{\|\slam^{n-1}e_{\paa^k(\omega)}\|}, \quad n,l \Ge 1.
   \end{align*}
As $W_{k+l}$ is completely hyperexpansive, we see that
the unilateral classical weighted shift $W_k$ has a
completely hyperexpansive $l$-step backward extension
for all integers $l\Ge 1$. Hence, by \cite[Corollary
4.6\,(i)]{j-j-s}, the weights of $W_k$ are equal to
$1$, which, together with \eqref{eu}, implies that
   \begin{align} \label{con=1}
1 = \|\slam e_{\paa^k(\omega)}\|^2 =
   \begin{cases}
   |\lambda_{\paa^{k-1}(\omega)}|^2 & \text{ for } k \in
\nbb,
\\
   \sum_{v \in \dzi \omega} |\lambda_v|^2 & \text{ for }
k=0.
   \end{cases}
   \end{align}
Applying the consistency condition \eqref{consist-ch} at
$u=\omega$ and \eqref{con=1}, we deduce that $\tau_v=0$ for
all $v \in \dzi{\omega}$. Employing the integral
representation \eqref{repch}, we see that $\|\slam^n
e_v\|=1$ for all $n\in \zbb_+$ and $v\in \dzi{\omega}$.
Next, using an induction argument and \eqref{decom} (with
$u=\omega$), we infer that $\|\slam e_u\| = 1$ for all $u
\in \des {\omega}$. Finally, applying Proposition
\ref{xdescor}\,(iv) to $u=\omega$ and Corollary
\ref{chariso}, we conclude that $\slam$ is an isometry. The
reverse implication is obvious. This completes the proof.
   \end{proof}
   \begin{rem}
A careful look at the proof of Theorem \ref{omega-ch}
reveals that the characterization (\mbox{ii-a}) of complete
hyperexpansivity of bounded weighted shifts on $\tcal$ with
nonzero weights remains valid even if $\omega$ has only one
child.
   \end{rem}
As an immediate application of Theorem \ref{omega-ch} we
have the following result, being a counterpart of Corollary
\ref{omega2} for completely hyperexpansive weighted shifts.
As before, we adhere to notation $\lambda_{i,j}$ instead of
a more formal expression $\lambda_{(i,j)}$. Recall also
that $\eta,\kappa \in \zbb_+ \sqcup \{\infty\}$ and $\eta
\Ge 2$.
   \begin{cor}\label{omega2-ch}
Let $\slam \in\ogr{\ell^2(V_{\eta,\kappa})}$ be a
weighted shift on the directed tree
$\tcal_{\eta,\kappa}$ with nonzero weights
$\lambdab = \{\lambda_v\}_{v \in
V_{\eta,\kappa}^\circ}$. Then the following
assertions hold.
   \begin{enumerate}
   \item[(i)] If $\kappa=0$, then
$\slam$ is completely hyperexpansive if and only if there
exist positive Borel measures $\{\tau_i\}_{i=1}^\eta$ on
$[0,1]$ such that
   \allowdisplaybreaks
   \begin{gather}  \label{zgod0-ch}
1+\int_0^1 (1 + \ldots + s^{n-1}) \, \D \tau_i(s)
= \Big|\prod_{j=2}^{n+1}\lambda_{i,j}\Big|^2,
\quad n \in \nbb, \; i \in J_\eta,
   \\
\sum_{i=1}^\eta |\lambda_{i,1}|^2 \Ge 1 + \sum_{i=1}^\eta
|\lambda_{i,1}|^2 \int_0^1 \frac 1 s\, \D \tau_i(s).
\label{zgod-ch}
   \end{gather}
   \item[(ii)] If $0 < \kappa < \infty$,
then $\slam$ is completely hyperexpansive if and
only if one of the following two equivalent
conditions holds\/{\em :}
   \begin{enumerate}
   \item[(\mbox{ii-a})]  there exist positive Borel
measures $\{\tau_i\}_{i=1}^\eta$ on $[0,1]$ which satisfy
\eqref{zgod0-ch} and the following requirements{\em :}
   \begin{align}     \label{zgod-ch'}
&\sum_{i=1}^\eta |\lambda_{i,1}|^2 = 1 + \sum_{i=1}^\eta
|\lambda_{i,1}|^2 \int_0^1 \frac 1 s\, \D \tau_i(s).
   \\
\label{lam=1+} &|\lambda_{-(k-1)}|^2
 = 1 + \Big|\prod_{j=0}^{k-1} \lambda_{-j}\Big|^2
\sum_{i=1}^\eta|\lambda_{i,1}|^2 \int_0^1 \frac 1 {s^{k+1}}
\D \tau_i(s), \quad k \in J_{\kappa-1},
   \\
\label{lam=1+'} &|\lambda_{-(\kappa-1)}|^2 \Ge 1 +
\Big|\prod_{j=0}^{\kappa-1} \lambda_{-j}\Big|^2
\sum_{i=1}^\eta|\lambda_{i,1}|^2 \int_0^1 \frac 1
{s^{\kappa+1}} \D \tau_i(s);
   \end{align}
   \item[(\mbox{ii-b})] there exist
positive Borel measures $\{\tau_i\}_{i=1}^\eta$ and $\nu$
on $[0,1]$ which satisfy \eqref{zgod0-ch} and the equations
below
   \begin{align*}
1+\int_0^1 (1+ \ldots + s^{n-1}) \D \nu(s) =
   \begin{cases}
|\prod_{j=\kappa-n}^{\kappa-1}\lambda_{-j}|^2 &
\text{if } n \in J_\kappa,
   \\[1ex]
|\prod_{j=0}^{\kappa-1}\lambda_{-j}|^2 \big(\sum_{i=1}^\eta
|\prod_{j=1}^{n-\kappa} \lambda_{i,j}|^2\big) & \text{if }
n \in \nbb \setminus J_\kappa.
   \end{cases}
   \end{align*}
   \end{enumerate}
   \end{enumerate}
   \begin{enumerate}
   \item[(iii)] If $\kappa=\infty$,
then $\slam$ is completely hyperexpansive if and
only if $\slam$ is an isometry.
   \end{enumerate}
Moreover, if $\slam$ is completely hyperexpansive
and $\{\tau_i\}_{i=1}^\eta$ are positive Borel
measures on $[0,1]$ satisfying \eqref{zgod0-ch},
then $\tau_i = \tau_{i,1}$ for all $i \in
J_\eta$.
   \end{cor}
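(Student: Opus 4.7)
The plan is to deduce the corollary as a direct translation of Theorem \ref{omega-ch} to the concrete tree $\tcal_{\eta,\kappa}$, whose unique branching vertex is $\omega=0$ and whose ``spine'' to the root (if any) is $\paa^j(0)=-j$ for $j\in J_\kappa$. The translation needs two ingredients: an explicit formula for $\|\slam^n e_u\|^2$ at each $u\in V_{\eta,\kappa}$ coming from Lemma \ref{pot}(ii) and \eqref{eu}, and the integral representation \eqref{repch} of completely alternating sequences together with uniqueness of the representing measure (cf.\ Notation \ref{ozn2-ca}).

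First I would observe that for every $i\in J_\eta$ the subtree of $\tcal_{\eta,\kappa}$ rooted at $(i,1)$ is the chain $\{(i,j)\}_{j=1}^\infty$, hence Lemma \ref{pot}(ii) and \eqref{eu} give $\|\slam^n e_{(i,1)}\|^2=\prod_{j=2}^{n+1}|\lambda_{i,j}|^2$ (with the empty product equal to $1$). By \eqref{repch}, the sequence $\{\|\slam^n e_{(i,1)}\|^2\}_{n=0}^\infty$ is completely alternating if and only if there exists a positive Borel measure $\tau_i$ on $[0,1]$ satisfying \eqref{zgod0-ch}, and uniqueness of the representing measure then forces $\tau_i=\tau_{(i,1)}$, which proves the \emph{moreover} clause.

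For part (i) I apply Theorem \ref{omega-ch}(i) with $\omega=\koo=0$; since $\dzi{0}=\{(i,1)\colon i\in J_\eta\}$, the consistency condition \eqref{consist-ch} at $u=\omega$ becomes exactly \eqref{zgod-ch} after replacing $\tau_{(i,1)}$ by $\tau_i$. For part (ii) I apply Theorem \ref{omega-ch}(ii) with $\paa^j(\omega)=-j$; the integer $\kappa$ in the theorem coincides with the present $\kappa$, and the chain $\lambda_{\paa^j(\omega)}=\lambda_{-j}$, $|\lambda_{\paa^{k-1}(\omega)}|^2=|\lambda_{-(k-1)}|^2$ turns the conditions (ii-a) of the theorem into \eqref{zgod-ch'}, \eqref{lam=1+} and \eqref{lam=1+'}. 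To reach the form (ii-b), set $\nu:=\tau_\koo=\tau_{-\kappa}$; iterating \eqref{eu} along the spine gives $\|\slam^n e_{-\kappa}\|^2=\prod_{j=\kappa-n}^{\kappa-1}|\lambda_{-j}|^2$ for $1\Le n\Le\kappa$, while for $n>\kappa$ the identity $\slam^\kappa e_{-\kappa}=\big(\prod_{j=0}^{\kappa-1}\lambda_{-j}\big)e_0$ combined with $\|\slam^{n-\kappa}e_0\|^2=\sum_{i=1}^\eta\prod_{j=1}^{n-\kappa}|\lambda_{i,j}|^2$ yields the moment equations of (ii-b); so the condition ``$\{\|\slam^n e_\koo\|^2\}_{n=0}^\infty$ is completely alternating'' of Theorem \ref{omega-ch}(ii-b) is exactly the existence of such a $\nu$. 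Part (iii) is immediate from Theorem \ref{omega-ch}(iii).

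The only genuinely delicate point is the equivalence (ii-a)$\Leftrightarrow$(ii-b), where one must keep track of two measure families at once. The plan for this is: starting from (ii-b), complete alternation of $\{\|\slam^n e_\koo\|^2\}$ propagates down the spine to each $e_{-k}$, $k=0,\ldots,\kappa-1$, by repeated application of Lemma \ref{charch-1} (the cardinality assumption there is satisfied since every spine vertex has exactly one child), and then Lemma \ref{charch2} at each $u\in\{-\kappa+1,\dots,0\}$ produces the equalities \eqref{zgod-ch'} and \eqref{lam=1+} as strong consistency conditions, together with \eqref{lam=1+'} as a non-strict consistency at the root; conversely, starting from (ii-a) one reconstructs the representing measures $\tau_{-k}$ inductively via the explicit formula in the proof of Theorem \ref{omega-ch}(ii), and the resulting $\tau_{-\kappa}$ is the desired $\nu$. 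This bookkeeping, rather than any conceptual difficulty, is the main obstacle.
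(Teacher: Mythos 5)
Your proposal is correct and follows exactly the route the paper intends: the paper offers no separate argument for Corollary \ref{omega2-ch}, presenting it as ``an immediate application of Theorem \ref{omega-ch}'' (in parallel with Corollary \ref{omega2}, whose proof is just ``Apply Theorem \ref{omega}''), and your write-up simply fills in the same translation — the product formulas for $\|\slam^n e_{(i,1)}\|^2$ and $\|\slam^n e_{-\kappa}\|^2$, the identification $\paa^j(0)=-j$, and the integral representation \eqref{repch} with uniqueness of representing measures. The only superfluous part is your final paragraph: the equivalence of (ii-a) and (ii-b) is already asserted and proved inside Theorem \ref{omega-ch}(ii), so it need not be re-derived here.
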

   \subsection{\label{mod-ch}Modelling complete
hyperexpansivity on $\tcal_{\eta,\kappa}$}
   Our aim in this section is to find a model for
completely hyperexpansive weighted shifts (with
nonzero weights) on $\tcal_{\eta,\kappa}$ (cf.\
\eqref{varkappa}). In view of Theorem \ref{uni} and
Corollary \ref{omega2-ch}, we can confine ourselves to
discussing the case when $\kappa$ is finite and the
weights of weighted shifts under consideration are
positive. We begin by formulating a simple necessary
condition which has to be satisfied by representing
measures $\tau_{i,1}$ (see Notation \ref{ozn2-ca}).
   \begin{lem}
If $\kappa \in \zbb_+$ and $\slam \in
\ogr{\ell^2(V_{\eta,\kappa})}$ is a completely
hyperexpansive weighted shift on $\tcal_{\eta,\kappa}$ with
nonzero weights, then $\sup_{i \in J_\eta}
\tau_{i,1}([0,1]) < \infty$.
   \end{lem}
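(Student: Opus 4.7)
The plan is to extract $\tau_{i,1}([0,1])$ directly from the integral representation \eqref{repch} of the completely alternating sequence $\{\|\slam^n e_{i,1}\|^2\}_{n=0}^\infty$ by evaluating it at $n=1$. Since $\|\slam^0 e_{i,1}\|^2 = 1$, the formula \eqref{repch} with $a_n = \|\slam^n e_{i,1}\|^2$ gives
\begin{align*}
\|\slam e_{i,1}\|^2 = 1 + \int_0^1 1 \, \D \tau_{i,1}(s) = 1 + \tau_{i,1}([0,1]),
\end{align*}
so $\tau_{i,1}([0,1]) = \|\slam e_{i,1}\|^2 - 1$.

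To justify that the sequence $\{\|\slam^n e_{i,1}\|^2\}_{n=0}^\infty$ is indeed completely alternating (and hence that $\tau_{i,1}$ exists), I invoke Theorem \ref{charch}\,(iii), applied to the vertex $(i,1) \in V_{\eta,\kappa}$, which is guaranteed by the completely hyperexpansive assumption on $\slam$. Then, since $\slam$ is bounded, Proposition \ref{ogrs} (in particular \eqref{pnor}) gives $\|\slam e_{i,1}\| \le \|\slam\|$, independently of $i \in J_\eta$. Combining the two observations,
\begin{align*}
\sup_{i \in J_\eta} \tau_{i,1}([0,1]) = \sup_{i \in J_\eta} \big(\|\slam e_{i,1}\|^2 - 1\big) \Le \|\slam\|^2 - 1 < \infty,
\end{align*}
which is the desired conclusion. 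There is no real obstacle here: the argument is a direct one-line application of the representing-measure formula at $n=1$ combined with the boundedness of $\slam$; in fact one can also replace $\|\slam e_{i,1}\|^2$ by $|\lambda_{i,2}|^2$ via \eqref{eu}, since $(i,1)$ has the single child $(i,2)$ in $\tcal_{\eta,\kappa}$.
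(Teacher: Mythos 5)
Your proof is correct and is essentially identical to the paper's: both evaluate the integral representation \eqref{repch} at $n=1$ to obtain $\tau_{i,1}([0,1]) = \|\slam e_{i,1}\|^2 - 1 \Le \|\slam\|^2 - 1$. The extra justifications you supply (Theorem \ref{charch} for the existence of $\tau_{i,1}$, and \eqref{pnor} for the uniform bound) are exactly the facts the paper takes as read.
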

   \begin{proof}  By  \eqref{repch}, we have
$\tau_{i,1} ([0,1]) = \|\slam e_{i,1}\|^2 -1 \Le
\|\slam\|^2 - 1$ for all $i \in J_\eta$, which completes
the proof.
   \end{proof}
We now show that a completely hyperexpansive weighted shift
on the directed tree $\tcal_{\eta,\kappa}$ is determined,
in a sense, by its weights which correspond to $\dzi{0}$.
   \begin{lem} \label{omega3-ch}
Let $\eta \in \{2,3, \ldots\} \sqcup \{\infty\}$ and
$\kappa \in \zbb_+$. Suppose that $\boldsymbol \tau
=\{\tau_i\}_{i=1}^\eta$ is a sequence of positive Borel
measures on $[0,1]$ such that $\sup_{i \in J_\eta}
\tau_{i}([0,1]) < \infty$. Then the following assertions
hold.
   \begin{enumerate}
   \item[(i)] If $\slam \in \ogr{\ell^2(V_{\eta,\kappa})}$
is a completely hyperexpansive weighted shift on
$\tcal_{\eta,\kappa}$ with positive weights $\lambdab=
\{\lambda_v\}_{v \in V_{\eta,\kappa}^\circ}$ such that
$\tau_{i,1} = \tau_i$ for all $i \in J_\eta$, then the
system $\teb:=\{t_{i}\}_{i=1}^{\eta}$ with $t_{i}:=
\lambda_{i,1}$ satisfies the following conditions{\em: }
   \allowdisplaybreaks
   \begin{align}
&\sum_{i=1}^\eta t_{i}^2 < \infty, \label{ka+1}
      \\
&\label{ka+2}
   \begin{cases}
   \begin{aligned}
&\sum_{i=1}^\eta t_{i}^2 \Ge 1 + \sum_{i=1}^\eta t_{i}^2
\int_0^1 \frac 1 s \D \tau_i(s) \quad \text{ if } \kappa=0,
   \\
& \sum_{i=1}^\eta t_{i}^2 = 1 + \sum_{i=1}^\eta t_{i}^2
\int_0^1 \frac 1 s \D \tau_i(s) \quad \text{ if } \kappa >
0,
   \end{aligned}
   \end{cases}
   \\
& \sum_{i=1}^\eta t_{i}^2 > \sum_{i=1}^\eta t_{i}^2
\int_0^1 \Big(\frac 1 s + \ldots + \frac 1
{s^{\kappa+1}}\Big) \D \tau_i(s). \label{ka+3}
   \end{align}
   \item[(ii)] Let $\teb =
\{t_{i}\}_{i=1}^{\eta} \subseteq (0,\infty)$ satisfy
\eqref{ka+1}, \eqref{ka+2} and \eqref{ka+3}. Then
there exists a completely hyperexpansive weighted
shift $\slam \in \ogr{\ell^2(V_{\eta,\kappa})}$ on
$\tcal_{\eta,\kappa}$ with positive weights $\lambdab=
\{\lambda_v\}_{v \in V_{\eta,\kappa}^\circ}$ such that
$\lambda_{i,1}=t_i$ and $\tau_{i,1} = \tau_i$ for all
$i \in J_\eta$. If $\kappa=0$, $\slam$ is unique. If
$\kappa \Ge 1$, all the weights of $\slam$, except for
$\lambda_{-\kappa+1}$, are uniquely determined by
$\teb$ and $\boldsymbol \tau$; the weight
$\lambda_{-\kappa+1}$ can be chosen arbitrarily within
the interval
$\Big[\sqrt{\frac{\zeta_\kappa}{\zeta_{\kappa+1}}},\infty\Big)$,
where
   \begin{align} \label{5USD}
\zeta_k =\sum_{i=1}^\eta t_i^2 \Big(1 - \int_0^1
\sum_{j=1}^{k} \frac 1 {s^j} \, \D \tau_i(s)\Big), \quad k
\in J_{\kappa + 1}.
   \end{align}
Moreover, the norm of $\slam$ is given by
   \begin{align} \label{norm-ch}
\|\slam\|^2 =
   \begin{cases}
   \max\big\{\sum_{i=1}^\eta t_i^2, \sup_{i\in J_\eta}
(1+\tau_i([0,1]))\big\} & \text{for } \kappa=0,
   \\[1ex]
   \max\big\{\lambda_{-(\kappa-1)}^2, \sup_{i\in J_\eta}
(1+\tau_i([0,1]))\big\} & \text{for } \kappa \Ge 1.
   \end{cases}
   \end{align}
   \end{enumerate}
   \end{lem}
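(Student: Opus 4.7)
The plan is to deduce both halves from Corollary \ref{omega2-ch}, with an explicit recursion for the backward weights $\{\lambda_{-k}\}$ playing the pivotal role in both directions.

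\medskip

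\textbf{Part (i).} Here \eqref{ka+1} is immediate from \eqref{eu} and Proposition \ref{ogrs}: $\sum_{i} t_{i}^{2}=\|\slam e_{0}\|^{2}\Le \|\slam\|^{2}$. The condition \eqref{ka+2} is the (strong) consistency of $\slam$ at $u=0$ as guaranteed by Corollary \ref{omega2-ch} (\eqref{zgod-ch} when $\kappa=0$, \eqref{zgod-ch'} when $\kappa\Ge 1$). For \eqref{ka+3} with $\kappa=0$ it is an immediate consequence of \eqref{ka+2}. For $\kappa\Ge 1$ I set $\pi_{k}=\prod_{j=0}^{k-1}\lambda_{-j}^{2}$, rearrange \eqref{lam=1+} into the recursion $1/\pi_{k}=1/\pi_{k-1}-\sum_{i}t_{i}^{2}\int_{0}^{1}\frac{1}{s^{k+1}}\,\D\tau_{i}(s)$ and iterate (starting from $\pi_{0}=1$) to obtain $1/\pi_{\kappa-1}=1-\sum_{i}t_{i}^{2}\int_{0}^{1}\sum_{l=2}^{\kappa}\frac{1}{s^{l}}\,\D\tau_{i}(s)$. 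Inequality \eqref{lam=1+'} then rearranges into $(1-\pi_{\kappa-1}\sum_{i}t_{i}^{2}\int_{0}^{1}\frac{1}{s^{\kappa+1}}\,\D\tau_{i})\lambda_{-(\kappa-1)}^{2}\Ge 1$, and positivity of $\lambda_{-(\kappa-1)}$ forces the parenthesised factor to be strictly positive. Plugging in the closed form for $1/\pi_{\kappa-1}$ and adding $\sum_{i}t_{i}^{2}\int_{0}^{1}\frac{1}{s}\,\D\tau_{i}=\sum_{i}t_{i}^{2}-1$ (strong consistency) to both sides yields \eqref{ka+3}.

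\medskip

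\textbf{Part (ii), construction.} Conversely, given $\teb$ satisfying \eqref{ka+1}--\eqref{ka+3}, I build $\slam$ explicitly. For each $i\in J_{\eta}$ set $a_{i,0}=1$ and $a_{i,n}=1+\int_{0}^{1}(1+s+\cdots+s^{n-1})\,\D\tau_{i}$ for $n\Ge 1$, and define $\lambda_{i,1}=t_{i}$ and $\lambda_{i,j}=\sqrt{a_{i,j-1}/a_{i,j-2}}$ for $j\Ge 2$; this is arranged so that $\|\slam^{n}e_{(i,1)}\|^{2}=a_{i,n}$, whence $\tau_{i,1}=\tau_{i}$ and \eqref{zgod0-ch} holds. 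For $\kappa\Ge 1$ the quantities $\zeta_{k}$ defined in \eqref{5USD} satisfy $\zeta_{1}=1$ (by \eqref{zgod-ch'}) and the recursion $\zeta_{k+1}=\zeta_{k}-\sum_{i}t_{i}^{2}\int_{0}^{1}\frac{1}{s^{k+1}}\,\D\tau_{i}$, while \eqref{ka+3} combined with this recursion and strong consistency forces $\zeta_{k}>0$ for every $k\in J_{\kappa+1}$. Accordingly set $\lambda_{-(k-1)}=\sqrt{\zeta_{k}/\zeta_{k+1}}$ for $k\in J_{\kappa-1}$ and $\lambda_{-(\kappa-1)}=\vartheta$ for any $\vartheta\Ge\sqrt{\zeta_{\kappa}/\zeta_{\kappa+1}}$. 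Boundedness then follows from Proposition \ref{ogrs}: by Lemma \ref{monot-ca} the ratios $a_{i,j}/a_{i,j-1}$ are decreasing in $j$, so $\sup_{j\Ge 2}\lambda_{i,j}^{2}\Le\lambda_{i,2}^{2}=1+\tau_{i}([0,1])$, and with \eqref{ka+1} all the row sums $\sum_{v\in\dzit{\tcal_{\eta,\kappa}}{u}}|\lambda_{v}|^{2}$ are uniformly bounded.

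\medskip

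\textbf{Verification, uniqueness, norm.} That $\slam$ is completely hyperexpansive now follows from Corollary \ref{omega2-ch}: \eqref{zgod0-ch} is built into the construction, \eqref{zgod-ch'} is \eqref{ka+2}, and, via the identity $\pi_{k-1}=1/\zeta_{k}$, \eqref{lam=1+} collapses to the defining equality $\lambda_{-(k-1)}^{2}=\zeta_{k}/\zeta_{k+1}$ while \eqref{lam=1+'} collapses to $\vartheta^{2}\Ge\zeta_{\kappa}/\zeta_{\kappa+1}$. Uniqueness of all weights except $\lambda_{-\kappa+1}$ follows by running the construction backwards and invoking the uniqueness of representing measures of completely alternating sequences (Notation \ref{ozn2-ca}). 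For the norm \eqref{norm-ch}, Proposition \ref{ogrs} gives $\|\slam\|^{2}=\sup_{v}\|\slam e_{v}\|^{2}$; the leaves $(i,j)$ contribute $\sup_{i}(1+\tau_{i}([0,1]))$ by Lemma \ref{monot-ca}, the vertex $0$ contributes $\sum_{i}t_{i}^{2}$, and the backward vertices $-k$ contribute $\lambda_{-k+1}^{2}$. The main obstacle will be showing that $\{\lambda_{-k+1}^{2}\}_{k=1}^{\kappa}$ is non-decreasing in $k$ and dominates $\sum_{i}t_{i}^{2}$ (when $\kappa\Ge 1$); this reduces to a concavity-type relation for $\{\zeta_{k}\}$ coming from $1/s^{k}\Le 1/s^{k+1}$ on $(0,1]$, together with the elementary bound $x(2-x)\Le 1$ applied to $x=\sum_{i}t_{i}^{2}$, and collapses the list of candidate maxima to precisely the two terms in \eqref{norm-ch}.
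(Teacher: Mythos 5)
Your argument is correct and follows the paper's own proof in all essentials: both directions are routed through Corollary \ref{omega2-ch}, and your recursion $1/\pi_k=1/\pi_{k-1}-\sum_i t_i^2\int_0^1 s^{-(k+1)}\,\D\tau_i(s)$ is precisely the paper's identity $\zeta_{k+1}=\zeta_k-\sum_i t_i^2\int_0^1 s^{-(k+1)}\,\D\tau_i(s)$ in the guise $\pi_k=1/\zeta_{k+1}$, so the derivation of \eqref{ka+3}, the positivity of the $\zeta_k$, and the construction/verification of the weights all match. The one spot where you deviate is the chain $\sum_i t_i^2\Le\lambda_0^2\Le\cdots\Le\lambda_{-(\kappa-1)}^2$ needed for \eqref{norm-ch}: your sketch via log-concavity of $\{\zeta_k\}$ (which holds because $\zeta_k\Ge\zeta_{k+1}$ while the subtracted integrals increase in $k$) together with $x(2-x)\Le1$ does close the gap, but the paper gets the same chain in one stroke by applying Lemma \ref{monot-ca} to the completely alternating sequence $\{\|\slam^n e_{-\kappa}\|^2\}_{n=0}^\infty$.
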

It is worth noting that if $\kappa=0$, then \eqref{ka+2}
implies \eqref{ka+3}. Observe also that the quantities
$\zeta_k$ are defined only in the case when $\kappa \Ge 1$,
and that $\zeta_1=1$ (use \eqref{ka+2}).
   \begin{proof}[Proof of Lemma  \ref{omega3-ch}]
(i) In view of Corollary \ref{omega2-ch}, $\int_0^1 \frac 1
{s^{k+1}} \D \tau_i(s) < \infty$ for all $k\in J_\kappa$
and $i\in J_{\eta}$. By \eqref{eu}, we have
$\sum_{i=1}^\eta t_{i}^2 = \|\slam e_0\|^2 < \infty$, which
gives \eqref{ka+1}. The condition \eqref{ka+2} follows from
\eqref{zgod-ch} and \eqref{zgod-ch'}. Thus, it remains to
prove \eqref{ka+3}.

If $\kappa=0$, then, as noted just above, \eqref{ka+2}
implies \eqref{ka+3}.

If $\kappa=1$, then by applying the inequality
\eqref{lam=1+'} we get
   \allowdisplaybreaks
   \begin{align} \label{1=l02}
1 \hspace{-1.5ex} & \hspace{1.5ex} \Le \lambda_0^2 \Big(1 -
\sum_{i=1}^\eta t_i^2 \int_0^1 \frac 1 {s^2} \D
\tau_i(s)\Big)
   \\
&\hspace{1.5ex}= \lambda_0^2 \Big(\big(1 - \sum_{i=1}^\eta
t_i^2\big) + \sum_{i=1}^\eta t_i^2 \Big(1 - \int_0^1 \frac
1 {s^2} \D \tau_i(s)\Big)\Big) \notag
   \\
&\overset{\eqref{zgod-ch'}}= \lambda_0^2 \sum_{i=1}^\eta
t_i^2 \Big(1 - \int_0^1 \Big(\frac 1 s + \frac 1 {s^2}\Big)
\D \tau_i(s)\Big) = \lambda_0^2 \zeta_2, \notag
   \end{align}
where $\zeta_2$ is as in \eqref{5USD}. Hence, \eqref{ka+3}
follows from \eqref{1=l02}.

Assume now that $\kappa = 2$. Arguing as in \eqref{1=l02}
and using \eqref{lam=1+} in place of \eqref{lam=1+'}, we
obtain
   \begin{align} \label{1=l02'}
1 = \lambda_0^2 \zeta_2.
   \end{align}
It follows from \eqref{lam=1+'} that
   \allowdisplaybreaks
   \begin{align}  \label{1=104}
   1 \hspace{-1.5ex} & \hspace{1.5ex}\Le \lambda_{-1}^2
   \Big(1 - \lambda_0^2 \sum_{i=1}^\eta t_i^2 \int_0^1
   \frac 1 {s^3} \D \tau_i(s)\Big)
   \\
&\overset{\eqref{1=l02'}}= \lambda_{-1}^2 \left(1 -
\frac{\sum_{i=1}^\eta t_i^2 \int_0^1 \frac 1 {s^3} \D
\tau_i(s)}{\sum_{i=1}^\eta t_i^2 \Big(1 - \int_0^1
\sum_{j=1}^{2} \frac 1 {s^j} \D \tau_i(s)\Big)} \right)
\notag
   \\
& \hspace{1.5ex} = \lambda_{-1}^2 \frac{\zeta_3}{\zeta_2},
\notag
   \end{align}
which together with \eqref{1=l02'} implies \eqref{ka+3}.

Suppose now that $\kappa \Ge 3$. We claim that the
following two conditions hold for all $k\in \{2, \ldots,
\kappa -1\}$:
   \allowdisplaybreaks
   \begin{align} \label{1>}
& \sum_{i=1}^\eta t_{i}^2 > \sum_{i=1}^\eta t_{i}^2
\int_0^1 \sum_{j=1}^{k+1}\frac{1}{s^j} \, \D \tau_i(s),
   \\
& 1 = \lambda_{-(k-1)}^2 \frac{\zeta_{k+1}} {\zeta_k}.
\label{2>}
   \end{align}
Arguing as in \eqref{1=l02} and \eqref{1=104}, and
using \eqref{lam=1+} with $k=1,2$ in place of
\eqref{lam=1+'}, we get \eqref{1=l02'} and the
equality
   \begin{align*}
1 = \lambda_{-1}^2 \frac{\zeta_3}{\zeta_2},
   \end{align*}
which implies that \eqref{1>} and \eqref{2>} hold for
$k=2$. This proves our claim for $\kappa=3$. If $\kappa \Ge
4$, we proceed by induction. Fix an integer $n$ such that
$2 \Le n < \kappa-1$ and assume that \eqref{1>} and
\eqref{2>} hold for all $k=2, \ldots,n$. By \eqref{lam=1+},
applied to $k=n+1$, we obtain (note that \eqref{1=l02'} is
valid for $\kappa \Ge 2$)
   \allowdisplaybreaks
   \begin{align} \label{1=106}
   1 \hspace{-4.5ex} & \hspace{4.5ex}= \lambda_{-n}^2
\Big(1 - \lambda_0^2 \lambda_{-1}^2 \cdots
\lambda_{-(n-1)}^2 \sum_{i=1}^\eta t_i^2 \int_0^1 \frac 1
{s^{n+2}} \D \tau_i(s)\Big)
   \\
& \overset{\eqref{1=l02'} \& \eqref{2>}}= \lambda_{-n}^2
\Big(1 - \frac 1{\zeta_2} \frac {\zeta_{2}}{\zeta_{3}}
\cdots \frac {\zeta_{n}}{\zeta_{n+1}} \sum_{i=1}^\eta t_i^2
\int_0^1 \frac 1 {s^{n+2}} \D \tau_i(s)\Big) \notag
   \\
& \hspace{4.5ex} = \lambda_{-n}^2 \Big(1 - \frac
{1}{\zeta_{n+1}} \sum_{i=1}^\eta t_i^2 \int_0^1 \frac 1
{s^{n+2}} \D \tau_i(s)\Big) = \lambda_{-n}^2 \frac
{\zeta_{n+2}}{\zeta_{n+1}}, \notag
   \end{align}
which shows that \eqref{1>} and \eqref{2>} hold for
$k=n+1$. This proves our claim. Arguing as in the proof of
\eqref{1=106} with $n=\kappa-1$ and using \eqref{lam=1+'}
in place of \eqref{lam=1+}, we get
   \begin{align} \label{2>'}
   1 \Le \lambda_{-(\kappa-1)}^2 \frac{\zeta_{\kappa+1}}
{\zeta_\kappa},
   \end{align}
which, when combined with \eqref{1>} applied to
$k=\kappa-1$, implies \eqref{ka+3}. Hence (i) is proved.

(ii) Assume that $\teb := \{t_{i}\}_{i=1}^{\eta} \subseteq
(0,\infty)$ satisfies \eqref{ka+1}, \eqref{ka+2} and
\eqref{ka+3}. Our aim now is to define the system
$\lambdab= \{\lambda_v\}_{v \in V_{\eta,\kappa}^\circ}
\subseteq (0,\infty)$. For $i \in J_\eta$, we set
   \begin{align}    \label{lamij-ch}
   \begin{aligned}
   \lambda_{i,j} =
   \begin{cases}
   t_i & \text{for } j=1,
   \\[1ex]
   \sqrt{1+\tau_i([0,1])} & \text{for } j=2,
   \\[1ex]
   \displaystyle{\sqrt{\frac{1 + \int_0^1 (1+ \ldots +
s^{j-2}) \D \tau_i(s)}{1 + \int_0^1 (1+ \ldots + s^{j-3})
\D \tau_i(s)}}} & \text{for } j \Ge 3.
   \end{cases}
   \end{aligned}
   \end{align}

If $\kappa=0$, then the weights $\lambdab =
\{\lambda_v\}_{v \in V_{\eta,0}^\circ}$ just defined
satisfy \eqref{zgod0-ch} and \eqref{zgod-ch} (the
latter because of \eqref{ka+2}).

If $\kappa=1$, then $\lambda_0$ can be considered as
any number from the interval
$[1/\sqrt{\zeta_2},\infty)$. Clearly, \eqref{zgod0-ch}
is valid. It follows from \eqref{ka+2} with $\kappa=1$
that \eqref{zgod-ch'} holds. Hence, we can reverse the
reasoning in \eqref{1=l02} and verify that
\eqref{lam=1+'} is valid for $\kappa=1$.

If $\kappa \Ge 2$ and $\vartheta \in \Big[
\sqrt{\frac{\zeta_\kappa}{\zeta_{\kappa+1}}},
\infty\Big)$, then we define the weights
$\{\lambda_{-k}\}_{k=0}^{\kappa-1}$ by
   \begin{align}  \label{zeta-ch}
\lambda_{-k} =
   \begin{cases}
   \frac 1 {\sqrt{\zeta_2}} & \text{for } k=0,
   \\[1.5ex]
   \sqrt{\frac{\zeta_{k+1}}{\zeta_{k+2}}} & \text{for } k
\in J_{\kappa-2},
   \\[1.5ex] \vartheta & \text{for } k=\kappa-1.
   \end{cases}
   \end{align}
(Of course, if $\kappa=2$, then the middle expression
in \eqref{zeta-ch} does not appear.) According to
\eqref{ka+1} and \eqref{ka+3}, the above definition is
correct. Reversing the reasonings in \eqref{1=l02'}
and \eqref{1=106}, we deduce that \eqref{lam=1+}
holds. Arguing as in \eqref{1=106} with $n=\kappa -1$,
we see that \eqref{lam=1+'} holds. As above, we
conclude that \eqref{zgod0-ch} and \eqref{zgod-ch'}
are valid as well.

Thus, it remains to show that the weighted shift
$\slam$ is bounded. It follows from \eqref{lamij-ch}
and Lemma \ref{monot-ca} that for every $i\in J_\eta$
the sequence $\{\lambda_{i,j}\}_{j=2}^\infty$ is
monotonically decreasing. As a consequence, we have
   \begin{align} \label{copo}
\sup_{i\in J_\eta} \sup_{j\Ge 2} \lambda_{i,j}^2
= \sup_{i\in J_\eta} (1+\tau_i([0,1])) < \infty.
   \end{align}
Combining this with \eqref{ka+1} and $\kappa <
\infty$, we see that $\sup_{u \in V_{\eta,\kappa}}
\sum_{v\in \dzi{u}} \lambda_v^2 < \infty$. Hence, by
Proposition \ref{ogrs}, $\slam \in
\ogr{\ell^2(V_{\eta,\kappa})}$. Applying Corollary
\ref{omega2-ch}, we conclude that the weighted shift
$\slam$ is completely hyperexpansive and $\tau_{i,1} =
\tau_i$ for all $i \in J_\eta$. The uniqueness
assertion in (ii) can be deduced from
\eqref{lamij-ch}, \eqref{1=l02'} and \eqref{2>}.

We now prove the ``moreover'' part of (ii). If $\kappa=0$,
then the top equality in \eqref{norm-ch} follows from
\eqref{pnor} and \eqref{copo}. Assume that $\kappa \Ge 1$.
Since the sequence $\{\|\slam^n
e_{-\kappa}\|^2\}_{n=0}^\infty$ is completely alternating,
we infer from Lemma \ref{monot-ca} that the corresponding
sequence of quotients
   \begin{align} \label{ch-dec}
\Big\{\lambda_{-(\kappa-1)}^2, \ldots, \lambda_0^2,
\sum_{i=1}^\eta t_i^2, \frac{\sum_{i=1}^\eta t_i^2
(1+\tau_i([0,1]))}{\sum_{i=1}^\eta t_i^2}, \ldots\Big\}
   \end{align}
is monotonically decreasing. In particular, we have
$\lambda_{-(\kappa-1)}^2 \Ge \ldots \Ge \lambda_0^2 \Ge
\sum_{i=1}^\eta t_i^2$. This, combined with \eqref{pnor}
and \eqref{copo}, yields the bottom equality in
\eqref{norm-ch}.
   \end{proof}
   \begin{rem} \label{extrch}
As in the subnormal case, we can single out a class of
extremal completely hyperexpansive weighted shifts on
$\tcal_{\eta,\kappa}$ (cf.\ Remark \ref{forwhile}). Suppose
that $\kappa \in \nbb$. We say that a completely
hyperexpansive weighted shift $\slam \in
\ogr{\ell^2(V_{\eta,\kappa})}$ on $\tcal_{\eta,\kappa}$
with nonzero weights $\lambdab = \{\lambda_v\}_{v \in
V_{\eta,\kappa}^\circ}$ is {\em extremal} if
   \begin{align*}
\|\slam e_{\koo}\| = \min \|S_{\tilde \lambdab} e_{\koo}\|,
   \end{align*}
where the minimum is taken over all completely
hyperexpansive weighted shifts $S_{\tilde \lambdab} \in
\ogr{\ell^2(V_{\eta,\kappa})}$ on $\tcal_{\eta,\kappa}$
with nonzero weights $\tilde \lambdab = \{\tilde
\lambda_v\}_{v \in V_{\eta,\kappa}^\circ}$ such that
\linebreak $\slamr{-\kappa+1} =
S_{\tilde\lambdab_\rightarrow\!(-\kappa+1)}$, or
equivalently that $\lambda_v = \tilde \lambda_v$ for all $v
\neq -\kappa + 1$. It follows from Corollary
\ref{omega2-ch} that a completely hyperexpansive weighted
shift $\slam \in \ogr{\ell^2(V_{\eta,\kappa})}$ on
$\tcal_{\eta,\kappa}$ with nonzero weights $\lambdab =
\{\lambda_v\}_{v \in V_{\eta,\kappa}^\circ}$ is extremal if
and only if $\slam$ satisfies the condition (\mbox{ii-a})
with the inequality in \eqref{lam=1+'} replaced by
equality; in other words, $\slam$ is extremal if and only
if $\slam$ satisfies the strong consistency condition at
each vertex $u \in V_{\eta,\kappa}$ (cf.\
\eqref{consist-ch'}).
   \end{rem}
As stated in Theorem \ref{par-ch} below, extremal
completely hyperexpansive weight\-ed shifts on
$\tcal_{\eta,\kappa}$ with a fixed system of representing
measures $\{\tau_{i,1}\}_{i=1}^\eta$ are in one-to-one and
onto correspondence with sequences $\{t_{i}\}_{i=1}^\eta
\subseteq (0,\infty)$ satisfying the conditions
\eqref{ka+1}, \eqref{ka+2} and \eqref{ka+3}.
   \begin{thm} \label{par-ch}
Let $\eta \in \{2,3, \ldots\} \sqcup \{\infty\}$ and
$\kappa \in \zbb_+$. Assume $\boldsymbol
\tau=\{\tau_i\}_{i=1}^\eta$ is a sequence of positive
Borel measures on $[0,1]$ such that $\sup_{i \in
J_\eta} \tau_{i}([0,1]) < \infty$. Let \idx{$\mathscr
U_{\eta,\kappa}^{\boldsymbol \tau}$}{78} be the set of
all sequences $\teb = \{t_{i}\}_{i=1}^\eta \subseteq
(0,\infty)$ satisfying \eqref{ka+1}, \eqref{ka+2} and
\eqref{ka+3}, and let \idx{$\mathscr
V_{\eta,\kappa}^{\boldsymbol \tau}$}{79} be the set of
all completely hyperexpansive weighted shifts $\slam
\in \ogr{\ell^2(V_{\eta,\kappa})}$ on
$\tcal_{\eta,\kappa}$ with positive weights $\lambdab=
\{\lambda_v\}_{v \in V_{\eta,\kappa}^\circ}$ such that
$\tau_{i,1} = \tau_i$ for all $i \in J_\eta$. Denote
by \idx{$\mathscr W_{\eta,\kappa}^{\boldsymbol
\tau}$}{80} the set of all completely hyperexpansive
weighted shifts $\slam \in \mathscr
V_{\eta,\kappa}^{\boldsymbol \tau}$ which are
extremal. If $\kappa=0$, then the mapping
   \begin{align*}
\varPhi_{\eta,0}\colon \mathscr
U_{\eta,0}^{\boldsymbol \tau} \ni \teb \mapsto \slam
\in \mathscr V_{\eta,0}^{\boldsymbol \tau}
   \end{align*}
defined by \eqref{lamij-ch} is a bijection. If
$\kappa\Ge 1$, then the mapping
   \begin{align*}
   \varPhi_{\eta,\kappa}\colon \mathscr
U_{\eta,\kappa}^{\boldsymbol \tau} \ni \teb \mapsto
\slam \in \mathscr W_{\eta,\kappa}^{\boldsymbol \tau}
   \end{align*}
defined by \eqref{lamij-ch} and
   \begin{align}  \label{zeta-ch'}
\lambda_{-k} =
   \begin{cases}
\frac 1 {\sqrt{\zeta_2}} & \text{for } k=0,
   \\[1.5ex]
\sqrt{\frac{\zeta_{k+1}}{\zeta_{k+2}}} & \text{for } k \in
J_{\kappa-1},
   \end{cases}
   \end{align}
is a bijection $($see \eqref{5USD} for the definition of
$\zeta_k$$)$. Moreover, if $\slam \in \mathscr
V_{\eta,\kappa}^{\boldsymbol \tau}$, then $\teb
=\{t_{i}\}_{i=1}^{\eta} \in \mathscr
U_{\eta,\kappa}^{\boldsymbol \tau}$ with $t_{i}:=
\lambda_{i,1}$, $\lambda_{-\kappa+1} \in
\Big[\sqrt{\frac{\zeta_\kappa}
{\zeta_{\kappa+1}}},\infty\Big)$ and $\lambda_v=\tilde
\lambda_v$ for all $v\neq -\kappa +1$, where
$S_{\tilde\lambdab}=\varPhi_{\eta,\kappa}(\teb)$.
Conversely, if $\teb \in \mathscr
U_{\eta,\kappa}^{\boldsymbol \tau}$ and
$S_{\lambdab}=\varPhi_{\eta,\kappa}(\teb)$, then for every
$\vartheta \in
\Big[\sqrt{\frac{\zeta_\kappa}{\zeta_{\kappa+1}}},\infty\Big)$,
the weighted shift $S_{\tilde\lambdab}$ with weights
$\tilde \lambdab = \{\tilde \lambda_v\}_{v \in
V_{\eta,\kappa}^\circ}$ given by $\tilde
\lambda_v=\lambda_v$ for all $v\neq -\kappa +1$, and
$\tilde \lambda_{-\kappa +1} = \vartheta$ is a member of
$\mathscr V_{\eta,\kappa}^{\boldsymbol \tau}$.
   \end{thm}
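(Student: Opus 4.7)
The plan is to derive Theorem \ref{par-ch} as essentially a re-packaging of Lemma \ref{omega3-ch}, which already contains the core construction and uniqueness analysis. Roughly, Lemma \ref{omega3-ch}\,(i) provides the map $\slam \mapsto \{\lambda_{i,1}\}_{i=1}^{\eta}$ as a candidate inverse of $\varPhi_{\eta,\kappa}$, while Lemma \ref{omega3-ch}\,(ii) provides $\varPhi_{\eta,\kappa}$ itself and describes precisely the one-parameter freedom in the choice of $\lambda_{-\kappa+1}$.

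First I would handle the case $\kappa = 0$. Given $\teb \in \mathscr U_{\eta,0}^{\boldsymbol \tau}$, Lemma \ref{omega3-ch}\,(ii) yields a unique $\slam \in \ogr{\ell^2(V_{\eta,0})}$ whose weights are given by \eqref{lamij-ch}; it is completely hyperexpansive with $\tau_{i,1}=\tau_i$ for all $i$, so $\varPhi_{\eta,0}(\teb) \in \mathscr V_{\eta,0}^{\boldsymbol \tau}$. For injectivity, the identity $\lambda_{i,1}=t_i$ recovers $\teb$ from $\varPhi_{\eta,0}(\teb)$. For surjectivity, given $\slam \in \mathscr V_{\eta,0}^{\boldsymbol \tau}$, Lemma \ref{omega3-ch}\,(i) guarantees that $\teb:=\{\lambda_{i,1}\}_{i=1}^\eta$ lies in $\mathscr U_{\eta,0}^{\boldsymbol \tau}$, and the uniqueness clause of Lemma \ref{omega3-ch}\,(ii) gives $\varPhi_{\eta,0}(\teb)=\slam$. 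For $\kappa \Ge 1$, the same strategy applies with one additional point: I must check that $\varPhi_{\eta,\kappa}(\teb)$ actually lands in the subclass $\mathscr W_{\eta,\kappa}^{\boldsymbol \tau}$ of extremal shifts. By Remark \ref{extrch}, extremality amounts to equality in \eqref{lam=1+'}. Plugging in \eqref{zeta-ch'} and unfolding the telescoping product $\lambda_0^2\lambda_{-1}^2\cdots\lambda_{-(\kappa-2)}^2 = \frac{1}{\zeta_2}\cdot \frac{\zeta_2}{\zeta_3}\cdots \frac{\zeta_{\kappa-1}}{\zeta_\kappa}=\frac{1}{\zeta_\kappa}$ (exactly the computation carried out in \eqref{1=106}), the equality version of \eqref{lam=1+'} reduces to $\vartheta^2 = \zeta_\kappa/\zeta_{\kappa+1}$, which is precisely the value of $\lambda_{-\kappa+1}$ selected by \eqref{zeta-ch'}. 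Injectivity and surjectivity of $\varPhi_{\eta,\kappa}\colon \mathscr U_{\eta,\kappa}^{\boldsymbol \tau}\to \mathscr W_{\eta,\kappa}^{\boldsymbol \tau}$ then follow exactly as in the $\kappa=0$ case, using the uniqueness clause of Lemma \ref{omega3-ch}\,(ii) applied to extremal shifts (for which $\vartheta=\sqrt{\zeta_\kappa/\zeta_{\kappa+1}}$ is forced).

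The ``moreover'' and ``conversely'' statements are then read off directly from Lemma \ref{omega3-ch}\,(ii). If $\slam \in \mathscr V_{\eta,\kappa}^{\boldsymbol \tau}$, part (i) of that lemma gives $\teb\in \mathscr U_{\eta,\kappa}^{\boldsymbol \tau}$, while part (ii) states that all weights except $\lambda_{-\kappa+1}$ are uniquely determined by $(\teb,\boldsymbol\tau)$ and coincide with those of $S_{\tilde\lambdab}=\varPhi_{\eta,\kappa}(\teb)$, and that $\lambda_{-\kappa+1}$ is free to range over $[\sqrt{\zeta_\kappa/\zeta_{\kappa+1}},\infty)$. The converse assertion, that any such modification of the extremal model remains in $\mathscr V_{\eta,\kappa}^{\boldsymbol \tau}$, is exactly the existence statement of Lemma \ref{omega3-ch}\,(ii) with the free parameter $\vartheta$ equal to the prescribed value. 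Since the quantities $\zeta_k$ defined by \eqref{5USD} depend only on $\teb$ and $\boldsymbol\tau$, and since \eqref{zeta-ch'} agrees with the values forced by formula \eqref{zeta-ch} with $\vartheta=\sqrt{\zeta_\kappa/\zeta_{\kappa+1}}$, the models furnished by $\varPhi_{\eta,\kappa}$ and those parametrized by Lemma \ref{omega3-ch}\,(ii) match perfectly.

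The main obstacle I anticipate is purely book-keeping: confirming that the extremal choice $\vartheta=\sqrt{\zeta_\kappa/\zeta_{\kappa+1}}$ produced by \eqref{zeta-ch'} really gives an extremal operator in the sense of Remark \ref{extrch}, which requires tracking the telescoping identity through \eqref{lam=1+} and \eqref{lam=1+'}. This is, however, already implicit in the proof of Lemma \ref{omega3-ch}\,(i), so no genuinely new calculation is needed; the theorem amounts to a change of perspective, reading Lemma \ref{omega3-ch} as a parametrization rather than as a construction.
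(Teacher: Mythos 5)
Your proposal is correct and follows essentially the same route as the paper, whose entire proof of Theorem \ref{par-ch} reads ``Apply Lemma \ref{omega3-ch} as well as its proof''; you have simply made explicit the book-keeping (in particular the telescoping identity showing that the choice $\vartheta=\sqrt{\zeta_\kappa/\zeta_{\kappa+1}}$ in \eqref{zeta-ch'} yields equality in \eqref{lam=1+'}, hence extremality) that the authors leave implicit by referring to the proof of the lemma rather than only its statement.
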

   \begin{proof}
Apply Lemma \ref{omega3-ch} as well as its proof.
   \end{proof}
Our next aim is to find necessary and sufficient
conditions for the parameterizing set $\mathscr
U_{\eta,\kappa}^{\boldsymbol \tau}$ to be nonempty.
   \begin{pro} \label{nascfch}
If $\eta$, $\kappa$, $\boldsymbol
\tau=\{\tau_i\}_{i=1}^\eta$ and $\mathscr
U_{\eta,\kappa}^{\boldsymbol \tau}$ are as in Theorem {\em
\ref{par-ch}}, then the following two conditions are
equivalent\/{\em :}
   \begin{enumerate}
   \item[(i)] $\mathscr
U_{\eta,\kappa}^{\boldsymbol \tau} \neq
\varnothing$,
   \item[(ii)] $\int_0^1 \frac 1
{s^{\kappa+1}} \D \tau_i(s) < \infty$ for all $i
\in J_\eta$, and $\int_0^1 \sum_{l=1}^{\kappa+1}
\frac 1{s^l} \D \tau_{i_0}(s) < 1$ for some
$i_0\in J_\eta$.
   \end{enumerate}
   \end{pro}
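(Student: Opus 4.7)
My plan is to handle the two implications separately. The key quantities to track are the ``deficit'' numbers $\alpha_i := \int_0^1 \tfrac{1}{s}\,\D\tau_i(s)$ and $\gamma_i := \int_0^1 \sum_{l=1}^{\kappa+1}\tfrac{1}{s^l}\,\D\tau_i(s)$; the conditions \eqref{ka+2} and \eqref{ka+3} then translate into linear constraints on $\{t_i^2\}_{i=1}^\eta$ of the form $\sum_i t_i^2(1-\alpha_i) \Ge 1$ (with equality when $\kappa \Ge 1$) and $\sum_i t_i^2(1-\gamma_i) > 0$. Condition (ii) is precisely the statement that each $\gamma_i$ is finite and that some $1-\gamma_{i_0}$ is strictly positive.

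For the implication (i)$\Rightarrow$(ii), I would take any $\teb \in \mathscr U_{\eta,\kappa}^{\boldsymbol\tau}$ and argue as follows. Since all $t_i > 0$ and $\sum_i t_i^2 < \infty$ by \eqref{ka+1}, the right-hand side of \eqref{ka+3} equals $\sum_i t_i^2 \gamma_i$, which must be finite; otherwise it would dominate the finite left-hand side $\sum_i t_i^2$. Hence $\gamma_i < \infty$ for every $i\in J_\eta$, and since $\tfrac{1}{s^l} \Le \tfrac{1}{s^{\kappa+1}}$ on $(0,1]$ for $l\Le \kappa+1$, this is equivalent to $\int_0^1 \tfrac{1}{s^{\kappa+1}}\,\D\tau_i(s) < \infty$ for all $i$. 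For the second half of (ii), observe that \eqref{ka+3} reads $\sum_i t_i^2(1-\gamma_i) > 0$, so at least one summand must be positive, which (since $t_i > 0$) forces $\gamma_{i_0} < 1$ for some $i_0$.

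For (ii)$\Rightarrow$(i) I would construct $\teb$ explicitly. Fix $i_0$ with $\gamma_{i_0}<1$, and note that automatically $\alpha_{i_0} \Le \gamma_{i_0} < 1$. The idea is to put most of the ``mass'' on the index $i_0$ and leave a tiny tail on the remaining indices. Concretely, for $i \neq i_0$ I would set $t_i^2 = \varepsilon_i$ where $\{\varepsilon_i\}$ are positive reals chosen so that the tail sum $\sum_{i\neq i_0}\varepsilon_i\bigl(1 + |1-\alpha_i| + |1-\gamma_i|\bigr)$ is smaller than a suitable $\delta>0$; when $\eta<\infty$ any small positive choice works, and when $\eta=\infty$ one can simply take $\varepsilon_i = \delta\cdot 2^{-i-1}\bigl(1+|1-\alpha_i|+|1-\gamma_i|\bigr)^{-1}$, which makes $\sum_{i\neq i_0}\varepsilon_i < \infty$ automatically. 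Writing $S_A := \sum_{i\neq i_0}\varepsilon_i(1-\alpha_i)$ and $S_B := \sum_{i\neq i_0}\varepsilon_i(1-\gamma_i)$, both bounded in absolute value by $\delta$, I would then determine $t_{i_0}^2$ from the appropriate version of \eqref{ka+2}: if $\kappa=0$ one may simply take $t_{i_0}^2 := 2/(1-\alpha_{i_0})$, while if $\kappa\Ge 1$ one sets $t_{i_0}^2 := (1-S_A)/(1-\alpha_{i_0})$, which is positive for $\delta<1$. A direct verification then shows \eqref{ka+1}, \eqref{ka+2}, and finally \eqref{ka+3}: indeed in the $\kappa\Ge 1$ case one computes
\begin{align*}
\sum_i t_i^2(1-\gamma_i) = \frac{(1-S_A)(1-\gamma_{i_0})}{1-\alpha_{i_0}} + S_B,
\end{align*}
which tends to $(1-\gamma_{i_0})/(1-\alpha_{i_0}) > 0$ as $\delta\to 0$, and hence is strictly positive for $\delta$ small enough; the $\kappa=0$ case is even easier since \eqref{ka+2} already implies \eqref{ka+3}.

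The main technical obstacle is the case $\eta = \infty$, where one needs simultaneous control on $\sum \varepsilon_i$, $\sum \varepsilon_i |1-\alpha_i|$, and $\sum \varepsilon_i |1-\gamma_i|$ without any uniform bound on the $\alpha_i, \gamma_i$ across $i$; the weighting $\varepsilon_i \propto 2^{-i-1}(1+|1-\alpha_i|+|1-\gamma_i|)^{-1}$ resolves this cleanly because (ii) only guarantees finiteness of each $\gamma_i$ individually, not their uniform boundedness.
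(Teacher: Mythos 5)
Your proof is correct. The implication (i)$\Rightarrow$(ii) is essentially the paper's argument (the paper phrases the existence of $i_0$ by contradiction, assuming $\int_0^1 \frac 1 s \D\tau_i(s)\Ge 1$, hence $\gamma_i\Ge 1$, for all $i$, and contradicting \eqref{ka+3}; your direct reading of \eqref{ka+3} as $\sum_i t_i^2(1-\gamma_i)>0$ is the same computation). The converse is where you genuinely diverge. The paper partitions $J_\eta$ according to the signs of $1-\alpha_i$ and $1-\gamma_i$ (into $J_\eta^{+}$, $J_\eta^{-}$ when $\kappa=0$, and into $J_\eta^{++}$, $J_\eta^{+-}$, $J_\eta^{--}$ when $\kappa\Ge 1$, after observing $J_\eta^{-+}=\varnothing$ since $\alpha_i\Le\gamma_i$), fixes square-summable reference systems on each block with finite weighted sums, and then solves a small algebraic system $1=r_0^2+r_1^2\vartheta_1-r_2^2\vartheta_2$, $r_0^2\alpha_0-r_1^2\alpha_1-r_2^2\alpha_2>0$ in the scaling parameters. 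You instead concentrate essentially all the mass at the single index $i_0$ guaranteed by (ii) and treat the remaining indices as an $O(\delta)$ perturbation, with the weights $\varepsilon_i\propto 2^{-i-1}(1+|1-\alpha_i|+|1-\gamma_i|)^{-1}$ absorbing the lack of uniform bounds when $\eta=\infty$; the normalization $t_{i_0}^2=(1-S_A)/(1-\alpha_{i_0})$ enforces the equality in \eqref{ka+2} exactly, and \eqref{ka+3} survives by continuity as $\delta\to 0$. Both arguments are complete; yours is a single unified perturbative scheme that handles $\kappa=0$ and $\kappa\Ge 1$ almost identically, while the paper's sign-decomposition exhibits more of the structure of the full solution set (e.g.\ how positive and negative contributions can be balanced against each other with independent scalings).
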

   \begin{proof}
Note that if (i) or (ii) holds, then $\int_0^1 \frac 1
{s^j} \D \tau_i(s) < \infty$ for all $i \in J_\eta$ and
$j \in J_{\kappa+1}$.

First, we consider the case when $\kappa = 0$.

(i)$\Rightarrow$(ii) Suppose that, contrary to our claim,
$\int_0^1 \frac 1 {s} \D \tau_i(s) \Ge 1$ for all $i \in
J_\eta$. Take $\{t_{i}\}_{i=1}^{\eta} \in \mathscr
U_{\eta,\kappa}^{\boldsymbol \tau}$. Then $\sum_{i=1}^\eta
t_i^2 \int_0^1 \frac 1 {s} \D \tau_i(s) \Ge \sum_{i=1}^\eta
t_i^2$, which contradicts \eqref{ka+3}.

(ii)$\Rightarrow$(i) Set
   \begin{align*}
J_\eta^{+} = \Big\{i \in J_\eta \colon 1 -
\int_0^1 \frac 1 s \D \tau_i(s)>0 \Big\}, \quad
J_\eta^{-}=\Big\{i \in J_\eta \colon 1 - \int_0^1
\frac 1 s \D \tau_i(s) \Le 0\Big\}.
   \end{align*}
Then $J_\eta = J_\eta^{+} \sqcup J_\eta^{-}$ and $i_0
\in J_\eta^{+}$. Let $\{\tilde t_i\}_{i \in
J_\eta^{+}}$ and $\{\tilde t_i\}_{i \in J_\eta^{-}}$
be systems of positive real numbers such that (see the
convention preceding Proposition \ref{desc})
   \begin{align*}
\text{$\sum_{i \in J_\eta^{+}} \tilde t_i^2 < \infty$,
$\sum_{i \in J_\eta^{-}} \tilde t_i^2 < \infty$ and
$\beta:=\sum_{i \in J_\eta^{-}} \tilde t_i^2
\Big(\int_0^1 \frac 1 s \D \tau_i(s) - 1\Big) <
\infty$.}
   \end{align*}
Then clearly $\alpha:=\sum_{i \in J_\eta^{+}} \tilde
t_i^2 (1 - \int_0^1 \frac 1 s \D \tau_i(s)) < \infty$.
If we consider the sequence $\teb =
\{t_i\}_{i=1}^{\eta}$ given by
   \begin{align*}
t_i =
   \begin{cases}
r_1 \tilde t_i & \text{for } i \in J_\eta^{+},
   \\
r_2 \tilde t_i & \text{for } i \in J_\eta^{-},
   \end{cases}
   \end{align*}
where $r_1,r_2 \in \rbb$, then the inequality in
\eqref{ka+2} takes the form $1 \Le r_1^2 \alpha -
r_2^2\beta$. Since $\alpha \in (0,\infty)$, we deduce that
this inequality has a solution in positive reals $r_1$ and
$r_2$. Hence the sequence $\{t_i\}_{i=1}^{\eta}$ satisfies
\eqref{ka+1} and \eqref{ka+2}, and consequently
\eqref{ka+3}. In fact, the sequence $\teb$ can be chosen so
as to satisfy \eqref{ka+1} and the equality
$\sum_{i=1}^\eta t_{i}^2 = 1 + \sum_{i=1}^\eta t_{i}^2
\int_0^1 \frac 1 s \D \tau_i(s)$.

Consider now the case when $\kappa \Ge 1$.

(i)$\Rightarrow$(ii) Repeat the argument used in the proof
of the case $\kappa=0$.

(ii)$\Rightarrow$(i) Set
   \allowdisplaybreaks
   \begin{align*}
J_\eta^{++} & = \Big\{i \in J_\eta \colon 1 - \int_0^1
\frac 1 s \D \tau_i(s)>0 \;\; \& \;\; 1-\int_0^1
\sum_{l=1}^{\kappa+1} \frac 1{s^l} \D \tau_i(s) > 0
\Big\},
   \\
J_\eta^{+-} & = \Big\{i \in J_\eta \colon 1 - \int_0^1
\frac 1 s \D \tau_i(s)>0 \;\; \& \;\; 1-\int_0^1
\sum_{l=1}^{\kappa+1} \frac 1{s^l} \D \tau_i(s) \Le 0
\Big\},
   \\
J_\eta^{-+} & = \Big\{i \in J_\eta \colon 1 - \int_0^1
\frac 1 s \D \tau_i(s) \Le 0 \;\; \& \;\; 1-\int_0^1
\sum_{l=1}^{\kappa+1} \frac 1{s^l} \D \tau_i(s) > 0
\Big\},
   \\
J_\eta^{--} & = \Big\{i \in J_\eta \colon 1 - \int_0^1
\frac 1 s \D \tau_i(s) \Le 0 \;\; \& \;\; 1-\int_0^1
\sum_{l=1}^{\kappa+1} \frac 1{s^l} \D \tau_i(s) \Le 0
\Big\}.
   \end{align*}
It is clear that $J_\eta^{++} = \big\{i \in J_\eta \colon 1
- \int_0^1 \sum_{l=1}^{\kappa+1} \frac 1{s^l} \D
\tau_i(s)>0\big\} \neq \varnothing$, $J_\eta^{-+} =
\varnothing$ and $J_\eta^{--} = \big\{i \in J_\eta \colon 1
- \int_0^1 \frac 1 s \D \tau_i(s) \Le 0\big\}$. Moreover,
the sets $J_\eta^{++}$, $J_\eta^{+-}$ and $J_\eta^{--}$ are
pairwise disjoint and $J_\eta = J_\eta^{++} \cup
J_\eta^{+-} \cup J_\eta^{--}$. Arguing as in Section
\ref{mod-sub}, we find square summable systems $\{\tilde
t_i\}_{i \in J_\eta^{++}}, \{\tilde t_i\}_{i \in
J_\eta^{+-}}, \{\tilde t_i\}_{i \in J_\eta^{--}} \subseteq
(0,\infty)$ such that
   \allowdisplaybreaks
   \begin{align*}
1 & \phantom{:}= \sum_{i \in J_\eta^{++}} \tilde t_i^2
\Big(1 - \int_0^1 \frac 1 s \D \tau_i(s)\Big),
   \\
\vartheta_1 &:=\sum_{i \in J_\eta^{+-}} \tilde t_i^2 \Big(1
- \int_0^1 \frac 1 s \D \tau_i(s)\Big) < \infty,
   \\
\vartheta_2 &:= \sum_{i \in J_\eta^{--}} \tilde t_i^2
\Big(\int_0^1 \frac 1 s \D \tau_i(s) -1\Big) < \infty,
   \\
\alpha_1 &:=\sum_{i \in J_\eta^{+-}} \tilde t_i^2
\Big(\int_0^1 \sum_{l=1}^{\kappa+1} \frac 1 {s^l}\D
\tau_i(s)-1\Big) < \infty,
   \\
\alpha_2 & :=\sum_{i \in J_\eta^{--}} \tilde t_i^2
\Big(\int_0^1 \sum_{l=1}^{\kappa+1} \frac 1 {s^l} \D
\tau_i(s)-1\Big) < \infty.
   \end{align*}
Since the system $\{\tilde t_i\}_{i \in J_\eta^{++}}$ is
square summable, we deduce that
   \begin{align*}
0 < \alpha_0 :=\sum_{i \in J_\eta^{++}} \tilde t_i^2 \Big(1
- \int_0^1 \sum_{l=1}^{\kappa+1} \frac 1 {s^l} \D
\tau_i(s)\Big) < \infty.
   \end{align*}
If we consider the sequence $\teb = \{ t_i\}_{i=1}^\eta$
given by
   \begin{align*}
t_i =
   \begin{cases}
r_0\tilde t_i & \text{for } i \in J_\eta^{++},
   \\
r_1\tilde t_i & \text{for } i \in J_\eta^{+-},
   \\
r_2\tilde t_i & \text{for } i \in J_\eta^{--},
   \end{cases}
   \end{align*}
where $r_0,r_1,r_2 \in \rbb$, then the conditions
\eqref{ka+2} and \eqref{ka+3} take the following form:
   \begin{align} \label{syst2}
1 = r_0^2 + r_1^2 \vartheta_1 - r_2^2 \vartheta_2, \quad
r_0^2 \alpha_0 - r_1^2 \alpha_1 - r_2^2 \alpha_2 > 0.
   \end{align}
Since $\alpha_0 \in (0,\infty)$, we easily verify that
there exist positive real numbers $r_0$, $r_1$ and $r_2$,
which satisfy \eqref{syst2}. This completes the proof.
   \end{proof}
It may be worth noting that the proof of Proposition
\ref{nascfch} also works without assuming that $\sup_{i \in
J_\eta} \tau_{i}([0,1]) < \infty$.

We are now ready to give a method of constructing all
possible completely hyperexpansive bounded weighted
shifts with nonzero weights on the directed tree
$\tcal_{\eta,\kappa}$ with $\kappa < \infty$. The
reader is asked to compare this method with that for
subnormal weighted shifts described in Procedure
\ref{twolin}. In particular, Procedure \ref{twolin}
enables us to construct bounded subnormal weighted
shifts on $\tcal_{\eta,\infty}$ with nonzero weights
which are not isometric. In view of Corollary
\ref{omega2-ch}\,(iii), this never happens in the case
of completely hyperexpansive weighted shifts, because
such operators are isometric. Isometric weighted
shifts are discussed in Propositions \ref{izometria}
and \ref{isokappa} (the case of the directed tree
$\tcal_{\eta,\kappa}$) and in Corollary \ref{chariso}
(the general situation).
   \begin{opis} \label{proc-ch}
Fix $\eta \in \{2,3, \ldots\} \sqcup \{\infty\}$ and
$\kappa \in \zbb_+$. Let $\{\tau_i\}_{i=1}^\eta$ be a
sequence of positive Borel measures on $[0,1]$ such that
$\sup_{i \in J_\eta} \tau_{i}([0,1]) < \infty$, $\int_0^1
\frac 1 {s^{\kappa+1}} \D \tau_i(s) < \infty$ for all $i
\in J_\eta$ and $\int_0^1 \sum_{l=1}^{\kappa+1} \frac
1{s^l} \D \tau_{i_0}(s) < 1$ for some $i_0\in J_\eta$.
Using Proposition \ref{nascfch}, we get a sequence $\teb =
\{t_{i}\}_{i=1}^\eta$ of positive real numbers satisfying
the conditions \eqref{ka+1}, \eqref{ka+2} and \eqref{ka+3}.
Next, applying Theorem \ref{par-ch}, we get a completely
hyperexpansive weighted shift $\slam \in
\ogr{\ell^2(V_{\eta,\kappa})}$ on $\tcal_{\eta,\kappa}$
with positive weights $\lambdab= \{\lambda_v\}_{v \in
V_{\eta,\kappa}^\circ}$ such that $\lambda_{i,1}=t_i$ and
$\tau_{i,1} = \tau_i$ for all $i \in J_\eta$.
   \end{opis}
   \subsection{\label{s7.4}Completion of weights on
$\tcal_{\eta,\kappa}$}
   Using the modelling procedure described in Section
\ref{mod-ch}, we give a deeper insight into complete
hyperexpansivity of weighted shifts on
$\tcal_{\eta,\kappa}$. We begin with writing some
estimates (from above and from below) for
$\sum_{i=1}^\eta \lambda_{i,1}^2$ and
$\tau_{j,1}([0,1])$, $j\in J_\eta$. Under some
circumstances, this enables us to simplify the formula
\eqref{norm-ch} for the norm of a completely
hyperexpansive weighted shift on the directed tree
$\tcal_{\eta,\kappa}$.
   \begin{pro}\label{norm-sim}
Let $\eta \in \{2,3, \ldots\} \sqcup \{\infty\}$ and
$\kappa \in \zbb_+$. Assume that $\slam \in
\ogr{\ell^2(V_{\eta,\kappa})}$ is a completely
hyperexpansive weighted shift on $\tcal_{\eta,\kappa}$ with
positive weights $\lambdab= \{\lambda_v\}_{v \in
V_{\eta,\kappa}^\circ}$. Set $\tau_i=\tau_{i,1}$ and
$t_i=\lambda_{i,1}$ for $i \in J_\eta$. Then the following
assertions hold.
   \begin{enumerate}
   \item[(i)] There exists $i_0 \in J_\eta$ such
that $\tau_{i_0}([0,1]) < \frac 1 {\kappa+1}$.
   \vspace{.5ex}
   \item[(ii)]  $\sum_{i=1}^\eta
t_i^2 \Ge 1;$ moreover, $\sum_{i=1}^\eta t_i^2 = 1$ if and
only if either $\kappa=0$ and $\slam$ is an isometry or
$\kappa \Ge 1$ and $\slamr{-\kappa +1}$ is an isometry
$($if $\slam$ is extremal, then $\sum_{i=1}^\eta t_i^2 = 1$
if and only if $\slam$ is an isometry$)$.
   \vspace{.5ex}
   \item[(iii)] If $\kappa \Ge 1$, then
$\sum_{i=1}^\eta t_i^2 < \frac {\kappa + 1}{\kappa}$.
   \vspace{.5ex}
   \item[(iv)] $\sum_{i=1}^\eta t_i^2 \Ge
1+\inf\{\tau_i([0,1])\colon i \in J_\eta\}$.
   \vspace{.5ex}
   \item[(v)] If $\tau_i([0,1]) =
\tau_1([0,1])$ for all $i\in J_\eta$, then
   \begin{align} \label{formulaF1}
\|\slam\|^2 =
   \begin{cases}
\sum_{i=1}^\eta t_i^2 & \text{for } \kappa=0,
   \\[1ex]
\lambda_{-(\kappa-1)}^2 & \text{for } \kappa \Ge
1.
   \end{cases}
   \end{align}
   \end{enumerate}
   \end{pro}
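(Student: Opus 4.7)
The overall strategy is to assemble $(\mathrm{i})$--$(\mathrm{v})$ out of the two conditions \eqref{ka+2} and \eqref{ka+3} satisfied by $\teb=\{t_i\}_{i=1}^{\eta}$, together with the normalisation consequences of Lemma~\ref{charch2} (namely $\tau_i(\nul)=0$, since each $\lambda_{i,1}>0$) and the model formulas \eqref{lamij-ch} and \eqref{norm-ch} for the weights and the norm.

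For $(\mathrm{i})$, I apply Proposition~\ref{nascfch} (the set $\mathscr U_{\eta,\kappa}^{\boldsymbol\tau}$ contains $\teb$, hence is nonempty) to produce $i_0\in J_\eta$ with $\int_0^1\sum_{l=1}^{\kappa+1}\frac1{s^l}\,\D\tau_{i_0}(s)<1$; since $\frac1{s^l}\Ge 1$ on $(0,1]$ and $\tau_{i_0}(\nul)=0$, the left side dominates $(\kappa+1)\tau_{i_0}([0,1])$. For $(\mathrm{ii})$ the lower bound $\sum t_i^2\Ge 1$ follows immediately from \eqref{ka+2}. Equality forces $\sum t_i^2\int_0^1\frac1s\D\tau_i(s)=0$, hence (using $t_i>0$ and $\tau_i(\nul)=0$) $\tau_i=0$ for every $i$. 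Then \eqref{lamij-ch} gives $\lambda_{i,j}=1$ for all $j\Ge 2$, and \eqref{5USD} gives $\zeta_k=1$ for $k\in J_{\kappa+1}$, so \eqref{zeta-ch'} forces $\lambda_{-k}=1$ for $k=0,\ldots,\kappa-2$. In view of Proposition~\ref{isokappa} (applied to $\tcal_{\eta,\kappa-1}$ in the case $\kappa\Ge1$), the restriction $\slamr{-\kappa+1}$ is an isometry, and a further application of Proposition~\ref{izometria} together with Corollary~\ref{chariso} handles the $\kappa=0$ case as well as the converse and the extremal case. For $(\mathrm{iii})$, assuming $\kappa\Ge1$, the equality in \eqref{ka+2} gives $\sum t_i^2\int_0^1\frac1s\D\tau_i(s)=\sum t_i^2-1$, and using $\sum_{l=1}^{\kappa+1}\frac1{s^l}\Ge(\kappa+1)\frac1s$ on $(0,1]$ in \eqref{ka+3} yields $\sum t_i^2>(\kappa+1)(\sum t_i^2-1)$, which rearranges to $\sum t_i^2<\frac{\kappa+1}{\kappa}$. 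For $(\mathrm{iv})$, I combine \eqref{ka+2} with the bound $\int_0^1\frac1s\D\tau_i(s)\Ge\tau_i([0,1])$ (using $\tau_i(\nul)=0$ and $\frac1s\Ge1$) and then $\sum t_i^2\Ge1$ from $(\mathrm{ii})$.

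For $(\mathrm{v})$, once $\tau_i([0,1])$ is independent of $i\in J_\eta$ (call the common value $c$), formula \eqref{lamij-ch} gives $\lambda_{i,2}^2=1+c$; by Lemma~\ref{monot-ca} the sequence $\{\lambda_{i,j}^2\}_{j=2}^\infty$ is nonincreasing, so $\sup_{i\in J_\eta}\sup_{j\Ge2}\lambda_{i,j}^2=1+c$. For $\kappa=0$, the top case of \eqref{norm-ch} together with $(\mathrm{iv})$ ($\sum t_i^2\Ge1+c$) gives the claimed equality. For $\kappa\Ge1$, applying Lemma~\ref{monot-ca} to the completely alternating sequence $\{\|\slam^n e_{-\kappa}\|^2\}_{n=0}^\infty$ (whose zeroth term is $1$) shows that its sequence of quotients, namely
\[
\lambda_{-(\kappa-1)}^2,\;\lambda_{-(\kappa-2)}^2,\;\ldots,\;\lambda_0^2,\;\sum_{i=1}^\eta t_i^2,\;\frac{\sum_{i=1}^\eta t_i^2(1+\tau_i([0,1]))}{\sum_{i=1}^\eta t_i^2},\;\ldots,
\]
is monotonically decreasing. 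Under the hypothesis of $(\mathrm{v})$ the entry at position $\kappa+1$ equals $1+c$, so $\lambda_{-(\kappa-1)}^2\Ge1+c$, and the bottom case of \eqref{norm-ch} collapses to $\lambda_{-(\kappa-1)}^2$.

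The only step that is not a direct bookkeeping argument is the equality clause in $(\mathrm{ii})$: one must trace through Procedure~\ref{proc-ch}/formula \eqref{zeta-ch'} to see that vanishing of all $\tau_i$ propagates to the left branch of the tree, and distinguish the extremal case (which pins down $\lambda_{-\kappa+1}$) from the general one (where only $\slamr{-\kappa+1}$ is forced to be isometric). Everything else is a mechanical consequence of \eqref{ka+1}--\eqref{ka+3} and the formulas derived in Lemma~\ref{omega3-ch}.
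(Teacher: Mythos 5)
Your proof is correct and follows essentially the same route as the paper's: parts (i)--(iii) and (v) are argued exactly as in the text (via Proposition \ref{nascfch}, the conditions \eqref{ka+2}--\eqref{ka+3}, the model formulas \eqref{lamij-ch}, \eqref{zeta-ch'}, \eqref{norm-ch} and the monotone quotient sequence \eqref{ch-dec}), and your treatment of the equality clause in (ii) matches the paper's appeal to Theorem \ref{par-ch} and Corollary \ref{chariso}. The only (harmless) deviation is in (iv), where you deduce the bound directly from \eqref{ka+2} and $\int_0^1 s^{-1}\,\D\tau_i \Ge \tau_i([0,1])$ rather than from the decreasing quotients of the completely alternating sequence $\{\|\slam^n e_0\|^2\}_{n=0}^\infty$ as the paper does; both arguments are valid and of the same depth.
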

   \begin{proof}
(i) It follows from Theorem \ref{par-ch} and Proposition
\ref{nascfch} that the exists $i_0\in J_\eta$ such that
$\int_0^1 \sum_{l=1}^{\kappa+1} \frac 1{s^l} \D
\tau_{i_0}(s) < 1$. As a consequence, we have
   \begin{align*}
(\kappa+1) \tau_{i_0}([0,1]) \Le \int_0^1
\sum_{l=1}^{\kappa+1} \frac 1{s^l} \D \tau_{i_0}(s) <1.
   \end{align*}

(ii) The inequality $\sum_{i=1}^\eta t_i^2 \Ge 1$ follows
from \eqref{ka+2}. Suppose that $\sum_{i=1}^\eta t_i^2 =
1$. Using \eqref{ka+2} again, we see that $\tau_i=0$ for
all $i \in J_\eta$, which in view of Theorem \ref{par-ch}
implies that $\sum_{v\in\dzi u} \lambda_v^2=1$ for all
$u\in V_{\eta,\kappa}$ when $\kappa = 0$, and for all $u
\in V_{\eta,\kappa} \setminus \{-\kappa\}$ when $\kappa \Ge
1$. Thus, by Corollary \ref{chariso}, $\slam$ is an
isometry when $\kappa=0$, and $\slamr{-\kappa +1}$ is an
isometry when $\kappa \Ge 1$. The reverse implication is
obvious. A similar reasoning applies to the case when
$\slam$ is extremal (cf.\ Remark \ref{extrch}).

(iii) It follows from Lemma \ref{omega3-ch}\,(i)
that
   \begin{align*}
\sum_{i=1}^\eta t_{i}^2 \overset{\eqref{ka+3}}
> \sum_{i=1}^\eta  t_{i}^2 \int_0^1
\Big(\frac 1 s + \ldots + \frac 1
{s^{\kappa+1}}\Big) \D \tau_i(s)
   & \hspace{1.5ex} \Ge (\kappa + 1) \sum_{i=1}^\eta
t_{i}^2 \int_0^1 \frac 1 s \D \tau_i(s)
   \\
& \overset{\eqref{ka+2}} = (\kappa + 1)
\Big(\sum_{i=1}^\eta t_{i}^2 - 1\Big),
   \end{align*}
which implies that $\sum_{i=1}^\eta t_i^2 < \frac {\kappa +
1}{\kappa}$.

(iv) Since the sequence $\{\|\slam^n
e_{0}\|^2\}_{n=0}^\infty$ is completely alternating,
we infer from \eqref{lamij-ch} and Lemma
\ref{monot-ca} that the corresponding sequence of
quotients
   \begin{align*}
\Big\{\sum_{i=1}^\eta t_i^2, \frac{\sum_{i=1}^\eta
 t_i^2 (1+\tau_i([0,1]))}{\sum_{i=1}^\eta t_i^2},
\ldots\Big\}
   \end{align*}
is monotonically decreasing. This implies that
$\sum_{i=1}^\eta t_i^2 \Ge 1+\inf_{i\in
J_\eta}\tau_i([0,1])$.

(v) Apply (iv), \eqref{norm-ch} and \eqref{ch-dec}.
   \end{proof}
Regarding parts (ii) and (iii) of Proposition
\ref{norm-sim}, it is worth noting that if $\kappa=0$, then
there is no upper bound for $\sum_{i=1}^\eta t_i^2$.
Moreover, for each $\varTheta \in [1,\frac
{\kappa+1}\kappa)$ (with the usual convention that $\frac 1
0 = \infty$) there exists a completely hyperexpansive
weighted shift $\slam \in \ogr{\ell^2(V_{\eta,\kappa})}$ on
$\tcal_{\eta,\kappa}$ with positive weights $\lambdab =
\{\lambda_v\}_{v \in V_{\eta,\kappa}^\circ}$ such that
$\varTheta = \sum_{i=1}^\eta \lambda_{i,1}^2$. In fact, we
can prove a more general result (see Example \ref{sub2} for
the discussion of the case of subnormality).
   \begin{pro}\label{kap-ch}
Let $\eta \in \{2,3, \ldots\} \sqcup \{\infty\}$ and
$\kappa \in \zbb_+$. If $\{t_i\}_{i=1}^\eta$ is a sequence
of positive real numbers such that $1 \Le \sum_{i=1}^\eta
t_i^2 < \frac{\kappa +1}\kappa$, then there exists a
completely hyperexpansive weighted shift $\slam \in
\ogr{\ell^2(V_{\eta,\kappa})}$ on $\tcal_{\eta,\kappa}$
with positive weights $\lambdab = \{\lambda_v\}_{v \in
V_{\eta,\kappa}^\circ}$ such that $\lambda_{i,1} = t_i$ for
all $i \in J_\eta$.
   \end{pro}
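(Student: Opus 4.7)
My plan is to realize the given sequence $\{t_i\}_{i=1}^\eta$ as the weights $\{\lambda_{i,1}\}_{i=1}^\eta$ of a completely hyperexpansive shift by first choosing an appropriate system of representing measures $\boldsymbol{\tau} = \{\tau_i\}_{i=1}^\eta$ and then invoking Theorem \ref{par-ch} via the parameterization $\mathscr U^{\boldsymbol\tau}_{\eta,\kappa}$. Thus the entire task reduces to exhibiting one $\boldsymbol\tau$ such that $\{t_i\}_{i=1}^\eta \in \mathscr U^{\boldsymbol\tau}_{\eta,\kappa}$, i.e., satisfies \eqref{ka+1}, \eqref{ka+2} and \eqref{ka+3}.

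The candidate I would try is the simplest possible: take all $\tau_i$ to be a common point mass concentrated at $1$, namely $\tau_i = c\,\delta_1$ for every $i \in J_\eta$, where the scalar $c\ge 0$ is to be chosen. With this choice, $\int_0^1 s^{-j}\D\tau_i(s) = c$ for every $j \ge 1$, so \eqref{ka+2} becomes $\sum_i t_i^2 \Ge 1 + c\sum_i t_i^2$ when $\kappa = 0$ and the equality version when $\kappa \Ge 1$, while \eqref{ka+3} becomes $1 > (\kappa+1)c$. In the case $\kappa = 0$ I would simply take $c = 0$; condition \eqref{ka+1} is obvious, \eqref{ka+2} reduces to the hypothesis $\sum_i t_i^2 \Ge 1$, and \eqref{ka+3} reduces to $\sum_i t_i^2 > 0$. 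In the case $\kappa \Ge 1$, the equality form of \eqref{ka+2} forces $c = 1 - \bigl(\sum_i t_i^2\bigr)^{-1}$, which lies in $[0,\infty)$ thanks to $\sum_i t_i^2 \Ge 1$; substituting this into \eqref{ka+3} yields the equivalent inequality $\sum_i t_i^2 < \tfrac{\kappa+1}{\kappa}$, which is exactly the upper bound assumed on the sequence $\{t_i\}$.

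Having verified that $\{t_i\}_{i=1}^\eta$ lies in $\mathscr U^{\boldsymbol\tau}_{\eta,\kappa}$, I would then apply $\varPhi_{\eta,\kappa}$ from Theorem \ref{par-ch} (using any admissible choice of $\lambda_{-\kappa+1}$ when $\kappa\Ge 1$) to produce the desired completely hyperexpansive weighted shift $\slam \in \ogr{\ell^2(V_{\eta,\kappa})}$ on $\tcal_{\eta,\kappa}$ with positive weights and with $\lambda_{i,1} = t_i$ for all $i \in J_\eta$. Note that $\sup_{i\in J_\eta}\tau_i([0,1]) = c < \infty$, so the hypothesis on $\boldsymbol\tau$ required by Theorem \ref{par-ch} is automatic.

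There is no genuine obstacle here; the only thing to notice is that the two-sided interval $\bigl[1,\tfrac{\kappa+1}\kappa\bigr)$ imposed on $\sum_i t_i^2$ matches \emph{exactly} the two constraints $c\Ge 0$ (from \eqref{ka+2} with equality) and $(\kappa+1)c < 1$ (from \eqref{ka+3}) after the substitution $c = 1 - 1/\sum_i t_i^2$. In other words, Proposition \ref{norm-sim}(ii)--(iii) already showed these bounds are \emph{necessary}; the content of the present proposition is that the same bounds are \emph{sufficient}, and the constant atomic choice $\tau_i \equiv c\,\delta_1$ is tight enough to exhibit this sufficiency in one stroke.
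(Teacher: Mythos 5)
Your proposal is correct and matches the paper's own proof in all essentials: the paper also takes $\tau_i=\frac{\varTheta-1}{\varTheta}\delta_1$ with $\varTheta=\sum_i t_i^2$ (exactly your $c=1-1/\varTheta$), verifies \eqref{ka+2} and \eqref{ka+3}, and invokes Lemma \ref{omega3-ch}\,(ii), of which Theorem \ref{par-ch} is a repackaging. The only cosmetic difference is that for $\kappa=0$ you set $c=0$ while the paper keeps $c=(\varTheta-1)/\varTheta$; both satisfy the inequality form of \eqref{ka+2}.
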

   \begin{proof}
Set $\varTheta = \sum_{i=1}^\eta t_i^2$. According to our
assumptions, we have $\varTheta \in [1,\frac{\kappa
+1}\kappa)$. Define the sequence $\{\tau_i\}_{i=1}^\eta$ of
positive Borel measures on $[0,1]$ by $\tau_i =
\frac{\varTheta-1}{\varTheta} \delta_1$ for $i \in J_\eta$.
It is easily seen that
   \begin{align*}
\sum_{i=1}^\eta t_i^2 = 1 + \sum_{i=1}^\eta t_i^2 \int_0^1
\frac 1 s \D \tau_i(s).
   \end{align*}
Hence, if $\kappa=0$, then by applying Lemma
\ref{omega3-ch}\,(ii) we get the required weighted shift
$\slam$. If $\kappa \Ge 1$, then $1 \Le \varTheta <
\frac{\kappa +1}\kappa$ implies that $1-(\kappa + 1)
\frac{\varTheta -1}\varTheta > 0$. Thus
   \begin{align*}
\sum_{i=1}^\eta t_{i}^2 \Big(1 - \int_0^1 \Big(\frac 1 s +
\ldots + \frac 1 {s^{\kappa+1}}\Big) \D \tau_i(s)\Big) =
\sum_{i=1}^\eta t_{i}^2
\Big(1-(\kappa+1)\frac{\varTheta-1}\varTheta\Big)>0,
   \end{align*}
which enables us once more to employ Lemma
\ref{omega3-ch}\,(ii).
   \end{proof}
   If total masses of representing measures $\tau_i$ are
not identical, then the formula \eqref{formulaF1} for the
norm of $\slam$ is no longer true.
   \begin{exa}
Consider the case when $\kappa=1$, $\eta=2$, $\tau_1=0$ and
$\tau_2=\delta_1$. Set $ t_1=1$ and take any $ t_2 \in (0,
1)$. We easily verify that the conditions
\eqref{ka+1}-\eqref{ka+3} are satisfied. By Theorem
\ref{par-ch}, there exists a unique extremal completely
hyperexpansive weighted shift $\slam \in
\ogr{\ell^2(V_{2,1})}$ on $\tcal_{2,1}$ with positive
weights $\lambdab= \{\lambda_v\}_{v \in V_{2,1}^\circ}$
such that $\lambda_{i,1}= t_i$ and $\tau_{i,1} = \tau_i$
for $i = 1,2$. Applying \eqref{zeta-ch'}, we see that
$\lambda_0=\frac1{\sqrt{1- t_2^2}}$.

If $ t_2 \in (0,\frac 1{\sqrt 2})$, then $\lambda_0\in
(1,\sqrt 2)$ and so $\lambda_0^2 < 2 =
\max\{1+\tau_i([0,1])\colon i=1,2\}$. Hence, by
\eqref{norm-ch}, $\|\slam\|^2= \max\{1+\tau_i([0,1])\colon
i=1,2\}$, which means that \eqref{formulaF1} does not hold.

If $ t_2 = \frac 1{\sqrt 2}$, then $\lambda_0 = \sqrt 2$
and $\|\slam\|^2= \lambda_0^2=\max\{1+\tau_i([0,1])\colon
i=1,2\}$. Thus \eqref{formulaF1} holds.

Finally, if $ t_2 \in (\frac 1{\sqrt 2},1)$, then
$\lambda_0\in (\sqrt 2,\infty)$, which implies that
$\|\slam\|^2= \lambda_0^2 > \max\{1+\tau_i([0,1])\colon
i=1,2\}$. Therefore \eqref{formulaF1} also holds.
   \end{exa}
The following result is a counterpart of Proposition
\ref{2curto} for completely hyperexpansive weighted shifts
(see Section \ref{k-step} for the definition of $k$-step
backward extendibility). The reader should be aware of the
difference between the condition (ii) of Proposition
\ref{2curto} and its counterpart in Proposition
\ref{2curto-ch} below.
   \begin{pro}\label{2curto-ch}
Let $\eta \in \{2,3, \ldots\} \sqcup \{\infty\}$ and
$\kappa \in \zbb_+$. If for every $i\in J_\eta$, $S_i$
is a bounded unilateral classical weighted shift with
positive weights $\{\alpha_{i,n}\}_{n=1}^\infty$, then
the following two conditions are equivalent\/{\em :}
   \begin{enumerate}
   \item[(i)] there exists a system $\lambdab =
\{\lambda_{v}\}_{v \in V_{\eta,\kappa}^\circ}$ of positive
scalars such that the weighted shift $\slam$ on the
directed tree $\tcal_{\eta,\kappa}$ is bounded and
completely hyperexpansive, and
   \begin{align} \label{alla-ch}
\alpha_{i,n}=\lambda_{i,n+1}, \quad n\in \nbb, \, i\in
J_\eta,
   \end{align}
   \item[(ii)] the operator $S_i$ is
completely hyperexpansive and $\int_0^1 \frac 1
{s^{\kappa+1}} \D \tau_i(s) < \infty$ for every $i \in
J_\eta$ $($$\tau_i$ is the representing measure of
$S_i$$)$, $S_{i_0}$ has a completely hyperexpansive
$(\kappa+1)$-step backward extension for some $i_0\in
J_\eta$, and $\sup_{i\in J_\eta} \|S_i\| < \infty$.
   \end{enumerate}
   \end{pro}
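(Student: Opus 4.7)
The plan is to mimic the argument given for the subnormal analogue (Proposition \ref{2curto}), but to use the toolkit developed in Section \ref{mod-ch}, especially Procedure \ref{proc-ch}, Theorem \ref{par-ch} and Proposition \ref{nascfch}, and to bridge classical weighted shifts with the directed-tree setting via the characterization \eqref{jab-ju-st} of completely hyperexpansive $k$-step backward extendibility.

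For the implication (i)$\Rightarrow$(ii), I would begin by noting that the subspace $\ell^2(\dest{\tcal_{\eta,\kappa}}{(i,1)})$ reduces the underlying directed tree to a copy of $\zbb_+$, and that the equality \eqref{alla-ch} means that $\slam|_{\ell^2(\dest{\tcal_{\eta,\kappa}}{(i,1)})}=\slamr{(i,1)}$ is unitarily equivalent to $S_i$. Since the restriction of a completely hyperexpansive operator to a reducing (in fact, invariant) subspace is completely hyperexpansive, each $S_i$ is completely hyperexpansive; moreover $\|S_i\|\Le \|\slam\|$, which gives the uniform norm bound. The equality of squared norms of iterates forces $\tau_i=\tau_{i,1}$, and Corollary \ref{omega2-ch} (via Lemma \ref{omega3-ch}\,(i)) then guarantees that $\int_0^1 \frac{1}{s^{\kappa+1}} \D \tau_i(s) < \infty$ for every $i\in J_\eta$. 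Finally, the sequence $\teb=\{\lambda_{i,1}\}_{i=1}^\eta$ belongs to $\mathscr U_{\eta,\kappa}^{\boldsymbol\tau}$ (by Theorem \ref{par-ch}, with $\slam\in \mathscr V_{\eta,\kappa}^{\boldsymbol \tau}$), so Proposition \ref{nascfch} yields some $i_0\in J_\eta$ with
\begin{align*}
\int_0^1 \sum_{l=1}^{\kappa+1}\frac{1}{s^l}\,\D \tau_{i_0}(s) < 1,
\end{align*}
and an appeal to \eqref{jab-ju-st} (with $k=\kappa+1$) translates this into the existence of a completely hyperexpansive $(\kappa+1)$-step backward extension of $S_{i_0}$.

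For the converse (ii)$\Rightarrow$(i), take $\tau_i$ to be the representing measure of $S_i$; because $\sup_i \supp(\tau_i)\subseteq [0,1]$ and $\tau_i([0,1])=\|S_i e_0\|^2-1\Le \|S_i\|^2-1$, the uniform norm bound yields $\sup_{i\in J_\eta}\tau_i([0,1])<\infty$. The integrability condition on $\tau_i$ in (ii), together with \eqref{jab-ju-st} applied to the completely hyperexpansive $(\kappa+1)$-step backward extension of $S_{i_0}$, is precisely the content of condition (ii) of Proposition \ref{nascfch}, so $\mathscr U_{\eta,\kappa}^{\boldsymbol\tau}\neq\varnothing$. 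Picking any $\teb\in \mathscr U_{\eta,\kappa}^{\boldsymbol\tau}$ and applying Procedure \ref{proc-ch} (equivalently, Theorem \ref{par-ch}) produces a bounded completely hyperexpansive weighted shift $\slam\in \mathscr V_{\eta,\kappa}^{\boldsymbol \tau}$ with $\tau_{i,1}=\tau_i$ for all $i\in J_\eta$.

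It then only remains to verify the matching identity $\alpha_{i,n}=\lambda_{i,n+1}$ for all $n\in\nbb$ and $i\in J_\eta$. The weights $\lambda_{i,j}$ for $j\Ge 2$ are given by \eqref{lamij-ch} purely in terms of $\tau_i$; on the other hand, since $\{\|S_i^n e_0\|^2\}_{n=0}^\infty$ is completely alternating with representing measure $\tau_i$, the classical formula
\begin{align*}
\alpha_{i,n}^2 = \frac{\|S_i^{n}e_0\|^2}{\|S_i^{n-1}e_0\|^2} = \frac{1+\int_0^1(1+\ldots+s^{n-1})\,\D \tau_i(s)}{1+\int_0^1(1+\ldots+s^{n-2})\,\D \tau_i(s)}
\end{align*}
holds for $n\Ge 2$ (with the natural reading for $n=1$ giving $\alpha_{i,1}^2=1+\tau_i([0,1])$), and a direct comparison with \eqref{lamij-ch} gives $\alpha_{i,n}=\lambda_{i,n+1}$ for all $n\in\nbb$. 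The main obstacle, and the point where this proof essentially differs from the subnormal analogue, is the asymmetric appearance of $i_0$: only one index needs the backward-extension property, and matching this precisely to the ``some $i_0$'' clause in Proposition \ref{nascfch}\,(ii) via \eqref{jab-ju-st} is what makes the whole scheme work.
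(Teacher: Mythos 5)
Your proposal is correct and follows essentially the same route as the paper's proof: restricting $\slam$ to $\ell^2(\dest{\tcal_{\eta,\kappa}}{(i,1)})$ to identify $S_i$ with $\slamr{(i,1)}$ for (i)$\Rightarrow$(ii), and combining Proposition \ref{nascfch}, Theorem \ref{par-ch}/Procedure \ref{proc-ch} and \eqref{jab-ju-st} for the converse. The only (immaterial) differences are that you obtain $\sup_{i\in J_\eta}\tau_i([0,1])<\infty$ directly from $\tau_i([0,1])=\|S_ie_0\|^2-1$ rather than via monotonicity of the weights, and that you spell out the weight-matching computation which the paper compresses into the equality of the moment sequences $\|\slam^n e_{i,1}\|^2=\|S_i^n e_0\|^2$.
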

   \begin{proof}  We argue essentially as in the
proof of Proposition \ref{2curto}.

   (i)$\Rightarrow$(ii) By \eqref{alla-ch}, the
operator $\slam|_{\ell^2(\des{i,1})}$ is unitarily
equivalent to $S_i$, and hence $\sup_{i\in J_\eta}
\|S_i\| \Le \|\slam\|$. Since $\|\slam^n e_{i,1}\|^2 =
\|S_i^n e_0\|^2$ for all $n \in \zbb_+$, we see that
$S_i$ is completely hyperexpansive and $\tau_{i,1}$ is
the representing measure of $S_i$. Owing to Theorem
\ref{par-ch} and Proposition \ref{nascfch}, $\int_0^1
\frac 1 {s^{\kappa+1}} \D \tau_{i,1}(s) < \infty$ for
all $i \in J_\eta$, and $\int_0^1
\sum_{l=1}^{\kappa+1} \frac 1{s^l} \D \tau_{i_0,1}(s)
< 1$ for some $i_0\in J_\eta$. Applying
\eqref{jab-ju-st} to $S_{i_0}$, we get (ii).

(ii)$\Rightarrow$(i) Since the weights of a completely
hyperexpansive unilateral classical weighted shift are
monotonically decreasing (use \eqref{repch} and Lemma
\ref{monot-ca}), we deduce that $\|S_i\|^2 = 1 +
\tau_i([0,1])$. This yields $\sup_{i \in J_\eta}
\tau_{i}([0,1]) < \infty$. It follows from
\eqref{jab-ju-st}, applied to $S_{i_0}$, that $\int_0^1
\sum_{l=1}^{\kappa+1} \frac 1{s^l} \D \tau_{i_0}(s) < 1$.
Employing Procedure \ref{proc-ch}, we get a bounded
completely hyperexpansive weighted shift $\slam$ on
$\tcal_{\eta,\kappa}$ with positive weights $\lambdab =
\{\lambda_{v}\}_{v \in V_{\eta,\kappa}^\circ}$ such that
$\tau_{i,1} = \tau_i$ for all $i \in J_\eta$. Hence
$\|\slam^n e_{i,1}\|^2 = \|S_i^n e_0\|^2$ for all $n \in
\zbb_+$ and $i \in J_\eta$, which implies \eqref{alla-ch}.
   \end{proof}
   \begin{cor} \label{cur-wn}
Let $\eta$, $\kappa$, $\{\alpha_{i,n}\}_{n=1}^\infty$ and
$S_i$ be as in Proposition {\em \ref{2curto-ch}}. If $S_i$
has a completely hyperexpansive $(\kappa+1)$-step backward
extension for every $i\in J_\eta$ and $\sup_{i\in J_\eta}
\|S_i\| < \infty$, then there exists a completely
hyperexpansive weighted shift $\slam \in
\ogr{\ell^2(V_{\eta,\kappa})}$ on $\tcal_{\eta,\kappa}$
with positive weights $\lambdab = \{\lambda_{v}\}_{v \in
V_{\eta,\kappa}^\circ}$ such that
$\alpha_{i,n}=\lambda_{i,n+1}$ for all $n\in \nbb$ and
$i\in J_\eta$.
   \end{cor}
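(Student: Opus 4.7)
The plan is to observe that Corollary \ref{cur-wn} is an almost immediate consequence of Proposition \ref{2curto-ch} combined with the characterization \eqref{jab-ju-st} of completely hyperexpansive $(\kappa+1)$-step backward extendibility. Indeed, the hypothesis of the corollary is (formally) stronger than condition (ii) of Proposition \ref{2curto-ch}, so the corollary follows simply by verifying that condition (ii) holds and invoking the proposition.

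First, I would fix $i \in J_\eta$ and apply \eqref{jab-ju-st} with $k=\kappa+1$ to the unilateral classical weighted shift $S_i$: since $S_i$ admits a completely hyperexpansive $(\kappa+1)$-step backward extension, we conclude that $S_i$ is itself completely hyperexpansive and that its representing measure $\tau_i$ satisfies
\begin{align*}
\int_0^1 \sum_{l=1}^{\kappa+1} \frac{1}{s^l} \D \tau_i(s) < 1.
\end{align*}
In particular, $\int_0^1 \frac{1}{s^{\kappa+1}} \D \tau_i(s) < \infty$ for every $i \in J_\eta$. Since $J_\eta$ is nonempty, we may pick any $i_0 \in J_\eta$ to serve as the distinguished index appearing in condition (ii) of Proposition \ref{2curto-ch}.

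Next, I would collect these facts together with the assumption $\sup_{i \in J_\eta}\|S_i\|<\infty$ to see that condition (ii) of Proposition \ref{2curto-ch} is fully satisfied. Invoking the implication (ii)$\Rightarrow$(i) of that proposition yields a system $\lambdab = \{\lambda_v\}_{v \in V_{\eta,\kappa}^\circ}$ of positive scalars for which the weighted shift $\slam$ on $\tcal_{\eta,\kappa}$ is bounded and completely hyperexpansive, and for which $\alpha_{i,n} = \lambda_{i,n+1}$ holds for all $n \in \nbb$ and $i \in J_\eta$. This is exactly the required conclusion.

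There is essentially no obstacle; the work was already done in Proposition \ref{2curto-ch}, whose proof in turn rested on Procedure \ref{proc-ch}, Proposition \ref{nascfch}, and Theorem \ref{par-ch}. The only ``substantive'' point to notice is the (already transparent) fact that the uniform hypothesis ``\emph{every} $S_i$ has a completely hyperexpansive $(\kappa+1)$-step backward extension'' implies both clauses of (ii) of Proposition \ref{2curto-ch} simultaneously: it supplies (via \eqref{jab-ju-st}) the complete hyperexpansivity of each $S_i$ together with the finiteness of $\int_0^1 s^{-(\kappa+1)} \D \tau_i(s)$ for all $i$, and it trivially furnishes the existence of at least one index $i_0$ whose shift $S_{i_0}$ extends $(\kappa+1)$ steps backward. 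Thus the corollary drops out at once from Proposition \ref{2curto-ch}.
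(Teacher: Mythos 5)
Your proposal is correct and follows exactly the paper's own (one-line) proof, which reads ``Apply \eqref{jab-ju-st} and Proposition \ref{2curto-ch}''; you have simply spelled out the routine verification that the hypothesis implies condition (ii) of that proposition. Nothing is missing.
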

   \begin{proof}
Apply \eqref{jab-ju-st} and Proposition \ref{2curto-ch}.
   \end{proof}
The converse of Corollary \ref{cur-wn} does not hold. In
fact, one can construct a completely hyperexpansive
weighted shift $\slam\in \ogr{\ell^2(V_{\eta,\kappa})}$ on
$\tcal_{\eta,\kappa}$ with positive weights such that the
set of all $i\in J_\eta$ for which $S_i$ has a completely
hyperexpansive $(\kappa+1)$-step backward extension
consists of one point (e.g., the required weighted shift
can be obtained by applying Procedure \ref{proc-ch} to the
measures $\{\tau_i\}_{i\in J_\eta}$ given by $\tau_1 = 0$
and $\tau_i=\delta_1$ for $i \neq 1$).
   \subsection{\label{s751}Graph extensions}
   It turns out that the direct counterpart of
Proposition \ref{maxsub} for complete hyperexpansivity
is no longer true (cf.\ Example \ref{ch-n0restr}). In
fact, the situation is now more complicated, and so we
have to make stronger assumptions.
   \begin{pro} \label{maxsub-ch}
Let $\tcal = (V,E)$ be a subtree of a directed tree
$\hat \tcal = (\hat V,\hat E)$ such that, for some $w
\in V \setminus \Ko{\tcal}$, $\dzit{\tcal}{w} \neq
\dzit{\hat \tcal}{w}$, $\dzit{\tcal}{\pa{w}} =
\dzit{\hat \tcal}{\pa{w}}$, and $\dest{\tcal}{v} =
\dest{\hat \tcal}{v}$ for all $v \in \dzit{\tcal}{w}
\cup \big(\dzit{\tcal}{\pa{w}} \setminus \{w\}\big)$.
Assume that $\slam \in \ogr{\ell^2(V)}$ is a
completely hyperexpansive weighted shift on $\tcal$
with nonzero weights $\lambdab = \{\lambda_u\}_{u \in
V^\circ}$. If $\slam$ satisfies one of the following
conditions{\em :}
   \begin{enumerate}
   \item[(i)] $w \in V \setminus
\big(\Ko{\tcal} \cup \dzi{\Ko{\tcal}}\big)$,
   \item[(ii)] $\slam$ satisfies the strong consistency
condition at $u=\pa{w}$, i.e., \eqref{consist-ch'} is
valid for $u = \pa{w}$,
   \end{enumerate}
then there exists no completely hyperexpansive weighted
shift $\slamh \in \ogr {\ell^2(\hat V)}$ on $\hat \tcal$
with nonzero weights $\hat \lambdab = \{\hat \lambda_u\}_{u
\in \hat V^\circ}$ such that $\lambdab \subseteq \hat
\lambdab$, i.e., $\lambda_u = \hat \lambda_u$ for all $u
\in V^\circ$.
   \end{pro}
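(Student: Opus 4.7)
Suppose, for contradiction, that such a weighted shift $\slamh$ exists. By Proposition \ref{chinj}, both $\tcal$ and $\hat\tcal$ are leafless. Since $\slam$ and $\slamh$ are completely hyperexpansive with nonzero weights, Lemma \ref{charch2} applied at $u=\pa{v}$, together with $\lambda_v\neq 0$ (respectively $\hat\lambda_v\neq 0$), gives $\tau_v^{\tcal}(\nul)=0$ (respectively $\tau_v^{\hat\tcal}(\nul)=0$) whenever $\pa{v}$ makes sense; by the ``moreover'' part of Lemma \ref{charch2} this is equivalent to the strong consistency condition at $v$. Hence $\slam$ satisfies the strong consistency condition at $w$ (as $w\notin\Ko{\tcal}$); in case~(i) it does so also at $\pa{w}$ (because then $\paa^2(w)$ exists), and in case~(ii) this is assumed outright. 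Likewise, $\slamh$ satisfies strong consistency at $w$ and at every $v\in\dzit{\hat\tcal}{w}$. Finally, for each $v \in \dzit{\tcal}{w}\cup\bigl(\dzit{\tcal}{\pa{w}}\setminus\{w\}\bigr)$, the hypothesis $\dest{\tcal}{v}=\dest{\hat\tcal}{v}$ together with $\lambdab\subseteq\hat\lambdab$ yields $\slam|_{\ell^2(\dest{\tcal}{v})}=\slamh|_{\ell^2(\dest{\hat\tcal}{v})}$, whence $\tau_v^{\tcal}=\tau_v^{\hat\tcal}$.

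Next, compare consistency at $\pa{w}$ and at $w$. Since $\dzit{\tcal}{\pa{w}}=\dzit{\hat\tcal}{\pa{w}}$, $\lambdab\subseteq\hat\lambdab$, and $\tau_v^{\tcal}=\tau_v^{\hat\tcal}$ for every $v\in\dzit{\tcal}{\pa{w}}\setminus\{w\}$, subtracting the strong consistency of $\slam$ at $\pa{w}$ (an equality) from the weak consistency of $\slamh$ at $\pa{w}$ (an inequality) gives
\begin{align} \label{planA-maxch}
|\lambda_w|^2 \int_0^1 \frac{1}{s}\,\D \tau_w^{\hat\tcal}(s) \;\Le\; |\lambda_w|^2 \int_0^1 \frac{1}{s}\,\D \tau_w^{\tcal}(s),
\end{align}
both sides being finite thanks to the consistency relations themselves. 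On the other hand, $\tau_w^{\tcal}(\nul)=\tau_w^{\hat\tcal}(\nul)=0$ specialises \eqref{muu-ch} to $\tau_w^{\tcal}(\sigma)=\sum_{v\in\dzit{\tcal}{w}}|\lambda_v|^2\int_\sigma s^{-1}\D\tau_v^{\tcal}(s)$ and to the analogous identity for $\tau_w^{\hat\tcal}$ (the $\delta_0$ terms disappear). Integrating $1/s$ against each, subtracting, and using $\tau_v^{\tcal}=\tau_v^{\hat\tcal}$ on $\dzit{\tcal}{w}$ together with $|\hat\lambda_v|=|\lambda_v|$ there, we obtain
\begin{align} \label{planB-maxch}
\int_0^1 \frac{1}{s}\D \tau_w^{\hat\tcal}(s) - \int_0^1 \frac{1}{s}\D \tau_w^{\tcal}(s) \;=\; \sum_{v\in D}|\hat\lambda_v|^2 \int_0^1 \frac{1}{s^2}\D \tau_v^{\hat\tcal}(s) \;\Ge\; 0,
\end{align}
where $D:=\dzit{\hat\tcal}{w}\setminus\dzit{\tcal}{w}$ is nonempty by hypothesis.

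Combining \eqref{planA-maxch} and \eqref{planB-maxch} forces $\int_0^1 s^{-2}\D\tau_v^{\hat\tcal}=0$ for every $v\in D$; since $\tau_v^{\hat\tcal}(\nul)=0$ and $s^{-2}>0$ on $(0,1]$, each $\tau_v^{\hat\tcal}$ ($v\in D$) is the zero measure. Subtracting the strong consistency of $\slam$ at $w$ from that of $\slamh$ at $w$ (again the $v\in\dzit{\tcal}{w}$ summands cancel) now produces
\begin{align*}
\sum_{v\in D}|\hat\lambda_v|^2 \;=\; \sum_{v\in D}|\hat\lambda_v|^2 \int_0^1 \frac{1}{s}\D \tau_v^{\hat\tcal}(s) \;=\; 0,
\end{align*}
contradicting $|\hat\lambda_v|>0$ for every $v\in D$. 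The main obstacle is the careful bookkeeping of the four consistency relations (strong for $\slam$ at $w$ and $\pa{w}$, strong for $\slamh$ at $w$, weak for $\slamh$ at $\pa{w}$), in tandem with the transfer of representing measures $\tau_v^{\tcal}=\tau_v^{\hat\tcal}$ afforded by the descendant-matching hypothesis, and the reduction of case~(i) to the strong-consistency version at $\pa{w}$ used in case~(ii).
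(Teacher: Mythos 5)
Your proof is correct and follows essentially the same route as the paper's: reduce case (i) to case (ii) via Lemma \ref{charch2} applied at $\paa^2(w)$, transfer the representing measures $\tau_v^{\tcal}=\tau_v^{\hat\tcal}$ along the matching descendant sets, play the consistency conditions at $\pa{w}$ against the formula \eqref{muu-ch} at $w$ to force $\int_0^1 s^{-2}\,\D\tau_v^{\hat\tcal}(s)=0$, hence $\tau_v^{\hat\tcal}=0$, for every $v\in\dzit{\hat\tcal}{w}\setminus\dzit{\tcal}{w}$, and obtain the contradiction from the strong consistency condition at $w$ for both operators. The only difference is the order of bookkeeping: the paper records the $w$-level comparison (its \eqref{=0ch}) first and contradicts it at the end, whereas you derive it last.
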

   \begin{proof}
Applying Lemma \ref{charch2} to $u=\paa^2(w)$, we see that
(i) implies (ii). Assume that (ii) holds. Suppose that,
contrary to our claim, such an $\slamh$ exists. It follows
from Proposition \ref{chinj} that $\tcal$ and $\hat\tcal$
are leafless. Hence $\varnothing \neq \dzit{\tcal}{w}
\subsetneq \dzit{\hat\tcal}{w}$. Applying Lemma
\ref{charch2} to $u=\pa{w}$, we deduce that
$\tau_w^{\tcal}{(\nul)}=0$, which, again by Lemma
\ref{charch2} applied now to $u=w$, yields
   \begin{align} \label{jedyneczka-ch}
1=\sum_{v \in \dzit{\tcal}{w}} |\lambda_v|^2 \Big(1 -
\int_0^1 \frac 1 s\, \D \tau_v^\tcal(s)\Big).
   \end{align}
The same is true for $\slamh$. Since $\lambdab \subseteq
\hat \lambdab$ and $\tau_v^{\tcal} = \tau_v^{\hat\tcal}$
for all $v \in \dzit{\tcal}{w}$ (see the proof of
Proposition \ref{maxsub}), we have
   \allowdisplaybreaks
   \begin{align*}
1&\hspace{2ex}=\sum_{v \in \dzit{\hat\tcal}{w}}
|\hat\lambda_v|^2 \Big(1 - \int_0^1 \frac 1 s\, \D
\tau_v^{\hat\tcal}(s)\Big)
   \\
&\hspace{2ex} = \sum_{v \in \dzit{\tcal}{w}} |\lambda_v|^2
\Big(1 - \int_0^1 \frac 1 s\, \D \tau_v^\tcal(s)\Big)
   \\
& \hspace{5ex}+ \sum_{v \in \dzit{\hat\tcal}{w} \setminus
\dzit{\tcal}{w}} |\hat\lambda_v|^2 \Big(1 - \int_0^1 \frac
1 s\, \D \tau_v^{\hat\tcal}(s)\Big)
   \\
& \overset{\eqref{jedyneczka-ch}}= 1 + \sum_{v \in
\dzit{\hat\tcal}{w} \setminus \dzit{\tcal}{w}}
|\hat\lambda_v|^2 \Big(1 - \int_0^1 \frac 1 s\, \D
\tau_v^{\hat\tcal}(s)\Big),
   \end{align*}
which implies that
   \begin{align} \label{=0ch}
\sum_{v \in \dzit{\hat\tcal}{w} \setminus \dzit{\tcal}{w}}
|\hat\lambda_v|^2 \Big(1 - \int_0^1 \frac 1 s\, \D
\tau_v^{\hat\tcal}(s)\Big) = 0.
   \end{align}

We now turn to the second part of the proof. Applying
Lemma \ref{charch2} to $u=\pa{w}$ (as well as to both
operators $\slam$ and $\slamh$), and using the
assumption $\dzit{\tcal}{\pa{w}} = \dzit{\hat
\tcal}{\pa{w}}$, we deduce that
   \allowdisplaybreaks
   \begin{gather}       \label{=0ch'}
1 + \sum_{v \in \dzit{\tcal}{\pa w}} |\lambda_v|^2 \int_0^1
\frac 1 {s}\, \D \tau_v^{\tcal}(s) \overset{\mathrm{(ii)}}=
\sum_{v \in \dzit{\tcal}{\pa{w}}} |\lambda_v|^2
   \\
\overset{\substack{\lambdab \subseteq \hat \lambdab
\\[-.6ex]\phantom{.}}} = \sum_{v \in \dzit{\hat\tcal}{\pa{w}}} |\hat
\lambda_v|^2 \Ge 1+ \sum_{v \in \dzit{\hat\tcal}{\pa w}}
|\hat\lambda_v|^2 \int_0^1 \frac 1 {s}\, \D
\tau_v^{\hat\tcal}(s). \notag
   \end{gather}
It follows from our assumptions that
$\tau_v^{\tcal}=\tau_v^{\hat\tcal}$ for all $v \in
\dzit{\tcal}{\pa{w}} \setminus \{w\}$. Hence, by
\eqref{=0ch'} and $\lambdab \subseteq \hat \lambdab$,
we have
   \begin{align} \label{=0ch''}
\int_0^1 \frac 1 s \D \tau_w^{\tcal}(s) \Ge \int_0^1 \frac
1 s \D \tau_w^{\hat \tcal}(s).
   \end{align}
Applying Lemma \ref{charch2} to $u=w$ (recall that
$\tau_w^{\tcal}(\nul) = \tau_w^{\hat \tcal}(\nul)
=0$), we deduce from \eqref{muu-ch} that
   \begin{align*} 
&\sum_{v \in \dzit{\tcal}{w}} |\lambda_v|^2 \int_0^1
\frac 1 {s^{2}}\, \D \tau_v^{\tcal}(s) = \int_0^1
\frac 1 s \D \tau_w^{\tcal}(s)
   \\
   &\hspace{5ex} \overset{\eqref{=0ch''}} \Ge \int_0^1
\frac 1 s \D \tau_w^{\hat \tcal}(s) = \sum_{v \in
\dzit{\hat\tcal}{w}} |\hat\lambda_v|^2\int_0^1 \frac 1
{s^{2}}\, \D \tau_v^{\hat\tcal}(s)
   \\
& \hspace{5.5ex} \overset{\substack{\lambdab \subseteq
\hat \lambdab
\\[-.6ex]\phantom{.}}} = \sum_{v \in \dzit{\tcal}{w}}
|\lambda_v|^2 \int_0^1 \frac 1 {s^{2}}\, \D
\tau_v^{\tcal}(s) + \sum_{v \in \dzit{\hat \tcal}{w}
\setminus \dzit{\tcal}{w}} |\hat \lambda_v|^2 \int_0^1
\frac 1 {s^{2}}\, \D \tau_v^{\hat\tcal}(s),
   \end{align*}
which implies that
   \begin{align*} 
\sum_{v \in \dzit{\hat \tcal}{w} \setminus \dzit{\tcal}{w}}
|\hat \lambda_v|^2 \int_0^1 \frac 1 {s^{2}}\, \D
\tau_v^{\hat\tcal}(s)=0.
   \end{align*}
Since $\dzit{\hat \tcal}{w} \setminus \dzit{\tcal}{w} \neq
\varnothing$ and all the weights $\hat \lambda_v$ are
nonzero, we conclude that $\tau_v^{\hat\tcal}=0$ for all $v
\in \dzit{\hat \tcal}{w} \setminus \dzit{\tcal}{w}$, which
contradicts \eqref{=0ch}. This completes the proof.
   \end{proof}
Regarding Proposition \ref{maxsub-ch}, note that if
$\tcal=\tcal_{\eta,1}$ and $w=0$, then (ii) is equivalent
to assuming that $\slam$ is extremal.

We now show by example that the conclusion of
Proposition \ref{maxsub-ch} can fail if one of the
assumptions (i) or (ii) is not satisfied. In fact, we
give a method of constructing such examples. The
reader who is interested in a simple example may
consider the measures $\{\tau_i\}_{i \in J_{\hat
\eta}}$ given by $\tau_i=0$ for $i\in J_\eta$ and
$\tau_i=\delta_1$ for $i \in J_{\hat \eta} \setminus
J_\eta$.
   \begin{exa} \label{ch-n0restr}
Let $\eta, \hat \eta \in \{2,3, \ldots\} \sqcup
\{\infty\}$ be such that $\eta < \hat \eta$ and let
$\hat \kappa \in \zbb_+$. Set $\tcal=\tcal_{\eta,0}$,
$\tilde \tcal=\tcal_{\eta,\hat\kappa}$ and $\hat
\tcal=\tcal_{\hat \eta,\hat\kappa}$. Take a (finite)
sequence $\{\tau_i\}_{i=1}^\eta$ of positive Borel
measures on $[0,1]$ such that $\sup_{i \in J_\eta}
\tau_{i}([0,1]) < \infty$, $\int_0^1 \frac 1 {s^{\hat
\kappa + 1}} \D\tau_i(s) < \infty$ for all $i\in
J_\eta$ and $\int_0^1 \sum_{l=1}^{\hat\kappa+1} \frac
1{s^l} \D \tau_{i_0}(s) < 1$ for some $i_0 \in
J_\eta$. Applying Procedure \ref{proc-ch} to
$\tilde\tcal$, we deduce that there exists a
completely hyperexpansive weighted shift
$S_{\tilde{\lambdab}} \in
\ogr{\ell^2(V_{\eta,\hat\kappa})}$ on $\tilde\tcal$
with positive weights $\tilde{\lambdab} =
\{\tilde\lambda_v\}_{v \in V_{\eta,\hat\kappa}^\circ}$
such that $\tau_{i,1}^{\tilde\tcal}=\tau_i$ for all $i
\in J_\eta$. It follows from \eqref{ka+3}, applied to
$S_{\tilde{\lambdab}}$, that
   \begin{align} \label{dodsum}
\sum_{i=1}^\eta \tilde\lambda_{i,1}^2\Big(1-
\int_0^1 \sum_{l=1}^{\hat\kappa+1} \frac 1 {s^l}
\D \tau_i(s)\Big) > 0.
   \end{align}
Consider now a supplementary sequence $\{\tau_i\}_{i \in
J_{\hat\eta} \setminus J_\eta}$ of positive Borel measures
on $[0,1]$ such that
   \begin{align} \label{int0'}
& \int_0^1 \frac 1 {s^{\hat \kappa + 1}} \D\tau_i(s) <
\infty, \quad i\in J_{\hat\eta} \setminus J_\eta,
   \\
& \label{int0} \int_0^1 \frac 1 s \D\tau_i(s)=1, \quad
i\in J_{\hat\eta} \setminus J_\eta.
   \end{align}
Since $\int_0^1 \frac 1 s \D \tau_i(s) \Ge \tau_i([0,1])$,
we deduce from \eqref{int0} that $\sup_{i\in J_{\hat \eta}}
\tau_i([0,1]) < \infty$. By \eqref{dodsum} and
\eqref{int0'}, there exists a square summable sequence
$\{t_i\}_{i\in J_{\hat \eta} \setminus J_{\eta}}$ of
positive real numbers such that $\sum_{i=\eta+1}^{\hat\eta}
t_i^2\big|1- \int_0^1 \sum_{l=1}^{\hat\kappa+1} \frac 1
{s^l} \D \tau_i(s)\big| < \infty$ and
   \begin{align}  \label{ka+3-ch}
\sum_{i=1}^\eta \tilde\lambda_{i,1}^2\Big(1-
\int_0^1 \sum_{l=1}^{\hat\kappa+1} \frac 1 {s^l}
\D \tau_i(s)\Big) + \sum_{i=\eta+1}^{\hat\eta}
t_i^2\Big(1- \int_0^1 \sum_{l=1}^{\hat\kappa+1}
\frac 1 {s^l} \D \tau_i(s)\Big) > 0.
   \end{align}
Note that the sequence $\hat\teb:=\{\hat
t_i\}_{i=1}^{\hat\eta}$ given by $\hat t_i =
\tilde\lambda_{i,1}$ for $i \in J_\eta$ and $\hat t_i =
t_i$ for $i\in J_{\hat\eta} \setminus J_{\eta}$ is square
summable and (apply \eqref{ka+2} to $S_{\tilde\lambdab}$)
   \begin{align} \label{ka+2-ch}
\sum_{i=1}^{\hat\eta} \hat t_i^2 \Big(1 - \int_0^1 \frac 1
s \D \tau_i(s)\Big) \overset{\eqref{int0}}=
\sum_{i=1}^{\eta} \tilde\lambda_{i,1}^2 \Big(1 - \int_0^1
\frac 1 s \D \tau_i(s)\Big)
   \begin{cases}
\Ge 1 & \text{if } \hat\kappa = 0,
   \\[1ex]
=1 & \text{if } \hat \kappa \Ge 1.
   \end{cases}
   \end{align}
In view of\,\footnote{\;The reader should be aware of
the fact that $\sum_{i=1}^{\hat\eta} \hat t_i^2
\int_0^1 \sum_{l=1}^{\hat\kappa+1} \frac 1 {s^l} \D
\tau_i(s) < \infty$.} \eqref{ka+3-ch} and
\eqref{ka+2-ch}, we can apply Theorem \ref{par-ch} to
the directed tree $\hat\tcal$ and to the sequences
$\{\tau_i\}_{i\in J_{\hat \eta}}$ and $\hat\teb$. In
this way we obtain a completely hyperexpansive
weighted shift $\slamh \in
\ogr{\ell^2(V_{\hat\eta,\hat\kappa})}$ on $\hat\tcal$
(which is extremal when $\hat \kappa \Ge 1$) with
positive weights $\hat\lambdab= \{\hat\lambda_v\}_{v
\in V_{\hat\eta,\hat\kappa}^\circ}$ such that
   \begin{align} \label{tilde-ch}
\text{$\hat\lambda_{i,1}=\hat t_i$ and
$\tau_{i,1}^{\hat\tcal} = \tau_i$ for all $i \in
J_{\hat\eta}$.}
   \end{align}

We are now ready to show that the conclusion of
Proposition \ref{maxsub-ch} may not hold if $w \in
\Ko\tcal$. Indeed, applying Theorem \ref{par-ch} to
$\tcal_{\eta,0}$, $\{\tau_i\}_{i=1}^\eta$ and $\teb
=\{\tilde\lambda_{i,1}\}_{i=1}^\eta$, we get a
completely hyperexpansive weighted shift $\slam \in
\ogr{\ell^2(V_{\eta,0})}$ on $\tcal$ with positive
weights $\lambdab=\{\lambda_v\}_{v\in
V_{\eta,0}^\circ}$ such that $\tau_{i,1}^\tcal=\tau_i$
and $\lambda_{i,1}=\tilde\lambda_{i,1}$ for all $i \in
J_\eta$. Since, by \eqref{tilde-ch}, $\lambda_{i,1} =
\hat\lambda_{i,1}$ and
$\tau_{i,1}^\tcal=\tau_{i,1}^{\hat\tcal}$ for all
$i\in J_\eta$, we deduce from \eqref{lamij-ch} that
$\lambdab \subseteq \hat\lambdab$.

Next example shows that the direct counterpart of
Proposition \ref{maxsub} for complete hyperexpansivity
breaks down when $w \in \dzi{\Ko{\tcal}}$. Moreover,
it exhibits that Proposition \ref{maxsub-ch} is no
longer true if $\slam$ does not satisfy the strong
consistency condition at $u = \pa w$ (even though
$S_{\hat\lambdab}$ satisfies the strong consistency
condition at $u = \pa w$). For this purpose, we
consider the pair $(\tilde\tcal,\hat\tcal)$ with
$\hat\kappa=1$, i.e., $\tilde\tcal = \tcal_{\eta,1}$
and $\hat\tcal=\tcal_{\hat\eta,1}$. Let $S_{\tilde
\lambdab}$ and $S_{\hat\lambdab}$ be as in the
penultimate paragraph. Define the new system
$\lambdab^\flat = \{\lambda_v^\flat\}_{v \in
V_{\eta,1}^\circ}$ of positive weights by modifying
the old one $\tilde{\lambdab}$ as follows:
$\lambda_v^\flat = \tilde\lambda_v$ for all $v\neq 0$
and $\lambda_0^\flat = \hat\lambda_0$. Since
$S_{\hat\lambdab}$ is extremal, we infer from
\eqref{zeta-ch'} and \eqref{tilde-ch} that
   \begin{align*}
\lambda_0^\flat & = \frac 1{\sqrt{\sum_{i=1}^{\hat\eta}
\hat\lambda_{i,1}^2 \Big(1- \int_0^1 \sum_{l=1}^{2} \frac 1
{s^l} \D \tau_i(s)\Big)}}
   \\
   & = \frac 1{\sqrt{\sum_{i=1}^{\eta}
\tilde\lambda_{i,1}^2 \Big(1- \int_0^1 \sum_{l=1}^{2} \frac
1 {s^l} \D \tau_i(s)\Big) - \sum_{i=\eta+1}^{\hat\eta}
t_i^2 \Big(\int_0^1 \sum_{l=1}^{2} \frac 1 {s^l} \D
\tau_i(s) - 1\Big)}}
   \\
& > \frac 1{\sqrt{\sum_{i=1}^{\eta} \tilde\lambda_{i,1}^2
\Big(1- \int_0^1 \sum_{l=1}^{2} \frac 1 {s^l} \D
\tau_{i,1}^{\tilde\tcal}(s)\Big)}},
   \end{align*}
where the last inequality is a consequence of the following
estimate
   \begin{align*}
1 \overset{\eqref{int0}}= \int_0^1 \frac 1s \D \tau_i (s) <
\int_0^1 \sum_{l=1}^2 \frac 1{s^l} \D \tau_i (s), \quad
i\in J_{\hat\eta} \setminus J_\eta.
   \end{align*}
Hence, by Theorem \ref{par-ch}, $S_{\lambdab^\flat} \in
\ogr{\ell^2(V_{\eta,1})}$ is a completely hyperexpansive
weight\-ed shift on $\tilde\tcal$ with positive weights
$\lambdab^\flat$ such that $\lambdab^\flat \subseteq
\hat\lambdab$. Certainly, $S_{\lambdab^\flat}$ is not
extremal, or equivalently $S_{\lambdab^\flat}$ does not
satisfy the strong consistency condition at $u=\pa{w}=-1$.
However, $S_{\hat\lambdab}$ does satisfy the strong
consistency condition at $u = \pa w$.
   \end{exa}
   \newpage
   \section{\label{ch8}Miscellanea}
   \subsection{Admissibility of assorted
weighted shifts} In this section we characterize
directed trees admitting weighted shifts with assorted
properties. To be more precise, a directed tree
$\tcal$ is said to {\em admit} a weighted shift with a
property $\mathcal P$ if there exists a weighted shift
on $\tcal$ with this property. First, we describe
directed trees admitting weighted shifts with dense
range. For this, we prove the following lemma (see
Remarks \ref{re1-2} and \ref{surp} for the definitions
of directed trees $\zbb$ and $\zbb_-$).
   \begin{lem} \label{denran0}
If $\slam$ is a densely defined weighted shift on
a directed tree $\tcal$ with weights $\lambdab =
\{\lambda_v\}_{v \in V^\circ}$, then the
following two conditions are equivalent\/{\em :}
   \begin{enumerate}
   \item[(i)] $\ob{\slam}$ is dense in $\ell^2(V)$,
   \item[(ii)] the directed tree $\tcal$ is
isomorphic either to $\zbb_-$ or to $\zbb$, and
$\lambda_u \neq 0$ for all $u \in V$.
   \end{enumerate}
   \end{lem}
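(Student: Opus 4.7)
My plan is to reduce the density of $\ob{\slam}$ to the vanishing of $\jd{\slam^*}$ and then exploit the explicit orthogonal decomposition of $\jd{\slam^*}$ provided by Proposition \ref{polar}(ii). Since $\slam$ is closed (Proposition \ref{clos}), we have $\overline{\ob{\slam}} = \ell^2(V) \ominus \jd{\slam^*}$, so condition (i) becomes $\jd{\slam^*}=\{0\}$.

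First I would dispose of the rooted case. In that case, Proposition \ref{polar}(ii) gives $e_{\koo}\in \jd{\slam^*}$, so $\jd{\slam^*}\neq\{0\}$ and (i) fails. Hence, assuming (i), the tree $\tcal$ must be rootless. Still using Proposition \ref{polar}(ii), the vanishing of $\jd{\slam^*}$ forces
\[
\ell^2(\dzi u) \ominus \langle \lambdab^u \rangle = \{0\}, \quad u \in V^\prime.
\]
Since $\langle \lambdab^u\rangle$ is at most one-dimensional, this forces $\card{\dzi u}=1$ and $\lambdab^u\neq 0$ for every $u\in V^\prime$. Equivalently, every branching is trivial ($V_{\prec} = \varnothing$) and $\lambda_v \neq 0$ for every $v\in V$ (each such $v$ lies in some $\dzi u$ with $u\in V^\prime$ by Proposition \ref{46} applied to the rootless case, where $V^\circ = V$).

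Next I would deduce the claimed isomorphism type. Fix any $u\in V$ and note that $\paa^k(u)$ makes sense for all $k\in\nbb$ (Proposition \ref{xdescor}(i)). Since $\card{\dzit{}{\paa^k(u)}}=1$ for all $k\in\nbb$, Proposition \ref{xdescor}(iv) yields
\[
V = \{\paa^k(u)\}_{k=1}^\infty \sqcup \des u.
\]
If $\tcal$ is leafless, then the single-child condition on $\des u$ implies $\des u = \{u, \dzi u, \dzin 2 u, \ldots\}$ is an infinite chain, and combining this with the backward chain $\{\paa^k(u)\}_{k\Ge 1}$ shows that $\tcal$ is isomorphic to $\zbb$. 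If $\tcal$ has a leaf $w$, then starting the argument at $u=w$ gives $\des w = \{w\}$, so $V = \{\paa^k(w)\}_{k=1}^\infty \sqcup \{w\}$ is isomorphic to $\zbb_-$ (uniqueness of the leaf follows from the single-child condition along the backward chain).

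For the converse, suppose $\tcal$ is isomorphic to $\zbb$ or $\zbb_-$ and all weights are nonzero. When $\tcal\cong \zbb$, every vertex $u$ has a parent with unique child, so $e_u = \lambda_u^{-1}\slam e_{\pa u}\in \ob{\slam}$ for every $u\in V$, giving $\escr \subseteq \ob{\slam}$ and hence (i). When $\tcal \cong \zbb_-$ (with leaf $0$), the same identity $e_u = \lambda_u^{-1}\slam e_{\pa u}$ holds for $u \neq 0$, so $\{e_u : u \neq 0\}\subseteq \ob{\slam}$; since Proposition \ref{polar}(ii) now reads $\jd{\slam^*} = \bigoplus_{u\in V^\prime}(\ell^2(\dzi u)\ominus \langle \lambdab^u\rangle)=\{0\}$ (each $\dzi u$ is a singleton and the corresponding weight is nonzero), we conclude $\overline{\ob{\slam}} = \ell^2(V)$. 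The main subtlety of the argument is the topological step isolating $\zbb_-$ versus $\zbb$ from the local single-child condition together with rootlessness, which I handle via the $V = \{\paa^k(u)\}_{k=1}^\infty \sqcup \des u$ decomposition of Proposition \ref{xdescor}(iv).
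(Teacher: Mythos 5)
Your proposal is correct and follows essentially the same route as the paper: both reduce (i) to $\jd{\slam^*}=\{0\}$ via Proposition \ref{polar}\,(ii), deduce rootlessness, $\card{\dzi u}=1$ and nonvanishing of the weights, and then identify the isomorphism type through the decomposition $V=\{\paa^k(w)\}_{k=1}^\infty\sqcup\des w$ of Proposition \ref{xdescor}\,(iv). The only difference is that you spell out the converse explicitly, which the paper dismisses as evident from \eqref{eu}.
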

   \begin{proof}
(i)$\Rightarrow$(ii) It follows from Proposition
\ref{polar}\,(ii) that the directed tree $\tcal$
is rootless and $\dim\big(\ell^2(\dzi u) \ominus
\langle \lambdab^u \rangle\big)=0$ for all $u \in
V^\prime$. It is a matter of routine to verify
that the latter implies that
   \begin{align} \label{ca1}
\card{\dzi{u}}=1, \quad u \in V^\prime,
   \end{align}
and $\lambdab^u \neq 0$ for all $u \in V^\prime$.
Since $\tcal$ is rootless, we deduce that $\lambda_v
\neq 0$ for all $v \in V$. Fix $w \in V$. By
\eqref{ca1} and Proposition \ref{xdescor}\,(i) and
(iv), the entries of the sequence
$\{\paa^k(w)\}_{k=1}^\infty$ are distinct and $V =
\{\paa^k(w)\}_{k=1}^\infty \sqcup \des{w}$. This,
together with \eqref{decom} and \eqref{ca1}, implies
that either $\dzin{n}w \neq \varnothing$ for all $n\in
\nbb$ and consequently $\tcal$ is isomorphic to
$\zbb$, or there exists a smallest positive integer
$n$ such that $\dzin{n}w = \varnothing$ and
consequently $\tcal$ is isomorphic to $\zbb_-$.

(ii)$\Rightarrow$(i) Evident due to \eqref{eu}.
   \end{proof}
By Lemma \ref{denran0}, the only densely defined
weighted shifts on directed trees with dense
range are either bilateral classical weighted
shifts with nonzero weights or the adjoints of
unilateral classical weighted shifts with nonzero
weights.
   \begin{pro}
If $\tcal$ is a directed tree, then the following
conditions are equivalent\/{\em :}
   \begin{enumerate}
   \item[(i)] $\tcal$ admits a densely defined
weighted shift with dense range,
   \item[(ii)] the directed tree $\tcal$ is isomorphic
either to $\zbb_-$ or to $\zbb$.
   \end{enumerate}
Moreover, if $($i$)$ holds, then each densely defined
weighted shift on $\tcal$ with nonzero weights has dense
range.
   \end{pro}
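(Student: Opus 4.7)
The proof is essentially an application of Lemma \ref{denran0} together with an easy construction for the reverse direction.

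For the implication (i)$\Rightarrow$(ii), I would simply invoke Lemma \ref{denran0}. Indeed, if $\tcal$ admits a densely defined weighted shift $\slam$ whose range is dense in $\ell^2(V)$, then the equivalence (i)$\Leftrightarrow$(ii) of Lemma \ref{denran0} says that $\tcal$ must be isomorphic to $\zbb_-$ or to $\zbb$ (and, as a bonus, all weights of $\slam$ are nonzero). This is a one-line deduction.

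For (ii)$\Rightarrow$(i), the plan is to exhibit a concrete example. Assume $\tcal$ is isomorphic to $\zbb_-$ or $\zbb$; then $\card{\dzi u} \Le 1$ for every $u \in V$. Define $\lambdab = \{\lambda_v\}_{v \in V^\circ}$ by $\lambda_v = 1$ for all $v \in V^\circ$. By Proposition \ref{ogrs} (or Corollary \ref{parc}) we have $\sup_{u \in V} \sum_{v\in\dzi u} |\lambda_v|^2 \Le 1$, so $\slam \in \ogr{\ell^2(V)}$; in particular $\slam$ is densely defined. Since all weights are nonzero and $\tcal$ is isomorphic to $\zbb_-$ or $\zbb$, the implication (ii)$\Rightarrow$(i) of Lemma \ref{denran0} gives $\overline{\ob{\slam}} = \ell^2(V)$.

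For the ``moreover'' part, suppose (i) holds, so (by the above) $\tcal$ is isomorphic to $\zbb_-$ or $\zbb$, and let $\slam$ be any densely defined weighted shift on $\tcal$ with nonzero weights $\lambdab = \{\lambda_v\}_{v\in V^\circ}$. Since $\card{\dzi u} \Le 1$ for every $u \in V$, the hypotheses of Lemma \ref{denran0}\,(ii) are automatically satisfied, and the equivalence (i)$\Leftrightarrow$(ii) of that lemma yields $\overline{\ob{\slam}}=\ell^2(V)$.

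There is no real obstacle here: everything is packaged in Lemma \ref{denran0}, and the only task is to check that the construction in (ii)$\Rightarrow$(i) produces a densely defined (in fact bounded) operator, which is immediate from Proposition \ref{ogrs} since $\card{\dzi u} \Le 1$ on the trees $\zbb_-$ and $\zbb$.
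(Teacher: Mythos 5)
Your proposal is correct and follows essentially the same route as the paper: the paper's proof of (i)$\Rightarrow$(ii) is exactly "apply Lemma \ref{denran0}", and for (ii)$\Rightarrow$(i) it takes the bounded weighted shift with $\lambda_u \equiv 1$ on $\zbb_-$ or $\zbb$, just as you do. Your handling of the ``moreover'' part (noting that on $\zbb_-$ and $\zbb$ one has $V=V^\circ$, so nonzero weights put you squarely in condition (ii) of Lemma \ref{denran0}) is the intended, if unstated, argument.
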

   \begin{proof}
(i)$\Rightarrow$(ii) Apply Lemma \ref{denran0}.

(ii)$\Rightarrow$(i) Consider the bounded
weighted shift on $\zbb_-$ (or on $\zbb$) with
$\lambda_u\equiv 1$.
   \end{proof}
The question of when a directed tree admits a weighted
shift which is respectively hyponormal, subnormal and
completely hyperexpansive, has a simple answer.
   \begin{pro}\label{adhyp}
If $\tcal$ is a directed tree with $V^\circ \neq
\varnothing$, then the following conditions are
equivalent{\em :}
   \begin{enumerate}
   \item[(i)] $\tcal$ admits a bounded hyponormal weighted
shift with nonzero weights,
   \item[(ii)] $\tcal$ admits a bounded subnormal weighted
shift with nonzero weights,
   \item[(iii)] $\tcal$ admits an isometric weighted
shift with nonzero weights,
   \item[(iv)] $\tcal$ admits a bounded completely hyperexpansive
weighted shift with nonzero weights,
   \item[(v)] $\tcal$ is leafless and $\card{V} =
\aleph_0$,
   \item[(vi)] $\tcal$ admits a bounded injective weighted
shift with nonzero weights.
   \end{enumerate}
   \end{pro}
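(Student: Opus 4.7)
The plan is to prove this via a cycle of implications threading through condition (iii), which is the strongest concrete one and the cleanest to construct. Specifically, I would show $(\mathrm{v}) \Rightarrow (\mathrm{iii}) \Rightarrow (\mathrm{ii})$ and $(\mathrm{iii}) \Rightarrow (\mathrm{iv})$, then $(\mathrm{ii}) \Rightarrow (\mathrm{i}) \Rightarrow (\mathrm{vi})$ and $(\mathrm{iv}) \Rightarrow (\mathrm{vi})$, and finally close the loop with $(\mathrm{vi}) \Rightarrow (\mathrm{v})$. Most links are either standard operator-theoretic facts (subnormal $\Rightarrow$ hyponormal; isometric $\Rightarrow$ subnormal via the unitary dilation; completely hyperexpansive $\Rightarrow$ injective since $\|Ah\|\Ge\|h\|$) or direct consequences of the structural results already established in the paper.

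The constructive step $(\mathrm{v}) \Rightarrow (\mathrm{iii})$ is the only one requiring a small amount of work. Given a leafless directed tree $\tcal$ with $\card V = \aleph_0$, I would use Proposition \ref{46} to write $V^\circ = \bigsqcup_{u\in V} \dzi u$ as a disjoint union, where each $\dzi u$ is nonempty (by leaflessness) and at most countable (since $V$ is). For each $u\in V$ I pick a family $\{\lambda_v\}_{v\in \dzi u}$ of positive numbers with $\sum_{v\in\dzi u}|\lambda_v|^2 = 1$ (trivially possible on a nonempty at-most-countable index set). Proposition \ref{ogrs} then yields $\slam \in \ogr{\ell^2(V)}$ with $\|\slam\|=1$, and Corollary \ref{chariso} guarantees that $\slam$ is an isometry. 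That $\slam$ is subnormal and completely hyperexpansive is then automatic: isometries have unitary extensions and hence are subnormal, while $\{\|\slam^n e\|^2\}_{n=0}^\infty$ is constant for every $e\in \ell^2(V)$ and hence trivially completely alternating (with representing measure zero), so Theorem \ref{charch} or a direct check gives complete hyperexpansivity.

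For $(\mathrm{i}) \Rightarrow (\mathrm{vi})$ and $(\mathrm{iv}) \Rightarrow (\mathrm{vi})$ I would simply observe that the witnessing operator is itself already injective: in the hyponormal case by Proposition \ref{hypcor} (which uses $V^\circ \neq \varnothing$) combined with Proposition \ref{dzisdesz}, and in the completely hyperexpansive case by the inequality $\|\slam h\| \Ge \|h\|$ noted after \eqref{ch1}. Finally, for the closing implication $(\mathrm{vi}) \Rightarrow (\mathrm{v})$, from an injective bounded weighted shift with nonzero weights Proposition \ref{dzisdesz} gives that $\tcal$ is leafless (forcing $\card V \Ge \aleph_0$ since every leafless directed tree is infinite), and Proposition \ref{przeldz} applied to this densely defined shift with nonzero weights gives $\card V \Le \aleph_0$, so $\card V = \aleph_0$.

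I do not foresee a real obstacle: all nontrivial content has been isolated in the earlier propositions, so the argument reduces to assembling the correct references and performing the explicit weight construction described above. The only point that requires a moment of care is to ensure that $V^\circ \neq \varnothing$ is used precisely where needed (namely to invoke Proposition \ref{hypcor} and to rule out the degenerate one-vertex tree for which the implications would collapse vacuously).
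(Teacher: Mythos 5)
Your proposal is correct and follows essentially the same route as the paper: the key constructive step $(\mathrm{v})\Rightarrow(\mathrm{iii})$ via weights normalized so that $\sum_{v\in\dzi u}|\lambda_v|^2=1$ together with Corollary \ref{chariso}, and the closing of the cycle through Propositions \ref{hypcor}, \ref{chinj}/injectivity of completely hyperexpansive operators, \ref{dzisdesz} and \ref{przeldz}, are exactly the ingredients the paper uses. The only difference is cosmetic: the paper sends (i) and (iv) directly to (v), whereas you route them through (vi) first, which changes nothing of substance.
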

   \begin{proof}
(i)$\Rightarrow$(v) Apply Proposition \ref{hypcor}.

(v)$\Rightarrow$(iii)\&(v)$\Rightarrow$(vi) Argue as in the
proof of Proposition \ref{przeldz} and use Corollary
\ref{chariso}.

(iv)$\Rightarrow$(v) Apply Propositions \ref{przeldz} and
\ref{chinj}.

Since the implications (iii)$\Rightarrow$(iv),
(iii)$\Rightarrow$(ii) and (ii)$\Rightarrow$(i) are
obvious, and the implication (vi)$\Rightarrow$(v) is a
consequence of Propositions \ref{dzisdesz} and
\ref{przeldz}, the proof is complete.
   \end{proof}
Proposition \ref{adhyp} fails to hold if the requirement of
nonzero weights is dropped. Indeed, if $\tcal$ is a
directed tree which comes from $\zbb_+$ by gluing a leaf to
the directed tree $\zbb_+$ at its root, and $\slam$ is the
weighted shifts on $\tcal$ with $0$ weight attached to the
glued leaf, the remaining weights being equal to $1$, then
$\slam$ is subnormal, but $\tcal$ is not leafless.

As stated below, admissibility of adjoints of isometric (in
short:\ {\em coisometric}) weight\-ed shifts is much more
restrictive.
   \begin{pro}\label{adcoiso}
If $\tcal$ is a directed tree, then the following
assertions hold.
   \begin{enumerate}
   \item[(i)] $\tcal$ admits a coisometric weighted
shift if and only if the directed tree $\tcal$ is
isomorphic either to $\zbb_-$ or to $\zbb$; moreover,
if $\slam$ is a coisometric weighted shift on $\tcal$,
then all its weights are nonzero.
   \item[(ii)] $\tcal$ admits a unitary weighted shift
if and only if $\tcal$ is isomorphic to $\zbb$.
   \end{enumerate}
   \end{pro}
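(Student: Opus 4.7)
The plan is to reduce both parts to results already established, most crucially Lemma \ref{denran0} (which pins down the directed trees supporting densely defined weighted shifts with dense range) together with the concrete formulas for $\slam$ and $\slam^*$ given in \eqref{eu} and \eqref{sl*}, and Corollary \ref{chariso}.

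For the ``only if'' direction of (i), I would first observe that a coisometric operator is automatically bounded (being the adjoint of an isometry), so $\slam\in\ogr{\ell^2(V)}$, and in particular $\slam$ is densely defined. Since $\slam\slam^*=I$, the range of $\slam$ equals $\ell^2(V)$, hence is dense. Applying Lemma \ref{denran0} immediately yields that $\tcal$ is isomorphic to $\zbb_-$ or $\zbb$ and that every weight $\lambda_v$ is nonzero, which also takes care of the ``moreover'' clause. For the ``if'' direction, I would exhibit an explicit example on each of these two trees: take $\lambdab$ with $\lambda_v\equiv 1$. On $\zbb_-$, formula \eqref{sl*} gives $\slam^* e_{-n}=e_{-(n+1)}$ for every $n\in\zbb_+$ (note $\zbb_-$ is rootless, so the second case of \eqref{sl*} never fires), and on $\zbb$ it gives $\slam^* e_n=e_{n-1}$ for every $n\in\zbb$. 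In both cases $\slam^*$ permutes (injectively) the orthonormal basis $\{e_u\}_{u\in V}$, so $\slam^*$ is isometric, i.e.\ $\slam$ is coisometric. (Alternatively one can verify this through Corollary \ref{chariso} applied to $\slam^*$ viewed as a weighted shift on the reversed tree via Remark \ref{surp}, but the direct computation is shorter.)

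For (ii), if $\tcal$ admits a unitary weighted shift $\slam$, then $\slam$ is simultaneously isometric and coisometric. Since $\slam$ is coisometric, part (i) forces $\tcal$ to be isomorphic to $\zbb_-$ or $\zbb$. Since $\slam$ is also isometric with nonzero weights (by the ``moreover'' part of (i)), Proposition \ref{adhyp} forces $\tcal$ to be leafless. But $\zbb_-$ has $0$ as a leaf, so the only remaining possibility is $\tcal\cong\zbb$. Conversely, on $\zbb$ the weighted shift with $\lambda_v\equiv 1$ is the standard bilateral shift: it is isometric by Corollary \ref{chariso} and coisometric by the case $\tcal\cong\zbb$ of part (i), hence unitary.

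The only genuinely delicate point is making sure that Lemma \ref{denran0} is applicable; its hypothesis is that $\slam$ be densely defined, which is why I begin the argument for (i) by noting that coisometric operators are automatically bounded. Once this is in place, everything else is a direct reading of \eqref{sl*} on the two specific trees $\zbb_-$ and $\zbb$, so no further obstacle is anticipated.
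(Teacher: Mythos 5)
Your proposal is correct and follows essentially the same route as the paper: part (i) is reduced to Lemma \ref{denran0} via the surjectivity of coisometries, and part (ii) is obtained from (i) by ruling out $\zbb_-$ through the non-injectivity caused by the leaf $0$ (the paper states this directly, whereas you route it through Proposition \ref{adhyp}, but the content is identical). Your explicit verification that $\lambda_v\equiv 1$ yields coisometric shifts on $\zbb_-$ and $\zbb$ just spells out what the paper delegates to Remark \ref{surp}.
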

   \begin{proof}
The assertion (i) is a direct consequence of Lemma
\ref{denran0} because each coisometry is surjective
(consult also Remark \ref{surp}). In turn, the assertion
(ii) follows from (i) because any bounded weighted shift on
$\zbb_-$ is not injective.
   \end{proof}
Using Theorem \ref{cohyp-opis}, one can construct
bounded cohyponormal weighted shifts on directed trees
with nonzero weights which are non-injective and
non-coisometric. Most of the model trees appearing in
Theorem \ref{cohyp-opis}\,(ii) are far from being
isomorphic to the directed trees $\zbb_-$ and $\zbb$.
Hence, by Lemma \ref{denran0}, weighted shifts on
these model trees (except for $\zbb_-$ and $\zbb$) do
not have dense range.

We now discuss the question of when a given directed
tree admits a bounded normal weighted shift with
nonzero weights.
   \begin{lem}\label{normal}
If $\slam \in \ogr {\ell^2(V)}$ is a nonzero
weighted shift on a directed tree $\tcal$ with
weights $\lambdab = \{\lambda_v\}_{v \in
V^\circ}$, then the following two conditions are
equivalent\/{\em :}
   \begin{enumerate}
   \item[(i)] $\slam$ is normal,
   \item[(ii)] there exists a sequence
$\{u_{n}\}_{n=-\infty}^{\infty} \subseteq V$ such
that
   \begin{align*}
\text{$u_{n-1} = \pa {u_n}$ and
$|\lambda_{u_{n-1}}|= |\lambda_{u_{n}}|$}
   \end{align*}
for all $n \in \zbb$, and $\lambda_v = 0$ for all
$v \in V \setminus {\{u_{n}\colon n\in\zbb\}}$.
   \end{enumerate}
   \end{lem}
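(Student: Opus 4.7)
My plan is to exploit the characterizations of hyponormality and cohyponormality proved earlier, since normality is equivalent to the conjunction of both. Specifically, I will combine Theorem \ref{cohyp-opis} (which severely restricts the tree structure when $\slam$ is cohyponormal) with Theorem \ref{hyp} (which provides weight inequalities when $\slam$ is hyponormal).

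For the implication (i)$\Rightarrow$(ii), assume $\slam$ is normal and nonzero. Then $\slam$ is both hyponormal and cohyponormal, so by Theorem \ref{cohyp-opis} the tree $\tcal$ is rootless and either the two-sided configuration of condition (i) of that theorem holds, or the one-sided ``broom with infinite handle'' configuration of condition (ii) holds. The first step of the plan is to rule out the broom configuration. In that configuration $\lambda_v = 0$ for every $v \in V \setminus (\{u_n : n \Le 0\} \cup \dzi{u_0})$, so for each $v \in \dzi{u_0}$ every child $w$ of $v$ satisfies $\lambda_w = 0$, giving $\|\slam e_v\| = 0$ by \eqref{eu}. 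Theorem \ref{hyp} applied at $u=u_0$ then forces, via \eqref{wkwhyp0}, that $\lambda_v = 0$ for every $v \in \dzi{u_0}$, contradicting $\sum_{v\in\dzi{u_0}}|\lambda_v|^2 > 0$ in Theorem \ref{cohyp-opis}(ii). So we are in the two-sided case: a sequence $\{u_n\}_{n\in\zbb}$ with $u_{n-1}=\pa{u_n}$ and $0 < |\lambda_{u_n}| \Le |\lambda_{u_{n-1}}|$, all other weights zero.

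The second step is to upgrade the cohyponormal inequalities to equalities by invoking hyponormality. Fix $n\in\zbb$ and apply Theorem \ref{hyp} at $u = u_{n-1}$. Children of $u_{n-1}$ distinct from $u_n$ have weight zero, hence contribute nothing; on the other hand $\|\slam e_{u_n}\|^2 = |\lambda_{u_{n+1}}|^2 > 0$, so $\dziplus{u_{n-1}} = \{u_n\}$ and \eqref{wkwhyp} reduces to $|\lambda_{u_n}|^2/|\lambda_{u_{n+1}}|^2 \Le 1$. Combined with $|\lambda_{u_n}| \Le |\lambda_{u_{n-1}}|$ this yields $|\lambda_{u_{n-1}}| = |\lambda_{u_n}|$, establishing (ii).

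For the reverse implication (ii)$\Rightarrow$(i), I verify normality on the orthonormal basis $\{e_u\}_{u\in V}$, using \eqref{eu}, \eqref{sl*}, and the constancy $c := |\lambda_{u_n}|$. For $u = u_n$, we get $\slam^*\slam e_{u_n} = |\lambda_{u_{n+1}}|^2 e_{u_n} = c^2 e_{u_n} = |\lambda_{u_n}|^2 e_{u_n} = \slam\slam^* e_{u_n}$, using $\slam e_{u_n} = \lambda_{u_{n+1}} e_{u_{n+1}}$ and $\slam^* e_{u_n} = \overline{\lambda_{u_n}} e_{u_{n-1}}$. For $v \in V \setminus \{u_n : n\in\zbb\}$, every child of $v$ lies outside $\{u_n\}$ (since the only parent of $u_n$ is $u_{n-1}$), so all weights at children of $v$ vanish, whence $\slam e_v = 0$; also $\lambda_v = 0$ by hypothesis and $\tcal$ has no root, so $\slam^* e_v = \overline{\lambda_v} e_{\pa v} = 0$. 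Thus $\slam^*\slam e_v = 0 = \slam\slam^* e_v$. By boundedness of $\slam$ and $\slam^*$ and density of $\escr$, this extends to $\slam^*\slam = \slam\slam^*$.

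The only delicate point is the exclusion of the broom configuration in step one; everything else is a short bookkeeping argument once Theorems \ref{hyp} and \ref{cohyp-opis} are in hand.
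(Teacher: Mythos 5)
Your proof is correct, and it follows essentially the same route as the paper: both arguments pivot on Theorem \ref{cohyp-opis} to pin down the support of the weights, and both verify (ii)$\Rightarrow$(i) by checking $\slam^*\slam e_u=\slam\slam^*e_u$ on the basis vectors. The only divergence is in how you extract the equalities $|\lambda_{u_{n-1}}|=|\lambda_{u_n}|$ and exclude the broom configuration: you factor normality as hyponormality plus cohyponormality and import Theorem \ref{hyp} (conditions \eqref{wkwhyp0} and \eqref{wkwhyp}) to reverse the cohyponormal inequalities, whereas the paper works from the single identity $\|\slam e_u\|^2 e_u = \slam^*\slam e_u = \slam\slam^* e_u = |\lambda_u|^2 e_u + \sum_{v\in\dzi{\pa u}\setminus\{u\}}\lambda_v\overline{\lambda_u}\,e_v$ for $u\in V^\circ$ (its \eqref{normal1}), which yields $\|\slam e_u\|=|\lambda_u|$ and the vanishing of sibling weights in one stroke. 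Your detour through Theorem \ref{hyp} is slightly longer but entirely sound, and your handling of the delicate points (ruling out case (ii) of Theorem \ref{cohyp-opis} via \eqref{wkwhyp0}, and the identification $\dziplus{u_{n-1}}=\{u_n\}$) is accurate.
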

   \begin{proof}
(i)$\Rightarrow$(ii) Note first that if $u\in
V^\circ$, then by \eqref{eu} and \eqref{sl*} we
have
   \begin{align}    \label{normal1}
\|\slam e_u\|^2 e_u = \slam^* \slam e_u = \slam
\slam^* e_u = |\lambda_u|^2 e_u + \sum_{v \in
\dzi{\pa{u}} \setminus \{u\}} \lambda_v
\overline{\lambda_u} e_v.
   \end{align}
Hence, if $\|\slam e_u\|=0$ for some $u \in
V^\circ$, then $\lambda_u=0$. This, combined with
Theorem \ref{cohyp-opis} and \eqref{normal1},
establishes the implication (i)$\Rightarrow$(ii)
(observe that the situation described in Theorem
\ref{cohyp-opis}\,(ii) is excluded because it
forces $\slam$ to be the zero operator).

(ii)$\Rightarrow$(i) Argue as in \eqref{normal1}.
   \end{proof}
   \begin{pro}
If $\tcal$ is a directed tree with $V^\circ \neq
\varnothing$, then the following two conditions are
equivalent\/{\em :}
   \begin{enumerate}
   \item[(i)] $\tcal$ admits a bounded normal
weighted shift with nonzero weights,
   \item[(ii)] the directed tree $\tcal$ is
isomorphic to $\zbb$.
   \end{enumerate}
   \end{pro}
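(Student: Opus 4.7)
The plan is to deduce the result directly from Lemma~\ref{normal}. The hypothesis $V^\circ \neq \varnothing$ guarantees that any weighted shift $\slam \in \ogr{\ell^2(V)}$ on $\tcal$ with nonzero weights is itself nonzero (at least one $\lambda_v$ is nonzero, hence $\slam \neq 0$ by \eqref{eu}), so Lemma~\ref{normal} applies whenever we have a candidate normal weighted shift.

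For (i)$\Rightarrow$(ii), I would start from a normal weighted shift $\slam \in \ogr{\ell^2(V)}$ with nonzero weights and apply Lemma~\ref{normal} to extract a sequence $\{u_n\}_{n \in \zbb} \subseteq V$ with $\pa{u_n} = u_{n-1}$ and $|\lambda_{u_{n-1}}| = |\lambda_{u_n}|$ such that $\lambda_v = 0$ for all $v \in V \setminus \{u_n : n \in \zbb\}$. Since every weight of $\slam$ is nonzero by assumption, I must have $V^\circ \subseteq \{u_n : n \in \zbb\}$. The sequence condition $\pa{u_n} = u_{n-1}$ for all $n \in \zbb$ shows that each $u_n$ has a parent in $V$, so $\tcal$ is rootless; hence $V = V^\circ \subseteq \{u_n : n \in \zbb\}$, and by Proposition~\ref{xdescor}\,(i) the $u_n$ are distinct, so $V = \{u_n : n \in \zbb\}$.

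Next I would check that each $u_n$ has a unique child, namely $u_{n+1}$: the relation $\pa{u_{n+1}} = u_n$ shows $u_{n+1} \in \dzi{u_n}$; conversely, if $w \in \dzi{u_n}$, then $w = u_m$ for some $m \in \zbb$, whence $u_{m-1} = \pa{w} = u_n = u_{(n+1)-1}$ and therefore $m = n+1$ by the injectivity of the shift $m \mapsto u_{m-1}$ on $\zbb$. Thus the map $\zbb \ni n \mapsto u_n \in V$ is a bijection satisfying $(u_{n-1}, u_n) \in E$ with $\dzi{u_n} = \{u_{n+1}\}$, which is exactly an isomorphism of directed trees $\zbb \to \tcal$ (cf.\ Remark~\ref{re1-2}).

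For (ii)$\Rightarrow$(i), if $\tcal$ is isomorphic to $\zbb$, pick any nonzero constant $c \in \cbb$ and consider $\slam$ with $\lambda_v = c$ for every $v \in V^\circ = V$. Then $\slam \in \ogr{\ell^2(V)}$ by Proposition~\ref{ogrs} (in fact $\|\slam\| = |c|$), and the sequence $\{u_n\}$ realizing the isomorphism $\zbb \cong \tcal$ satisfies the condition (ii) of Lemma~\ref{normal} with $V \setminus \{u_n : n \in \zbb\} = \varnothing$, so $\slam$ is normal (indeed $|c|^{-1} \slam$ is unitary by Proposition~\ref{adcoiso}\,(ii)). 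The main obstacle is merely the bookkeeping in the forward direction, ensuring the sequence $\{u_n\}$ exhausts $V$ and that no vertex has extra children; both fall out cleanly from the structural content of Lemma~\ref{normal} combined with Proposition~\ref{xdescor}\,(i).
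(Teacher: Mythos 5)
Your proposal is correct and follows the paper's own route exactly: the paper's proof of (i)$\Rightarrow$(ii) is literally ``Apply Lemma \ref{normal}'' and of (ii)$\Rightarrow$(i) ``Obvious'', and you have simply filled in the bookkeeping. The only spot worth tightening is the claim that ``each $u_n$ has a parent, so $\tcal$ is rootless'': what you really need is that a root would force $\paa^{m+1}(u_0)$ to be undefined for some $m$ (via Corollary \ref{przem} or Proposition \ref{xdescor2}), contradicting $\paa^{m+1}(u_0)=u_{-m-1}$; this is a one-line fix and does not affect the argument.
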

   \begin{proof}
(i)$\Rightarrow$(ii) Apply Lemma \ref{normal}.

(ii)$\Rightarrow$(i) Obvious.
   \end{proof}
It turns out that quasinormal weighted shifts on directed
trees with nonzero weights are scalar multiplies of
isometric operators. Recall that an operator $A \in \ogr
\hh$ acting on a complex Hilbert space $\hh$ is said to be
{\em quasinormal} if $A|A|=|A|A$, or equivalently if
$AA^*A=A^*AA$. It is well known that normal operators are
quasinormal and quasinormal operators are subnormal, but
neither of these implications is reversible in general
(cf.\ \cite{con2}).
   \begin{pro}\label{qn-ch}
Let $\slam \in \ogr{\ell^2(V)}$ be a weighted
shift on a directed tree $\tcal$ with weights
$\lambdab = \{\lambda_v\}_{v \in V^\circ}$. Then
the following conditions are equivalent\/{\em :}
   \begin{enumerate}
   \item[(i)] $\slam$ is quasinormal,
   \item[(ii)] $\|\slam
e_u\|=\|\slam e_v\|$ for all $u \in V$ and $v \in
\dzi u$ such that $\lambda_v \neq 0$.
   \end{enumerate}
Moreover, if $V^\circ \neq \varnothing$ and $\lambda_v \neq
0$ for all $v\in V^\circ$, then $\slam$ is quasinormal if
and only if $\|\slam\|^{-1}\slam$ is an isometry.
   \end{pro}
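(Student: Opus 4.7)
The plan is to verify quasinormality directly on the orthonormal basis $\{e_u\}_{u \in V}$, using the explicit formulas already established for $\slam$ and $|\slam|$. Since both $\slam$ and $|\slam|$ are bounded (cf.\ Proposition~\ref{3} combined with Lemma~\ref{lems}, as $\|\slam e_u\| \Le \|\slam\|$ for all $u$), the identity $\slam |\slam| = |\slam|\slam$ is equivalent to its being satisfied on every basis vector.

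First I would compute both sides applied to a fixed $e_u$, $u \in V$. By Proposition~\ref{3}\,(iii),
\begin{align*}
\slam\,|\slam|\,e_u = \slam\bigl(\|\slam e_u\|\,e_u\bigr) = \|\slam e_u\| \sum_{v\in\dzi u}\lambda_v e_v,
\end{align*}
where the second equality uses \eqref{eu}. On the other hand, since $|\slam|$ is a bounded diagonal operator with $|\slam|e_v = \|\slam e_v\|e_v$, continuity yields
\begin{align*}
|\slam|\,\slam\,e_u = |\slam|\Bigl(\sum_{v\in\dzi u}\lambda_v e_v\Bigr) = \sum_{v\in\dzi u}\lambda_v\,\|\slam e_v\|\,e_v.
\end{align*}
Comparing coefficients of the orthonormal family $\{e_v\}_{v\in \dzi u}$, the equality $\slam|\slam|e_u = |\slam|\slam e_u$ holds if and only if $\lambda_v\|\slam e_v\| = \lambda_v\|\slam e_u\|$ for every $v \in \dzi u$, i.e.\ $\|\slam e_v\| = \|\slam e_u\|$ whenever $\lambda_v \neq 0$. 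Running over all $u \in V$ proves the equivalence (i)$\Leftrightarrow$(ii).

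For the moreover part, assume $V^\circ \neq \varnothing$ and all weights are nonzero. Then (ii) reads $\|\slam e_u\| = \|\slam e_v\|$ for every edge $(u,v) \in E$. Since $\tcal$ is connected, the function $u \mapsto \|\slam e_u\|$ is constant on $V$; call this value $c$. Because $V^\circ \neq \varnothing$, there exists $v \in V^\circ$ with $\lambda_v \neq 0$, so $\|\slam e_{\pa v}\|^2 \Ge |\lambda_v|^2 > 0$ by \eqref{eu}, hence $c > 0$. From \eqref{slgsl} we then have $\slam^*\slam e_u = c^2 e_u$ for every $u \in V$, so $\slam^*\slam = c^2 I$ on $\escr$, and by density $\slam^*\slam = c^2 I$ on $\ell^2(V)$. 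In particular $\|\slam\|^2 = \|\slam^*\slam\| = c^2$, so $c = \|\slam\|$ and $(\|\slam\|^{-1}\slam)^*(\|\slam\|^{-1}\slam) = I$, i.e.\ $\|\slam\|^{-1}\slam$ is an isometry. Conversely, if $\|\slam\|^{-1}\slam$ is an isometry, then $\slam^*\slam = \|\slam\|^2 I$ commutes with $\slam$, which is a well-known reformulation of quasinormality.

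No genuine obstacle arises; the main point is to ensure that the formal calculation on basis vectors actually extends to an operator equality. Here this is immediate because $|\slam|$ is bounded and $\escr$ spans a dense subspace of $\ell^2(V)$, so continuity guarantees that pointwise agreement on $\{e_u\}_{u\in V}$ implies the operator identity.
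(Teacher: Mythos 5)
Your proof is correct and follows essentially the same route as the paper: both verify the commutation relation on the basis vectors $e_u$ using the diagonal form of $|\slam|$ (the paper works with $\slam\slam^*\slam e_u=\slam^*\slam\slam e_u$, you with $\slam|\slam|e_u=|\slam|\slam e_u$, which yields the identical coefficient comparison), and both deduce the moreover part by showing $u\mapsto\|\slam e_u\|$ is constant on $V$ (the paper via descendants and Proposition~\ref{witr}, you via connectedness along undirected edges — an immaterial difference).
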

   \begin{proof}
It follows from Proposition \ref{3} that
   \begin{align*}
\slam(\slam^*\slam) e_u = \|\slam e_u\|^2 \slam
e_u \overset{\eqref{eu}}= \sum_{v \in \dzi u}
\|\slam e_u\|^2 \lambda_v e_v, \quad u \in V,
   \end{align*}
and
   \begin{align*}
(\slam^*\slam)\slam e_u = \sum_{v \in \dzi u}
\lambda_v (\slam^*\slam) e_v = \sum_{v \in \dzi
u} \|\slam e_v\|^2 \lambda_v e_v, \quad u \in V.
   \end{align*}
Putting this all together completes the proof of
the equivalence (i)$\Leftrightarrow$(ii).

Suppose now that $\slam$ is quasinormal, $V^\circ \neq
\varnothing$ and $\lambda_v \neq 0$ for all $v\in V^\circ$.
First, we claim that $\|\slam e_u\| = \text{const}$. For
this, take $u \in V$. Using an induction argument and the
implication (i)$\Rightarrow$(ii), we see that $\|\slam
e_v\| = \|\slam e_u\|$ for all $v \in \dzin{n}u$ and $n\in
\zbb_+$. In view of \eqref{decom}, this implies that
$\|\slam e_v\| = \|\slam e_u\|$ for all $v \in \des{u}$. An
application of Proposition \ref{witr} proves our claim.
Hence, by \eqref{eu} and Corollary \ref{chariso}, the
operator $\|\slam\|^{-1}\slam$ is an isometry. The reverse
implication is obvious.
   \end{proof}
Note that if $\tcal$ is a directed tree (with or without
root) such that
   \begin{align*}
   1 \Le \card{\dzi{u}}=\card{\dzi{v}} < \infty, \quad u,v
\in V,
   \end{align*}
then the weighted shift $\slam$ on $\tcal$ with weights
$\lambda_v=\text{const}$ is bounded and quasinormal.
   \subsection{$p$-hyponormality}
Recall that an operator $A \in \ogr{\hh}$ acting on a
complex Hilbert space $\hh$ is said to be {\em
$p$-hyponormal}, where $p$ is a positive real number,
if $|A^*|^{2p} \Le |A|^{2p}$. By the L\"{o}wner-Heinz
inequality, for all positive real numbers $p,q$ such
that $p<q$, if $A\in \ogr{\hh}$ is $q$-hyponormal,
then $A$ is $p$-hyponormal (see \cite{xia} and
\cite{furr} for more information on the subject).
Clearly, the notions of $1$-hyponormality and
hyponormality coincide. This means that the following
characterization of $p$-hyponormality can be thought
of as a generalization of Theorem \ref{hyp}.
    \begin{thm} \label{p-hyp}
Let $\slam \in \ogr{\ell^2(V)}$ be a weighted shift on a
directed tree $\tcal$ with weights $\lambdab =
\{\lambda_v\}_{v \in V^\circ}$, and let $p$ be a positive
real number. Then the following assertions are
equivalent{\em :}
    \begin{enumerate}
    \item[(i)] $\slam$ is $p$-hyponormal,
    \item[(ii)] the following two conditions
hold\/{\em :}
    \begin{gather}
\text{for every $u \in V$, if } v \in \dzi u \text{
and } \|\slam e_v\|=0, \text{ then } \lambda_v = 0,
\label{p-wkwhyp0}
    \\
\|\slam e_u\|^{2(p-1)}\sum_{v \in \dziplus u} \frac
{|\lambda_v|^2}{\|\slam e_v\|^{2p}} \Le 1, \quad u \in
\vplus. \label{p-wkwhyp}
    \end{gather}
    \end{enumerate}
    \end{thm}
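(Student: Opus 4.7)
The plan is to reduce the operator inequality $|\slam^*|^{2p} \le |\slam|^{2p}$ to a family of rank-one operator inequalities on the subspaces $\ell^2(\dzi u)$, one for each $u \in V^\prime$, and then translate each such inequality into the numeric conditions \eqref{p-wkwhyp0} and \eqref{p-wkwhyp}. The first ingredient is already in hand: Proposition \ref{3}\,(iii), applied with $\alpha = 2p$, tells us that $|\slam|^{2p}$ is the diagonal operator with $|\slam|^{2p} e_u = \|\slam e_u\|^{2p} e_u$ for every $u \in V$. In particular, each $\ell^2(W)$ built over a subset of $V$ reduces $|\slam|^{2p}$.

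The key computation is the structure of $\slam\slam^*$. Using Proposition \ref{sprz}\,(ii) and \eqref{eu}, for $u \in V^\circ$ one has $\slam\slam^* e_u = \overline{\lambda_u} \sum_{v \in \dzi{\pa u}} \lambda_v e_v$, while $\slam\slam^* e_{\koo}=0$ if $\tcal$ has a root. Combined with the partition $V^\circ = \bigsqcup_{u \in V} \dzi u$ of Proposition \ref{46}, this yields the orthogonal decomposition
\begin{align*}
\slam\slam^* \;=\; 0_{\langle e_{\koo}\rangle} \,\oplus\, \bigoplus_{u \in V^\prime} \big(\lambdab^u \otimes \lambdab^u\big),
\end{align*}
where the first summand is absent in the rootless case. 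A direct computation shows that for a nonzero $f$, $(f\otimes f)^p = \|f\|^{2(p-1)}\, f \otimes f$; applying this termwise (each rank-one piece reduces $\slam\slam^*$) and recalling $\|\lambdab^u\|^2 = \|\slam e_u\|^2$, we obtain
\begin{align*}
|\slam^*|^{2p} \;=\; 0_{\langle e_{\koo}\rangle} \,\oplus\, \bigoplus_{u \in V^\prime} \|\slam e_u\|^{2(p-1)}\, \lambdab^u \otimes \lambdab^u.
\end{align*}
Since this same orthogonal decomposition also reduces $|\slam|^{2p}$ (which is diagonal in $\{e_v\}$), and the inequality $|\slam^*|^{2p} \le |\slam|^{2p}$ is trivial on $\langle e_{\koo}\rangle$, condition (i) is equivalent to the family of inequalities
\begin{align*}
\|\slam e_u\|^{2(p-1)}\, \lambdab^u \otimes \lambdab^u \;\le\; D_u \qquad (u \in V^\prime),
\end{align*}
where $D_u$ is the positive diagonal operator on $\ell^2(\dzi u)$ defined by $D_u e_v = \|\slam e_v\|^{2p} e_v$.

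It remains to translate each of these rank-one inequalities into the stated weight conditions. If $u \in V^\prime \setminus \vplus$ then $\lambdab^u = 0$ (all children-weights vanish) and the inequality holds trivially; in particular \eqref{p-wkwhyp0} holds for such $u$ automatically. If $u \in \vplus$, a rank-one positive operator $c\,f \otimes f$ is dominated by a positive operator $D$ if and only if (a) $f \perp \jd{D}$ and (b) $c\,\|D^{-1/2}f\|^2 \le 1$, where $D^{-1/2}$ denotes the inverse of $D$ on $\jd{D}^\perp$. Applied to $D_u$, whose kernel is $\ell^2(\dzi u \setminus \dziplus u)$, condition (a) is precisely $\lambda_v = 0$ for every $v \in \dzi u$ with $\|\slam e_v\| = 0$, which is the instance of \eqref{p-wkwhyp0} at $u$; under (a), $\lambdab^u$ is supported in $\dziplus u$ and condition (b) becomes
\begin{align*}
\|\slam e_u\|^{2(p-1)} \sum_{v \in \dziplus u} \frac{|\lambda_v|^2}{\|\slam e_v\|^{2p}} \le 1,
\end{align*}
which is \eqref{p-wkwhyp}. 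The main technical points to watch are the correctness of the functional-calculus identity for the rank-one summands and the legitimacy of ``inverting $D_u$ on its range'' when some $\|\slam e_v\|$ vanish; both are handled cleanly once \eqref{p-wkwhyp0} is isolated as the obstruction to the range-orthogonality condition (a).
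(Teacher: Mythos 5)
Your argument is correct, and it reaches the theorem by a route that differs from the paper's in its two key mechanisms. The paper works through the polar decomposition $\slam = U|\slam|$: it computes $U^*$ explicitly from Propositions \ref{sprz} and \ref{polar}, invokes the identity $|\slam^*|^{2p} = U|\slam|^{2p}U^*$, and arrives at the quadratic form $\is{|\slam^*|^{2p}f}{f} = \sum_{u\in \vplus}\|\slam e_u\|^{2(p-1)}\big|\sum_{v\in\dzi u}\overline{\lambda_v}f(v)\big|^2$, after which conditions \eqref{p-wkwhyp0} and \eqref{p-wkwhyp} are extracted by substituting test functions exactly as in the proofs of Theorems \ref{slssl*} and \ref{hyp}. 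You instead bypass $U$ entirely: you exhibit $\slam\slam^*$ as the orthogonal sum $\bigoplus_{u\in V^\prime}\lambdab^u\otimes\lambdab^u$ over the blocks $\ell^2(\dzi u)$ furnished by Proposition \ref{46}, apply the functional calculus blockwise using $(f\otimes f)^p = \|f\|^{2(p-1)} f\otimes f$, and then invoke an abstract criterion for when a rank-one positive operator is dominated by a positive diagonal operator. The resulting operator $\bigoplus_{u}\|\slam e_u\|^{2(p-1)}\lambdab^u\otimes\lambdab^u$ has precisely the quadratic form the paper computes, so the two proofs converge on the same per-vertex inequality; what your version buys is a cleaner conceptual statement (the obstruction \eqref{p-wkwhyp0} is exactly the range-orthogonality condition $\lambdab^u\perp\jd{D_u}$, and \eqref{p-wkwhyp} is exactly $\|D_u^{-1/2}\lambdab^u\|^2$ being controlled), at the cost of needing the rank-one domination lemma, which for diagonal $D_u$ is just the Cauchy--Schwarz computation the paper performs with test functions. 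One small point of hygiene: in your condition (b) you should fix the convention that the inequality is deemed to fail when $\lambdab^u\notin\dz{D_u^{-1/2}}$ (i.e.\ when the series $\sum_{v\in\dziplus u}|\lambda_v|^2/\|\slam e_v\|^{2p}$ diverges); with that reading the equivalence with \eqref{p-wkwhyp} is exact.
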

Note that if $\slam \in \ogr{\ell^2(V)}$ is a
$p$-hyponormal weighted shift on a directed tree
$\tcal$, then for every $u \in \vplus$, the left-hand
side of the inequality \eqref{p-wkwhyp} never
vanishes.
   \begin{proof}[Proof of Theorem  \ref{p-hyp}]
We make use of some ideas from the proof of Theorem
\ref{hyp}. Let $\slam = U |\slam|$ be the polar
decomposition of $\slam$. It follows from Propositions
\ref{sprz}\,(iii) and \ref{polar} that for every $f \in
\ell^2(V)$,
   \begin{align} \label{u*}
(U^*f)(u) =
   \begin{cases}
   \frac{1}{\|\slam e_u\|}\sum_{v \in \dzi{u}}
\overline{\lambda_v}f(v) & \text{for } u \in
\vplus,
   \\[1ex]
   0 & \text{for } u \in V \setminus \vplus.
   \end{cases}
   \end{align}
Since $|\slam^*|^{2p} = U |\slam|^{2p} U^*$ (cf.\
\cite[Theorem 4 in \S2.2.2]{furr}), we deduce
from Proposition \ref{3}\,(iv) that
   \begin{align} \label{msl*2}
   \begin{aligned}
   \is{|\slam^*|^{2p}f}{f} &
\hspace{1.5ex}= \is{|\slam|^{2p}U^*f}{U^*f}
   \\
& \hspace{1.5ex} = \sum_{u\in V}
(|\slam|^{2p}U^*f)(u) \overline{(U^*f)(u)}
   \\
& \hspace{1.5ex} = \sum_{u\in V} \|\slam
e_u\|^{2p} |(U^*f)(u)|^2
   \\
& \overset{\eqref{u*}}= \sum_{u\in \vplus}
\|\slam e_u\|^{2(p-1)} \Big|\sum_{v \in \dzi{u}}
\overline{\lambda_v}f(v)\Big|^2, \quad f \in
\ell^2(V).
   \end{aligned}
   \end{align}
Similar reasoning leads to
   \begin{align}  \label{msl2}
   \begin{aligned}
   \is{|\slam|^{2p}f}{f} & = \sum_{u\in V}
\|\slam e_u\|^{2p} |f(u)|^{2}
   \\
&\overset{\eqref{sumchi}} = \|\slam
e_{\koo}\|^{2p} |f(\koo)|^{2}
   \\
& \hspace{15ex} + \sum_{u\in V} \sum_{v \in
\dziplus u} \|\slam e_v\|^{2p} |f(v)|^{2}, \quad
f \in \ell^2(V),
   \end{aligned}
   \end{align}
where the term $\|\slam e_{\koo}\|^{2p}
|f(\koo)|^{2}$ appears in \eqref{msl2} only if
$\tcal$ has a root. Combining \eqref{msl*2} with
\eqref{msl2} (see also the proof of Theorem
\ref{slssl*}), we deduce that $\slam$ is
$p$-hyponormal if and only if
   \begin{align} \label{p-hyp-nier}
   \begin{aligned}
\sum_{u\in \vplus} \|\slam e_u\|^{2(p-1)} &
\Big|\sum_{v \in \dzi{u}}
\overline{\lambda_v}f(v)\Big|^2
   \\
& \Le \sum_{u\in V} \sum_{v \in \dziplus u}
\|\slam e_v\|^{2p} |f(v)|^{2}, \quad f \in
\ell^2(V).
   \end{aligned}
   \end{align}

Suppose that $\slam$ is $p$-hyponormal. If $v \in
\dzi u$ is such that $\|\slam e_v\|=0$, then by
substituting $f = e_v$ into \eqref{p-hyp-nier} we
obtain $\lambda_v=0$, which proves
\eqref{p-wkwhyp0} (note that if $u \in V
\setminus \vplus$, then automatically
$\lambda_v=0$). In view of \eqref{p-wkwhyp0} and
\eqref{p-hyp-nier}, we see that for every $u \in
\vplus$,
   \begin{align*}
\|\slam e_u\|^{2(p-1)} \Big|\sum_{v \in \dziplus
u} \overline{\lambda_v}f(v)\Big|^2 \Le \sum_{v
\in \dziplus u} \|\slam e_v\|^{2p} |f(v)|^{2},
\quad f \in \ell^2(\dziplus u).
   \end{align*}
This implies \eqref{p-wkwhyp} (consult the part of the
proof of Theorem \ref{slssl*} which comes after the
inequality \eqref{in2}). It is a simple matter to verify
that the above reasoning can be reversed. This completes
the proof.
   \end{proof}
The following well known fact is a direct consequence
of Theorem \ref{p-hyp}.
   \begin{cor} \label{p-hyp-clas}
Let $p\in (0,\infty)$. A bounded unilateral or
bilateral classical weighted shift $S$ with nonzero
weights is $p$-hyponormal if and only if it is
hyponormal.
   \end{cor}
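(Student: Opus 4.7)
The plan is to apply Theorem \ref{p-hyp} and observe that, for classical weighted shifts, the characterizing inequality \eqref{p-wkwhyp} collapses to a condition on consecutive weights that is independent of $p$. As noted in Remark \ref{re1-2}, both unilateral and bilateral classical weighted shifts arise as weighted shifts $\slam$ on the directed trees $\zbb_+$ and $\zbb$ (under the new convention $\slam e_n = \lambda_{n+1} e_{n+1}$). The distinguishing structural feature of these two trees is that every vertex $u \in V$ has exactly one child, namely $u+1$, so $\|\slam e_u\|^2 = |\lambda_{u+1}|^2$ for all $u \in V$.

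Since all weights of $S=\slam$ are assumed nonzero, we have $\|\slam e_u\| > 0$ for every $u \in V$, whence $\vplus = V$ and $\dziplus u = \dzi u = \{u+1\}$. In particular, condition \eqref{p-wkwhyp0} of Theorem \ref{p-hyp} is vacuously satisfied, and the inequality \eqref{p-wkwhyp} reduces to
\begin{align*}
|\lambda_{u+1}|^{2(p-1)} \cdot \frac{|\lambda_{u+1}|^2}{|\lambda_{u+2}|^{2p}} \Le 1, \quad u \in V,
\end{align*}
which after simplification and extraction of $(2p)$-th roots is equivalent to $|\lambda_{u+1}| \Le |\lambda_{u+2}|$ for every $u \in V$. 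Crucially, this last condition contains no reference to $p$.

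Applying Theorem \ref{p-hyp} twice---once for the given parameter $p \in (0,\infty)$ and once for $p = 1$---we obtain that $\slam$ is $p$-hyponormal if and only if $|\lambda_{u+1}| \Le |\lambda_{u+2}|$ holds for all $u \in V$, if and only if $\slam$ is hyponormal. There is no genuine obstacle: the only point worth highlighting is the algebraic reduction in which the $p$-dependent factor $\|\slam e_u\|^{2(p-1)}$ exactly cancels against the $p$-th power in the denominator once the sum over $\dziplus u$ degenerates to a single term, a cancellation that is peculiar to trees in which each vertex has a unique child.
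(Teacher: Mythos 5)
Your proof is correct and follows essentially the same route as the paper: the authors likewise observe that for the trees $\zbb_+$ and $\zbb$ the inequality \eqref{p-wkwhyp} collapses to the statement that the moduli of the weights form a monotonically increasing sequence, which is the hyponormality condition. Your write-up merely makes explicit the cancellation of the $p$-dependent factors that the paper leaves as "easily seen".
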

   \begin{proof}
The inequalities \eqref{p-wkwhyp} are easily seen to
be equivalent to the fact that the moduli of weights
of $S$ form a monotonically increasing sequence, which
in turn is equivalent to the hyponormality of $S$.
   \end{proof}
Theorem \ref{p-hyp} provides us with a handy
characterization of the $p$-hyponormality of weighted
shifts on the directed tree $\tcal_{\eta,\kappa}$
defined in \eqref{varkappa}.
   \begin{cor} \label{another-ch}
Let $\eta \in \{2,3, \ldots\} \sqcup \{\infty\}$, $\kappa
\in \zbb_+ \sqcup \{\infty\}$ and $p \in (0,\infty)$. A
weighted shift $\slam \in \ogr{\ell^2(V_{\eta,\kappa})}$ on
$\tcal_{\eta,\kappa}$ with nonzero weights $\lambdab =
\{\lambda_v\}_{v\in V_{\eta,\kappa}^\circ}$ is
$p$-hyponormal if and only if $\slam$ satisfies the
following conditions\/{\em:}
   \begin{align*}
& |\lambda_{i,j}| \Le |\lambda_{i,j+1}| \text{ for all } i
\in J_\eta \text{ and } j \Ge 2,
   \\
& \big(\sum_{i=1}^\eta |\lambda_{i,1}|^2\big)^{p-1}
\Big(\sum_{i=1}^\eta
\frac{|\lambda_{i,1}|^2}{|\lambda_{i,2}|^{2p}}\Big) \Le 1,
   \\
& |\lambda_0|^2 \Le \sum_{i=1}^\eta |\lambda_{i,1}|^2,
\text{ provided $\kappa \Ge 1$,}
   \\
& |\lambda_{-(k+1)}| \Le |\lambda_{-k}| \text{ for } k=0,
\ldots, \kappa -2, \text{ provided $\kappa \Ge 2$.}
   \end{align*}
   \end{cor}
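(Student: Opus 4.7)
The approach is to apply Theorem \ref{p-hyp} directly, exploiting the fact that every vertex of $\tcal_{\eta,\kappa}$ except the branching vertex $0$ has exactly one child. Since all weights $\lambda_v$ are nonzero and $\tcal_{\eta,\kappa}$ is leafless, \eqref{eu} gives $\|\slam e_u\| > 0$ for every $u \in V_{\eta,\kappa}$; hence $\vplus = V_{\eta,\kappa}$ and $\dziplus u = \dzi u$ for every $u$. Consequently, condition \eqref{p-wkwhyp0} is vacuously satisfied, and the entire content of $p$-hyponormality is contained in the inequality \eqref{p-wkwhyp} read at each vertex of $V_{\eta,\kappa}$. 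The plan is therefore to specialize \eqref{p-wkwhyp} to each of four vertex types, observing that in every case $\dzi u$ is either a singleton or $\{(i,1):i\in J_\eta\}$, which reduces the sum in \eqref{p-wkwhyp} to an explicit algebraic expression.

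First I would take $u = (i,j)$ with $j \ge 1$ and $i \in J_\eta$. Here $\dzi u = \{(i,j+1)\}$, $\|\slam e_u\|^2 = |\lambda_{i,j+1}|^2$, and $\|\slam e_{(i,j+1)}\|^2 = |\lambda_{i,j+2}|^2$; substitution into \eqref{p-wkwhyp} collapses to $|\lambda_{i,j+1}|^{2p} \Le |\lambda_{i,j+2}|^{2p}$, i.e.\ $|\lambda_{i,j+1}| \Le |\lambda_{i,j+2}|$. Letting $j$ range over $\nbb$ for each $i$ yields precisely the monotonicity condition $|\lambda_{i,j}| \Le |\lambda_{i,j+1}|$ for all $j \Ge 2$ and $i \in J_\eta$.

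Next I would take $u = 0$, for which $\dzi 0 = \{(i,1):i\in J_\eta\}$ and $\|\slam e_0\|^2 = \sum_{i=1}^\eta |\lambda_{i,1}|^2$, while $\|\slam e_{(i,1)}\|^2 = |\lambda_{i,2}|^2$; inserting these into \eqref{p-wkwhyp} gives the stated product inequality involving the $(p-1)$-th power. If $\kappa \Ge 1$, the vertex $u = -1$ contributes next: $\dzi{-1} = \{0\}$, so \eqref{p-wkwhyp} collapses to $|\lambda_0|^{2p} \Le \big(\sum_{i=1}^\eta |\lambda_{i,1}|^2\big)^{p}$, which after taking $p$-th roots is exactly the third condition. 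Finally, if $\kappa \Ge 2$ (including $\kappa = \infty$), each vertex $u = -k$ with $2 \Le k \Le \kappa$ has unique child $-k+1$, and \eqref{p-wkwhyp} reduces to $|\lambda_{-k+1}| \Le |\lambda_{-k+2}|$; re-indexing $k' = k-2 \in \{0, \ldots, \kappa-2\}$ gives $|\lambda_{-(k'+1)}| \Le |\lambda_{-k'}|$, which is the fourth condition.

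The conjunction of these four vertex-by-vertex conditions is exactly the list in the statement, and conversely if they all hold then \eqref{p-wkwhyp} is verified at every vertex of $V_{\eta,\kappa}$, so Theorem \ref{p-hyp} gives $p$-hyponormality. There is no genuine obstacle: each reduction is a routine manipulation of exponents, and the only point requiring mild care is bookkeeping the edge cases $\kappa = 0$, $\kappa = 1$, and $\kappa = \infty$ uniformly, which is handled by noting that the vertex classes $\{-k : 2 \Le k \Le \kappa\}$ and $\{-1\}$ are present precisely when $\kappa \Ge 2$ and $\kappa \Ge 1$, respectively.
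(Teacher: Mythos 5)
Your proposal is correct and is exactly the argument the paper intends: the corollary is stated as a direct specialization of Theorem \ref{p-hyp} (with $\vplus = V_{\eta,\kappa}$ and $\dziplus u = \dzi u$ because the weights are nonzero and the tree is leafless), and your vertex-by-vertex evaluation of \eqref{p-wkwhyp} at $(i,j)$, $0$, $-1$ and $-k$ for $k\Ge 2$ reproduces the four listed conditions, including the correct handling of the cases $\kappa=0,1,\infty$.
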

We now show how to separate $p$-hyponormality classes
with weighted shifts on the directed tree
$\tcal_{2,1}$ (see \cite{JLP,JLL} and \cite{b-j-l} for
analogous results for weighted shifts with special
matrix weights and composition operators,
respectively).
   \begin{exa} \label{p-hyp-sep}
Let $a,b$ be positive real numbers. Consider a weighted
shift $\slam$ on $\tcal_{2,1}$ with weights $\lambdab =
\{\lambda_v\}_{v\in V_{2,1}^\circ}$ such that $\lambda_0
\in (0,\infty)$, $\lambda_{1,1}=\lambda_{2,1}=1/\sqrt 2$
and $\lambda_{1,j}=1/a$, $\lambda_{2,j}=1/b$ for $j=2,3,
\dots$ By Corollary \ref{parc}, $\slam \in
\ogr{\ell^2(V_{2,1})}$. It follows from Corollary
\ref{another-ch} that
   \begin{align} \label{ap+bp}
\text{$\slam$ is $p$-hyponormal if and only if $\lambda_0
\Le 1$ and $(a,b) \in \varDelta_p$,}
   \end{align}
where $\varDelta_p = \{(x,y)\in \rbb^2 \colon x,y
> 0, \, x^{2p} + y^{2p} \Le 2\}$. Observe that the set
$\varDelta_p$ consists of all points of the first open
quarter of the plane which lie on or below the graph
of the function $x \mapsto
\sqrt[\uproot{2}2p]{2-x^{2p}}$ (see Figure $7$). By
more or less elementary calculations, one can verify
that $\varDelta_q \varsubsetneq \varDelta_p$ for all
$p,q\in (0,\infty)$ such that $p<q$. What is more, if
$0 < p < q$, then $(1,1)$ is the only point of
$\varDelta_q$ which is in the topological boundary of
$\varDelta_p$. One can also check that
   \begin{align} \label{infty-ph}
\varDelta_\infty := \bigcap_{p>0} \varDelta_p & =
\{(x,y)\in \rbb^2\colon 0 < x \Le 1, \, 0< y \Le 1\},
   \\
\varDelta_0 := \bigcup_{p>0} \varDelta_p & = \{(x,y)\in
\rbb^2\colon x,y >0, \, xy < 1\} \cup \{(1,1)\}. \notag
   \end{align}
The sets $\varDelta_p$ are plotted in Figure 7 for
some choices of $p$\hspace{.1ex}; the most external
one corresponds to $p=0$, while the most internal to
$p=\infty$.
   \vspace{1.5ex}
   \begin{center}
   \includegraphics[width=9cm,height=8.7cm]
   {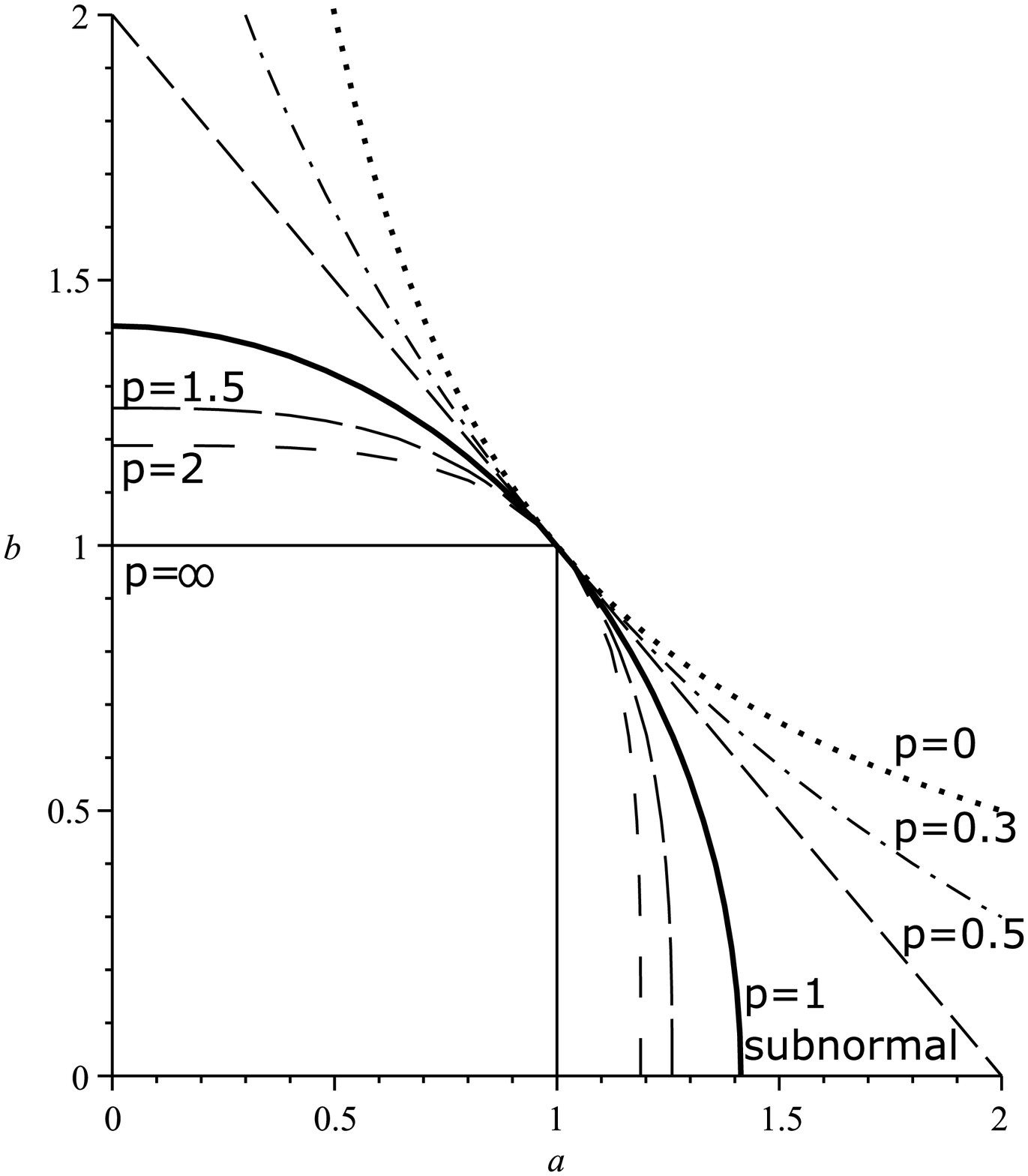}
   \\
   {\small {\sf Figure $7$}}
   \end{center}
   \vspace{1ex} By \eqref{ap+bp} and \eqref{infty-ph},
the operator $\slam$ is $\infty$-hyponormal (i.e.,
$p$-hyponormal for all $p\in (0,\infty)$) if and only
if $\lambda_0,a,b\Le 1$. Owing to Proposition
\ref{isokappa}, $\slam$ is an isometry if and only if
$\lambda_0=a=b=1$. Subnormality of $\slam$ can also be
described in terms of the parameters $a$, $b$ and
$\lambda_0$. Namely, applying Corollary
\ref{omega2}\,(ii) to $\mu_1=\delta_{1/a^2}$ and
$\mu_2=\delta_{1/b^2}$, we deduce that
   \begin{align} \label{ap+bp-sub}
\slam \text{ is subnormal if and only if } \frac {a^4 +
b^4}2 \Le \frac 1 {\lambda_0^2} \text{ and } a^2 + b^2 = 2.
   \end{align}
Fix now any real $\lambda_0$ such that $0 < \lambda_0 \Le
\frac 1{\sqrt{2}}$. Since $x^2 + y^2 = 2$ implies $\frac
{x^4 + y^4}2 < \frac 1 {\lambda_0^2}$ whenever $x,y>0$, we
deduce from \eqref{ap+bp} and \eqref{ap+bp-sub} that
$\slam$ is $p$-hyponormal if and only if $(a,b) \in
\varDelta_p$, and $\slam$ is subnormal if and only if $a^2+
b^2=2$. In view of the above discussion, if $(a,b) \neq
(1,1)$, then the operator $\slam$ is simultaneously
subnormal and $p$-hyponormal if and only if $0 < p \Le 1$
and $a^2 + b^2 = 2$. What is more, if $(a,b) \in
\varDelta_\infty \setminus \{(1,1)\}$, then $\slam$ is
$\infty$-hyponormal but not subnormal. On the other hand,
if $a^2 + b^2 = 2$ and $(a,b) \neq (1,1)$, then $\slam$ is
subnormal but not $\infty$-hyponormal (see \cite[Examples
3.2 and 3.3]{b-j-l} for the case of composition operators).
   \end{exa}

\vspace{3ex}

   \textbf{Acknowledgement}. A substantial part of
this paper was written while the first and the third
authors visited Kyungpook National University during
the springs and the autumns of 2008 and 2009; they
wish to thank the faculty and administration of this
unit for their warm hospitality.
   \newpage
   \bibliographystyle{amsalpha}
   
   \newpage
   \renewcommand\baselinestretch{1.3}
   \printindex
   \end{document}